\crefname{figure}{Figure}{Figures}
\Crefname{figure}{Figure}{Figures}
\crefname{section}{Section}{Sections}
\crefname{part}{Part}{Parts}
\crefname{remark}{Remark}{Remarks}
\crefname{Rq}{Remark}{Remarks}
\crefname{Claim}{Claim}{Claims}
\crefname{Def}{Definition}{Definitions}
\crefname{Th}{Theorem}{Theorems}
\crefname{Cor}{Corollary}{Corollaries}
\crefname{Prop}{Proposition}{Propositions}
\crefname{Lmm}{Lemma}{Lemmas}
\crefname{lemma}{Lemma}{Lemmas}
\crefname{Ex}{Example}{Examples}
\crefname{Exs}{Examples}{Examples}
\crefname{CEx}{Counter-example}{Counter-example}
\crefname{Q}{Question}{Questions}
\crefname{Nota}{Notation}{Notations}
\theoremstyle{definition}
\newtheorem{Def}{Definition}
\numberwithin{Def}{section}
\newtheorem{Th}[Def]{Theorem}
\newtheorem{Cor}[Def]{Corollary}
\newtheorem{Prop}[Def]{Proposition}
\newtheorem{Lmm}[Def]{Lemma}
\newtheorem{lemma}[Def]{Lemma}
\newtheorem{Ex}[Def]{Example}
\newtheorem{Claim}[Def]{Claim}
\newtheorem{Rq}[Def]{Remark}
\newtheorem*{Th1}{Theorem 1.2}
\newtheorem*{Th2}{Theorem 1.6}
\newtheorem*{Th3}{Theorem 1.10}
\newtheorem*{Th4}{Theorem 1.16}
\newtheorem*{Th6}{Theorem 1.13}
\numberwithin{equation}{section}
\definecolor{MCB}{cmyk}{0,0.03,0.08,0.26} % BDB8AD 
\definecolor{MFCB}{cmyk}{0,0.06,0.20,0.6} % BDB8AD Mais en plus foncé
\colorlet{Turquoise}{DeepSkyBlue4}
\colorlet{TurquoiseClair}{DeepSkyBlue4!20}
\colorlet{Orange}{DarkOrange3!85}
\setlist{nosep}
\title{Graph products and measure equivalence: classification, rigidity, and quantitative aspects}
\author{Amandine Escalier and Camille Horbez}
\date{\today}
\begin{document}
% Mathbb
\newcommand{\bC}{\mathbb{C}}
\newcommand{\bD}{\mathbb{D}}
\newcommand{\bF}{\mathbb{F}}
\newcommand{\bK}{\mathbb{K}}
\newcommand{\bL}{\mathbb{L}}
\newcommand{\bN}{\mathbb{N}}
\newcommand{\bP}{\mathbb{P}}
\newcommand{\bQ}{\mathbb{Q}}
\newcommand{\bR}{\mathbb{R}}
\newcommand{\bZ}{\mathbb{Z}}
% Les mathcal 
\newcommand{\calA}{\mathcal{A}}
\newcommand{\cala}{\mathcal{A}}
\newcommand{\calB}{\mathcal{B}}
\newcommand{\calb}{\mathcal{B}}
\newcommand{\calc}{\mathcal{C}}
\newcommand{\calC}{\mathcal{C}}
\newcommand{\calF}{\mathcal{F}}
\newcommand{\calf}{\mathcal{F}}
\newcommand{\calG}{\mathcal{G}}
\newcommand{\calg}{\mathcal{G}}
\newcommand{\calH}{\mathcal{H}}
\newcommand{\calh}{\mathcal{H}}
\newcommand{\cH}{\mathcal{H}}
\newcommand{\calI}{\mathcal{I}}
\newcommand{\calk}{\mathcal{K}}
\newcommand{\caln}{\mathcal{N}}
\newcommand{\calO}{\mathcal{O}}
\newcommand{\calp}{\mathcal{P}}
\newcommand{\calP}{\mathcal{P}}
\newcommand{\calQ}{\mathcal{Q}}
\newcommand{\calq}{\mathcal{Q}}
\newcommand{\calr}{\mathcal{R}}
\newcommand{\calS}{\mathcal{S}}
\newcommand{\cals}{\mathcal{S}}
\newcommand{\calt}{\mathcal{T}}
\newcommand{\calU}{\mathcal{U}}
\newcommand{\calv}{\mathcal{V}}
\newcommand{\calV}{\mathcal{V}}
\newcommand{\calw}{\mathcal{W}}
\newcommand{\calW}{\mathcal{W}}
\newcommand{\calY}{\mathcal{Y}}
% Little g and h for groupoid elements
\newcommand{\grpda}{\mathbf{a}}
\newcommand{\grpdb}{\mathbf{b}}
\newcommand{\grpdg}{\mathbf{g}}
\newcommand{\grpdh}{\mathbf{h}}
\newcommand{\classC}{\mathsf{C}}
\newcommand{\supp}{\mathrm{supp}}
\newcommand{\lk}{\mathrm{lk}}
\newcommand{\st}{\mathrm{star}}
\newcommand{\reg}{\mathrm{reg}}
\newcommand{\para}{\mathrm{para}}
\newcommand{\Prob}{\mathrm{Prob}}
\newcommand{\Isom}{\mathrm{Isom}}
\newcommand{\Maps}{\mathrm{Maps}}
\newcommand{\Aut}{\mathrm{Aut}}
\newcommand{\Out}{\mathrm{Out}}
\newcommand{\Mod}{\mathrm{Mod}}
\newcommand{\Homeo}{\mathrm{Homeo}}
\newcommand{\Stab}{\mathrm{Stab}}
\newcommand{\Ind}{\mathrm{Ind}}
\newcommand{\Hig}{\mathrm{Hig}}
\newcommand{\BS}{\mathrm{BS}}
\newcommand{\ad}{\mathrm{ad}}
\newcommand{\Creg}{\mathcal{C}_{\mathrm{reg}}}
\newcommand{\Pprod}{(\mathrm{P}_{\mathrm{prod}})}
\newcommand{\Pprodb}{(\mathrm{P}'_{\mathrm{prod}})}
\newcommand{\Pspec}{(\mathrm{P}_{\mathrm{spec}})}
\newcommand{\Padm}{(\mathrm{P}_{\mathrm{cofact}})}
\newcommand{\Pfact}{(\mathrm{P}_{\mathrm{fact}})}
\newcommand{\Pfactp}{(\mathrm{P}^\prime_{\mathrm{fact}})}
\newcommand{\Pvert}{(\mathrm{P}_{\mathrm{vert}})}
\newcommand{\Pcofact}{(\mathrm{P}^{\prime}_{\mathrm{cofact}})}
\newcommand{\Pf}{(\mathrm{P}'_{\mathrm{fact}})}
\newcommand{\Pv}{(\mathrm{P}'_{\mathrm{vert}})}
\newcommand{\Pamen}{(\mathrm{P}'_{\mathrm{vert}})}
\newcommand{\Pamenb}{(\mathrm{P}''_{\mathrm{vert}})}
\newcommand{\Pclique}{(\mathrm{P}_{\mathrm{clique}})}
\newcommand{\Pcliqueb}{(\mathrm{P}_{\mathrm{clique}}')}

 \newcommand{\Qprod}{(\mathrm{Q}_{\mathrm{prod}})}
\newcommand{\Qisol}{(\mathrm{Q}_{\mathrm{thick}})}
\newcommand{\Qspec}{(\mathrm{Q}_{\mathrm{spec}})}
\newcommand{\Qadm}{(\mathrm{Q}_{\mathrm{cofact}})}
\newcommand{\Qvert}{(\mathrm{Q}_{\mathrm{vert}})}
\newcommand{\Qfact}{(\mathrm{Q}_{\mathrm{fact}})}
\newcommand{\Qtc}{(\mathrm{Q}_{\mathrm{tc}})}
\newcommand{\cdotprime}{\bullet}
\newcommand{\cpp}{c^{\prime \prime}_1}%Ils era toujours temps de changer ces notations à la fin si elles ne nous plaisent pas
\newcommand{\cppdeux}{c^{\prime \prime}_2}
\newcommand{\dunion}{\sqcup} 
\newcommand{\actson}{\curvearrowright}
%Commande pour mettre des commentaires dans la marge
\newcommand{\Ccom}[1]{\Cmod\marginpar{\color{blue}\tiny #1 --ch}} 
\newcommand{\Cmod}{$\textcolor{blue}{\clubsuit}$} 
\newcommand{\Ccomm}[1]{\Cmodd\marginpar{\color{red}\tiny #1 --ch}} 
\newcommand{\Cmodd}{$\textcolor{red}{\clubsuit}$} 
\newcommand{\Amod}{$\textcolor{violet}{\clubsuit}$} 
\newcommand{\Acom}[1]{\Amod\marginpar{\color{violet}\tiny #1 --a}} 
%=============================================
%-----------------Boîtes----------------------
%=============================================
% -----------------------------
% Boite Todo
% ----------------------------
\newtcolorbox{Todo}{colback=black!20,
colframe=black,fonttitle=\bfseries,
colbacktitle=black,title=To do}

\maketitle

\begin{abstract}
    We study graph products of groups from the viewpoint of measured group theory.
    
    We first establish a full measure equivalence classification of graph products of countably infinite groups over finite simple graphs with no transvection and no partial conjugation. This finds applications to their classification up to commensurability, and up to isomorphism, and to the study of their automorphism groups. We also derive structural properties of von Neumann algebras associated to probability measure-preserving actions of graph products. Variations of the measure equivalence classification statement are given with fewer assumptions on the defining graphs.
    
    We also provide a quantified version of our measure equivalence classification theorem, that keeps track of the integrability of associated cocycles. As an application, we solve an inverse problem in quantitative orbit equivalence for a large family of right-angled Artin groups.
    
    We then establish several rigidity theorems. First, in the spirit of work of Monod--Shalom, we achieve rigidity in orbit equivalence for probability measure-preserving actions of graph products, upon imposing extra ergodicity assumptions. %(vertex groups are assumed to act weakly mixingly). 
    
    Second, we establish a sufficient condition on the defining graph and on the vertex groups ensuring that a graph product $G$ is rigid in measure equivalence among torsion-free groups (in the sense that every torsion-free countable group $H$ which is measure equivalent to $G$, is in fact isomorphic to $G$). Using variations over the Higman groups as the vertex groups, we construct the first example of a group which is rigid in measure equivalence, but not in quasi-isometry, among torsion-free groups.  
\end{abstract}

\newpage

\tableofcontents

\newpage

% ----------------------------
% Intro
% ----------------------------

%==================================================================
\newpage
\section{Introduction}
Graph products of groups, introduced by Green in \cite{Gre}, form a natural class of groups that interpolates between direct products and free products. If $\Gamma$ is a finite simple graph and $(G_v)_{v\in V\Gamma}$ is a family of groups, the \emph{graph product}\index{Graph product} over $\Gamma$ with vertex groups $(G_v)_{v\in V\Gamma}$ is the group

\begin{equation*}
    G:=\left(\bigast_{v\in V\Gamma} G_v \right)\bigg/ \left\langle\left\langle{ [g,h] \mid g\in G_u, h\in G_v, (u,v)\in E\Gamma}\right\rangle\right\rangle.
\end{equation*} 
The class of graph products encompasses several particularly important families of groups, among which right-angled Artin groups (when all vertex groups are isomorphic to $\mathbb{Z}$) and right-angled Coxeter groups (when all vertex groups are isomorphic to $\mathbb{Z}/2\mathbb{Z}$).

Basic questions about graph products remain open. For instance, their classification up to isomorphism, or up to commensurability, is still open in general. Moving to the framework of geometric group theory, the problem of their quasi-isometry classification is also still wide open. In a very different direction, the notion of a graph product can be defined for other structures than groups, e.g.\ von Neumann algebras \cite{CF}; and from this viewpoint the $W^*$-classification of graph products and their measure-preserving actions is also an important open question that has recently attracted a lot of attention \cite{CKE,CDD1,CDD2}.

In the present work, we adopt the point of view of measured group theory, and obtain classification and rigidity theorems for graph products in measure equivalence. Interestingly, using this viewpoint, we are able to also obtain several consequences and draw connections with the algebraic, geometric and functional analytic approaches mentioned in the above paragraph. 

Besides being a particularly natural class of groups, and an important object of study on their own, graph products will also enable us to exhibit new phenomena in measured group theory, like examples of groups that are superrigid in measure equivalence (among torsion-free groups) but not in quasi-isometry (Part~\ref{Part:SuperrigityHigman}). They will also be a good framework to develop the theory of quantitative measure equivalence (in the sense of \cite{DKLMT}) beyond the world of amenable groups (Part~\ref{Part:QuantitativeResults}).  

\paragraph*{Measure equivalence.} Introduced by Gromov in \cite{Gro}, measure equivalence is a measurable counterpart to the notion of quasi-isometry between finitely generated groups. Two countable groups $G$ and $H$ are \emph{measure equivalent}\index{Measure equivalence} if there exists a standard measure space $(\Omega,m)$ equipped with commuting, free, measure-preserving actions of $G$ and $H$ by Borel automorphisms, having finite measure Borel fundamental domains $X_G,X_H$ respectively. The space $(\Omega,m)$ is called a \emph{measure equivalence coupling}\index{Measure equivalence!Measure equivalence coupling} between $G$ and~$H$. As an important example, any two lattices in the same locally compact second countable group $\mathbf{G}$ are measure equivalent, by letting $(\Omega,m)=(\mathbf{G},\mathrm{Haar})$, equipped with the actions of the two lattices by left/right multiplication.

Measure equivalence is intimately connected to the ergodic-theoretic notion of \emph{orbit equivalence}, which finds its roots in the work of Dye \cite{Dye1,Dye2}. Two countable groups $G$ and $H$ are \emph{orbit equivalent}\index{Orbit equivalence} if there exist free probability measure-preserving actions $G\curvearrowright X$ and $H\curvearrowright Y$ on standard probability spaces, and a measure space isomorphism $f:X\to Y$ sending orbits to orbits, i.e.\ such that for almost every $x\in X$, one has $f(G\cdot x)=H\cdot f(x)$. It turns out that orbit equivalence is a stronger notion than measure equivalence: two groups are orbit equivalent precisely when one can find a measure equivalence coupling $(\Omega,m)$ as above, with $X_G=X_H$ (see e.g.\ $\mathbf{P}_{\mathrm{ME}}\mathbf{5}$ in \cite{Gab}).

The classification of countable groups up to measure equivalence is fundamentally different in the amenable and in the non-amenable case. A celebrated theorem of Ornstein and Weiss \cite{OW80}, building on the aforementioned work of Dye, asserts that all countably infinite amenable groups are orbit equivalent. At the other extreme, certain non-amenable groups satisfy a very strong form of rigidity. Furman proved in \cite{Fur-me} that every countable group which is measure equivalent to a lattice in a higher-rank simple Lie group $\mathbf{G}$ with trivial center, is virtually isomorphic to a lattice in $\mathbf{G}$. Some (non-amenable) groups $G$ are even more rigid, in that every countable group $H$ which is measure equivalent to $G$, must be virtually isomorphic to $G$: this is the case of most mapping class groups of finite-type surfaces (Kida \cite{Kid-me}), or of $\mathrm{Out}(F_N)$ for $N\ge 3$ (Guirardel-Horbez \cite{GH-OutFn}).

\subsection{Measure equivalence classification of graph products}

We now present our first main theorem (Theorem~\ref{theo:classificationversionintro} below, proved in Part~\ref{Part:MEClassification}), giving a full measure equivalence classification of graph products with countably infinite vertex groups defined over transvection-free graphs with no partial conjugation. Let us mention that Monod-Shalom \cite{MS} previously obtained measure equivalence classification results for a wide class of direct products. In the opposite direction, Alvarez-Gaboriau \cite{AG} classified a large family of free products. As a motivating example for our work, we first review the case of right-angled Artin groups treated in \cite{HH21}.

\subsubsection{The case of right-angled Artin groups: review} \label{Sec:IntroRAAG}

In \cite{HH21}, Huang and the second-named author proved the following measure equivalence classification theorem for right-angled Artin groups, which turns out to match the quasi-isometry classification for the groups under consideration \cite[Theorem~1.1]{Hua}.

\begin{Th}[{\cite[Theorem 1]{HH21}}]\label{Th:ClassificationRAAG}
    Let $A_1$ and $A_2$ be two right-angled Artin groups such that $\Out(A_i)$ is finite for every $i\in\{1,2\}$. Then $A_1$ and $A_2$ are measure equivalent if and only if they are isomorphic.
\end{Th}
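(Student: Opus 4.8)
The forward implication is immediate: every countable group is measure equivalent to itself, so if $A_1\cong A_2$ they are measure equivalent via the tautological coupling. All the content lies in the converse, and my plan is to recover the defining graphs from measure-equivalence-invariant data and use this to produce an isomorphism $A_1\cong A_2$. The guiding principle is that a right-angled Artin group $A_\Gamma$ is determined up to isomorphism by its graph $\Gamma$ (Droms), and that $\Gamma$ can be read off from the poset of parabolic subgroups $\langle\Lambda\rangle$, $\Lambda\subseteq V\Gamma$: the vertices correspond to the irreducible rank-one parabolics $\langle v\rangle\cong\bZ$, and two such are adjacent exactly when they commute. So the task is to show that a measure equivalence coupling induces an isomorphism of these parabolic posets.

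The hypothesis that $\Out(A_i)$ be finite enters at the combinatorial level: by the work of Servatius, Laurence and Charney--Vogtmann it is equivalent to $\Gamma_i$ having no transvection (no domination $\lk(v)\subseteq\st(w)$) and no separating star (each $\Gamma_i\setminus\st(v)$ connected). Its role in the proof is to rigidify the relevant canonical object, forcing its automorphisms to arise from graph symmetries rather than from transvections or partial conjugations. I would therefore fix a rigid complex on which $A_\Gamma$ acts that encodes exactly this parabolic structure --- the natural candidate being the extension graph $\Gamma^e$ of Kim--Koberda, whose vertices are the conjugates of the standard $\bZ$-generators and whose edges record commutation --- and aim to transport the action across the coupling.

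To make "induces'' precise I would pass to the measured-groupoid formalism, following the strategy of Kida for mapping class groups and the techniques of Furman and Monod--Shalom. A coupling $\Omega$ between $A_1$ and $A_2$ yields a measured groupoid $\calG$ carrying two commuting actions, and the problem becomes the recognition of canonical subgroupoids from $\calG$ alone. Concretely I would: (i) characterize in purely measured-groupoid terms the subgroupoids arising as stabilizers of vertices of $\Gamma^e$, exploiting that these stabilizers are amenable (virtually cyclic); (ii) recover the commutation/product structure by identifying the subgroupoids generated by commuting families of such vertex stabilizers, together with a maximality and intersection calculus mirroring the combinatorics of links in $\Gamma$; and (iii) conclude that $\calG$ furnishes an equivariant isomorphism $\Gamma_1^e\cong\Gamma_2^e$, which under the no-transvection, no-separating-star hypothesis descends to an isomorphism $\Gamma_1\cong\Gamma_2$ because $\Aut(\Gamma^e)$ is then commensurable with $A_\Gamma$ itself. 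Droms' theorem finally upgrades this to $A_1\cong A_2$.

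The main obstacle, and the technical heart of the argument, is step (i)--(ii): proving that the geometric stabilizer subgroupoids are \emph{canonically} identifiable from the abstract structure of the coupling, and that no exotic subgroupoids mimic them. I expect the essential lever to be an amenability statement for the vertex-group couplings, obtained from an analysis of the boundary action on $\Gamma^e$ in the spirit of Adams' work, combined with results on relatively amenable and normal subgroupoids that pin down maximal amenable subgroupoids and their normalizers. The delicate point is the reconstruction lemma underlying step (ii): one must show that $\Gamma$ is determined by the lattice of its product parabolics in a way robust enough to survive the passage to measured groupoids, so that the combinatorics of commuting generators is genuinely visible in $\calG$ and not merely in $A_\Gamma$.
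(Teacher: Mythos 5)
Your outline is correct in its architecture and follows essentially the same strategy as the proof this paper cites for the statement (\cite{HH21}), which is also what the paper abstracts as Kida's blueprint (\cref{sec:blueprint}): recognize, inside the measured groupoid of a coupling, the subgroupoids corresponding to conjugates of vertex groups and to commutation between them, deduce an equivariant isomorphism $\Gamma_1^e\cong\Gamma_2^e$, and descend to $\Gamma_1\cong\Gamma_2$. One genuine difference with the paper's own machinery is worth recording: the proof in Part~\ref{Part:MEClassification}, which recovers this statement as a special case of \cref{theo:classificationversionintro}, deliberately avoids your amenability-based recognition step, because there the vertex groups are arbitrary countably infinite groups; it instead recognizes vertex groups through a zoom-in by maximal product parabolic subgroups and their factors (Properties $\Pprod$, $\Pfact$, $\Pvert$). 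The amenability route you describe --- maximal amenable subgroupoids, normalizers, Adams-type boundary arguments --- is the one actually carried out in \cite{HH21} and, within this paper, in \cref{Sec:AmenalbeUntransvectable} for graph products with amenable vertex groups. Your route buys a shorter argument for RAAGs; the paper's route buys the generalization.

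Two points need repair. First, and most substantively, your step (iii) justifies the descent from $\Gamma_1^e\cong\Gamma_2^e$ to $\Gamma_1\cong\Gamma_2$ by asserting that $\Aut(\Gamma^e)$ is commensurable with $A_\Gamma$. This is not established anywhere and cannot be taken for granted: even for the right-angled building the paper does not know that $\Aut(\bD_G)$ is a finite extension of $G$ --- this is precisely why it proves a strong ICC statement (\cref{lemma:icc}) by boundary arguments rather than by any discreteness of $\Aut(\bD_G)$. The correct and available tool is Huang's combinatorial rigidity theorem: for graphs with no transvection and no partial conjugation, any isomorphism $\Gamma_1^e\cong\Gamma_2^e$ forces $\Gamma_1\cong\Gamma_2$ (\cite[Corollary~4.16 and Lemma~4.17]{Hua}, invoked in the proof of \cref{prop:coupling-restricted}). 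Second, a smaller conflation: the stabilizers of vertices of $\Gamma^e$ are the conjugates of $A_{\st(v)}$, which are not virtually cyclic; what must be recognized are the cyclic parabolics $gA_vg^{-1}$ themselves, and ``maximal amenable'' alone does not isolate them, since maximal abelian subgroups of a RAAG also include clique subgroups $\bZ^k$ and cyclic subgroups generated by elements not conjugate into any vertex group. Excluding these is exactly where the support formalism and untransvectability enter (in \cite{HH21}, or \cref{prop:untransvectable} here); you correctly flag this as the technical heart, but it is the step your sketch leaves entirely open. Finally, the last step needs only the trivial direction --- isomorphic graphs give isomorphic RAAGs --- not Droms' theorem, which is the converse.
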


The condition on the finiteness of the outer automorphism groups is important, and can be read off from the defining graphs as follows. Recall that if $\Gamma$ is a finite simple graph, and $v\in V\Gamma$, then 
\begin{itemize}
\item the \emph{link} of $v$\index{Link (of a vertex)}\label{Def:Link} is the subgraph $\lk(v)$ of $\Gamma$ induced by all vertices adjacent to $v$;
\item the \emph{star} of $v$\index{Star (of a vertex)}\label{Def:Star} is the subgraph $\st(v)$ of $\Gamma$ induced by $v$ itself and its neighbors. 
\end{itemize}
A theorem of Laurence \cite{Laurence}, confirming a conjecture of Servatius \cite{Servatius}, ensures that if $A_\Gamma$ is the right-angled Artin group over $\Gamma$, then  $|\Out(A_\Gamma)|<+\infty$ if and only if 
\begin{itemize}
    \item $\Gamma$ is \emph{transvection-free}, that is, for any two distinct vertices $v,w\in V\Gamma$, one has $\lk(v)\nsubseteq\st(w)$, and
    \item $\Gamma$ has \emph{no partial conjugation}, that is, for every vertex $v\in V\Gamma$, the subgraph induced by $V\Gamma\backslash V\st(v)$ is connected.
\end{itemize}

\subsubsection{Main theorem and discussion on the hypotheses} 

Our first main theorem is proved in \cref{Part:MEClassification}. Its statement vastly generalizes Theorem~\ref{Th:ClassificationRAAG} to all graph products of countably infinite groups, and its proof relies on different tools (see the discussion at the beginning of Part~\ref{Part:MEClassification}).

\begin{Th} 
\label{theo:classificationversionintro}
   Let $\Gamma_G,\Gamma_H$ be two finite simple graphs, not reduced to one vertex, with no transvection and no partial conjugation. Let $G,H$ be graph products of countably infinite groups over $\Gamma_G,\Gamma_H$, respectively. 
   Then the following assertions are equivalent.
   \begin{enumerate}
       \item The groups $G$ and $H$ are measure equivalent.
       \item The groups $G$ and $H$ are orbit equivalent.
       \item There exists a graph isomorphism $\sigma:\Gamma_G\to\Gamma_H$ such that for every $v\in V\Gamma_G$, the groups $G_v$ and $H_{\sigma(v)}$ are orbit equivalent.
   \end{enumerate}
\end{Th}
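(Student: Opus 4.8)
The plan is to prove the cycle of implications $(2)\Rightarrow(1)\Rightarrow(3)\Rightarrow(2)$. The implication $(2)\Rightarrow(1)$ is immediate, since orbit equivalence is the special case of measure equivalence in which the coupling can be taken with $X_G=X_H$. The entire weight of the theorem lies in $(1)\Rightarrow(3)$, which is the rigidity statement, while $(3)\Rightarrow(2)$ is a construction producing an orbit equivalence of the two graph products out of orbit equivalences of their vertex groups. Note that the assumption that $\Gamma_G,\Gamma_H$ are not reduced to a single vertex is what guarantees that the graph-product geometry is genuinely present (the associated complex is nontrivial and the vertex groups are proper subgroups), so that the recognition step below has content.

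For $(3)\Rightarrow(2)$, I would first transport everything through the graph isomorphism $\sigma$ to reduce to the case $\Gamma_G=\Gamma_H$ and $\sigma=\mathrm{id}$. I then want a ``graph product of orbit equivalences'': given probability-measure-preserving actions $G_v\actson X_v$ and $H_v\actson Y_v$ realizing the orbit equivalence $G_v\sim_{\mathrm{OE}}H_v$ via measure isomorphisms $X_v\cong Y_v$ identifying the two orbit equivalence relations, I would construct pmp actions of $G$ and of $H$ whose orbit equivalence relations are canonically isomorphic. The natural framework is the graph product of measured equivalence relations (equivalently, of the associated measured groupoids): this operation is functorial, recovers the orbit relation of the graph-product action of $G$ when fed the relations of the $G_v\actson X_v$, and depends only on the isomorphism classes of the input relations together with the graph. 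Since the inputs for $G$ and for $H$ are identified relation by relation, the outputs are identified, giving $G\sim_{\mathrm{OE}}H$.

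For $(1)\Rightarrow(3)$, I would pass to the measured-groupoid picture: a measure equivalence coupling between $G$ and $H$ yields a stable orbit equivalence between the groupoids generated by pmp actions of $G$ and $H$, encoded by a measure equivalence cocycle. The graph product $G$ carries a canonical family of parabolic subgroups — the vertex groups $G_v$, their links and stars $G_{\lk(v)},G_{\st(v)}$, and the maximal product (join) subgroups — together with an action on an associated $\mathrm{CAT}(0)$ cube complex and its combinatorial boundary. The hypotheses that $\Gamma_G,\Gamma_H$ have no transvection and no partial conjugation are exactly what renders these parabolic subgroups canonically \emph{recognizable}: no transvection forbids a vertex group from being absorbed into a star, and no partial conjugation provides the relevant indecomposability. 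The heart of the argument is to show that the cocycle \emph{aligns} with this structure, i.e.\ that any measure equivalence must send maximal product subgroups of $G$ to maximal product subgroups of $H$, and then vertex groups to vertex groups, respecting the commuting (hence adjacency) relation.

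The main obstacle is precisely this measure-theoretic recognition of the canonical subgroups. I would attack it by establishing, inside the measured groupoid, an amenability/non-amenability dichotomy together with a classification of the maximal product subgroupoids mirroring the combinatorial boundary of the cube complex — the measured analogue of reading off joins and stars from the geometry. Granting this, the resulting bijection between maximal product subgroups, compatible with intersections, reconstructs a simplicial isomorphism of the relevant complexes attached to $\Gamma_G$ and $\Gamma_H$, hence a graph isomorphism $\sigma$ matching $G_v$ to $H_{\sigma(v)}$. Restricting the coupling to a vertex subgroup $G_v$ then yields a measure equivalence between $G_v$ and $H_{\sigma(v)}$; the delicate point is to track the fundamental domains through the recognition and check that, because $\sigma$ is a bijection pairing each vertex group of $G$ with a single vertex group of $H$, the associated coupling index equals one, so that this measure equivalence is in fact an orbit equivalence, yielding $(3)$. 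The same alignment, applied globally rather than vertex by vertex, is what ultimately forces the equivalence of $(1)$ and $(2)$.
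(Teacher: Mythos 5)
Your skeleton matches the paper's: $(2)\Rightarrow(1)$ is trivial, $(3)\Rightarrow(2)$ is the Gaboriau-type construction (the paper simply invokes \cite[Proposition~4.2]{HH21}, which is exactly the ``graph product of orbit equivalences'' you describe, realized concretely on $Z\times Y_1\times\dots\times Y_n$ with $G$ acting through the retractions and $H$ through the vertex cocycles --- your appeal to a functorial graph product of relations is a reasonable packaging of the same construction), and for $(1)\Rightarrow(3)$ you correctly identify the core difficulty: translating the coupling into a measured groupoid with two cocycles and proving that the cocycles must align maximal product subgroupoids and then vertex subgroupoids, respecting commutation. This is precisely the paper's Vertex Recognition Property, proved via amenability and normalization of subgroupoids, so up to this point your route and the paper's coincide.

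The genuine gap is the final upgrade from measure equivalence to \emph{orbit} equivalence, both for the vertex groups in $(3)$ and for the implication $(1)\Rightarrow(2)$. You claim that ``because $\sigma$ is a bijection pairing each vertex group of $G$ with a single vertex group of $H$, the associated coupling index equals one.'' This does not follow: a bijective pairing of vertex groups imposes no constraint on the index of the restricted coupling, and even coupling index one would not suffice, since orbit equivalence requires an actual common fundamental domain, which is strictly stronger in general. This ME-versus-OE distinction is exactly the delicate point of the theorem (see Remark~\ref{Rq:AsymmetryOEMEQt}, where the paper notes that this upgrade fails to be automatic even when a common fundamental domain is known to exist); note also that a conclusion of mere measure equivalence of vertex groups would be strictly weaker than assertion $(3)$, since e.g.\ $F_2$ and $F_3$ are measure equivalent but not orbit equivalent \cite{Gab-cost}. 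The paper closes this gap with a tool absent from your proposal: Davis's right-angled building $\bD_G$. From vertex recognition one gets a $(G\times H)$-equivariant Borel map $\Omega\to\Isom(\Gamma_G^e,\Gamma_H^e)$ via Kida's blueprint (Proposition~\ref{prop:witness}), then a comparison map to $\Isom(\bD_G,\bD_H)$ (Proposition~\ref{theo:autos}). Since building automorphisms preserve ranks (Corollary~\ref{cor:rank}) and $G,H$ act freely and transitively on rank~$0$ vertices, the set $Y=\{f\mid f(e_G)=e_H\}$ is a \emph{common} fundamental domain for both actions on $\Isom(\bD_G,\bD_H)$; pulling it back through the equivariant map yields a common fundamental domain on $\Omega$, giving $(1)\Rightarrow(2)$ (Proposition~\ref{prop:me-oe}), and the refined sets $Y^v_{\sigma_0}=\{f\mid f(e_G)=e_H,\ f(v)=\sigma_0(v)\}$ give common fundamental domains for $G_v$ and $H_{\sigma_0(v)}$ on invariant subsets $\Omega_v\subseteq\Omega$, giving $(1)\Rightarrow(3)$ (Proposition~\ref{prop:coupling-restricted}). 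Without this geometric step, or some substitute that manufactures a common fundamental domain, your argument stalls at measure equivalence of the vertex groups.
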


The implication $2\Rightarrow 1$ is obvious, and $3\Rightarrow 2$ was proved in \cite[Proposition~4.2]{HH21} by adapting an argument of Gaboriau \cite{Gab} from free products to graph products. Our contribution in the present paper is the (hardest) implication $1\Rightarrow 3$.

Examples illustrating the above theorem (and some variations obtained in \cref{Part:MEClassification}) are provided in \cref{Sec:ExemplesMEVRP}. 

\paragraph*{Necessity of all assumptions.}

The assumptions in Theorem~\ref{theo:classificationversionintro} are necessary.

First, the theorem does not hold in general if the vertex groups are finite. For instance, for $n\ge 5$, the right-angled Coxeter group $W_n$ with defining graph an $n$-gon acts properly and cocompactly on the real hyperbolic plane $\mathbb{H}^2_{\mathbb{R}}$, as the group generated by the reflections over a right-angled hyperbolic $n$-gon. Therefore all groups $W_n$ are cocompact Fuchsian, so they are all measure equivalent (in fact commensurable). The classification of right-angled Coxeter groups up to measure equivalence is a widely open problem. We refer to the survey \cite[Sections~5 and~6]{Dan} and the references therein for an overview of the current knowledge regarding their commensurability and quasi-isometry classification.

Assuming that both $\Gamma_G$ and $\Gamma_H$ have no partial conjugation is also necessary, even in the case of right-angled Artin groups. Indeed, every non-cyclic right-angled Artin group $G$ with $|\Out(G)|<+\infty$ has proper finite-index subgroups that are themselves right-angled Artin groups, defined over graphs which are transvection-free but have partial conjugations. An example is given by the two leftmost graphs in Figure~\ref{fig:Rk56}, \cpageref{fig:Rk56}, with all vertex groups isomorphic to $\mathbb{Z}$. More generally, we refer to \cite[Theorem~1.5 and Section~6]{Hua} for a full description of all finite-index right-angled Artin subgroups of $G$.

Furthermore, assuming that $\Gamma_G,\Gamma_H$ have no transvection is also necessary. For instance, when $\Gamma_G=\Gamma_H=:\Gamma$ is either a complete graph or a square, the same group $G$ can be defined as a graph product over $\Gamma$ in two different ways, and therefore the third conclusion of the theorem cannot hold in general. One can obtain counterexamples by considering a graph product over a square with appropriate vertex groups $A,B,C\ast D,E$ (in cyclic order), which is isomorphic to the graph product over a square with vertex groups $A\ast C,B,D,E$. On the other hand, we mention that it is not possible to decompose the same group $G$ in two different ways over a finite simple simple graph (not reduced to a point) with no transvection and no partial conjugation (see Corollary~\ref{cor:isomorphism}, \cpageref{cor:isomorphism}).

\begin{Rq}[New instances of the measure equivalence classification of RAAGs]
    Even among right-angled Artin groups, our main theorem yields new classification results that were not covered by \cite{HH21}, and were not accessible to our knowledge by previously available techniques. 
    For example, let $\Gamma$ be a pentagon and let $G$ and $H$ be the graph products over $\Gamma$ with respective vertex groups $(F_2)_{v\in V\Gamma}$ and $(F_3)_{v\in V\Gamma}$. By our main result $G$ and $H$ are not measure equivalent, because the vertex groups $F_2$ and $F_3$ are not orbit equivalent \cite{Gab-cost}. But {this was not possible to tell using hitherto known techniques}. For instance $G$ and $H$ both have cost $1$ because they have chain-commuting generating sets, see \cite[Critères VI.24(1)]{Gab-cost}. They have proportional $\ell^2$-Betti numbers: these all vanish except in dimension $2$, as follows from the work of Davis and Leary which relates the $\ell^2$-Betti numbers of a right-angled Artin group, to the classical homology of the flag completion of its defining graph \cite[Corollary~2]{DL}. And they have infinite outer automorphism groups (which contain transvections), so \cite{HH21} does not apply --~one could also prove that they have isomorphic extension graphs in the sense of Kim-Koberda \cite{KK}, so the situation is very different from \cite{HH21}. 
    We finally remark that $G$ and $H$ are quasi-isometric (see also our next remark on this point).
    
    Sticking to the problem of the measure equivalence classification of right-angled Artin groups, we also refer to \cref{Sec:AmenalbeUntransvectable}, where we establish the \emph{untransvectable extension graph} as a new invariant, thereby improving the results from \cite{HH21}.
\end{Rq}

\begin{Rq}[Measure equivalence versus quasi-isometry]\label{rk:qi-me}
No analogue of Theorem~\ref{theo:classificationversionintro} for quasi-isometry is known. An easy observation (see e.g.\ Lemma~\ref{lemma:qi}, \cpageref{lemma:qi}) shows that if $G$ and $H$ are graph products over the same finite simple graph $\Gamma$, and if the vertex groups $G_v$ and $H_v$ are bi-Lipschitz equivalent for every $v\in V\Gamma$ (i.e.\ there exists a bi-Lipschitz bijection from $G_v$ to $H_v$), then $G$ and $H$ are bi-Lipschitz equivalent. But no converse of this statement is known, except in some specific situations, see e.g.\ \cite[Corollary~A.3]{HKS} and the conjecture \cite[Remark~A.4]{HKS}. 

Notice that, in contrast to the situation from Theorem~\ref{Th:ClassificationRAAG}, the measure equivalence and quasi-isometric classifications do not match for graph products in general. This is because the difference between quasi-isometry and bi-Lipschitz equivalence is significantly looser than the difference between measure equivalence and orbit equivalence. Indeed, a theorem of Whyte \cite[Theorem~2]{Why} ensures that two non-amenable groups which are quasi-isometric are in fact bi-Lipschitz equivalent. On the other hand, measure equivalent groups need not be orbit equivalent: for example, a group $G$ with a positive $\ell^2$-Betti number is never orbit equivalent to any proper finite-index subgroup \cite{Gab-l2}. As a consequence, if $G$ is a graph product of non-amenable groups, then changing one vertex group $G_v$ to $G_v\times\mathbb{Z}/2\mathbb{Z}$ yields a group which is always quasi-isometric to $G$, but not measure equivalent to $G$ in general.

See also Theorem~\ref{theo:qi-me} for an example of a group which is rigid in measure equivalence but not in quasi-isometry among torsion-free groups.
\end{Rq}

\begin{Rq}[Fundamental groups and von Neumann algebras]\label{rk:von-neumann}
When proving \cref{theo:classificationversionintro}, we actually show (see \cref{prop:me-oe}, \cpageref{prop:me-oe}) that all measure equivalence couplings between $G$ and $H$ are actually orbit equivalence couplings, i.e.\ there is a common Borel fundamental domain for $G$ and $H$.

A consequence is the following. Given a group $G$ as in Theorem~\ref{theo:classificationversionintro}, and a free, ergodic, measure-preserving $G$-action on a standard probability space $X$, the orbit equivalence relation $\calr$ of the action \emph{has trivial fundamental group}, i.e.\ it is not isomorphic to $\calr_{|U}$ for any positive measure Borel subset $U\subseteq X$.

Now, to every action $G\actson X$ as above, one can associate a von Neumann algebra (a $\mathrm{II}_1$ factor), via Murray and von Neumann's group measure space construction \cite{MvN}. By combining the above with a Cartan rigidity theorem established for graph products by Chifan and Kunnawalkam Elayavalli \cite{CKE} in the framework of Popa's deformation/rigidity theory, we also deduce that the von Neumann algebra $L(G\actson X)$ has trivial fundamental group. This is proved in Section~\ref{sec:von-neumann}, \cpageref{sec:von-neumann}. See also Section~\ref{sec:von-neumann-2}, \cpageref{sec:von-neumann-2}, for another application of our work to a $W^*$-rigidity theorem.
\end{Rq}

Finally, let us mention that we also obtain variations of \cref{theo:classificationversionintro}, for instance \cref{Cor:VRPandGraphMorphism}, \cpageref{Cor:VRPandGraphMorphism}. We refer to \cref{Sec:ExemplesMEVRP} for examples of applications.

\subsubsection{On the commensurability classification}

Recall that two groups $G$ and $H$ are \emph{commensurable}\index{Commensurable} if they share isomorphic finite-index subgroups. Following \cite{JS}, we say that they are \emph{strongly commensurable}\index{Strongly!Strongly commensurable}\index{Commensurable!Strongly commensurable} if there exist isomorphic finite-index subgroups $G^0\subseteq G$ and $H^0\subseteq H$ such that $[G:G^0]=[H:H^0]$. Januszkiewicz and \'{S}wi\c{a}tkowski proved in \cite[Theorem~1]{JS} that if $G,H$ are two graph products over finite simple graphs $\Gamma_G$ and $\Gamma_H$ respectively, and if there exists a graph isomorphism $\sigma:\Gamma_G\to\Gamma_H$ such that for every $v\in V\Gamma_G$, the vertex groups $G_v$ and $H_{\sigma(v)}$ are strongly commensurable, then $G$ and $H$ are (strongly) commensurable. In \cite[Section~5]{JS}, they discuss the necessity of strong commensurability of the vertex groups (as opposed to mere commensurability) in various examples.

It turns out that commensurability (resp.\ strong commensurability) is exactly the property of having a measure equivalence coupling (resp.\ orbit equivalence coupling) on a countable set $\Omega$, see Lemma~\ref{lemma:commensurability}, \cpageref{lemma:commensurability}. By specifying Theorem~\ref{theo:classificationversionintro} to the setting of discrete couplings, we attain the following theorem which provides a converse to the work of Januszkiewicz and \'{S}wi\c{a}tkowski when the defining graphs have no transvection and no partial conjugation.

\begin{Th} 
\label{theo:classification-commensurability}
   Let $\Gamma_G,\Gamma_H$ be two finite simple graphs, not reduced to a point, with no transvection and no partial conjugation. Let $G,H$ be graph products of countably infinite groups over $\Gamma_G,\Gamma_H$, respectively. 
   Then the following assertions are equivalent.
   \begin{enumerate}
       \item The groups $G$ and $H$ are commensurable.
       \item The groups $G$ and $H$ are strongly commensurable.
       \item There exists a graph isomorphism $\sigma:\Gamma_G\to\Gamma_H$ such that for every $v\in V\Gamma_G$, the groups $G_v$ and $H_{\sigma(v)}$ are strongly commensurable.
   \end{enumerate}
\end{Th}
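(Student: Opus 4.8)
The plan is to deduce Theorem~\ref{theo:classification-commensurability} from Theorem~\ref{theo:classificationversionintro} by specializing the measure equivalence framework to the discrete setting. The key dictionary is announced in the excerpt itself: by Lemma~\ref{lemma:commensurability}, commensurability is exactly the existence of a measure equivalence coupling supported on a countable set~$\Omega$, and strong commensurability is exactly the existence of such a coupling that is moreover an orbit equivalence coupling (i.e.\ admits a common fundamental domain). With this dictionary in hand, the three implications become the discrete shadows of the three implications in Theorem~\ref{theo:classificationversionintro}.

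The implications $2\Rightarrow 1$ and $3\Rightarrow 2$ are the easy directions. For $2\Rightarrow 1$, strong commensurability trivially implies commensurability. For $3\Rightarrow 2$, I would invoke the result of Januszkiewicz and \'{S}wi\c{a}tkowski \cite[Theorem~1]{JS} quoted just above the statement: if there is a graph isomorphism $\sigma$ matching the vertex groups up to strong commensurability, then $G$ and $H$ are (strongly) commensurable. Thus these two directions require essentially no new work beyond citing existing results and the elementary part of the dictionary.

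The substance, as in the measure equivalence theorem, is the implication $1\Rightarrow 3$. Here the strategy is to run the proof of $1\Rightarrow 3$ from Theorem~\ref{theo:classificationversionintro} inside the category of discrete couplings, so that every measure-theoretic object produced along the way is automatically discrete. Concretely: commensurability of $G$ and $H$ gives, via Lemma~\ref{lemma:commensurability}, a measure equivalence coupling on a countable set~$\Omega$. Applying Theorem~\ref{theo:classificationversionintro} already yields a graph isomorphism $\sigma:\Gamma_G\to\Gamma_H$ such that $G_v$ and $H_{\sigma(v)}$ are \emph{orbit equivalent} for every $v$. The point is to upgrade this to \emph{strong commensurability} of the vertex groups. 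For this I would trace through the construction of the vertex orbit equivalence couplings in the proof of $1\Rightarrow 3$ and verify that, when the ambient coupling $\Omega$ is countable, the induced couplings between the vertex groups $G_v$ and $H_{\sigma(v)}$ are themselves countable (they arise as stabilizer subgroups acting on suborbits/subsets of the discrete $\Omega$, and all the finite-index and fundamental-domain data are inherited as finite subsets). Combined with the second half of the dictionary in Lemma~\ref{lemma:commensurability} —that an \emph{orbit} equivalence coupling on a countable set witnesses strong commensurability— this gives that $G_v$ and $H_{\sigma(v)}$ are strongly commensurable, which is conclusion~3.

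**The main obstacle** will be the last step: ensuring that the orbit equivalence couplings between vertex groups produced by the proof of Theorem~\ref{theo:classificationversionintro} are discrete, and with the \emph{matched index} required by strong commensurability rather than merely abstract orbit equivalence. This hinges on whether the argument for $1\Rightarrow 3$ is genuinely functorial in the coupling, so that a countable input coupling yields countable output couplings at each vertex; I would need to isolate the precise place in Part~\ref{Part:MEClassification} where vertex couplings are extracted and check that no step secretly passes through a continuous measure space. If the construction is stated so as to preserve the fundamental-domain structure (which Remark~\ref{rk:von-neumann} suggests, since it asserts every measure equivalence coupling between $G$ and $H$ is already an orbit equivalence coupling), then the index-matching needed for \emph{strong} commensurability should follow by bookkeeping the cardinalities of the discrete fundamental domains via Lemma~\ref{lemma:commensurability}; otherwise an additional argument controlling the relative indices would be required.
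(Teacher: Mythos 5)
Your proposal is correct and takes essentially the same route as the paper: $2\Rightarrow 1$ is trivial, $3\Rightarrow 2$ is Januszkiewicz--\'Swi\c{a}tkowski, and $1\Rightarrow 3$ feeds the discrete coupling supplied by Lemma~\ref{lemma:commensurability} into the measure equivalence machinery. The ``main obstacle'' you flag is resolved exactly as you hoped: the paper invokes Proposition~\ref{prop:coupling-restricted}, which produces each vertex coupling as a $(G_v\times H_{\sigma(v)})$-invariant \emph{subset} $\Omega_v\subseteq\Omega$ with a common fundamental domain, so when $\Omega$ is countable the $\Omega_v$ are automatically discrete with finite common fundamental domains, and strong commensurability (including the index matching) follows directly from Lemma~\ref{lemma:commensurability}(2) with no extra bookkeeping.
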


\begin{Rq}[On the classification of graph products up to isomorphism]\label{rk:isomorphism}
The same theorem is also true if ``commensurable'' and ``strongly commensurable'' are replaced by ``isomorphic''. In this case one can altogether remove the assumption that the vertex groups are countably infinite. See Corollaries~\ref{cor:isomorphism} and~\ref{cor:isomorphism-join}, \cpageref{cor:isomorphism,cor:isomorphism-join}. This classification theorem up to isomorphism was essentially proved by Genevois \cite[Theorem~8.2]{Gen}, with the extra assumption that vertex groups do not further split as graph products. Our approach, which was inspired from the setting of measure equivalence, is different, and enables us to remove this assumption.
\end{Rq}

\subsubsection{A word on proof techniques}

A complete outline of the proof is available in the introduction of Part~\ref{Part:MEClassification}, whose reading can be completed with the introduction of Section~\ref{sec:strongly-reduced}. We therefore refer the reader to these sections, and here we only describe our main tools. 

A measure equivalence coupling $\Omega$ between $G$ and $H$ yields two actions $G\actson H\backslash\Omega$ and $H\actson G\backslash\Omega$, whose orbits coincide on a common positive measure subset $U$ of the two spaces \cite{Fur}. This can be reformulated in the language of measured groupoids (see Section~\ref{sec:background-groupoids}), by saying that we have a measured groupoid $\calg$ over $U$, equipped with two cocycles $\rho_G:\calg\to G$ and $\rho_H:\calg\to H$. Our proof strategy can be decomposed in several steps.
\begin{itemize}
    \item We first show that subgroupoids of the form $\rho_G^{-1}(G_v)$ for some $v\in V\Gamma_G$, are also of the form $\rho_H^{-1}(hH_wh^{-1})$ for some $w\in V\Gamma_H$ and $h\in H$, up to a countable Borel partition of $U$.
    \item Following a strategy of Kida \cite{Kid-me} (reviewed in Section~\ref{sec:blueprint}, \cpageref{sec:blueprint}), we use the above to find a Borel $(G\times H)$-equivariant map $\Omega\to\Isom(\Gamma_G^e,\Gamma_H^e)$, where $\Gamma_G^e,\Gamma_H^e$ are the extension graphs of $G,H$ in the sense of Kim-Koberda \cite{KK} -- they play for us the same role as the curve graph in Kida's proof of the measure equivalence rigidity of surface mapping class groups.
    \item Another combinatorial object associated to a graph product $G$ is its right-angled building $\bD_G$, introduced by Davis \cite{Dav}. In fact, by comparison of the automorphism groups of $\Gamma_G^e$ and $\bD_G$, the above yields a $(G\times H)$-equivariant Borel map from $\Omega$ to $\Isom(\bD_G,\bD_H)$. Using that $G$ and $H$ have a common fundamental domain on $\bD_G\approx\bD_H$, and therefore on $\Isom(\bD_G,\bD_H)$, we deduce by pulling back a common fundamental domain on $\Omega$. Therefore $G$ and $H$ are orbit equivalent. And we also use the right-angled building to show that $\Omega$ induces an orbit equivalence coupling at the level of the vertex groups.
\end{itemize}

\begin{Rq}
The first step is the translation to measured groupoids of the following group-theoretic statement: every isomorphism $f:G\to H$ sends vertex groups to conjugates of vertex groups. Its proof requires new combinatorial arguments, which we believe to be of independent interest for the study of graph products. We have written a proof of the above group-theoretic statement, with assumptions that are slightly more general than those from Theorem~\ref{theo:classificationversionintro}, in the appendix of the paper (see Theorem~\ref{theo:conjugating-automorphism}, \cpageref{theo:conjugating-automorphism}). This is used to obtain some new results on graph products: this is the key for the isomorphism classification as in Remark~\ref{rk:isomorphism}, and also allows us to slightly improve a theorem of Genevois regarding the acylindrical hyperbolicity of their automorphism groups \cite{Gen}. 
\end{Rq}

\subsection{Quantitative point of view}\label{Sec:QuantitatifIntro}

As previously mentioned, some classes of groups turn out to be very rigid from the measure equivalence point of view while amenable groups or lattices in $\mathrm{SL}_2(\bR)$, on the other hand, are quite flexible: the class of groups that are measure equivalent to one of these groups is actually very large. 

This motivates the study of refinements of measure (or orbit) equivalence, that are more sensitive to the geometry of the groups inside these flexible classes. To that end, an \emph{integrable} version of these notions has been defined and allowed Bader, Furman and Sauer \cite{BFSIntegrability} to obtain rigidity phenomena for lattices in $\mathrm{SO}(n,1)$ when $n\geq 2$. 

This refined version has recently been generalized by Delabie, Koivisto, Le Maître and Tessera \cite{DKLMT} to what they called \emph{quantitative} measure (and orbit) equivalence (see \cref{Def:QuantitativeME} below). In particular, geometric notions such as the growth (see Bowen's appendix to Austin's article \cite{AustinBowen}, see also \cite[Theorem~3.1]{DKLMT}) or the isoperimetric profile \cite[Theorem~1.1]{DKLMT} are captured by this quantitative version of measure equivalence, and can thus be used to distinguish amenable groups.

But quantitative measure equivalence has not been extensively studied beyond the amenable world (where growth or isoperimetric profiles no longer provide meaningful invariants), apart from some strong rigidity theorems in integrable measure equivalence for some specific families of groups \cite{BFSIntegrability,Bow,HH-L1}. 

In \cref{Part:QuantitativeResults} of the present work, we extend our main classification theorem for graph products to the quantitative framework (\cref{Prop:CouplageGraphProd,Prop:DescenteDuProfil}), thereby providing the first systematic study of quantitative measure equivalence in a non-amenable setting. As an application, we answer the inverse problem of the quantification for a very large family of right-angled Artin groups (\cref{Th:PrescribedCouplingRAAG}). But let us first recall some notations and terminology introduced in \cite{DKLMT}.

\subsubsection{Quantification of couplings}

Let $G$ and $H$ be two measure equivalent groups over some measured space $(\Omega,m)$ with respective fundamental domains $X_G$ and $X_H$. In this framework one can define two \emph{measure equivalence cocycles}\index{Cocycle!Measure equivalence cocycle}\index{Measure equivalence!Measure equivalence cocycle} denoted $c:G\times X_H\to H$ and $c':H\times X_G\rightarrow G$ by letting $c(g,x)$ be the unique element of $H$ that brings $g*x$ back to the fundamental domain $X_H$ (see \cref{fig:Orbites}) and $c'$ is defined analogously by reverting the roles of $G$ and $H$.
In this section we will need to keep track of the fundamental domains $X_G$ and $X_H$, therefore we will denote by $(\Omega,m,X_G,X_H)$ a {measure equivalence coupling}\index{Measure equivalence!Measure equivalence coupling} from $G$ to $H$.

\begin{figure}[htbp]
    \centering
    \includegraphics[width=\textwidth]{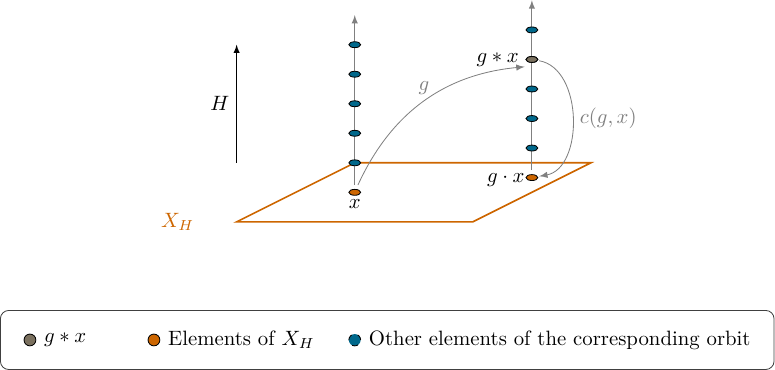}
  \caption{Definition of $c(g,x)$} 
  \label{fig:Orbites}
\end{figure}

If $S_G$ denotes a finite generating set of $G$, we denote by $|g|_{S_G}$ the word length of $g$ with respect to $S_G$.

\begin{Def} \label{Def:QuantitativeME} Let $G=\langle{S_G}\rangle$ and $H=\langle{S_H}\rangle$ be two finitely generated groups. 
Given a measure equivalence coupling $(\Omega,m,X_G,X_H)$ and two non-decreasing functions $\varphi$ and $\psi$, one says that the coupling is \emph{$(\varphi,\psi)$-integrable}\index{phi integrable@$(\varphi,\psi)$-integrable} {(from $G$ to~$H$)} if
for all $g\in G$ and all $h\in H$ we have
\[\int_{X_H}\varphi(|c(g,x)|_{S_H})dm(x)<+\infty \quad \text{and} \quad 
\int_{X_G}\psi(|c'(h,x)|_{S_G})dm(x)<+\infty.\] 
\end{Def}

We will say that the coupling is $(\varphi,L^0)$-integrable when there are no conditions on the integrability of the cocycle $c^\prime$.

In the particular case where $X_G=X_H=:X$, the coupling $(\Omega,m,X,X)$ will be called a \emph{$(\varphi,\psi)$-orbit equivalence coupling.} 

\subsubsection{Main quantification results for graph products}

We first show in \cref{Sec:FactToProd} that the integrability between vertex groups passes to the graph products. The following theorem gives a quantitative version of the implication “$3\Rightarrow 2$” of \cref{theo:classificationversionintro}, valid with no condition on the defining graph.

\begin{Th} 
\label{Prop:CouplageGraphProd}
 Let $G$ and $H$ be two graph products over a finite simple graph $\Gamma$, with finitely generated vertex groups $(G_v)_{v\in V\Gamma}$ and $(H_v)_{v\in V\Gamma}$, respectively.
 
  Let $\varphi,\psi :[1,+\infty[\rightarrow[1,+\infty[ $ be two non-decreasing functions. Assume that for every $v\in V\Gamma$, there exists a $(\varphi,\psi)$-integrable orbit equivalence coupling from $G_v$ to~$H_v$. 
  
  Then there exists a $(\varphi,\psi)$-integrable orbit equivalence coupling from
  $G$ to $H$.
\end{Th}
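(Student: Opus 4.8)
The plan is to reuse the orbit equivalence coupling produced in the proof of the implication $3\Rightarrow 2$ of \cref{theo:classificationversionintro} (the graph-product construction of \cite[Proposition~4.2]{HH21}, itself adapted from Gaboriau's free-product argument), and to keep track of the integrability of its cocycles. For each $v\in V\Gamma$ the hypothesis provides a standard probability space $(X_v,\mu_v)$ carrying free, measure-preserving actions of $G_v$ and $H_v$ with the same orbits, whose orbit equivalence cocycles $c_v\colon G_v\times X_v\to H_v$ and $c_v'\colon H_v\times X_v\to G_v$ are $(\varphi,\psi)$-integrable. First I would recall the construction of the \emph{graph product} of these couplings: it yields a common probability space $(X,m)$ equipped with free, measure-preserving actions of $G$ and $H$ sharing a single orbit equivalence relation $\calr$, together with cocycles $c\colon G\times X\to H$ and $c'\colon H\times X\to G$ determined by $g\actson x=c(g,x)\actson x$, and measure-preserving projections $p_v\colon X\to X_v$ pushing $m$ to $\mu_v$. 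The structural point I would extract from the construction is that on a vertex group the cocycle $c$ agrees with the corresponding vertex cocycle: for $g\in G_v$ the element $c(g,x)$ lies in $H_v$ and equals $c_v(g,p_v(x))$, so that $|c(g,x)|_{S_H}\le|c_v(g,p_v(x))|_{S_{H_v}}$.

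The core of the argument is then a word-length estimate along graph-product normal forms. Fix $g\in G$ and write it in reduced form $g=g_1\cdots g_k$ with $g_i\in G_{v_i}$; here $k$ depends only on $g$. The cocycle identity gives
\[
c(g,x)=c(g_1,g_2\cdots g_k\actson x)\,c(g_2,g_3\cdots g_k\actson x)\cdots c(g_k,x),
\]
a product of $k$ elements lying in vertex groups of $H$. Choosing $S_H=\bigsqcup_{v}S_{H_v}$, word length in the graph product $H$ is subadditive and is dominated on each vertex group by the corresponding vertex word length, whence
\[
|c(g,x)|_{S_H}\le\sum_{i=1}^{k}\bigl|c_{v_i}(g_i,p_{v_i}(y_i(x)))\bigr|_{S_{H_{v_i}}},\qquad y_i(x):=g_{i+1}\cdots g_k\actson x.
\]
I would then integrate, using that $\varphi$ is non-decreasing and that $k$ is constant to bound $\varphi\bigl(\sum_{i=1}^k t_i\bigr)\le\sum_{i=1}^k\varphi(k t_i)$, and that each map $x\mapsto y_i(x)$ is measure-preserving. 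This reduces $\int_X\varphi(|c(g,x)|_{S_H})\,dm$ to a finite sum of integrals of the form $\int_{X_{v_i}}\varphi\bigl(k\,|c_{v_i}(g_i,z)|_{S_{H_{v_i}}}\bigr)\,d\mu_{v_i}(z)$. The symmetric computation with $c'$ and $\psi$ gives the second condition, and since $G$ and $H$ already share the fundamental domain $X$, the coupling is an orbit equivalence coupling.

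The main obstacle is twofold. First, I must make the cocycle decomposition of the first paragraph rigorous: in the correct construction a vertex element does \emph{not} simply act on its own coordinate of a product space (that would realize the direct product $\prod_v G_v$ rather than the graph product), so I would have to read off from \cite{HH21} how a single syllable $g_i$ moves a point of $X$ and verify that its contribution to the cocycle is exactly one vertex cocycle evaluation, with no accumulation coming from the other vertices. Second, the passage from $\int\varphi(\ell)<+\infty$ to $\int\varphi(k\ell)<+\infty$ requires mild control on $\varphi$: it is immediate when $\varphi$ is subadditive (e.g.\ concave, as for isoperimetric-type profiles) or of polynomial type (where the constant $k$ merely rescales the integral), and more generally holds under a doubling assumption on $\varphi$; alternatively, when the contributions attached to distinct vertices live on independent coordinates of $X$ one can replace the crude bound $\sum_i t_i\le k\max_i t_i$ by an argument exploiting independence, which also covers multiplicative $\varphi$ such as exponentials. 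These are exactly the regimes relevant to the applications, so once the decomposition is established the $(\varphi,\psi)$-integrability passes to the graph product.
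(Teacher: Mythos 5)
Your construction and your structural claim coincide with the paper's: the coupling is exactly the one of \cite[Proposition~4.2]{HH21}, namely $X=Z\times Y_1\times\dots\times Y_n$ where $G$ acts on each $Y_i$ through the retraction $r_i\colon G\to G_{v_i}$ and freely on $Z$, and each $H_{v_i}$ acts through $c'_i$; and for $g\in G_{v_i}$ one indeed has $c(g,x)=c_i(g,y_i)$, which the paper verifies from the identity $c'_i\big(c_i(g,y_i),y_i\big)=g$. So your ``first obstacle'' dissolves: a syllable \emph{does} act only on its own $Y$-coordinate (plus the $Z$-coordinate), because the retractions are homomorphisms; it is the auxiliary free $G$-action on $Z$, not any interaction between the $Y$-coordinates, that encodes the graph-product structure. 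Moreover $|c(g,x)|_{S_H}=|c_i(g,y_i)|_{S^i_H}$ holds exactly, since graph-product word length restricted to a vertex group is the vertex word length.

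The genuine gap is your second obstacle, and it cannot be absorbed into ``the regimes relevant to the applications'': the theorem is stated for \emph{arbitrary} non-decreasing $\varphi,\psi$, while your verification for a general element $g=g_1\cdots g_k$ needs $\int\varphi(k\ell)<+\infty$ to follow from $\int\varphi(\ell)<+\infty$, i.e.\ a doubling-type hypothesis. This genuinely fails: for $\varphi=\exp\circ\exp$ and vertex cocycles whose length distributions make $\int\varphi(\ell_i)$ barely finite, one gets $\int\varphi(\ell_1+\ell_2)=+\infty$ even when $\ell_1,\ell_2$ live on independent coordinates (independence only rescues genuinely multiplicative $\varphi$, such as a pure exponential), and the paper's own application (Theorem~\ref{Th:PrescribedCouplingRAAG}) invokes the present theorem with $\psi=\exp\circ\rho$, which in general is neither subadditive, nor polynomial, nor doubling, nor multiplicative. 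The paper's proof never meets this problem because it never considers general elements of $G$: integrability is checked only on the generating sets $S_G=\sqcup_i S^i_G$ and $S_H=\sqcup_i S^i_H$ --- a reduction built into how $(\varphi,\psi)$-integrability is handled in this framework (in \cite{DKLMT} the definition carries multiplicative constants precisely so that checking generators suffices) --- and for a generator $s\in S^i_G$ the cocycle value $c(s,x)=c_i(s,y_i)$ is a \emph{single} vertex-cocycle value, so that $\int_X\varphi\big(|c(s,x)|_{S_H}\big)\,dm=\int_{Y_i}\varphi\big(|c_i(s,y_i)|_{S^i_H}\big)\,d\nu_i<+\infty$ directly from the hypothesis, with no sum, no factor $k$, and no condition on $\varphi$ beyond monotonicity. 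Your normal-form computation is in effect an attempt to reprove that generator reduction by hand, and that is exactly the step that cannot be carried out for arbitrary $\varphi$; the repair is to delete it and restrict the verification to generators, as the paper does.
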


In fact the coupling was already constructed in \cite{HH21}, using an argument of Gaboriau \cite{Gab}, and here we only need to check its integrability. 

Remark that by \cref{theo:classificationversionintro}, orbit equivalence can not be replaced by measure equivalence in the above theorem. For a basic example, consider $G:=\{1\}*\bZ/2\bZ$ and $H=\bZ/2\bZ * \bZ/2\bZ$. Even though $\{1\}$ and $\bZ/2\bZ$ are measure equivalent, the two graph products $G$ and $H$ are not: $G$ is finite and $H$ infinite amenable (see e.g.\ $\mathbf{P}_{\mathrm{ME}}\mathbf{1}$ and $\mathbf{P}_{\mathrm{ME}}\mathbf{10}$ in \cite{Gab}). 

\medskip

Our main theorem in Part~\ref{Part:QuantitativeResults} is a converse to Theorem~\ref{Prop:CouplageGraphProd}, which provides a quantitative version of the implication “$1\Rightarrow 3$” in Theorem~\ref{theo:classificationversionintro}. 

\begin{Th}[see Theorem~\ref{Th:ProdToFactWithVRP}, \cpageref{Th:ProdToFactWithVRP}]\label{Prop:DescenteDuProfil} 
 Let $G,H$ be graph products with infinite finitely generated vertex groups over finite simple graphs $\Gamma_G,\Gamma_H$ with no transvection and no partial conjugation. Let $\varphi,\psi :[1,+\infty[\rightarrow[1,+\infty[ $ be two non-decreasing functions.
 
 If there exists a $(\varphi,\psi)$-integrable measure equivalence coupling from $G$ to $H$,
 then there exists a graph isomorphism $\sigma:\Gamma_G\to\Gamma_H$, such that for every $v\in V\Gamma_G$, there exists a $(\varphi,\psi)$-integrable measure equivalence coupling from $G_v$ to $H_{\sigma(v)}$.
\end{Th}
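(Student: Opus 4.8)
The plan is to revisit the proof of the implication ``$1\Rightarrow 3$'' of \cref{theo:classificationversionintro} and to carry the integrability of the cocycles along with it. The qualitative statement already produces the graph isomorphism $\sigma:\Gamma_G\to\Gamma_H$, together with the crucial first step of the proof outline: after passing to a countable Borel partition of the common base $U$ of the measured groupoid $\calg$ carrying the two cocycles $\rho_G:\calg\to G$ and $\rho_H:\calg\to H$, each subgroupoid $\rho_G^{-1}(G_v)$ coincides with $\rho_H^{-1}(h H_{\sigma(v)} h^{-1})$ for some $h\in H$ that is constant on each piece. The restriction of $\calg$ to this subgroupoid, equipped with the two cocycles $\rho_G$ (now valued in $G_v$) and $\rho_H$ (valued in a conjugate of $H_{\sigma(v)}$), is exactly the data of a measure (in fact orbit) equivalence coupling from $G_v$ to $H_{\sigma(v)}$, with ME cocycle $c_v$. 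All that remains is to show that this restricted coupling inherits $(\varphi,\psi)$-integrability from the ambient one.

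The key geometric input is that vertex subgroups are \emph{isometrically embedded}. With the natural generating set $S_H=\bigsqcup_{v} S_{H_v}$, the retraction $r_w:H\to H_w$ that is the identity on $H_w$ and kills every other vertex group sends $S_H$ into $S_{H_w}\cup\{1\}$, hence is $1$-Lipschitz; since it fixes $H_w$ pointwise this gives $|k|_{S_{H_w}}=|k|_{S_H}$ for $k\in H_w$, and $|r_w(k)|_{S_{H_w}}\le |k|_{S_H}$ for all $k\in H$. I use this to eliminate the conjugator: within each coset $hH_{\sigma(v)}$ one may replace $h$ by $h\,r_{\sigma(v)}(h)^{-1}$, which does not change $\rho_H^{-1}(hH_{\sigma(v)}h^{-1})$, so the conjugating element can be chosen in $\ker r_{\sigma(v)}$. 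With this normalization one checks that on the vertex domain the restricted cocycle equals the retraction of the ambient one, $c_v=r_{\sigma(v)}\circ c$ (because $c(g,x)=h\,c_v(g,x)\,h^{-1}$ with $h\in\ker r_{\sigma(v)}$ and $c_v(g,x)\in H_{\sigma(v)}$ forces $c_v(g,x)=r_{\sigma(v)}(c(g,x))$). Consequently $|c_v(g,x)|_{S_{H_{\sigma(v)}}}=|r_{\sigma(v)}(c(g,x))|_{S_{H_{\sigma(v)}}}\le |c(g,x)|_{S_H}$ pointwise, with neither a multiplicative nor an additive loss.

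Granting this pointwise comparison, the transfer of integrability is immediate and, crucially, loses nothing even though $\varphi$ is only assumed non-decreasing:
\[
\int \varphi\big(|c_v(g,x)|_{S_{H_{\sigma(v)}}}\big)\,d\mu(x)\ \le\ \int \varphi\big(|c(g,x)|_{S_H}\big)\,d\mu(x)\ <\ +\infty
\]
for every fixed $g\in G_v\subseteq G$, the finiteness on the right coming from the $(\varphi,\psi)$-integrability of the ambient coupling. Applying the same argument to the retraction $G\to G_v$ and to the cocycle $c'$ handles the $\psi$-integrability on the $G_v$-side. Combining the two estimates yields the desired $(\varphi,\psi)$-integrable coupling from $G_v$ to $H_{\sigma(v)}$.

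I expect the main obstacle to lie not in this inequality but in the measure-theoretic bookkeeping that legitimizes the clean pointwise identity $c_v=r_{\sigma(v)}\circ c$. Two points need care. First, the conjugators $h$ live over a countable partition and must be selected measurably and \emph{coherently}, so that the $\ker r_{\sigma(v)}$-normalization is compatible with the cocycle relation across pieces; one must verify that the twisted $H_{\sigma(v)}$-action genuinely assembles into a free measure-preserving action generating the sub-relation $\rho_G^{-1}(G_v)$ with a finite-measure fundamental domain. This is precisely the step where the groupoid formalism of \cref{Part:MEClassification} pays off, since the passage from $\rho_H$ to $c_v$ is a change of cross-section that must be realized honestly. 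Second, one must transport the integrability conditions, stated on the fundamental domains $X_G,X_H$, to the base of the restricted groupoid; here I will use that the ambient coupling may be taken to have a common Borel fundamental domain (\cref{prop:me-oe}), so that the ambient and the restricted integrals are taken against the same probability measure and the comparison above is literally an inequality of integrals over the same space.
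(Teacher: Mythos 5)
Your geometric mechanism is sound: the retraction $r_{\sigma(v)}:H\to H_{\sigma(v)}$ is indeed $1$-Lipschitz for the generating sets $S_H,S_{H_{\sigma(v)}}$, parabolic subgroups are isometrically embedded, and for $h\in\ker r_{\sigma(v)}$ and $z\in H_{\sigma(v)}$ one does get $|z|_{S_{H_{\sigma(v)}}}\le |hzh^{-1}|_{S_H}$. But the measure-theoretic step where you invoke it has a genuine gap, and it sits exactly at the point the paper flags as the difficulty. The identity $c_v=r_{\sigma(v)}\circ c$ does \emph{not} follow from normalizing the constant conjugator $h$ into $\ker r_{\sigma(v)}$: the relation $c(g,x)=h\,c_v(g,x)\,h^{-1}$ holds only for groupoid elements whose source and range both lie in the piece $U$, whereas the ME cocycle $c_v$ of the restricted coupling is attached to that coupling's own fundamental domains. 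To compare $c_v$ with the ambient cocycle $c$ (the only one carrying the integrability hypothesis, attached to $X_H$), one must pass through a cohomology $c'(g,x')=\alpha(g\cdot x)\,c(g,x)\,\alpha(x)^{-1}$, where $\alpha:X_H\to H$ is the measurable change-of-fundamental-domain map; nothing forces $\alpha$ to take values in $\ker r_{\sigma(v)}$, and its word length is completely uncontrolled, so your pointwise inequality does not follow. Your proposed remedy — pass to a common fundamental domain via \cref{prop:me-oe} so that the two integrals live on the same space — is precisely the move that \cref{Rq:AsymmetryOEMEQt} rules out: a $(\varphi,\psi)$-integrable coupling $(\Omega,m,X_G,X_H)$ admitting a common fundamental domain $Y$ need not be $(\varphi,\psi)$-integrable as $(\Omega,m,Y,Y)$, since the change of fundamental domain conjugates the cocycle by an uncontrolled measurable map. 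This is exactly why the theorem's conclusion is stated for measure equivalence rather than orbit equivalence.

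The correct move, and the paper's actual proof, is to normalize the \emph{measurable} conjugator rather than the constant $h$. First, \cref{prop:coupling-restricted} together with \cref{rk:requirement} (or \cref{lemma:me-subgroups}) gives restricted fundamental domains contained in ambient ones $X'_G,X'_H$; this containment is what makes $c_v$ literally a restriction of the ambient cocycle $c'$ attached to $X'_H$, hence cohomologous to $c$ via some $\alpha$. Then one translates the restricted fundamental domain by a measurable $H_{\sigma(v)}$-valued map so that the resulting conjugator becomes transverse to $H_{\sigma(v)}$: in \cref{Lmm:ProdToParabolicsQt} this is the map $\vartheta$ selecting the representative of $H_{\sigma(v)}\alpha(x)$ whose head avoids $H_{\sigma(v)}$, and the pointwise bound then comes from \cref{Lmm:LengthAndNormalForm}. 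Your retraction idea gives a legitimate variant of this step: translate instead by $r_{\sigma(v)}(\alpha(\cdot))^{-1}$, so that the new conjugator $\beta(x)=r_{\sigma(v)}(\alpha(x))^{-1}\alpha(x)$ lies in $\ker r_{\sigma(v)}$; the new restricted cocycle $\beta(g\cdot x)\,c(g,x)\,\beta(x)^{-1}$ is still $H_{\sigma(v)}$-valued, and applying the homomorphism $r_{\sigma(v)}$ to it yields exactly $r_{\sigma(v)}(c(g,x))$, whence your $1$-Lipschitz bound and the transfer of integrability. With that correction your argument becomes a valid, arguably slightly cleaner, alternative to the paper's: kernel representatives instead of head-avoiding coset representatives, and the Lipschitz retraction instead of the normal-form lemma.
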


We refer to \cref{Th:ProdToFactWithVRP} for an even more general statement, with variations on the assumptions made on the defining graphs $\Gamma_G,\Gamma_H$. 

Notice that Theorem~\ref{theo:classificationversionintro} already gives us a measure equivalence coupling from $G_v$ to $H_{\sigma(v)}$. We crucially use the convexity of the vertex groups inside the graph product (exploited in the form of Green's normal form theorem for graph products \cite{Gre}) to obtain the desired integrability.

\begin{Rq}[Discussion on the asymmetry orbit/measure equivalence]\label{Rq:AsymmetryOEMEQt} We do not know whether, as could be suggested by  Theorem~\ref{theo:classificationversionintro}, the conclusion of Theorem~\ref{Prop:DescenteDuProfil} can be improved to a $(\varphi,\psi)$-integrable orbit equivalence coupling from $G_v$ to $H_{\sigma(v)}$. The difficulty is that having a $(\varphi,\psi)$-integrable measure equivalence coupling $(\Omega,m,X_G,X_H)$, and knowing that there also exists a common fundamental domain $Y$ for the actions of $G$ and $H$, is not enough to deduce that $(\Omega,m,Y,Y)$ is $(\varphi,\psi)$-integrable. Whence the slight asymmetry in our statements of Theorems~\ref{Prop:CouplageGraphProd} and~\ref{Prop:DescenteDuProfil}, one with orbit equivalence, and the second with measure equivalence.
\end{Rq}

\subsubsection{An application to the inverse problem for quantifications}\label{Sec:OptimIPIntro}

In \cref{Sec:InverseProblem} we construct couplings with right-angled Artin groups with prescribed integrability. Formally, we will say that a $(\Phi,L^0)$-integrable coupling {from} $G$ to $H$ is \emph{optimal}\index{Optimal coupling}\index{Coupling!Optimal} if any $(\varphi,L^0)$-integrable coupling from $G$ to $H$ verifies $\varphi \preccurlyeq \Phi$ (that is to say if $\varphi(x)=\mathcal{O}(\Phi(Cx))$ for some $C>0$, as $x$ goes to $+\infty$). In \cref{Sec:InverseProblem} we investigate the following question —~also called “the inverse problem”: given a group $H$ and an increasing function $\varphi$, can one find a group $G$ and a $(\varphi,L^0)$-integrable coupling from $G$ to $H$ such that the integrability is optimal?

The construction of prescribed couplings with optimal (or close to be) integrability has been solved by the first named author in \cite{Esc1} for various classes of amenable groups (including $\bZ$ or the usual lamplighter group). We offer here an extension of these results for the same class of functions as in the latter reference but \emph{outside of the amenable framework}. This is done by constructing couplings with prescribed integrability with a right-angled Artin group $H$. To do so we rely on \cref{Prop:CouplageGraphProd} to obtain a coupling from a graph product $G$ with amenable vertex groups, to $H$, with the wanted integrability. Finally, relying on the geometry of the vertex groups, we prove that these couplings are optimal (up to some logarithmic error). This is formalized in \cref{Th:PrescribedCouplingRAAG} below. 

A vertex $v\in V\Gamma$ is \emph{untransvectable}\index{Untransvectable!Untransvectable vertex} if there does not exist any $w\in V\Gamma\backslash\{v\}$ such that $\lk(v)\subseteq \st(w)$.

\begin{Th}\label{Th:PrescribedCouplingRAAG}
Let $H$ be a right-angled Artin group over a finite simple graph $\Gamma$, and assume that $\Gamma$ contains an untransvectable vertex. Let $\rho:[1,+\infty) \rightarrow [1,+\infty)$ be a non-decreasing map such that $x\mapsto x/\rho(x)$ is non-decreasing. 

Then there exist a group $G$ and an orbit equivalence coupling that is $(\varphi_{\varepsilon},\exp \circ \rho)$-integrable from $G$ to $H$ for all 
$\varepsilon>0$, where $\varphi_{\varepsilon}(x):={\rho\circ
\log(x)}/{\left( \log\circ\rho\circ\log(x) \right)^{1+\varepsilon}}$.

Moreover, this coupling is optimal up to a log factor in the following sense: if there exists a $(\varphi,L^0)$-integrable measure equivalence coupling from $G$ to $H$, then $\varphi\preccurlyeq \rho \circ \log$.   
\end{Th}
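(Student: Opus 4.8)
The plan is to build the group $G$ explicitly as a graph product over the same graph $\Gamma$ as $H$, replacing each $\mathbb{Z}$ vertex group by an amenable group whose geometry realizes the prescribed function $\rho$, and then invoke the quantitative machinery already established.

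First I would recall from the amenable case (Escalier \cite{Esc1}) that for the given $\rho$, one can construct an amenable group $A$ together with a $(\varphi_\varepsilon,\exp\circ\rho)$-integrable orbit equivalence coupling from $A$ to $\mathbb{Z}$, for every $\varepsilon>0$; the hypothesis that $x\mapsto x/\rho(x)$ be non-decreasing is exactly what is needed there to make $\varphi_\varepsilon$ well-behaved. I would then set $G$ to be the graph product over $\Gamma$ with all vertex groups equal to $A$. Since $H$ is the right-angled Artin group over $\Gamma$, its vertex groups are all $\mathbb{Z}$, and the per-vertex couplings $A\rightsquigarrow\mathbb{Z}$ are $(\varphi_\varepsilon,\exp\circ\rho)$-integrable orbit equivalence couplings. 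Applying \cref{Prop:CouplageGraphProd} (with $\varphi=\varphi_\varepsilon$ and $\psi=\exp\circ\rho$) immediately produces a $(\varphi_\varepsilon,\exp\circ\rho)$-integrable orbit equivalence coupling from $G$ to $H$, establishing the existence part.

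For optimality I would argue by contradiction, using the geometry of the vertex groups at the untransvectable vertex. Suppose there is a $(\varphi,L^0)$-integrable measure equivalence coupling from $G$ to $H$. By \cref{Prop:DescenteDuProfil} (applied in its general form \cref{Th:ProdToFactWithVRP}, to accommodate the weaker no-transvection assumption encoded by the untransvectable vertex), the coupling descends to a $(\varphi,L^0)$-integrable measure equivalence coupling from the vertex group $G_v=A$ to the corresponding vertex group $H_{\sigma(v)}=\mathbb{Z}$, where $v$ is the untransvectable vertex. The key point is that the untransvectability of $v$ guarantees the hypotheses of the descent theorem are met for that particular vertex even though $\Gamma$ need not be fully transvection-free. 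At this stage the problem is reduced to a purely amenable, single-vertex statement: the optimality of the coupling $A\rightsquigarrow\mathbb{Z}$ constructed in \cite{Esc1}, which yields $\varphi\preccurlyeq\rho\circ\log$.

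The main obstacle I anticipate is the descent step. \cref{Prop:DescenteDuProfil} as stated assumes that \emph{both} defining graphs have no transvection and no partial conjugation, whereas here we only assume that $\Gamma$ contains a single untransvectable vertex; the graph may well admit transvections elsewhere, and may have partial conjugations. So the crux is to extract from the general statement \cref{Th:ProdToFactWithVRP} a version that localizes at one untransvectable vertex and produces the vertex-group coupling $A\rightsquigarrow\mathbb{Z}$ there, without controlling the other vertices. Once this localized descent is available, the reduction to \cite{Esc1} is routine, and the logarithmic loss in optimality is precisely the gap between the prescribed $\exp\circ\rho$ integrability used to build the coupling and the $\rho\circ\log$ obstruction coming from the amenable input, which is why optimality holds only up to a log factor.
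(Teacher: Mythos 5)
Your proposal is correct and follows essentially the same route as the paper: Escalier's group $\Delta$ with $I_\Delta\simeq\rho\circ\log$ feeds into \cref{Prop:CouplageGraphProd} for existence, and vertex-level descent plus the isoperimetric-profile obstruction of Delabie--Koivisto--Le Ma\^itre--Tessera gives optimality up to the log factor. The obstacle you flag is not actually one: assertion 1 of \cref{Th:ProdToFactWithVRP} is precisely the localized descent at an untransvectable vertex, and it applies because \emph{all} vertex groups on both sides are amenable (this amenability, rather than untransvectability alone, is the enabling hypothesis, via the untransvectable extension graph of \cref{Sec:AmenalbeUntransvectable}) --- your all-$A$ construction satisfies it just as well as the paper's choice of replacing only the one vertex group by $\Delta$ and keeping $\mathbb{Z}$ elsewhere.
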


Theorem~\ref{Th:PrescribedCouplingRAAG} applies, for instance, to every right-angled Artin group with finite outer automorphism group (see the above \cref{Sec:IntroRAAG}). 
But Theorem~\ref{Th:PrescribedCouplingRAAG} also applies, for instance, to all right-angled Artin groups whose defining graph is a tree of diameter at least $3$, a class which is not covered by the results in \cite{HH21}. 

\subsection{Rigidity}

All the results presented so far are classification theorems within the class of graph products. We will now consider measure equivalence couplings between a graph product $G$ and an arbitrary countable group $H$, and aim for rigidity theorems, i.e.\ give conditions ensuring that $H$ has to be virtually isomorphic to $G$. There are two ways in which rigidity can be achieved, which occupy Part~\ref{Part:Superrigidity} and Part~\ref{Part:SuperrigityHigman} of the present work, respectively. 
\begin{itemize}
    \item In Part~\ref{Part:Superrigidity}, we impose extra ergodicity conditions on the coupling between $G$ and $H$. This is in the spirit of the work of Monod--Shalom on rigidity for irreducible actions of direct products \cite{MS}, and \cite{HHI} for right-angled Artin groups --~in fact we do follow the strategy of \cite{HHI} very closely.
    \item In Part~\ref{Part:SuperrigityHigman}, we impose some strong form of measure equivalence rigidity for the vertex groups (but no extra conditions on the coupling), and use graph products to obtain examples of groups that are superrigid in measure equivalence, but not in quasi-isometry. 
\end{itemize}

\subsubsection{Orbit equivalence rigidity under ergodicity assumptions}

In \cref{Part:Superrigidity}, we consider probability measure-preserving actions of graph products that satisfy extra ergodicity conditions, and obtain strong rigidity results. Here we will state a simple version of our theorems where we assume all actions to be mixing. In Part~\ref{Part:Superrigidity}, we investigate various sets of ergodicity assumptions that ensure rigidity. We refer in particular to Theorem~\ref{theo:strong-rigidity}, \cpageref{theo:strong-rigidity} and Theorem~\ref{theo:superrigidity}, \cpageref{theo:superrigidity} for the full statements.

Recall that a free, measure-preserving action of a countable group $G$ on a standard probability space $(X,\mu)$ is \emph{mixing} if for any Borel subsets $U,V\subseteq X$, and any sequence $(g_n)_{n\in\mathbb{N}}\in G^{\mathbb{N}}$ consisting of pairwise distinct elements, one has \[\lim_{n\to +\infty}\mu(g_n\cdot U\cap V)=\mu(U)\mu(V).\] Recall also that two probability measure-preserving actions $G\actson X$ and $H\actson Y$ are \emph{conjugate}\index{Conjugate actions} if there exists a group isomorphism $\alpha:G\rightarrow H$ such that for all $g\in G$ and a.e.\ $x\in X$ one has $f(g\cdot x)=\alpha(g)\cdot f(x)$.

\begin{Th}\label{theo:superrigidityintro}
  Let $G,H$ be two torsion-free countable groups. Let $G\actson X$ and $H\actson Y$ be two mixing measure-preserving actions on standard probability spaces. Assume that $G$ splits as a graph product over a connected finite simple graph $\Gamma$, with no vertex joined to every other vertex.
  
  If the two actions $G\actson X$ and $H\actson Y$ are orbit equivalent, then they are conjugate.
\end{Th}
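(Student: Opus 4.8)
The plan is to convert the orbit equivalence into an untwisting problem for the associated Zimmer cocycle, and to solve that problem by playing the product structure coming from the graph against the mixing hypothesis, in the spirit of the cocycle superrigidity strategy of Monod--Shalom \cite{MS} as adapted to graph products in \cite{HHI}. Note that, since $H$ is an arbitrary torsion-free group, neither \cref{theo:classificationversionintro} nor \cref{prop:me-oe} applies directly; the argument must instead be cocycle-theoretic.

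First I would set up the reduction. Identifying $Y$ with $X$ through the orbit equivalence $f$, we obtain a single p.m.p.\ relation $\calr$ on $X$ that is simultaneously the orbit relation of $G\actson X$ and of $H\actson Y$, and freeness of both actions yields a rearrangement cocycle $\alpha:G\times X\to H$ satisfying $f(g\cdot x)=\alpha(g,x)\cdot f(x)$. Conjugacy of the two actions is then equivalent to the statement that $\alpha$ is cohomologous to a group homomorphism $\sigma:G\to H$: given such a $\sigma$ together with $\phi:X\to H$ with $\alpha(g,x)=\phi(g\cdot x)\,\sigma(g)\,\phi(x)^{-1}$, the corrected map $x\mapsto \phi(x)^{-1}f(x)$ intertwines the two actions along $\sigma$. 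Moreover such a $\sigma$ is automatically an isomorphism: surjectivity follows because the $G$-orbits must exhaust the $H$-orbits, and injectivity from freeness together with torsion-freeness. Thus the entire problem reduces to \emph{untwisting $\alpha$ to a homomorphism}.

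The heart of the proof is a local-to-global untwisting. For each vertex $v$ the star subgroup splits as a direct product $G_{\st(v)}=G_v\times G_{\lk(v)}$, and the mixing hypothesis is the key leverage here: mixing passes to every infinite subgroup (a sequence of distinct elements of a subgroup is still distinct in $G$), so $G_v$, $G_{\lk(v)}$, and hence the product action $G_{\st(v)}\actson X$ are suitably irreducible and weakly mixing. I would feed this irreducibility into a Monod--Shalom-type cocycle superrigidity statement to untwist $\alpha$ on each $G_{\st(v)}$: after a measurable conjugation $\phi_v:X\to H$, the corrected cocycle restricts to a homomorphism on $G_{\st(v)}$ sending the commuting subgroups $G_v$ and $G_{\lk(v)}$ to commuting subgroups of $H$. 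One then glues these local untwistings. Because $\Gamma$ is connected, the stars overlap along shared vertices, and the local conjugations $\phi_v$, being essentially unique up to the residual ambiguity of the untwisting (a constant in a normalizer/centralizer), must agree on overlaps after correcting by a constant element of $H$; propagating across the connected graph produces a single global conjugation untwisting $\alpha$ on the subgroup generated by all stars, which is all of $G$. The hypothesis that \emph{no vertex is joined to every other vertex} is used precisely to exclude the degenerate case $G=G_v\times G_{\Gamma\setminus\{v\}}$, which would allow a flexible (e.g.\ amenable-rescaling) orbit equivalence and destroy rigidity, and to force the glued data to yield a genuine conjugacy rather than a merely stable orbit equivalence.

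\textbf{Main obstacle.} I expect the decisive difficulties to be twofold. The first is the local untwisting itself, since the vertex groups $G_v$ are arbitrary (possibly amenable, as already in the right-angled Artin case of \cite{HHI}): the needed rigidity cannot come from the vertex groups in isolation and must be extracted from the ambient negatively-curved structure of the graph product together with the product/commutation relations, so one has to identify the correct ``rich'' factor (the complementary or link structure) against which each vertex group can be untwisted. The second, and I believe harder, is the gluing: the local untwistings are canonical only up to an ambiguity, and showing these ambiguities are coherent across the whole connected graph — so that they assemble into one global conjugation and a well-defined homomorphism $\sigma$ — is where the mixing assumption (to make untwistings unique up to constants) and the connectedness together with the no-cone-vertex hypothesis must be exploited most carefully.
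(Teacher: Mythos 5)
Your reduction of conjugacy to untwisting the rearrangement cocycle is fine, and your observation that mixing passes to all infinite subgroups (hence gives vertexwise weak mixing and total ergodicity) matches what the paper uses. But the core step of your plan — untwisting $\alpha$ on each star subgroup $G_{\st(v)}=G_v\times G_{\lk(v)}$ by feeding its product structure and the mixing of its action into a ``Monod--Shalom-type cocycle superrigidity statement'' — is not merely unjustified, it is false as stated. Monod--Shalom rigidity for products requires the factors to lie in the class $\Creg$, and the vertex groups here are arbitrary; the paper stresses precisely that its theorems avoid this hypothesis. Concretely, for a right-angled Artin group the star subgroups are free abelian, and mixing actions of amenable groups admit no intrinsic cocycle rigidity whatsoever (by Ornstein--Weiss/Dye flexibility every free ergodic p.m.p.\ action of $\mathbb{Z}^n$ is orbit equivalent to actions of any other amenable group, with cocycles that cannot be untwisted). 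The untwisting in the paper is therefore \emph{not} extracted from any intrinsic property of a subgroup's action: it comes from matching sub-equivalence relations across the two sides — the groupoid machinery of Part~\ref{Part:MEClassification} recognizes maximal product parabolic subgroups and their (clique) factors for \emph{both} groups, and only then does the Monod--Shalom quotient lemma (Theorem~\ref{theo:ms-2}) apply, untwisting the cocycle on one vertex group modulo an error in a clique factor, which is removed by comparing two untwistings along commuting clique factors (Lemma~\ref{lemma:HHI-1}). This matching requires the target to be a graph product, which your $H$ is not.

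This points to the second missing piece: the paper's argument is two-staged, and your proposal collapses the stages. Stage one (Theorem~\ref{theo:strong-rigidity}) proves rigidity when \emph{both} $G$ and $H$ are graph products, using the recognition results plus a propagation along the connected graph (your gluing step is a reasonable variant of this propagation, and is the unproblematic part). Stage two (Theorem~\ref{theo:superrigidity}) transfers this to an arbitrary countable $H$ via the Furman/Monod--Shalom composition-of-couplings technique (\cite[Theorem~7.1]{HHI}), and it is exactly here that mild mixing of $H\actson Y$ — supplied by your mixing hypothesis — is needed; no direct cocycle argument against an arbitrary $H$ is available. Finally, the transfer only yields \emph{virtual} conjugacy, and a last step (absence of finite normal subgroups since the clique factor is trivial, the fact that an induced action $\Ind_{G'}^{G}(X)$ with $G'\subsetneq G$ is never mixing, and torsion-freeness) upgrades it to genuine conjugacy; your proposal does not address this issue because it aims directly at a homomorphism, which is precisely what cannot be produced in one step.
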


Part~\ref{Part:Superrigidity} is organized as follows.
\begin{itemize}
    \item In Section~\ref{Sec:StrongRigidity}, we consider the case where both $G$ and $H$ are graph products, over connected graphs with no vertex joined to every other vertex. We prove rigidity results that are internal to the class of graph products, under softer ergodicity conditions. For instance, assuming that the actions $G\actson X$ and $H\actson Y$ are both vertexwise ergodic (i.e.\ every vertex group acts ergodically), and that $G$ and $H$ have the unique root property (if $g_1^k=g_2^k$ with $k>0$, then $g_1=g_2$), we prove that (stably) orbit equivalent actions $G\actson X$ and $H\actson Y$ are automatically conjugate. Several variations over this statement are given.
    \item In Section~\ref{Sec:Superrigidity}, we consider the superrigidity problem, where the group $H$ is arbitrary. As in the work of Monod--Shalom, a mild mixing condition on the action $H\actson Y$ enables to transfer the rigidity within the class of graph products, to a superrigidity theorem, through composition of couplings. 
\end{itemize}

\begin{Rq}
As in Remark~\ref{rk:von-neumann}, combining the above theorem with theorems regarding the uniqueness of the Cartan subalgebra also yields $W^*$-rigidity theorems for actions of graph products satisfying extra ergodicity conditions. See Section~\ref{sec:von-neumann-2}.
\end{Rq}

\paragraph*{Comparison with previously known results.}
\begin{itemize}
    \item In \cite{MS}, Monod and Shalom proved rigidity theorems for actions of products of groups having some negative curvature property (class $\Creg$ – see \cref{ex:creg}, \cpageref{ex:creg} for the definition), assuming that each factor acts ergodically. Our proof deeply relies on their work, using the fact that graph products contain many product subgroups. Interestingly, we do not need to assume that the vertex groups belong to the class $\Creg$ in our theorems: the graph product structure alone is enough to ensure the rigidity.
    \item In \cite{HHI}, Huang, Ioana and the second-named author proved the above theorem (and the variations presented in Part~\ref{Part:Superrigidity}) in the case of right-angled Artin groups. We follow their strategy very closely, and crucially use our work from Part~\ref{Part:MEClassification} to replace several arguments from \cite{HHI}.
    \item Finally, for Bernoulli actions, a particular case of \emph{malleable} actions (see \cite[Definition 4.3]{Pop}), very strong rigidity theorems were proved by Popa in the framework of his deformation/rigidity theory \cite{Pop,Popa-SpectralGap}. His theorems cover the Bernoulli actions of numerous graph products, and in fact many more classes (including, in a different direction, all Bernoulli actions of Property~(T) groups). 
\end{itemize}

\subsubsection{Measure equivalence rigidity for graph products}

The final chapter of this work (Part~\ref{Part:SuperrigityHigman}) addresses the question of the rigidity of graph products whose vertex groups are rigid in measure equivalence. We say that a countable group $G$ is \emph{rigid in measure equivalence} if every countable group $H$ which is measure equivalent to $G$, is virtually isomorphic to $G$. Our main results are the following.
\begin{itemize}
    \item In Section~\ref{sec:rigidity-criterion}, we give a sufficient criterion to ensure that a graph product $G$ is rigid in measure equivalence. This criterion requires to know that every vertex group $G_v$ is rigid in a very strong sense, that is, every self measure equivalence coupling of $G_v$ factors through the tautological one (by left-right multiplication on $G_v$ itself, or at least on a finite index extension of $G_v$). We also need extra assumptions on the vertex groups, including having some control on possible homomorphisms between them. See Theorem~\ref{theo:rigidity-criterion}, \cpageref{theo:rigidity-criterion}.
    \item In Section~\ref{sec:higman}, using the above criterion with Higman groups as vertex groups, we construct a graph product which, to our knowledge, is the first example of a group which is rigid (among torsion-free groups) in measure equivalence, but not in quasi-isometry.
\end{itemize}

As mentioned, our example involves families of Higman groups \cite{Hig}, whose rigidity in measure equivalence was proven by Huang and the second-named author \cite{HH-Higman}. Given $k\ge 4$, let 
\[\Hig_k=\langle a_1,\dots, a_k \mid a_ia_{i+1}a_i^{-1}=a_{i+1}^{2} \quad \forall i~(\text{mod~} k)\rangle.\] 
We attain the following theorem.

\begin{Th}
\label{theo:qi-me}
    Let $\Gamma$ be a finite simple graph with no transvection and no partial conjugation, such that $\Aut(\Gamma)=\{1\}$. Let $(k_v)_{v\in V\Gamma}$ be a family of integers, and for every $v\in V\Gamma$, let $G_v=\Hig_{\sigma_v}$. Assume that for every $v\neq w$, the integer $k_v$ does not divide $k_w$. Let $G$ be the graph product over $\Gamma$ with vertex groups $(G_v)_{v\in V\Gamma}$. Then
    \begin{enumerate}
        \item Every torsion-free countable group $H$ which is measure equivalent to $G$, is isomorphic to $G$.
        \item There exists an infinite family of finitely generated torsion-free groups that are all quasi-isometric to $G$, and pairwise not commensurable.
    \end{enumerate}
\end{Th}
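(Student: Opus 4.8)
The plan is to prove the two assertions separately: assertion~(1) from the general rigidity criterion of Section~\ref{sec:rigidity-criterion}, after an upgrade from virtual isomorphism to genuine isomorphism, and assertion~(2) by combining the quasi-isometry invariance of the graph-product construction with the commensurability classification.

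\textbf{Assertion (1).} First I would note that $G$ is torsion-free, being a graph product over a finite graph of the torsion-free Higman groups $\Hig_{k_v}$. The core of the argument is to check the hypotheses of the rigidity criterion (Theorem~\ref{theo:rigidity-criterion}): (i) every vertex group $\Hig_{k_v}$ is rigid in the strong sense that each of its self measure equivalence couplings factors through the tautological one, which is precisely the Huang--Horbez theorem \cite{HH-Higman}; (ii) $\Gamma$ has no transvection, no partial conjugation, and $\Aut(\Gamma)=\{1\}$, all of which are assumed; and (iii) the required control on homomorphisms between distinct vertex groups. Step (iii) is where the arithmetic enters: I would use that $k_v\nmid k_w$ for $v\neq w$ to guarantee that $\Hig_{k_v}$ and $\Hig_{k_w}$ are neither measure equivalent nor related by a homomorphism with sufficiently large image, so that distinct vertex groups cannot be interchanged. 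Granting the criterion, every torsion-free $H$ measure equivalent to $G$ is virtually isomorphic to $G$; more precisely $H$ is identified with a graph product over a graph isomorphic to $\Gamma$ whose vertex groups are measure equivalent -- hence, again by \cite{HH-Higman}, isomorphic -- to the $\Hig_{k_v}$, with the labelling forced by $\Aut(\Gamma)=\{1\}$ together with the non-divisibility hypothesis.

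It then remains to promote this virtual isomorphism to an isomorphism, and here I would exploit two structural features of $G$. Each $\Hig_{k_v}$ is one of Higman's groups with no nontrivial finite quotient, so $G$ also has no nontrivial finite quotient and thus no proper finite-index subgroup; this forces the finite-index subgroup of $G$ occurring in the virtual isomorphism to be all of $G$, so that $H$ contains a finite-index (hence, after passing to the core, normal) copy $G_0\cong G$. Moreover $G$ is centerless, and by the automorphism analysis of graph products (Theorem~\ref{theo:conjugating-automorphism}) every automorphism of $G$ preserves the conjugacy classes of the vertex groups; combining this with the finiteness and fixed-point behaviour of $\Out(\Hig_{k_v})$ rules out any torsion-free proper finite extension of $G$, since a nontrivial finite-order outer symmetry would force torsion in such an extension. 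Therefore $H=G_0\cong G$. I expect step (iii) and this last fixed-point argument to be the two delicate points of the proof of (1).

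\textbf{Assertion (2).} The strategy is to exhibit the quasi-isometric flexibility of $G$ by varying a single vertex group. Fix a vertex $v_0$, and suppose $(K_i)_{i\in\bN}$ is an infinite family of finitely generated torsion-free groups that are all quasi-isometric to $\Hig_{k_{v_0}}$ but pairwise non-commensurable; let $G^{(i)}$ be the graph product over $\Gamma$ obtained from $G$ by replacing the vertex group at $v_0$ by $K_i$ and leaving all other vertex groups unchanged. Since Higman groups are non-amenable, Whyte's theorem \cite{Why} upgrades the quasi-isometries $K_i\simeq\Hig_{k_{v_0}}$ to bi-Lipschitz equivalences, and then the quasi-isometry invariance of graph products (Lemma~\ref{lemma:qi}) shows that each $G^{(i)}$ is bi-Lipschitz, hence quasi-isometric, to $G$. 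On the other hand, the commensurability classification (Theorem~\ref{theo:classification-commensurability}), applied over the fixed graph $\Gamma$ with $\Aut(\Gamma)=\{1\}$, forces any commensurability between $G^{(i)}$ and $G^{(j)}$ to induce a strong commensurability between $K_i$ and $K_j$; hence the $G^{(i)}$ are pairwise non-commensurable and form the desired family.

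The remaining and principal obstacle of (2) is to produce the family $(K_i)$: non-commensurable groups inside the quasi-isometry class of a Higman group are exactly what witnesses the failure of quasi-isometric rigidity, and their existence requires the fine geometry of (generalized) Higman groups. A natural candidate is to take the $K_i$ to be generalized Higman groups obtained by varying the Baumslag--Solitar modulus in the defining relations, chosen so that the local solvable pieces remain mutually quasi-isometric (so that the large-scale geometry, and hence the quasi-isometry type, is unchanged) while an arithmetic commensurability invariant distinguishes them. Verifying both the common quasi-isometry type and the pairwise non-commensurability of such a family is where the real work of (2) lies.
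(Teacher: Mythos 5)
There is a genuine gap, and it sits exactly where you admit it does: assertion (2). Your reduction (replace one vertex group by groups $K_i$ quasi-isometric to $\Hig_{k_{v_0}}$, then apply Whyte \cite{Why}, Lemma~\ref{lemma:qi}, and Theorem~\ref{theo:classification-commensurability}) matches the paper's skeleton, but the family $(K_i)$ is never produced, and the candidate you propose (generalized Higman groups with varying Baumslag--Solitar moduli) would require a new and difficult quasi-isometry and commensurability analysis. The paper's construction is elementary and avoids this entirely: take $G_n$ to be the graph product in which one vertex group $G_v$ is replaced by $G_v\times\mathbb{Z}/n\mathbb{Z}$. Whyte plus Lemma~\ref{lemma:qi} makes all $G_n$ bi-Lipschitz equivalent to $G$; since $\Hig_{k_v}$ has no proper finite-index subgroup, the groups $G_v\times\mathbb{Z}/n\mathbb{Z}$ and $G_v\times\mathbb{Z}/m\mathbb{Z}$ are commensurable but \emph{not strongly} commensurable for $n\neq m$, and Theorem~\ref{theo:classification-commensurability} (whose criterion is strong commensurability of vertex groups) then shows the $G_n$ are pairwise non-commensurable. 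Torsion is dealt with only at the end, by passing to the kernel $G_n^0$ of the natural homomorphism $G_n\to\mathbb{Z}/n\mathbb{Z}$, a torsion-free finite-index subgroup. Your insistence that the replacement groups themselves be torsion-free and pairwise non-commensurable (rather than merely non-strongly-commensurable, with torsion removed afterwards) is precisely what makes your route intractable.

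For assertion (1), your route is essentially the paper's (check $(\mathrm{H}_1)$--$(\mathrm{H}_4)$ of Theorem~\ref{theo:rigidity-criterion} via \cite{HH-Higman} and the divisibility hypothesis), but two points need fixing. First, the criterion concludes \emph{isomorphism}, not virtual isomorphism: its proof identifies $H$ as a simple complex of groups over a strict fundamental domain in $\bD_G$ and uses $(\mathrm{H}_4)$ to force all inclusion maps to be factor inclusions, so $H\cong G$ directly and your "upgrade" step is unnecessary. Second, that upgrade step is in fact incorrect as written: a torsion-free group containing a normal finite-index copy of $G$ on which the quotient acts by nontrivial outer automorphisms need not contain torsion (the Klein bottle group, and Bieberbach groups generally, are counterexamples), so "a nontrivial finite-order outer symmetry would force torsion in such an extension" is not a valid principle. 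Finally, your checking of the homomorphism hypothesis is aimed at \emph{distinct} vertex groups, but the crucial content of $(\mathrm{H}_4)$ concerns injective homomorphisms of a vertex group into a product of vertex groups \emph{including itself}; this uses both that $k_v\nmid k_w$ for $v\neq w$ and that every non-trivial endomorphism of $\Hig_k$ is an automorphism (Proposition~\ref{prop:homomorphisms-higman}), the latter being needed to rule out diagonal-type embeddings.
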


To build a group that is quasi-isometric to $G$ but not commensurable, one only needs to replace one vertex groups $G_v$ by $G_v\times\mathbb{Z}/n\mathbb{Z}$. See Remark~\ref{rk:qi-me}. We now say a word about our proof of the measure equivalence rigidity of $G$, a more detailed outline is given at the beginning of Section~\ref{sec:rigidity-criterion}.

\paragraph{A word on the proof.} Using the work from Part~\ref{Part:MEClassification}, we know that for every self measure equivalence coupling $\Omega$ of $G$, there is a $(G\times G)$-equivariant measurable map $\Omega\to \Aut(\bD_G)$, where $\bD_G$ is the right-angled building of the graph product. We use a standard technique that dates back to the work of Furman \cite{Fur-me}, to transfer rigidity from self-couplings to arbitrary couplings. We deduce a homomorphism $H\to\Aut(\bD_G)$, i.e.\ an action of $H$ on $\bD_G$. And we prove that the cell stabilizers for the $H$-action on $\bD_G$ are measure equivalent to their respective $G$-stabilizers. Using the measure equivalence rigidity of the Higman groups, in fact $G$ and $H$ act on $\bD_G$ with isomorphic cell stabilizers. 

The crucial point is to use a volume (coupling index) argument to show that $G$ and $H$ act on $\bD_G$ with a common strict fundamental domain $Y$. For this, it is crucial to know that $\Hig_\sigma$ has no proper finite-index subgroup \cite{Hig}.

The existence of a strict fundamental domain for the $H$-action ensures that $H$ has the structure of simple complex of groups over $Y$, in the sense of Bridson--Haefliger \cite[Chapter~II.12]{BH}. The inclusion homomorphisms are inclusions between direct products of Higman groups, and these are very constrained: they must be factor inclusions. Consequently, the complex of group structure of $H$ is nothing but a graph product structure, in other words $H$ is isomorphic to $G$.

\subsection{Structure of the paper}

The present article is decomposed in five parts and one appendix. The first part contains the necessary material about graph products, measure and orbit equivalence, and measured groupoids and --~depending on the reader’s background~-- can be skipped on first reading.

\cref{Part:MEClassification} revolves around our measure equivalence classification of graph products, namely the proof of \cref{theo:classificationversionintro}. In particular we develop in \cref{sec:strongly-reduced} most of the groupoid theoretic machinery needed to prove the latter result. Let us mention that a translation of this argument in the language of groups can be found in \Cref{Appendix:Isom}, where it is used to show the isomorphic classification theorem analogous to \cref{theo:classificationversionintro}. \\
\cref{Part:MEClassification} is also where we prove our corollaries about commensurability (\cref{theo:classification-commensurability}) and fundamental groups of equivalence relations and von Neumann algebras (\cref{cor:fundamental}, \cpageref{cor:fundamental}). Both of the latter proofs can be found in \cref{Sec:ApplicationsMainTheorem}, \cpageref{Sec:ApplicationsMainTheorem}. The last section of the part is devoted to examples (\cref{Sec:ExemplesMEVRP}, \cpageref{Sec:ExemplesMEVRP}). 

In \cref{Part:QuantitativeResults} we study the quantitative version of this measure equivalence classification and prove \cref{Prop:CouplageGraphProd,Prop:DescenteDuProfil,Th:PrescribedCouplingRAAG}.

The last two parts deal with rigidity behaviours. In \cref{Part:Superrigidity} we impose some ergodicity assumptions on the actions and obtain some rigidity statements that are in the same spirit as Monod-Shalom rigidity theorem for direct products (Theorem~\ref{theo:superrigidityintro}). \cref{Part:SuperrigityHigman} focuses on exhibiting an example of a group that is superrigid for measure equivalence but not for quasi-isometry (Theorem~\ref{theo:qi-me}).

Finally, in \Cref{Appendix:Isom} we obtain an isomorphic classification theorem (\Cref{cor:isomorphism}, \cpageref{cor:isomorphism}) of graph products. We use it to slightly improve a theorem of Genevois and show the acylindrical hyperbolicity of $\mathrm{Aut}(G)$ whenever $G$ does not split as a direct product, and is not an infinite dihedral group (\Cref{theo:acyl-hyp}, \cpageref{theo:acyl-hyp}). For the reader less familiar with the language of groupoids, the proof of \Cref{cor:isomorphism} can also serve as a guide through the proofs of \cref{sec:strongly-reduced}.

\section*{Acknowledgements}

We warmly thank Jingyin Huang for many stimulating discussions that inspired several ideas from the present work.

\medskip

This project was initiated during the trimester \emph{Groups acting on fractals, Hyperbolicity and Self-Similarity} held at the Institut Henri Poincaré (UAR 839 CNRS-Sorbonne Université) during Spring 2022. Both authors thank the IHP for its hospitality and support, through LabEx CARMIN, ANR-10-LABX-59-01.

Both authors were supported by the European Union (ERC, Artin-Out-ME-OA, 101040507). Views and opinions expressed are however those of the authors only and do not necessarily reflect those of the European Union or the European Research Council. Neither the European Union nor the granting authority can be held responsible for them.

The first-named author was also funded by the Deutsche Forschungsgemeinschaft (DFG, German Research Foundation) – Project-ID 427320536 – SFB 1442, as well as under Germany’s Excellence Strategy EXC 2044 –390685587, Mathematics Münster: Dynamics–Geometry–Structure.

%==================================================================
%==================================================================
\newpage\part{Graph products and measured groupoids}
This first part contains background and preliminary facts regarding graph products and their geometry (Section~\ref{sec:background-graph-products}), measure and orbit equivalence and its quantitative versions (Section~\ref{sec:ME}), and measured groupoids and their use in measure equivalence classification problems (Section~\ref{sec:background-groupoids}).

%==================================================================
\section{Graph products and their geometry}
\label{sec:background-graph-products}
In the following $\Gamma$ will always denote a finite simple graph (namely a graph with no loop edge and no multiple edges between two vertices) and $(G_v)_{v\in V\Gamma}$ a family of non-trivial vertex groups. We let $G$ be the graph product over $\Gamma$ with vertex groups $(G_v)_{v\in V\Gamma}$. An \emph{induced subgraph}\index{Induced subgraph} $\Lambda\subseteq\Gamma$ is a subgraph such that two vertices of $\Lambda$ are joined by an edge in $\Lambda$ if and only if they are joined by an edge in $\Gamma$. Recall that the \emph{link}\index{Link (of a vertex)} of a vertex $v\in V\Gamma$ is the induced subgraph with vertex set all vertices adjacent to $v$. We denote it $\lk(v)$. The \emph{star}\index{Star (of a vertex)} of $v\in V\Gamma$ is the subgraph of $\Gamma$ induced by $v$ itself and its neighbors and is denoted by $\st(v)$. 

Recall that a finite simple graph $\Gamma$ is \emph{transvection-free}\index{Transvection-free} if there do not exist two distinct vertices $v,w\in V\Gamma$ such that $\lk(v)\subseteq\st(w)$. We say that $\Gamma$ \emph{has no partial conjugation}\index{No partial conjugation} if there does not exist any vertex $v\in V\Gamma$ such that $\st(v)$ disconnects~$\Gamma$. This terminology comes from the setting of right-angled Artin groups, where these conditions prevent having transvections and partial conjugations in $\Out(G_\Gamma)$ -- and in fact any finite simple graph $\Gamma$ which is transvection-free and has no partial conjugation determines a right-angled Artin group $G_\Gamma$ with $|\Out(G_\Gamma)|<+\infty$ by \cite{Laurence,Servatius}.

A finite simple graph $\Gamma$ is called \emph{reducible}\index{Reducible graph} if it splits as a join of (at least) two non-empty subgraphs. We will call \emph{irreducible}\index{Irreducible graph}\label{Def:Irreducible} a graph which is not reducible.

\begin{Rq}\label{rk:transvection-free-factors}
    Let $\Gamma$ be a finite simple graph, not reduced to one vertex, and let $\Gamma=\Gamma_1\circ\dots\circ\Gamma_k$ be its join decomposition into irreducible subgraphs (well-defined up to permutation of the factors). If $\Gamma$ is transvection-free, then no $\Gamma_i$ is reduced to one vertex, and every $\Gamma_i$ is transvection-free. And if $\Gamma$ has no partial conjugation, then for every $i\in\{1,\dots,k\}$, the graph $\Gamma_i$ has no partial conjugation.
\end{Rq}

\subsection{Normal form for graph products}\label{Sec:NormalForms}

This section reviews work of Green \cite{Gre}. 
Given $g\in G$, a \emph{word}\label{def:word}\index{Word}
representing $g$ is a tuple $\underline{\mathsf{w}}=(g_1,\dots,g_k)$, where every $g_i$ is an element of some vertex group, such that $g=g_1\dots g_k$.
Following \cite[Definition~3.5]{Gre}, we say that a word $\underline{\mathsf{w}}$ is \emph{reduced}\index{Reduced word} if, letting $v_i\in V\Gamma$ be such that $g_i\in G_{v_i}$, the following two conditions hold: 
\begin{itemize}
    \item for every $i\in\{1,\dots,k\}$, one has $g_i\neq 1$;
    \item for every $i<j$, if $v_i=v_j$, then there exists $i<k<j$ such that $v_k$ is neither equal nor adjacent to $v_i$. 
\end{itemize}
The second condition implies in particular that $v_{i+1}\neq v_i$ for every $i\in\{1,\dots,k\}$. Notice that a subword of a reduced word is again reduced, and that if $(g_1,\dots,g_n)$ is reduced, so is $(g^{-1}_n,\dots,g^{-1}_1)$.

By Green’s normal form theorem \cite[Theorem~3.9]{Gre},\label{Th:GreenNormalForm} every element of $G$ is represented by a reduced word. In addition, two reduced words $\underline{\mathsf{w}}=(g_1,\dots,g_k)$ and $\underline{\mathsf{w}}'=(g'_1,\dots,g'_\ell)$ represent the same element if and only if $k=\ell$, and $\underline{\mathsf{w}}$ and $\underline{\mathsf{w}}'$ are obtained from one another by successive applications of the following operation: if $g_i$ and $g_{i+1}$ belong to adjacent vertex groups, swap them. 
More generally, in her proof, Green also showed (see \cite[Definition~3.10 and Property~(a) on p.30]{Gre}) that, starting from any word $\underline{\mathsf{w}}$ representing $g$, one can obtain a reduced word representing $g$ by applying a sequence of the following operations:
\begin{itemize}
    \item remove some $g_i=1$;
    \item if $g_i$ and $g_{i+1}$ belong to the same vertex group, replace them by the product $g_ig_{i+1}$;
    \item if $g_i$ and $g_{i+1}$ belong to adjacent vertex groups, swap them.
\end{itemize}

For every $v\in V\Gamma$, let $S_v$ be a finite generating set of $G_v$, and let $S=\cup_{v\in V\Gamma}S_v$, which is a finite generating set of $G$. Given a word $\underline{\mathsf{w}}=(g_1,\dots,g_k)$, we let $|\underline{\mathsf{w}}|_S=\sum_{i=1}^k|g_i|_{S_{v_i}}$, where $v_i\in V\Gamma$ is such that $g_i\in G_{v_i}$ -- here $|g_i|_{S_{v_i}}$ denotes the word length of $g_i$ in the generating set $S_{v_i}$. Observe that if $\underline{\mathsf{w}}$ is a reduced word representing $g$, then $|g|_S=|\underline{\mathsf{w}}|_S$ (see e.g.\ \cite[Corollary~2.5]{BR}).

Given $g\in G$, the \emph{head}\label{Def:Head}\index{Head of a word} of $g$ is the set of all elements $h\in G$ such that there exists a reduced word $\underline{\mathsf{w}}=(g_1,\dots,g_k)$ representing $g$, with $g_1=h$. Likewise, the \emph{tail}\label{Def:Tail}\index{Tail of a word} of $g$ is the set of all elements $h\in G$ such that there exists a reduced word $\underline{\mathsf{w}}=(g_1,\dots,g_k)$ representing $g$, with $g_k=h$. We say that an element $z\in G$ is a \emph{syllable}\index{Syllable of a word} of $g$ if for some (equivalently, any) reduced word $\underline{\mathsf{w}}=(g_1,\dots,g_k)$ representing $g$, there exists $i\in\{1,\dots,k\}$ such that $z=g_i$. 

\begin{Lmm}\label{Lmm:LengthAndNormalForm}
Let $g,h\in G$, let $v\in V\Gamma$, and let $z\in G_v\setminus\{1\}$. Assume that the heads of $g$ and $h$ do not contain any element of $G_v$.

Then $z$ is a syllable of $g^{-1}zh$. In particular, $|g^{-1}zh|_S\ge |z|_S$.
\end{Lmm}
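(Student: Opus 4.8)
The plan is to first dispense with the ``in particular'', and then prove the main assertion by exhibiting a single reduced word for $w:=g^{-1}zh$ in which $z$ occurs as a syllable. For the inequality, note that $z\in G_v$ is represented by the reduced word $(z)$, so $|z|_S=|z|_{S_v}$; if $z$ is a syllable of $w$, then in any reduced word $(w_1,\dots,w_n)$ representing $w$ some $w_i$ equals $z$, whence $|w|_S=\sum_i|w_i|_{S_{u_i}}\ge |z|_{S_v}=|z|_S$, where $u_i\in V\Gamma$ is such that $w_i\in G_{u_i}$. By Green's normal form theorem any two reduced words representing $w$ share the same multiset of syllables, so it suffices to produce \emph{one} reduced word for $w$ containing $z$. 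Fixing reduced words $(g_1,\dots,g_k)$ and $(h_1,\dots,h_\ell)$ for $g$ and $h$, so that $(g_k^{-1},\dots,g_1^{-1})$ is reduced for $g^{-1}$, I would start from the word $W=(g_k^{-1},\dots,g_1^{-1},z,h_1,\dots,h_\ell)$, which represents $w$, and track the distinguished occurrence of $z$ as $W$ is reduced to normal form.

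The first key step is to record how the hypotheses constrain $W$ near $z$. Since the head of $h$ contains no element of $G_v$, the word $(z,h_1,\dots,h_\ell)$ is reduced: for any $j$ with $h_j\in G_v$, a minimality argument shows that some $h_p$ with $p<j$ lies in a vertex group neither equal nor adjacent to $v$, for otherwise $h_j$ could be shuffled to the front, placing an element of $G_v$ in the head of $h$. Symmetrically, because the head of $g$ contains no element of $G_v$ --~equivalently the tail of $g^{-1}$ contains no element of $G_v$~-- the word $(g_k^{-1},\dots,g_1^{-1},z)$ is reduced. Combining these two facts with the reducedness of the $g$- and $h$-words, I would establish the crucial structural statement: in $W$, any two distinct syllables lying in $G_v$ are separated by a syllable whose vertex is neither equal nor adjacent to $v$. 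The only pairs requiring a new argument are those made of a $g$-syllable $g_i^{-1}\in G_v$ and an $h$-syllable $h_j\in G_v$, and for these one simply reuses the separator lying between $g_i^{-1}$ and $z$ that reducedness of $(g_k^{-1},\dots,g_1^{-1},z)$ provides.

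The remaining, and most delicate, step is to show that the occurrence of $z$ survives reduction. I would argue that the separators shielding $z$ from the nearest $G_v$-syllable on each side cannot be destroyed. Let $g_{i_0}^{-1}$ be the $G_v$-syllable of the $g$-part closest to $z$; the separator $s$ between them lies strictly to the left of $z$ in a vertex group neither equal nor adjacent to $v$. Hence $s$ can never be swapped across $z$, and for the same reason no syllable of the $h$-part can be brought into contact with $s$, as that would require crossing $z$; moreover $s$ can be neither merged nor cancelled, since within the reduced $g$-part no two equal-vertex syllables can be made adjacent, and any $h$-syllable that migrates leftward past $z$ lies in a vertex group adjacent to $v$, hence distinct from that of $s$. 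Thus $s$ persists through any reduction, so $z$ and $g_{i_0}^{-1}$ never become adjacent and $z$ is never merged leftward; the symmetric argument handles the $h$-side, and as these are the only $G_v$-syllables present, $z$ is never merged or deleted. I expect this persistence-of-separators analysis to be the main obstacle: the subtlety is that reduction may shuffle syllables across $z$ from either side, so one must verify that the specific shielding syllables, singled out by the head hypotheses and their non-adjacency to $v$, can neither cross $z$ nor be consumed across it. Once this is established, every reduced word obtained from $W$ contains $z$, which completes the proof.
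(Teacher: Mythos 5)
Your reduction of the problem to exhibiting one reduced word for $g^{-1}zh$ containing $z$, your use of Green's theorem (reduced words differ only by swaps, hence have the same syllables), and your verification that $(g_k^{-1},\dots,g_1^{-1},z)$ and $(z,h_1,\dots,h_\ell)$ are reduced are all correct. The genuine gap is in the final ``persistence of separators'' step, and it sits exactly where you predicted the main obstacle would be. To rule out that the shield $s$ (lying in a vertex group $G_u$ with $u$ neither equal nor adjacent to $v$) is cancelled, you must exclude that $s$ merges with another $G_u$-syllable of the $g$-part; for this you invoke the claim that ``within the reduced $g$-part no two equal-vertex syllables can be made adjacent''. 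But this claim does not follow from reducedness of the $g$-part alone: reducedness guarantees a separator between such a pair in the \emph{initial} word, whereas the claim concerns the whole reduction process of the ambient word $W$, during which $h$-syllables in vertex groups adjacent to $v$ can legitimately cross $z$, enter the $g$-region, and merge with or cancel $g$-syllables there; in particular the separator between two $G_u$-syllables of the $g$-part may itself lie in a vertex group adjacent to $v$, so its potential cancelling partners are not blocked by $z$. In other words, the persistence of $s$ is being deduced from the persistence of other separators, which is an assertion of exactly the same kind; as written, the argument is circular.

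The claim is true, but closing the gap requires machinery your proposal does not set up: the invariant that two syllables whose vertex groups are equal or non-adjacent never exchange their relative order (swaps only permute distinct adjacent vertex groups), followed by a minimal-counterexample induction --- at the first merge of two same-vertex syllables coming from a reduced zone, the separator between them can only have been cancelled by same-vertex partners trapped strictly between them (outside partners would have to cross one of the two syllables, whose vertex group is non-adjacent to the separator's), yielding an earlier such merge, a contradiction. This is several steps beyond what you wrote. Note that the paper sidesteps the whole difficulty: instead of tracking $z$ through an arbitrary reduction of a naively assembled word, it \emph{chooses} the reduced words of $g$ and $h$ so as to maximize the initial segment of syllables lying in vertex groups adjacent to $v$ (the indices $i_0,j_0$), pushes $z$ across this commuting prefix, reduces only the middle subword (which lies in $G_{\lk(v)}$), and then checks directly that one explicit word is reduced, using that $g_{i_0+1}$ and $h_{j_0+1}$ lie in vertex groups neither equal nor adjacent to $v$ --- a finite verification on a single word, with no persistence-through-reduction analysis at all.
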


\begin{proof}
Given a reduced word $\underline{\mathsf{w}}=(g_1,\dots,g_k)$ representing $g$,
we let
\begin{equation*}
    i_0(\underline{\mathsf{w}})=\max\{i \, : \, \forall j\leq i, \ g_j\notin G_v, \text{and}\ g_j \ \text{centralizes}\ G_v\}.
\end{equation*}
Namely $i_0$ is the largest $i$ such that for every $j\le i$, the element $g_j$ belongs to a vertex group $G_{w_j}$ with $w_j$ adjacent to $v$.
We now choose $\underline{\mathsf{w}}_g$ so as to maximize the value of $i_0$. Let $\underline{\mathsf{w}}_h=(h_1,\dots,h_\ell)$ be a reduced word representing $h$, chosen with the same constraint (we denote by $j_0$ the index defined by the same property). Then $(g_k^{-1},\dots,g_{i_0}^{-1},\dots,g_1^{-1},z,h_1,\dots,h_{j_0},\dots,h_\ell)$ is a word that represents $g^{-1}zh$, and so is 
\begin{equation*}
    (g_k^{-1},\dots,g_{i_0+1}^{-1},z,g_{i_0}^{-1},\dots,g_1^{-1},h_1,\dots,h_{j_0},\dots,h_\ell).
\end{equation*}
The subword $\underline{\mathsf{w}}'=(g_{i_0}^{-1},\dots,g_1^{-1},h_1,\dots,h_{j_0})$ might not be reduced. Let $\underline{\mathsf{w}}''=(k_1,\dots,k_r)$ be a reduced word that represents the same element as $\underline{\mathsf{w}}'$, and observe that all $k_p$ belong to vertex groups $G_{w_p}$ with $w_p$ adjacent to $v$ (in particular, distinct from $v$). Now $g^{-1}zh$ is represented by $(g_k^{-1},\dots,g_{i_0+1}^{-1},z,k_1,\dots,k_r,h_{j_0+1},\dots,h_\ell)$. Notice that $g_{i_0+1}$ and $h_{j_0+1}$ do not belong to vertex groups adjacent to $G_v$ by definition of $i_0$ and $j_0$, and they do not belong to $G_v$ either by our assumption on the heads of $g$ and $h$. Therefore the latter word is reduced, which concludes the proof. 
\end{proof}

\subsection{Parabolic subgroups}\label{Subsec:ParabolicSubgroup}

\subsubsection{Definition and first facts}\label{sec:first-facts}

Let $\Lambda\subseteq\Gamma$ be an induced subgraph, possibly empty. 
Let $G_\Lambda\subseteq G$ be the subgroup generated by all subgroups $G_v$ with $v\in V\Lambda$, note that when $\Lambda=\emptyset$ we have $G_\Lambda=\{e\}$. As a consequence of the normal form theorem for graph products reviewed in the previous section, the group $G_\Lambda$ is isomorphic to the graph product over $\Lambda$ with vertex groups $(G_v)_{v\in V\Lambda}$.

A \emph{parabolic subgroup}\index{Parabolic!Parabolic subgroup} of $G$ is a subgroup $P$ which is conjugate to $G_\Lambda$ for some induced subgraph $\Lambda\subseteq\Gamma$.\footnote{The notion of parabolic subgroup does not merely depend on $G$, but also of its decomposition as a graph product. Throughout the paper, we always assume that $G$ comes equipped with this decomposition.} Notice that $\Lambda$ is uniquely determined by $P$: if $gG_\Lambda g^{-1}=hG_{\Lambda'}h^{-1}$, then $\Lambda=\Lambda'$, as follows from \cite[Corollary~3.8]{AM}. We say that $\Lambda$ is the \emph{type}\index{Type!Type of a parabolic subgroup} of $P$.

An intersection of parabolic subgroups of $G$ is again a parabolic subgroup: this is \cite[Corollary~3.6]{AM} for finite intersections, and follows from \cite[Corollary~3.18]{AM} for infinite intersections. Remark also that if $P\subseteq G$ is a parabolic subgroup of type~$\Lambda$, then $P$ is naturally a graph product over $\Lambda$; parabolic subgroups of $P$ for this graph product structure are then naturally identified with the parabolic subgroups of $G$ that are contained in $P$. 

Given an induced subgraph $\Lambda\subseteq\Gamma$, we denote by $\Lambda^{\perp}$\label{notation:lambdaperp} the induced subgraph spanned by all vertices of $\Gamma\setminus\Lambda$ that are joined to all vertices of $\Lambda$ (see \cref{fig:GraphOrthog} for two examples). Remark that in particular if $\Lambda$ is reduced to one vertex $v$ then $\Lambda^\perp=\lk(v)$. Given a parabolic subgroup $P=gG_\Lambda g^{-1}$ (with $g\in G$, and $\Lambda\subseteq\Gamma$ an induced subgraph), we let $P^{\perp}=gG_{\Lambda^{\perp}}g^{-1}$. That this is well-defined (i.e.\ does not depend on the choice of $g$) follows from \cite[Proposition~3.13]{AM}. The normalizer of $P$ in $G$ is equal to $P\times P^{\perp}$, and is therefore again a parabolic subgroup \cite[Proposition~3.13]{AM}.
\begin{figure}[htbp]
  \centering
  \includegraphics[width=0.4\textwidth]{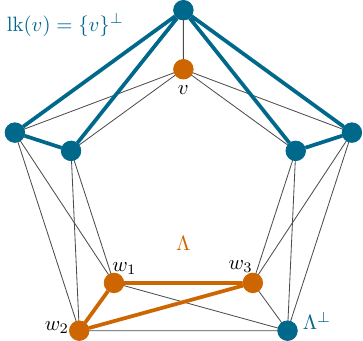}
  \caption{Induced subgraphs and orthogonal}
  \label{fig:GraphOrthog}
\end{figure}

Given an induced subgraph $\Lambda\subseteq\Gamma$, there is a natural retraction\index{Retraction to a parabolic subgroup} $r_\Lambda:G_\Gamma\to G_\Lambda$, sending every $G_v$ with $v\in V\Lambda$ to itself, and every $G_w$ with $w\notin V\Lambda$ to $\{1\}$. More generally, let $P$ be a parabolic subgroup, and let $\Lambda\subseteq\Gamma$ be its type. Then there exists a unique element $g\in G$ such that $P=gG_\Lambda g^{-1}$ and the tail of $g$ does not contain any element in $G_\Lambda\times G_{\Lambda}^{\perp}$. For such a $g$ we then define the retraction $r_P:G\to P$ to be the map sending $gG_vg^{-1}$ to itself if $v\in V\Lambda$, and $gG_wg^{-1}$ to $\{1\}$ if $w\notin V\Lambda$.  

\subsubsection{Splittings of a graph product}\label{sec:trees}

Every vertex $v\in V\Gamma$ determines a splitting of $G$ as $G=G_{\st(v)}\ast_{G_{\lk(v)}}G_{\Gamma\setminus\{v\}}$. We let $T_v$\label{NotationTv} be the Bass--Serre tree of this splitting, see \cite{Ser}. It is equipped with an isometric $G$-action whose vertex stabilizers are exactly the conjugates of $G_{\st(v)}$ and of $G_{\Gamma\setminus\{v\}}$.

The above splitting is non-trivial (i.e.\ the $G$-action on $T_v$ has no global fixed point) unless $v$ is joined to all other vertices of $\Gamma$ by an edge.

We will need a few standard facts regarding group actions on trees. Given an isometric action of a group $G$ on a tree $T$, we say that a subgroup $H\subseteq G$ is \emph{elliptic}\index{Elliptic subgroup} if it fixes a point in $T$. Notice that this happens as soon as $H$ as a finite orbit in $T$: indeed $H$ then fixes the circumcenter of this finite orbit. In particular, if $H$ has a finite-index subgroup $H^0$ which is elliptic in $T$, then $H$ is also elliptic in $T$ (because if $x\in T$ is a point fixed by $H^0$, then the $H$-orbit of $x$ is finite). 

\subsubsection{More properties of parabolic subgroups}

We now establish a few extra properties of parabolic subgroups that will be used throughout the paper. They are based on the following results of Antol\'in--Minasyan.

\begin{Prop}[{Antol\'in--Minasyan \cite[Proposition~3.4 and Lemma~3.9]{AM}}]\label{prop:am}
Let $G$ be a graph product over a finite simple graph $\Gamma$.
\begin{enumerate}
\item Let $\Lambda_1,\Lambda_2\subseteq\Gamma$ be induced subgraphs, and let $g\in G$. Then there exist an induced subgraph $\Upsilon\subseteq\Lambda_1\cap\Lambda_2$, and $h\in G_{\Lambda_2}$, such that $gG_{\Lambda_1}g^{-1}\cap G_{\Lambda_2}=hG_{\Upsilon}h^{-1}$. 
\item Let $P\subseteq G$ be a parabolic subgroup, and $g\in G$. If $gPg^{-1}\subseteq P$, then $gPg^{-1}=P$.
\end{enumerate}
\end{Prop}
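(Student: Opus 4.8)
The plan is to prove each of the two statements using Green's normal form and the structure theory of parabolic subgroups, relying on Proposition~\ref{prop:am} (Antol\'in--Minasyan) which has already been invoked.

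\textbf{Statement (1): description of $gG_{\Lambda_1}g^{-1}\cap G_{\Lambda_2}$.} First I would reduce to understanding which conjugates of $G_{\Lambda_1}$ meet $G_{\Lambda_2}$ nontrivially, and then locate the intersection inside $G_{\Lambda_2}$. The natural approach is to use the retraction $r_{\Lambda_2}:G\to G_{\Lambda_2}$ together with Green's normal form. Writing $g$ in a normal form adapted to the decomposition of $\Gamma$, one wants to push $g$ across $G_{\Lambda_1}$ so that, modulo the part of $g$ that commutes with $G_{\Lambda_1}$ (i.e.\ lies in $G_{\Lambda_1^\perp}$), the conjugator is absorbed. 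Concretely, I would argue that any element of the intersection, being in $G_{\Lambda_2}$, must have all its syllables in vertex groups $G_v$ with $v\in V\Lambda_2$; being conjugate into $G_{\Lambda_1}$ forces those syllables to lie over vertices of $\Lambda_1$ as well, hence over $\Lambda_1\cap\Lambda_2$. The element $h\in G_{\Lambda_2}$ conjugating $G_\Upsilon$ into position arises as the ``$G_{\Lambda_2}$-part'' of the conjugator $g$. Since this is exactly \cite[Proposition~3.4]{AM}, the cleanest route is to cite it directly and, if a self-contained argument is wanted, to reconstruct it via the tree actions $T_v$ from Section~\ref{sec:trees}: intersecting a parabolic with a standard parabolic can be analyzed one vertex at a time by looking at ellipticity in each $T_v$, using that $G_{\Lambda_2}$ acts on $T_v$ and that $gG_{\Lambda_1}g^{-1}$ is elliptic in each $T_v$ with $v\in V\Lambda_1$.

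\textbf{Statement (2): a parabolic conjugated into itself equals itself.} I would let $\Lambda$ be the type of $P$ and assume without loss of generality (after conjugating everything by a fixed element) that $P=G_\Lambda$ is standard, so the hypothesis becomes $gG_\Lambda g^{-1}\subseteq G_\Lambda$ for some $g\in G$, and the goal is $gG_\Lambda g^{-1}=G_\Lambda$. The key observation is that $gG_\Lambda g^{-1}$ is a parabolic subgroup of type $\Lambda$ (the type is preserved under conjugation), so by part~(1) applied with $\Lambda_1=\Lambda_2=\Lambda$ the intersection $gG_\Lambda g^{-1}\cap G_\Lambda$ is a parabolic subgroup whose type is an induced subgraph $\Upsilon\subseteq\Lambda$. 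Under the containment hypothesis this intersection equals $gG_\Lambda g^{-1}$ itself, which has type $\Lambda$; by the uniqueness of the type (recalled in Section~\ref{sec:first-facts}, from \cite[Corollary~3.8]{AM}) we get $\Upsilon=\Lambda$. Comparing the two parabolic subgroups $gG_\Lambda g^{-1}\subseteq G_\Lambda$ of the same type $\Lambda$, I would then conclude equality: a proper inclusion of parabolics of the same type is impossible, which again follows from the rigidity of the type together with an index or normal-form argument showing that a standard parabolic cannot strictly contain a conjugate of itself of the same type.

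\textbf{Main obstacle.} Since both assertions are quoted verbatim from \cite{AM}, the mathematically substantive work has already been done there, and the honest ``proof'' is largely a matter of correctly invoking Proposition~\ref{prop:am}. The genuinely delicate point, should one want a self-contained treatment, is the normal-form bookkeeping in part~(1): controlling how the conjugator $g$ interacts with the two subgraphs $\Lambda_1,\Lambda_2$, in particular isolating the commuting part $G_{\Lambda_1^\perp}$ and showing that the surviving type lands exactly in $\Lambda_1\cap\Lambda_2$ rather than something larger. I expect the passage from ``every syllable lies over $\Lambda_1\cap\Lambda_2$'' to ``the intersection is a \emph{single} conjugate $hG_\Upsilon h^{-1}$ with $h\in G_{\Lambda_2}$'' to be the step requiring the most care, as it needs Green's normal form uniqueness (Theorem~\ref{Th:GreenNormalForm}) to guarantee that the conjugator can be chosen inside $G_{\Lambda_2}$.
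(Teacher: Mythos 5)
The paper contains no proof of this proposition: it is imported verbatim from Antol\'in--Minasyan, and the bracketed citation \cite[Proposition~3.4 and Lemma~3.9]{AM} \emph{is} the paper's entire justification. Your primary recommendation --- cite \cite{AM} directly --- therefore coincides with what the paper does. Moreover your derivation of part~(2) from part~(1) is correct and clean: reducing to $P=G_\Lambda$ standard, part~(1) with $\Lambda_1=\Lambda_2=\Lambda$ writes $gG_\Lambda g^{-1}=gG_\Lambda g^{-1}\cap G_\Lambda=hG_\Upsilon h^{-1}$ with $h\in G_\Lambda$ and $\Upsilon\subseteq\Lambda$; uniqueness of the type (\cite[Corollary~3.8]{AM}, recalled in the paper) forces $\Upsilon=\Lambda$, and then $gG_\Lambda g^{-1}=hG_\Lambda h^{-1}=G_\Lambda$ since $h\in G_\Lambda$.

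However, the self-contained sketch you offer for part~(1) contains a genuine error. You claim that an element of $gG_{\Lambda_1}g^{-1}\cap G_{\Lambda_2}$, being conjugate into $G_{\Lambda_1}$, must have all its syllables over vertices of $\Lambda_1$, hence over $\Lambda_1\cap\Lambda_2$. This is false: conjugacy into $G_{\Lambda_1}$ constrains only the support of a \emph{cyclically reduced} representative, not the syllables of the element itself. Concretely, let $\Gamma$ consist of two non-adjacent vertices $u,v$, let $\Lambda_1=\{u\}$, $\Lambda_2=\Gamma$, and let $g\in G_v\setminus\{1\}$. Then $gG_ug^{-1}\cap G_{\Lambda_2}=gG_ug^{-1}$, whose non-trivial elements $gxg^{-1}$ (with $x\in G_u\setminus\{1\}$) have reduced word $(g,x,g^{-1})$, with syllables over both $u$ and $v$ rather than over $\Lambda_1\cap\Lambda_2=\{u\}$; yet the conclusion of part~(1) does hold here, with $h=g\in G_{\Lambda_2}$ and $\Upsilon=\{u\}$. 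The real content of \cite[Proposition~3.4]{AM} is exactly the step you flag as delicate --- producing a \emph{single} conjugator $h$ lying in $G_{\Lambda_2}$ and a type landing inside $\Lambda_1\cap\Lambda_2$ --- and your syllable argument neither proves it nor survives this example. The alternative reconstruction via ellipticity in the trees $T_v$ is plausible for showing the intersection is parabolic, but as sketched it likewise does not deliver the two quantitative conclusions ($h\in G_{\Lambda_2}$ and $\Upsilon\subseteq\Lambda_1\cap\Lambda_2$), so if a self-contained proof were actually required, the gap would remain.
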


\begin{Lmm}\label{lemma:finite-index-0}
Let $G$ be a graph product over a finite simple graph $\Gamma$ such that no vertex $v$ of $\Gamma$ satisfies $\Gamma=\st(v)$. Let $H\subseteq G$ be a subgroup. 
If $H$ has a finite-index subgroup $H^0$ contained in a proper parabolic subgroup $P$ of $G$, then $H$ itself is contained in a proper parabolic subgroup $Q$ of $G$.
\end{Lmm}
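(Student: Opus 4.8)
The plan is to reduce the statement to an ellipticity argument on a Bass--Serre tree. Since $P$ is a \emph{proper} parabolic subgroup, its type $\Lambda$ is a proper induced subgraph of $\Gamma$, so I can choose a vertex $w\in V\Gamma\setminus V\Lambda$. Writing $P=gG_\Lambda g^{-1}$ for some $g\in G$, the inclusion $\Lambda\subseteq\Gamma\setminus\{w\}$ gives $G_\Lambda\subseteq G_{\Gamma\setminus\{w\}}$, hence $P\subseteq gG_{\Gamma\setminus\{w\}}g^{-1}$. Now I bring in the Bass--Serre tree $T_w$ of the splitting $G=G_{\st(w)}\ast_{G_{\lk(w)}}G_{\Gamma\setminus\{w\}}$ from \cref{sec:trees}: since $gG_{\Gamma\setminus\{w\}}g^{-1}$ is a vertex stabilizer of $T_w$, the subgroup $P$ fixes a vertex, i.e.\ $P$ is elliptic in $T_w$. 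In particular the finite-index subgroup $H^0\subseteq P$ is elliptic in $T_w$.

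Next I would promote ellipticity from $H^0$ to all of $H$. This is exactly the standard fact recalled in \cref{sec:trees}: if $x\in T_w$ is fixed by $H^0$, then the $H$-orbit of $x$ has cardinality at most $[H:H^0]<+\infty$, so $H$ fixes the circumcenter of this finite orbit and is therefore elliptic in $T_w$. Consequently $H$ is contained in some vertex stabilizer $Q$ of $T_w$.

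Finally, I would identify $Q$ as a proper parabolic subgroup. By the description of $T_w$, every vertex stabilizer is a conjugate either of $G_{\st(w)}$ or of $G_{\Gamma\setminus\{w\}}$, both of which are parabolic subgroups (of types $\st(w)$ and $\Gamma\setminus\{w\}$ respectively). The group $G_{\Gamma\setminus\{w\}}$ is automatically proper, since $w\notin\Gamma\setminus\{w\}$. The one place where the hypothesis enters is in checking that $G_{\st(w)}$ is also proper: this holds precisely because $\st(w)\neq\Gamma$, i.e.\ because no vertex of $\Gamma$ is joined to every other vertex. Hence $Q$ is a proper parabolic subgroup of $G$ containing $H$, as desired.

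I expect the only (modest) obstacle to be bookkeeping rather than a genuine difficulty: ensuring the chosen $w$ lies outside the type of $P$ (so that $P$ is truly elliptic in $T_w$), and verifying that \emph{both} candidate vertex stabilizers are proper parabolic subgroups — the latter being exactly what the assumption ``$\st(v)\neq\Gamma$ for every $v$'' guarantees. No serious estimate or combinatorial argument beyond the normal-form and splitting facts already recorded seems to be required.
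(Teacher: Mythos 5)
Your proof is correct and takes essentially the same route as the paper's: pick a vertex $w$ outside the type $\Lambda$ of $P$, note that $H^0$ is elliptic in the Bass--Serre tree $T_w$ because it is conjugate into $G_{\Gamma\setminus\{w\}}$, promote ellipticity from $H^0$ to $H$ via the finite-orbit/circumcenter fact from \cref{sec:trees}, and use the hypothesis $\st(w)\neq\Gamma$ to ensure both types of vertex stabilizers are proper parabolic subgroups. The paper's proof is simply a terser version of exactly this argument, leaving the last verification implicit.
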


\begin{proof}
The group $P$ is conjugate to $G_\Lambda$ for some proper induced subgraph $\Lambda\subsetneq\Gamma$. Let $v$ be in $V\Gamma\setminus V\Lambda$. Then $H^0$ is elliptic in the tree $T_v$ because it is conjugate into $G_{\Gamma\setminus\{v\}}$. Therefore $H$ is also elliptic in $T_v$, and the conclusion follows (see the discussion in \cref{sec:trees}).
\end{proof}

\begin{Lmm}\label{lemma:finite-index}
Let $G$ be a graph product over a finite simple graph $\Gamma$ with infinite vertex groups. Let $P,Q\subseteq G$ be two parabolic subgroups, and assume that some finite-index subgroup of $P$ is contained in $Q$.

Then $P\subseteq Q$.
\end{Lmm}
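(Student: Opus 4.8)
The plan is to reduce the statement to the claim that a parabolic subgroup of finite index in the ambient graph product must be the whole group, and then to exploit the vertex retractions together with the hypothesis that vertex groups are infinite.

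First I would set $R:=P\cap Q$. By the results of Antol\'in--Minasyan recalled above (\cite[Corollary~3.6]{AM}), $R$ is again a parabolic subgroup of $G$. The hypothesis provides a finite-index subgroup $P^0\subseteq P$ with $P^0\subseteq Q$; since also $P^0\subseteq P$, we get $P^0\subseteq R\subseteq P$, whence $[P:R]\le [P:P^0]<+\infty$. Thus $R$ is a parabolic subgroup of finite index in $P$, and it suffices to prove $R=P$, as this immediately yields $P=R\subseteq Q$.

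Next I would pass inside $P$. Writing $\Lambda\subseteq\Gamma$ for the type of $P$, the subgroup $P$ is itself a graph product over $\Lambda$ with the (infinite) vertex groups $(G_v)_{v\in V\Lambda}$, and the parabolic subgroups of $G$ contained in $P$ are exactly the parabolic subgroups of $P$ for this structure (as recalled in \cref{sec:first-facts}). Conjugating, I may assume $P=G_\Lambda$ and that $R=gG_\Upsilon g^{-1}$ for some induced subgraph $\Upsilon\subseteq\Lambda$ and some $g\in G_\Lambda$. Conjugation preserves index, so $[G_\Lambda:G_\Upsilon]=[G_\Lambda:R]<+\infty$, and the goal becomes $\Upsilon=\Lambda$.

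The key point---and the only place where the hypothesis on infinite vertex groups enters---is that a proper standard parabolic subgroup has infinite index. Indeed, suppose $\Upsilon\subsetneq\Lambda$ and pick a vertex $v\in V\Lambda\setminus V\Upsilon$. The retraction $r_{\{v\}}:G_\Lambda\to G_v$ sending $G_v$ to itself and every other vertex group to $\{1\}$ is a surjective homomorphism, so $[G_\Lambda:\ker r_{\{v\}}]=|G_v|=+\infty$. Since $v\notin V\Upsilon$, one has $G_\Upsilon\subseteq\ker r_{\{v\}}$, and therefore $[G_\Lambda:G_\Upsilon]=+\infty$, contradicting finiteness of the index. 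Hence $\Upsilon=\Lambda$, so $R=G_\Lambda=P$, and the lemma follows. I do not anticipate a genuine obstacle; the one point to record carefully is that the type $\Upsilon$ of $R$ is a subgraph of $\Lambda$ and that $R$ is conjugate to $G_\Upsilon$ by an element of $G_\Lambda$, which is precisely what the identification of parabolic subgroups of $P$ with parabolic subgroups of $G$ lying in $P$ provides.
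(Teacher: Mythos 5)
Your proof is correct, and it takes a genuinely different route from the paper's. The paper conjugates so that $Q=G_\Lambda$ and $P=gG_\Upsilon g^{-1}$, writes $P\cap Q=hG_\Theta h^{-1}$ with $\Theta\subseteq\Lambda\cap\Upsilon$ using Proposition~\ref{prop:am}(1), and then argues vertex by vertex: for any $v\in V\Upsilon\setminus V\Theta$, the conjugate $gG_vg^{-1}$ meets $P\cap Q$ trivially (again by Proposition~\ref{prop:am}(1)), yet by hypothesis it has an infinite finite-index subgroup inside $P\cap Q$ --- a contradiction forcing $\Theta=\Upsilon$; finally Proposition~\ref{prop:am}(2) is needed to upgrade the inclusion $P\cap Q\subseteq P$ of two conjugates of $G_\Upsilon$ to an equality. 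You instead pass immediately to $R=P\cap Q$, note it is a parabolic subgroup of finite index in $P$, and reduce to the cleaner intermediate statement that a \emph{proper} parabolic subgroup of a graph product with infinite vertex groups has infinite index, which you prove in one line via the retraction onto an omitted (infinite) vertex group. What your route buys: it isolates a reusable statement, it avoids Proposition~\ref{prop:am}(2) entirely (you only need that intersections of parabolics are parabolic and the identification of parabolic subgroups of $P$ with parabolic subgroups of $G$ contained in $P$, both recorded in \cref{sec:first-facts}), and the ``conjugation preserves index'' reduction disposes of the conjugators that the paper must track through to its final step. What the paper's route buys: it never leaves the Antol\'in--Minasyan intersection calculus, and its vertex-by-vertex contradiction pattern (an infinite subgroup forced into a trivial intersection) is the same mechanism the paper redeploys in nearby arguments such as \cref{lemma:root}, where the subgroup in question is no longer parabolic and a quotient-by-retraction argument would not apply as directly.
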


\begin{proof}
Up to a global conjugation, we can (and shall) assume that there exist two induced subgraphs $\Upsilon,\Lambda\subseteq\Gamma$ and $g\in G$ such that $Q=G_\Lambda$ and $P=gG_\Upsilon g^{-1}$. By Proposition~\ref{prop:am}(1), the intersection $P\cap Q$ is of the form $hG_{\Theta}h^{-1}$, with $h\in Q$ and $\Theta\subseteq\Lambda\cap\Upsilon$. Then for every $v\in V\Upsilon\setminus V\Theta$, the subgroup $gG_vg^{-1}$ intersects $P\cap Q$ trivially, again by Proposition~\ref{prop:am}(1). But $gG_vg^{-1}$ is contained in $P$, so by assumption $gG_vg^{-1}$ has a finite-index (whence infinite) subgroup contained in $Q$. This proves that $\Theta=\Upsilon$. We have $P\cap Q=hG_\Theta h^{-1}$ and $P=gG_\Theta g^{-1}$, so Proposition~\ref{prop:am}(2) implies that $P\cap Q=P$, i.e.\ $P\subseteq Q$.
\end{proof}

In the case where all vertex groups are torsion-free (or equivalently that $G$ is torsion-free \cite[Corollary~3.28]{Gre}), we can extend \cref{lemma:finite-index} to all subgroups $H$ of $G$.

\begin{lemma}\label{lemma:root}
Let $G$  be a graph product over a finite simple graph $\Gamma$, with infinite torsion-free vertex groups. Let $P\subseteq G$ be a parabolic subgroup, and let $H\subseteq G$ be a subgroup.

If $H$ has a finite-index subgroup $H^0$ contained in $P$, then $H\subseteq P$.
\end{lemma}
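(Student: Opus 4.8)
The plan is to reduce everything to the trivial interaction between a parabolic subgroup and its orthogonal complement, exploiting the normalizer structure recalled above together with torsion-freeness. Since arbitrary intersections of parabolic subgroups are again parabolic, and $G=G_\Gamma$ is itself parabolic, every subset $S\subseteq G$ is contained in a smallest parabolic subgroup, namely the intersection of all parabolic subgroups containing it; write $\mathrm{Pc}(S)$ for this \emph{parabolic closure}. Because conjugation by any $g\in G$ permutes the parabolic subgroups of $G$, one has the equivariance $g\,\mathrm{Pc}(S)\,g^{-1}=\mathrm{Pc}(gSg^{-1})$.

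First I would replace $H^0$ by its normal core $N:=\bigcap_{h\in H}hH^0h^{-1}$, which is a finite-index subgroup of $H$ that is normal in $H$ and still contained in $P$. Set $Q:=\mathrm{Pc}(N)$; since $P$ is a parabolic subgroup containing $N$, we have $Q\subseteq P$, so it suffices to prove $H\subseteq Q$. Now normality gives $hNh^{-1}=N$ for every $h\in H$, whence $hQh^{-1}=\mathrm{Pc}(hNh^{-1})=\mathrm{Pc}(N)=Q$ by equivariance. Thus $H$ normalizes $Q$, and the normalizer computation recalled above (the normalizer of a parabolic subgroup $Q$ equals $Q\times Q^{\perp}$) yields $H\subseteq N_G(Q)=Q\times Q^{\perp}$.

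The final step is a projection-and-torsion argument, which is where I expect the only real content to lie. Let $\pi\colon Q\times Q^{\perp}\to Q^{\perp}$ be the projection, so that $\ker\pi=Q$. As $N\subseteq Q$ and $N$ has finite index in $H$, the image $\pi(H)\cong H/(H\cap Q)$ is finite. But $Q^{\perp}$ is a subgroup of $G$, which is torsion-free since all vertex groups are (see \cite[Corollary~3.28]{Gre}); hence the finite subgroup $\pi(H)\subseteq Q^{\perp}$ is trivial, forcing $H\subseteq\ker\pi=Q\subseteq P$. Torsion-freeness is used exactly here: without it, $\pi(H)$ could be a nontrivial finite subgroup of $Q^{\perp}$ and one would only obtain $H\subseteq Q\times Q^{\perp}$ rather than $H\subseteq Q$. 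This is precisely the obstruction that prevents upgrading Lemma~\ref{lemma:finite-index} from parabolic subgroups to arbitrary subgroups without the torsion-free hypothesis.
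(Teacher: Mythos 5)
Your proof is correct, and it takes a genuinely different route from the paper's. The paper first replaces $P$ by the smallest parabolic subgroup containing $H^0$ and then argues vertex by vertex: for each $v\in V\Gamma\setminus V\Lambda$ it uses the Bass--Serre tree $T_v$ of the splitting $G=G_{\st(v)}\ast_{G_{\lk(v)}}G_{\Gamma\setminus\{v\}}$, notes that ellipticity of the finite-index subgroup $H^0$ forces ellipticity of $H$, and runs a case analysis (either $H$ is conjugate into $G_{\Gamma\setminus\{v\}}$, or it is conjugate into $G_{\st(v)}$, in which case torsion-freeness of $G_v$ and a projection argument push $H$ into a conjugate of $G_{\lk(v)}$); it concludes via Proposition~\ref{prop:am}(1) together with minimality of $P$. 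You instead pass to the normal core $N$ of $H^0$, take its parabolic closure $Q$ (this is exactly the \emph{parabolic support} that the paper introduces in Appendix~\ref{Appendix:Isom}), use conjugation-equivariance of the closure to see that $H$ normalizes $Q$, invoke the normalizer formula $N_G(Q)=Q\times Q^{\perp}$ from \cite{AM}, and finish with a single projection-plus-torsion argument. Both proofs ultimately rest on the same torsion-freeness trick --- a finite-order projection to a factor must be trivial --- but you apply it once, globally, rather than inside a case analysis; your argument avoids Bass--Serre theory entirely, uses only facts already recorded in Section~\ref{sec:first-facts} (stability of parabolic subgroups under arbitrary intersections and conjugation, and the normalizer formula), isolates exactly where torsion-freeness is needed, and silently handles the degenerate case where $N$ is trivial (then $Q=\{1\}$, $Q^{\perp}=G$, and the argument shows $H$ is finite, hence trivial). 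What the paper's route buys in exchange is uniformity: Lemmas~\ref{lemma:finite-index-0} and~\ref{lemma:finite-index} are proved by the same ellipticity mechanism, so the paper reuses the tree machinery of Section~\ref{sec:trees} rather than introducing the closure operator at this point.
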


\begin{proof}
Without loss of generality, we will assume that $P$ is the smallest parabolic subgroup that contains $H^0$, namely $P$ is the intersection of all parabolic subgroups that contain $H^0$. Up to conjugation, we will assume that $P=G_\Lambda$ for some induced subgraph $\Lambda\subseteq\Gamma$. Let $v\in V\Gamma\setminus V\Lambda$. Then $H^0$ is elliptic in $T_v$, so $H$ is elliptic in $T_v$. So either $H$ is conjugate into $G_{\Gamma\setminus\{v\}}$, or else $H$ is conjugate into $G_{\st(v)}$.

Let us first assume that $H$ is conjugate into $G_{\st(v)}$. In particular, there exists $g\in G$ such that $H^0\subseteq G_\Lambda\cap gG_{\st(v)}g^{-1}$, namely $g^{-1}H^0g\subseteq g^{-1}G_\Lambda g\cap G_{\st(v)}$. By Proposition~\ref{prop:am}(1), it follows that $H^0\subseteq gG_{\lk(v)}g^{-1}$. Since $G_{\st(v)}=G_v\times G_{\lk(v)}$ and since $G_v$ is torsion-free, it follows that $H\subseteq gG_{\lk(v)}g^{-1}$: indeed otherwise, some element $h\in H$ has a nontrivial projection to $gG_vg^{-1}$, but then all powers of $h$ have a nontrivial projection to $gG_vg^{-1}$, contradicting that some power is contained in the finite-index subgroup $H^0$. 

So in all cases, we can assume that $H$ is conjugate into $G_{\Gamma\setminus\{v\}}$. As this is true for every $v\in V\Gamma\setminus V\Lambda$, we can apply Proposition~\ref{prop:am}(1) again and deduce that $H$ is contained in a conjugate $hPh^{-1}$. In particular $H^0$ is contained in $P\cap hPh^{-1}$, and by minimality of $P$ this implies that $hPh^{-1}=P$. Therefore $H\subseteq P$, as desired.
\end{proof}

Using the first point of \cref{prop:am} we can show that if $P=P_1\times P_2$ is a product of two parabolic subgroups $P_1,P_2$ then its type $\Lambda$ can be written as the join $\Lambda=\Lambda_1 \circ \Lambda_2$, where $\Lambda_i$ is the type of $P_i$ for every $i\in \{1,2\}$.
Moreover if $P=gG_{\Lambda}g^{-1}$ for some $g\in G$, then $P_i=gG_{\Lambda_i}g^{-1}$. We can thus deduce the following two lemmas -- details are left to the reader.

\begin{Lmm}\label{lemma:product-parabolic-subgroup}
    Let $G$ be a graph product over a finite simple graph $\Gamma$. Let $P=P_1\times P_2$ be a parabolic subgroup that splits as a direct product of two parabolic subgroups $P_1,P_2$. Let $Q\subseteq P$ be a parabolic subgroup.

    Then $Q=(Q\cap P_1)\times (Q\cap P_2)$.
\qed
\end{Lmm}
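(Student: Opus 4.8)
The plan is to reduce to the untwisted case and then exploit the join structure of the types. Using the observation recorded just before the statement, since $P=P_1\times P_2$ is a product of parabolic subgroups, its type $\Lambda$ decomposes as a join $\Lambda=\Lambda_1\circ\Lambda_2$, where $\Lambda_i$ is the type of $P_i$, and moreover if $P=gG_\Lambda g^{-1}$ then $P_i=gG_{\Lambda_i}g^{-1}$. Conjugating everything globally by $g^{-1}$ (which does not affect the statement, as it preserves parabolicity, direct products, and intersections), I would assume from the outset that $P=G_\Lambda$, $P_1=G_{\Lambda_1}$ and $P_2=G_{\Lambda_2}$, with $\Lambda=\Lambda_1\circ\Lambda_2$ a join of induced subgraphs.

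Next I would describe $Q$ explicitly. Since $Q$ is a parabolic subgroup of $G$ contained in $G_\Lambda$, it is identified with a parabolic subgroup of the graph product $G_\Lambda$ over $\Lambda$ (as recalled in \cref{sec:first-facts}); hence I may write $Q=hG_\Theta h^{-1}$ for some $h\in G_\Lambda$ and some induced subgraph $\Theta\subseteq\Lambda$. Setting $\Theta_i=\Theta\cap\Lambda_i$, the join structure $\Lambda=\Lambda_1\circ\Lambda_2$ is inherited by $\Theta$, so $\Theta=\Theta_1\circ\Theta_2$ is again a join, and consequently $G_\Theta=G_{\Theta_1}\times G_{\Theta_2}$. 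Conjugating by $h$ preserves this product decomposition, giving $Q=(hG_{\Theta_1}h^{-1})\times(hG_{\Theta_2}h^{-1})$.

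The one step requiring care is locating each conjugated factor inside the correct $P_i$. Writing $h=h_1h_2$ with $h_i\in G_{\Lambda_i}$, I would use that $h_2\in G_{\Lambda_2}$ centralizes $G_{\Lambda_1}\supseteq G_{\Theta_1}$ (since $\Lambda_1\circ\Lambda_2$ is a join), so that $hG_{\Theta_1}h^{-1}=h_1G_{\Theta_1}h_1^{-1}\subseteq G_{\Lambda_1}=P_1$, and symmetrically $hG_{\Theta_2}h^{-1}\subseteq P_2$. Since both factors also lie in $Q$, this yields $hG_{\Theta_i}h^{-1}\subseteq Q\cap P_i$ for $i\in\{1,2\}$.

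Finally I would close the argument by a chain of inclusions. Because $P_1$ and $P_2$ commute and intersect trivially, the subgroups $Q\cap P_1$ and $Q\cap P_2$ commute, intersect trivially, and their internal product is contained in $Q$; thus
\[
Q=(hG_{\Theta_1}h^{-1})\times(hG_{\Theta_2}h^{-1})\subseteq (Q\cap P_1)\times(Q\cap P_2)\subseteq Q,
\]
forcing equality throughout, which is the desired conclusion. I do not anticipate a genuine obstacle here; the only delicate point is the bookkeeping in the previous paragraph showing that conjugation by $h$ sends $G_{\Theta_i}$ into $P_i$ rather than mixing the two factors, which is exactly where the join (commutation) hypothesis is used.
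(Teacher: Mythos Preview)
Your proof is correct and follows essentially the approach the paper intends: the lemma is stated with a \qed{} and the paper explicitly says ``details are left to the reader'' after recording the join decomposition $\Lambda=\Lambda_1\circ\Lambda_2$ with $P_i=gG_{\Lambda_i}g^{-1}$. Your argument faithfully fills in those details, using the identification of parabolic subgroups of $G$ contained in $P$ with parabolic subgroups of $G_\Lambda$ (equivalently, Proposition~\ref{prop:am}(1)) to write $Q=hG_\Theta h^{-1}$ with $h\in G_\Lambda$, and then exploiting the join structure exactly as the paper suggests.
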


\begin{Lmm}\label{lemma:adjacency-extension}
    Let $G$ be a graph product over a finite simple graph $\Gamma$. Let $P,Q$ be two distinct parabolic subgroups which are conjugate to vertex groups.

    Then $P$ and $Q$ commute if and only if there exist $g\in G$ and two adjacent vertices $v,w\in V\Gamma$ such that $P=gG_vg^{-1}$ and $Q=gG_wg^{-1}$. 
    \qed
\end{Lmm}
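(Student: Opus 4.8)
The backward implication is immediate: if $P=gG_vg^{-1}$ and $Q=gG_wg^{-1}$ with $v,w$ adjacent, then $G_v$ and $G_w$ commute by the very definition of the graph product, hence so do their conjugates $P$ and $Q$. The content lies in the forward direction, and my plan is to first show that commutation forces $Q\subseteq P^\perp$, and then to manufacture a single conjugating element realizing both $P$ and $Q$.

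First I would reduce to proving $Q\subseteq P^\perp$. Since $P$ and $Q$ commute elementwise, every element of $Q$ normalizes $P$, so $Q\subseteq N_G(P)=P\times P^\perp$, which is a product of the two parabolic subgroups $P$ and $P^\perp$. Applying \cref{lemma:product-parabolic-subgroup} to the parabolic subgroup $Q$ sitting inside $P\times P^\perp$ yields $Q=(Q\cap P)\times(Q\cap P^\perp)$. Both factors are parabolic (as intersections of parabolics) and they commute, so by the description of the type of a product of parabolic subgroups recalled just above the statement, the type of $Q$ is the \emph{join} of the types of $Q\cap P$ and $Q\cap P^\perp$. But $Q$ is conjugate to a vertex group, so its type is a single vertex $\{w\}$, which cannot be written as a join of two non-empty induced subgraphs. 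Hence one of the two factors is trivial. To rule out $Q\cap P^\perp=\{1\}$: this would force $Q\subseteq P$, and since the parabolic subgroups of $G$ contained in $P$ are exactly the parabolic subgroups of $P\cong G_v$ — which, $P$ having single-vertex type, are only $\{1\}$ and $P$ itself — the non-triviality of $Q$ would give $Q=P$, contradicting $P\neq Q$. Therefore $Q\cap P=\{1\}$ and $Q\subseteq P^\perp$.

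Next I would extract the common conjugator. Write $P=g_0G_vg_0^{-1}$, so that $P^\perp=g_0G_{\lk(v)}g_0^{-1}$. Viewing $P^\perp$ as a graph product over $\lk(v)$, the parabolic subgroup $Q\subseteq P^\perp$, being conjugate to a vertex group, takes the form $Q=(g_0k)G_w(g_0k)^{-1}$ for some $k\in G_{\lk(v)}$ and some vertex $w\in V\lk(v)$, so in particular $w$ is adjacent to $v$. Setting $g:=g_0k$, I would observe that since every vertex of $\lk(v)$ is adjacent to $v$, the subgroup $G_{\lk(v)}$ centralizes $G_v$; hence $kG_vk^{-1}=G_v$ and $gG_vg^{-1}=g_0G_vg_0^{-1}=P$. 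Thus $P=gG_vg^{-1}$ and $Q=gG_wg^{-1}$ with $v,w$ adjacent, as desired.

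The main obstacle I anticipate is this last absorption step: upgrading the a priori independent conjugators for $P$ and for $Q$ to a single element $g$. The mechanism that makes it work is precisely that the conjugator bringing $Q$ into $P^\perp$ can be chosen in $G_{\lk(v)}$, which commutes with $G_v$ and therefore fixes $P$ under conjugation; one must check carefully that this choice is available and that the resulting vertex $w$ genuinely lies in $\lk(v)$. The join-irreducibility dichotomy in the reduction step is the other point requiring care, but it follows cleanly from the already-established statement that the type of a direct product of parabolic subgroups is the join of the types of the factors.
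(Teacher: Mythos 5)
Your proof is correct and fills in exactly the details the paper explicitly leaves to the reader, using the ingredients the paper assembles for this purpose: the normalizer description $N_G(P)=P\times P^{\perp}$, Lemma~\ref{lemma:product-parabolic-subgroup}, and the fact (recalled just above the statement) that the type of a direct product of parabolic subgroups is the join of the types of the factors. Your hand-made construction of the common conjugator — writing $Q=(g_0k)G_w(g_0k)^{-1}$ with $k\in G_{\lk(v)}$ and absorbing $k$ using that $G_{\lk(v)}$ centralizes $G_v$ — is a correct realization of the ``moreover'' clause in the paper's hint, so the two arguments coincide in substance.
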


A vertex $v\in V\Gamma$ is \emph{untransvectable}\index{Untransvectable!Untransvectable vertex}\label{Def:Untransvectable} if there does not exist any vertex $w\neq v$ such that $\lk(v)\subseteq\st(w)$.

\begin{Lmm}\label{lemma:parabolic-untransvectable}
    Let $\Gamma$ be a finite simple graph which is not reduced to one vertex, let $G$ be a graph product over $\Gamma$. Let $C\subseteq G$ be a parabolic subgroup which is conjugate to $G_{\Upsilon}$ for some complete subgraph $\Upsilon\subseteq \Gamma$. Let $v\in V\Gamma$ be an untransvectable vertex.

    If $G_v\times G_v^{\perp}\subseteq C\times C^{\perp}$, then $C=G_v$.
\end{Lmm}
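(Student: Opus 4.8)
The plan is to translate the hypothesis into a containment of parabolic subgroups, locate $G_v$ inside the join $C\times C^\perp$ using \cref{lemma:product-parabolic-subgroup}, and then rule out $\Upsilon$ being strictly larger than $\{v\}$ by a combinatorial argument producing a forbidden transvection. First I would set up notation: write $C=gG_\Upsilon g^{-1}$, and recall that the normalizer $Q:=C\times C^\perp$ equals $gG_\Delta g^{-1}$, a parabolic subgroup of type $\Delta:=\Upsilon\circ\Upsilon^\perp$, the induced subgraph on $V\Upsilon\cup V\Upsilon^\perp$ (a join, since every vertex of $\Upsilon^\perp$ is joined to all of $\Upsilon$). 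Since $v$ is a single vertex we have $\{v\}^\perp=\lk(v)$, hence $G_v^\perp=G_{\lk(v)}$ and $G_v\times G_v^\perp=G_{\st(v)}$, so the hypothesis reads simply $G_{\st(v)}\subseteq Q$. Applying Proposition~\ref{prop:am}(1) to the intersection $Q\cap G_{\st(v)}=G_{\st(v)}$ and using the uniqueness of the type of a parabolic subgroup, I would deduce $\st(v)\subseteq\Delta$; in particular $\lk(v)\subseteq V\Upsilon\cup V\Upsilon^\perp$.

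Next I would locate $G_v$. Applying \cref{lemma:product-parabolic-subgroup} to the parabolic subgroup $G_v\subseteq Q=C\times C^\perp$ gives $G_v=(G_v\cap C)\times(G_v\cap C^\perp)$. Each factor is a parabolic subgroup of $G_v$, hence equals $\{1\}$ or $G_v$; as $G_v$ is non-trivial, exactly one of them is all of $G_v$, so either $G_v\subseteq C$ or $G_v\subseteq C^\perp$. Applying Proposition~\ref{prop:am}(1) once more to the non-trivial intersection ($G_v\cap C=G_v$, respectively $G_v\cap C^\perp=G_v$), and using uniqueness of types, the first case forces $v\in V\Upsilon$ and the second forces $v\in V\Upsilon^\perp$.

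The combinatorial heart of the argument, which I expect to run uniformly in both cases, is the following claim: \emph{if there exists a vertex $w\in V\Upsilon$ with $w\neq v$, then $\lk(v)\subseteq\st(w)$}, which is impossible because $v$ is untransvectable. To prove the claim take $x\in\lk(v)$ with $x\neq w$; by the first step $x\in V\Upsilon\cup V\Upsilon^\perp$. If $x\in V\Upsilon$, then $x$ and $w$ both lie in the complete graph $\Upsilon$ and are therefore adjacent; if $x\in V\Upsilon^\perp$, then $x$ is joined to every vertex of $\Upsilon$, in particular to $w$. In either case $x\in\lk(w)\subseteq\st(w)$, which proves the claim and yields the contradiction.

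To conclude: in the case $v\in V\Upsilon$, the claim shows $\Upsilon$ contains no vertex other than $v$, so $\Upsilon=\{v\}$; then $C=gG_vg^{-1}$ is a parabolic subgroup of type $\{v\}$ containing the non-trivial parabolic $G_v$, and since a single-vertex graph product has only $\{1\}$ and itself as parabolic subgroups, $C=G_v$, as desired. In the case $v\in V\Upsilon^\perp$, every $w\in V\Upsilon$ satisfies $w\neq v$ (as $V\Upsilon\cap V\Upsilon^\perp=\emptyset$), so the claim applies whenever $V\Upsilon\neq\emptyset$; hence $\Upsilon=\emptyset$ and $C=\{1\}$, which is excluded since $C$ is a non-trivial parabolic subgroup. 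Thus only the first case occurs and $C=G_v$. I expect the genuine mathematical content to be entirely contained in the short combinatorial claim $\lk(v)\subseteq\st(w)$; the main care needed elsewhere is purely organizational, namely translating $G_v^\perp=G_{\lk(v)}$ correctly, identifying the normalizer $Q$ with $gG_\Delta g^{-1}$, and invoking uniqueness of types so that the two cases can be dispatched by the same combinatorial argument.
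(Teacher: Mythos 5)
Your proof is correct, and its combinatorial heart — the inclusion of types $\st(v)\subseteq\Upsilon\circ\Upsilon^{\perp}$ together with the observation that $\Upsilon\circ\Upsilon^{\perp}\subseteq\st(w)$ for any $w\in V\Upsilon$, so that untransvectability forces $\Upsilon=\{v\}$ — is exactly the paper's argument. The only difference is organizational: where you first locate $G_v$ inside $C$ or $C^{\perp}$ via Lemma~\ref{lemma:product-parabolic-subgroup} and then finish using that a single-vertex graph product has no proper non-trivial parabolic subgroups (also handling the degenerate case $\Upsilon=\emptyset$, which the paper leaves implicit), the paper instead concludes by applying Proposition~\ref{prop:am}(2) to show that the conjugator $g$ lies in $G_{\st(v)}$ and hence normalizes $G_v$.
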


\begin{proof} Let $g\in G$ be such that $C=gG_{\Upsilon}g^{-1}$.
Since $G_v\times G_v^{\perp}\subseteq C\times C^{\perp}$ then by \cref{prop:am} we also have inclusion of their types, namely $\st(v)\subseteq \Upsilon\circ \Upsilon^\perp$.  

Now let $w\in V\Upsilon$. Since $\Upsilon$ is a clique, we get $\Upsilon \circ \Upsilon^\perp\subseteq \st(w)$. In particular $\st(v)\subseteq \st(w)$, but $v$ is assumed to be untransvectable therefore $w=v$ and hence $C=gG_{v}g^{-1}$.
Using \cref{prop:am} leads to $g\in G_{\st(v)}$ and therefore $C=G_v$.
\end{proof}

\subsection{Extension graphs, right-angled buildings and their automorphisms}\label{subsec:ExtensionGraphs}

In this section, we first review the definitions of the extension graph $\Gamma_G^e$ and of the right-angled building $\bD_G$ associated to a graph product $G$. We then establish some results regarding the comparison of their automorphism groups.

\subsubsection{Definitions}\label{sec:def-building}

\paragraph{Extension graphs.} They were introduced by Kim and Koberda in the context of right-angled Artin groups \cite{KK}. Their definition naturally extends to graph products, as already observed for instance in \cite[Definition~3.1]{CRKNG}.

\begin{Def}[{Extension graph \cite{KK}}]\label{de:extension-graph}
Let $G$ be a graph product over a finite simple graph $\Gamma$. The \emph{extension graph}\index{Extension graph} $\Gamma_G^e$ is the graph whose vertices are the conjugates of the vertex groups of $G$, where two distinct vertices are joined by an edge if the corresponding subgroups commute.
\end{Def}

Notice that $\Gamma_G^e$ contains a natural subgraph isomorphic to $\Gamma$, spanned by the vertices corresponding to the vertex groups. The action of $G$ by conjugation on the set of conjugates of vertex groups, induces a $G$-action by graph automorphisms on $\Gamma_G^e$. For this action $\Gamma$ is a fundamental domain, as follows using Lemma~\ref{lemma:adjacency-extension}.

We will call a \emph{clique}\index{Clique} a complete subgraph of $\Gamma$ (possibly empty and in particular not necessarily maximal). A \emph{standard clique subgroup}\index{Standard!Clique subgroup} of $G$ is a parabolic subgroup of $G$ equal to $G_\Upsilon$ for some clique subgraph $\Upsilon\subseteq \Gamma$. A \emph{standard clique coset}\index{Standard!Clique coset} is a left coset $gG_\Upsilon$ of a standard clique subgroup of $G$. Notice that the stabilizer of the standard clique coset $gG_\Upsilon$, for the $G$-action by left translation on itself, is equal to $g G_\Upsilon g^{-1}$. From this, one deduces that if $\Upsilon_1,\Upsilon_2$ are two clique subgraphs of $G$ and $g_1,g_2\in G$, then the standard clique cosets $g_1G_{\Upsilon_1}$ and $g_2G_{\Upsilon_2}$ are equal if and only if $\Upsilon_1=\Upsilon_2$ and $g_1^{-1}g_2\in G_{\Upsilon_2}$. In particular we can define the \emph{type} of a clique coset as the clique subgraph $\Upsilon\subseteq\Gamma$ associated to the coset. 

\paragraph{Right-angled buildings.} In \cite{Dav}, Davis associated a right-angled building to any graph product, as follows.

As usual, let $G$ be a graph product over a finite simple graph $\Gamma$. Let $\mathfrak{F}$\label{Def:FrakF} be the set of all standard clique cosets. Then $\mathfrak{F}$ is a partially ordered set, ordered by inclusion. Given $F_1\subseteq F_2$ in $\mathfrak{F}$, the \emph{interval}\index{Interval in $\mathfrak{F}$} $I_{F_1,F_2}$ is defined as \[I_{F_1,F_2}=\{F\in\mathfrak{F}\mid F_1\subseteq F\subseteq F_2\}.\] Every interval in $\mathfrak{F}$ is a Boolean lattice of finite rank, so there exists a unique (up to isomorphism) cube complex $\bD_G$ whose poset of cubes is isomorphic to the poset of intervals of $\mathfrak{F}$, see e.g.\ \cite[Proposition~A.38]{AB}. More concretely,
\begin{itemize}
    \item the vertex set of $\bD_G$ is the set of standard clique cosets of $G$;
    \item there is an edge between any two standard clique cosets of the form $gG_{\Lambda},gG_{\Upsilon}$, where $\Lambda\subseteq\Upsilon$ and $|V\Upsilon\setminus V\Lambda|=1$;
    \item for $k\geq 2$ one glues a $k$-cube to every subgraph isomorphic to the $1$-skeleton of a $k$-cube, with the obvious face identifications. 
\end{itemize}

\begin{Def}[{Right-angled building \cite{Dav}}]\label{de:building}
The cube complex $\bD_G$ is called the \emph{Davis realization of the right-angled building}\index{Right-angled building of a graph product} of $G$. 
\end{Def}
  
In the sequel, we will simply say that $\bD_G$ is the \emph{right-angled building} of $G$. We mention that $\bD_G$ is a $\mathrm{CAT}(0)$ cube complex by \cite[Corollary~11.7]{Dav}. 

By construction, every vertex $v\in V\bD_G$ corresponds to a left coset of the form $gG_\Lambda$, where $\Lambda\subseteq\Gamma$ is a complete subgraph. The \emph{rank} of $v$ is the cardinality of the clique $\Lambda$ (this is well-defined: if $gG_\Lambda=g'G_{\Lambda'}$, then $\Lambda=\Lambda'$). Notice that rank $0$ vertices correspond to elements of $G$.

\subsubsection{Comparison with right-angled Artin groups}

In this section, we will prove that the extension graph and the right-angled building of a graph product of countably infinite groups over a finite simple graph $\Gamma$, are isomorphic to the same objects associated to the right-angled Artin group over $\Gamma$. This will allow us to derive certain facts about these spaces from the analogous facts in the case of right-angled Artin groups.

More generally, let $\Gamma$ be a finite simple graph, and let $G$ and $H$ be two graph products over $\Gamma$ such that for every $v\in V\Gamma$, the vertex groups $G_v$ and $H_v$ have the same cardinality. For every $v\in V\Gamma$, we fix a bijection $\theta_v:G_v\to H_v$ such that $\theta_v(e)=e$.

Through normal forms (see Section~\ref{Sec:NormalForms}), this induces a bijection $\theta:G\to H$, in the following way. Let $\underline{\mathsf{w}}_g=(g_1,\dots,g_k)$ be a reduced word that represents $g$. For every $i\in\{1,\dots,k\}$, let $v_i\in V\Gamma$ be such that $g_i\in G_{v_i}$. Let $\underline{\mathsf{w}}_H=(\theta_{v_1}(g_1),\dots,\theta_{v_k}(g_k))$. Our condition that $\theta_{v_i}(e)=e$ ensures that $\underline{\mathsf{w}}_H$ is again a reduced word, which represents an element of $H$ denoted $\theta(g)$. Notice that $\theta(g)$ does not depend on the choice of a reduced word that represents $g$, and that $\theta:G\to H$ is indeed a bijection (its inverse is obtained by considering the bijections $\theta_v^{-1}$).

We say that the bijection $\theta$ defined above is the bijection \emph{combined from $(\theta_v)_{v\in V\Gamma}$}\index{Combined bijection}.

\begin{Lmm}\label{lemma:extension-graph}
    Let $\Gamma$ be a finite simple graph, and let $G,H$ be two graph products over $\Gamma$. Assume that for every $v\in V\Gamma$, the vertex groups $G_v$ and $H_v$ have the same cardinality, and choose a bijection $\theta_v:G_v\to H_v$ with $\theta_v(e)=e$. Let $\theta:G\to H$ be the bijection combined from $(\theta_v)_{v\in V\Gamma}$.

    Then for every $g\in G$ and every $v\in V\Gamma_G$, one has $\theta(gG_vg^{-1})=\theta(g)H_{v}\theta(g)^{-1}$; in particular $\theta$ induces a graph isomorphism $\theta^e:\Gamma_G^e\to\Gamma_H^e$.
\end{Lmm}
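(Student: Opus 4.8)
The plan is to read the displayed identity as the definition of the induced vertex map of $\theta^e$, and to prove that this map is well defined, bijective, and adjacency preserving. Recall that the vertices of $\Gamma_G^e$ are exactly the parabolic subgroups of $G$ conjugate to a vertex group, and that, by the uniqueness of the type of a parabolic subgroup and the computation of normalizers recalled in \cref{sec:first-facts}, one has $gG_vg^{-1}=g'G_{v'}g'^{-1}$ if and only if $v=v'$ and $g^{-1}g'\in N_G(G_v)=G_{\st(v)}$. Hence, setting $\theta^e(gG_vg^{-1}):=\theta(g)H_v\theta(g)^{-1}$, the whole difficulty of well-definedness is the following claim: for every $g\in G$, every $v\in V\Gamma$ and every $s\in G_{\st(v)}$, one has $\theta(g)^{-1}\theta(gs)\in H_{\st(v)}$. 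Indeed, since $H_{\st(v)}=N_H(H_v)$, this claim gives $\theta(gs)H_v\theta(gs)^{-1}=\theta(g)H_v\theta(g)^{-1}$, which is exactly independence of the chosen representative (note that $\theta$ is merely a bijection, so the left-hand side of the displayed identity is not the literal element-wise image of the subgroup, but the value of $\theta^e$ on that vertex).

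The core of the argument is the one-syllable version of the claim, proved by induction on the number of syllables of $g$: for $w\in V\Gamma$ and $t\in G_w$, one has $\theta(g)^{-1}\theta(gt)\in H_w$. Fix a reduced word $(g_1,\dots,g_k)$ representing $g$. If $(g_1,\dots,g_k,t)$ is already reduced, then by the syllable-wise definition of $\theta$ one gets $\theta(gt)=\theta(g)\theta_w(t)$, so $\theta(g)^{-1}\theta(gt)=\theta_w(t)\in H_w$. Otherwise, Green's normal form theorem \cite{Gre} lets us shuffle a syllable $g_i\in G_w$ to the end, producing a reduced word $(\tilde g_1,\dots,\tilde g_{k-1},g_i)$ for $g$; writing $g''=\tilde g_1\cdots \tilde g_{k-1}$ we have $\theta(g)=\theta(g'')\theta_w(g_i)$ and $gt=g''(g_it)$ with $g_it\in G_w$. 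If $g_it=e$ then $\theta(gt)=\theta(g'')$ and $\theta(g)^{-1}\theta(gt)=\theta_w(g_i)^{-1}\in H_w$; if $g_it\neq e$, the induction hypothesis applied to $g''$ and $g_it\in G_w$ gives $\theta(g'')^{-1}\theta(gt)\in H_w$, whence $\theta(g)^{-1}\theta(gt)=\theta_w(g_i)^{-1}\bigl(\theta(g'')^{-1}\theta(gt)\bigr)\in H_w$. Iterating this one-syllable statement along a reduced word for $s\in G_{\st(v)}$, all of whose syllables lie in vertex groups $G_w$ with $w\in\st(v)$, yields $\theta(g)^{-1}\theta(gs)\in H_{\st(v)}$, establishing the claim of the previous paragraph.

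It then remains to check that $\theta^e$ is a graph isomorphism. Bijectivity follows formally: the combined bijection $\theta^{-1}$ (assembled from the $\theta_v^{-1}$) induces in the same way a vertex map $\Gamma_H^e\to\Gamma_G^e$, and the two are mutually inverse because $\theta\circ\theta^{-1}=\mathrm{id}$ and $\theta^{-1}\circ\theta=\mathrm{id}$. For adjacency, note that two distinct vertices of $\Gamma_G^e$ are joined by an edge exactly when the corresponding conjugates commute; by \cref{lemma:adjacency-extension} this happens if and only if they can be written as $hG_ah^{-1}$ and $hG_bh^{-1}$ with $a,b$ adjacent in $\Gamma$, and then well-definedness gives $\theta^e(hG_ah^{-1})=\theta(h)H_a\theta(h)^{-1}$ and $\theta^e(hG_bh^{-1})=\theta(h)H_b\theta(h)^{-1}$, which commute by \cref{lemma:adjacency-extension} applied in $H$ (and conversely using $\theta^{-1}$). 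I expect the main obstacle to be the one-syllable sub-lemma: because $\theta$ is only a bijection and respects neither products nor inverses, one must argue purely at the level of reduced words, using that right multiplication by $t\in G_w$ alters only the unique reachable $G_w$-syllable at the tail of $g$, so that the discrepancy between $\theta(gt)$ and $\theta(g)$ stays confined to $H_w$.
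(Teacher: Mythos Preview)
Your argument is correct and complete. You are also right to flag the interpretation issue: read as a literal element-wise set equality, the displayed identity can fail, since $\theta_w(t^{-1})$ need not equal $\theta_w(t)^{-1}$ and hence $\theta(g'hg'^{-1})$ need not land in $\theta(g')H_v\theta(g')^{-1}$. What is actually needed, and what your one-syllable lemma plus iteration delivers, is the well-definedness of the assignment $gG_vg^{-1}\mapsto\theta(g)H_v\theta(g)^{-1}$; together with your bijectivity and adjacency checks (both via \cref{lemma:adjacency-extension}, exactly as in the paper), this yields the graph isomorphism $\theta^e$.

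The paper's route to well-definedness is different and shorter. It chooses a reduced word $(g_1,\dots,g_k)$ for $g$ with all tail syllables lying in $G_{\st(v)}$ pushed to the end, say $g_{j+1},\dots,g_k\in G_{\st(v)}$ and $g_j\notin G_{\st(v)}$; then both $gG_vg^{-1}$ and $\theta(g)H_v\theta(g)^{-1}$ depend only on the canonical prefix $g_1\cdots g_j$, because $\theta(g_{j+1}),\dots,\theta(g_k)\in H_{\st(v)}$ normalize $H_v$. This is a minimal-length coset representative argument, whereas you argue by induction on the number of syllables of $g$. Both prove the same thing; your approach makes explicit that only well-definedness is at stake (and not the literal set identity), while the paper's is terser but leans on the uniqueness of the canonical prefix.
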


\begin{proof}
Write $g=g_1\dots g_jg_{j+1}\dots g_k$, where each $g_i$ belongs to a vertex group, and $g_{j+1},\dots,g_k\in G_v\times G_v^{\perp}$, with $g_j\notin G_v\times G_v^{\perp}$. Then $gG_vg^{-1}=g_1\dots g_jG_vg_j^{-1}\dots g_1^{-1}$, so \[\theta(gG_vg^{-1})=\theta(g_1)\dots\theta(g_j) H_v\theta(g_j)^{-1}\dots\theta(g_1)^{-1}=\theta(g)H_v\theta(g)^{-1}.\]

   The map $\theta^e:V\Gamma_G^e\to V\Gamma_H^e$ defined by $\theta^e(gG_vg^{-1})=\theta(gG_vg^{-1})$ is a bijection. That it induces a graph isomorphism follows from Lemma~\ref{lemma:adjacency-extension}.
\end{proof}

The analogous statement for the right-angled building is as follows.

\begin{Lmm}\label{lemma:building}
   Let $\Gamma$ be a finite simple graph, and let $G,H$ be two graph products over $\Gamma$. Assume that for every $v\in V\Gamma$, the vertex groups $G_v$ and $H_v$ have the same cardinality, and choose a bijection $\theta_v:G_v\to H_v$ with $\theta_v(e)=e$. Let $\theta:G\to H$ be the bijection combined from $(\theta_v)_{v\in V\Gamma}$.

   Then for every standard clique coset $gG_\Upsilon$, one has $\theta(gG_\Upsilon)=\theta(g)H_\Upsilon$; in particular $\theta$ induces a cubical isomorphism $\theta_\bD:\bD_G\to\bD_H$.
\end{Lmm}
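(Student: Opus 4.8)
The plan is to first establish the set-theoretic identity $\theta(gG_\Upsilon)=\theta(g)H_\Upsilon$ for every standard clique coset, and then deduce the cubical isomorphism from the fact that $\bD_G$ and $\bD_H$ are entirely determined by the posets of standard clique cosets ordered by inclusion. For the identity, the key is to pick a good representative. Given a standard clique coset $gG_\Upsilon$, I would let $g_0$ be an element of minimal syllable length within $gG_\Upsilon$ (by Green's normal form theorem all reduced words representing a fixed element have the same number of syllables, so syllable length is a well-defined function on $G$). I claim $g_0$ has no element of $G_u$ in its tail for any $u\in V\Upsilon$: otherwise some reduced word for $g_0$ would end in a syllable $z\in G_u\setminus\{1\}$, and $g_0z^{-1}\in g_0G_\Upsilon$ would have one fewer syllable, contradicting minimality.

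The combinatorial heart of the argument is then to show that for such $g_0$ and any $w\in G_\Upsilon$, concatenating a reduced word for $g_0$ with a reduced word for $w$ yields a reduced word for $g_0w$. By Green's normal form theorem, if this concatenation were not reduced, then after a sequence of shuffles some syllable $z$ of $g_0$ lying in a vertex group $G_u$ and some syllable $z'$ of $w$ lying in the same $G_u$ would become adjacent, all syllables strictly between them commuting with $G_u$. Since $w\in G_\Upsilon$ and $\Upsilon$ is a clique, every syllable of $w$ preceding $z'$ lies in a vertex group adjacent to $u$, hence commutes with $G_u$; so the condition reduces to the syllables of $g_0$ following $z$ commuting with $G_u$, which is exactly the statement that $z\in G_u$ lies in the tail of $g_0$ with $u\in V\Upsilon$. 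This contradicts the choice of $g_0$. This is the step I expect to require the most care, since it is where the clique hypothesis on $\Upsilon$ is genuinely used.

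With reduced concatenation in hand, the very definition of the combined bijection gives $\theta(g_0w)=\theta(g_0)\theta(w)$ for all $w\in G_\Upsilon$. As $\theta$ restricts to a bijection $G_\Upsilon\to H_\Upsilon$ (it sends reduced words supported on $\Upsilon$ to reduced words supported on $\Upsilon$, syllable by syllable), letting $w$ range over $G_\Upsilon$ yields $\theta(g_0G_\Upsilon)=\theta(g_0)H_\Upsilon$. For an arbitrary representative $g$ of the coset we then get $\theta(gG_\Upsilon)=\theta(g_0G_\Upsilon)=\theta(g_0)H_\Upsilon$, and since $\theta(g)\in\theta(g_0G_\Upsilon)$ this common set equals $\theta(g)H_\Upsilon$, establishing the identity for every $g$.

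Finally, the identity shows that $\theta$ carries each standard clique coset of $G$ of type $\Upsilon$ bijectively onto a standard clique coset of $H$ of the same type; applying the same reasoning to the inverse bijection (combined from the $\theta_v^{-1}$, as noted in the construction of $\theta$) shows that this is a bijection between the two sets of standard clique cosets. Because $\theta\colon G\to H$ is a bijection of the underlying sets, it preserves and reflects inclusions of subsets, and therefore induces an isomorphism between the posets $\mathfrak{F}$ of standard clique cosets of $G$ and of $H$, each ordered by inclusion. Since $\bD_G$ and $\bD_H$ are the cube complexes canonically associated to these posets, their cubes being the intervals of $\mathfrak{F}$, this poset isomorphism promotes to a cubical isomorphism $\theta_\bD\colon\bD_G\to\bD_H$, as desired.
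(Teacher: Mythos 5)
Your proof is correct and takes essentially the same route as the paper: both arguments hinge on choosing a coset representative $g_0\in gG_\Upsilon$ whose tail contains no element of $G_\Upsilon$ (the paper obtains it by rearranging a reduced word for $g$ so that its $G_\Upsilon$-part sits at the end, you by minimizing syllable length in the coset), then use reducedness of the concatenation with words in $G_\Upsilon$ to get $\theta(g_0G_\Upsilon)=\theta(g_0)H_\Upsilon$, and finally read off the cubical isomorphism from the poset of standard clique cosets. If anything, you spell out the concatenation-reducedness step, using that $\Upsilon$ is a clique, which the paper's proof uses implicitly.
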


\begin{proof}
   Let us first prove that for all $g\in G$ and all $\Upsilon \subseteq \Gamma$ we have $\theta_G(gG_\Upsilon)=\theta_G(g)H_\Upsilon$.\\
   Let $\underline{\mathsf{w}}=(g_1,\dots,g_k)$ be a reduced word that represents $g$, chosen so that there exists $i\in\{1,\dots,k\}$ such that all $g_j$ with $j>i$ belong to $G_\Upsilon$, and every $h\in G_\Upsilon$ that belongs to the tail of $g$ is one of the $g_j$ with $j>i$. In this way $gG_\Upsilon=g_1\dots g_i G_\Upsilon$, and $\theta_G(gG_{\Upsilon})=\theta_G(g_1)\dots\theta_G(g_i)H_\Upsilon$. On the other hand $\theta_G(g)=\theta_G(g_1)\dots\theta_G(g_i)\theta_G(g_{i+1})\dots\theta_G(g_k)$, and the elements $\theta_G(g_{i+1}),\dots,\theta_G(g_k)$ all belong to $H_\Upsilon$. This proves that $\theta_G(gG_\Upsilon)=\theta_G(g)H_\Upsilon$, as claimed. 

In particular, the above proves that the map $gG_\Upsilon\mapsto \theta_G(g)H_\Upsilon$ is well-defined, and by construction it induces a cubical isomorphism from $\bD_G$ to $\bD_H$.
\end{proof}

\begin{Cor}\label{cor:rank}
    Every automorphism of $\bD_G$ preserves ranks of vertices.
\end{Cor}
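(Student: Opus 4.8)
The plan is to give an intrinsic, purely combinatorial characterization of the rank of a vertex of $\bD_G$, from which invariance under automorphisms is immediate. Concretely, I would show that
\[\mathrm{rank}(v) = d(v, R_0),\]
where $d$ is the combinatorial distance in the $1$-skeleton of $\bD_G$ and $R_0$ is the set of rank-$0$ vertices, and that $R_0$ itself admits an automorphism-invariant description, namely as the set of vertices of finite degree. Since any automorphism of $\bD_G$ preserves both the edge metric $d$ and the degree of each vertex, it then preserves $R_0$, and hence the rank function.

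Two elementary structural facts drive the argument. First, every edge of $\bD_G$ joins two vertices whose ranks differ by exactly $1$: this is immediate from the construction preceding \cref{de:building}, since an edge connects standard clique cosets $gG_\Lambda$ and $gG_\Upsilon$ with $\Lambda\subseteq\Upsilon$ and $|V\Upsilon\setminus V\Lambda|=1$. Consequently the rank changes by $\pm 1$ at each step of any edge-path, so $d(v,R_0)\ge\mathrm{rank}(v)$; and since one may descend from $v=gG_\Lambda$ to the rank-$0$ vertex $gG_\emptyset=\{g\}$ by deleting the vertices of $\Lambda$ one at a time (each subclique of the clique $\Lambda$ being again a clique), one obtains a path of length $|\Lambda|=\mathrm{rank}(v)$, whence equality. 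Second, I would compute the degree of a vertex $v=gG_\Lambda$: its up-neighbours are the cosets $gG_{\Lambda\cup\{w\}}$ with $w$ adjacent to every vertex of $\Lambda$, a finite set of cardinality at most $|V\Gamma|$; its down-neighbours are the cosets of $G_{\Lambda\setminus\{w\}}$ contained in $gG_\Lambda$ for $w\in\Lambda$, of which there are $\sum_{w\in\Lambda}|G_w|$, since $G_\Lambda=\prod_{u\in\Lambda}G_u$ is a direct product over the clique $\Lambda$. As the vertex groups are infinite, this shows that $v$ has finite degree if and only if $\Lambda=\emptyset$; that is, $R_0$ is exactly the set of vertices of finite degree.

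Combining the two facts yields the displayed formula for the rank in terms of $d$ and $R_0$, and the corollary follows at once. The main point to get right is the degree computation identifying $R_0$ intrinsically, and this is where the hypothesis that the vertex groups are infinite (in force throughout this subsection) is used in an essential way. Alternatively, one could bypass the degree computation altogether by invoking \cref{lemma:building}: the rank-preserving cubical isomorphism $\bD_G\cong\bD_{A_\Gamma}$ with the right-angled Artin group over $\Gamma$ conjugates automorphisms of $\bD_G$ to automorphisms of $\bD_{A_\Gamma}$, so it would suffice to know the statement for right-angled Artin groups. I would nonetheless favour the self-contained combinatorial argument above, as it makes no appeal to the corresponding fact for RAAGs.
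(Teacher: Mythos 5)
Your proof is correct, but it takes a genuinely different route from the paper. The paper's proof is a two-line transfer argument: it invokes the corresponding fact for right-angled Artin groups, established in \cite[Lemma~2.5]{HH-L1}, and pulls it back through the rank-preserving cubical isomorphism $\bD_G\cong\bD_{A_\Gamma}$ of Lemma~\ref{lemma:building} --- precisely the ``alternative'' you mention and decline at the end of your proposal. Your argument instead characterizes the rank intrinsically: the two structural facts you use (edges of $\bD_G$ join cosets $gG_\Lambda\subseteq gG_\Upsilon$ with $|V\Upsilon\setminus V\Lambda|=1$, so ranks change by $\pm 1$ along edges; and a vertex $gG_\Lambda$ has finitely many up-neighbours but $\sum_{w\in V\Lambda}|G_w|$ down-neighbours) are both verified correctly, and together they give $\mathrm{rank}(v)=d(v,R_0)$ with $R_0$ the set of finite-degree vertices, which is manifestly $\Aut(\bD_G)$-invariant. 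What your approach buys is self-containedness --- no appeal to \cite{HH-L1} --- and slightly more generality: it only needs the vertex groups to be infinite, whereas the paper's transfer to $A_\Gamma$ requires them to be \emph{countably} infinite (to admit bijections with $\mathbb{Z}$). What the paper's approach buys is brevity, since Lemma~\ref{lemma:building} and the RAAG result are already available, and consistency with the surrounding strategy of this subsection, which is precisely to derive facts about $\bD_G$ and $\Gamma_G^e$ from their RAAG counterparts.
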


\begin{proof}
    This follows from the case of right-angled Artin groups, established in \cite[Lemma~2.5]{HH-L1}, together with Lemma~\ref{lemma:building}.
\end{proof}

\subsubsection{Automorphisms of the extension graph and the right-angled building}

We equip $\Isom(\Gamma_G^e,\Gamma_H^e)$ and $\Isom(\bD_G,\bD_H)$ with the compact-open topology (equivalently, the topology of pointwise convergence on their vertex sets), and with the $(G\times H)$-action by left/right multiplication, i.e.\ for $f\in\Isom(\Gamma_G^e,\Gamma_H^e)$ and $(g,h)\in G\times H$, we let $(g,h)\cdot f=gfh^{-1}$. And similarly if $f\in\Isom(\bD_G,\bD_H)$. The main result of the present section is the following proposition -- this was established as \cite[Lemma~2.6]{HH-L1} for right-angled Artin groups, based on work of Huang \cite{Hua}.

\begin{Prop}\label{theo:autos}
   Let $G$ and $H$ be two graph products with countably infinite vertex groups over a common finite simple graph $\Gamma$ with no transvection and no partial conjugation.

   Then there exists a $(G\times H)$-equivariant continuous map from $\Isom(\Gamma_G^e,\Gamma_H^e)$ to $\Isom(\bD_G,\bD_H)$.
\end{Prop}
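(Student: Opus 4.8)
The plan is to reduce the statement to the already known case of right-angled Artin groups, namely \cite[Lemma~2.6]{HH-L1}, by means of the combined-bijection identifications of Lemmas~\ref{lemma:extension-graph} and~\ref{lemma:building}. Let $A$ be the right-angled Artin group over $\Gamma$, i.e.\ the graph product over $\Gamma$ all of whose vertex groups are equal to $\bZ$. Since the vertex groups of $G$ and $H$ are all countably infinite, hence of the same cardinality as $\bZ$, I would choose for every $v\in V\Gamma$ bijections $\theta^G_v:G_v\to\bZ$ and $\theta^H_v:H_v\to\bZ$ fixing the identity, and form the combined bijections $\theta^G:G\to A$ and $\theta^H:H\to A$. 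By Lemma~\ref{lemma:extension-graph} these induce graph isomorphisms $\alpha:\Gamma_G^e\to\Gamma_A^e$ and $\beta:\Gamma_H^e\to\Gamma_A^e$, and by Lemma~\ref{lemma:building} they induce cubical isomorphisms $a_G:\bD_G\to\bD_A$ and $b_H:\bD_H\to\bD_A$.

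First I would transport the problem across these identifications. Conjugation by the fixed isomorphisms $\alpha,\beta$ (resp.\ $a_G,b_H$) gives a homeomorphism $\Isom(\Gamma_G^e,\Gamma_H^e)\to\Aut(\Gamma_A^e)$, $f\mapsto \beta f\alpha^{-1}$ (resp.\ $\Isom(\bD_G,\bD_H)\to\Aut(\bD_A)$, $F\mapsto b_H F a_G^{-1}$). Feeding $\beta f\alpha^{-1}$ into the $(A\times A)$-equivariant continuous map $\Psi:\Aut(\Gamma_A^e)\to\Aut(\bD_A)$ provided by \cite[Lemma~2.6]{HH-L1} and conjugating back, I obtain a map
\[\Phi:\Isom(\Gamma_G^e,\Gamma_H^e)\longrightarrow\Isom(\bD_G,\bD_H),\qquad \Phi(f)=b_H^{-1}\,\Psi(\beta f\alpha^{-1})\,a_G.\]
Continuity of $\Phi$ is immediate, since $\Psi$ is continuous and conjugation by a fixed isomorphism is a homeomorphism for the topologies of pointwise convergence on vertex sets.

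The hard part will be to verify that $\Phi$ is $(G\times H)$-equivariant, and I expect this to be the main obstacle. The difficulty is that the combined bijections $\theta^G,\theta^H$ are not group homomorphisms (multiplicativity fails exactly at the syllables that merge when a product of reduced words is reduced), so the $G$- and $H$-actions transported through $\alpha,a_G$ (resp.\ $\beta,b_H$) are \emph{not} given by genuine left translations of $A$. Consequently the $(A\times A)$-equivariance of $\Psi$ does not transfer to $\Phi$ for free. To overcome this, I would show that for every $g\in G$ the transported action $\alpha\gamma_g^e\alpha^{-1}$ on $\Gamma_A^e$ and the transported action $a_G\gamma_g^\bD a_G^{-1}$ on $\bD_A$ (where $\gamma_g^e,\gamma_g^\bD$ denote the genuine $G$-actions on $\Gamma_G^e$ and $\bD_G$) are intertwined by $\Psi$, i.e.\ $\Psi(\alpha\gamma_g^e\alpha^{-1})=a_G\gamma_g^\bD a_G^{-1}$, and symmetrically for $H$. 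Granting this, expanding $\Phi((g,h)\cdot f)$ and pushing the transported actions through $\Psi$ produces exactly $(g,h)\cdot\Phi(f)$.

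To establish this intertwining — which is really the crux — I would use that the map $\Psi$ is natural with respect to the combinatorial structures identified in Lemmas~\ref{lemma:extension-graph} and~\ref{lemma:building}: both transported actions originate from the single geometric $G$-action on $G$'s own extension graph and building, which are already compatible there, and this compatibility is preserved by the $\theta$-identifications. Concretely, I expect to reduce the intertwining to the naturality statement that $\Psi$ sends the automorphism of $\Gamma_A^e$ induced by a structure automorphism to the automorphism of $\bD_A$ induced by the same automorphism, which can be read off from the way the building automorphism is produced in \cite{HH-L1}. As a fallback avoiding any appeal to the internal construction of $\Psi$, I would instead verify the identity $\Psi(\alpha\gamma_g^e\alpha^{-1})=a_G\gamma_g^\bD a_G^{-1}$ on the standard copy $\Gamma\subseteq\Gamma_A^e$ and on the base clique cosets, where one may choose representatives for which no syllable merging occurs and $\theta^G$ is genuinely multiplicative, and then propagate the equality to all of $\Gamma_A^e$ and $\bD_A$ using $(A\times A)$-equivariance of $\Psi$ together with the transitivity of the $A$-action on rank-$0$ vertices. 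Once this naturality is secured, the $(G\times H)$-equivariance of $\Phi$, and hence the proposition, follows.
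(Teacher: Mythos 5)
Your map is in fact literally the paper's map: the proof of Proposition~\ref{theo:autos} establishes exactly your conjugation formula as Equation~\eqref{eq:PsiBeta}, $\Psi(\beta)=\theta_H^{-1}\circ\Psi_A(\beta_A)\circ\theta_G$ with $\beta_A=\theta_H^e\beta(\theta_G^e)^{-1}$, so the two constructions coincide. The genuine difference is the order of the logic, and it matters. The paper does not \emph{define} the map by conjugating $\Psi_A$; it defines $\Psi(\beta)$ intrinsically on maximal standard clique cosets via stabilizers (if $\beta(gG_\Upsilon g^{-1})=hH_\Lambda h^{-1}$, set $\Psi(\beta)(gG_\Upsilon):=hH_\Lambda$), and uses the RAAG reduction only to prove well-definedness on rank~$0$ vertices, i.e.\ that the images of the maximal cosets through a given rank~$0$ vertex intersect in a singleton (Lemma~\ref{lemma:intersection-cosets}). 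With the intrinsic definition, $(G\times H)$-equivariance is essentially formal, since pre/post-composition by $(g,h)$ just conjugates the stabilizers; your intertwining problem never arises. Your organization creates that problem and then must solve it. It is solvable, and your primary strategy (naturality with respect to the stabilizer description) is the correct one, but to be a proof it must be carried out rather than invoked: writing $\gamma_g^e$ and $\gamma_g^{\bD}$ for the actions of $g\in G$ on $\Gamma_G^e$ and $\bD_G$, a direct computation using Lemma~\ref{lemma:extension-graph} and Lemma~\ref{lemma:building} (applied to both $\theta_G$ and $\theta_G^{-1}$) shows that $\Psi_A(\alpha\gamma_g^e\alpha^{-1})$ and $a_G\gamma_g^{\bD}a_G^{-1}$ both send a maximal coset $kA_\Upsilon$ to $\theta_G\bigl(g\,\theta_G^{-1}(k)\bigr)A_\Upsilon$; one then concludes because two rank-preserving automorphisms of $\bD_A$ agreeing on all maximal-rank vertices agree everywhere (rank~$0$ vertices are the singleton intersections of the maximal cosets containing them, using transvection-freeness, and higher-rank vertices are determined by the rank~$0$ vertices below them, cf.\ Corollary~\ref{cor:rank}). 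Note also that your step ``pushing the transported actions through $\Psi$'' uses that $\Psi_A$ is \emph{multiplicative} on $\Aut(\Gamma_A^e)$, not merely $(A\times A)$-equivariant: since $\alpha\gamma_g^e\alpha^{-1}$ is not a translation by an element of $A$, bare $(A\times A)$-equivariance says nothing about it. Multiplicativity does hold (it is immediate from the stabilizer description and is part of what \cite{HH-L1} proves), but it must be invoked explicitly.

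Your fallback, by contrast, does not work as stated, for the same reason. The identity $\Psi_A(aFa'^{-1})=a\Psi_A(F)a'^{-1}$ relates values of $\Psi_A$ at \emph{different arguments} for genuine translations $a,a'\in A$; it gives no mechanism for propagating pointwise information about the single automorphism $\Psi_A(\alpha\gamma_g^e\alpha^{-1})$ from the base clique cosets to the rest of $\bD_A$, precisely because the transported actions are not translations (this is the non-multiplicativity of $\theta_G$ you yourself identified). Moreover, agreement of two automorphisms of $\bD_A$ on the standard fundamental domain does not force agreement everywhere: for instance, the automorphism of $\bD_A$ induced by inverting one standard generator of $A$ fixes every coset $A_\Upsilon$ of the fundamental domain but is not the identity. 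So the propagation scheme ``check on the base cosets, spread by transitivity'' is invalid; the computation on maximal cosets described above is what closes the argument.
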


The main part of the proof is the definition of the map from $\Isom(\Gamma^e_G,\Gamma^e_H)$ to $\Isom(\bD_G,\bD_H)$. The crucial step for this is to extend Huang's \cite[Lemma~4.12]{Hua} to graph products. So let us start with $\beta\in \Isom(\Gamma^e_G,\Gamma^e_H)$ and let us define its image in $\Isom(\bD_G,\bD_H)$ denoted by $\Psi(\beta)$.
First, remark that the map sending a maximal standard clique coset $gG_\Upsilon$ to its stabilizer $gG_\Upsilon g^{-1}$, is a bijection between maximal standard clique cosets and maximal cliques in the extension graph of $G$. 
So, if $\beta(gG_\Upsilon g^{-1})=h H_\Lambda h^{-1}$ we define $\Psi(\beta)(gG_\Upsilon):=hH_{\Lambda}$. This only defines $\Psi(\beta)$ on maximal vertices. The goal is now to extend this definition to an element in $\Isom(\bD_G,\bD_H)$. To do so, let us define the image of some $g\in G$ viewed as a rank $0$ vertex of $\bD_G$. 

\begin{lemma}\label{lemma:intersection-cosets}
    Let $\Gamma$ be a finite simple graph with no transvection and no partial conjugation, not reduced to one vertex. Let $G$ and $H$ be graph products over $\Gamma$ with countably infinite vertex groups. Let $\beta\in\Isom(\Gamma_G^e,\Gamma_H^e)$. Let $g\in G$, and let $B_1,\dots,B_n$ be the maximal standard clique cosets containing $g$.

    Then $\Psi(\beta)(B_1)\cap\dots\cap\Psi(\beta)(B_n)$ is a singleton denoted by $\{\Psi(\beta)(g)\}$.

    Moreover the map $g\mapsto \Psi(\beta)(g)$ is a bijection from $G$ to $H$ that sends standard clique cosets to standard clique cosets of the same rank.
\end{lemma}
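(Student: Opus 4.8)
The plan is to reduce the statement to the already-known case of right-angled Artin groups, exploiting the combinatorial isomorphisms provided by Lemmas~\ref{lemma:extension-graph} and~\ref{lemma:building}. Let $A_\Gamma$ denote the right-angled Artin group over $\Gamma$; since its vertex groups $\bZ$ and all the $G_v,H_v$ are countably infinite, they share the same cardinality, so we may fix combined bijections $\theta_G:A_\Gamma\to G$ and $\theta_H:A_\Gamma\to H$. By Lemma~\ref{lemma:extension-graph} these induce graph isomorphisms $\theta_G^e:\Gamma_{A_\Gamma}^e\to\Gamma_G^e$ and $\theta_H^e:\Gamma_{A_\Gamma}^e\to\Gamma_H^e$, and by Lemma~\ref{lemma:building} cubical isomorphisms $(\theta_G)_\bD:\bD_{A_\Gamma}\to\bD_G$ and $(\theta_H)_\bD:\bD_{A_\Gamma}\to\bD_H$ that preserve ranks and send each standard clique coset $aA_\Upsilon$ to $\theta(a)G_\Upsilon$. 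Setting $\tilde\beta:=(\theta_H^e)^{-1}\circ\beta\circ\theta_G^e\in\Aut(\Gamma_{A_\Gamma}^e)$, the analogous construction for $A_\Gamma$ produces a map $\Psi_{A_\Gamma}(\tilde\beta)$ whose good behaviour is exactly Huang's \cite[Lemma~4.12]{Hua} (see also \cite[Lemma~2.6]{HH-L1}); these results apply because the hypotheses that $\Gamma$ is not a point and has no transvection and no partial conjugation are inherited by $A_\Gamma$.

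The key step is to check that $\Psi$ is \emph{natural} with respect to these identifications, namely that $\Psi(\beta)\circ(\theta_G)_\bD=(\theta_H)_\bD\circ\Psi_{A_\Gamma}(\tilde\beta)$ on maximal vertices. I would verify this by an explicit computation on a maximal standard clique coset $aA_\Upsilon$: its stabilizer $aA_\Upsilon a^{-1}$ is carried by $\theta_G^e$ to the maximal clique $\theta_G(a)G_\Upsilon\theta_G(a)^{-1}$ (Lemma~\ref{lemma:extension-graph}), then by $\beta$ to some maximal clique $hH_\Lambda h^{-1}$, and finally the definitions of $\Psi$ and $\Psi_{A_\Gamma}$ unwind so that both sides equal $hH_\Lambda$. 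The only thing to track is the passage between a maximal clique coset and its stabilizer, which is the bijection recalled just before Lemma~\ref{lemma:intersection-cosets}, together with the compatibility of $\theta_G,\theta_H$ with conjugation from Lemma~\ref{lemma:extension-graph}.

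Granting naturality on maximal vertices, the rest is transfer. Since $(\theta_G)_\bD$ preserves ranks and incidence, the maximal standard clique cosets $B_1,\dots,B_n$ containing $g$ are exactly the images under $(\theta_G)_\bD$ of the maximal standard clique cosets $B_1^A,\dots,B_n^A$ of $A_\Gamma$ containing $a:=\theta_G^{-1}(g)$. Because $\theta_H$ is a \emph{bijection} $A_\Gamma\to H$ sending each $bA_\Lambda$ to $\theta_H(b)H_\Lambda$, it intertwines intersections, giving $\bigcap_{i}\Psi(\beta)(B_i)=\theta_H\bigl(\bigcap_i \Psi_{A_\Gamma}(\tilde\beta)(B_i^A)\bigr)$. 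Huang's lemma guarantees that the right-hand intersection is a single rank-$0$ vertex of $\bD_{A_\Gamma}$, whence $\bigcap_i\Psi(\beta)(B_i)$ is a singleton, which we call $\Psi(\beta)(g)$; concretely $\Psi(\beta)=\theta_H\circ\Psi_{A_\Gamma}(\tilde\beta)\circ\theta_G^{-1}$ on rank-$0$ vertices. As a reality check, the hypotheses forbid a vertex joined to all others by Remark~\ref{rk:transvection-free-factors}, so the intersection of all maximal cliques of $\Gamma$ is empty and already $\bigcap_i B_i=\{g\}$ on the source side. For the \emph{moreover} part, $\Psi_{A_\Gamma}(\tilde\beta)$ is a bijection of $A_\Gamma$ sending standard clique cosets to standard clique cosets of the same rank (the RAAG case again, with inverse $\Psi_{A_\Gamma}(\tilde\beta^{-1})$), and both $\theta_G,\theta_H$ enjoy the same property by Lemma~\ref{lemma:building}; composing, $g\mapsto\Psi(\beta)(g)$ is a rank-preserving bijection of standard clique cosets.

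The main obstacle I anticipate is purely the bookkeeping in the naturality step: the map $\Psi$ is defined on maximal vertices through \emph{stabilizers} inside the extension graph, whereas the building and the combined bijections are phrased in terms of clique \emph{cosets}, so one must pass back and forth between a coset $gG_\Upsilon$ and its stabilizer $gG_\Upsilon g^{-1}$ at each stage and apply $\theta_G,\theta_H$ consistently. Once the intertwining identity is in place, every assertion of the lemma becomes a formal consequence of the corresponding right-angled Artin group statement, so no new combinatorial input beyond \cite{Hua} is required.
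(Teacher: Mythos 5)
Your proposal is correct and follows essentially the same route as the paper's proof: reduce to the right-angled Artin group over $\Gamma$ via combined bijections, establish the intertwining identity $\Psi(\beta)=\theta_H^{-1}\circ\Psi_A(\beta_A)\circ\theta_G$ (your ``naturality'' step, which is Equation~\eqref{eq:PsiBeta} in the paper), invoke Huang's results \cite[Lemmas~4.12 and~4.17]{Hua} for the singleton intersection, and transfer bijectivity and rank-preservation back through the combined bijections. The only cosmetic differences are the direction in which you orient the bijections and that you obtain the inverse from $\Psi_{A_\Gamma}(\tilde\beta^{-1})$ at the RAAG level rather than from $\Psi'(\beta^{-1})$ directly between $H$ and $G$.
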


\begin{proof}
We start with the first assertion. In the case of right-angled Artin groups, this was proved by Huang  \cite[Lemmas~4.12 and~4.17]{Hua}, see in particular \cite[Equation~4.13]{Hua}. We explain how to derive the case of arbitrary graph products with countably infinite vertex groups, from the case of right-angled Artin groups.
In the following, we denote by $A$ the right-angled Artin group defined over $\Gamma$.

    For every $v\in V\Gamma$, we fix bijections $\theta_G^v:G_v\to\mathbb{Z}$ and $\theta_H^v:H_v\to\mathbb{Z}$. Through normal forms, this induces bijections $\theta_G:G\to A$ and $\theta_H:H\to A$ -- here $\theta_G$ is combined from $(\theta_G^v)_{v\in V\Gamma}$, and $\theta_H$ is combined from $(\theta_H^v)_{v\in V\Gamma}$, in the sense of the previous section.

    Let $\theta_G^e:\Gamma_G^e\to\Gamma_A^e$ and $\theta_H^e:\Gamma_H^e\to\Gamma_A^e$ be the induced bijections provided by Lemma~\ref{lemma:extension-graph}. Let $\beta_A\in\Aut(\Gamma_A^e)$ be defined by $\beta_A=\theta_H^e\beta(\theta_G^e)^{-1}$ (see also \cref{fig:diagrammepreuveDavisCplx}).
    
\begin{figure}[htbp]
    \centering
    \begin{tikzpicture}[scale=1.25]
        \node (gammaeG) at (0,0) {$\Gamma^e_G$};
        \node (gammaeH) at (2,0) {$\Gamma^e_H$};
        \draw[->,>=latex] (gammaeG)--(gammaeH) node[midway,above]{$\beta$};
        \node (gammaA1) at (0,-2) {$\Gamma^e_A$};
        \node (gammaA2) at (2,-2) {$\Gamma^e_A$};
        \draw[->,>=latex, dashed] (gammaA1)--(gammaA2)node[midway,above]{$\beta_A$};
        \draw[->,>=latex] (gammaeG)--(gammaA1) node[midway,left]{$\theta^e_G$};
        \draw[->,>=latex] (gammaeH)--(gammaA2) node[midway,right]{$\theta^e_H$};
    \end{tikzpicture}
    \caption{Definition of $\beta_A$}
    \label{fig:diagrammepreuveDavisCplx}
\end{figure}
Similarly as in the discussion above the lemma, we can define $\Psi_A(\beta_A)$ as a bijection of the set of maximal standard clique cosets of $A$. Using \cref{lemma:extension-graph,lemma:building} and the definitions of $\Psi(\beta)$ and $\Psi_A(\beta_A)$, we get

\begin{equation}\label{eq:PsiBeta}
    \Psi(\beta)=\theta^{-1}_H \circ \Psi_A(\beta_A) \circ \theta_G.
\end{equation}

Let $B=gG_\Upsilon$ be a maximal standard clique coset of $G$, where $g\in G$ and $\Upsilon\subseteq\Gamma$ is a maximal clique. By Lemma~\ref{lemma:building}, we have $\theta_G(B)=\theta_G(g)A_\Upsilon$.  

The maximal standard clique cosets $B_i$ containing $g$ coincide with the cosets $gG_{\Upsilon_i}$, with $\Upsilon_i$ ranging over the set of maximal cliques of $\Gamma$. But remark that 
\begin{equation*}
    \theta_G(B_1)\cap\dots\cap\theta_G(B_n)=\theta_G(g)A_{\Upsilon_1}\cap \cdots\cap \theta_G(g)A_{\Upsilon_n}=\theta_G(g)A_{\cap_i \Upsilon_i}.
\end{equation*}
Now, since the graph is transvection-free there is no vertex in $\Gamma$ linked to all other vertices of $\Gamma$, thus $\cap_i \Upsilon_i=\emptyset$. Therefore $\theta_G(B_1)\cap\dots\cap\theta_G(B_n)=\{\theta_G(g)\}$.

The cosets $\theta_G(B_i)$ are exactly the maximal standard clique cosets in $A$ containing $\theta_G(g)$. In the language of \cite{Hua}, these are called the \emph{maximal standard flats} containing $\theta_G(g)$. It then follows from Huang's aforementioned work \cite[Lemma~4.12, Equation~4.13 and Lemma~4.17]{Hua} that $\Psi_A(\beta_A)(\theta_G(B_1))\cap\dots\cap\Psi_A(\beta_A)(\theta_G(B_n))$ is a singleton, denoted by $\{\Psi_A(\beta_A)\circ\theta_G(g)\}$. 

Applying $\theta^{-1}_H$ shows that $\theta_H^{-1}\Psi_A(\beta_A)\theta_G(B_1)\cap\dots\cap\theta_H^{-1}\Psi_A(\beta_A)\theta_G(B_n)$ is a singleton. This by construction (see \cref{eq:PsiBeta}) is exactly $\Psi(\beta)(B_1)\cap\dots\cap\Psi(\beta)(B_n)$, which proves the first assertion. Notice that, by construction, the map $g\mapsto \Psi(\beta)(g)$ still verifies Equation~\eqref{eq:PsiBeta}.

Let us now show that $g\mapsto \Psi(\beta)(g)$ is a bijection from $G$ to $H$ that sends standard clique cosets to standard clique cosets.

To see that it is a bijection, remark that one can define in an analogous way a map $\Psi^{\prime}(\beta^{-1}):H\rightarrow G$ and note that it is the inverse of $\Psi(\beta)$.

Using the construction made in \cite{HH-L1}, above Lemma 2.6, we obtain that $\Psi_{A}(\beta_A)$ sends standard clique cosets to standard clique cosets. Moreover, by induction on the rank, using the bijectivity of $\Psi_A(\beta_A)$ (see again \cite{HH-L1}) and what precedes, we can show that $\Psi_A(\beta_A)$ preserves the rank. Using \cref{eq:PsiBeta} defining $\Psi(\beta)$, we obtain the wanted assertion.
\end{proof}

\begin{proof}[Proof of Proposition~\ref{theo:autos}]
    Let us start with the definition of our $(G\times H)$-equivariant map 
    \[\Psi:\Isom(\Gamma_G^e,\Gamma_H^e)\to\Isom(\bD_G,\bD_H).\] 
    Let $\beta\in\Isom(\Gamma_G^e,\Gamma_H^e)$. Let $g\in G$. Let $B_1,\dots,B_n$ be the maximal standard clique cosets that contain $g$. By Lemma~\ref{lemma:intersection-cosets}, the cosets $\Psi(\beta)(B_1)\cap\dots\cap\Psi(\beta)(B_n)$ intersect in a single point $\Psi(\beta)(g)$, and the map $g\mapsto\Psi(\beta)(g)$ is a bijection that sends every standard clique coset to a standard clique coset. 
    So $\Psi(\beta)$ can be viewed as a map in $\Isom(\bD_G,\bD_H)$. The assignment $\beta\mapsto \Psi(\beta)$ yields the desired $(G\times H)$-equivariant map \[\Psi:\Isom(\Gamma_G^e,\Gamma_H^e)\to\Isom(\bD_G,\bD_H).\]
    The continuity of $\Psi$ is now a consequence of the following observation: given any two rank $0$ vertices $g\in V\bD_G$ and $h\in V\bD_H$, one has $\Psi(\beta)(g)=h$ if and only if, denoting by $\Upsilon_1,\dots,\Upsilon_k$ the maximal cliques of $\Gamma$, one has $\beta(\{gG_{\Upsilon_i}g^{-1}\})=\{h H_{\Upsilon_i}h^{-1}\}$. And any vertex of rank at least $1$ is determined by finitely many adjacent rank $0$ vertices.
\end{proof}

\begin{Rq}
It is in fact possible to show that the map $\Psi$ constructed in the above proof is a homeomorphism, see \cite[Section~2.3]{HH-L1} where this is explained in the case of right-angled Artin groups. We will however never need this fact in the sequel.
\end{Rq}

\subsection{Amenable actions on boundaries of trees}

\subsubsection{Review on Borel amenability of group actions}\label{sec:amenability}
We now review some basic facts on Borel amenability of group actions. 
For general background on the topic we refer to \cite{ADR,Ozawa-ICM}. 

Let $G$ be a countable group, acting by Borel automorphisms on a standard Borel set~$\Delta$. The $G$-action on~$\Delta$ is \emph{Borel amenable}\index{Borel!Borel amenable action} if there exists a sequence of measurable maps $\mu_n:\Delta\to\Prob(G)$ such that for every $\delta\in\Delta$ and every $g\in G$, one has \[\lim_{n\to +\infty}||\mu_n(g\delta)-g\mu_n(\delta)||_1=0.\] 
Here $\Prob(G)$ is the space of probability measures on $G$, equipped with the topology of pointwise convergence. We say that the sequence of maps $\mu_n$ is \emph{asymptotically equivariant}\index{Asymptotically equivariant}.

For a first example of an amenable action of a non-amenable group, we also refer to \cite[Example 2.2]{Ozawa-ICM} for the case of an action of $F_2$ on the boundary of a tree. We will use a variation over this argument in our proof of Lemma~\ref{lemma:decomposition-boundary} below.

We mention a few elementary facts about Borel amenability of group actions that we will use in the sequel.
\begin{itemize}
    \item A countable group $G$ is amenable if and only if its action on a point is Borel amenable.
    \item If $G_1,\dots,G_k$ are countable groups with Borel amenable actions on standard Borel sets $\Delta_1,\dots,\Delta_k$, then the product action of $G_1\times\dots\times G_k$ on $\Delta_1\times\dots\times\Delta_k$ is Borel amenable.
    \item If the $G$-action on $X$ is Borel amenable, then —~letting $\calp_{\le 2}(X)$ be the standard Borel set consisting of all non-empty subsets of $X$ of cardinality at most $2$~— the $G$-action on $\calp_{\le 2}(X)$ is Borel amenable.
\end{itemize}

\subsubsection{Action on the boundary of the tree $T_v$}

Recall that given some $v\in V\Gamma$, we denote by $T_v$ the Bass-Serre tree associated to the splitting $G=G_{\st(v)}*_{G_{\lk(v)}} G_{\Gamma\backslash\{v\}}$ (see \cref{sec:trees}).

Given a vertex $v\in V\Gamma$, the $G$-action on the tree $T_v$ extends to a $G$-action by homeomorphisms on $\partial_\infty T_v$ (equipped with its visual topology). Let $\mathbb{P}_G$ be the (countable) set of all parabolic subgroups of $G$. We will now build a $G$-equivariant Borel map $\theta_v:\partial_\infty T_v\to\mathbb{P}_G$.

Given $\xi\in\partial_\infty T_v$, we define the \emph{elliptic stabilizer} of $\xi$ as the subgroup of $\Stab_G(\xi)$ consisting of all elements that act elliptically on $T_v$ (i.e.\ with a fixed point).

\begin{lemma}\label{lemma:elliptic-stabilizer}
    Let $v\in V\Gamma$ be such that $\Gamma\neq\st(v)$. For every $\xi\in \partial_\infty T_v$, the elliptic stabilizer of $\xi$ is a proper parabolic subgroup of $G$.
\end{lemma}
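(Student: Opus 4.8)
The plan is to identify the elliptic stabilizer $E_\xi$ of $\xi$ explicitly as the pointwise stabilizer of a ray pointing towards $\xi$, and then to recognize this group as a parabolic subgroup, using that intersections of parabolic subgroups are again parabolic (the results of Antol\'in--Minasyan recalled in \cref{sec:first-facts}).

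First I would fix a geodesic ray $r=(x_0,x_1,\dots)$ in $T_v$ converging to $\xi$, and set $P_n=\Stab_G(x_n)$. Each $P_n$ is a vertex stabilizer, hence a conjugate of $G_{\st(v)}$ or of $G_{\Gamma\setminus\{v\}}$; both are \emph{proper} parabolic subgroups, since $\Gamma\neq\st(v)$ by hypothesis and $\Gamma\setminus\{v\}\subsetneq\Gamma$. Define $L_n=\bigcap_{m\geq n}P_m$, the pointwise stabilizer of the subray $(x_n,x_{n+1},\dots)$; this gives an increasing sequence $L_0\subseteq L_1\subseteq\cdots$. The first claim is that $E_\xi=\bigcup_n L_n$. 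The inclusion $\bigcup_n L_n\subseteq E_\xi$ is immediate, as each element of $L_n$ fixes $x_n$ (hence is elliptic) and fixes $\xi$. For the reverse inclusion I would invoke the standard tree-geometric fact that an elliptic isometry $g$ fixing an end fixes pointwise a ray towards that end: choosing $y\in\mathrm{Fix}(g)$, the geodesic ray from $y$ to $\xi$ is the unique such ray, so it is preserved, hence fixed pointwise, by $g$; this ray eventually coincides with a subray of $r$, so $g\in L_n$ for $n$ large.

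Next, each $L_n$ is an intersection of parabolic subgroups, hence itself parabolic. To conclude that the increasing union is parabolic, I would show that the sequence $(L_n)$ stabilizes. The key point is a monotonicity statement for types: if $P\subseteq Q$ are parabolic subgroups, then, conjugating so that $Q=G_{\Lambda'}$ is standard, \cref{prop:am}(1) yields $P=P\cap Q=hG_\Upsilon h^{-1}$ with $\Upsilon\subseteq\Lambda'$ and $h\in Q$; by uniqueness of the type, $\Upsilon$ is the type of $P$, so $\mathrm{type}(P)\subseteq\mathrm{type}(Q)$, and if the two types have the same number of vertices then $P=hQh^{-1}=Q$. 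Thus a proper inclusion of parabolic subgroups strictly increases the number of vertices of the type. Since the types of the $L_n$ are induced subgraphs of the finite graph $\Gamma$, their cardinalities are bounded by $|V\Gamma|$, so the sequence stabilizes: $E_\xi=L_N$ for some $N$, which is parabolic. Finally $E_\xi=L_N\subseteq P_N$ is contained in a proper parabolic subgroup, so $E_\xi$ is itself proper.

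I expect the main obstacle to be the passage from the increasing union of parabolic subgroups to a single parabolic subgroup, that is, establishing the type-monotonicity under inclusion and the resulting stabilization. By contrast, the geometric identification $E_\xi=\bigcup_n L_n$ should be routine once the standard fact about elliptic isometries fixing ends is invoked.
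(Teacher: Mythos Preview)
Your proof is correct and follows essentially the same approach as the paper's: both identify the elliptic stabilizer as the increasing union of pointwise stabilizers of subrays, observe these are parabolic (as intersections of parabolics), and argue that the increasing chain stabilizes. The paper organizes this slightly differently---first stabilizing the \emph{decreasing} chain of pointwise stabilizers of initial segments, then the increasing chain as the basepoint moves along the ray---but the content is the same; your type-monotonicity argument for the stabilization is more explicit than the paper's, which simply asserts that a non-decreasing chain of proper parabolic subgroups is stationary.
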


\begin{proof}
  Let $x\in T_v$ be a vertex, and let $\xi\in\partial_\infty T_v$. For every $n\in\mathbb{N}$, let $\gamma_n(x,\xi)$ be the segment in $T_v$ consisting of the union of the first $n$ edges of the ray from $x$ to $\xi$. Let $G_n(x,\xi)$ be the pointwise stabilizer of $\gamma_n(x,\xi)$. Notice that edge stabilizers of $T_v$ are proper parabolic subgroups of $G$. Therefore, for every $n\in\mathbb{N}$, the subgroup $G_n(x,\xi)$ is a proper parabolic subgroup of $G$, being an intersection of proper parabolic subgroups.

  As $n$ increases, the subgroups $G_n(x,\xi)$ form a non-increasing chain of proper parabolic subgroups, which is therefore stationary. We denote by $G_\infty(x,\xi)$ the ultimate value of this sequence.

  Now, if $x_1,x_2,\dots$ form a ray towards $\xi$, then as $n$ increases, the subgroups $G_\infty(x_n,\xi)$ form a non-decreasing chain of proper parabolic subgroups of $G$. Therefore it is stationary, and we denote by $\theta_v(\xi)$ its ultimate value. Notice that the value of $\theta_v(\xi)$ does not depend on the choice of a ray towards $\xi$, because any two such rays eventually coincide.

  Finally, note that being in the elliptic stabilizer is equivalent to fixing pointwise a ray towards $\xi$. Therefore $\theta_v(\xi)$ is indeed the elliptic stabilizer of $\xi$.
\end{proof}

We now let $\theta_v:\partial_\infty T_v\to\mathbb{P}_G$ be the map sending $\xi$ to its elliptic stabilizer. Notice that this map is Borel and $G$-equivariant. We let $(\partial_\infty T_v)^{\reg}:=\theta_v^{-1}(\{1\})$.\label{Def:TvReg}

\begin{lemma}\label{lemma:reg-nonempty}
    Let $v\in V\Gamma$. Then $(\partial_\infty T_v)^{\reg}\neq\emptyset$ if and only if $\Gamma$ is irreducible. 
\end{lemma}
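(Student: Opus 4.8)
The plan is to prove the two implications separately. Throughout I may assume $\Gamma$ is not reduced to a single vertex (otherwise $T_v$ is a point and $\partial_\infty T_v=\emptyset$), and I will use the standard fact that $\Gamma$ is reducible precisely when its complement graph is disconnected.

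For the direction ``reducible $\Rightarrow$ empty'', suppose $\Gamma=\Gamma_1\circ\Gamma_2$ is a nontrivial join, and say $v\in V\Gamma_1$, so that $G=G_{\Gamma_1}\times G_{\Gamma_2}$. Since every vertex of $\Gamma_2$ is adjacent to $v$, we have $\Gamma_2\subseteq\lk(v)$, hence $G_{\Gamma_2}\subseteq G_{\lk(v)}$. Now $G_{\Gamma_2}$ is a nontrivial normal subgroup of $G$ contained in the edge group $G_{\lk(v)}$ of the base edge of $T_v$; as all edge stabilizers of $T_v$ are conjugate to $G_{\lk(v)}$ and $G_{\Gamma_2}$ is normal, $G_{\Gamma_2}$ lies in every edge stabilizer and therefore fixes $T_v$ pointwise. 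Consequently $G_{\Gamma_2}\neq\{1\}$ lies in the elliptic stabilizer of every $\xi\in\partial_\infty T_v$, so $(\partial_\infty T_v)^{\reg}=\emptyset$. (If $\Gamma=\st(v)$ then $T_v$ is already trivial and the conclusion is immediate.)

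For the converse, assume $\Gamma$ is irreducible. Then no vertex is joined to all others, so $\st(v)\subsetneq\Gamma$, the amalgam $G=G_{\st(v)}\ast_{G_{\lk(v)}}G_{\Gamma\setminus\{v\}}$ is a nondegenerate splitting with both factors strictly containing the edge group, and since the vertex groups are infinite, $T_v$ has infinitely many ends and $\partial_\infty T_v$ is a Cantor set. My plan is to produce a single geodesic ray with trivial elliptic stabilizer, by exhibiting a loxodromic element $g\in G$ built from a finite alternating pattern of $A$-steps $s_i\rho_i$ (with $s_i\in G_v\setminus\{1\}$ and $\rho_i\in G_{\lk(v)}$) and $B$-steps $t_j\in G_{w_j}\setminus\{1\}$ with $w_j\notin\st(v)$ (such vertices exist by irreducibility), and then analysing the attracting endpoint $\xi_+$. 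Writing $p_n$ for the partial products along the associated ray, the elliptic stabilizer is $\bigcup_N\bigcap_{n\ge N}p_nG_{\lk(v)}p_n^{-1}$, a parabolic subgroup $P$ which, after conjugating back, has a well-defined type $\Theta\subseteq\lk(v)$ that is constant along the ray; the goal is to force $\Theta=\emptyset$.

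The mechanism is a support-tracking argument on $R_n:=p_n^{-1}Pp_n\subseteq G_{\lk(v)}$. A $B$-step $t\in G_{w}$ conjugates $R_n$, and using Green's normal form one checks that $t^{-1}R_nt\subseteq G_{\lk(v)}$ forces $w$ to be adjacent to every vertex in the support of $R_n$; thus a nontrivial survivor must have support inside $\lk(w)$ at each $B$-step. Meanwhile an $A$-step lies in $G_v\times G_{\lk(v)}$, so it conjugates $R_n$ within $G_{\lk(v)}$, and the factor $\rho_i$ can enlarge its support across $\lk(v)$. The heart of the proof — and the step I expect to be the main obstacle — is the purely combinatorial claim that, because $\Gamma$ is irreducible (equivalently its complement is connected), one can choose the data $\bigl((\rho_i),(w_j)\bigr)$ so that no nonempty $\Theta\subseteq\lk(v)$ survives: any candidate support can be conjugated by an $A$-step to meet a vertex non-adjacent to some admissible $B$-step vertex, yielding a contradiction. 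The difficulty is that pure steps may leave $\lk(v)\cap\lk(w)$ as an unkilled support (this already happens for the central vertex of a path on four vertices, where the only available $w$ shares the neighbour that must then be eliminated by a conjugating $A$-step), so controlling the support of iterated conjugates of a parabolic via normal forms, and arguing that an unkillable $\Theta$ would produce a join decomposition of $\Gamma$, is the technical crux. Once this claim is established, $P=\{1\}$ and $\xi_+\in(\partial_\infty T_v)^{\reg}$, which completes the proof.
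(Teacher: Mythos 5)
Your reducible direction is correct and matches the paper's: $G_{\Gamma_2}$ is a nontrivial subgroup lying in every edge stabilizer of $T_v$, hence in the elliptic stabilizer of every boundary point.

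The irreducible direction, however, has a genuine gap, and it sits exactly where you say it does. Everything reduces to the ``purely combinatorial claim'' that the data $\bigl((\rho_i),(w_j)\bigr)$ can be chosen so that no nonempty support $\Theta\subseteq\lk(v)$ survives the conjugation process, and this claim is never proved: you describe the intended mechanism, then exhibit an example (the centre of a path on four vertices) showing that the naive version of that mechanism fails, and conclude with ``once this claim is established''. An argument whose central step is announced as the technical crux, shown to need repair, and then assumed, is not a proof. Moreover the repair is not routine: one would have to choose the $G_{\lk(v)}$-components $\rho_i$ adaptively as a function of the surviving support, control supports of iterated conjugates of parabolic subgroups through normal forms, and prove termination using connectedness of the complement of $\Gamma$ --- a delicate induction you have not carried out.

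The paper sidesteps all of this with a short argument worth internalizing. Take $g=g_1\cdots g_m$, a product of one nontrivial element from \emph{each} vertex group of $\Gamma$; a normal-form argument shows $g$ lies in no proper parabolic subgroup. This $g$ is loxodromic on $T_v$; let $\xi$ be its attracting fixed point and $P=\theta_v(\xi)$ its elliptic stabilizer. Then $P$ fixes a subray $r$ of the axis of $g$ pointwise; pick a compact segment $I\subseteq r$ whose pointwise stabilizer equals $P$ (as in the proof of Lemma~\ref{lemma:elliptic-stabilizer}). Since $gI\subseteq r$, one gets $P\subseteq\Stab_G(gI)=gPg^{-1}$, and Proposition~\ref{prop:am}(2) upgrades this inclusion to the equality $gPg^{-1}=P$. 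Hence $g\in N_G(P)=P\times P^{\perp}$, a parabolic subgroup; as $g$ avoids proper parabolics, $P\times P^{\perp}=G$, and irreducibility together with the fact that $P$ is a proper parabolic (Lemma~\ref{lemma:elliptic-stabilizer}) forces $P=\{1\}$. The idea your proposal misses is that one never needs to compute the elliptic stabilizer along the ray: it suffices that $g$ normalizes it, and then self-normalization of parabolic subgroups plus irreducibility kill it in one stroke.
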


\begin{proof}
    First assume that $\Gamma$ is reducible and decompose it as $\Gamma=\Gamma_1\circ \Gamma_2$. Assuming without loss of generality that $v\in V\Gamma_1$, the parabolic subgroup $G_{\Gamma_2}$ fixes all the vertices of $T_v$ and therefore is contained in the elliptic stabilizer of any $\xi\in \partial_\infty T_v$. 

    We now assume that $\Gamma$ is irreducible.
    \begin{itemize}
    \item Let us first show that there exists $g\in G$ that is not contained in any proper parabolic subgroup of $G$.\\
    Let $\{w_1,\ldots,w_m\}=V\Gamma$ be an enumeration of $V\Gamma$. For all $i\in \{1,\ldots,m\}$ take some $g_i\in G_{w_i}\setminus\{1\}$ and consider $g=g_{w_1}\cdots g_{w_m}$. Using a normal form argument, we now obtain that $g$ is not contained in any proper parabolic subgroup of $G$.
    \item Let $g$ be as above, then $g$ acts loxodromically on $T_v$ and we let $\xi$ be the attracting fixed point of $g$ in $\partial_\infty T_v$. Let $P=\theta_v(\xi)$. Then $P$ fixes a subray $r$ of the axis of $g$. As in the proof of Lemma~\ref{lemma:elliptic-stabilizer}, we can find a compact segment $I\subseteq r$ whose stabilizer is equal to $P$. Then the stabilizer of $gI$ is equal to $gPg^{-1}$, so $P\subseteq gPg^{-1}$. It follows from Proposition~\ref{prop:am} that $gPg^{-1}=P$, i.e.\ $g\in P\times P^{-1}$. Since $\Gamma$ is irreducible and $g$ is not contained in any proper parabolic subgroup of $G$, we deduce that $P=\{1\}$. In particular $\xi\in (\partial_\infty T_v)^{\reg}$ and the lemma is proved. \qedhere
    \end{itemize}
\end{proof}

\begin{lemma}\label{lemma:decomposition-boundary}
  For every $v\in V\Gamma$, the $G$-action on $(\partial_\infty T_v)^{\reg}$ is Borel amenable.
\end{lemma}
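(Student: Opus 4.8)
The plan is to build by hand an asymptotically equivariant sequence of measurable maps $\mu_n\colon(\partial_\infty T_v)^{\reg}\to\Prob(G)$, following the strategy used by Ozawa for the boundary action of $F_2$ \cite[Example~2.2]{Ozawa-ICM}; the whole role of the regularity assumption will be to compensate for the fact that the $G$-action on $T_v$ is very far from free. First I would dispose of the trivial case: if $\Gamma$ is reducible, then $(\partial_\infty T_v)^{\reg}=\emptyset$ by Lemma~\ref{lemma:reg-nonempty} and there is nothing to prove, so from now on I may assume that $\Gamma$ is irreducible.

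Next I would record the two structural features of the regular part that make the construction run. Fix a base vertex $o\in T_v$. For $\xi\in(\partial_\infty T_v)^{\reg}$ and $n\in\bN$, the pointwise stabilizers $P_n(\xi):=G_n(o,\xi)$ of the initial segments $\gamma_n(o,\xi)$ form a non-increasing chain of proper parabolic subgroups; by the very analysis in the proof of Lemma~\ref{lemma:elliptic-stabilizer}, their intersection $\bigcap_n P_n(\xi)$ is contained in the elliptic stabilizer $\theta_v(\xi)$, which is trivial on the regular part, so $\bigcap_n P_n(\xi)=\{1\}$. Moreover each stabilizer $\Stab_G(\xi)$ is amenable: the translation-length (Busemann) homomorphism $\Stab_G(\xi)\to\bZ$ has kernel exactly the set of elements of $\Stab_G(\xi)$ acting elliptically, that is $\theta_v(\xi)=\{1\}$, so $\Stab_G(\xi)$ embeds into $\bZ$.

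For the construction itself I would use the Bass--Serre normal forms attached to the splitting $G=G_{\st(v)}\ast_{G_{\lk(v)}}G_{\Gamma\setminus\{v\}}$ (with fixed transversals), coding each regular $\xi$ by the ray $(y_0=o,y_1,\dots)$ and its depth-$n$ prefix $w_n(\xi)\in G$, a representative of the vertex $y_n$. The naive choice $\mu_n(\xi)=\delta_{w_n(\xi)}$ already succeeds when the edge groups are trivial (free-product vertices): there $w_n(g\xi)=g\,w_{n'}(\xi)$ exactly once $n$ exceeds the cancellation length, yielding exact asymptotic equivariance as in Ozawa's argument. In general, however, the normal-form reduction of $g\cdot\xi$ leaves a residual element lying in a conjugate of the edge group $G_{\lk(v)}$; to absorb it I would replace the Dirac mass by a measure spread over representatives of $w_n(\xi)$ modulo the stabilizer $P_n(\xi)$ resolved up to depth $n$. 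The verification of $\lim_{n\to+\infty}\|\mu_n(g\xi)-g\mu_n(\xi)\|_1=0$ then reduces to the fact that the discrepancy between the two measures lives in the shrinking stabilizers $P_n(\xi)$, which by regularity intersect down to $\{1\}$, together with the amenability of $\Stab_G(\xi)$.

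The main obstacle is precisely the non-amenability of the edge and vertex stabilizers of $T_v$ (the conjugates of $G_{\lk(v)}$ and $G_{\st(v)}$): it forbids a naive F\o{}lner averaging over them and is exactly why the conclusion fails on all of $\partial_\infty T_v$. The regularity hypothesis is what circumvents this, by forcing the pointwise stabilizers of longer and longer initial segments to become trivial, so that the residual edge-group ambiguity produced by the action is resolved in the limit. I expect the delicate points to be the measurability of $\xi\mapsto\mu_n(\xi)$ and the uniformity of the estimate in $\xi$, which is where the careful bookkeeping with normal forms will be needed.
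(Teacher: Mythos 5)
There is a genuine gap at the central step: your candidate measures are never actually defined, and the mechanism you propose for controlling the error is based on a misidentification of where the error lives. When you compare the canonical depth-$n$ prefix $w_n(g\xi)$ of the ray $[o,g\xi)$ with $g\,w_{n'}(\xi)$, the two represent the same vertex (or edge) of $T_v$, so their discrepancy is an element of the stabilizer of that \emph{single} edge, i.e.\ of a conjugate of $G_{\lk(v)}$ --- not an element of the pointwise stabilizer $P_n(\xi)$ of the whole initial segment. Single-edge stabilizers do not shrink as $n\to\infty$; only the $P_n(\xi)$ do. Concretely, writing the normal-form reduction of $g\,w_n(\xi)$, the edge-group correction evolves by $c_{k+1}=(t'_{k+1})^{-1}c_k t_{k+1}$ and in general never dies; for instance it passes through unchanged whenever the next syllable commutes with it, which happens constantly in a graph product. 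Worse, on the regular part the chain $P_n(\xi)$ is a stationary chain of parabolic subgroups which is eventually trivial, so for large $n$ your ``measure spread over representatives of $w_n(\xi)$ modulo $P_n(\xi)$'' degenerates to the single Dirac mass $\delta_{w_n(\xi)}$; since two distinct Dirac masses are at $\ell^1$-distance $2$, a persistent non-trivial correction forces $\|\mu_n(g\xi)-g\mu_n(\xi)\|_1=2$ for all $n$. Absorbing an ambiguity ranging in a conjugate of $G_{\lk(v)}$ into probability measures on $G$ would require F\o{}lner-type averaging inside $G_{\lk(v)}$ --- precisely the amenability that fails here. Two smaller slips point the same way: even with trivial edge groups, $\delta_{w_n(\xi)}$ alone is not asymptotically equivariant (there is an index offset between the rays $[o,g\xi)$ and $[go,g\xi)$; Ozawa's argument in \cite[Example~2.2]{Ozawa-ICM} is the Ces\`aro average $\frac1n\sum_{k\le n}\delta_{w_k(\xi)}$, not a single Dirac mass), and the amenability of the point stabilizers $\Stab_G(\xi)$, which you correctly establish, does not by itself enter any implication towards Borel amenability of the action.

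What your sketch is missing is the reduction that makes the whole problem tractable: one should not aim at measures on $G$ at all. The paper's proof introduces the countable $G$-set $\mathbb{S}$ of oriented segments of $T_v$ with \emph{trivial} stabilizer; regularity is used exactly once, to guarantee that every ray $[x,\xi)$ with $\xi\in(\partial_\infty T_v)^{\reg}$ contains a finite initial segment $\alpha(x,\xi)\in\mathbb{S}$, and this assignment is $G$-equivariant by construction. The Ces\`aro averages $\mu_n(\xi)=\frac1n\sum_{i=1}^n\delta_{\alpha(v_i(\xi),\xi)}\in\Prob(\mathbb{S})$ are then asymptotically equivariant for the trivial geometric reason that the rays $[o,g\xi)$ and $[go,g\xi)$ eventually coincide, so the two averages differ in boundedly many terms; and Borel amenability of the boundary action follows from the standard reduction lemma (\cite[Proposition~11]{Oza} or \cite[Proposition~2.12]{BGH}), because the $G$-action on $\mathbb{S}$ has trivial stabilizers. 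Passing through such an auxiliary set with trivial (or at least amenable) stabilizers --- rather than through group elements and normal forms --- is what cancels the edge-group ambiguity; without it, or some substitute for it, your construction cannot be completed.
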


\begin{proof}
Let $\mathbb{S}$ be the countable set of all oriented segments of $T_v$ which start and end at vertices and have trivial $G$-stabilizer. The $G$-action on $T_v$ induces a $G$-action on $\mathbb{S}$, and stabilizers for the latter action are trivial. Therefore, it is enough to construct an asymptotically equivariant sequence of Borel maps $\mu_n:(\partial_\infty T_v)^{\reg}\to\Prob(\mathbb{S})$ (see e.g.\ \cite[Proposition~11]{Oza} or \cite[Proposition~2.12]{BGH}).

Given any vertex $x\in VT_v$ and any $\xi\in (\partial_\infty T_v)^{\reg}$, we denote by $\alpha(x,\xi)\in\mathbb{S}$ the smallest (oriented) segment of the ray $[x,\xi)$ originating at $x$, ending at a vertex, and with trivial stabilizer. This exists, and it is a finite subsegment, by definition of $(\partial_\infty T_v)^{\reg}$. Notice that, for a fixed $x$, the map $\alpha(x,\cdot)$ is continuous. 

We now fix a basepoint $o\in VT_v$. Given $\xi\in (\partial_\infty T_v)^\reg$ let $\{o=v_1(\xi),v_2(\xi),\cdots\}$ be the set of vertices encountered along the ray from $o$ to $\xi$. Given $n\in\mathbb{N}$ and $\xi\in (\partial_\infty T_v)^{\reg}$, we let \[\mu_n(\xi)=\frac{1}{n}\sum_{i=1}^n\delta_{\alpha(v_i(\xi),\xi)},\] a weighted sum of Dirac masses, so $\mu_n(\xi)\in\Prob(\mathbb{S})$, and $\mu_n$ depends measurably on $\xi$. Now remark that $\mu_n(g\xi)={1}/{n}\sum_{i=1}^n\delta_{\alpha(v_i(g\xi),g\xi)}$ where the $\alpha\big(v_i(g\xi),g\xi\big)$ are subintervals of the ray $[o,g\xi)$. Moreover $g\mu_n(\xi)={1}/{n}\sum_{i=1}^n\delta_{g\alpha(v_i(\xi),\xi)}$ where the $g\alpha\big(v_i(\xi),\xi\big)$ are subintervals of the ray $[go,g\xi)$. Since any two rays towards $g\xi$ eventually coincide, this implies that $\mu_n$ is asymptotically equivariant (see also \cref{fig:MoyennabiliteBord}).
\end{proof}
\begin{figure}
    \centering
    \includegraphics[width=\textwidth]{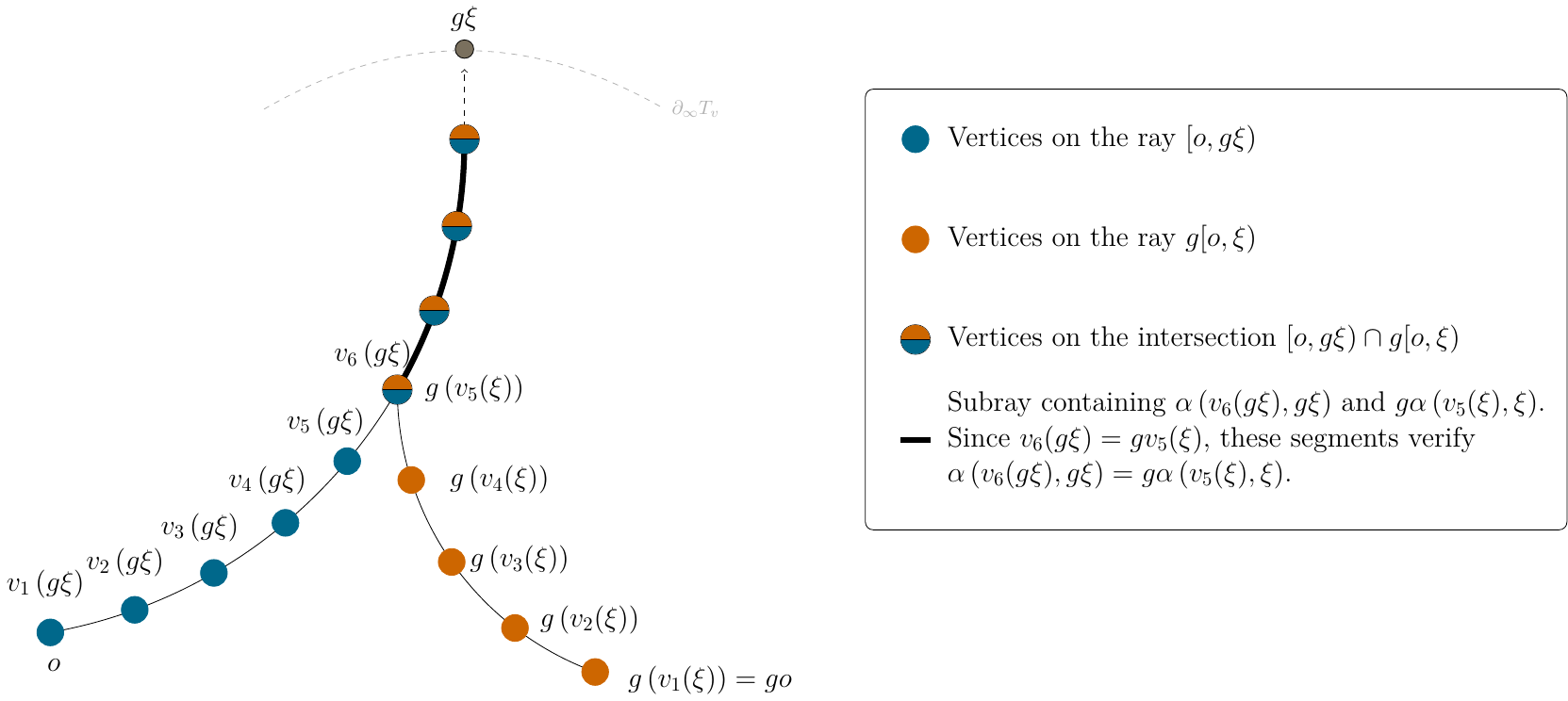}
    \caption{Illustration of the proof of \cref{lemma:decomposition-boundary}}
    \label{fig:MoyennabiliteBord}
\end{figure}

\section{Measure and orbit equivalence}
\label{sec:ME}
This section contains general facts about measure equivalence and orbit equivalence. After reviewing the basic definitions in Section~\ref{sec:me-background}, we compare measure and orbit equivalence with their discrete counterparts, namely commensurability and strong commensurability (Section~\ref{sec:commensurability}) -- this will be useful in Part~\ref{Part:MEClassification} to derive commensurability classification theorems for graph products, from their versions in measure equivalence. Finally, in Section~\ref{lemma:me-subgroups}, we establish a lemma regarding how to restrict measure equivalence couplings to subgroups, which will be useful in Part~\ref{Part:QuantitativeResults} of our work.   

\subsection{Background and definitions}\label{sec:me-background}

We recall here some general material about measure equivalence and refer to \cite{Fur-survey,Gab-survey} for general surveys. We also refer to Furman's original article \cite{Fur} for more information on the dictionary between measure equivalence and stable orbit equivalence, and the associated cocycles.

\subsubsection{Measure equivalence and (stable) orbit equivalence}\label{sec:me-background-1}

A \emph{standard measure space}\index{Standard!Standard measure space} $(\Omega,m)$ is a measurable space whose $\sigma$-algebra $\mathcal{B}(\Omega)$ consists of the Borel subsets coming from some Polish (\emph{viz.} separable and completely metrizable) topology on $\Omega$ and equipped with a non-zero $\sigma$-finite measure $m$ on $\mathcal{B}(\Omega)$. One says that an action of a countable group $G$ on a standard measure space $(\Omega,m)$ is \emph{measure-preserving}\index{Measure preserving action} if the action map $(g,x)\mapsto g*x$ is Borel and if $\mu(g*E)=\mu(E)$ for all $g\in G$ and all Borel subsets $E\subseteq \Omega$. In this setting, a \emph{fundamental domain}\index{Fundamental!Fundamental domain} for the action of $G$ on $(\Omega,m)$ is a Borel subset $X_G\subseteq \Omega$ which intersects almost every $G$-orbit at exactly one point.

Recall from the introduction that two countable groups $G$ and $H$ are \emph{measure equivalent}\index{Measure equivalence} if there exists a standard measure space $(\Omega,m)$, equipped with a measure-preserving action of $G\times H$, such that the actions of $G$ and $H$ on $\Omega$ are both free and have finite measure fundamental domains, denoted respectively by $X_G$ and $X_H$. The space $(\Omega,m)$ (sometimes simply denoted $\Omega$) is called a \emph{measure equivalence coupling}\index{Measure equivalence!Measure equivalence coupling}\index{Coupling!Measure equivalence coupling} between $G$ and $H$. If we need to keep track of the fundamental domains, we will sometimes call “coupling” from $G$ to $H$ the tuple $(\Omega,m,X_G,X_H)$. 
When additionally one can take $X_G=X_H$ in the above definition, the groups $G$ and $H$ are said to be \emph{orbit equivalent}\index{Orbit equivalent}.

In general, the \emph{compression constant}\index{Compression constant}\label{Def:CompressionConstant} (from $G$ to $H$) of the measure equivalence coupling $\Omega$ is defined as \[[G:H]_\Omega:=\frac{m(X_H)}{m(X_G)}\] 
(this does not depend on the choice of fundamental domains). The compression constant of an orbit equivalence coupling is always equal to $1$.

By a theorem of Furman \cite{Fur} and Gaboriau \cite[$\mathbf{P}_{\mathrm{ME}}\mathbf{5}$]{Gab}, two countable groups are orbit equivalent if and only if they admit free, measure-preserving actions on a standard probability space $(X,\mu)$ with the same orbits (on a conull subset). We will call such an $(X,\mu)$ an \emph{orbit equivalence pairing}\footnote{we use the word \emph{pairing} instead of \emph{coupling} to distinguish it from measure equivalence couplings where the two fundamental domains coincide}\index{Orbit equivalence!Orbit equivalence pairing}.

Likewise, measure equivalence is characterized by having \emph{stably orbit equivalent}\index{Stable!Stable orbit equivalence}\index{Orbit equivalence!Stable orbit equivalence} free, measure-preserving actions on standard probability spaces, in the following sense. Given two free, measure-preserving actions $G\actson (X,\mu)$ and $H\actson (Y,\nu)$ on standard probability spaces, a \emph{stable orbit equivalence} between the two actions is a measure-scaling isomorphism $f:U\to V$ between positive measure subsets $U\subseteq X$ and $V\subseteq Y$ (i.e.\ $f$ induces a measure space isomorphism after renormalizing $U$ and $V$ to probability spaces) such that for almost every $x\in U$, one has $f((G\cdot x)\cap U)=(H\cdot f(x))\cap V$. If a map $f$ with these properties exists, the two actions are said to be stably orbit equivalent. The \emph{compression constant}\index{Compression constant} of $f$ is defined as\label{Def:CompressionConstantf} \[\kappa(f):=\frac{\nu(V)}{\mu(U)}.\] 
In fact, two countable groups admit a measure equivalence coupling with compression constant $\kappa$ (from $G$ to $H$) if and only if there is a stable orbit equivalence from a free, measure-preserving $G$-action on a standard probability space $X$, to a measure-preserving $H$-action on a standard probability space $Y$, with compression constant $\kappa$, see \cite[Lemma~3.2 and Theorem~3.3]{Fur}. 

\subsubsection{Cocycles}\label{sec:cocycles}

We now review the notions of measure equivalence and (stable) orbit equivalence cocycles. In the sequel, we will also work with a notion of cocycle in the groupoid theoretic framework -- we refer to \cref{Def:CocycleGroupoid}, \cpageref{Def:CocycleGroupoid} for more details on the groupoid viewpoint.

\paragraph{Measure equivalence cocycles.}
\label{Nota:ActionDomFond} 
A measure equivalence coupling $(\Omega,m,X_G,X_H)$ comes with two induced actions: one of $G$ on the fundamental domain $X_H$ and one of $H$ on $X_G$. We will denote both induced actions by “$\cdot$” to distinguish them from the action of the groups on the whole space $\Omega$, denoted by “$*$”. For all $g\in G$ and a.e. $x\in X_H$, the element $g\cdot x$ is defined as the (unique) element of $X_H$ that belongs to the $H$-orbit of $g*x$, namely $\left\{g\cdot x\right\}=X_H\cap\big(H*(g*x)\big)$ (see \cref{fig:Orbites}, \cpageref{fig:Orbites}). In the following we will (slightly) abuse notations and write $H*g*x$ instead of $H*(g*x)$. Similarly for all $h\in H$ and a.e. $x^\prime \in X_G$ we have $\{h\cdot x^\prime\}=X_G\cap \big(G*h*x^\prime\big)$. 

We thus define $c:G\times X_H\rightarrow H$ and $c^\prime:G\times X_H \rightarrow H$ the \emph{measure equivalence cocycles}\index{Cocycle!Measure equivalence cocycle}, by (see also \cref{fig:Orbites})
\begin{align*}
    \forall g\in G, \ \text{a.e.}\ x\in X_H \qquad &c(g,x)*g*x=g\cdot x, \\   
    \forall h\in H, \ \text{a.e.}\ x\in X_G\ \qquad &c'(h,x)*h*x=h\cdot x.
\end{align*}
They verify the \emph{cocycle relation}, namely 
\begin{equation*}
    \forall g,g^\prime \in G,\ a.e.\ x\in X \qquad
    c(gg^\prime,x)=c(g,g^\prime\cdot x)c(g^\prime,x).
\end{equation*}

Two cocycles $c:G\times X_H\rightarrow H$ and $c^\prime:G\times X_H \rightarrow H$ are said to be \emph{cohomologous}\index{Cohomologous cocycles}\index{Cocycle!Cohomologous cocycles}\label{Def:CohomCocyclesME} if there exists a measurable map $\alpha:X_H\rightarrow H$ such that for all $g\in G$ and a.e.\ $x\in X_H$ one has
\begin{equation*}
    c^\prime(g,x)=\alpha(g \cdot x)c(g,x)\alpha(x)^{-1}.
\end{equation*}

\begin{Rq}\label{Rq:CohomologousCocyclesFundamentalDomains}
    Identifying $X_H$ with $\Omega/H$ allows to see the measure equivalence cocycle $c$ as defined over $G\times \Omega/H$. With this identification, changing the fundamental domain $X_H$ amounts to considering cohomologous cocycles. In the following, given two fundamental domains $X_H$ and $X^\prime_H$ and their associated respective cocycles $c:G\times X_H\rightarrow H$ and $c^\prime:G\times X^\prime_H\rightarrow H$, we will consider $c$ and $c^\prime$ to be cohomologous if they are cohomologous via the natural identifications $X_H\simeq \Omega/H \simeq X^\prime_H$.
\end{Rq}

\paragraph{(Stable) orbit equivalence cocycles.}\label{Par:SOE}

% cocycle OE
Assume now that $G$ and $H$ are orbit equivalent over some standard probability space $(X,\mu)$. One can define two \emph{orbit equivalence cocycles}\index{Cocycle!Orbit equivalence cocycle}\index{Orbit equivalence!Cocycle} denoted $c:G\times X\to H$ and $c':H\times X\rightarrow G$ by letting $c(g,x)$ (resp.\ $c'(h,x)$) be the unique element of $H$ (resp.\ $G$) such that $c(g,x)\cdot x=g\cdot x$ and $c^\prime(h,x) \cdot x=h\cdot x$ (this is well-defined for a.e.\ $x\in X$). When $\Omega$ is an orbit equivalence coupling between $G$ and $H$, with a common fundamental domain $X=X_G=X_H$, the associated measure equivalence cocycles $c:G\times X\to H$ and $c':H\times X\to G$ are nothing but the orbit equivalence cocycles associated to the actions $G\actson X$ and $H\actson X$.

%cocycle SOE
More generally, if $G\actson (X,\mu)$ and $H\actson (Y,\nu)$ are two free, ergodic, measure-preserving actions on standard probability spaces, and if $f:U\to V$ (with $U\subseteq X$ and $V\subseteq Y$ Borel subsets of positive measure) is a stable orbit equivalence, we say that a cocycle $c:G\times X\to H$ is a \emph{stable orbit equivalence (SOE) cocycle}\index{Stable!Stable orbit equivalence cocycle}\index{Cocycle!SOE cocycle} associated to $f$ if there exists a Borel map $p:X\to U$ such that 
\begin{itemize}
    \item for almost every $x\in X$, one has $p(x)\in G\cdot x$, and
    \item for every $g\in G$ and almost every $x\in X$, one has $f\circ p (g\cdot x)=c(g,x)\cdot (f\circ p(x))$.
\end{itemize}
Notice that $p$ can always be chosen so that $p(x)=x$ whenever $x\in U$. Changing $p$ to another projection map yields a cohomologous cocycle.

We also observe that if $f:U\to V$ is a stable orbit equivalence between $G\actson (X,\mu)$ and $H\actson (Y,\nu)$, then for every positive measure Borel subset $W\subseteq U$, the map $f_{|W}:W\to f(W)$ is again a stable orbit equivalence between $G\actson (X,\mu)$ and $H\actson (Y,\nu)$, with the same compression constant. In addition, any SOE cocycle for $f_{|W}$ is also an SOE cocycle for $f$, and any SOE cocycle for $f$ is cohomologous to an SOE cocycle for~$f_{|W}$. 

We also observe that if $P_G\subseteq G$ and $P_H\subseteq H$ are subgroups, such that
\begin{itemize}
    \item for almost every $x\in U$, one has $f((P_G\cdot x)\cap U)=(P_H\cdot f(x))\cap V$,
    \item $P_G$ acts ergodically on $X$,
\end{itemize}
then the map $p$ can be chosen so that $p(x)\in P_G\cdot x$ for almost every $x\in X$ (by ergodicity). Moreover $f$ is a stable orbit equivalence between the actions $P_G\actson X$ and $P_H\actson Y$. Any SOE cocycle $c:G\times X\to H$ for $f$ is $H$-cohomologous to a cocycle $c'$ such that $c'(P_G\times X)\subseteq P_H$, and $c'$ is an SOE cocycle for $f$, viewed as a stable orbit equivalence between $P_G\actson X$ and $P_H\actson Y$. 

\subsubsection{Quantitative versions}\label{Sec:QuantitativeDefinitions}

Recall from the introduction (Definition~\ref{Def:QuantitativeME}) that given a measure equivalence coupling $(\Omega,m,X_G,X_H)$ (equipped with fundamental domains $X_G,X_H$) between finitely generated groups $G,H$, and two non-decreasing functions $\varphi$ and $\psi$, one says that the coupling is \emph{$(\varphi,\psi)$-integrable}\index{phi integrable@$(\varphi,\psi)$-integrable} {(from $G$ to $H$)} if
for all $g\in G$ and all $h\in H$ we have
\begin{equation}\label{eq:integrability}
\int_{X_H}\varphi(|c(g,x)|_H)dm(x)<+\infty \quad \text{and} \quad 
\int_{X_G}\psi(|c'(h,x)|_G)dm(x)<+\infty,
\end{equation}
where $c:G\times X_H\to H$ and $c':H\times X_G\to G$ denote the measure equivalence cocycles, and $|\cdot|_G,|\cdot|_H$ denote word lengths on $G,H$ associated to finite generating sets (the integrability does not depend on the choice of generating sets). 

When $\varphi(x)=x^p$ for some $p\in [0,+\infty)$ we will talk about $(L^p,\psi)$-integrable couplings instead of $(\varphi,\psi)$-integrable couplings, to stay consistent with the literature. In particular $L^0$ means that we are not imposing any integrability condition on the corresponding cocycle. 

Likewise, an orbit equivalence pairing $(X,\mu)$ between two finitely generated groups $G$ and $H$ (i.e.\ a standard probability space on which the two groups have the same orbits) is \emph{$(\varphi,\psi)$-integrable} from $G$ to $H$ if the associated orbit equivalence cocycles $c:G\times X\to H$ and $c':H\times X\to G$ satisfy the integrability conditions~\eqref{eq:integrability} (with $X=X_G=X_H$).  

Using the dictionary between measure equivalence couplings and orbit equivalence pairings, the existence of a $(\varphi,\psi)$-integrable orbit equivalence pairing from $G$ to $H$ is equivalent to the existence of a $(\varphi,\psi)$-integrable measure equivalence coupling of the form $(\Omega,m,X_G,X_H)$ with $X_G=X_H$.

\subsection{Link with commensurability}\label{sec:commensurability} 

Two groups are \emph{commensurable}\index{Commensurable} if they have isomorphic finite index subgroups. Following \cite{JS}, we say that they are \emph{strongly commensurable}\index{Commensurable!Strongly commensurable}\index{Strongly!Strongly commensurable} if they admit isomorphic finite-index subgroups of the same index. The following lemma shows that commensurability and strong commensurability are the respective analogues of measure equivalence and orbit equivalence, when one restricts to discrete (i.e.\ countable) couplings.

\begin{Lmm}\label{lemma:commensurability}
    Two countable groups $G_1$ and $G_2$ are 
    \begin{enumerate}
        \item commensurable if and only if there exists a non-empty countable set $\Omega$ equipped with an action of $G_1\times G_2$ such that for every $i\in\{1,2\}$, the action of $G_i$ on $\Omega$ is free and cofinite.
        \item strongly commensurable if and only if there exists a non-empty countable set $\Omega$ equipped with an action of $G_1\times G_2$ such that for every $i\in\{1,2\}$, the action of $G_i$ on $\Omega$ is free, and the actions of $G_1$ and $G_2$ on $\Omega$ have a common finite fundamental domain.
    \end{enumerate}
\end{Lmm}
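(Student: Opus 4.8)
The plan is to treat each biconditional as the exact discrete analogue of the measure/orbit equivalence dictionary: a coupling on a countable set is to (strong) commensurability what a measure equivalence coupling is to measure/orbit equivalence. In both directions the bridge is the same observation. If $\Omega$ is a countable set with a $(G_1\times G_2)$-action on which each $G_i$ acts freely, pick a point $\omega_0\in\Omega$ and let $S=\Stab_{G_1\times G_2}(\omega_0)$. Freeness of the two actions gives $S\cap(G_1\times\{1\})=S\cap(\{1\}\times G_2)=\{1\}$, so both coordinate projections $\pi_i\colon S\to G_i$ are injective; hence $S$ is the graph of an isomorphism $\phi\colon H_1\to H_2$, where $H_i:=\pi_i(S)\subseteq G_i$. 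Writing $O:=(G_1\times G_2)\cdot\omega_0\cong(G_1\times G_2)/S$ and computing the two orbit spaces through the coordinate projections yields $G_1\backslash O\cong G_2/H_2$ and $G_2\backslash O\cong G_1/H_1$.

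For the implications ``coupling $\Rightarrow$ (strong) commensurability'' I would use this as follows. In case (1), cofiniteness of the $G_1$-action on $\Omega$ (hence on the invariant subset $O$) forces $G_1\backslash O\cong G_2/H_2$ to be finite, i.e.\ $[G_2:H_2]<+\infty$; symmetrically $[G_1:H_1]<+\infty$. Since $\phi\colon H_1\to H_2$ is an isomorphism of finite-index subgroups, $G_1$ and $G_2$ are commensurable. In case (2) I would additionally intersect the common finite fundamental domain $X$ with the $(G_1\times G_2)$-invariant set $O$: because $X$ meets every $G_1$-orbit and every $G_2$-orbit of $\Omega$ exactly once, $X\cap O$ is a common fundamental domain for the two restricted actions on $O$, so $[G_2:H_2]=|X\cap O|=[G_1:H_1]$, giving strong commensurability.

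For the converse implications I would produce an explicit coupling. Given an isomorphism $\phi\colon H_1\to H_2$ between finite-index subgroups, set $\Delta:=\{(h,\phi(h)):h\in H_1\}\subseteq G_1\times G_2$ and $\Omega:=(G_1\times G_2)/\Delta$ with the left $(G_1\times G_2)$-action. Injectivity of $\phi$ gives freeness of both $G_i$-actions exactly as above, and the orbit computations show the $G_1$- and $G_2$-actions have $[G_2:H_2]$ and $[G_1:H_1]$ orbits respectively, both finite; this settles (1). For (2), with $[G_1:H_1]=[G_2:H_2]=n$, I would identify $\Omega\cong\{1,\dots,n\}\times G_2$ via transversals $(r_i)$ of $G_1/H_1$ (the point $(r_i,g_2)\Delta$ being the unique representative with first coordinate $r_i$), note that the $G_1$-orbit of $(g_1,g_2)\Delta$ is recorded by the coset $g_2H_2$, and then take $X:=\{(r_i,s_i)\Delta:1\le i\le n\}$ for a transversal $(s_i)$ of $G_2/H_2$. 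This $X$ meets each $G_2$-orbit $\{i\}\times G_2$ once by construction, and meets each $G_1$-orbit once because the cosets $s_iH_2$ are pairwise distinct; hence $X$ is a common finite fundamental domain.

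I expect the only genuinely delicate point to be this last step: equality of orbit counts alone does \emph{not} guarantee a joint transversal, and the naive choice (all second coordinates equal to $1$) in fact collapses $X$ into a single $G_1$-orbit. The obstacle is resolved precisely by spreading the second coordinates over distinct $H_2$-cosets, and verifying that this simultaneously distributes the points across distinct $G_1$-orbits requires the explicit identification $\Omega\cong\{1,\dots,n\}\times G_2$ together with the description of the $G_1$-orbit invariant $g_2H_2$. The remaining verifications (freeness, finiteness of the orbit spaces, and that $S$ is the graph of an isomorphism) are routine once the coordinate bookkeeping is set up.
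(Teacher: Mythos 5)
Your proof is correct and follows essentially the same strategy as the paper's: in one direction you extract the isomorphism $H_1\cong H_2$ from a point stabilizer (your $S=\Stab_{G_1\times G_2}(\omega_0)$, being the graph of an isomorphism, packages exactly the paper's subgroups $H_1=\Stab_{G_1}(y_2)$ and $H_2=\{g_2 : g_2y_2\in H_1y_2\}$, and your restriction to a single orbit $O$ is the paper's restriction to an invariant subset with transitive induced actions), while in the other you build the tautological coupling $(G_1\times G_2)/\Delta$, which is isomorphic to the paper's model $(H_1\times H_2)\backslash(G_1\times H_1\times G_2)$. Your key step for part (2) — pairing a transversal of $G_1/H_1$ with a transversal of $G_2/H_2$ so the fundamental domain is spread over distinct $G_1$-orbits — is precisely the paper's choice of a bijection $\alpha:X_1\to X_2$ and the common domain $\{[g_1,e,\alpha(g_1)]\mid g_1\in X_1\}$.
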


For comparison, notice also that two countable groups $G_1$ and $G_2$ are isomorphic if and only if there exists a non-empty countable set $\Omega$ equipped with an action of $G_1\times G_2$, such that for every $i\in\{1,2\}$, the action of $G_i$ on $\Omega$ is free and transitive.
Indeed, the direct implication is clear by considering the left-right action of the group on itself. Conversely, identifying $\Omega$ to $G_1$ and $G_2$ via the orbit maps, gives the needed isomorphism.

\begin{proof}
    Assume first that $G_1$ and $G_2$ are commensurable. Let $H_1\subseteq G_1$ and $H_2\subseteq G_2$ be isomorphic finite-index subgroups, and let $\theta:H_1\to H_2$ be an isomorphism. The group $H_1\times H_2$ acts on $\tilde{\Omega}=G_1\times H_1\times G_2$ via
    \begin{equation*}
    (h_1,h_2)\cdot (g_1,h,g_2)=\left(g_1h_1^{-1},h_1h(\theta^{-1}(h_2))^{-1},h_2g_2\right).
    \end{equation*}
    And the action of $G_1\times G_2$ on $\tilde{\Omega}$ given by $(g'_1,g'_2)\cdot (g_1,h,g_2)=(g'_1g_1,h,g_2(g'_2)^{-1})$ commutes with that of $H_1\times H_2$, hence descends to an action on the quotient $\Omega=(H_1\times H_2)\backslash \tilde{\Omega}$. One checks that the actions of $G_1$ and $G_2$ on $\Omega$ are both free. We will denote by $[g_1,h,g_2]$ the class of $(g_1,h,g_2)$ in $\Omega$. Let $X_1\subseteq G_1$ be a set of representatives of the left cosets of $H_1$ in $G_1$, and $X_2\subseteq G_2$ be a set of representatives of the right cosets of $H_2$ in $G_2$. Then $\{[e,e,g_2]\mid g_2\in X_2\}$ is a finite fundamental domain for the $G_1$-action on $\Omega$, and $\{[g_1,e,e]\mid g_1\in X_1\}$ is a finite fundamental domain for the $G_2$-action on $\Omega$.

    In the above, if we further assume that $G_1$ and $G_2$ are strongly commensurable, then $|X_1|=|X_2|$. Let $\alpha:X_1\to X_2$ be a bijection. Then $\{[g_1,e,\alpha(g_1)]\mid g_1\in X_1\}$ is a common fundamental domain for the actions of $G_1$ and $G_2$ on $\Omega$.

    Conversely, let us now assume that there exists a non-empty countable set $\Omega$ equipped with an action of $G_1\times G_2$ such that for every $i\in\{1,2\}$, the action of $G_i$ on $\Omega$ is free and admits a finite fundamental domain $Y_i$. 
    Let $y_2\in Y_2$, and let $H_1\subseteq G_1$ be the stabilizer of $y_2$ for the induced action of $G_1$ on $Y_2\approx G_2\backslash\Omega$. Then $H_1$ is a finite-index subgroup of $G_1$ which acts freely with finitely many orbits on $G_2y_2$. Using the fact that the actions of $G_1$ and $G_2$ commute, one checks that $\{g_2\in G_2\mid g_2y_2\in H_1y_2\}$ is a subgroup $H_2$ of $G_2$, and that its (finite) index is equal to the number of $H_1$-orbits on $G_2y_2$. Now $H_1$ and $H_2$ have free and transitive actions on $H_1y_2$, from which it follows that they are isomorphic. 

    To get the characterization of strong commensurability, in the above, assume in addition that $Y_1=Y_2$. Up to changing $\Omega$ to a $(G_1\times G_2)$-invariant subset (which does not affect the fact that the two actions have a common fundamental domain), we can (and will) assume that the induced actions of $G_1$ on $G_2\backslash\Omega$ and of $G_2$ on $G_1\backslash\Omega$ are transitive. Then in the above, we have $[G_1:H_1]=|Y_2|$. And $G_1$ acts on $\Omega$ with $|Y_1|$ orbits, and each of these orbits meets $G_2y_2$. So $H_1=\mathrm{Stab}_{G_1}(y_2)$ acts on $G_2y_2$ with $|Y_1|$ orbits, which by the above implies that $[G_2:H_2]=|Y_1|$. So $[G_1:H_1]=[G_2:H_2]$, which proves that $G_1$ and $G_2$ are strongly commensurable.  
\end{proof}

\subsection{Restricting couplings to subgroups}

\begin{lemma}\label{lemma:me-subgroups} Let $G$ and $H$ be two countable groups and $M\subseteq G$ and $N\subseteq H$ be subgroups. 
    Let $(\Omega,m,X_G,X_H)$ be a measure equivalence coupling from $G$ to $H$ and denote by $c_1:G\times X_H\to H$ and $c_2:H\times X_G\to G$ the coresponding measure equivalence cocycles. Assume that there exists a positive measure Borel subset $U\subseteq X_G\cap X_H$ such that 
    \begin{itemize}
    \item for every $g\in M$ and a.e.\ $x\in U$, if $gx\in U$, then $c_1(g,x)\in N$;
    \item for every $h\in N$ and a.e.\ $x\in U$, if $hx\in U$, then $c_2(h,x)\in M$.
    \end{itemize}

    Then there exist an $(M\times N)$-invariant Borel subspace $\Omega'\subseteq\Omega$, and Borel fundamental domains $X_M,X_N$ for the actions of $M,N$ on $\Omega'$, such that
    \begin{itemize}
        \item $(\Omega',m,X_M,X_N)$ is a measure equivalence coupling from $M$ to $N$, and
        \item $X_M,X_N$ are contained in fundamental domains for the actions of $G,H$ on $\Omega$.
    \end{itemize}
\end{lemma}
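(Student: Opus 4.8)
The plan is to take $\Omega':=(M\times N)*U$, the $(M\times N)$-orbit of $U$ inside $\Omega$. As a countable union of Borel translates of $U$ this is Borel, it is manifestly $(M\times N)$-invariant, and the $M$- and $N$-actions on it are free and measure-preserving, being restrictions of the $G$- and $H$-actions. Since $\Omega'\supseteq U$ has positive measure and $\Omega$ is $\sigma$-finite, $\Omega'$ is a legitimate candidate space; it remains to produce fundamental domains $X_M,X_N$ with the stated containments. I will take $X_N$ to be a Borel transversal for the $N$-action on $\Omega'$ (such a transversal exists, e.g.\ $X_N=\Omega'\cap(\bigsqcup_t t*X_H)$, where $t$ ranges over a transversal of $N\backslash H$, since $\bigsqcup_t t*X_H$ is a Borel fundamental domain for $N$ on $\Omega$), and $X_M$ a Borel transversal for the $M$-action on $\Omega'$, built symmetrically from $X_G$.

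The heart of the matter — and the step I expect to be the main obstacle — is the following claim: after discarding a null set, distinct $N$-orbits in $\Omega'$ lie in distinct $H$-orbits of $\Omega$ (and symmetrically, distinct $M$-orbits lie in distinct $G$-orbits). Granting this, the lemma follows quickly: the points of $X_N$ then lie in pairwise distinct $H$-orbits, so $X_N$ is a partial $H$-transversal and can be completed to a Borel fundamental domain for $H$ on $\Omega$; in particular $X_N$ is contained in such a fundamental domain and $m(X_N)\le m(X_H)<+\infty$. Likewise $X_M$ is contained in a fundamental domain for $G$ with $m(X_M)\le m(X_G)<+\infty$. Freeness, measure-preservation and finiteness of $X_M,X_N$ then exhibit $(\Omega',m,X_M,X_N)$ as a measure equivalence coupling from $M$ to $N$ with the required containments.

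To prove the claim for $N$ (the case of $M$ being symmetric, using the second hypothesis and $c_2$), I would first note that, the $M$- and $N$-actions commuting, every $N$-orbit in $\Omega'$ has the form $N*(m*u)$ with $m\in M$, $u\in U$. Suppose $N*(m_1*u_1)$ and $N*(m_2*u_2)$ meet a common $H$-orbit, say $m_2*u_2=h*m_1*u_1$ with $h\in H$; commuting the actions and setting $g_0:=m_1^{-1}m_2\in M$ yields $g_0*u_2=h*u_1$. Since $u_1\in X_H$, it is the $X_H$-representative of $g_0*u_2$, which is precisely to say $u_1=g_0\cdot u_2$ for the induced $G$-action on $X_H$; comparing with the defining relation $c_1(g_0,u_2)*g_0*u_2=g_0\cdot u_2$ then gives $h=c_1(g_0,u_2)^{-1}$. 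As $g_0\in M$ and $g_0\cdot u_2=u_1\in U$, the first hypothesis forces $c_1(g_0,u_2)\in N$, hence $h\in N$ and the two $N$-orbits coincide. This reciprocity between $c_1$ and $c_2$ on $U$ reflects the fact that on $U\subseteq X_G\cap X_H$ the induced $G$- and $H$-orbit relations coincide, and it is exactly what makes the hypothesis on $c_1$ bite. The only real care is measure-theoretic: the hypotheses hold only for a.e.\ $x$, so I would first replace $U$ by the conull Borel subset obtained by removing the ($M$-indexed, hence null) union of the exceptional sets, and run the whole construction over this smaller $U$.
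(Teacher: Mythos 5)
Your proof is correct, and it takes a genuinely different route from the paper's, even though both arguments live on the same space: your $\Omega'=(M\times N)*U$ coincides up to null sets with the paper's $\Omega_{M,N}$, and both hinge on the same core computation (an element $g_0=m_1^{-1}m_2\in M$ carrying a point of $U$ to a point of $U$ under the induced action forces the corresponding $H$-element $c_1(g_0,u_2)$ to lie in $N$). The difference is architectural. The paper is constructive throughout: it introduces retractions $\kappa:M\cdot U\to M$ and $\lambda:N\cdot U\to N$ bringing points back to $U$, builds \emph{shifted} fundamental domains $X'_G,X'_H$ containing $U$, replaces $c_1,c_2$ by cohomologous cocycles $c'_1,c'_2$ taking values in $N,M$, and its crux is the set identity $N*(M\cdotprime U)=M*(N\cdotprime U)$, after which $X_M=N\cdotprime U\subseteq X'_G$ and $X_N=M\cdotprime U\subseteq X'_H$ are the desired fundamental domains. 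You instead take soft transversals $X_N=\Omega'\cap\bigsqcup_t t*X_H$ (and symmetrically $X_M$ from $X_G$), and your crux is the orbit-injectivity claim that on $\Omega'$ the trace of the $H$-orbit relation equals the $N$-orbit relation (resp.\ $G$ and $M$); containment in fundamental domains of $G,H$ and the bounds $m(X_M)\le m(X_G)$, $m(X_N)\le m(X_H)$ then follow by completing a partial transversal. Your treatment of the a.e.\ hypothesis (shrinking $U$ to a conull $U'$ using countability of $M$ and $N$) is the right fix; one should also discard the $(G\times H)$-invariant null set where $X_G,X_H$ fail to be exact transversals, but that is routine. As for trade-offs: your argument is shorter and isolates a clean dynamical statement from which the measure bounds are immediate, whereas the paper's explicit construction keeps the new cocycles in a controlled cohomology class of the old ones (conjugation by the shift maps $\kappa,\lambda$), which is precisely the style reused in the quantitative arguments of Part~\ref{Part:QuantitativeResults} (compare Lemma~\ref{Lmm:ProdToParabolicsQt}), where one must track integrability of cocycles rather than their mere existence.
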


We refer to \cref{Rq:CouplageRestriction} for the groupoid theoretic translation of the above lemma.

\begin{proof}
   We denote by $c_1:G\times X_H\to H$ and $c_2:H\times X_G\to G$ the associated measure equivalence cocycles. Our assumption on $U$ ensures that for a.e.\ $x,y\in U$, if $g\in M$ sends $x$ to $y$, then $c_1(g,x)$ belongs to $N$. 
  Remark that $M\cdot U \subseteq X_H$ and $N\cdot U \subseteq X_G$.
  Let $\kappa:M\cdot U\to M$ be a measurable map such that $\kappa(x)\cdot x\in U$ for all $x\in M\cdot U$,
  and extend it to $X_H$ by letting $\kappa(x)=e_{M}$ for all $x\in X_H\backslash M \cdot U$. Similarly, define $\lambda:N\cdot U\to N$ a measurable map such that $\lambda(x)\cdot x\in U$ for every $x\in N\cdot U$,
  and extend it to $X_G$ by letting $\lambda(x)=e_{N}$ for all $x\in X_G\backslash N\cdot U$. Finally remark that these maps can (and will) be chosen such that for all $x\in U$ we have $\kappa(x)=e_{M}$ and $\lambda(x)=e_{N}$.

  \begin{description}
  \item[Shifted fundamental domains for $G$ and $H$] First let
  \begin{align*}
    X^\prime_G
    &:=\left\{ c_2\big(\lambda(x),x\big)*x \mid x\in X_G \right\},\\
    X^\prime_H
    &:=\left\{ c_1\big(\kappa(x),x\big)*x \mid x\in X_H \right\}.
  \end{align*}
  Remark that $X^\prime_G$ and $X^\prime_H$ are Borel subsets of $\Omega$ and both contain $U$.
  Now since $X^\prime_G$ and $X^\prime_H$ are fundamental domains for the respective actions of $G$ and $H$ on $\Omega$, we also have an induced action of $G$ (resp.\ $H$) on $X^\prime_H$ (resp.\ $X^\prime_G$). We will denote these actions by~“$\cdotprime$” 
  (see also \cref{eq:ActionDotPrime} below).
  \item[Cocycles] Now define for all $g\in M$ and a.e.\ $x\in M\cdot U$
  \begin{align*}
      c^\prime_1(g,x)&:= c_1\big(\kappa(g\cdot x),g\cdot x \big) c_1(g,x) c_1\big(\kappa(x),x\big)^{-1},\\
      &=c_1\left(\kappa(g\cdot x) g \kappa(x)^{-1},\kappa(x)\cdot x\right).
  \end{align*}
  In other words ${c}^\prime_1(g,x)$ verifies ${c}^\prime_1(g,x)\cdot \big(\kappa(x)\cdot x \big)=\kappa(g\cdot x) \cdot (g\cdot x)$ (see \cref{fig:DefcPrimeUn}).
    \begin{figure}[htbp]
    \centering
    \includegraphics[width=0.45\textwidth]{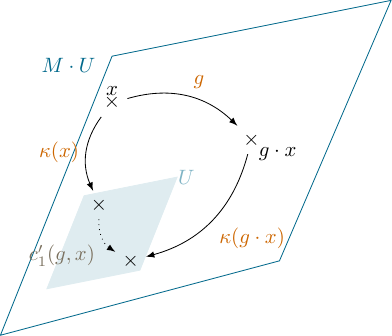}
    
    {\small \itshape The action of $c^\prime_1(g,x)$ is represented by the dotted arrow, it corresponds to the element of $H$ sending $\big(\kappa(x)\cdot x \big)$ to $\kappa(g\cdot x) \cdot (g\cdot x)$.
    Actions of elements of $G$ (in \textcolor{Orange}{orange}) are represented by the other arrows.}
    \caption{Definition of $c^\prime_1$}
    \label{fig:DefcPrimeUn}
  \end{figure}
  But by definition of $\kappa$, the product $\kappa(g\cdot x)g\kappa(x)^{-1}$ belongs to $M$ and both 
  $\kappa(x)\cdot x$ and $\kappa(g\cdot x) \cdot (g\cdot x)$ belong to $U$. 
  Therefore, ${c}^\prime_1(g,x)\in N$.

  Similarly, define for all $h\in N$ and all $x\in N\cdot U$
  \begin{equation*}
      c^\prime_2(h,x):= c_2\big(\lambda(h\cdot x),h\cdot x \big) c_2(h,x) c_2\big(\lambda(x),x\big)^{-1}.
  \end{equation*}
  As above ${c}^\prime_2$ verifies %is the\Ccom{idem} element of $G$ verifying 
  ${c}^\prime_2(h,x)\cdot \big(\lambda(x)\cdot x \big)=\lambda(h\cdot x) \cdot (h\cdot x)$ and $c^\prime_2(h,x)\in M$.

  Remark that, if we denote by $x^\prime=c_1\big(\kappa(x),x\big)*x$ an element of $X^\prime_H$, for all $g\in M$ and a.e.\ $x\in M\cdot U$ we have (see also \cref{fig:DefXprime})
  \begin{equation}\label{eq:ActionDotPrime}
   g\cdotprime x^\prime = c^\prime_1(g,x)*g*x^\prime.
  \end{equation}
  And similarily for $c^\prime_2$: for all $h\in N$ and a.e.\ $y\in N\cdot U$, if we let $y':=c_2(\lambda(y),y)\ast y$, we have $ h\cdotprime y^\prime = c^\prime_2(h,y)*h*y^\prime$.
  
  \begin{figure}[htbp]
      \centering
      \includegraphics{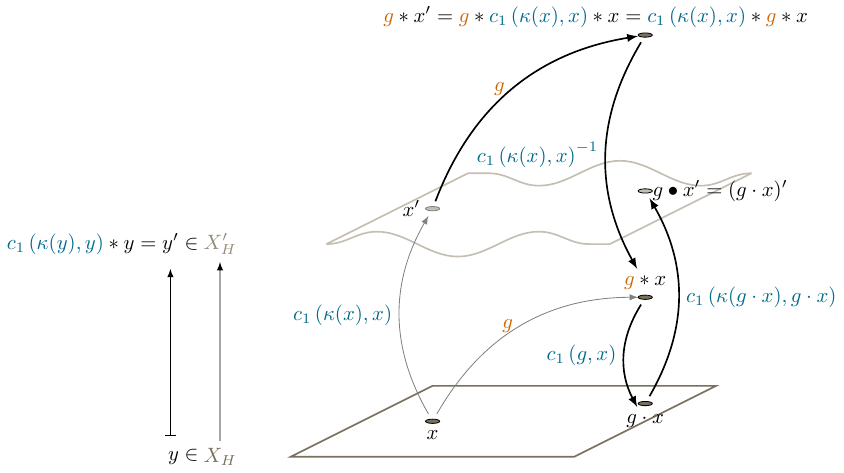}
      \caption{Definition of $g \cdotprime x^\prime$ (see \cref{eq:ActionDotPrime}). Here $X^\prime_H$ is represented wavy to emphasize that it is not a translate of $X_H$.}
      \label{fig:DefXprime}
  \end{figure}
  \item[Definition of the space] Let us now define the following two Borel spaces
  \begin{equation*}
    \Omega_{M,N}:=N*\left(M \cdotprime U\right),
    \qquad \Omega_{N,M}:=M*\left(N\cdotprime U\right).
  \end{equation*}
  Remark that $\Omega_{M,N}$ (resp.\ $\Omega_{N,M}$) is $N$-invariant (resp.\ $M$-invariant). In addition $N\cdotprime U$ and $M\cdotprime U$ are contained in the fundamental domains $X'_G$ and $X'_H$ respectively. Thus, $M\cdotprime U$ (resp.\ $N\cdotprime U$) is a Borel fundamental domain of finite measure for the $N$-action on $\Omega_{M,N}$ (resp.\ for the $M$-action on $\Omega_{N,M}$).
  Therefore, to obtain a measure equivalence coupling between $M$ and $N$, it only remains to prove that $\Omega_{M,N}=\Omega_{N,M}$: indeed, the lemma then follows by letting $\Omega'=\Omega_{M,N}$, and letting $X_M=N\cdotprime U$ and $X_N=M\cdotprime U$, which are respectively contained in $X'_G$ and $X'_H$. By symmetry, it is enough to prove the inclusion $\Omega_{M,N}\subseteq \Omega_{N,M}$.
  
  So let $h\in N$ and $u\in U$ and $g\in M$ and consider $h*(g\cdotprime u)\in \Omega_{M,N}$. Since $g\in M$ we have $c^\prime_1(g,u)\in N$ and therefore $h^\prime:=hc^\prime_1(g,u)$ also belongs to $N$. Remark that $h^\prime$ is the element of $N$ sending $g*u$ to $h*(g\cdotprime u)$: indeed using \cref{eq:ActionDotPrime} (see also \cref{fig:Omegavwwv}) we obtain
  \begin{equation}\label{eq:hprime}  
    h^\prime*g*u= hc^\prime_1(g,u)*g*u=  h*(g\cdotprime u).
  \end{equation}
  Note moreover that since $h^\prime$ belongs to $N$ then $c^\prime_2(h^\prime,u)$ is in $M$. Then, using \cref{eq:hprime} and that the actions of $H$ and $G$ commute we get (see also \cref{fig:Omegavwwv})
  \begin{align*}
      h*(g\cdotprime u)
      &= h^\prime*g*u,\\
      &= g*h^\prime*u,\\
      &=g*c^\prime_2(h^\prime,u)^{-1}*(h^\prime \cdotprime u)\in M*(N\cdotprime U).
  \end{align*}
  
  \begin{figure}[htbp]
      \centering
      \includegraphics[width=0.85\textwidth]{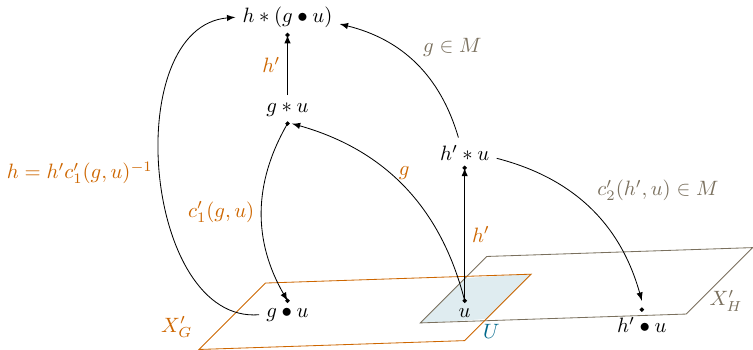} % À corriger
      \caption{Showing that $h*(g \cdotprime u)$ belongs to $\Omega_{N,M}=M*(N \cdotprime U)$}
      \label{fig:Omegavwwv}
  \end{figure}
  Therefore $\Omega_{M,N}=\Omega_{N,M}$. Hence denoting again $m$ the measure induced by $m$ on $\Omega_{M,N}$ we obtain that $\big(\Omega_{M,N},m, N\cdotprime U,M\cdotprime U \big)$ is a measure equivalence coupling from $M$ to $N$. \qedhere
  \end{description}
\end{proof}

%==================================================================
\section{Measured groupoids}
\label{sec:background-groupoids}
Since the work of Kida on the measure equivalence rigidity of mapping class groups \cite{Kid-me}, proofs of measure equivalence classification and rigidity theorems have often taken advantage of the language of measured groupoids. In this section, we review the relevant notions for the present work and emphasize the relationship to measure equivalence. In particular, we finish this section by providing a general framework, phrased in the language of measured groupoids, that encompasses Kida's strategy, and will serve as a blueprint in Part~\ref{Part:MEClassification} of the present work (Section~\ref{sec:blueprint}). General references about measured groupoids include \cite[Section~2.1]{AD}, \cite{Kid-survey} or \cite[Section~3]{GH-OutFn}.

\subsection{Measured groupoids}
In this section we recall some necessary material about measured groupoids and give numerous examples coming from the framework of groups acting on measure spaces.

A \emph{Borel groupoid}\index{Borel!Borel groupoid}\index{Groupoid!Borel groupoid} is a standard Borel space $\calg$ endowed with two Borel maps $s,r:\calG \rightarrow X$ towards a standard Borel space $X$ and equipped with a measurable (partially defined) composition law, a measurable inverse map, and a unit element for all $x\in X$ denoted by~$e_x$.
We call $X$ the \emph{base space}\index{Base space (of a groupoid)}\index{Groupoid!Base space} of $\calg$ and if $\grpdg \in \calg$ we call $s(\grpdg)$ the \emph{source}\index{Source (of a groupoid element)} and $r(\grpdg)$ the \emph{range}\index{Range (of a groupoid element)} of $\grpdg$. Two elements $\grpdg,\grpdh\in\calg$ are composable (i.e.\ one can form the product $\grpdg\grpdh$) if and only if $r(\grpdh)=s(\grpdg)$. Let $B_1,B_2\subseteq \calG$, we will denote by $B_1B_2$ the set\label{Def:ProduitGroupoides}
\begin{equation*}
      B_1B_2 =\left\{ \grpdb_1 \grpdb_2 \, : \, \grpdb_1\in B_1, \grpdb_2\in B_2,\  r(\grpdb_2)=s(\grpdb_1) \right\}.
\end{equation*}
All Borel groupoids considered in this article are \emph{discrete}\index{Discrete groupoid}, that is to say there are at most countably many elements with a given source or range. 

\begin{Ex}\label{Ex:MeasuredGroupoidAssociatedToAnAction}
    Let $G$ be a countable group acting on a standard probability space $(X,\mu)$. Then $G\times X$ has a natural structure of Borel groupoid over $X$. The associated source and range maps are then defined by $s(g,x)=x$ and $r(g,x)=g\cdot x$, the unit elements are given by $e_x:=(e_G,x)$ and the inverse map by $(g,x)^{-1}:=(g^{-1},g\cdot x)$. We denote this groupoid by $G \ltimes X$. 
\end{Ex}

Let $U\subseteq X$ be a Borel subset. The \emph{restriction}\index{Restriction (groupoid)}\index{Groupoid!Restriction of a groupoid} of $\calg$ to $U$ is defined as 
\begin{equation*}
    \calg_{|U}:=\left\{ \grpdg\in \calg \ :
    \ s(\grpdg),\, r(\grpdg)\in U\right\},
\end{equation*}
which is naturally a Borel groupoid over the base space $U$.

A Borel subset $B\subseteq \calG$ is called a \emph{bisection}\index{Bisection} if $s_{|B}$ and $r_{|B}$ are Borel isomorphisms to Borel subsets $s(B),r(B)\subseteq X$ respectively.

In the sequel of the paper we will work with measured groupoids\index{Measured groupoid}, defined as follows. A \emph{measured groupoid}\index{Groupoid!Measured groupoid} over a standard measure space $(X,\mu)$ is a discrete Borel groupoid $\calg$ over $X$ for which the measure $\mu$ is quasi-invariant in the following sense: for every bisection $B\subseteq\calg$, one has $\mu(r(B))=0$ if and only if $\mu(s(B))=0$.

The example of interest for us is when $G$ acts measure preservingly on a probability space $X$: the groupoid $G\ltimes X$ defined in \cref{Ex:MeasuredGroupoidAssociatedToAnAction} is then a measured groupoid. 

We mention that there is a natural notion of measured subgroupoid of $\calG$, as well as of groupoid isomorphism and refer to \cite[Section 3]{GH-OutFn} for more details.

Let us now illustrate how to construct measured groupoids from measure or orbit equivalence couplings.

\begin{Ex}\label{Ex:GroupoidsAssociatedToCouplings}
\begin{itemize}
    \item If $G$ and $H$ are orbit equivalent over some probability space $(X,\mu)$, then the corresponding measured groupoids $G\ltimes X$ and $H\ltimes X$ are isomorphic. In the following, when $G$ and $H$ are orbit equivalent, we will therefore identify $G\ltimes X$ and $H\ltimes X$ to the same measured groupoid.
    \item If two countable groups $G$ and $H$ are measure equivalent over some measure space $(\Omega,m)$, one can choose the fundamental domains $X_G$ and $X_H$ such that $m(X_G\cap X_H)>0$. For such a choice, let $X:=X_G\cap X_H$ and consider $(G \ltimes X_H)_{|X}$ and $(H \ltimes X_G)_{|X}$. Remark that the orbit equivalence relations on $X$ induced by the $G$-action on $X_H$ and $H$-action on $X_G$ coincide. In fact the two measured groupoids are isomorphic, via the identification $(g,x)\mapsto \big(c(g,x),x \big)$, where $c:G\times X_H\to H$ is the associated measure equivalence cocycle.
\end{itemize}    
\end{Ex}

\subsubsection{Finite index and infinite type groupoids} 
The \emph{index}\index{Index of a measured subgroupoid} of a measured subgroupoid was defined by Kida \cite[Section~3.2]{Kid-BS}, generalizing a notion of Feldman--Sutherland--Zimmer for equivalence relations \cite{FSZ}. Let $\calH \subseteq \calG$ be a measured subgroupoid and define the equivalence relation $\sim_\calH$ on $\calG$ as $\grpdg_1\sim_{\calH} \grpdg_2$ if and only if $\grpdg_2\grpdg^{-1}_1\in \calH$. 

\begin{itemize}
    \item One says that $\calH$ has \emph{finite index}\index{Finite index subgroupoid}\index{Subgroupoid of finite index} in $\calG$ if for a.e.\ $x\in X$ there are only finitely many equivalence classes modulo $\sim_{\calH}$ on $s^{-1}(x)$.
    \item One says that $\calg$ is \emph{of infinite type}\index{Infinite type groupoid}\index{Groupoid!Groupoid of infinite type} if there does not exist any positive measure Borel subset $U\subseteq X$ such that the trivial subgroupoid has finite index in $\calG_{|U}$, that is to say  
    if for every Borel subset $U\subseteq X$ of positive measure and a.e.\ $x\in U$ there are infinitely many $\grpdg\in \calG_{|U}$ such that $ s(\grpdg)=x$. 
\end{itemize}

\subsubsection{Stably something groupoids} \label{Subsec:StablySomething} In the present paper we will often work with properties verified \emph{up to some subdivision of the base space.} If \textsf{A} denotes some adjective, we will say that the groupoid is \emph{stably \textsf{A}}\index{Stably} if there exist a conull Borel subset $X^*\subseteq X$ and a partition $X^*=\sqcup_{i\in I} X_i $ into at most countably many Borel subsets such that $\calG_{|X_i}$ is \textsf{A}. For example:
\begin{itemize}
    \item We say that $\calg$ is \emph{stably trivial}\index{Stably!Stably trivial groupoid} if there exist a conull Borel subset $X^*\subseteq X$ and a partition $X^*=\sqcup_{i\in I} X_i $ into at most countably many Borel subsets such that $\calg_{|X_i}=\{ e_x \ : \ x\in X_i\}$ for every $i\in I$.
    \item Let $\calh,\calh'$ be two measured subgroupoids of $\calg$. We say that $\calh$ is \emph{stably contained}\index{Stably!Stably contained} in $\calh'$ (resp.\ \emph{stably equal}\index{Stably!Stably equal} to $\calh'$) if there exist a conull Borel subset $X^*\subseteq X$ and a partition $X^*=\sqcup_{i\in I} X_i $ into at most countably many Borel subsets such that $\calh_{|X_i}\subseteq \calh'_{|X_i}$ (resp.\ $\calh_{|X_i} = \calh'_{|X_i}$) for all $i\in I$.
\end{itemize}

We will also work with the notion of \emph{stably normalized} subgroupoid (see \cref{Subsec:NormalisationGroupoid}).

\subsubsection{Cocycles}\label{Def:CocycleGroupoid}
We now introduce the notion of cocycle for {measured groupoids}. In order to avoid confusion with measure and orbit equivalence cocycles defined above, we will denote by $c$ the latter and by $\rho$ a cocycle defined on a groupoid. The link between the different notions is made in the examples below.

Let $\calg$ be a measured groupoid over some standard measure space $X$ and $G$ be a countable group. A map $\rho:\calG \rightarrow G$ is called a \emph{cocycle}\index{Cocycle} if there exists a conull Borel subset $X^*\subseteq X$ such that for all
$\grpdg_1,\grpdg_2\in \calg_{|X^*}$ such that $s(\grpdg_1)=r(\grpdg_2)$, it verifies $\rho(\grpdg_1\grpdg_2)=\rho(\grpdg_1)\rho(\grpdg_2)$. Up to taking a conull subset, we can (and will) always assume that the equality holds everywhere.

\begin{Ex} \label{Ex:CocycleAssociatedToAnAction}
\begin{itemize}
    \item Let $G$ and $X$ be as in \cref{Ex:MeasuredGroupoidAssociatedToAnAction}. The groupoid $G\ltimes X$ comes with a cocycle $\rho:G\ltimes X \rightarrow G$ defined as $\rho(g,x)=g$. We call it the \emph{natural cocycle}\index{Natural cocycle}\index{Cocycle!Natural cocycle associated to an action} associated to the action.
    \item \label{Ex:CocycleAssociatedToAnOE}
    Let $G$ and $H$ be two groups that are orbit equivalent over some probability space~$X$. Recall that we have an orbit equivalence cocycle $c:G\times X \rightarrow H$ where $c(g,x)$ is defined as the (unique) element of $H$ such that $c(g,x)\cdot x=g\cdot x$. 
    Therefore this map $c$ induces a cocycle $\rho$ from the groupoid $\calG=G\ltimes X$ to the group $H$, defined by $\rho(g,x)=c(g,x)$. Note that since $G\ltimes X$ and $H\ltimes X$ are isomorphic, if $c^\prime:H\times X\rightarrow G$ denotes the other cocycle associated to the orbit equivalence, one can also define a cocycle $\rho^{\prime}:\calG\rightarrow G$ by letting $\rho^\prime(h,x)=c^\prime(h,x)$.
    \item \label{Ex:CocycleGrAssociatedToAME}
    Similarly, assume that $G$ and $H$ are two measure equivalent groups over some measure space $\Omega$ and $X$ is as in the second point of \cref{Ex:GroupoidsAssociatedToCouplings}. One can thus consider $\calG:=(G\ltimes X_H)_{|X}$ and define a cocycle $\rho:\calG \rightarrow H$ by letting $\rho(g,x):=c(g,x)$ where $c(g,x)$ denotes here the measure equivalence cocycle associated to the coupling $(\Omega,m,X_G,X_H)$.
\end{itemize}
\end{Ex}

\begin{Rq}\label{Rq:CouplageRestriction}
    For future use, notice that the assumption made in Lemma~\ref{lemma:me-subgroups} can be reformulated in terms of measured subgroupoids. Indeed, let $\calg$ be the measured groupoid on $X_G\cap X_H$ naturally associated to the measure equivalence coupling (as in the second point of \cref{Ex:GroupoidsAssociatedToCouplings}), and let $\rho_G:\calg\to G$ and $\rho_H:\calg\to H$ be the natural cocycles. Then our assumption on the cocycles $c_1$ and $c_2$ in Lemma~\ref{lemma:me-subgroups} is equivalent to requiring that $\rho_G^{-1}(M)_{|U}=\rho_H^{-1}(N)_{|U}$. 
\end{Rq}

The \emph{kernel}\index{Kernel!Kernel of a cocycle} of $\rho:\calG\rightarrow G$ is the set of all $\grpdg \in \calG$ such that $\rho(\grpdg)=e_G$. We say that $\rho$ has \emph{trivial kernel}\index{Trivial kernel}\index{Kernel!Trivial kernel} if $\rho^{-1}(e_G)$ only consists of units of $\calg$.
A cocycle $\rho$ is called \emph{nowhere trivial}\index{Nowhere!Nowhere trivial (cocycle)} if there does not exist any Borel subset $U\subseteq X$ of positive measure such that $\rho(\calG_{|U})=\{e_G\}$. 
The cocycle $\rho$ is called \emph{stably trivial}\index{Stably!Stably trivial cocycle}\label{Def:Stably-Trivial-Cocycle} if there exist a conull Borel subset $X^*\subseteq X$ and a Borel partition $X^*=\sqcup_{i\in I} X_i $ into at most countably many Borel subsets such that $\rho\left(\calG_{|X_i} \right)=\{e_G\}$ for all $i\in I$.

\paragraph{Equivariant maps} 
Let $\rho: \calG \rightarrow G$ be a cocycle and assume that the group $G$ acts on a standard Borel space $\Delta$ by Borel automorphisms. One says that a map $\Phi:X\rightarrow \Delta$ is \emph{$(\calG,\rho)$-equivariant}\index{Equivariant map} if there exists a conull Borel subset $X^*\subseteq X$ such that for all $\grpdg \in \calG_{|X^*}$ one has 
\begin{equation*}
    \Phi\left(r(\grpdg) \right)=\rho(\grpdg)\Phi\left(s(\grpdg) \right).
\end{equation*}
\begin{Ex}
    Let $G$ be a countable group acting measure-preservingly on a standard measure space $X$. Denote by $\calG:=G \ltimes X$ the associated groupoid and $\rho:\calG \rightarrow G$ the natural cocycle. If $G$ acts by measurable automorphisms on a standard Borel space $\Delta$, then any $G$-equivariant map from $X$ to $\Delta$ is $(\calG,\rho)$-equivariant.
\end{Ex}

\subsection{Normalization and amenable groupoids}
A crucial tool in our proofs is the study of amenable subgroupoids and their normalizers (see for example \cref{Sec:ExploitingNormalAmenable}). We recall in this section the main definitions and properties needed regarding the aforementioned two notions.

\subsubsection{Normalization}\label{Subsec:NormalisationGroupoid} A notion of normal subgroupoid of a measured groupoid was formalized by Kida \cite[Section~2.4]{Kid-me}, building upon a similar notion for measured equivalence relations due to Feldman--Sutherland--Zimmer \cite{FSZ}. We recall here the reformulation given in \cite[Section~3.2]{GH-OutFn}.

Let $\calH$ be a measured subgroupoid of $\calG$. Given a Borel subset $B\subseteq \calG$, one says that $\calH$ is \emph{$B$-invariant}\index{Invariant subgroupoids} if there exists a conull Borel subset $X^*\subseteq X$ such that for all $\grpdg_1,\grpdg_2 \in B\cap \calG_{|X^*}$ and all $\grpdh\in \calG_{|X^*}$ verifying $s(\grpdh)=s(\grpdg_1)$ and $r(\grpdh)=s(\grpdg_2)$, one has $\grpdh\in \calH$ if and only if $\grpdg_2\grpdh\grpdg^{-1}_1 \in \calH$.

Given two measured subgroupoids $\calH,\caln$, one says that $\calH$ is \emph{normalized}\index{Normalized (groupoids)} by $\caln$ if there exists a covering of $\caln$ by countably many Borel subsets $B_n\subseteq \calG$ so that $\calH$ is $B_n$-invariant for all $n\in \bN$. 

\begin{Ex}\label{ex:normal}
    Let $\rho:\calG\rightarrow G$ be a cocycle towards a countable group $G$. 
    If $H$ and $N$ are two subgroups of $G$ such that $H$ is normalized by $N$ then $\rho^{-1}(H)$ is normalized by~$\rho^{-1}(N)$.
\end{Ex}

Note that similarly as in \cref{Subsec:StablySomething}, one can define the notion of \emph{stably normalized}\index{Stably!Stably normalized (subgroupoid)} subgroupoids.

\subsubsection{Amenability} Generalizing Zimmer's notion of amenability of a group action \cite{Zim-amen}, Kida introduced the notion of amenable measured groupoid\index{Amenable measured groupoid}\index{Groupoid!Amenable measured groupoid}. We refer to \cite[Section~4]{Kid-survey} for the formal definition and only recall here the properties needed for our proofs. As an example, if $(X,\mu)$ is a standard probability space equipped with a measure-preserving action of a countable group $G$, then the groupoid $G\ltimes X$ is amenable if and only if the $G$-action on $X$ is amenable in the sense of Zimmer. 

\begin{Lmm}[{see \cite[Corollary~3.39]{GH-OutFn}}]\label{lemma:amenable-subgroup-subgroupoid}
    Let $\calG$ be a measured groupoid, and $G$ be an amenable countable group.
    If there exists a cocycle $\calG \rightarrow G$ with trivial kernel, then $\calG$ is amenable.
\end{Lmm}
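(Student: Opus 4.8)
The plan is to exploit the fact that a trivial-kernel cocycle is \emph{injective along the fibers}, and then to realize $\calG$ as a subgroupoid of a manifestly amenable groupoid built out of $G$. So first I would record the structural consequence of the kernel being trivial: for a.e.\ $x$ in the base space $X$, the restriction of $\rho$ to the source-fiber $s^{-1}(x)$ is injective (and likewise on range-fibers). Indeed, suppose $\grpdg,\grpdh\in s^{-1}(x)$ satisfy $\rho(\grpdg)=\rho(\grpdh)$. Since $s(\grpdg)=s(\grpdh)=x$, the element $\grpdg\grpdh^{-1}$ is composable (as $r(\grpdh^{-1})=s(\grpdh)=x=s(\grpdg)$), and $\rho(\grpdg\grpdh^{-1})=\rho(\grpdg)\rho(\grpdh)^{-1}=e_G$. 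As $\rho$ has trivial kernel, $\grpdg\grpdh^{-1}$ is a unit $e_{r(\grpdg)}$, whence $r(\grpdg)=r(\grpdh)$ and $\grpdg=(\grpdg\grpdh^{-1})\grpdh=\grpdh$.

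Next I would build the ambient amenable groupoid. Let $\calA:=G\times(X\times X)$ be the product of $G$, viewed as a groupoid over a point, with the pair groupoid on $X$; this is a measured groupoid over $X$, with an element $(g,(y,z))$ having source $z$ and range $y$, and with product $(g_1,(y_1,z_1))(g_2,(y_2,z_2))=(g_1g_2,(y_1,z_2))$ whenever $z_1=y_2$. I would then define $\Phi:\calG\to\calA$ by $\Phi(\grpdg)=(\rho(\grpdg),(r(\grpdg),s(\grpdg)))$. A direct check shows that $\Phi$ is a groupoid homomorphism over $\mathrm{id}_X$: composability is governed by $r(\grpdh)=s(\grpdg)$ on both sides, and the cocycle relation gives $\Phi(\grpdg)\Phi(\grpdh)=(\rho(\grpdg)\rho(\grpdh),(r(\grpdg),s(\grpdh)))=\Phi(\grpdg\grpdh)$. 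Moreover $\Phi$ is injective: if $\Phi(\grpdg)=\Phi(\grpdh)$ then $s(\grpdg)=s(\grpdh)$ and $\rho(\grpdg)=\rho(\grpdh)$, so $\grpdg=\grpdh$ by the first paragraph. Hence $\calG$ is isomorphic to a measured subgroupoid of $\calA$.

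It then remains to conclude by permanence. The groupoid $\calA$ is amenable: $G$ is amenable by hypothesis, the pair groupoid $X\times X$ is proper and therefore amenable, and a product of amenable groupoids is amenable (see \cite{ADR}). Since amenability is inherited by measured subgroupoids (\cite{Kid-survey}), $\calG$ is amenable, as desired. Equivalently, one may package the same mechanism as a permanence statement: the kernel subgroupoid $\rho^{-1}(e_G)$ is stably trivial, hence amenable, and amenability is stable under the extension determined by the cocycle $\rho$ into the amenable group $G$.

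The point I expect to be the real obstacle — and the reason for the detour through the pair groupoid — is that the most tempting direct argument fails. One would like to transport a Reiter (Følner) sequence $\nu_n\in\Prob(G)$ to fiberwise probability measures on $s^{-1}(x)$ through the injection $\rho|_{s^{-1}(x)}$, thereby verifying amenability via approximately invariant systems of measures on the fibers. But the image $\rho(s^{-1}(x))\subseteq G$ may carry arbitrarily small $\nu_n$-mass, so the renormalization needed to obtain genuine probability measures on the fibers destroys the asymptotic invariance (the invariance defect is only controlled after dividing by this vanishing mass). The embedding into $\calA=G\times(X\times X)$ circumvents this entirely by leaning on the unconditional amenability (properness) of the pair groupoid rather than on a quantitative fiberwise averaging.
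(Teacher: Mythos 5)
Your opening observation (fiber-injectivity of a trivial-kernel cocycle) is correct, and so is the algebraic verification that $\Phi:\calG\to\calA:=G\times(X\times X)$, $\grpdg\mapsto(\rho(\grpdg),(r(\grpdg),s(\grpdg)))$, is an injective groupoid homomorphism over $\mathrm{id}_X$. The proof breaks at the permanence step, and it breaks irreparably: you apply ``amenability is inherited by measured subgroupoids'' to $\Phi(\calG)\subseteq\calA$, but that principle is only valid when the subgroupoid is compatible with the measured (Haar) structure of the ambient groupoid. This is automatic in the discrete setting of the reference you cite, where every fiber is countable and carries counting measure, but your $\calA$ is not discrete (its fibers are copies of $G\times X$, so it is not even an object of the framework of this paper, which explicitly restricts to discrete groupoids). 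With respect to any natural Haar system on $\calA$ (counting measure on $G$ times $\mu$ on $X$), the subgroupoid $\Phi(\calG)$ has countable fibers and is therefore \emph{null}, and amenability does not pass to null Borel subgroupoids. Concretely, the principle your argument needs is false: for a free p.m.p.\ action $F_2\actson(X,\mu)$, the map $(h,x)\mapsto(hx,x)$ is an injective Borel groupoid homomorphism, over $\mathrm{id}_X$, from $F_2\ltimes X$ into the pair groupoid $X\times X$; the pair groupoid is amenable in exactly your sense (the field $x\mapsto\mu$ is an \emph{exactly} invariant field of fiber probability measures), yet $F_2\ltimes X$ is not amenable, since amenability of a p.m.p.\ action groupoid forces the acting group to be amenable (\cite[Proposition~4.3.3]{Zim}, as used in the proof of Lemma~\ref{lemma:action-like-1}). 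So ``injective Borel embedding into an amenable measured groupoid'' does not imply amenability; your argument, were it valid, would prove that every free action groupoid is amenable.

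The gap also cannot be patched within this strategy. The only discrete groupoid your $\Phi$ naturally lands in is $G\times\calr_\calG$, where $\calr_\calG\subseteq X\times X$ is the orbit equivalence relation induced by $\calG$; there, discrete subgroupoid inheritance does apply, but amenability of $G\times\calr_\calG$ is (since $G$ is amenable) equivalent to amenability of $\calr_\calG$, which is essentially the content of the lemma --- the argument becomes circular. Your final paragraph correctly identifies the genuine difficulty: the fiberwise images $\rho(s^{-1}(x))\subseteq G$ may carry vanishing F\o lner mass, so pulling back Reiter functions and renormalizing destroys asymptotic invariance. But the detour through the pair groupoid does not circumvent this difficulty; it relocates it inside the invalid inheritance step (note that the pair-groupoid factor is amenable for reasons having nothing to do with $\rho$ or with amenability of $G$, which is a sign that it cannot carry the weight of the proof). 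The paper itself does not reprove the statement: it quotes \cite[Corollary~3.39]{GH-OutFn}, which rests on \cite[Proposition~4.33]{Kid-memoir}, where the fiberwise mass problem is handled by a genuine argument; a self-contained proof has to confront that problem head-on rather than bypass it by an embedding.
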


In the following, if $K$ is a compact metrizable space we denote by $\mathrm{Prob}(K)$ the space of Borel probability measures on $K$ equipped with the weak-$*$ topology.

\begin{Prop}[{\cite[Proposition 4.14]{Kid-survey}}]\label{prop:amenable-prob}
    Let $\calG$ be an amenable measured groupoid over some base space $X$ and $\rho:\calG \rightarrow G$ be a cocycle toward $G$ a countable group. Assume that $G$ acts by homeomorphisms on a compact metrizable space $K$. Then there exists a $(\calG,\rho)$-invariant Borel map from $X$ to $\mathrm{Prob}(K)$.
\end{Prop}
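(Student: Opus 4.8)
The plan is to deduce this from the fixed point characterization of amenability for measured groupoids, which is the form in which Zimmer's notion is most useful here. First I would package the data $(\rho,\, G\actson K)$ into a fibered affine action of $\calG$. Since $K$ is compact metrizable, the space $\Prob(K)$ is a compact convex metrizable space for the weak-$*$ topology, and the $G$-action on $K$ by homeomorphisms induces a $G$-action on $\Prob(K)$ by affine homeomorphisms via pushforward, $g\cdot\nu:=g_*\nu$. Using the cocycle $\rho$, I then let $\calG$ act on the trivial bundle $X\times\Prob(K)\to X$ by declaring that $\grpdg\in\calG$ maps the fiber over $s(\grpdg)$ to the fiber over $r(\grpdg)$ through $\nu\mapsto\rho(\grpdg)_*\nu$. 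The cocycle identity $\rho(\grpdg_1\grpdg_2)=\rho(\grpdg_1)\rho(\grpdg_2)$ guarantees that this is a genuine (fiberwise affine) Borel action of $\calG$ on a bundle of non-empty compact convex sets.

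Second, I would invoke amenability. By definition (equivalently, by the fixed point property that characterizes amenable measured groupoids, see \cite[Section~4]{Kid-survey}), a $\calG$-action by affine homeomorphisms on a Borel bundle of non-empty compact convex metrizable sets admits a $\calG$-invariant Borel section. Applied to the bundle $X\times\Prob(K)$ built above, this yields a Borel map $\Phi:X\to\Prob(K)$ that is invariant under the $\calG$-action, which unwinds to
\[
    \Phi\bigl(r(\grpdg)\bigr)=\rho(\grpdg)_*\,\Phi\bigl(s(\grpdg)\bigr)
\]
for almost every $\grpdg\in\calG$. Writing $\rho(\grpdg)\cdot\Phi(s(\grpdg))$ for the pushforward $\rho(\grpdg)_*\Phi(s(\grpdg))$ under the induced $G$-action on $\Prob(K)$, this is exactly the defining relation for $\Phi$ to be $(\calG,\rho)$-equivariant, which is the desired conclusion.

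The two points that require care, and where the real content sits, are the measurability of the construction and the fixed point theorem itself. For the former, I would check that $\grpdg\mapsto\rho(\grpdg)$ is Borel (true since $\rho$ is a cocycle) and that $(g,\nu)\mapsto g_*\nu$ is continuous on $G\times\Prob(K)$ (true since each $g$ acts by a homeomorphism and $G$ is countable), so that the fibered action is Borel and the hypotheses of the fixed point property are met; separability of $\Prob(K)$ is what allows any measurable selection to go through. The genuinely hard input is the fixed point property for amenable groupoids, i.e. the equivalence between the mean-theoretic definition of amenability and the existence of invariant sections of affine bundles; this is precisely the groupoid analogue of Zimmer's fixed point theorem, and I would treat it as the black box provided by \cite{Kid-survey}. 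If instead one starts from the mean-theoretic definition (an asymptotically invariant sequence of fiberwise means, in the spirit of the asymptotically equivariant maps of \cref{sec:amenability}), the extraction of $\Phi$ proceeds by forming barycenters: apply the approximately invariant means to the constant field $K$ to obtain approximately equivariant maps $X\to\Prob(K)$, then pass to a weak-$*$ limit point using the compactness of $\Prob(K)$ to produce a genuinely equivariant section.
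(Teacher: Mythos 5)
The paper offers no proof of this proposition at all: it is quoted directly from Kida's survey (\cite[Proposition~4.14]{Kid-survey}), so the only thing to compare against is the argument underlying that citation, and your proposal is exactly it — encode $(\rho, G\actson K)$ as a fibered affine $\calG$-action on the trivial bundle $X\times\Prob(K)$ and apply the fixed-point characterization of groupoid amenability to extract an invariant Borel section, which unwinds to the $(\calG,\rho)$-equivariance relation $\Phi(r(\grpdg))=\rho(\grpdg)_*\Phi(s(\grpdg))$. Your argument is correct and takes essentially the same route as the cited source; the only glossy spot is the closing remark about the mean-theoretic route, where "pass to a weak-$*$ limit point using compactness of $\Prob(K)$" really requires compactness of the unit ball of $L^\infty_w(X,\Prob(K))$ (as the dual of $L^1(X,C(K))$) rather than fiberwise compactness alone, but that is an aside and not your main line of proof.
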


One says that $\calG$ is \emph{everywhere non-amenable}\index{Everywhere non-amenable groupoid} if for every Borel subset $U\subseteq X$ of positive measure, the restriction $\calG_{|U}$ is not amenable. 

\subsection{Action-like cocycles} 

We will work with the following definition.

\begin{Def}[Action-like cocycle]\index{Cocycle!Action-like cocycle}\label{de:action-like}
    Let $\calg$ be a measured groupoid over a standard probability space, and let $G$ be a countable group. A cocycle $\rho:\calg\to G$ is \emph{action-like}\index{Action-like cocycle}\index{Cocycle!Action-like cocycle} if
    \begin{enumerate}
        \item $\rho$ has trivial kernel;
        \item whenever $H_1\subseteq H_2$ is an infinite index inclusion of subgroups of $G$, for every positive measure Borel subset $U\subseteq X$, the subgroupoid $\rho^{-1}(H_1)_{|U}$ does not have finite index in $\rho^{-1}(H_2)_{|U}$;
        \item for every nonamenable subgroup $H\subseteq G$, the groupoid $\rho^{-1}(H)$ is everywhere nonamenable.
    \end{enumerate}
\end{Def}

\begin{Rq}\label{rk:action-like}
We warn the reader that, although we are using the same terminology, our notion is slightly different from the one in \cite[page~53]{HH-Higman}, where the third requirement was dropped --~it is also slightly more restrictive than the notion of \emph{action-type} cocycle from \cite[Definition~3.20]{GH-OutFn}.
\end{Rq}

\begin{Rq}\label{rk:action-like-restriction}
We observe that if $\rho:\calg\to G$ is action-like, then for every positive measure Borel subset $U\subseteq X$, the restriction $\rho:\calg_{|U}\to G$ is again action-like. Also, if $H\subseteq G$ is a subgroup, then the restricted cocycle $\rho^{-1}(H)\to H$, viewed as a cocycle towards $H$, is action-like.
\end{Rq}

Building on Example~\ref{Ex:CocycleAssociatedToAnAction}, the main example of importance to us is the following.

\begin{Lmm}\label{lemma:action-like-1}
Let $(X,\mu)$ be a standard probability space, equipped with a measure-preserving action of a countable group $G$. Let $\calg=G\ltimes X$ be the corresponding measured groupoid.

Then the natural cocycle $\rho:\calg\to G$ is action-like. 
\end{Lmm}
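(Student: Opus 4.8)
The plan is to verify the three conditions of Definition~\ref{de:action-like} for the natural cocycle $\rho\colon\calg\to G$, $\rho(g,x)=g$, where $\calg=G\ltimes X$. Condition~(1) is immediate: the preimage $\rho^{-1}(e_G)=\{(e_G,x)\mid x\in X\}$ is exactly the set of units of $\calg$, so $\rho$ has trivial kernel. Conditions~(2) and~(3) are the substantial ones, and I would treat them separately. (In fact (1) and (2) together amount to saying that $\rho$ is action-type in the sense of \cite[Definition~3.20]{GH-OutFn}, and could be cited from there; but the argument below is self-contained.)

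For condition~(2), fix an infinite-index inclusion $H_1\subseteq H_2$ of subgroups of $G$ and a positive measure Borel subset $U\subseteq X$. Since $\rho^{-1}(H_i)_{|U}=\{(h,x)\mid h\in H_i,\ x,hx\in U\}$, unwinding the finite-index relation shows that two elements $(h_1,x),(h_2,x)$ of a fibre $s^{-1}(x)\cap\rho^{-1}(H_2)_{|U}$ are $\sim_{\rho^{-1}(H_1)}$-equivalent iff $h_2h_1^{-1}\in H_1$; hence the number of classes equals the number of distinct right cosets $H_1h$ with $h\in H_2$ and $hx\in U$. Set $\tilde U:=H_1\cdot U$, a Borel $H_1$-invariant set with $\mu(\tilde U)\ge\mu(U)>0$, and fix a (countably infinite) transversal $T\subseteq H_2$ of $H_1\backslash H_2$. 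A direct check gives that this class-count equals
\[f(x):=\sum_{t\in T}\mathbf{1}_{\tilde U}(tx)=|\{t\in T\mid tx\in\tilde U\}|.\]
The key observation is that, read off the same formula on all of $X$, the function $f$ is $H_2$-invariant: the set $\{h\in H_2\mid hx\in\tilde U\}$ is a union of right cosets $H_1h$ (by $H_1$-invariance of $\tilde U$), and replacing $x$ by $h_0x$ right-translates it by $h_0^{-1}$, preserving the number of cosets. Integrating against the invariant measure,
\[\int_X f\,d\mu=\sum_{t\in T}\mu(t^{-1}\tilde U)=|T|\cdot\mu(\tilde U)=+\infty.\]
Passing to the ergodic decomposition, $f$ is a.e.\ constant on each component, and on every component of positive $\tilde U$-mass the displayed integral forces this constant to be $+\infty$; since almost every $x\in U$ lies in such a component, $f=+\infty$ a.e.\ on $U$. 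Thus $\rho^{-1}(H_1)_{|U}$ has infinitely many classes on a.e.\ fibre and does not have finite index in $\rho^{-1}(H_2)_{|U}$.

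For condition~(3), let $H\subseteq G$ be nonamenable and suppose for contradiction that $\rho^{-1}(H)_{|U}$ is amenable for some positive measure $U$. Note $\rho^{-1}(H)=H\ltimes X$ for the restricted $H$-action. Let $Y:=H\cdot U$ be the $H$-saturation of $U$; it is Borel, $H$-invariant, of positive finite measure, and $U$ is a Borel complete section for $H\actson Y$. Since amenability of a measured groupoid is preserved under restriction to a complete section \cite{ADR}, the groupoid $\rho^{-1}(H)_{|Y}=H\ltimes Y$ is amenable. But $\mu_{|Y}$ is an $H$-invariant finite measure, so after renormalization we obtain a probability measure-preserving action of $H$ whose orbit groupoid is amenable; by Zimmer's theorem \cite{Zim-amen} this forces $H$ to be amenable, a contradiction. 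Hence $\rho^{-1}(H)$ is everywhere nonamenable.

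The main obstacle is condition~(3): everything there rests on the external input that a probability measure-preserving action whose orbit groupoid is amenable must be an action of an amenable group — Zimmer's theorem, combined with the invariance of groupoid amenability under restriction to a complete section. This is exactly the requirement that distinguishes our action-like cocycles from the action-type cocycles of \cite{GH-OutFn}, and it is the only step that is not essentially formal. In condition~(2) the single point needing care is the $H_2$-invariance of the fibrewise class-count, which is precisely what reduces the statement to an elementary integration once the ergodic decomposition is invoked.
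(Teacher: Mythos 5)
Your proof is correct, and for condition (3) it is the paper's argument verbatim in substance: assume $\rho^{-1}(H)_{|U}$ is amenable, pass to the saturation $H\cdot U$, and apply Zimmer's theorem (the paper cites \cite[Proposition~4.3.3]{Zim}) to the measure-preserving action of $H$ on the saturation to contradict nonamenability of $H$. One wording slip there: you say amenability is ``preserved under restriction to a complete section,'' which is the implication opposite to the one you use — you need that amenability of the restriction to $U$ implies amenability over the saturation; this is exactly what the paper cites as \cite[Theorem~4.16(iii)]{Kid-survey} (the statement in the literature is an equivalence, so only your phrasing is off, not the mathematics). Where you genuinely depart from the paper is in conditions (1) and (2): the paper disposes of both by citing \cite[Lemma~B.3]{HH-Higman}, whereas you supply a self-contained argument. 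That argument is correct: the number of $\sim_{\rho^{-1}(H_1)_{|U}}$-classes in the fibre over $x\in U$ equals the number of cosets $H_1h\subseteq H_2$ with $tx\in H_1\cdot U$ for the representative $t$; read on all of $X$ this count is $H_2$-invariant; its integral is $[H_2:H_1]\cdot\mu(H_1\cdot U)=+\infty$; and the ergodic decomposition of the $H_2$-action converts the infinite integral into the conclusion that the count is $+\infty$ a.e.\ on $U$, which is more than enough to rule out finite index. Your route buys self-containedness (no appeal to an external lemma whose proof the reader must chase), at the cost of length; the paper's citation buys brevity. One caution: your parenthetical assertion that (1) and (2) together ``amount to'' the action-type condition of \cite[Definition~3.20]{GH-OutFn} is imprecise — condition (2) applied with $H_1=\{1\}$ yields the infinite-type requirement of action-type, but the converse is not clear (the paper's Remark~\ref{rk:action-like} only says action-like is more restrictive than action-type), so citing \cite{GH-OutFn} would not obviously substitute for your argument. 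Since you prove (2) directly anyway, this aside does not affect the correctness of the proof.
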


\begin{proof}
    The first two items of the definition are checked in \cite[Lemma~B.3]{HH-Higman} (we emphasize here, as recalled in Remark~\ref{rk:action-like}, that the notion of an action-like cocycle used in \cite{HH-Higman} only had the first two items from~\cref{de:action-like}). We now prove that if $H\subseteq G$ is a nonamenable subgroup, then $\rho^{-1}(H)$ is everywhere nonamenable. So let $U\subseteq X$ be a positive measure Borel subset, and assume that $\rho^{-1}(H)_{|U}$ is amenable. Letting $H\cdot U$ be the saturation of $U$ under the $H$-action, it follows from \cite[Theorem~4.16(iii)]{Kid-survey} that $\rho^{-1}(H)_{|H\cdot U}$ is amenable. But since $H\cdot U$ is an $H$-invariant positive measure subset on which $H$ acts in a measure-preserving way, then \cite[Proposition~4.3.3]{Zim} implies that $H$ is amenable, which is a contradiction. 
\end{proof}

We will often use the following observation.

\begin{Lmm}\label{lemma:action-like}
Let $\calg$ be a measured groupoid, let $G$ be a countable group, and let $\rho:\calg\to G$ be an action-like cocycle. Let $H,H'$ be two subgroups of $G$.
If $\rho^{-1}(H)$ is stably contained in $\rho^{-1}(H')$, then $H$ has a finite index subgroup contained in $H'$.
\end{Lmm}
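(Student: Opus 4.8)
The plan is to reduce to the subgroup $H\cap H'$ and exploit the second clause of \cref{de:action-like} contrapositively. Neither the trivial-kernel clause nor the nonamenability clause will be needed.

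First I would observe that stable containment of $\rho^{-1}(H)$ in $\rho^{-1}(H')$ is equivalent to stable equality of $\rho^{-1}(H)$ with $\rho^{-1}(H\cap H')$. Since $\rho$ is a cocycle, one has $\rho^{-1}(H)\cap\rho^{-1}(H')=\rho^{-1}(H\cap H')$. Let $X^*=\sqcup_{i\in I}X_i$ be the countable Borel partition of a conull set witnessing the stable containment, so that $\rho^{-1}(H)_{|X_i}\subseteq\rho^{-1}(H')_{|X_i}$ for every $i\in I$. Intersecting both sides of this inclusion with $\rho^{-1}(H)_{|X_i}$ yields $\rho^{-1}(H)_{|X_i}=\rho^{-1}(H\cap H')_{|X_i}$ for every $i\in I$.

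Next I would choose an index $i$ with $\mu(X_i)>0$; such an $i$ exists because $X^*$ is conull and the partition is countable, so the $X_i$ cannot all be null. Set $U:=X_i$. On $U$ we then have $\rho^{-1}(H\cap H')_{|U}=\rho^{-1}(H)_{|U}$, so in particular $\rho^{-1}(H\cap H')_{|U}$ has finite index (indeed index one) in $\rho^{-1}(H)_{|U}$.

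Finally, I would apply the second property in \cref{de:action-like} with $H_1:=H\cap H'$ and $H_2:=H$. Its contrapositive asserts that if there exists a positive measure Borel subset $U\subseteq X$ for which $\rho^{-1}(H_1)_{|U}$ has finite index in $\rho^{-1}(H_2)_{|U}$, then the inclusion $H_1\subseteq H_2$ has finite index. By the previous step this hypothesis holds, so $H\cap H'$ has finite index in $H$. Since $H\cap H'\subseteq H'$, the subgroup $H\cap H'$ is the desired finite-index subgroup of $H$ contained in $H'$, which completes the proof. The argument is short and presents no real obstacle; the only point to get right is the reformulation of stable containment as stable equality with $\rho^{-1}(H\cap H')$, which is exactly what permits invoking the finite-index clause on a single positive-measure piece of the partition.
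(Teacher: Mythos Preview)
Your proof is correct and follows essentially the same approach as the paper's: both pick a positive-measure Borel subset $U$ on which $\rho^{-1}(H)_{|U}\subseteq\rho^{-1}(H')_{|U}$, rewrite this as $\rho^{-1}(H\cap H')_{|U}=\rho^{-1}(H)_{|U}$, and then invoke the second clause of the action-like definition to conclude that $[H:H\cap H']<\infty$. Your version is slightly more explicit about the partition and the contrapositive, but the argument is the same.
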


\begin{proof}
Let $U\subseteq X$ be a positive measure Borel subset such that $\rho^{-1}(H)_{|U}\subseteq\rho^{-1}(H')_{|U}$. Then $\rho^{-1}(H\cap H')_{|U}=\rho^{-1}(H)_{|U}$, and it follows from the definition of an action-like cocycle that $H\cap H'$ has finite index in $H$.     
\end{proof}

As a consequence, when an action-like cocycle takes its values in a graph product $G$, one can relate the inclusion of two parabolic subgroups of $G$ (see \cref{Subsec:ParabolicSubgroup} for the definition) with the inclusion of groupoids.

\begin{Lmm}\label{lemma:inclusion-parabolics}
    Let $G$ be a graph product of countably infinite groups over a finite simple graph $\Gamma$. Let $\calg$ be a measured groupoid over a standard probability space $X$, equipped with an action-like cocycle $\rho:\calg\to G$, and let $U\subseteq X$ be a positive measure Borel subset. Let $P_1,P_2\subseteq G$ be two parabolic subgroups.

    If $\rho^{-1}(P_1)_{|U}\subseteq\rho^{-1}(P_2)_{|U}$, then $P_1\subseteq P_2$.
\end{Lmm}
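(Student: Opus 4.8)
The plan is to combine the two group-theoretic facts already recorded in the excerpt: Lemma~\ref{lemma:action-like}, which converts a groupoid-level containment of preimages into a finite-index containment of the underlying subgroups, and Lemma~\ref{lemma:finite-index}, which upgrades a finite-index containment of parabolic subgroups into an honest one. Since $G$ is a graph product of countably infinite groups, all its vertex groups are infinite, so the standing hypothesis of Lemma~\ref{lemma:finite-index} is satisfied.

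First I would pass to the restriction of everything over $U$. By Remark~\ref{rk:action-like-restriction}, the restricted cocycle $\rho:\calg_{|U}\to G$ is again action-like. The hypothesis $\rho^{-1}(P_1)_{|U}\subseteq\rho^{-1}(P_2)_{|U}$ says precisely that, over the base space $U$, the subgroupoid $(\rho_{|U})^{-1}(P_1)$ is contained in $(\rho_{|U})^{-1}(P_2)$; in particular it is \emph{stably} contained in it (take the trivial one-piece partition of $U$). Applying Lemma~\ref{lemma:action-like} to the action-like cocycle $\rho:\calg_{|U}\to G$ with $H=P_1$ and $H'=P_2$ then yields a finite-index subgroup $P_1^0\subseteq P_1$ with $P_1^0\subseteq P_2$.

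Second, I would invoke Lemma~\ref{lemma:finite-index}. The subgroups $P_1$ and $P_2$ are parabolic by assumption, and we have just produced a finite-index subgroup $P_1^0$ of $P_1$ contained in $P_2$. As the vertex groups of $G$ are infinite, Lemma~\ref{lemma:finite-index} applies verbatim and gives $P_1\subseteq P_2$, which is exactly the desired conclusion.

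I do not expect any genuinely hard step here: the statement is a formal consequence of the two cited lemmas. The only point requiring a little bookkeeping is feeding a containment that holds merely on the positive-measure set $U$ (rather than on all of $X$) into Lemma~\ref{lemma:action-like}; this is dealt with by restricting $\calg$ and $\rho$ to $U$ and using that action-likeness and the containment are both preserved under this restriction.
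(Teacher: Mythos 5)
Your proof is correct and is essentially the paper's own argument: the paper likewise applies Lemma~\ref{lemma:action-like} to the restricted groupoid $\calg_{|U}$ to extract a finite-index subgroup of $P_1$ inside $P_2$, and then concludes with Lemma~\ref{lemma:finite-index}, using that the vertex groups are infinite. Your extra bookkeeping via Remark~\ref{rk:action-like-restriction} just makes explicit what the paper leaves implicit in the phrase ``applied to the groupoid $\calg_{|U}$''.
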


\begin{proof}
  As $\rho^{-1}(P_1)_{|U}\subseteq\rho^{-1}(P_2)_{|U}$, Lemma~\ref{lemma:action-like} (applied to the groupoid $\calg_{|U}$) shows that $P_1$ has a finite-index subgroup contained in $P_2$. The conclusion then follows from Lemma~\ref{lemma:finite-index}, \cpageref{lemma:finite-index}.
\end{proof}

\subsubsection{Support}

In this section specifically, we consider measured groupoids equipped with cocycles towards a graph product, and review the notion of support introduced in \cite[Section~3.3]{HH21}. It was defined in a more general context than graph products in \cite{HH21}, but we will specify to this setting since this will be the only one of interest to us in the present paper.

\paragraph{Tight support}\label{tight-support}
Let $G$ be a graph product of countable groups over a finite simple graph. Recall that $\mathbb{P}_G$ denotes the set of parabolic subgroups of $G$. Let $\calg$ be a measured groupoid over a standard probability space $X$, and let $\rho:\calg\to G$ be a cocycle. Following \cite[Section~3.3]{HH21}, given a parabolic subgroup $P\in\mathbb{P}_G$, we say that $(\calg,\rho)$ is \emph{tightly $P$-supported}\index{Tightly $P$-supported} if 
\begin{enumerate}
\item there exists a conull Borel subset $X^*\subseteq X$ such that $\rho(\calg_{|X^*})\subseteq P$, and
\item for every proper parabolic subgroup $Q\subsetneq P$, there does not exist any positive measure Borel subset $U\subseteq X$ such that $\rho(\calg_{|U})\subseteq Q$.
\end{enumerate}

Given a pair $(\calG,\rho)$, if there exists some parabolic subgroup $P\in\mathbb{P}_G$ such that $(\calG,\rho)$ is tightly $P$-supported then such a $P$ is unique. Notice also that if $(\calg,\rho)$ is tightly $P$-supported, then for every positive measure Borel subset $U\subseteq X$, the pair $(\calg_{|U},\rho)$ is tightly $P$-supported. The following lemma provides examples of tightly supported pairs.

\begin{Lmm}\label{lemma:tight-support}
    Let $G$ be a graph product of countably infinite groups over a finite simple graph. Let $\calg$ be a measured groupoid over a standard probability space, equipped with an action-like cocycle $\rho:\calg\to G$. Let $P\subseteq G$ be a parabolic subgroup.

    Then $(\rho^{-1}(P),\rho)$ is tightly $P$-supported. 
\end{Lmm}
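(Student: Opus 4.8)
The plan is to verify the two conditions in the definition of a tightly $P$-supported pair directly, keeping in mind that here the groupoid under consideration is $\calg=\rho^{-1}(P)$ and the cocycle is the restriction of $\rho$.

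First, condition~(1) is immediate. By definition of $\rho^{-1}(P)$, every element $\grpdg\in\rho^{-1}(P)$ satisfies $\rho(\grpdg)\in P$, so $\rho(\rho^{-1}(P))\subseteq P$ and one may simply take $X^*=X$. No genuine work is needed here.

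The content lies in condition~(2), which I would establish by contradiction. Suppose there were a proper parabolic subgroup $Q\subsetneq P$ and a positive measure Borel subset $U\subseteq X$ with $\rho\bigl((\rho^{-1}(P))_{|U}\bigr)\subseteq Q$. Unwinding the definitions, this says precisely that $\rho^{-1}(P)_{|U}\subseteq\rho^{-1}(Q)_{|U}$. Since the inclusion $Q\subseteq P$ gives the reverse containment $\rho^{-1}(Q)_{|U}\subseteq\rho^{-1}(P)_{|U}$ for free, the two subgroupoids coincide, i.e.\ $\rho^{-1}(P)_{|U}=\rho^{-1}(Q)_{|U}$. In particular $\rho^{-1}(Q)_{|U}$ has finite index (index~$1$) in $\rho^{-1}(P)_{|U}$.

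At this point I would invoke that $\rho$ is action-like. By the contrapositive of the second item of \cref{de:action-like}, the existence of a positive measure $U$ over which $\rho^{-1}(Q)_{|U}$ has finite index in $\rho^{-1}(P)_{|U}$ forces the inclusion $Q\subseteq P$ to be of finite index. But a proper parabolic subgroup can never have finite index in $P$: if $Q$ had finite index in $P$, then $P$ would possess a finite-index subgroup (namely $Q$ itself) contained in $Q$, so \cref{lemma:finite-index} would yield $P\subseteq Q$, whence $P=Q$, contradicting $Q\subsetneq P$. This contradiction proves condition~(2) and hence the lemma. The only steps requiring care are the translation of the set-level containment $\rho(\calg_{|U})\subseteq Q$ into the groupoid inclusion $\rho^{-1}(P)_{|U}\subseteq\rho^{-1}(Q)_{|U}$, and checking that the resulting index-$1$ situation genuinely triggers the action-like hypothesis; the purely group-theoretic input, that a proper parabolic subgroup is never of finite index, is supplied entirely by \cref{lemma:finite-index}.
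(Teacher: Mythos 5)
Your proof is correct and follows essentially the same route as the paper: the paper disposes of condition~(1) as obvious and derives condition~(2) in one line from Lemma~\ref{lemma:inclusion-parabolics}, whose proof is exactly the chain you spell out (groupoid inclusion $\Rightarrow$ finite index via item~(2) of Definition~\ref{de:action-like} $\Rightarrow$ $P\subseteq Q$ via Lemma~\ref{lemma:finite-index}). The only cosmetic difference is that, since $Q\subseteq P$ in this special case, you can work with $Q$ directly instead of passing through the intersection $H\cap H'$ as in Lemma~\ref{lemma:action-like}.
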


\begin{proof}
    The first part of the definition obviously holds. So it is enough to prove that if $Q\subsetneq P$ is a parabolic subgroup, and $U\subseteq X$ is a positive measure Borel subset, we have $\rho^{-1}(P)_{|U}\nsubseteq\rho^{-1}(Q)_{|U}$. This is a consequence of Lemma~\ref{lemma:inclusion-parabolics}.
\end{proof}

The next lemma formalizes the fact that up to a countable partition of the base space, we can work with tightly supported pairs. It follows from \cite[Lemma~3.7]{HH21}, by noticing that the collection $\mathbb{P}_G$ of parabolic subgroups of $G$ is admissible in the sense of \cite{HH21} (it is countable, stable under conjugation, intersections, and every descending chain of parabolic subgroups is finite).

\begin{Lmm}[{\cite[Lemma~3.7]{HH21}}]\label{lemma:support}
Let $G$ be a graph product of countably infinite groups over a finite simple graph. Let $\calg$ be a measured groupoid over a standard probability space $X$, and let $\rho:\calg\to G$ be a cocycle.

Then there exist a partition $X=\dunion_n X_n$ into at most countably many Borel subsets, and for every $n$, a parabolic subgroup $P_n\subseteq G$, such that $(\calg_{|X_n},\rho)$ is tightly $P_n$-supported.
\end{Lmm}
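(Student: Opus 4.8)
The plan is to reduce the statement to the general partition result \cite[Lemma~3.7]{HH21}, which applies to any cocycle $\rho:\calg\to G$ whose target group carries an \emph{admissible} collection $\mathcal{F}$ of subgroups in the sense of \cite{HH21}: a countable collection, stable under conjugation and under intersections, in which every strictly descending chain is finite. The conclusion of that lemma is precisely the asserted countable Borel partition of $X$ into pieces on which $(\calg_{|X_n},\rho)$ is tightly supported, so the only thing I would need to verify is that the collection $\mathbb{P}_G$ of parabolic subgroups of $G$ is admissible.

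First I would check the three easy properties. Countability is clear, since a parabolic subgroup has the form $gG_\Lambda g^{-1}$ with $g\in G$ and $\Lambda\subseteq\Gamma$ an induced subgraph, and both $G$ and the set of induced subgraphs are countable (the latter being finite). Stability under conjugation is immediate from the definition. Stability under intersection is exactly the fact recalled in Section~\ref{sec:first-facts}, namely \cite[Corollary~3.6]{AM} for finite intersections and \cite[Corollary~3.18]{AM} for arbitrary ones.

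The one point requiring a genuine argument---and the step I expect to be the main obstacle---is the descending chain condition. Here I would argue via the type of a parabolic subgroup. If $P\subseteq Q$ are parabolic subgroups, then $P$ is a parabolic subgroup of $G$ contained in $Q$, hence a parabolic subgroup of $Q$ for its own graph product structure, so the type of $P$ is an induced subgraph of the type of $Q$. Moreover, if these two types coincide then $P=Q$: writing $Q=gG_\Lambda g^{-1}$ and $P=hG_\Lambda h^{-1}$, the inclusion $P\subseteq Q$ reads $(g^{-1}h)G_\Lambda(g^{-1}h)^{-1}\subseteq G_\Lambda$, and Proposition~\ref{prop:am}(2) forces equality. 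Consequently, along a strictly descending chain $P_1\supsetneq P_2\supsetneq\cdots$ the number of vertices of the successive types strictly decreases, so the chain has length at most $|V\Gamma|+1$. This establishes admissibility, and \cite[Lemma~3.7]{HH21} then gives the partition.

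For orientation I would also recall the mechanism inside the general lemma, since it is where admissibility is used. Given a positive measure Borel subset $U$, closure under intersection together with the descending chain condition produces a unique smallest parabolic $P_U$ with $\rho(\calg_{|U})\subseteq P_U$ up to a null set. One then shows that every positive measure set contains a positive measure subset on which the pair is tightly supported: starting from $U$, whenever tightness fails one passes to a positive measure subset whose associated minimal parabolic is strictly smaller, and the descending chain condition halts this process after finitely many steps. A maximal essentially disjoint countable family of such tightly supported subsets then exhausts $X$ up to a null set, yielding the desired partition.
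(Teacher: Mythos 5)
Your proposal is correct and is essentially the paper's own argument: the paper likewise deduces the statement directly from \cite[Lemma~3.7]{HH21} by observing that $\mathbb{P}_G$ is admissible (countable, stable under conjugation and intersections, with every descending chain finite). Your verification of the descending chain condition via types and Proposition~\ref{prop:am}(2) is a sound filling-in of the detail the paper only asserts in passing.
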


\paragraph{Tight support and normalizers}
We will often make use of the following lemma from \cite{HH21}.

\begin{Lmm}[{\cite[Lemma~3.8]{HH21}}]\label{lemma:support-normal}
Let $G$ be a graph product of countably infinite groups over a finite simple graph. Let $\calg$ be a measured groupoid over a standard probability space~$X$, and let $\rho:\calg\to G$ be a cocycle. Let $\cala,\calh$ be measured subgroupoids of $\calg$, with~$\cala$ normalized by $\calh$.

If $(\cala,\rho)$ is tightly $P$-supported for some parabolic subgroup $P\subseteq G$, then there exists a conull Borel subset $X^*\subseteq X$ such that $\rho(\calh_{|X^*})\subseteq P\times P^{\perp}$.
\end{Lmm}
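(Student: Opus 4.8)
The plan is to reduce to a single, constant group element via the countability of $G$, and then run a parabolic-intersection argument. By definition of $\calh$ normalizing $\cala$, there is a countable covering of $\calh$ by Borel subsets $B_n\subseteq\calg$ for which $\cala$ is $B_n$-invariant; since $\calg$ is discrete I may refine each $B_n$ into countably many bisections, and $B$-invariance of $\cala$ clearly passes to Borel subsets. Thus it suffices to prove, for a fixed bisection $B\subseteq\calg$ for which $\cala$ is $B$-invariant, that $\rho(B)\subseteq P\times P^{\perp}$ on a conull set. The key simplification is that, because $G$ is countable, I can partition $B=\bigsqcup_{g\in G}B_g$ with $B_g:=\{\grpdb\in B:\rho(\grpdb)=g\}$; on $B_g$ the cocycle is \emph{constant}, which is exactly what removes the difficulty of a varying conjugating element. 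Discarding the (countably many) values $g$ with $m(s(B_g))=0$, I fix some $g$ with $m(s(B_g))>0$ and aim to show $g\in P\times P^{\perp}$.

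Next I would extract the conjugation relation. Denote by $\grpdb_x\in B_g$ the unique element with $s(\grpdb_x)=x$. Applying $B_g$-invariance of $\cala$ with $\grpdg_1=\grpdb_x$, $\grpdg_2=\grpdb_y$ and $\grpdh=\grpda$, for every $\grpda\in\cala_{|s(B_g)}$ with $s(\grpda)=x$ and $r(\grpda)=y$ the conjugate $\grpdb_y\grpda\grpdb_x^{-1}$ again lies in $\cala$. Since $(\cala,\rho)$ is tightly $P$-supported, its $\rho$-value $\rho(\grpdb_y\grpda\grpdb_x^{-1})=g\rho(\grpda)g^{-1}$ lies in $P$. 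As $\rho(\grpda)\in P$ as well, this shows that the conjugated cocycle $\rho_g:=g\rho g^{-1}$ maps $\cala_{|s(B_g)}$ into $P\cap gPg^{-1}$.

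For the final step I would exploit tightness. Restricting a tightly supported pair to a positive measure subset keeps it tightly supported, so $(\cala_{|s(B_g)},\rho)$ is tightly $P$-supported; conjugating the cocycle by the fixed element $g$ (an automorphism of $G$ permuting parabolic subgroups and preserving proper inclusions) shows that $(\cala_{|s(B_g)},\rho_g)$ is tightly $gPg^{-1}$-supported. Now $P\cap gPg^{-1}$ is a parabolic subgroup (an intersection of parabolics is parabolic) contained in $gPg^{-1}$, and by the previous paragraph it contains $\rho_g(\cala_{|s(B_g)})$. By the defining property of tight support, no proper parabolic subgroup of $gPg^{-1}$ can contain this image on the positive measure set $s(B_g)$; hence $P\cap gPg^{-1}=gPg^{-1}$, i.e.\ $gPg^{-1}\subseteq P$. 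Proposition~\ref{prop:am}(2) then forces $gPg^{-1}=P$, so $g$ normalizes $P$, i.e.\ $g\in P\times P^{\perp}$. Intersecting the conull sets over the countably many bisections and values $g$ yields a conull $X^*$ with $\rho(\calh_{|X^*})\subseteq P\times P^{\perp}$.

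I expect the only real subtlety — and the crux of the argument — to be the reduction to a \emph{constant} conjugating element: without the partition of the bisection by the countably many values of $\rho$, one is forced to conjugate $P$ by a measurably varying element $\gamma(x)=\rho(\grpdb_x)$, and tight support (a positive measure condition) does not see pointwise isotropy, so it cannot be applied directly. Once $\gamma$ is constant the remaining manipulations are purely group-theoretic and use only the stated facts about parabolic subgroups and their intersections, together with the stability of tight support under restriction.
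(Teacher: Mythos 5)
Your proof is correct, and its key move --- refining the normalizing cover of $\calh$ into bisections, partitioning each one by the (countably many) values of $\rho$ so that the conjugating element becomes constant, then combining tight support with Proposition~\ref{prop:am}(2) and $N_G(P)=P\times P^{\perp}$ --- is exactly the standard argument; note that this paper does not reprove the lemma but quotes it from \cite[Lemma~3.8]{HH21}, whose proof proceeds along the same lines. The only (harmless) detour in your write-up is conjugating the cocycle to $\rho_g$ and invoking tight $gPg^{-1}$-support: one can instead observe directly that the invariance gives $\rho(\cala_{|s(B_g)})\subseteq P\cap g^{-1}Pg$, so tight $P$-support forces $P\subseteq g^{-1}Pg$, i.e.\ $gPg^{-1}\subseteq P$, and Proposition~\ref{prop:am}(2) concludes.
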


\begin{Def}\label{def:ProperlySupported}
    Let $G$ be a graph product over a finite simple graph, let $\calg$ be a measured groupoid over a standard probability space $X$, and let $\rho:\calg\to G$ be a cocycle.

    We say that $(\calg,\rho)$ is \emph{properly $\mathbb{P}_G$-supported}\index{Properly $\mathbb{P}_G$-supported} if there exist a partition $X^*=\dunion_{i\in I} X_i$ of a conull Borel subset $X^*\subseteq X$ into at most countably many Borel subsets, and for every $i\in I$, a \emph{proper} parabolic subgroup $P_i\subsetneq G$ such that $\rho\left(\calg_{|X_i}\right)\subseteq P_i$.
    %$(\calg_{|X_i},\rho)$ is tightly $P_i$-supported.
\end{Def}
We refer to \cpageref{Def:Irreducible} for the definition of irreducible graph.
\begin{Lmm}\label{lemma:properly-supported}
    Let $G$ be a graph product of countably infinite groups over a finite irreducible simple graph. Let $\calg$ be a measured groupoid over a standard probability space, and let $\rho:\calg\to G$ be a cocycle. Let $\cala,\calh\subseteq\calg$ be measured subgroupoids, with $\cala$ normalized by $\calh$.

    If $(\cala,\rho)$ is properly $\mathbb{P}_G$-supported and $\rho_{|\cala}$ is nowhere trivial, then $(\calh,\rho)$ is properly $\mathbb{P}_G$-supported.
\end{Lmm}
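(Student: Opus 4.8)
The plan is to reduce to the tightly supported situation via Lemma~\ref{lemma:support}, and then feed the result into Lemma~\ref{lemma:support-normal}. Concretely, I would first apply Lemma~\ref{lemma:support} to $(\cala,\rho)$, obtaining a partition $X=\dunion_n X_n$ of a conull subset into countably many Borel pieces such that $(\cala_{|X_n},\rho)$ is tightly $P_n$-supported for some parabolic subgroup $P_n\subseteq G$. Discarding the null pieces, it suffices to show that for each remaining $n$ the normalizer $P_n\times P_n^{\perp}$ is a \emph{proper} parabolic subgroup of $G$, and that $\rho(\calh_{|X_n^*})\subseteq P_n\times P_n^{\perp}$ on a conull $X_n^*\subseteq X_n$; the partition $\{X_n^*\}$ then witnesses that $(\calh,\rho)$ is properly $\mathbb{P}_G$-supported. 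The second assertion is immediate from Lemma~\ref{lemma:support-normal}: normalization is a local condition, so $\cala_{|X_n}$ is normalized by $\calh_{|X_n}$ inside $\calg_{|X_n}$, and since $(\cala_{|X_n},\rho)$ is tightly $P_n$-supported, the lemma yields a conull $X_n^*\subseteq X_n$ with $\rho(\calh_{|X_n^*})\subseteq P_n\times P_n^{\perp}$.

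It then remains to establish two facts about each $P_n$. First, $P_n\neq\{1\}$: if $P_n$ were trivial, then $\rho(\cala_{|X_n})=\{e_G\}$ on a conull subset of $X_n$, contradicting that $\rho_{|\cala}$ is nowhere trivial (as $X_n$ has positive measure). Second, $P_n$ is a \emph{proper} parabolic subgroup of $G$: since $(\cala,\rho)$ is properly $\mathbb{P}_G$-supported, intersecting $X_n$ with a piece of the proper-support partition gives a positive measure subset $U\subseteq X_n$ with $\rho(\cala_{|U})\subseteq Q$ for some proper parabolic $Q\subsetneq G$. Then $\rho(\cala_{|U})\subseteq P_n\cap Q$, which is again parabolic and contained in $P_n$. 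Were $P_n\cap Q\subsetneq P_n$, then $P_n\cap Q$ would be a proper parabolic subgroup of $G$ contained in $P_n$, and the positive measure set $U$ would violate part~(2) of the tight $P_n$-support of $(\cala_{|X_n},\rho)$. Hence $P_n\cap Q=P_n$, i.e.\ $P_n\subseteq Q\subsetneq G$, so $P_n$ is proper.

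Finally, I would use irreducibility of $\Gamma$ to upgrade these two facts into properness of the normalizer. Writing $\Lambda_n\subseteq\Gamma$ for the type of $P_n$, the subgroup $P_n\times P_n^{\perp}$ is the parabolic subgroup of type $\Lambda_n\circ\Lambda_n^{\perp}$, and by the above $\Lambda_n$ is nonempty (as $P_n\neq\{1\}$) and a proper induced subgraph (as $P_n$ is proper, so $V\Lambda_n\subsetneq V\Gamma$). Suppose for contradiction that $\Lambda_n\circ\Lambda_n^{\perp}=\Gamma$. Since $V\Lambda_n\subsetneq V\Gamma$, the nonempty complement $V\Gamma\setminus V\Lambda_n$ is contained in $V\Lambda_n^{\perp}$, so $\Lambda_n^{\perp}$ is nonempty; but then $\Gamma=\Lambda_n\circ\Lambda_n^{\perp}$ exhibits $\Gamma$ as a join of two nonempty induced subgraphs, contradicting irreducibility (see \cpageref{Def:Irreducible}). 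Therefore $\Lambda_n\circ\Lambda_n^{\perp}\subsetneq\Gamma$, so $P_n\times P_n^{\perp}$ is a proper parabolic subgroup, completing the argument.

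I expect the genuinely load-bearing step to be the last one. The output of Lemma~\ref{lemma:support-normal} only controls $\calh$ by the \emph{normalizer} $P_n\times P_n^{\perp}$ of $P_n$, which a priori could be all of $G$; it is precisely the combination of (a) nowhere triviality of $\rho_{|\cala}$, (b) properness of the support of $\cala$, and (c) irreducibility of $\Gamma$ that rules this out. The groupoid bookkeeping (intersecting the two partitions, passing to conull and positive measure subsets, and preservation of tight support and normalization under restriction) is routine; the conceptual content lies in the graph-theoretic observation that for an irreducible $\Gamma$, a nonempty proper induced subgraph $\Lambda$ cannot satisfy $\Lambda\circ\Lambda^{\perp}=\Gamma$.
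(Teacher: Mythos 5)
Your proposal is correct and follows essentially the same route as the paper's proof: reduce via Lemma~\ref{lemma:support} to a countable partition on which $\cala$ is tightly supported, use nowhere triviality to get $P_n\neq\{1\}$ and the properly-supported hypothesis (via tight-support condition~(2)) to get $P_n\subsetneq G$, apply Lemma~\ref{lemma:support-normal} to bound $\calh$ by $P_n\times P_n^{\perp}$, and conclude by irreducibility of $\Gamma$. The only difference is one of exposition — you spell out the intersection-of-partitions argument for properness of $P_n$ and the join argument for properness of $P_n\times P_n^{\perp}$, which the paper leaves implicit.
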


\begin{proof}
    Up to a countable Borel partition of $X$, we can assume that $(\cala,\rho)$ is tightly $P$-supported for some proper parabolic subgroup $P\subsetneq G$, and aim to show that there exists a conull Borel subset $X^*\subseteq X$ such that $\rho(\calh_{|X^*})$ is contained in a proper parabolic subgroup of $G$. 
    
    As $\rho_{|\cala}$ is nowhere trivial, we have $P\neq\{1\}$. By Lemma~\ref{lemma:support-normal}, there exists a conull Borel subset $X^*\subseteq X$ such that $\rho(\calh_{|X^*})\subseteq P\times P^{\perp}$. As $G$ is irreducible and $P$ is neither equal to $\{1\}$ nor to $G$, the parabolic subgroup $P\times P^{\perp}$ is proper, which completes our proof.
\end{proof}

\subsection{Kida's blueprint}\label{sec:blueprint}

In this section, we review Kida's general strategy towards measure equivalence classification and rigidity theorems. This originates from his work on mapping class groups \cite{Kid-memoir,Kid-me} and was also used in subsequent measure equivalence rigidity theorems. We extract an abstract setting from Kida's work, that we will follow in Part~\ref{Part:MEClassification}. We start with a very brief review of the mapping class group case that will serve as a motivating example -- we refer to the references below for basic definitions and background, but will never need anything about mapping class groups in the sequel of the present work.

\paragraph*{Review.} Recall that the \emph{mapping class group} of a finite-type orientable surface $\Sigma$ is the group $\Mod(\Sigma)$ of isotopy classes of orientation-preserving homeomorphisms of $\Sigma$. It acts by automorphisms on the \emph{curve graph} $\calc(\Sigma)$, defined as follows: vertices of $\calc(\Sigma)$ are isotopy classes of essential simple closed curves on $\Sigma$, and edges correspond to the existence of disjoint representatives. An important theorem of Ivanov \cite{Iva} asserts that the automorphism group of $\calc(\Sigma)$ is exactly the \emph{extended} mapping class group $\Mod^{\pm}(\Sigma)$, where orientation-reversing homeomorphisms are allowed.

A key step in Kida's proof of the measure equivalence rigidity of the mapping class group $\Mod(\Sigma)$ of a finite-type surface \cite{Kid-me}, was to establish that every self-coupling $\Omega$ of $\mathrm{Mod}(\Sigma)$, factors through $\mathrm{Aut}\left(\mathcal{C}(\Sigma)\right)\simeq\Mod^{\pm}(\Sigma)$. From there, a standard argument allows to derive the measure equivalence rigidity of $\Mod(\Sigma)$, from this rigidity statement for self-couplings. 

Kida's strategy for proving this statement about self-couplings was in part inspired by earlier work of Ivanov \cite{Iva}. Ivanov had proved that the abstract commensurator of $\Mod(\Sigma)$ is equal to $\Mod^{\pm}(\Sigma)$: this means that every isomorphism between finite-index subgroups of $\Mod(\Sigma)$, is given by the conjugation by an element of $\Mod^{\pm}(\Sigma)$. Ivanov's proof goes as follows: he showed that, if $H_1,H_2\subseteq\Mod(\Sigma)$ are two finite-index subgroups of $\Mod(\Sigma)$, then any isomorphism $f:H_1\to H_2$ sends curve stabilizers (in $H_1$) to curve stabilizers (in $H_2$), and sends stabilizers of disjoint curves (up to isotopy) to the stabilizers of disjoint curves. 
In other words $f$ induces an automorphism of the curve graph. Using that $\Aut(\calc(\Sigma))\simeq\Mod^{\pm}(\Sigma)$, this precisely says that $f$ is the conjugation by an element of $\Mod^{\pm}(\Sigma)$. 

Kida's strategy can be viewed as a groupoid-theoretic version of Ivanov's argument. Given a measured groupoid over a standard probability space with two action-like cocycles towards $\Mod(\Sigma)$, he proves that subgroupoids that correspond (in an appropriate sense) to curve stabilizers for the first cocycle, also correspond to curve stabilizers for the second cocycle. Through the dictionary between measure equivalence couplings and measured groupoids, this establishes the desired factorization of self-couplings of $\Mod(\Sigma)$.

Kida's strategy has later been used in several contexts to establish new measure equivalence rigidity theorems (see for example \cite{ChiKid,Kid-amalgam,HeH,HH-Artin,GH-OutFn}). We now formulate it in an abstract setting, which will be applied to the extension graph of the graph product in Part~\ref{Part:MEClassification} of the present work.  

\paragraph*{Preliminary tool: from measured groupoids to couplings.}

\begin{lemma}\label{lemma:groupoids-to-couplings}
    Let $G,H$ be two countable groups. Let $\Delta$ be a Polish space, equipped with an action of $G\times H$ by measurable automorphisms. Assume that for any measured groupoid $\calg$ over a standard probability space $X$, and any action-like cocycles $\rho_G:\calg\to G$ and $\rho_H:\calg\to H$, there exists a $(\rho_G,\rho_H)$-equivariant Borel map $\theta:X\to \Delta$, i.e.\ such that (up to restricting $\calg$ to a conull Borel subset) for every $\grpdg \in\calg$, one has 
    \[\theta(r(\grpdg))=\rho_H(\grpdg)\theta(s(\grpdg))\rho_G(\grpdg)^{-1}.\]
    Then for every measure equivalence coupling $\Omega$ between $G$ and $H$, there exists a $(G\times H)$-equivariant Borel map $\Omega\to \Delta$.
\end{lemma}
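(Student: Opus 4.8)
The plan is to feed the coupling into the hypothesis by manufacturing a measured groupoid with two action-like cocycles, produce an equivariant map on a transversal, and then \emph{spread it out} over $\Omega$ using the $(G\times H)$-action. First I would fix finite-measure fundamental domains $X_G,X_H$ for the actions of $G,H$ on $\Omega$ chosen so that $m(X_G\cap X_H)>0$, set $U:=X_G\cap X_H$, and normalize $m_{|U}$ to a probability measure. As in Example~\ref{Ex:GroupoidsAssociatedToCouplings}, the restriction $\calg:=(G\ltimes X_H)_{|U}$ is isomorphic, via $(g,x)\mapsto(c(g,x),x)$, to $(H\ltimes X_G)_{|U}$, where $c:G\times X_H\to H$ is the measure equivalence cocycle. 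This groupoid carries two natural cocycles $\rho_G:\calg\to G$, $(g,x)\mapsto g$, and $\rho_H:\calg\to H$, $(g,x)\mapsto c(g,x)$. Since the induced $G$-action on $X_H$ and the induced $H$-action on $X_G$ are free and measure-preserving, Lemma~\ref{lemma:action-like-1} together with Remark~\ref{rk:action-like-restriction} shows that both $\rho_G$ and $\rho_H$ are action-like. Applying the hypothesis to $(\calg,\rho_G,\rho_H)$ then yields a $(\rho_G,\rho_H)$-equivariant Borel map $\theta:U\to\Delta$.

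The core of the argument is the following compatibility. Viewing $U\subseteq\Omega$, every $\grpdg\in\calg$ satisfies $r(\grpdg)=(\rho_G(\grpdg),\rho_H(\grpdg))*s(\grpdg)$ in $\Omega$, because $g\cdot x=c(g,x)*g*x$ and the actions of $G$ and $H$ commute. Thus the groupoid equivariance $\theta(r(\grpdg))=\rho_H(\grpdg)\,\theta(s(\grpdg))\,\rho_G(\grpdg)^{-1}=(\rho_G(\grpdg),\rho_H(\grpdg))\cdot\theta(s(\grpdg))$ says exactly that $\theta$ intertwines the $(G\times H)$-motion in $\Omega$ with that in $\Delta$. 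I would then define $\Phi$ on the saturation $(G\times H)*U$ by $\Phi((g,h)*x):=(g,h)\cdot\theta(x)$ for $x\in U$. For well-definedness, suppose $(g,h)*x=(g',h')*x'$ with $x,x'\in U$; writing $\bar g=g'^{-1}g$ and $\bar h=h'^{-1}h$, we have $\bar h*\bar g*x=x'$ with $x,x'\in U\subseteq X_H$, so $\bar g\cdot x=x'$ and $c(\bar g,x)=\bar h$. As $x,x'\in U$, the pair $\grpdg=(\bar g,x)$ lies in $\calg$ with $\rho_G(\grpdg)=\bar g$ and $\rho_H(\grpdg)=\bar h$, whence $\theta(x')=(\bar g,\bar h)\cdot\theta(x)$; a one-line computation using $g'\bar g=g$ and $h'\bar h=h$ gives $(g',h')\cdot\theta(x')=(g,h)\cdot\theta(x)$, as required. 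By construction $\Phi$ is Borel and $(G\times H)$-equivariant on $(G\times H)*U$.

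The remaining point—and the step I expect to be the \textbf{main obstacle}—is that the saturation $(G\times H)*U$ must be conull. When the $(G\times H)$-action on $\Omega$ is ergodic this is automatic, since $(G\times H)*U$ is then a positive-measure invariant set, hence conull, and the construction above finishes the proof. For a general coupling I would reduce to the ergodic case by ergodic decomposition: almost every ergodic component of the $(G\times H)$-action is again a measure equivalence coupling (the fundamental domains $X_G,X_H$ restrict to finite-measure fundamental domains on each component), so the construction applies componentwise and the resulting maps assemble into a single Borel map on $\Omega$ via a measurable selection. The delicate part is purely this measure-theoretic bookkeeping—carrying the whole construction through the ergodic decomposition measurably, or equivalently arranging $U$ so that its saturation meets every component—whereas the algebraic heart, namely the cocycle identity proving well-definedness of $\Phi$, is routine.
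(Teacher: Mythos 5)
Your core construction --- the groupoid $\calg$ over $U=X_G\cap X_H$, the two natural action-like cocycles $\rho_G,\rho_H$, the application of the hypothesis to get $\theta$, the extension $(g,h)*x\mapsto (g,h)\cdot\theta(x)$, and the well-definedness check via the cocycle identity --- is exactly the paper's proof of this lemma, down to the same computation. The only point of divergence is the last step, the conullity of the saturation $(G\times H)*U$, and there your proposal has a genuine gap.

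The paper kills the problem at the outset: it chooses the fundamental domains $X_G,X_H$ so that $(G\times H)*(X_G\cap X_H)=\Omega$ up to a null set, citing \cite[Lemma~2.27]{Kid-survey}; with that choice the extension is defined almost everywhere and the proof ends. Your ergodic-decomposition fix does not work as written, and the obstruction is not ``bookkeeping''. The hypothesis of the lemma is a pure existence statement: for each measured groupoid with two action-like cocycles it asserts that \emph{some} equivariant map exists, but it provides no preferred solution and no Borel structure on the set of solutions. With uncountably many ergodic components you would have to apply it once per component and then choose the resulting maps $\theta_s$ measurably in the component $s$; there is no measurable family of solution sets here for a selection theorem to act on. (A second, smaller issue: on a given component the restrictions of your fixed $X_G,X_H$ may intersect in a null set, so the transversal itself would also have to be re-chosen, again measurably in $s$.) Both problems disappear if you replace the ergodic decomposition by a countable one: the sets $X_G\cap h*X_H$, $h\in H$, partition $X_G$ up to null sets, so the invariant sets $(G\times H)*(X_G\cap h*X_H)$ cover $\Omega$; disjointify them inductively into $(G\times H)$-invariant pieces $\Omega_n$, observe that on each $\Omega_n$ the intersection of the fundamental domains $X_G$ and $h_n*X_H$ has conull saturation, and run your construction once on each of the countably many pieces of positive measure (after normalizing the measure). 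Alternatively, and most simply, invoke Kida's lemma as the paper does.
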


As usual, the equivariance of the map $\Omega\to \Delta$ is understood almost everywhere.

\begin{proof} 
The argument of this proof comes from the work of Kida, see the demonstration of \cite[Theorem~5.6]{Kid-me}.

    Recall that we denote by $*$ the actions of $G$ and $H$ on $\Omega$, and by $\cdot$ the induced actions on the fundamental domains.
    
    Let $X_G$ and $X_H$ be fundamental domains for the actions of $G$ and $H$ respectively. We can choose $X_G$ and $X_H$ such that their intersection $X:=X_G\cap X_H$ verifies $(G\times H)*X=\Omega$ up to a null set, see e.g.\ \cite[Lemma~2.27]{Kid-survey}. Note that this last equality implies in particular that $X$ has positive measure.

    Let $\calg$ be the measured groupoid over $X$ given by the restrictions of the $G$- and $H$-actions on $X_H$ and $X_G$ respectively, as in the second point of Example~\ref{Ex:GroupoidsAssociatedToCouplings}. We thus have two action-like cocycles $\rho_G: \calg \rightarrow G$ and $\rho_H:\calG \rightarrow H$ (see Example~\ref{Ex:CocycleAssociatedToAnAction} for their definition, and Lemma~\ref{lemma:action-like-1} and \cref{rk:action-like-restriction} for the fact that they are action-like). So by assumption, we have a $(\rho_G,\rho_H)$-equivariant Borel map $\theta:X\rightarrow \Delta$. We extend it to a map $\bar{\theta}:\Omega\to \Delta$ by defining $\bar{\theta}\big( (g,h)*x \big)=h\theta(x)g^{-1}$, for a.e.\ $x\in X$ and all $(g,h)\in G\times H$. 

     It only remains to verify that the definition of $\bar{\theta}$ does not depend on the choice of the representatives $g,h$ and $x$ --~its equivariance then follows from its definition. So let us show that for all $g_1,g_2\in G$, all $h_1,h_2\in H$ and a.e.\ $x_1,x_2\in X$ verifying $(g_1,h_1)*x_1=(g_2,h_2)*x_2$, we have $h_1\theta(x_1)g^{-1}_1=h_2\theta(x_2)g^{-1}_2$. 
     Since the actions of $G$ and $H$ commute, our assumption that $(g_1,h_1)*x_1=(g_2,h_2)*x_2$ rewrites as  $(h_2^{-1}h_1)*(g_2^{-1}g_1)*x_1=x_2$. This translates in terms of actions on the fundamental domains as 
    \begin{equation*}
        (h_2^{-1}h_1)\cdot x_1=x_2=(g_2^{-1}g_1)\cdot x_1.
    \end{equation*}
    The $(\rho_G,\rho_H)$-equivariance of $\theta$ thus gives $\theta(x_2)=h_2^{-1}h_1\theta(x_1)g_1^{-1}g_2$ almost surely, and hence $\bar{\theta}$ is well defined. 
\end{proof}

\paragraph*{Abstract setting.} Let $G$ be a countable group, let $\mathfrak{C}_G$\label{Nota:FrakC} be a conjugacy-invariant countable set of subgroups of $G$, and let $\mathfrak{I}_G$ be a graph with vertex set $\mathfrak{C}_G$, such that the conjugation action of $G$ on $\mathfrak{C}_G$ extends to a graph automorphism of $\mathfrak{I}_G$. 

\begin{Rq}
    In Kida's setting, $\mathfrak{C}_G$ is the set of all stabilizers in $\mathrm{Mod}(\Sigma)$ of isotopy classes of essential simple closed curves on $\Sigma$, and $\mathfrak{I}_G$ is the curve graph --adjacency corresponds to the disjointness of the curves up to isotopy, or more algebraically to the fact that the centers of their stabilizers commute. In our setting $\mathfrak{C}_G$ will be the set of conjugates of vertex groups, and $\mathfrak{I}_G$ will be the extension graph.  
\end{Rq}

Let $\calg$ be a measured groupoid over a standard probability space $X$, equipped with a cocycle $\rho:\calg\to G$. Let $\calh\subseteq\calg$ be a measured subgroupoid. 

We say that $(\calh,\rho)$ is \emph{stably $\mathfrak{C}_G$}\index{Stably! Stably $\mathfrak{C}_G$} if there exists a countable Borel partition $X^*=\dunion_{i\in I}X_i$ of a conull Borel subset $X^*\subseteq X$ into at most countably many Borel subsets, such that for every $i\in I$, there exists $C_i\in\mathfrak{C}_G$ such that $\calh_{|X_i}=\rho^{-1}(C_i)_{|X_i}$.

Let now $\calh_1,\calh_2$ be two measured subgroupoids such that both $(\calh_1,\rho)$ and $(\calh_2,\rho)$ are stably $\mathfrak{C}_G$. Then (after refining the two partitions given by the above definition) there exists a countable Borel partition $X^*=\dunion_{i\in I}X_i$ of a conull Borel subset $X^*\subseteq X$ into at most countably many Borel subsets, such that for every $i\in I$, there exist $C_{1,i},C_{2,i}\in\mathfrak{C}_G$ such that $(\calh_1)_{|X_i}=\rho^{-1}(C_{1,i})_{|X_i}$ and $(\calh_2)_{|X_i}=\rho^{-1}(C_{2,i})_{|X_i}$. 

We say that $(\calh_1,\rho)$ and $(\calh_2,\rho)$ are \emph{$\mathfrak{I}_G$-adjacent}\index{IG-adjacent@$\mathfrak{I}_G$-adjacent}\label{Nota:FrakI} if the above partition and subgroups can be chosen such that for every $i\in I$ such that $X_i$ has positive measure, the groups $C_{1,i}$ and $C_{2,i}$ are distinct and adjacent in $\mathfrak{I}_G$. 

\begin{Def}[ME-witness]
Let $G,H$ be two countable groups, with countable sets of subgroups $\mathfrak{C}_G,\mathfrak{C}_H$, and graphs $\mathfrak{I}_G,\mathfrak{I}_H$ with vertex sets $\mathfrak{C}_G,\mathfrak{C}_H$. We say that $(\mathfrak{I}_G,\mathfrak{I}_H)$ is an \emph{ME-witness}\label{Def:MEWitness}\index{ME-Witness} for $G,H$ if for every measured groupoid $\calg$ over a standard probability space $X$, any two action-like cocycles $\rho_G:\calg\to G$ and $\rho_H:\calg\to H$, and any measured subgroupoids $\calC_1,\calC_2\subseteq\calg$, the following two properties hold:
\begin{enumerate}
    \item $(\calC_1,\rho_G)$ is stably $\mathfrak{C}_G$ if and only if $(\calC_2,\rho_H)$ is stably $\mathfrak{C}_H$, and
    \item if $(\calC_1,\rho_G)$ and $(\calC_2,\rho_G)$ are stably  $\mathfrak{C}_G$, then they are $\mathfrak{I}_G$-adjacent if and only if $(\calC_1,\rho_H)$ and $(\calC_2,\rho_H)$ are $\mathfrak{I}_H$-adjacent. 
\end{enumerate}
\end{Def}

\begin{Rq}
    Kida shows that the curve graph is an ME-witness for an appropriate subgroup of $\Mod(\Sigma)$. We will prove in Part~\ref{Part:MEClassification} that the extension graphs of graph products of countably infinite groups defined over transvection-free graphs with no partial conjugations, are also ME-witnesses. 
\end{Rq}

In the following proposition, and throughout the paper, we equip the set $\Isom(\mathfrak{I}_G,\mathfrak{I}_H)$ of all graph isomorphisms between $\mathfrak{I}_G$ and $\mathfrak{I}_H$, with the compact-open topology, or equivalently the topology of pointwise convergence. The following proposition is inspired from \cite[Section~5]{Kid-me}.

\begin{Prop}\label{prop:witness}
    Let $G,H$ be two countable groups. Let $\mathfrak{C}_G,\mathfrak{C}_H$ be conjugacy-invariant countable sets of pairwise non-commensurable subgroups, and let $\mathfrak{I}_G,\mathfrak{I}_H$ be graphs with vertex sets $\mathfrak{C}_G,\mathfrak{C}_H$, for which the actions of $G$ and $H$ by conjugation extend to graph automorphisms. Assume that $(\mathfrak{I}_G,\mathfrak{I}_H)$ is an ME-witness for $G,H$.

    Then for every measure equivalence coupling $\Omega$ between $G$ and $H$, there exists a $(G\times H)$-equivariant measurable map $\Omega\to\Isom(\mathfrak{I}_G,\mathfrak{I}_H)$. 
\end{Prop}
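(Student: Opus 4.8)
The plan is to combine the preceding Lemma~\ref{lemma:groupoids-to-couplings} with the ME-witness hypothesis. By that lemma, it suffices to produce, for any measured groupoid $\calg$ over a standard probability space $X$ equipped with action-like cocycles $\rho_G:\calg\to G$ and $\rho_H:\calg\to H$, a $(\rho_G,\rho_H)$-equivariant Borel map $\theta:X\to\Isom(\mathfrak{I}_G,\mathfrak{I}_H)$. So from now on I work purely at the groupoid level.

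\textbf{Construction of the equivariant map at the groupoid level.} The idea is that $\theta(x)$ should be the graph isomorphism $\mathfrak{I}_G\to\mathfrak{I}_H$ ``seen from the fiber over $x$''. For each vertex $C\in\mathfrak{C}_G$ of $\mathfrak{I}_G$, the subgroupoid $\rho_G^{-1}(C)$ is stably $\mathfrak{C}_G$ (trivially, being of the required form on all of $X$), so by the first ME-witness property it is stably $\mathfrak{C}_H$: there is a countable Borel partition $X=\dunion_i X_i^C$ together with groups $D_i^C\in\mathfrak{C}_H$ such that $\rho_G^{-1}(C)_{|X_i^C}=\rho_H^{-1}(D_i^C)_{|X_i^C}$. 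Intersecting these partitions over all (countably many) $C\in\mathfrak{C}_G$ simultaneously, I obtain a single countable Borel partition $X^*=\dunion_{j}X_j$ of a conull set on which, for every $C\in\mathfrak{C}_G$, one has $\rho_G^{-1}(C)_{|X_j}=\rho_H^{-1}(\sigma_j(C))_{|X_j}$ for some map $\sigma_j:\mathfrak{C}_G\to\mathfrak{C}_H$. On each piece $X_j$, I set $\theta(x):=\sigma_j$ for $x\in X_j$; this is Borel. I must check $\sigma_j$ is a bijection $\mathfrak{C}_G\to\mathfrak{C}_H$: injectivity follows because $\mathfrak{C}_H$ consists of pairwise non-commensurable subgroups, so distinct $C,C'$ cannot map to the same $D$ (otherwise $\rho_H^{-1}(D)$ would stably equal both $\rho_G^{-1}(C)$ and $\rho_G^{-1}(C')$, forcing $C,C'$ to have a common finite-index subgroup via the action-like property and Lemma~\ref{lemma:action-like}); surjectivity follows by running the symmetric argument starting from $\rho_H^{-1}(D)$ and using the first ME-witness property in the reverse direction. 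That $\sigma_j$ respects adjacency (hence is a graph isomorphism) is exactly the content of the second ME-witness property, applied to $\calC_1=\rho_G^{-1}(C)$ and $\calC_2=\rho_G^{-1}(C')$ restricted to $X_j$: adjacency of $C,C'$ in $\mathfrak{I}_G$ is equivalent to $\mathfrak{I}_G$-adjacency of these subgroupoids, equivalent to $\mathfrak{I}_H$-adjacency, equivalent to adjacency of $\sigma_j(C),\sigma_j(C')$ in $\mathfrak{I}_H$.

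\textbf{Equivariance.} It remains to verify $\theta(r(\grpdg))=\rho_H(\grpdg)\,\theta(s(\grpdg))\,\rho_G(\grpdg)^{-1}$ for a.e.\ $\grpdg\in\calg$. Fix $C\in\mathfrak{C}_G$. For a groupoid element $\grpdg$ with $s(\grpdg)=x$, $r(\grpdg)=y$, conjugating the subgroupoid $\rho_G^{-1}(C)$ by $\grpdg$ moves the fiber condition at $x$ to one at $y$: concretely, $\grpdg$ conjugates $\rho_G^{-1}(C)$ near $x$ to $\rho_G^{-1}(\rho_G(\grpdg)C\rho_G(\grpdg)^{-1})$ near $y$, and simultaneously $\rho_H^{-1}(\theta(x)(C))$ near $x$ to $\rho_H^{-1}(\rho_H(\grpdg)\,\theta(x)(C)\,\rho_H(\grpdg)^{-1})$ near $y$. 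Matching these against the defining identity of $\theta(y)$ gives $\theta(y)(\rho_G(\grpdg)C\rho_G(\grpdg)^{-1})=\rho_H(\grpdg)\,\theta(x)(C)\,\rho_H(\grpdg)^{-1}$, which is precisely the asserted equivariance evaluated on the vertex $\rho_G(\grpdg)C\rho_G(\grpdg)^{-1}$; since $C$ and $\grpdg$ are arbitrary and the conjugation action of $G$ is transitive on nothing but still sweeps out all vertices, this holds on all of $\mathfrak{C}_G$. Making this rigorous requires choosing the partition $\dunion_j X_j$ compatibly with the (countably many) bisections witnessing the groupoid structure, so that the above local computation is valid on a conull set; this bookkeeping is where care is needed but no new idea.

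\textbf{Conclusion.} With $\theta:X\to\Isom(\mathfrak{I}_G,\mathfrak{I}_H)$ now Borel and $(\rho_G,\rho_H)$-equivariant, Lemma~\ref{lemma:groupoids-to-couplings} applies verbatim (the space $\Delta=\Isom(\mathfrak{I}_G,\mathfrak{I}_H)$ is Polish in the topology of pointwise convergence, and the $(G\times H)$-action by left/right composition is by measurable automorphisms), yielding the desired $(G\times H)$-equivariant measurable map $\Omega\to\Isom(\mathfrak{I}_G,\mathfrak{I}_H)$ for every measure equivalence coupling $\Omega$. The main obstacle I anticipate is the measurability/partition bookkeeping in the equivariance step: one must assemble a single conull Borel set and partition that simultaneously witnesses the stably-$\mathfrak{C}_H$ property for all vertices $C\in\mathfrak{C}_G$ and is compatible with groupoid composition, so that the pointwise definition of $\theta$ is both well-defined and genuinely equivariant rather than merely equivariant up to null sets on each vertex separately; the non-commensurability hypothesis on $\mathfrak{C}_G,\mathfrak{C}_H$ is what guarantees $\theta(x)$ is an honest bijection and thus lands in $\Isom$ rather than in a space of relations.
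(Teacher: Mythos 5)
Your route is the same as the paper's: reduce to the groupoid level via Lemma~\ref{lemma:groupoids-to-couplings}, define $\theta(x)(C)$ by applying the first ME-witness property to $\rho_G^{-1}(C)$, deduce bijectivity from pairwise non-commensurability together with the action-like property, get adjacency from the second ME-witness property, and prove equivariance by conjugating by groupoid elements grouped according to the values of the two cocycles. But the device you hang everything on fails: when $\mathfrak{C}_G$ is infinite, the common refinement of the countably many countable partitions $X=\dunion_i X_i^C$ need not be countable, and it may have no cell of positive measure at all. (Take $X=\{0,1\}^{\mathbb{N}}$ with Bernoulli measure and refine by the coordinate partitions: the common refinement is the partition into singletons.) This is not innocuous bookkeeping, because all your verifications are carried out on the cells $X_j$ of this refinement: injectivity of $\sigma_j$ invokes Lemma~\ref{lemma:action-like}, which requires a \emph{positive measure} set on which $\rho_G^{-1}(C)$ and $\rho_G^{-1}(C')$ coincide, and the adjacency step invokes the definition of $\mathfrak{I}_G$-adjacency, which by its very formulation only constrains pieces of positive measure. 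On null cells both arguments are vacuous, so as written the proof establishes nothing about $\theta(x)$ for a.e.\ $x$.

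The repair is exactly how the paper structures its proof: never refine over all vertices at once. The map $\theta$ is defined vertex by vertex (for each fixed $C$, the map $x\mapsto\theta(x)(C)$ is Borel and a.e.\ defined, and is independent of the chosen partition because $\rho_H^{-1}(C'_1)_{|U}=\rho_H^{-1}(C'_2)_{|U}$ on a positive measure set $U$ forces $C'_1=C'_2$). Injectivity, surjectivity and adjacency are then checked two vertices at a time: for instance, if the locus where $\theta(x)(C_1)=\theta(x)(C_2)$ for a fixed pair $C_1\neq C_2$ had positive measure, it would contain a positive measure cell of the common refinement of just the two partitions attached to $C_1,C_2$ (which \emph{is} countable), and there Lemma~\ref{lemma:action-like} together with non-commensurability gives $C_1=C_2$; summing over the countably many pairs handles a.e.\ $x$. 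Your equivariance sketch is also the right idea and matches the paper's (partition $\calg$ into the level sets $B_{g,h}$ of the pair of cocycles and compare $\theta$ with $\theta\circ (r\circ s^{-1})$ along each), but it too must be run one vertex $C$ at a time, using the countable partition adapted to the two countably-valued maps $x\mapsto\theta(x)(C)$ and $x\mapsto\theta(f(x))(gCg^{-1})$, rather than through a global refinement.
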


\begin{proof}
In view of Lemma~\ref{lemma:groupoids-to-couplings}, it is enough to prove that for every measured groupoid $\mathcal{G}$ over a standard probability space $X$, and any two action-like cocycles $\rho_G:\calg\to G$ and $\rho_H:\calg\to H$, there exists a Borel map $\theta:X\to\Isom(\mathfrak{I}_G,\mathfrak{I}_H)$ which is $(\rho_G,\rho_H)$-equivariant in the following sense: there exists a conull Borel subset $X^*\subseteq X$ such that for every $\grpdg\in \calg_{|X^*}$, one has $\theta(y)=\rho_H(\grpdg)\theta(x)\rho_G(\grpdg)^{-1}$, where $x=s(\grpdg)$ and $y=r(\grpdg)$. We will now prove this fact.

   \begin{description}
   \item[Observation] We start by showing that if $C_1,C_2\in \mathfrak{C}_G$ verify $\rho_G^{-1}(C_1)_{|U}=\rho_G^{-1}(C_2)_{|U}$ for some Borel subset $U\subseteq X$ of positive measure, then $C_1=C_2$.\\
   For this, observe that the measured subgroupoid $\rho_G^{-1}(C_1\cap C_2)_{|U}$ is equal to both $\rho_G^{-1}(C_1)_{|U}$ and to $\rho_G^{-1}(C_2)_{|U}$. 
   As $\rho_G$ is action-like, it follows that $C_1\cap C_2$ has finite index in both $C_1$ and $C_2$. Thus $C_1$ and $C_2$ are commensurable, which by assumption on $\mathfrak{C}_G$, implies that $C_1=C_2$, as desired. \\
   The same observation also holds for $C'_1,C'_2\in\mathfrak{C}_H$, with respect to the cocycle $\rho_H$.
   \item[Definition of $\theta$] Fix $C\in V\mathfrak{I}_G=\mathfrak{C}_G$ and let $\mathcal{C}:=\rho^{-1}_G(C)$. Note that in particular $(\calC,\rho_G)$ is stably~$\mathfrak{C}_G$.
    
    Since $(\mathfrak{I}_G,\mathfrak{I}_H)$ is an ME-witness, $(\calc,\rho_H)$ is of type $\mathfrak{C}_H$. This means that there exists a partition $X^*=\sqcup_{i\in I}X_i$ of a conull Borel subset $X^*\subseteq X$ into at most countably many Borel subsets, such that for all $i\in I$ there exists ${C^\prime_i}\in V\mathfrak{I}_H=\mathfrak{C}_H$ such that $\calC_{|X_i}=\rho^{-1}_H(C^\prime_i)_{|X_i}$ (and by the above observation, $C'_i$ is unique whenever $X_i$ has positive measure). We then define for a.e.\ $x\in X$
    \begin{equation}\label{eq:DefTheta}
        \theta(x):C\in V\mathfrak{I}_G \mapsto C^\prime_i \quad \text{if} \ x\in X_i. 
    \end{equation}

     We now prove that, up to null sets, our definition of $\theta(x)(C)$ does not depend on the choice of partition. So assume that, starting from $C$, we are given two partitions as above. Let $X^*=\dunion_{i\in I}X_i$ be a common refinement, with associated subgroups $C'_{1,i}$ and $C'_{2,i}$. By construction, for every $i\in I$, we have $\rho^{-1}_H(C'_{1,i})_{|X_i}=\rho_G^{-1}(C)_{|X_i}=\rho^{-1}_H(C'_{2,i})_{|X_i}$. By the observation made at the beginning of the proof $C^\prime_{1,i}=C^\prime_{2,i}$.
    \item[Injectivity of $\theta(x)$] Let us now show, for a.e.\ $x\in X$, the injectivity of $\theta(x)$. It is enough to prove that for every positive measure Borel subset $U\subseteq X$, and any two $C_1,C_2\in V\mathfrak{I}_G$, if $\theta(x)(C_1)=\theta(x)(C_2)$ for every $x\in U$, then $C_1=C_2$. Let $U,C_1,C_2$ be as above. Up to restricting to positive measure Borel subset of $U$, we will assume that $\theta(x)(C_1)=\theta(x)(C_2)$ is constant on $U$, with value denoted by $C^\prime$. We can assume that the partitions of $X$ given by $C_1$ and $C_2$ coincide and denote this common partition $X^*=\sqcup_{i\in I} X_i$. Now let $i\in I$ such that $U_i:=U\cap X_i$ has positive measure. By our definition of $\theta$, we have
    \begin{equation*}
       \rho^{-1}_G (C_1)_{|U_i}=\rho^{-1}_H(C^\prime)_{|U_i}=
       \rho^{-1}_G (C_2)_{|U_i}.
    \end{equation*}
    By the observation made at the beginning of the proof, this implies that $C_1=C_2$. 
    \item[Surjectivity of $\theta(x)$] 
    Take a vertex $C^\prime$ in $\mathfrak{I}_H$. Since $(\mathfrak{I}_G,\mathfrak{I}_H)$ is an ME-witness, there exist a conull Borel subset $X^*\subseteq X$ and a partition $X^*=\sqcup_{j\in J} Y_j$ into at most countably many Borel subsets such that for all $j\in J$ there exists $C_j\in V\mathfrak{I}_G$ such that $\rho^{-1}_H(C^\prime)_{|Y_j}=\rho^{-1}_G(C_j)_{|Y_j}$. 
    Now, for a.e.\ $x\in X$ there exists $j\in J$ such that $x$ belong to $Y_j$. Therefore, by the above definition of $\theta$, it verifies $\theta(x)(C_j)=C^\prime$ almost surely. Hence the surjectivity.
    \item[Adjacency] We now prove that for a.e.\ $x\in X$ the map $\theta(x)$ is a graph isomorphism. So consider $C_1,C_2\in V\mathfrak{I}_G$. Up to taking a common refinement of the two partitions, we can assume that the partitions of (a conull Borel subset of) $X$ given by $C_1$ and $C_2$ coincide. We denote it by $X^*=\sqcup_{i\in I}X_i$. Now let $\calC_1:=\rho^{-1}_G(C_1)$ and $\calC_2:=\rho^{-1}_G(C_2)$. For $i\in I$, denote by $C^\prime_{1,i}$ and $C^\prime_{2,i}$ the vertices of $\mathfrak{I}_H$ such that $(\mathcal{C}_1)_{|X_i}=\rho^{-1}_H(C^\prime_{1,i})_{|X_i}$ and $(\calC_2)_{|X_i}=\rho^{-1}_H(C^\prime_{2,i})_{|X_i}$. In other words $C^\prime_{1,i}=\theta(x)(C_1)$ and $C^\prime_{2,i}=\theta(x)(C_2)$ whenever $x\in X_i$. Since $(\mathfrak{I}_G,\mathfrak{I}_H)$ is an ME-witness, we get that ${C^\prime_{1,i}}$ and ${C^\prime_{2,i}}$ are distinct and adjacent if and only if $C_1$ and $C_2$ are distinct and adjacent, which is the desired conclusion.
    \item[Equivariance of $\theta$] 
    First for all $g\in G$ and all $h\in H$, let
    \begin{equation*}
        B_{g,h}:=\left\{ \grpdg \in \calg \ | \ \rho_G(\grpdg)=g, \rho_H(\grpdg)=h \right\},
    \end{equation*}
    and remark that $\calg=\sqcup_{g\in G,h\in H}B_{g,h}$. Now consider $g\in G$ and $h\in H$ and denote $B:=B_{g,h}$. Now define $U:=s(B)$ and $V:=r(B)$, which are Borel subsets of $X$ (see \cite[Theorem~18.10]{Kec}). Since $\rho_G$ is action-like, the map $f:=r\circ s^{-1}$ is a Borel isomorphism from $U$ to $V$. To prove the equivariance on $B_{g,h}$, we need to show that $\theta(r(\grpdg))=\rho_H(\grpdg)\theta(s(\grpdg))\rho_G(\grpdg)^{-1}$, that is to say for a.e.\ $x\in U$ and for all $C\in V\mathfrak{I}_G$
    \begin{equation}\label{eq:PreuveEquivarianceTheta}
        \theta\big(f(x)\big)(gCg^{-1})=h \Big( \theta(x)(C)\Big)h^{-1}.
    \end{equation}
    We now fix $C\in V\mathfrak{I}_G$. Recall that the definition of $\theta$ does not depend on the choice of partition. So we can partition $U=\sqcup_{i\in I}U_i$ and $V=\sqcup_{i\in I}V_i$ into Borel subsets, such that for all $i\in I$, we have $f(U_i)=V_i$, and the maps $x\mapsto \theta(x)(C)$ and $x\mapsto \theta(f(x))(gCg^{-1})$ are constant a.e.\ on $U_i$. We denote by $C'_{U,i},C^\prime_{V,i} \in V\mathfrak{I}_H$ their respective values. Hence by definition of $\theta$, we have
    \begin{equation}\label{eq:jaiplusdenompourcetteegalite}
      \rho^{-1}_G(C)_{|U_i}=\rho^{-1}_H(C'_{U,i})_{|U_i}
      \quad \text{and} \quad
      \rho^{-1}_G(gCg^{-1})_{|V_i}=\rho^{-1}_H(C'_{V,i})_{|V_i}.
    \end{equation}
    Now, to prove \cref{eq:PreuveEquivarianceTheta} we need to show that $C^\prime_{V,i}=h C'_{U,i}h^{-1}$ for all $i$. Using the definition of $B_{g,h}$ for the first and third equalities and \cref{eq:jaiplusdenompourcetteegalite} for the second one, we have
    \begin{align*}
        \rho^{-1}_G\left(gCg^{-1}\right)_{|V_i} 
        = B\rho^{-1}_G(C)_{|U_i} B^{-1}
        &= B\rho^{-1}_H(C'_{U,i})_{|U_i} B^{-1}\\
        &=\rho^{-1}_H\left(hC^\prime_{U,i}h^{-1} \right)_{|V_i}.
    \end{align*}
    Combining the above two equations, we deduce that $\rho_H^{-1}(C'_{V,i})_{|V_i}=\rho_H^{-1}(hC'_{U,i}h^{-1})_{|V_i}$. 
    The first observation of the proof thus yields $C^\prime_{V,i}=hC^\prime_{U,i}h^{-1}$ as desired.
    Hence the equivariance. \qedhere
    \end{description}
\end{proof}

\begin{figure}[htbp]
    \begin{center}
    \includegraphics[width=0.9\textwidth]{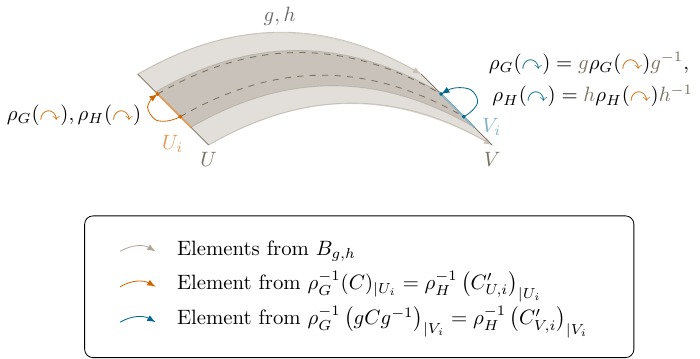}
    \end{center}
    
    \caption{Illustration for the proof of the equivariance of $\theta$}
    \label{fig:EquivarianceTheta}
\end{figure}

%==================================================================
%\section{Elementary subgroupoids}
%\label{sec:elementary-groupoids}
%\input{Elementary}
%==================================================================
\newpage\part{Measure equivalence classification of graph products} \label{Part:MEClassification}
%==================================================================
%\section{Exploiting normal amenable subgroupoids}
The goal of this part is to prove the following theorem, which recalls Theorem~\ref{theo:classificationversionintro} from the introduction.

\begin{Th1}
   Let $\Gamma_G,\Gamma_H$ be two finite simple graphs, not reduced to one vertex, with no transvection and no partial conjugation. Let $G,H$ be graph products of countably infinite groups over $\Gamma_G,\Gamma_H$, respectively. 
   Then the following assertions are equivalent.
   \begin{enumerate}
       \item The groups $G$ and $H$ are measure equivalent.
       \item The groups $G$ and $H$ are orbit equivalent.
       \item There exists a graph isomorphism $\sigma:\Gamma_G\to\Gamma_H$ such that for every $v\in V\Gamma_G$, the groups $G_v$ and $H_{\sigma(v)}$ are orbit equivalent.
   \end{enumerate}
\end{Th1}

\section*{Strategy of the proof} Let us describe the idea of the proof. The implication $3\Rightarrow 2$ was proved in \cite{HH21} and the implication $2\Rightarrow 1$ is obvious. We explain the implication $1\Rightarrow 2$, and give a hint about $2\Rightarrow 3$. 

So let $G$ and $H$ be as in the theorem, and assume that $G$ and $H$ are measure equivalent. Let $\Omega$ be a measure equivalence coupling between $G$ and $H$, and let $\calg$ be a measured groupoid associated to $\Omega$ as in \cref{Ex:GroupoidsAssociatedToCouplings}, coming with two action-like cocycles $\rho_G:\calg\to G$ and $\rho_H:\calg\to H$.

\paragraph{General idea and main tools} The proof that $G$ and $H$ are orbit equivalent is decomposed in two main steps.
\begin{itemize}
    \item In the first step of the proof, we work with the extension graphs $\Gamma_G^e$ and $\Gamma_H^e$, associated to $G$ and $H$. Following Kida's blueprint recalled in Section~\ref{sec:blueprint}, our goal in this step is to show that $(\Gamma_G^e,\Gamma_H^e)$ is an ME-witness between $G$ and $H$ in the sense of Definition~\ref{Def:MEWitness}. 
    
    Recall that vertices of $\Gamma_G^e,\Gamma_H^e$ are the conjugates of the vertex groups $G_v,H_w$. Thus, the key point is to show that a subgroupoid $\calv\subseteq \calg$ verifies $\calv=\rho^{-1}_G(G_v)$ for some $v\in V\Gamma_G$ if and only if (up to a countable partition of the base space) there exist $w\in V\Gamma_H$ and $h\in H$ such that $\calv=\rho^{-1}_H(hH_wh^{-1})$. This property is what we formalize under the name of \emph{Vertex Recognition Property} in \cref{de:VRP} below. Such a subgroupoid $\calv$ is called of \emph{vertex type}, with respect to either $\rho_G$ or $\rho_H$ (see \cref{Def:VertexType} for the formal definition). The proof of this Vertex Recognition Property goes through a characterization of subgroupoids of vertex type which is independent from the cocycle -- in other words, we show that there exists $v\in V\Gamma_G$ such that $\calv=\rho^{-1}_G(G_v)$ if and only if $\calv$ verifies some purely groupoid theoretic conditions. This is done in \cref{sec:strongly-reduced,sec:reducible}; we will say more about this below.
    
    \item In the second step of the proof, we use the actions of $G,H$ on their right-angled buildings $\bD_G,\bD_H$ (see \cref{subsec:ExtensionGraphs} for the definition). These are used to build a fundamental domain of $G$ and $H$ on $\Omega$ (showing that $G$ and $H$ are orbit equivalent), and also induce orbit equivalence couplings at the level of the vertex groups, in the following way.
    
    First, through Proposition~\ref{prop:witness}, \cpageref{prop:witness}, the previous step gives us a measurable $(G\times H)$-equivariant map $\Omega\to\Isom(\Gamma_G^e,\Gamma_H^e)$, where $G,H$ act by pre- and post-composition on $\Isom(\Gamma_G^e,\Gamma_H^e)$. In particular $\Gamma_G^e$ and $\Gamma_H^e$ are isomorphic. This automatically implies that $\Gamma_G$ and $\Gamma_H$ are isomorphic, using that $\Gamma_G$ and $\Gamma_H$ have no transvection and no partial conjugation \cite{Hua}. 
    
    Now, in view of the comparison between $\Isom(\Gamma_G^e,\Gamma_H^e)$ and $\Isom(\bD_G,\bD_H)$ made in Proposition~\ref{theo:autos}, \cpageref{theo:autos}, we deduce a measurable $(G\times H)$-equivariant map $\Theta$ from $\Omega$ to $\Isom(\bD_G,\bD_H)$.

    Notice that $G,H$ act transitively on the sets of rank $0$ vertices of $\bD_G,\bD_H$. Thus, the subset $Y\subseteq\Isom(\bD_G,\bD_H)$ consisting of all isomorphisms that send $\{e_G\}$ to $\{e_H\}$ (viewed as rank $0$ vertices) is a fundamental domain for both the $G$- and the $H$-action on $\Isom(\bD_G,\bD_H)$. So $\Theta^{-1}(Y)$ is a common fundamental domain for the $G$- and $H$-actions on $\Omega$. This shows that $G$ and $H$ are orbit equivalent. 

    We finally say a word about the implication $2\Rightarrow 3$.    Starting with an isomorphism $\sigma':\Gamma_G\to\Gamma_H$, the idea is to identify $v$ and $\sigma'(v)$ to rank $1$ vertices of $\bD_G,\bD_H$, and let $\Omega'_v:=\Theta^{-1}(\{f|f(v)=\sigma'(v)\})$. We show that there exists a choice of $\sigma'$ for which $\Omega^\prime_v$ has positive measure for all $v\in V\Gamma_G$. For this choice, the set $\Omega'_v$ (which is $(G_v\times H_{\sigma'(v)})$-invariant) is in fact the desired orbit equivalence coupling between $G_v$ and $H_{\sigma'(v)}$. We refer to Proposition~\ref{prop:coupling-restricted}, \cpageref{prop:coupling-restricted} for the details.  
\end{itemize}

\paragraph{More on the Vertex Recognition Property: A zoom-in process}

 As we explained in the description of the first step of the proof, in order to prove the Vertex Recognition Property, we need to give a characterization of subgroupoids of $\calg$ of vertex type that is independent from the cocycle. 
However, since we do not have any hypothesis on the vertex groups --~besides the fact that they are countably infinite~-- we cannot hope to obtain a characterization {via} their \emph{intrinsic} properties. Instead, we rely on how they behave \emph{inside} the bigger groupoid $\calG$, to get the needed characterization. 

The central combinatorial idea behind our proof is a \emph{zoom-in} process: starting from~$G$, vertex groups are obtained by successively passing to maximal (parabolic) product subgroups, and then restricting to factors. More precisely, given a vertex group $G_v$, we can find two finite sequences $(F_j)_j$ and $(P_j)_j$ of parabolics subgroups of $G$ such that 
\begin{itemize}
    \item $G=F_0\supseteq P_1\supseteq F_1\supseteq\dots\supseteq P_n\supseteq F_n=G_v$;
    \item And $P_{j+1}$ is a maximal product inside $F_j$;
    \item And $F_j$ is a factor of $P_j$ (and $F_n$ is just the clique factor of $P_n$).
\end{itemize} 
We refer to Lemma~\ref{lemma:graph}, \cpageref{lemma:graph} for a precise statement. Conversely, we manage to use this zoom-in process as a starting point towards an algebraic characterization of (conjugates of) vertex groups. 

One important feature here is that the properties we use in the process (being a maximal product, being a factor) can in fact be completely characterized just in terms of amenability of certain subgroups, and inclusion or normalization of one subgroup by another --~this allows for a translation to the groupoid-theoretic setting, using the notions of amenability and normality recalled in Section~\ref{sec:background-groupoids}. More precisely, we characterize --~independently from the cocycle $\rho_G$~-- subgroupoids of the form $\calp=\rho^{-1}_G(P)$ for $P$ some maximal product in $G$, using properties of their behavior inside of $\calg$. Likewise, we then characterize subgroupoids of the form $\calf=\rho^{-1}_G(F)$ where $F$ is a factor of $P$. Iterating this process on the sequences of parabolics given by the above zoom-in process, we are then able to characterize vertex type groupoids and show our Vertex Recognition Property.

The details of the arguments used to “recognize” maximal products and factors are explained in the introductory part of Section~\ref{sec:strongly-reduced}, to which the reader is refered.
Readers less familiar with measured groupoids can also read \cref{Appendix:Isom} where we prove the analogue of \cref{theo:classificationversionintro} with regard to isomorphism classification. The proof in this appendix can serve as a guideline (written in the group theoretic framework) through \cref{sec:strongly-reduced}.

\section*{\texorpdfstring{Structure of \cref{Part:MEClassification}}{Structure of the part}}
 Our proof of Theorem~\ref{theo:classificationversionintro} occupies Sections~\ref{sec:proof-from-VRP} to~\ref{sec:reducible} -- Section~\ref{Sec:AmenalbeUntransvectable} is not needed for the main theorem, and is only a slight variation over the Vertex Recognition Property, in preparation for Part~\ref{Part:QuantitativeResults} of the present work. Examples illustrating Theorem~\ref{theo:classificationversionintro} and variations obtained along the way, are provided in \cref{Sec:ExemplesMEVRP}.
 
  In Section~\ref{sec:proof-from-VRP}, we formulate the Vertex Recognition Property, and explain how to prove Theorem~\ref{theo:classificationversionintro} assuming this property has been established. In other words, Section~\ref{sec:proof-from-VRP} carries Step~2 from the above proof outline, which exploits the right-angled buildings of the graph products. Section~\ref{sec:proof-from-VRP} also contains the proofs of the commensurability classification theorem  (Theorem~\ref{theo:classification-commensurability}), as well as the application to fundamental groups of associated von Neumann algebras (Corollary~\ref{cor:fundamental}, \cpageref{cor:fundamental}), both derived from the statement in measure equivalence.
  
  The next three sections are devoted to the proof of the Vertex Recognition Property. Section~\ref{sec:graph} contains preparatory tools of two sorts. First, it contains some combinatorial lemmas, namely:
\begin{itemize}
    \item we establish the combinatorial lemma that provides the zoom-in process described above (Section~\ref{sec:zoom-in});
    \item we show in Section~\ref{sec:strongly-reduced-def} that if $\Gamma$ is transvection-free and has no partial conjugation, then either $\Gamma$ splits as a join (and $G$ splits as a product), or else $\Gamma$ is strongly reduced in the sense that it provides a minimal decomposition of $G$ as a graph product. Our proof of the Vertex Recognition Property will be split into this two cases in later sections.
\end{itemize}
Section~\ref{sec:graph} also contains some groupoid-theoretic tools in preparation for later sections. % (Section~\ref{sec:groupoid-tools}).

Our proof now follows the dichotomy provided by the second combinatorial lemma. Namely, in Section~\ref{sec:strongly-reduced}, we establish the Vertex Recognition Property when the defining graph is strongly reduced. And in Section~\ref{sec:reducible}, we extend this to include graph products over graphs that split as a join.

Finally \cref{Sec:ExemplesMEVRP} provides some examples, illustrating the main results and the different cases treated in the present part.

\section{From vertex recognition to rigidity}\label{sec:proof-from-VRP}
In the following section we formalize the aforementioned Vertex Recognition Property and show how to use it to obtain our main classification theorem (\cref{theo:classificationversionintro}).
We start by applying Kida’s blueprint (detailed in \cref{sec:blueprint}) to factorize our coupling through the extension graphs (see \cref{sec:factoring}) and prove our main theorem in \cref{subsec:ProofClassificationTh}. We then turn in \cref{Sec:ApplicationsMainTheorem} to applications of the Vertex Recognition Property to the classifications up to commensurability and isomorphisms and to the study of fundamental groups of equivalence relations and associated von Neumann algebras.

\subsection{The Vertex Recognition Property}\label{subsec:VRP}

Let $G$ be a graph product. Let $\calg$ be a measured groupoid over a standard probability space $X$, equipped with a cocycle $\rho:\calg\to G$, and let $\calv\subseteq\calg$ be a measured subgroupoid.
We say that $(\calv,\rho)$ is \emph{of vertex type}\label{Def:VertexType}\index{Vertex type (groupoid framework)} if there exists a countable Borel partition $X^*=\dunion_{i\in I}X_i$ of a conull Borel subset $X^*\subseteq X$, and for every $i\in I$, a parabolic subgroup $P_i\subseteq G$ that is conjugate to a vertex group, such that $\calv_{|X_i}=\rho^{-1}(P_i)_{|X_i}$. 

A crucial tool in our proof of Theorem~\ref{theo:classificationversionintro} is a recognition technique phrased in the language of measured groupoids.

\begin{Def}[Vertex Recognition Property]\label{de:VRP}
A class $\classC$ of graph products satisfies the \emph{Vertex Recognition Property}\index{Vertex Recognition Property} if for every $G,H\in \classC$, every measured groupoid $\calg$ over a standard probability space, equipped with action-like cocycles $\rho_G:\calg\to G$ and $\rho_H:\calg\to H$, and every measured subgroupoid $\calv\subseteq\calg$, the pair $(\calv,\rho_G)$ is of vertex type if and only if $(\calv,\rho_H)$ is of vertex type.
\end{Def}

\begin{Rq}\label{rk:VRP-transvection-free}
   Note that the Vertex Recognition Property for the class $\classC$ implies that for every $G\in\classC$, every automorphism $\varphi$ of $G$ sends every vertex group to a conjugate of a vertex group: indeed, this is proved by taking $G=H$, $\calg=G$, $\rho_G=\mathrm{id}$ and $\rho_H=\varphi$. In fact our notion is a groupoid-theoretic extension of this algebraic property. 
\end{Rq}

The Vertex Recognition Property holds in the context of interest to us.

\begin{Prop}\label{prop:vertex-recognition}
The class of all graph products of countably infinite groups over finite simple graphs not reduced to one vertex, which are transvection-free and have no partial conjugation, has the Vertex Recognition Property.
\end{Prop}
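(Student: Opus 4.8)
The plan is to reduce the biconditional in \cref{de:VRP} to a single \emph{cocycle-free} characterization of vertex-type subgroupoids. Concretely, I would isolate a property $\mathcal{P}(\calv)$, phrased purely in terms of the internal structure of $\calg$ (everywhere non-amenability, inclusion, and normalization of measured subgroupoids, in the sense recalled in Section~\ref{sec:background-groupoids}), such that for \emph{any} action-like cocycle $\rho$ one has: $(\calv,\rho)$ is of vertex type (\cref{Def:VertexType}) if and only if $\calv$ satisfies $\mathcal{P}$. Since $\mathcal{P}$ mentions neither $\rho_G$ nor $\rho_H$, applying this equivalence to each cocycle in turn gives exactly the Vertex Recognition Property. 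The first step is combinatorial: using the dichotomy to be established in \cref{sec:strongly-reduced-def}, a transvection-free graph with no partial conjugation either splits as a join — so that $G$ is a direct product, treated in \cref{sec:reducible} by first recognizing the join factors groupoid-theoretically and then applying the irreducible case factorwise — or is strongly reduced, which is the core case of \cref{sec:strongly-reduced}.

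The engine will be the zoom-in of \cref{lemma:graph}, exhibiting each vertex group $G_v$ as the bottom of a chain $G=F_0\supseteq P_1\supseteq F_1\supseteq\dots\supseteq P_n\supseteq F_n=G_v$, in which each $P_{j+1}$ is a \emph{maximal product} parabolic inside $F_j$ and each $F_j$ is a \emph{factor} of $P_j$. First I would mirror each step groupoid-theoretically. For a parabolic $P=P_1\times P_2$ with non-amenable factors, the subgroupoids $\rho^{-1}(P_1),\rho^{-1}(P_2)$ are everywhere non-amenable (this uses that $\rho$ is action-like), mutually normalizing, and jointly generate $\rho^{-1}(P)$ with stably trivial intersection. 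The substantive direction is the converse: any pair of mutually normalizing, everywhere non-amenable subgroupoids generating a prescribed subgroupoid is forced, via \cref{lemma:support-normal} together with the proper-support machinery of \cref{lemma:properly-supported}, to be of the form $\rho^{-1}$ of the two factors of a product parabolic. Maximality among product subgroupoids for the natural partial order then matches maximality of the parabolic product, so "$\rho^{-1}$ of a maximal product" becomes an intrinsic condition.

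Next I would recognize a factor $F_j$ of $P_j=F_j\times F_j'$, which is the delicate point, since one must single out a single factor intrinsically. The idea is to make the characterization recursive: $\rho^{-1}(F_j)$ is the factor of the already-recognized product $\rho^{-1}(P_j)$ that itself continues to admit the prescribed alternating chain of maximal-product and factor decompositions, terminating at a subgroupoid with no non-amenable product structure (the bottom $F_n=G_v$, whose support is a single vertex group rather than a larger clique subgroup). Iterating down the chain characterizes vertex-type subgroupoids by a condition intrinsic to $\calg$. Feeding this into \cref{prop:witness}, with $\mathfrak{C}_G,\mathfrak{C}_H$ the sets of conjugates of vertex groups and $\mathfrak{I}_G,\mathfrak{I}_H$ the extension graphs, then upgrades the recognition to the full ME-witness property needed for Theorem~\ref{theo:classificationversionintro}.

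I expect the main obstacle to be the \emph{converse} directions of the product and factor recognitions, i.e.\ showing that the intrinsic groupoid conditions cannot be met by subgroupoids that are not of the expected parabolic form. This requires tight control of normalizers of tightly supported subgroupoids and the exclusion of amenable "leakage" through everywhere non-amenability, and it is precisely where the hypotheses enter: transvection-freeness and absence of partial conjugation guarantee that the factors arising along the zoom-in are non-amenable and that the chain genuinely bottoms out at single vertex groups, while the assumption that $\Gamma$ is not a single vertex ensures the recursion is non-degenerate. The reducible case adds the bookkeeping of peeling off join factors before the irreducible recognition, which must be carried out compatibly for both cocycles simultaneously.
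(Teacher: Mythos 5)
Your high-level skeleton does match the paper's: a cocycle-free characterization of vertex-type subgroupoids, the join/strongly-reduced dichotomy of Lemma~\ref{lemma:strongly-reduced}, the zoom-in chain of Lemma~\ref{lemma:graph}, recognition of maximal products and then of factors, and a reducible case that peels off join factors before invoking the irreducible case. But the intrinsic property you propose for recognizing maximal products is wrong, and this is a genuine gap, not a presentational one. You want to detect a product parabolic $P=P_1\times P_2$ through a pair of mutually normalizing, \emph{everywhere non-amenable} subgroupoids $\rho^{-1}(P_1),\rho^{-1}(P_2)$. This fails in the most basic examples the proposition must cover: for the pentagon right-angled Artin group (transvection-free, no partial conjugation, vertex groups $\mathbb{Z}$), every maximal product parabolic subgroup is $G_{\st(v)}\cong\mathbb{Z}\times F_2$, so one factor is amenable and $\rho^{-1}(G_v)$ is an \emph{amenable} subgroupoid (Lemma~\ref{lemma:amenable-subgroup-subgroupoid}), never an everywhere non-amenable one. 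Your intrinsic condition is therefore not satisfied by the very subgroupoids you need to detect, and the zoom-in never gets started. This is exactly why the paper's Property~$\Pprod$ (Definition~\ref{de:pprod}) is built the other way around: an \emph{amenable} subgroupoid $\cala$ of infinite type, stably normalized by an everywhere non-amenable $\caln$, itself stably normalized by $\calp$; the dichotomy of Lemma~\ref{lemma:maximal-product-nonamenable} (some factor contains a non-abelian free group unless $P$ is of isolated clique type) makes this work for arbitrary countably infinite vertex groups, and the amenability of $\cala$ is what feeds the Adams-style boundary machinery (Lemmas~\ref{lemma:alternative} and~\ref{lemma:adams-argument-v2}) in the converse direction. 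Note also that your converse is false as stated even when both factors are non-amenable: two commuting non-abelian free subgroups sitting inside the two factors give mutually normalizing, everywhere non-amenable subgroupoids with trivial intersection that are not of parabolic form; the paper's converse only concludes \emph{containment} in a product parabolic (via tight supports and Lemma~\ref{lemma:support-normal}) and then pins it down with the maximality clause $\Pprod_2$.

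The second gap is at the bottom of the recursion. You assert that transvection-freeness and the absence of partial conjugation ``guarantee that the factors arising along the zoom-in are non-amenable and that the chain genuinely bottoms out at single vertex groups''; neither claim is what these hypotheses do (factors can be amenable, as above). Their actual role is to let one distinguish, intrinsically, a vertex group $G_v$ from a strictly larger parabolic subgroup containing it --- in particular from a clique parabolic $G_\Upsilon\supsetneq G_v$. In the paper this is the content of Property~$\Pvert_2$ (strong reducedness provides, inside any non-vertex parabolic, a vertex group whose normalizer escapes the ambient normalizer) and of $\Pvert_3$/$\Pvert_4$ (untransvectability, via Lemma~\ref{lemma:parabolic-untransvectable}), together with the handling of the isolated-clique asymmetry (Remark~\ref{rk:prod}), which forces the two-sided formulation of Lemma~\ref{lemma:maximal-product} and Steps~2 and~4 of Lemma~\ref{lemma:characterization-vertex-groups}. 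Your recursive clause ``terminating at a subgroupoid whose support is a single vertex group rather than a larger clique subgroup'' assumes precisely what must be proved --- that this distinction is visible without the cocycle --- and supplies no mechanism for it.
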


The proof of Proposition~\ref{prop:vertex-recognition} is postponed to Sections~\ref{sec:strongly-reduced} (where it is proved in the irreducible case) and~\ref{sec:reducible} (where it is proved in the reducible case). In the present section, we explain how we derive our classification theorem (Theorem~\ref{theo:classificationversionintro}) from the Vertex Recognition Property. In other words, we carry out the second point from the strategy of the proof described at the beginning of Part~\ref{Part:MEClassification} of the present work.

With the language and notations from Section~\ref{sec:blueprint} (Kida’s blueprint), we will let $\mathfrak{C}_G$ be the set of all conjugates of vertex groups of $G$, and let $\mathfrak{I}_G=\Gamma_G^e$ be the extension graph of $G$, a graph acted upon by $G$ whose vertex set is precisely $\mathfrak{C}_G$. In this language, a subgroupoid of vertex type is exactly a subgroupoid which is stably $\mathfrak{C}_G$. The first step in our proof of Theorem~\ref{theo:classificationversionintro}, carried in Section~\ref{sec:factoring}, consists in showing that $(\mathfrak{I}_G,\mathfrak{I}_H)$ is an ME-witness for $G,H$ (see \cpageref{Def:MEWitness} for the definition). This requires showing that adjacency in the extension graph is visible at the groupoid level. The second step, carried in Section~\ref{subsec:ProofClassificationTh}, is where we depart from Kida's strategy for measure equivalence classification: we will use the right-angled buildings of the graph products as a tool for keeping track of volumes and passing from measure equivalence to orbit equivalence.

\subsection{Factoring the self-coupling}\label{sec:factoring}
The purpose of this section is to apply Kida’s blueprint (\cref{sec:blueprint}). The first step is to obtain a characterization of adjacency in the extension graph $\Gamma^e_G$ (i.e.\ commutation) in terms of measured groupoids. More precisely, given two groupoids we are able to recognize when their image by a cocycle will give adjacent parabolic subgroups \emph{independently from the cocycle}.

\begin{Lmm}\label{lemma:adjacency}
    Let $G$ be a graph product of countably infinite groups over a finite simple graph. Let $\calg$ be a measured groupoid over a standard probability space $X$, and let $\rho:\calg\to G$ be an action-like cocycle. Let $P,Q$ be two parabolic subgroups that are conjugate to vertex groups. Let $\calp=\rho^{-1}(P)$ and $\calQ=\rho^{-1}(Q)$. The following assertions are equivalent.
    \begin{enumerate}
        \item The groups $P$ and $Q$ are distinct and commute.
        \item The groupoid $\calQ$ normalizes $\calp$ and is different from $\calp$. 
        \item There exists a positive measure Borel subset $U\subseteq X$ such that $\calQ_{|U}$ normalizes $\calp_{|U}$, and $\calQ_{|U}\neq\calp_{|U}$.
    \end{enumerate}
\end{Lmm}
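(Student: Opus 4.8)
The plan is to prove the cyclic chain $1\Rightarrow 2\Rightarrow 3\Rightarrow 1$. The implication $1\Rightarrow 2$ is the soft direction: if $P$ and $Q$ commute then $Q$ centralizes, and in particular normalizes, $P$, so by Example~\ref{ex:normal} the groupoid $\calQ=\rho^{-1}(Q)$ normalizes $\calp=\rho^{-1}(P)$. To see that $\calp\neq\calQ$, I would argue by contradiction: if $\calp=\calQ$, then each is stably contained in the other, so Lemma~\ref{lemma:action-like} makes $P\cap Q$ finite index in both $P$ and $Q$; Lemma~\ref{lemma:finite-index} (which applies since the vertex groups are infinite) then forces $P\subseteq Q$ and $Q\subseteq P$, i.e.\ $P=Q$, contradicting the distinctness asserted in $1$. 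The implication $2\Rightarrow 3$ is immediate by taking $U=X$.

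The substantial direction is $3\Rightarrow 1$. First, the hypothesis $\calQ_{|U}\neq\calp_{|U}$ already gives $P\neq Q$ (otherwise $\calp=\calQ$ and their restrictions would agree). To obtain commutation, I would feed the groupoid normalization into the support machinery. By Lemma~\ref{lemma:tight-support}, $(\calp,\rho)$ is tightly $P$-supported, and this property passes to the restriction $(\calp_{|U},\rho)$. Since $\calQ_{|U}$ normalizes $\calp_{|U}$ inside $\calg_{|U}$, applying Lemma~\ref{lemma:support-normal} with $\cala=\calp_{|U}$ and $\calh=\calQ_{|U}$ produces a conull subset $U^*\subseteq U$ with $\rho(\calQ_{|U^*})\subseteq P\times P^{\perp}$. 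As $\calQ_{|U^*}=\rho^{-1}(Q)_{|U^*}$, this reads $\rho^{-1}(Q)_{|U^*}\subseteq\rho^{-1}(P\times P^{\perp})_{|U^*}$, and because $P\times P^{\perp}$ is itself parabolic (it is the normalizer of $P$), Lemma~\ref{lemma:inclusion-parabolics} upgrades this to the genuine group inclusion $Q\subseteq P\times P^{\perp}$.

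It then remains a purely combinatorial endgame. After a global conjugation I may assume $P=G_v$, so that $P\times P^{\perp}=G_{\st(v)}=G_v\times G_{\lk(v)}$ and $Q\subseteq G_{\st(v)}$. Lemma~\ref{lemma:product-parabolic-subgroup} splits $Q=(Q\cap G_v)\times(Q\cap G_{\lk(v)})$; since $Q$ is conjugate to a vertex group its type is a single (join-irreducible) vertex, so one of the two factors must be trivial. If $Q\cap G_{\lk(v)}$ were trivial, then $Q$ would be an infinite parabolic subgroup of $G$ contained in the vertex group $G_v$, forcing $Q=G_v=P$ and contradicting $P\neq Q$; hence $Q\cap G_v$ is trivial, $Q\subseteq G_{\lk(v)}=P^{\perp}$, and $P$ and $Q$ commute. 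I expect the main obstacle to be the bookkeeping around restrictions in the middle step --~correctly transferring tight support to $\calg_{|U}$ and tracking the conull set $U^*$ so that Lemma~\ref{lemma:inclusion-parabolics} still applies on a positive measure set~-- rather than any genuinely new idea, since the heavy lifting is already packaged in Lemmas~\ref{lemma:support-normal} and~\ref{lemma:product-parabolic-subgroup}.
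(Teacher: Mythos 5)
Your proof is correct and follows essentially the same route as the paper's: Example~\ref{ex:normal} plus Lemma~\ref{lemma:inclusion-parabolics} (which you unpack into Lemmas~\ref{lemma:action-like} and~\ref{lemma:finite-index}) for $1\Rightarrow 2$, and for $3\Rightarrow 1$ the combination of tight support (Lemma~\ref{lemma:tight-support}), the normalizer lemma (Lemma~\ref{lemma:support-normal}) to get $Q\subseteq P\times P^{\perp}$, and the product-splitting of parabolic subgroups to conclude commutation. The only cosmetic difference is that the paper deduces the group inclusion $Q\subseteq P\times P^{\perp}$ from the tight $Q$-supportedness of $\calQ_{|U}$, whereas you route it through Lemma~\ref{lemma:inclusion-parabolics}; both are valid.
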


\begin{proof}
    The assertion $1\Rightarrow 2$ follows from Example~\ref{ex:normal}, \cpageref{ex:normal} (for the fact that $\calq$ normalizes $\calp$), and from Lemma~\ref{lemma:inclusion-parabolics}, \cpageref{lemma:inclusion-parabolics} (for the fact that $\calq\neq\calp$). The assertion $2\Rightarrow 3$ is clear. For $3\Rightarrow 1$, notice first that $(\calQ,\rho)$ is tightly $Q$-supported (Lemma~\ref{lemma:tight-support}), and therefore $(\calQ_{|U},\rho)$ is tightly $Q$-supported. As $\calQ_{|U}$ normalizes $\calp_{|U}$, Lemma~\ref{lemma:support-normal} implies that (up to restricting to a conull Borel subset) $\calq_{|U}\subseteq\rho^{-1}(P\times P^\perp)_{|U}$. It thus follows that $Q\subseteq P\times P^{\perp}$. As $Q$ is conjugate to a vertex group, this implies that $Q$ is either equal to $P$ or commutes with $P$. The fact that $\calq_{|U}\neq\calp_{|U}$ implies that $P\neq Q$, using Lemma~\ref{lemma:inclusion-parabolics}.
\end{proof}

For the following lemma, we recall that $\Isom(\Gamma_G^e,\Gamma_H^e)$ is equipped with the topology of pointwise convergence, and with the action of $G\times H$ given by pre- and post-composition. 

\begin{Lmm}\label{Lmm:CouplingAndGraphIsomorphism}
Let $\classC$ be a class of graph products that satisfies the Vertex Recognition Property. Let $G,H\in\classC$. 
     
     For every measure equivalence coupling $\Omega$ between $G$ and $H$, there exists a $(G\times H)$-equivariant Borel map $\Omega\to\Isom(\Gamma_G^e,\Gamma^e_H)$. (In particular, if $G$ and $H$ are measure equivalent, then $\Gamma_G^e$ and $\Gamma_H^e$ are isomorphic.)
\end{Lmm}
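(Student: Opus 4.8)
The statement I want to prove, Lemma~\ref{Lmm:CouplingAndGraphIsomorphism}, is precisely an instance of the abstract machinery assembled in Section~\ref{sec:blueprint}, so the strategy is to verify that $(\Gamma_G^e,\Gamma_H^e)$ is an ME-witness for $G,H$ in the sense of the definition on \cpageref{Def:MEWitness}, and then invoke Proposition~\ref{prop:witness} to produce the desired equivariant map. Let me set up the correspondence: I take $\mathfrak{C}_G$ to be the set of all conjugates of vertex groups of $G$ (a conjugacy-invariant countable set of subgroups), $\mathfrak{I}_G=\Gamma_G^e$ the extension graph, and similarly for $H$. With this dictionary, a pair $(\calv,\rho_G)$ being \emph{stably $\mathfrak{C}_G$} is exactly the same as $(\calv,\rho_G)$ being \emph{of vertex type} in the sense of \cref{Def:VertexType}. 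So the first witness condition — that $(\calC_1,\rho_G)$ is stably $\mathfrak{C}_G$ if and only if $(\calC_1,\rho_H)$ is stably $\mathfrak{C}_H$ — is \emph{literally} the Vertex Recognition Property from \cref{de:VRP}, which holds for the class $\classC$ by hypothesis.

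\textbf{Verifying the two witness conditions.} Thus the first condition is free. For the second witness condition, I must show that when $(\calC_1,\rho_G)$ and $(\calC_2,\rho_G)$ are both of vertex type, they are $\mathfrak{I}_G$-adjacent (i.e.\ $\Gamma_G^e$-adjacent) if and only if $(\calC_1,\rho_H)$ and $(\calC_2,\rho_H)$ are $\Gamma_H^e$-adjacent. The key observation is that $\Gamma_G^e$-adjacency of two vertices is, by \cref{de:extension-graph}, exactly commutation of the corresponding (distinct) conjugates of vertex groups. Here Lemma~\ref{lemma:adjacency} does all the work: it characterizes, \emph{independently of the cocycle}, when $\calp=\rho^{-1}(P)$ and $\calQ=\rho^{-1}(Q)$ arise from two distinct commuting vertex-conjugate parabolics, purely in terms of the groupoid-theoretic relation that $\calQ_{|U}$ normalizes but differs from $\calp_{|U}$ on some positive-measure $U$. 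Concretely, after refining to a common countable Borel partition $X^*=\dunion_{i\in I}X_i$ on which $(\calC_1)_{|X_i}=\rho_G^{-1}(C_{1,i})_{|X_i}$ and $(\calC_2)_{|X_i}=\rho_G^{-1}(C_{2,i})_{|X_i}$ with $C_{1,i},C_{2,i}\in\mathfrak{C}_G$, the definition of $\mathfrak{I}_G$-adjacency asks that on each positive-measure piece the groups $C_{1,i}$ and $C_{2,i}$ be distinct and commute. By Lemma~\ref{lemma:adjacency} applied on $U=X_i$, this commutation-and-distinctness is equivalent to the groupoid condition ``$(\calC_2)_{|X_i}$ normalizes $(\calC_1)_{|X_i}$ and the two differ'' — and this last condition is phrased entirely in terms of $\calg$ and makes no reference to the cocycle. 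Running the same argument with $\rho_H$ in place of $\rho_G$ (the corresponding $C'_{1,i},C'_{2,i}\in\mathfrak{C}_H$ are vertex-conjugate parabolics in $H$, so Lemma~\ref{lemma:adjacency} applies verbatim, since both $\rho_G,\rho_H$ are action-like) shows that both adjacency notions reduce to the \emph{same} cocycle-free normalization condition. Hence $\mathfrak{I}_G$-adjacency holds if and only if $\mathfrak{I}_H$-adjacency holds, giving the second witness condition.

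\textbf{Conclusion and the main obstacle.} Having established that $(\Gamma_G^e,\Gamma_H^e)$ is an ME-witness, I still need the hypotheses of Proposition~\ref{prop:witness}: that $\mathfrak{C}_G,\mathfrak{C}_H$ consist of \emph{pairwise non-commensurable} subgroups, and that the conjugation actions extend to graph automorphisms. The latter is built into the $G$-action on $\Gamma_G^e$ (recorded after \cref{de:extension-graph}). For the former, two distinct conjugates $gG_vg^{-1}$ and $g'G_{v'}(g')^{-1}$ of vertex groups are non-commensurable: by Lemma~\ref{lemma:finite-index} (using that vertex groups are infinite), if a finite-index subgroup of one sits in the other then one is contained in the other, and then Proposition~\ref{prop:am}(2) forces equality, contradicting distinctness. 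With all hypotheses checked, Proposition~\ref{prop:witness} yields a $(G\times H)$-equivariant Borel map $\Omega\to\Isom(\Gamma_G^e,\Gamma_H^e)$; the parenthetical consequence that $\Gamma_G^e\cong\Gamma_H^e$ follows because $\Isom(\Gamma_G^e,\Gamma_H^e)$ is nonempty whenever such an equivariant map into it exists (the image of any point is an isomorphism). The genuine difficulty is entirely outsourced: it is the Vertex Recognition Property (Proposition~\ref{prop:vertex-recognition}), whose proof occupies Sections~\ref{sec:strongly-reduced}–\ref{sec:reducible}; the present lemma is essentially a clean translation of that property and Lemma~\ref{lemma:adjacency} into the ME-witness formalism, so I expect no new obstacle here beyond careful bookkeeping of the common Borel partitions.
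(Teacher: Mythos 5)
Your proof is correct and follows essentially the same route as the paper: take $\mathfrak{C}_G$ to be the conjugates of vertex groups and $\mathfrak{I}_G=\Gamma_G^e$, observe that the Vertex Recognition Property gives the first ME-witness condition and Lemma~\ref{lemma:adjacency} gives the second, then apply Proposition~\ref{prop:witness}. The only difference is that you spell out the verification of the pairwise non-commensurability hypothesis of Proposition~\ref{prop:witness} (via Lemma~\ref{lemma:finite-index} and Proposition~\ref{prop:am}), which the paper leaves implicit; this is a welcome but inessential addition.
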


\begin{proof}
    Let $\mathfrak{C}_G$ be the set of conjugates of vertex groups of $G$, and let $\mathfrak{I}_G=\Gamma_G^e$ be the extension graph of $G$. Define $\mathfrak{C}_H$ and $\mathfrak{I}_H$ in the same way. The Vertex Recognition Property, together with the recognition of adjacency given by Lemma~\ref{lemma:adjacency}, ensure that $(\mathfrak{I}_G,\mathfrak{I}_H)$ is an ME-witness for $G,H$ (see Definition~\ref{Def:MEWitness}). The conclusion thus follows from Proposition~\ref{prop:witness}.
    \end{proof}

In the sequel, it will be useful to have a version of the above lemma at the level of the right-angled buildings of the graph products (see Section~\ref{subsec:ExtensionGraphs} for definitions). Similarly to $\Isom(\Gamma_G^e,\Gamma_H^e)$, we will always equip $\Isom(\bD_G,\bD_H)$ with the compact-open topology, and with the action of $G\times H$ by pre- and post-composition.

\begin{Cor}\label{cor:factor-through-building}
Let $\classC$ be a class of graph products that satisfies the Vertex Recognition Property. Let $G,H\in\classC$. 
     
     For every measure equivalence coupling $\Omega$ between $G$ and $H$, there exists a $(G\times H)$-equivariant Borel map $\Omega\to\Isom(\bD_G,\bD_H)$.
\end{Cor}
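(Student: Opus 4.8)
The plan is to combine Lemma~\ref{Lmm:CouplingAndGraphIsomorphism} with the comparison between the automorphisms of the extension graph and those of the right-angled building provided by Proposition~\ref{theo:autos}. The only genuine point to address is that Proposition~\ref{theo:autos} is stated for two graph products over a \emph{common} defining graph, so a preliminary identification of $\Gamma_G$ with $\Gamma_H$ is required before it can be applied.

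First I would invoke Lemma~\ref{Lmm:CouplingAndGraphIsomorphism} to produce a $(G\times H)$-equivariant Borel map $\Phi\colon\Omega\to\Isom(\Gamma_G^e,\Gamma_H^e)$, which in particular shows that the extension graphs $\Gamma_G^e$ and $\Gamma_H^e$ are isomorphic. Since $\Gamma_G$ and $\Gamma_H$ have no transvection and no partial conjugation, Huang's work \cite{Hua} guarantees that this isomorphism of extension graphs forces an isomorphism $\sigma\colon\Gamma_G\to\Gamma_H$ of the defining graphs. I would fix such a $\sigma$ and use it to relabel the vertex groups of $H$, thereby viewing $G$ and $H$ as graph products over one and the same finite simple graph $\Gamma:=\Gamma_G$. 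This relabelling changes neither the groups nor their associated objects $\Gamma_G^e,\Gamma_H^e,\bD_G,\bD_H$, and $\Gamma$ still has no transvection and no partial conjugation, so the hypotheses of Proposition~\ref{theo:autos} are met.

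With $G$ and $H$ now expressed over the common graph $\Gamma$, Proposition~\ref{theo:autos} yields a $(G\times H)$-equivariant continuous map $\Psi\colon\Isom(\Gamma_G^e,\Gamma_H^e)\to\Isom(\bD_G,\bD_H)$. I would then take the composition $\Psi\circ\Phi\colon\Omega\to\Isom(\bD_G,\bD_H)$: it is Borel, being the composition of the Borel map $\Phi$ with the continuous map $\Psi$, and it is $(G\times H)$-equivariant because both $\Phi$ and $\Psi$ are $(G\times H)$-equivariant (equivariance is preserved under composition). This is the desired map. The main (and essentially only) obstacle is the passage from the isomorphism $\Gamma_G^e\cong\Gamma_H^e$ to the isomorphism $\Gamma_G\cong\Gamma_H$ needed to place ourselves in the setting of a common defining graph; this step relies crucially on the transvection-free and no-partial-conjugation assumptions through \cite{Hua}. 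Everything else is the formal composition of an equivariant Borel map with an equivariant continuous map.
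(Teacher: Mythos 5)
Your proposal is correct and is essentially the paper's own proof: the paper disposes of this corollary in one line by composing the $(G\times H)$-equivariant Borel map $\Omega\to\Isom(\Gamma_G^e,\Gamma_H^e)$ from Lemma~\ref{Lmm:CouplingAndGraphIsomorphism} with the $(G\times H)$-equivariant continuous map $\Isom(\Gamma_G^e,\Gamma_H^e)\to\Isom(\bD_G,\bD_H)$ from Proposition~\ref{theo:autos}, exactly as you do. Your intermediate step --- using \cite{Hua} to upgrade $\Gamma_G^e\cong\Gamma_H^e$ to an isomorphism of the defining graphs, so that $G$ and $H$ can be regarded as graph products over a common graph as Proposition~\ref{theo:autos} requires --- is a point the paper glosses over here (it carries out that identification only later, in the proof of Proposition~\ref{prop:coupling-restricted}); note that this step invokes the no-transvection and no-partial-conjugation hypotheses on $\Gamma_G,\Gamma_H$, which are not literally part of the statement about an abstract class $\classC$ but are needed equally by the paper's own citation of Proposition~\ref{theo:autos} and hold in every application the paper makes of this corollary.
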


\begin{proof}
    This is a consequence of Lemma~\ref{Lmm:CouplingAndGraphIsomorphism}, and the existence of a $(G\times H)$-equivariant Borel map $\Isom(\Gamma_G^e,\Gamma_H^e)\to\Isom(\bD_G,\bD_H)$ (Proposition~\ref{theo:autos}).
\end{proof}

We now deduce another consequence of Lemma~\ref{lemma:adjacency}. We mention that this will not be used to prove \cref{theo:classificationversionintro}, but will allow us to obtain classification results beyond the case covered by the latter theorem (see for example \cref{Sec:ExemplesMEVRP}) as well as quantitative statements (see \cref{Sec:ProdToFact}).

\begin{Cor}\label{Cor:VRPandGraphMorphism}
    Let $\classC$ be a class of graph products that satisfies the Vertex Recognition Property. Let $G,H\in\classC$ defined over $\Gamma_G$ and $\Gamma_H$ respectively.
    
    If $G$ and $H$ are measure equivalent, then there exists a graph homomorphism $\sigma:\Gamma_G \rightarrow \Gamma_H$ such that for all $v\in V\Gamma_G$, the group $G_v$ is measure equivalent to $H_{\sigma(v)}$.
\end{Cor}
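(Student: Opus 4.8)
The plan is to work entirely at the level of the measured groupoid associated to the coupling, and to exploit the Vertex Recognition Property one vertex at a time. First I would fix a measure equivalence coupling $\Omega$ between $G$ and $H$, choose fundamental domains with $m(X_G\cap X_H)>0$, and form the measured groupoid $\calg$ over $X:=X_G\cap X_H$ together with its two natural cocycles $\rho_G:\calg\to G$ and $\rho_H:\calg\to H$, as in the second point of \cref{Ex:GroupoidsAssociatedToCouplings}; these are action-like by \cref{lemma:action-like-1} and \cref{rk:action-like-restriction}. For each vertex $v\in V\Gamma_G$, the subgroupoid $\rho_G^{-1}(G_v)$ is of vertex type with respect to $\rho_G$ by definition, so the Vertex Recognition Property (\cref{de:VRP}) guarantees that $(\rho_G^{-1}(G_v),\rho_H)$ is of vertex type as well.

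Since $V\Gamma_G$ is finite, I would pass to a common refinement of the countably many partitions produced by the Vertex Recognition Property for the various $v$, obtaining a conull subset $X^*=\dunion_j Y_j$ such that on each positive measure piece $Y_j$ and for every $v$ simultaneously one has $\rho_G^{-1}(G_v)_{|Y_j}=\rho_H^{-1}(Q_j^{(v)})_{|Y_j}$ for some parabolic subgroup $Q_j^{(v)}\subseteq H$ conjugate to a vertex group. I would then fix one such piece $U:=Y_j$ of positive measure, and define $\sigma(v)$ to be the type of $Q^{(v)}:=Q_j^{(v)}$, i.e.\ the vertex $w\in V\Gamma_H$ for which $Q^{(v)}$ is conjugate to $H_w$. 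Working on the single common piece $U$ for all vertices at once is what makes $\sigma\colon V\Gamma_G\to V\Gamma_H$ a consistent, well-defined map.

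The measure equivalence of vertex groups is then immediate: the equality $\rho_G^{-1}(G_v)_{|U}=\rho_H^{-1}(Q^{(v)})_{|U}$ is exactly the hypothesis of \cref{lemma:me-subgroups} in its groupoid reformulation \cref{Rq:CouplageRestriction} (with $M=G_v$ and $N=Q^{(v)}$), which yields a measure equivalence coupling between $G_v$ and $Q^{(v)}$. As $Q^{(v)}$ is conjugate, hence isomorphic, to $H_{\sigma(v)}$, we conclude that $G_v$ is measure equivalent to $H_{\sigma(v)}$.

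It remains to check that $\sigma$ is a graph homomorphism, which I expect to be the main point. Let $v\neq v'$ be adjacent in $\Gamma_G$, so that $G_v$ and $G_{v'}$ are distinct and commute. By \cref{lemma:adjacency} applied to $\rho_G$, the groupoid $\rho_G^{-1}(G_{v'})$ normalizes $\rho_G^{-1}(G_v)$; restricting to $U$ and substituting $\rho_G^{-1}(G_v)_{|U}=\rho_H^{-1}(Q^{(v)})_{|U}$ and the analogous identity for $v'$, this transfers to the statement that $\rho_H^{-1}(Q^{(v')})_{|U}$ normalizes $\rho_H^{-1}(Q^{(v)})_{|U}$. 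These two restrictions are distinct, for otherwise $\rho_G^{-1}(G_v)_{|U}=\rho_G^{-1}(G_{v'})_{|U}$ would force $G_v=G_{v'}$ by \cref{lemma:inclusion-parabolics}. Hence the third assertion of \cref{lemma:adjacency}, now read for $\rho_H$, shows that $Q^{(v)}$ and $Q^{(v')}$ are distinct and commute, and \cref{lemma:adjacency-extension} then produces $g\in H$ and adjacent vertices $a,b\in V\Gamma_H$ with $Q^{(v)}=gH_ag^{-1}$ and $Q^{(v')}=gH_bg^{-1}$. By uniqueness of the type of a parabolic subgroup this gives $\sigma(v)=a$ and $\sigma(v')=b$, which are adjacent in $\Gamma_H$, so $\sigma$ sends edges to edges. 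The only delicate points I anticipate are verifying that normalization of subgroupoids is preserved under restriction to $U$, and confirming the distinctness of the two restricted subgroupoids; both are controlled by the action-like properties recorded in \cref{lemma:inclusion-parabolics} and \cref{lemma:adjacency}.
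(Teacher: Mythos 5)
Your proposal is correct and follows essentially the same route as the paper's own proof: the same groupoid over $X_G\cap X_H$ with its two action-like cocycles, the same common refinement of the finitely many partitions supplied by the Vertex Recognition Property to get a single positive-measure piece $U$ on which $\rho_G^{-1}(G_v)_{|U}=\rho_H^{-1}(Q^{(v)})_{|U}$ for all $v$ simultaneously, the same appeal to \cref{lemma:me-subgroups} via \cref{Rq:CouplageRestriction} for the measure equivalence of vertex groups, and the same combination of \cref{lemma:adjacency} and \cref{lemma:adjacency-extension} for the graph homomorphism property. If anything, your handling of the adjacency step (making explicit the distinctness of the restricted subgroupoids via \cref{lemma:inclusion-parabolics}) is slightly more detailed than the paper's, which passes over this point quickly.
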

The proof uses the Vertex Recognition Property to define the wanted map $\sigma$, and \cref{lemma:adjacency} to show that the latter is a graph homomorphism.

\begin{proof}
Let $\Omega$ be a measure equivalence coupling between $G$ and $H$. As in \cref{Ex:GroupoidsAssociatedToCouplings,Ex:CocycleGrAssociatedToAME}, let $X\subseteq \Omega$ be a Borel subset of finite positive measure, and $\calg$ be a measured groupoid over $X$, coming with action-like cocycles $\rho_G:\calg\to G$ and $\rho_H:\calg\to H$. 
\begin{itemize}
        \item Let us first show that there exists a Borel subset $U\subseteq X$ of positive measure such that for all $v\in V\Gamma_G$ there exist $h_v\in H$ and $\sigma(v)\in V\Gamma_H$ such that
        \begin{equation*}
            {\rho^{-1}_G(G_v)}_{|U}=\rho^{-1}_H\left(h_vH_{\sigma(v)}h^{-1}_v\right)_{|U}.
        \end{equation*}
        Since $\classC$ has the Vertex Recognition Property, for all $v\in V\Gamma_G$ there exist a countable Borel partition $X^*(v)=\dunion_{i\in I(v)}X_i(v)$ of a conull Borel subset $X^*(v)\subseteq X$ (depending a priori on $v$), and for every $i\in I(v)$, a parabolic subgroup $P_i(v)\subseteq H$ that is conjugate to a vertex group, such that 
        \begin{equation}\label{eq:VRPpourMorphisme}
        {\rho^{-1}_G(G_v)}_{|X_i(v)}=\rho^{-1}_H\left(P_i(v)\right)_{|X_i(v)}.
        \end{equation}
        Since $V\Gamma_G$ is finite, up to taking the common refinement of all the partitions $(X_i(v))_i$ and the intersection of all the subsets $X^*(v)$, we can obtain 
        a countable Borel partition $X^*=\dunion_{i\in I}X_i$ of a conull Borel subset $X^*$ such that \cref{eq:VRPpourMorphisme} is verified and that does not depend on $v$. Remark that since $X^*$ has full measure, there thus exists $i\in I$ such that $\mu(X_i)>0$. For such an $i\in I$, for all $v\in V\Gamma_G$ there exist (by \cref{eq:VRPpourMorphisme}) an element $h_v\in H$ and $\sigma(v)\in V\Gamma_H$ such that 
        \begin{equation}\label{eq:chaispas}
        {\rho_G^{-1}(G_v)}_{|X_i}=\rho_H^{-1}\left(h_vH_{\sigma(v)}h^{-1}_v\right)_{|X_i}.
        \end{equation}
        Taking $U=X_i$ thus leads to the wanted assertion. 
        \item The above \cref{eq:VRPpourMorphisme} moreover implies that $G_v$ and $h_vH_{\sigma(v)}h^{-1}_v$ are measure equivalent for all $v\in V\Gamma_G$ in view of Lemma~\ref{lemma:me-subgroups}, \cpageref{lemma:me-subgroups} and Remark~\ref{Rq:CouplageRestriction}, \cpageref{Rq:CouplageRestriction}. We can thus define a measure equivalence between $G_v$ and $H_{\sigma(v)}$ for all $v\in V\Gamma_G$.
        \item Let us now prove that $\sigma$ is a graph homomorphism.\\
        If $v,w\in V\Gamma_G$ are adjacent in $\Gamma_G$, then $G_v$ and $G_w$ are distinct and commute and therefore by \cref{lemma:adjacency,eq:chaispas}, $h_vH_{\sigma(v)}h^{-1}_v$ and $h_wH_{\sigma(w)}h^{-1}_w$ are also distinct and commute. In particular \cref{lemma:adjacency-extension}, \cpageref{lemma:adjacency-extension} implies that $\sigma(v)$ and $\sigma(w)$ are adjacent in $\Gamma_G$. Hence $\sigma$ is a graph homomorphism.\qedhere
    \end{itemize}
\end{proof}

\begin{Rq}\label{Rq:56}
The class of groups covered by Corollary~\ref{Cor:VRPandGraphMorphism} is larger than those covered by our main theorem (Theorem~\ref{theo:classificationversionintro}): indeed, it applies to all (direct products of) graph products of countably infinite groups over transvection-free strongly reduced\footnote{see Definition~\ref{de:strongly-reduced}, \cpageref{de:strongly-reduced}.} graphs (not reduced to a vertex), without the necessity to require that the graphs have no partial conjugation (see \cref{prop:vertex-recognition-join}, \cpageref{prop:vertex-recognition-join}). 

However, the conclusion is weaker in two ways. First, the homomorphism $\sigma$ need not be an isomorphism. Second, we only get a measure equivalence between the vertex groups, not an orbit equivalence.

In fact, the conclusion of Corollary~\ref{Cor:VRPandGraphMorphism} cannot be strengthened (even if we had assumed $G$ and $H$ to be orbit equivalent to start with). Here is an example, illustrated in Figure~\ref{fig:Rk56}. The groups $G_1$ and $G_2$ are two index $2$ subgroups of $G$. Indeed $G_1$ is isomorphic to the kernel of the homomorphism $G\to\mathbb{Z}/2\mathbb{Z}$, sending the first generator $a$ of $G_v\simeq F_2$ to $1$ and the second generator $b$ to $0$, and sending all other vertex groups to $\{0\}$. And $G_2$ is isomorphic to the kernel of the homomorphism $G\to\mathbb{Z}/2\mathbb{Z}$ sending the generator of $G_w\simeq\mathbb{Z}$ to $1$, and sending all other vertex groups to $\{0\}$. That $G_1$ and $G_2$ are indeed as described can be checked, for instance, by using the Reidemeister--Schreier method to obtain a presentation of a finite-index subgroup of $G$, see e.g.\ \cite[Proposition~II.4.1]{LS}. 

As a consequence of the second point of \cref{lemma:commensurability}, \cpageref{lemma:commensurability}, being subgroups of the same finite index in $G$, the groups $G_1$ and $G_2$ are orbit equivalent. And they satisfy the conclusion of Corollary~\ref{Cor:VRPandGraphMorphism}, however the homomorphism $\sigma$ that appear cannot be taken to be an isomorphism, and the vertex groups $F_2$ and $F_3$ are measure equivalent, but not orbit equivalent \cite{Gab-cost}.
\end{Rq}

\begin{figure}
    \centering
    \includegraphics[width=\textwidth]{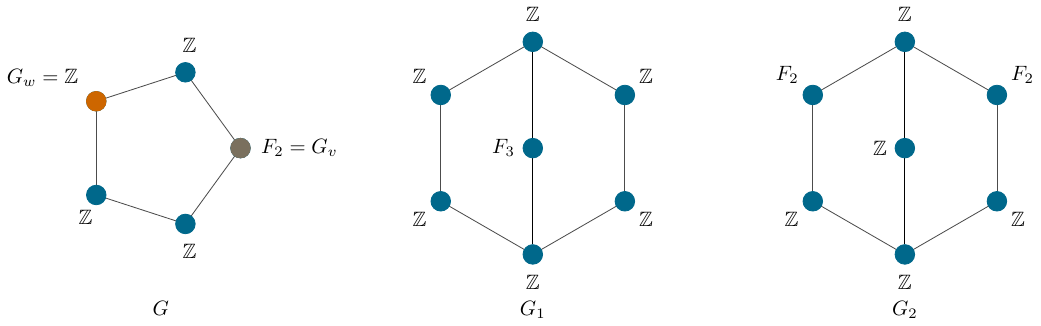}
    \caption{Groups from \cref{Rq:56}}
    \label{fig:Rk56}
\end{figure}
In Example~\ref{Ex:PentaDeplie}, we will further illustrate Corollary~\ref{Cor:VRPandGraphMorphism} by obtaining new examples of right-angled Artin groups that can be distinguished in measure equivalence.

\subsection{Proof of the measure equivalence classification theorem} \label{subsec:ProofClassificationTh}
We now turn to the proof of \cref{theo:classificationversionintro} and start with the following statement.

\begin{Prop}\label{prop:me-oe}
    Let $G$ and $H$ be graph products of countably infinite vertex groups over finite simple graphs with no transvection and no partial conjugation, not reduced to one vertex.

    Then every measure equivalence coupling $\Omega$ between $G$ and $H$ is an orbit equivalence coupling between $G$ and $H$, i.e.\ $G$ and $H$ have a common fundamental domain on $\Omega$.
\end{Prop}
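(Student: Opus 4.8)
The plan is to transfer the question to the right-angled buildings $\bD_G$ and $\bD_H$, where each group acts simply transitively on the rank $0$ vertices, and then to pull back a fundamental domain. Since $\Gamma_G,\Gamma_H$ are transvection-free, have no partial conjugation, and are not reduced to a vertex, Proposition~\ref{prop:vertex-recognition} applies: the class containing $G$ and $H$ has the Vertex Recognition Property. Hence Corollary~\ref{cor:factor-through-building} furnishes a $(G\times H)$-equivariant Borel map $\Theta\colon\Omega\to\Isom(\bD_G,\bD_H)$; in particular $\Isom(\bD_G,\bD_H)\neq\emptyset$, so the two buildings are isomorphic. Recall that the rank $0$ vertices of $\bD_G$ are exactly the singletons $\{g\}$, $g\in G$, on which $G$ acts by left translation, simply transitively, and likewise for $H$.

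First I would isolate the set
\[
Y:=\bigl\{f\in\Isom(\bD_G,\bD_H)\ :\ f(\{e_G\})=\{e_H\}\bigr\},
\]
where $\{e_G\},\{e_H\}$ are viewed as rank $0$ vertices. As evaluation at the fixed vertex $\{e_G\}$ is continuous for the topology of pointwise convergence, $Y$ is closed, hence Borel. By Corollary~\ref{cor:rank} — which applies to isomorphisms $\bD_G\to\bD_H$ via the comparison with right-angled Artin groups (Lemma~\ref{lemma:building}) together with the graph isomorphism $\Gamma_G\cong\Gamma_H$ coming from Lemma~\ref{Lmm:CouplingAndGraphIsomorphism} — every $f\in\Isom(\bD_G,\bD_H)$ preserves ranks, so it restricts to a bijection between the rank $0$ vertices of $\bD_G$ (identified with $G$) and those of $\bD_H$ (identified with $H$).

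Next I would verify that $Y$ is simultaneously a fundamental domain for the $G$-action and for the $H$-action on $\Isom(\bD_G,\bD_H)$. Consider the basepoint evaluation $\mathrm{ev}(f):=f(\{e_G\})$, an element of $H$. The $H$-action on the codomain moves $\mathrm{ev}(f)$ along the simply transitive $H$-action on rank $0$ vertices, so exactly one element of the $H$-orbit of $f$ satisfies $\mathrm{ev}=\{e_H\}$, i.e.\ lies in $Y$. Symmetrically, the $G$-action on the domain replaces $\mathrm{ev}(f)$ by the $f$-image of a rank $0$ vertex running simply transitively over $\bD_G$; since $f$ is a bijection on rank $0$ vertices, this image runs over all of $H$, so again exactly one element of the $G$-orbit of $f$ lies in $Y$. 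Both actions are moreover free, because an element acting trivially on the building fixes every rank $0$ vertex and is therefore trivial.

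Finally I would pull back: set $X:=\Theta^{-1}(Y)$, a Borel subset of $\Omega$. Combining the $(G\times H)$-equivariance of $\Theta$, the freeness of the $G$- and $H$-actions on $\Omega$, and the fact that $Y$ is a free fundamental domain for each action on the target, a routine orbit-counting argument shows that $X$ meets almost every $G$-orbit and almost every $H$-orbit of $\Omega$ in exactly one point. Thus $X$ is a common Borel fundamental domain for $G$ and $H$; it has finite measure because any two Borel fundamental domains of a measure-preserving action share the same measure, here equal to that of the finite-measure fundamental domain supplied by the coupling. Hence $\Omega$ is an orbit equivalence coupling. The only genuine subtlety is the simultaneous fundamental-domain property of $Y$, which rests on rank preservation for building isomorphisms together with the simple transitivity of each group on its rank $0$ vertices; all the hard work is upstream, in the Vertex Recognition Property underlying Corollary~\ref{cor:factor-through-building}.
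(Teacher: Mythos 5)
Your proposal is correct and follows essentially the same route as the paper: factor the coupling through $\Isom(\bD_G,\bD_H)$ via the Vertex Recognition Property and Corollary~\ref{cor:factor-through-building}, observe using rank preservation (Corollary~\ref{cor:rank}) and simple transitivity on rank $0$ vertices that $Y=\{f : f(e_G)=e_H\}$ is a fundamental domain for both actions, and pull it back through $\Theta$. Your added details — the reduction of rank preservation for isomorphisms $\bD_G\to\bD_H$ to the automorphism case via Lemma~\ref{lemma:building}, the freeness check, and the finite-measure remark — are consistent with, and if anything slightly more careful than, the paper's argument.
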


\begin{proof}
    First note that  by \cref{prop:vertex-recognition}, the groups $G$ and $H$ belong to a same class $\classC$ that verifies the Vertex Recognition Property. 

    By Corollary~\ref{cor:factor-through-building}, there exists a $(G\times H)$-equivariant Borel map $\Omega \rightarrow \Isom(\bD_G,\bD_H)$. 
    
     Recall from Corollary~\ref{cor:rank} that every isomorphism from $\bD_G$ to $\bD_H$ sends rank $0$ vertices to rank $0$ vertices. Additionally the respective actions of $G$ and $H$ on rank $0$ vertices of $\bD_G$ and $\bD_H$ are transitive. Therefore, identifying the neutral elements $e_G$ and $e_H$ with the associated rank $0$ vertices of $\bD_G$ and $\bD_H$, we deduce that \[Y:=\left\{f\in \Isom(\bD_G,\bD_H)\ : \ f(e_{G})=e_{H} \right\}\] is a fundamental domain for both the $G$- and the $H$-actions on $\Isom(\bD_G,\bD_H)$. Hence, by equivariance of $\Theta$, the set $X:=\Theta^{-1}(Y)$ is a common Borel fundamental domain for the actions of $G$ and $H$ on $\Omega$. This shows that $\Omega$ is an orbit equivalence coupling.
\end{proof}

We now prove the following statement which shows the implication $1 \Rightarrow 3$ of our main theorem. 

\begin{Prop}\label{prop:coupling-restricted}
Let $G,H$ be graph products of countably infinite vertex groups over finite simple graphs $\Gamma_G,\Gamma_H$ with no transvection and no partial conjugation, not reduced to one vertex. Let $(\Omega,m)$ be a measure equivalence coupling between $G$ and $H$.

Then there exists a graph isomorphism $\sigma:\Gamma_G\to\Gamma_H$ such that for every $v\in V\Gamma_G$, there exists a $(G_v\times H_{\sigma(v)})$-invariant Borel subset $\Omega_v\subseteq\Omega$, which is an orbit equivalence coupling between $G_v$ and $H_{\sigma(v)}$.
\end{Prop}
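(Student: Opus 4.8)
The plan is to feed the measure equivalence coupling into the $(G\times H)$-equivariant map to the building provided by the earlier results, and then to extract both the graph isomorphism $\sigma$ and the subcouplings $\Omega_v$ from the combinatorics of rank-$1$ vertices. First I would apply \cref{cor:factor-through-building} to fix a $(G\times H)$-equivariant Borel map $\Theta:\Omega\to\Isom(\bD_G,\bD_H)$. Exactly as in the proof of \cref{prop:me-oe}, writing $e_G,e_H$ for the rank-$0$ vertices of $\bD_G,\bD_H$ attached to the neutral elements, the set $Y:=\{f\in\Isom(\bD_G,\bD_H): f(e_G)=e_H\}$ is a common fundamental domain for the $G$- and $H$-actions on $\Isom(\bD_G,\bD_H)$, where $G$ acts by precomposition and $H$ by postcomposition. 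Hence $X:=\Theta^{-1}(Y)$ is a common fundamental domain for $G,H$ on $\Omega$, of finite positive measure.

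The key combinatorial observation is that every $f\in Y$ induces a graph isomorphism $\beta_f:\Gamma_G\to\Gamma_H$. Indeed, by \cref{cor:rank} the map $f$ preserves ranks; the rank-$1$ vertices adjacent to $e_G$ are precisely the cosets $G_v$ for $v\in V\Gamma_G$, and likewise the rank-$1$ neighbors of $e_H$ are the $H_w$. Since $f(e_G)=e_H$, the isomorphism $f$ restricts to a bijection between these two sets, defining $\beta_f:V\Gamma_G\to V\Gamma_H$ by $f(G_v)=H_{\beta_f(v)}$. Because $f$ is a rank-preserving cubical isomorphism, two vertices $v,v'$ span a $2$-cube together with $e_G$ (equivalently, are adjacent in $\Gamma_G$) if and only if $\beta_f(v),\beta_f(v')$ span a $2$-cube with $e_H$, so $\beta_f\in\Isom(\Gamma_G,\Gamma_H)$. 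The assignment $f\mapsto\beta_f$ is locally constant, hence Borel, and takes values in the finite set $\Isom(\Gamma_G,\Gamma_H)$; partitioning $Y=\bigsqcup_{\sigma}Y_\sigma$ according to the value of $\beta_f$ and using $m(X)>0$, I fix some $\sigma\in\Isom(\Gamma_G,\Gamma_H)$ with $m(\Theta^{-1}(Y_\sigma))>0$. This $\sigma$ is the desired graph isomorphism.

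For this $\sigma$ and each $v\in V\Gamma_G$ I would set $Y_v:=\{f\in\Isom(\bD_G,\bD_H): f(G_v)=H_{\sigma(v)}\}$ and $\Omega_v:=\Theta^{-1}(Y_v)$. A direct check shows $Y_v$ is invariant under $G_v\times H_{\sigma(v)}$: precomposing by $g\in G_v$ fixes the coset $G_v$, and postcomposing by $h\in H_{\sigma(v)}$ fixes the coset $H_{\sigma(v)}$. By equivariance of $\Theta$, the set $\Omega_v$ is therefore $(G_v\times H_{\sigma(v)})$-invariant, and the $G_v$- and $H_{\sigma(v)}$-actions on it are free and measure-preserving since those of $G,H$ on $\Omega$ are. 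Moreover $\Theta^{-1}(Y_\sigma)\subseteq\Omega_v$, so $\Omega_v$ has positive measure.

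It remains to produce a common finite-measure fundamental domain. For $f\in Y_v$, since $f$ preserves adjacency between rank-$0$ and rank-$1$ vertices, it restricts to a bijection from the rank-$0$ neighbors of $G_v$, namely $G_v\subseteq G$, onto the rank-$0$ neighbors of $H_{\sigma(v)}$, namely $H_{\sigma(v)}\subseteq H$. Setting $Z_v:=\{f\in Y_v: f(e_G)=e_H\}$, this bijectivity shows that $Z_v$ meets each $G_v$-orbit and each $H_{\sigma(v)}$-orbit in $Y_v$ exactly once, so $Z_v$ is a common fundamental domain on $Y_v$; by equivariance and freeness, $\Theta^{-1}(Z_v)$ is then a common fundamental domain for $G_v,H_{\sigma(v)}$ on $\Omega_v$. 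Since $Z_v\subseteq Y$, it lies in $X$ and hence has finite measure, and it has positive measure because $\Omega_v$ does. This exhibits $\Omega_v$ as an orbit equivalence coupling between $G_v$ and $H_{\sigma(v)}$. The step I expect to require the most care is the verification that $\beta_f$ is genuinely a graph isomorphism and the precise identification of rank-$0$ and rank-$1$ neighborhoods: both rely on the combinatorics of the Davis building and on rank preservation, and I would want to make sure the cube structure really detects adjacency in $\Gamma$ and that the various bijections are honestly defined everywhere on the relevant conull sets.
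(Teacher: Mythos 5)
Your proposal is correct and follows essentially the same route as the paper's proof: factor the coupling through $\Isom(\bD_G,\bD_H)$ via \cref{cor:factor-through-building}, partition the common fundamental domain $Y=\{f: f(e_G)=e_H\}$ according to the graph isomorphism induced on rank-$1$ neighbors of $e_G$, choose $\sigma$ so that $\Theta^{-1}(Y_\sigma)$ has positive measure, set $\Omega_v=\Theta^{-1}(\{f: f(G_v)=H_{\sigma(v)}\})$, and check that $\Theta^{-1}(Y^v_\sigma)$ is a common fundamental domain using freeness of the actions on rank-$0$ vertices. The only (harmless) cosmetic difference is that you derive the existence of $\sigma$ directly from the partition of $Y$ rather than first quoting \cref{prop:me-oe} and Huang's extension-graph result, and you verify the fundamental-domain property by a single bijection argument instead of the paper's disjointness-plus-covering phrasing.
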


\begin{Rq}\label{rk:requirement}
    We will see that the coupling we construct satisfies $m(g\Omega_v\cap\Omega_v)=0$ for every $g\in (G\setminus G_v)\cup (H\setminus H_{\sigma(v)})$. In particular, every Borel fundamental domain for the $G_v$-action (resp.\ $H_{\sigma(v)}$-action) on $\Omega_v$ is contained in a Borel fundamental domain for the $G$-action (resp.\ $H$-action) on $\Omega$.
\end{Rq}

\begin{proof}
Let $\Omega$ be a measure equivalence coupling between $G$ and $H$. Proposition~\ref{prop:me-oe} implies that $\Omega$ is in fact an orbit equivalence coupling. As $G$ and $H$ are measure equivalent, the extension graphs $\Gamma_G^e$ and $\Gamma_H^e$ are isomorphic (Lemma~\ref{Lmm:CouplingAndGraphIsomorphism}). As the graphs $\Gamma_G$ and $\Gamma_H$ are transvection-free and have no partial conjugations, it follows from \cite[Corollary~4.16 and Lemma~4.17]{Hua} that $\Gamma_G$ and $\Gamma_H$ are isomorphic --~so $\Isom(\Gamma_G,\Gamma_H)$ is non-empty.
    
As in the previous proof, we identify the neutral elements $e_G,e_H$ with rank $0$ vertices of $\bD_G,\bD_H$, and recall that the set 
\[Y:=\left\{f\in \Isom(\bD_G,\bD_H)\, : \, f(e_G)=e_H \right\}\] is a fundamental domain for both the $G$- and the $H$-action on~$\Isom(\bD_G,\bD_H)$. We also identify every vertex $v\in V\Gamma_G$ (resp.\ $w\in V\Gamma_H$) with the rank $1$ vertex of $\bD_G$ (resp.\ $\bD_H$) corresponding to the coset $G_v$ (resp.\ $H_w$). Now, for all $\sigma \in \Isom(\Gamma_G,\Gamma_H)$ and all $v\in V\Gamma_G$ let 
        \begin{align*}
            Y^v_\sigma&:=\left\{f\in \Isom(\bD_G,\bD_H)\, : \, f(v)=\sigma(v), \, f(e_{G})=e_{H} \right\},\\
             \Isom_{v\mapsto \sigma(v)}(\bD_G,\bD_H)&:=\left\{f\in \Isom(\bD_G,\bD_H)\, : \, f(v)=\sigma(v) \right\},\\
             \Omega^v_{\sigma}&:=\Theta^{-1}\left(\Isom_{v\mapsto \sigma(v)}(\bD_G,\bD_H) \right),\\
            X^v_{\sigma}&:=\Theta^{-1}(Y^v_\sigma).
        \end{align*}
        We claim that there exists $\sigma_0\in \Isom(\Gamma_G,\Gamma_H)$ such that for all $v\in V\Gamma_G$, one has $m(X^v_{\sigma_0})>0$. Indeed, for $\sigma \in \Isom(\Gamma_G,\Gamma_H)$, let \[Y_\sigma:=\left\{f\in \Isom(\bD_G,\bD_H)\, : \, f(v)=\sigma(v), \ \forall v\in V\Gamma_G \right\}.\] 
        Observe that $Y=\sqcup_{\sigma} Y_\sigma$. Indeed, we have $\sqcup_{\sigma} Y_\sigma\subseteq Y$ because $e_G$ (resp.\ $e_H$) is the unique rank $0$ vertex adjacent to all rank $1$ vertices corresponding to cosets of the form $G_v$ (resp.\ $H_w$). For the reverse inclusion, notice that any isomorphism $f\in\Isom(\bD_G,\bD_H)$ induces a bijection $\sigma$ from the set of rank $1$ vertices adjacent to $e_G$, to the set of rank $1$ vertices adjacent to $f(e_G)$. Now if $f$ belongs to $Y$, namely $f(e_G)=e_H$, we view $\sigma$ as a bijection from $V\Gamma_G$ to $V\Gamma_H$. Observe that this bijection preserves adjacency and non-adjacency: this follows from the fact that $f$ preserves ranks of vertices (Corollary~\ref{cor:rank}), after observing that two rank one vertices $v_1,v_2$ correspond to adjacent vertices of $\Gamma_G$ if and only if they are adjacent to a common rank $2$ vertex in $\bD_G$. This proves that $Y=\sqcup_{\sigma} Y_\sigma$. 
        
        Since $Y$ is a fundamental domain for the $G$- and $H$-actions on $\Isom(\bD_G,\bD_H)$ and $\Theta$ is equivariant, the Borel set $\Theta^{-1}(Y)$ is a fundamental domain for the $G$- and $H$-actions on $\Omega$. Therefore, there exists $\sigma_0\in \Isom(\Gamma_G,\Gamma_H)$ such that $m\left(\Theta^{-1}(Y_{\sigma_0})\right)>0$. Noting that for all $v\in V\Gamma_G$ we have $Y_{\sigma_0} \subset Y^v_{\sigma_0}$, we obtain that $m\left(X^v_{\sigma_0}\right)>0$ for all $v\in V\Gamma_G$, which proves our claim. 

       Notice that for every $v\in V\Gamma_G$, the set $\Omega^{v}_{\sigma_0}$ is invariant under $G_v\times H_{\sigma_0(v)}$. Let us now prove that $X^v_{\sigma_0}$ is a common fundamental domain for the actions of $G_v$ and $H_{\sigma_0(v)}$ on $\Omega_v:=\Omega^{v}_{\sigma_0}$. By equivariance of $\Theta$, it is enough to verify that $Y^v_{\sigma_0}$ is a fundamental domain for both actions on $\Isom_{v\mapsto \sigma_0(v)}(\bD_G,\bD_H)$. First, since the action of $H$ on the set of rank $0$ vertices of $\bD_H$ is free, the $H_{\sigma_0(v)}$-translates of $Y_{\sigma_0}^v$ are pairwise disjoint (the translate of $Y^v_{\sigma_0}$ by an element $h$ consists of isomorphisms $f\in \Isom(\bD_G,\bD_H)$ sending $e_G$ to $h$). And likewise for their $G_v$-translates.
        Second, pick $f\in \Isom_{v\mapsto \sigma_0(v)}(\bD_G,\bD_H)$, then $f(v)=\sigma_0(v)$. Since $e_G$ and $v$ are adjacent (as vertices of $\bD_G$), so are $f(e_G)$ and $f(v)=\sigma_0(v)$. Therefore $f(e_G)$ being of rank $0$, it corresponds to an element --~denoted by $h$~-- of $H_{\sigma_0(v)}$ (see \cref{fig:DeligneAndOE}, \cpageref{fig:DeligneAndOE}). Hence $h^{-1}\cdot f$ belongs to $Y^v_{\sigma_0}$ and $Y^v_{\sigma_0}$ is thus a fundamental domain for the action of $H_{\sigma_{0}(v)}$. 
        Similarly, for the $G_v$-action, since $e_H$ and $\sigma_0(v)$ are adjacent so are $f^{-1}(e_H)$ and $f^{-1}(\sigma_0(v))=v$. Therefore $f^{-1}(e_H)$ is an element of $G_v$ (see again \cref{fig:DeligneAndOE}) and as above, we get that $Y^v_{\sigma_0}$ is a fundamental domain for the $G_v$-action.
        
        Thus, for all $v\in V\Gamma_G$, the subspace $\Omega_v$ is an orbit equivalence coupling between $G_v$ and $H_{\sigma_0(v)}$ (and by construction it also satisfies the requirement from Remark~\ref{rk:requirement}).
\end{proof}

\begin{figure}
    \centering
    \includegraphics[width=\textwidth]{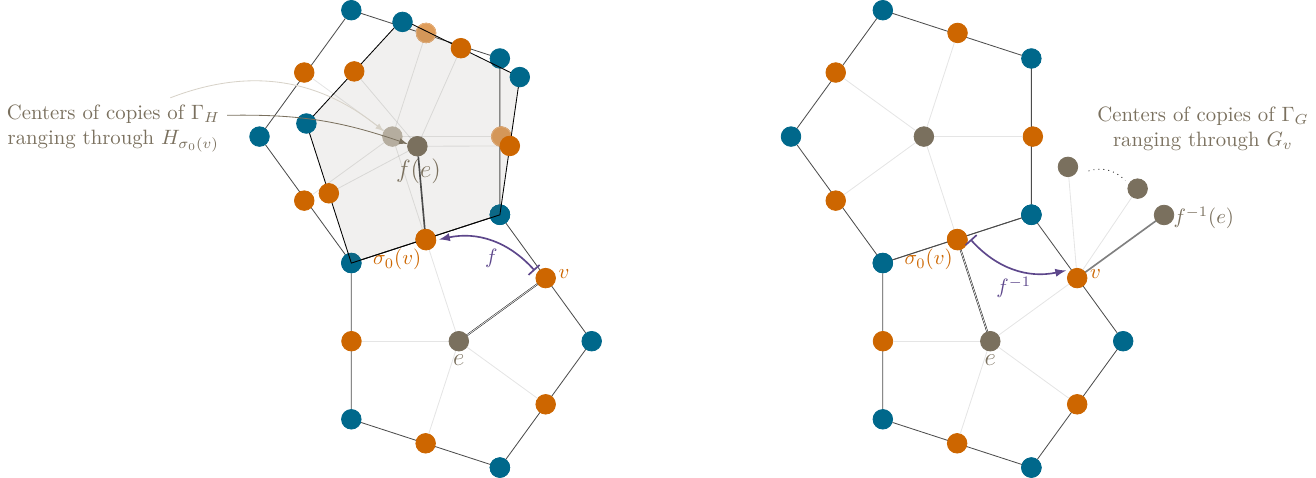}
    \caption{Illustration of the end of the proof of \cref{prop:coupling-restricted} for $\Gamma_G$ and $\Gamma_H$ pentagons}
    \label{fig:DeligneAndOE}
\end{figure}

We can finally complete the proof of our main measure equivalence classification theorem.

\begin{proof}[Proof of Theorem~\ref{theo:classificationversionintro}]
The implication $2 \Rightarrow 1$ is immediate and $3 \Rightarrow 2$ comes from \cite[Proposition~4.2]{HH21}. 
Finally $1\Rightarrow 3$ is the contents of Proposition~\ref{prop:coupling-restricted}.
\end{proof}

\subsection{Applications: Commensurability and von Neumann algebras}\label{Sec:ApplicationsMainTheorem}

We now prove \cref{theo:classification-commensurability} regarding the commensurability classification. We also give applications to fundamental groups and von Neumann algebras.

\subsubsection{Application to commensurability}

By restricting to discrete measure equivalence couplings (i.e.\ $\Omega$ is countable), we complete our proof of the commensurability classification theorem that we recall below.

\begin{Th2} 
   Let $\Gamma_G,\Gamma_H$ be two finite simple graphs, not reduced to one vertex, with no transvection and no partial conjugation. Let $G,H$ be graph products of countably infinite groups over $\Gamma_G,\Gamma_H$, respectively. Then the following assertions are equivalent.
   \begin{enumerate}
       \item The groups $G$ and $H$ are commensurable.
       \item The groups $G$ and $H$ are strongly commensurable.
       \item There exists a graph isomorphism $\sigma:\Gamma_G\to\Gamma_H$ such that for every $v\in V\Gamma_G$, the groups $G_v$ and $H_{\sigma(v)}$ are strongly commensurable.
   \end{enumerate}
\end{Th2}

\begin{proof}
    The implication $2\Rightarrow 1$ is obvious, and the implication $3\Rightarrow 2$ was proved by Januszkiewicz--\'Swi\c{a}tkowski \cite[Theorem~1]{JS}. For $1\Rightarrow 3$, since $G$ and $H$ are commensurable, there exists a discrete measure equivalence coupling $\Omega$ between $G$ and $H$ (Lemma~\ref{lemma:commensurability}). Proposition~\ref{prop:coupling-restricted} thus yields a graph isomorphism $\sigma:\Gamma_G\to\Gamma_H$ such that for every $v\in V\Gamma_G$, there exists a $(G_v\times H_{\sigma(v)})$-invariant subset $\Omega_v\subseteq\Omega$ which is a (discrete) orbit equivalence coupling between $G_v$ and $H_{\sigma(v)}$, i.e.\ $G_v$ and $H_{\sigma(v)}$ have a common (finite) fundamental domain. This exactly says that $G_v$ and $H_{\sigma(v)}$ are strongly commensurable in view of Lemma~\ref{lemma:commensurability}.
\end{proof}

\begin{Rq}
    Similarly as in \cref{Cor:VRPandGraphMorphism} we can also show that if $G$ and $H$ are commensurable and belong to a same class of graph products having the Vertex Recognition Property, then there exists a graph homomorphism $\sigma:\Gamma_G\rightarrow \Gamma_H$ such that $G_v$ is commensurable to $H_{\sigma(v)}$ for all $v\in V\Gamma_G$.
\end{Rq}

\begin{Rq}
By restricting to countable couplings $\Omega$ on which the actions of $G$ and $H$ are free and transitive, we can derive that the same conclusion as in Theorem~\ref{theo:classification-commensurability} holds after replacing “commensurable” and “strongly commensurable” by “isomorphic”. However, for isomorphisms, the statement can be made more general, in particular vertex groups do not need to be assumed countably infinite, and we can consider more general classes of graphs $\Gamma_G,\Gamma_H$. For this reason, we postpone the proof of the analogous isomorphic classification theorem to the appendix of this work (Corollary~\ref{cor:isomorphism}).
\end{Rq}

\begin{Rq} Another algebraic consequence of Proposition~\ref{prop:me-oe} is the following. Let $G$ be a graph product with countably infinite vertex groups over a finite simple graph with no transvection and no partial conjugation (and not reduced to one vertex). Then any two isomorphic finite-index subgroups of $G$ have the same index. Indeed, if $G_1$ and $G_2$ are two isomorphic finite-index subgroups, then for every $i\in\{1,2\}$, the coupling between $G$ and $G_i$ by left-right multiplication has coupling index $[G:G_i]$. By composition of couplings, one can construct a self measure equivalence coupling of $G$ whose coupling index equals $[G:G_2]/[G:G_1]$, see e.g.\ Item~c on page 300 of \cite{Fur-survey}. Proposition~\ref{prop:me-oe} therefore implies that $[G:G_1]=[G:G_2]$.

This phenomenon (any two isomorphic finite index subgroups have the same index) was called \emph{finite index rigidity} by Lazarovich, who established it for all non-elementary hyperbolic groups \cite{Laz}. Finite index rigidity also occurs for all groups having at least one positive $\ell^2$-Betti number, since these are multiplicative with the index. In the case of graph products, $\ell^2$-Betti numbers were computed by Davis--Okun \cite{DO}, so in many cases the statement was known (though our work does cover new cases).  
\end{Rq}

\subsubsection{Fundamental groups and von Neumann algebras}\label{sec:von-neumann}

Recall that to every free, ergodic, probability measure-preserving action $G\actson X$ of a countable group $G$ on a standard probability space $X$, one associates a von Neumann algebra $L(G\actson X)$, in fact a $\mathrm{II}_1$ factor, via Murray and von Neumann's group measure space construction \cite{MvN}. 

The \emph{fundamental group}\index{Fundamental!Fundamental group} of an ergodic measured equivalence relation $\calr$ over a standard probability space $X$ (resp.\ a $\mathrm{II}_1$ factor $M$) is the subgroup of $\mathbb{R}_+^\ast$ consisting of all $t>0$ such that the amplification $\calr^t$ (resp.\ $M^t$) is isomorphic to $\calr$ (resp.\ $M$). In particular $t<1$ belongs to the fundamental group of $\calr$ if for any Borel subset $U\subseteq X$ of measure~$t$, the restriction $\calr_{|U}$ is isomorphic to $\calr$ (up to rescaling the measure). And if $L(\calr)$ is the $\mathrm{II}_1$ factor associated to $\calr$, then $t$ belongs to the fundamental group of $L(\calr)$ if for any $U$ as above, $L(\calr_{|U})$ is isomorphic to $L(\calr)$.

This notion was introduced by Murray and von Neumann in \cite{MvN2} as an invariant to distinguish between $\mathrm{II}_1$ factors, and they proved that the fundamental group of the hyperfinite $\mathrm{II}_1$ factor is $\mathbb{R}_+^*$, but besides this example no computations were available for an extended period of time. Connes famously proved that the fundamental group of any $\mathrm{II}_1$ factor with Property~(T) is countable \cite{Con}. It is only much later than the first example of a $\mathrm{II}_1$ factor with trivial fundamental group was exhibited by Popa \cite{Pop2}, solving a long-standing open problem of Kadison, in the framework of his deformation/rigidity theory.

Theorem~\ref{theo:classificationversionintro}, combined with a Cartan rigidity theorem established for graph products by Chifan and Kunnawalkam Elayavalli \cite[Theorem~1.3]{CKE} in the framework of Popa's deformation/rigidity theory, yields the following corollary.

\begin{Cor}\label{cor:fundamental}
    Let $G$ be a graph product of countably infinite vertex groups over a finite simple graph $\Gamma$ with no transvection and no partial conjugation. Let $G\curvearrowright X$ be a free, ergodic, measure-preserving action of $G$ by Borel automorphisms on a standard probability space $X$. Let $\calr(G\curvearrowright X)$ be the associated orbit equivalence relation, and $L(G\curvearrowright X)$ be the associated von Neumann algebra.

    Then $\calr(G\curvearrowright X)$ and $L(G\curvearrowright X)$ have trivial fundamental groups.
\end{Cor}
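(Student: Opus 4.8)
The plan is to deduce both statements from Proposition~\ref{prop:me-oe} together with the Cartan rigidity theorem of \cite{CKE}, handling the equivalence relation first since the von Neumann algebra statement will reduce to it. \textbf{Step 1 (the relation $\calr:=\calr(G\actson X)$).} First I would note that, taking $H=G$, the group $G$ satisfies the hypotheses of Proposition~\ref{prop:me-oe}, so every \emph{self} measure equivalence coupling $\Omega$ of $G$ is in fact an orbit equivalence coupling; as a common fundamental domain $Y=X_G=X_H$ forces $[G:G]_\Omega=m(Y)/m(Y)=1$, and the compression constant does not depend on the choice of fundamental domains, every self-coupling of $G$ has compression constant $1$. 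Now let $t$ lie in the fundamental group of $\calr$; replacing $t$ by $t^{-1}$ if needed, I may assume $t\le 1$, so that for a Borel subset $U\subseteq X$ of measure $t$ there is an isomorphism $\calr_{|U}\cong\calr$. Viewed as a measure-scaling map $U\to X$, this is a stable orbit equivalence from $G\actson X$ to itself with compression constant $\mu(X)/\mu(U)=t^{-1}$. By Furman's dictionary (Section~\ref{sec:me-background-1}, via \cite[Lemma~3.2 and Theorem~3.3]{Fur}), it produces a self measure equivalence coupling of $G$ with compression constant $t^{-1}$, which by the previous sentence must equal $1$; hence $t=1$ and the fundamental group of $\calr$ is trivial.

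\textbf{Step 2 (the factor $M:=L(G\actson X)$).} Here I would invoke the Cartan rigidity theorem of Chifan and Kunnawalkam Elayavalli \cite[Theorem~1.3]{CKE}, which guarantees that $M$ has a unique group-measure-space Cartan subalgebra up to unitary conjugacy. As is standard, this uniqueness forces the fundamental group of the factor to coincide with that of the underlying relation. The inclusion $\mathcal{F}(\calr)\subseteq\mathcal{F}(M)$ is automatic, since $\calr^t\cong\calr$ gives $M^t=L(\calr^t)\cong L(\calr)=M$. Conversely, any isomorphism $M^t\cong M$ carries the amplified Cartan subalgebra $L^\infty(X)^t$ to a Cartan subalgebra of $M$, which by uniqueness is unitarily conjugate to $L^\infty(X)$; this conjugation upgrades the isomorphism to an isomorphism $\calr^t\cong\calr$, whence $t\in\mathcal{F}(\calr)$. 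Combining with Step~1 yields $\mathcal{F}(M)=\mathcal{F}(\calr)=\{1\}$.

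\textbf{Main obstacle.} All the genuine content sits in Step~1, and more precisely in the already-proved Proposition~\ref{prop:me-oe}: the entire difficulty is hidden in the fact that self-couplings of $G$ are automatically orbit equivalence couplings, equivalently have compression constant $1$. Step~2 is a formal consequence of the uniqueness of the Cartan subalgebra, so the only point requiring attention there is to verify that the free ergodic probability measure-preserving action $G\actson X$ falls within the hypotheses of \cite[Theorem~1.3]{CKE}; granting that, no further argument is needed.
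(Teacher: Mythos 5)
Your proposal is correct and follows essentially the same route as the paper: Step~1 converts an isomorphism $\calr^t\cong\calr$ into a self measure equivalence coupling of $G$ via Furman's dictionary and then invokes Proposition~\ref{prop:me-oe} to force the compression constant (hence $t$) to be $1$, and Step~2 uses the Cartan rigidity theorem \cite[Theorem~1.3]{CKE} together with the Feldman--Moore/Singer correspondence to reduce $\mathcal{F}(L(G\actson X))$ to $\mathcal{F}(\calr)$, exactly as in the paper's proof.
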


\begin{proof} Let $\calr=\calr(G\actson X)$. 
We first prove that the fundamental group of $\calr$ is trivial. So let $t\le 1$ such that $\calr$ and $\calr^t$ are isomorphic. This yields a stable orbit equivalence with compression constant $t$ between two free, ergodic, measure-preserving actions of $G$ on standard probability spaces. By \cite[Theorem~3.3]{Fur}, this in turn gives a self measure equivalence coupling $\Omega$ of $G$ such that the ratio of the fundamental domains of the two actions of $G$ is equal to $t$. Proposition~\ref{prop:me-oe} implies that $t=1$ shows that the fundamental group of $\calr(G \actson X)$ is trivial.

We now prove that the fundamental group of $L(G\actson X)=L(\calr)$ is trivial. Let $t\le 1$, and assume that $L(\calr)^t=L(\calr^t)$ is isomorphic to $L(\calr)$. Let $U\subseteq X$ be a Borel subset of measure $t$, so that $\calr^t$ is isomorphic to the restriction of $\calr$ to $U$. By \cite[Theorem~1.3]{CKE}, up to unitary conjugacy, $L^\infty(X)$ is the unique Cartan subalgebra of $L(\calr)$. Therefore any isomorphism from $L(\calr)$ to $L(\calr^t)$ can be unitarily conjugated so as to send $L^\infty(X)$ to $L^\infty(U)$. It thus follows from \cite{FM} that $\calr$ and $\calr^t$ are isomorphic, and by the above $t=1$.
\end{proof}

%---------------
\section{Combinatorial and groupoid-theoretic tools}\label{sec:graph}
The proof of the Vertex Recognition Property (Proposition~\ref{prop:vertex-recognition}) occupies the next two sections. This section contains preparatory tools.  

\subsection{Combinatorial tools}

This subsection contains several combinatorial tools that will be crucial in the proof of the Vertex Recognition Property. First, in Section~\ref{sec:strongly-reduced-def}, we introduce the notion of a strongly reduced graph, and show that every finite simple graph with no transvection and no partial conjugation is either strongly reduced or splits non-trivially as a join (Lemma~\ref{lemma:strongly-reduced} below). This will allow us to decompose the proof of the Vertex Recognition Property into these two cases, that will be treated separately in Sections~\ref{sec:strongly-reduced} and~\ref{sec:reducible} below.

Second, in Section~\ref{sec:zoom-in}, we explain how to recognize conjugates of untransvectable vertex groups inside a graph product, by iteratively taking maximal direct products and passing to factors. This is the contents of Lemma~\ref{lemma:graph} below. This lemma will be a starting point and a guide in the next two sections: proving the Vertex Recognition Property will indeed require establishing a groupoid-theoretic version of Lemma~\ref{lemma:graph}. Before this, in Section~\ref{sec:products-and-factors}, we provide general definitions and facts regarding products and factors in graph products.

\subsubsection{Strongly reduced graphs}\label{sec:strongly-reduced-def}

The following combinatorial condition will be crucial in the present work.

\begin{Def}[Strongly reduced graph]\label{de:strongly-reduced}
Let $\Gamma$ be a finite simple graph. An induced subgraph $\Lambda\subseteq\Gamma$ is \emph{collapsible}\index{Collapsible (subgraph)} if for any $x,y\in V\Lambda$, one has $\lk(x)\cap(\Gamma\setminus\Lambda)=\lk(y)\cap(\Gamma\setminus\Lambda)$. We refer to \cref{fig:Collapsible}, \cpageref{fig:Collapsible} for an illustration. 

A graph $\Gamma$ is \emph{strongly reduced}\index{Strongly!Strongly reduced graph} if it does not contain any proper induced subgraph on at least two vertices which is collapsible.
\end{Def}

Note that if $\Lambda\subseteq\Gamma$ is collapsible, then every graph product over $\Gamma$ is also a graph product over the graph obtained from $\Gamma$ by collapsing $\Lambda$ to a single vertex $v_\Lambda$, and joining $v_\Lambda$ by an edge to every vertex in the common link of all vertices of $\Lambda$ (see the two graphs $\Gamma$ and $\Gamma^\prime$ of \cref{Ex:CollapsibleGraphs}, \cpageref{fig:Collapsible}). Every graph product over a finite simple graph $\Gamma$ which is not reduced to one vertex, can therefore be represented as a graph product over a strongly reduced finite simple graph $\overline{\Gamma}$ not reduced to one vertex.

\begin{Ex}\label{Ex:CollapsibleGraphs}\leavevmode
\begin{itemize}
    \item If $\Gamma$ is a join then we can collapse it to an edge (which is strongly reduced).
    \item If $\Gamma$ is disconnected, then we can collapse it to a disjoint union of two vertices (which is strongly reduced).
    \item As a more interesting example, the subgraph $\Lambda$ of $\Gamma$ represented in \cref{fig:Collapsible} is collapsible. Indeed the set $\lk(x)\cap(\Gamma\setminus\Lambda)$ --~represented by the blue vertices in the leftmost picture~-- coincide with $\lk(y)\cap(\Gamma\setminus\Lambda)$. Collapsing $\Lambda$ into $\bar{\Lambda}$ leads to the graph $\Gamma^\prime$ in the middle of \cref{fig:Collapsible}. The corresponding strongly reduced finite graph is the pentagon $\bar{\Gamma}$ represented on the right most side.
\end{itemize}
\end{Ex}

\begin{figure}[htbp]
    \centering
    \includegraphics[width=\textwidth]{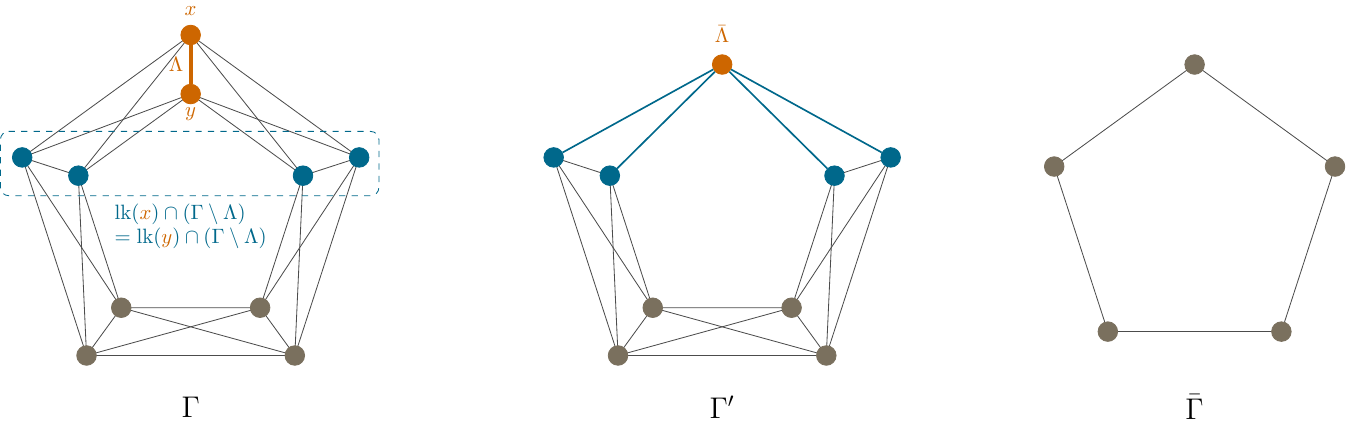}
    \caption{Collapsible and strongly reduced graphs (\cref{Ex:CollapsibleGraphs})}
    \label{fig:Collapsible}
\end{figure}

\begin{Lmm}\label{lemma:strongly-reduced}
    Let $\Gamma$ be a finite simple graph. If $\Gamma$ is transvection-free and has no partial conjugation, then either $\Gamma$ is strongly reduced or $\Gamma$ is a join.
\end{Lmm}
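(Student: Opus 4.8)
The plan is to prove the statement in contrapositive-within-hypothesis form: assuming that $\Gamma$ is transvection-free, has no partial conjugation, and is \emph{not} strongly reduced, I will show that $\Gamma$ must be a join. By the definition of strongly reduced, the failure furnishes a proper induced subgraph $\Lambda\subsetneq\Gamma$ with $|V\Lambda|\ge 2$ which is collapsible. Write $C=V\Gamma\setminus V\Lambda$, which is nonempty since $\Lambda$ is proper, and let $L=\lk(x)\cap C$ be the common outer link; by collapsibility this set does not depend on the choice of $x\in V\Lambda$. Put $D=C\setminus L$, so that every vertex of $\Lambda$ is adjacent to all of $L$ and to none of $D$. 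The whole argument then turns on analysing whether $D$ is empty.

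The first key step is to exploit the absence of partial conjugations \emph{at vertices of $\Lambda$}. Fix $x\in V\Lambda$. Since $x\notin C$ and $\lk(x)\cap C=L$, we have $V\st(x)\cap C=L$, and therefore $V\Gamma\setminus V\st(x)$ is the disjoint union of $V\Lambda\setminus V\st(x)$ and $D$. The crucial observation is that every vertex of $\Lambda$ has its $C$-neighbours equal to exactly $L$, so there is \emph{no} edge of $\Gamma$ joining a vertex of $V\Lambda\setminus V\st(x)$ to a vertex of $D$. Hence if both of these sets were nonempty, the induced subgraph on $V\Gamma\setminus V\st(x)$ would be disconnected, i.e.\ $\st(x)$ would disconnect $\Gamma$, contradicting the no-partial-conjugation hypothesis. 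I conclude that for every $x\in V\Lambda$, either $V\Lambda\setminus V\st(x)=\emptyset$ or $D=\emptyset$.

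This dichotomy splits the argument. If $D=\emptyset$, then $C=L$, so every vertex of $\Lambda$ is joined to every vertex of $C$; as both $\Lambda$ and $C$ are nonempty this exhibits $\Gamma=\Lambda\circ C$ as a join, and we are done. Otherwise $D\neq\emptyset$, and then (since $D$ does not depend on $x$) the dichotomy forces $V\Lambda\setminus V\st(x)=\emptyset$ for \emph{every} $x\in V\Lambda$, which means precisely that $\Lambda$ is a clique. The final step is to manufacture a forbidden transvection: for distinct $x,y\in V\Lambda$ we compute $\lk(x)=(V\Lambda\setminus\{x\})\cup L$ and $\st(y)=V\Lambda\cup L$, whence $\lk(x)\subseteq\st(y)$, contradicting transvection-freeness. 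Thus the case $D\neq\emptyset$ is impossible, and $\Gamma$ is a join.

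All the individual steps are short, so I do not expect a serious obstacle. The only point requiring genuine care is organizing the case analysis so that the collapsibility data ($L$ and $D$) is what makes each hypothesis bite: no-partial-conjugation is invoked exactly to rule out the coexistence of $V\Lambda\setminus V\st(x)$ and $D$, while transvection-freeness is invoked exactly in the residual clique case. The main (mild) bookkeeping task is to verify correctly the induced-subgraph structure around $\st(x)$ — namely that $\st(x)$ meets $C$ precisely in $L$ and that no edge crosses from $\Lambda$ into $D$ — and this is exactly the content guaranteed by collapsibility of $\Lambda$.
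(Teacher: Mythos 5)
Your proof is correct and follows essentially the same route as the paper's: collapsibility yields the common outer link $L$, no-partial-conjugation rules out the coexistence of $V\Lambda\setminus V\st(x)$ and $D$ (this is the paper's star-separation argument), and transvection-freeness eliminates the residual case where $\Lambda$ is a clique. The only difference is organizational — you prove the contrapositive directly via a clean dichotomy on $D$, while the paper assumes irreducibility and derives a contradiction — and your structure is in fact exactly the "idea behind the lemma" that the paper sketches just before its own proof.
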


The idea behind the lemma is the following: if a finite simple graph $\Gamma$ is irreducible and contains a collapsible subgraph $\Lambda$, then either $\Lambda$ is a clique and in this case $\Gamma$ has transvections, or $\Lambda$ is not a clique but in this case, there exists $v\in V\Lambda$ such that $\Gamma\backslash \st(v)$ is disconnected (see also \cref{fig:penta-de-penta}). 

\begin{figure}
    \centering
    \includegraphics{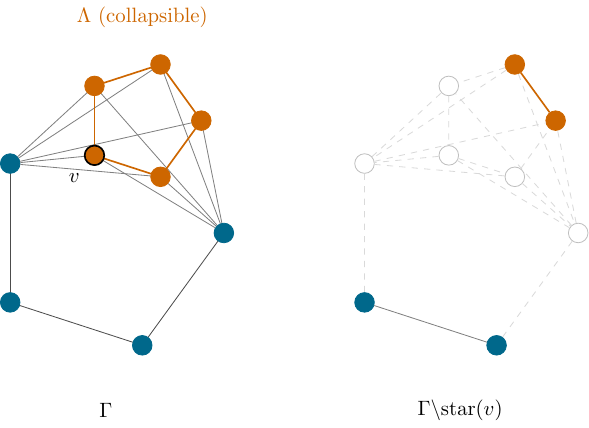}
    
    The graph $\Gamma$ on the left is irreducible but contains a collapsible subgraph $\Lambda$ (in \textcolor{orange}{orange}). Removing the star of the vertex $v$ (drawn in \textcolor{black!55}{grey} on the right most graph) produces a graph with two different connected components. 
    \caption{Example of an irreducible graph $\Gamma$ with partial conjugation}
    \label{fig:penta-de-penta}
\end{figure}

\begin{proof}
    Let us assume that $\Gamma$ is irreducible and show that $\Gamma$ is strongly reduced. 
    So assume towards a contradiction that there exists a proper collapsible subgraph $\Lambda\subseteq\Gamma$ on at least $2$ vertices. Let $w\in V\Lambda$. We observe that $\Lambda\neq\st(w)$. Otherwise, as $\Lambda$ is collapsible, for every $v\in V\Lambda$, we would have $\lk(v)\subseteq \st(w)$, and as $|V\Lambda|\ge 2$ this would give a transvection in $\Gamma$. We can thus choose $u\in V\Lambda\setminus\st(w)$.

    Let us show that there exists a vertex $u'\in V\Gamma\setminus V\Lambda$ which is not contained in $\st(w)$. By collapsibility of $\Lambda$ we have $\st(w)\subseteq \Lambda \circ \Lambda^\perp$, therefore  $\Lambda\cup \st(w)=\Lambda \circ \Lambda^\perp$. But since $\Lambda$ is proper and $\Gamma$ assumed to be irreducible, then $\Lambda\cup \st(w)$ is a proper subgraph of $\Gamma$. Whence the existence of $u^\prime$, as desired. 
    
    We claim that $\st(w)$ disconnects $u$ from $u'$, which will contradict the fact that $\Gamma$ has no partial conjugation -- and thus complete our proof. Indeed, let $u=u_1,\dots,u_k=u'$ be a path (i.e.\ any two consecutive vertices $u_i,u_{i+1}$ are adjacent), and let us prove that one of the vertices along this path is contained in $\st(w)$. Let $i\in\{1,\dots,k-1\}$ be such that $u_{i}\in V\Lambda$ and $u_{i+1}\notin V\Lambda$. Then $u_{i+1}$ is in $\lk(u_i)$, so by collapsibility it is also in $\lk(w)$, as desired. Hence $\st(w)$ disconnects $u$ from $u'$.
\end{proof}

In the sequel, we will use Lemma~\ref{lemma:strongly-reduced} in the form of the following corollary.

\begin{Cor}\label{cor:strongly-reduced}
Let $\Gamma$ be a finite simple graph with no transvection and no partial conjugation, not reduced to a vertex.

Then $\Gamma$ splits as $\Gamma=\Gamma_1\circ\dots\circ\Gamma_k$ (possibly with $k=1$), in such a way that every $\Gamma_j$ is strongly reduced, transvection-free, and not reduced to a vertex.
\end{Cor}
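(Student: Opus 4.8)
The plan is to reduce the statement immediately to Lemma~\ref{lemma:strongly-reduced} by first decomposing $\Gamma$ into its irreducible join factors and then checking that each factor inherits the standing hypotheses. First I would invoke the join decomposition $\Gamma=\Gamma_1\circ\dots\circ\Gamma_k$ into irreducible subgraphs, well-defined up to permutation of the factors, with $k=1$ exactly when $\Gamma$ is itself irreducible. Since $\Gamma$ is assumed not reduced to one vertex, this decomposition is meaningful.

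The second step is to transfer the two hypotheses -- no transvection and no partial conjugation -- from $\Gamma$ to each factor $\Gamma_j$. This is precisely the content of Remark~\ref{rk:transvection-free-factors}: because $\Gamma$ is transvection-free, no $\Gamma_j$ is reduced to a single vertex and every $\Gamma_j$ is transvection-free; and because $\Gamma$ has no partial conjugation, every $\Gamma_j$ has no partial conjugation. Thus each $\Gamma_j$ simultaneously satisfies: transvection-free, no partial conjugation, and not reduced to a vertex.

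Finally, I would observe that each $\Gamma_j$, being an irreducible factor of the join decomposition, is by construction not a join. Applying Lemma~\ref{lemma:strongly-reduced} to $\Gamma_j$ -- which is transvection-free and has no partial conjugation -- forces $\Gamma_j$ to be strongly reduced, the alternative ``being a join'' being excluded by irreducibility. This yields the desired decomposition with every $\Gamma_j$ strongly reduced, transvection-free, and not reduced to a vertex.

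I do not expect any genuine obstacle: the corollary is a straightforward packaging of the lemma and the remark, with all the combinatorial work already absorbed into the proof of Lemma~\ref{lemma:strongly-reduced}. The only point requiring a moment of care is confirming that the irreducible factors are \emph{not} single vertices. This matters because a one-vertex graph would vacuously qualify as strongly reduced (it has no proper induced subgraph on at least two vertices), yet would violate the required conclusion that no factor is reduced to a vertex; ruling this out is exactly the transvection-freeness clause of Remark~\ref{rk:transvection-free-factors}.
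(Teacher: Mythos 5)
Your proof is correct and follows essentially the same route as the paper: decompose $\Gamma$ into irreducible join factors, use Remark~\ref{rk:transvection-free-factors} to transfer transvection-freeness (hence non-triviality of the factors) and absence of partial conjugations to each factor, and then conclude via Lemma~\ref{lemma:strongly-reduced}, the join alternative being excluded by irreducibility. The only cosmetic difference is that the paper writes the clique factor $\Gamma_0$ explicitly and observes it is empty, whereas you absorb this into the remark's statement that no irreducible factor is a single vertex; the two formulations are interchangeable.
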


\begin{proof}
 Write $\Gamma=\Gamma_0\circ\Gamma_1\circ\dots\circ\Gamma_k$, where $\Gamma_0$ is the clique factor of $\Gamma$, and each $\Gamma_j$ with $j\ge 1$ has at least two vertices and does not split non-trivially as a join.
 
 Since $\Gamma$ is transvection-free and not reduced to a vertex, one has $\Gamma_0=\emptyset$.
 
 In view of Remark~\ref{rk:transvection-free-factors}, for every $j\in\{1,\dots,k\}$, the graph $\Gamma_j$ is transvection-free and has no partial conjugation. Therefore Lemma~\ref{lemma:strongly-reduced} shows that $\Gamma_j$ is strongly reduced, which completes our proof.
\end{proof}

\subsubsection{Products and factors}\label{sec:products-and-factors}

Let $G$ be a graph product of countably infinite vertex groups over a finite simple graph. A \emph{product parabolic subgroup} $P\subseteq G$ is \index{Product!Product parabolic subgroup} a parabolic subgroup which splits as a direct product of two non-trivial parabolic subgroups. We say that $P$ is a \emph{maximal} product parabolic subgroup \index{Maximal product!Maximal product parabolic subgroup} if it is maximal for inclusion (among product parabolic subgroups). We say that $P$ is of \emph{isolated clique type}\index{Isolated clique type!Parabolic subgroup} if $\Gamma$ has a connected component $C$ which is a clique, and $P$ is conjugate to $G_C$.  

\begin{Lmm}\label{lemma:maximal-product-nonamenable}
 Let $G$ be a graph product with countably infinite vertex groups over a finite simple graph $\Gamma$. 
 Let $P$ be a maximal product parabolic subgroup, and write $P=P_1\times P_2$, where $P_1$ and $P_2$ are non-trivial parabolic subgroups.

 Then either $P$ is of isolated clique type, or else $P_1$ or $P_2$ contains a non-abelian free subgroup.
\end{Lmm}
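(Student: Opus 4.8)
The plan is to reduce everything to the combinatorics of the type of $P$. Up to conjugating, I may assume $P=G_\Lambda$ for an induced subgraph $\Lambda\subseteq\Gamma$, and by the discussion preceding \cref{lemma:product-parabolic-subgroup}, the given splitting $P=P_1\times P_2$ corresponds to a join decomposition $\Lambda=\Lambda_1\circ\Lambda_2$ with $P_i=G_{\Lambda_i}$ and $\Lambda_1,\Lambda_2$ nonempty. Both the maximality hypothesis and both possible conclusions are conjugation-invariant (isolated clique type being defined up to conjugacy), so this reduction is harmless.

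First I would dispose of the free-subgroup case. If one of the $\Lambda_i$, say $\Lambda_1$, contains two non-adjacent vertices $a,b$, then the standard parabolic subgroup on $\{a,b\}$, which is the free product $G_a\ast G_b$, sits inside $P_1=G_{\Lambda_1}$. Since the vertex groups are infinite, neither $G_a$ nor $G_b$ is $\mathbb{Z}/2\mathbb{Z}$, so $G_a\ast G_b$ is not virtually cyclic and hence contains a non-abelian free subgroup; this gives the conclusion. Thus I may henceforth assume that both $\Lambda_1$ and $\Lambda_2$ are cliques, so that $\Lambda=\Lambda_1\circ\Lambda_2$ is itself a clique.

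The heart of the argument is then to show that maximality of $P$ forces this clique $\Lambda$ to be a full connected component of $\Gamma$. Being a nonempty clique, $\Lambda$ is connected, hence contained in a single connected component $C$ of $\Gamma$. If $\Lambda=C$, then $C$ is a clique connected component and $P=G_C$ is of isolated clique type, as wanted. Otherwise $\Lambda\subsetneq C$, and since $C$ is connected there is a vertex $w\in C\setminus\Lambda$ adjacent to some $v_0\in\Lambda$. Setting $\Theta:=\Lambda\cup\{w\}$, I claim $\Theta$ is a nontrivial join: writing $\Lambda_a:=\Lambda\cap\lk(w)\ni v_0$ and $\Lambda_b:=\Lambda\setminus\lk(w)$, one checks $\Theta=(\{w\}\cup\Lambda_b)\circ\Lambda_a$, since $w$ is joined to all of $\Lambda_a$ by definition and $\Lambda_b$ is joined to $\Lambda_a$ because $\Lambda$ is a clique, with both parts nonempty. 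Consequently $G_\Theta=G_{\{w\}\cup\Lambda_b}\times G_{\Lambda_a}$ is a product parabolic subgroup strictly containing $P=G_\Lambda$, contradicting the maximality of $P$. Hence $\Lambda=C$, completing the proof.

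I expect the only genuinely delicate step to be the combinatorial enlargement in the last paragraph; the key realization is that as soon as a clique type fails to be an entire connected component, adjoining a single adjacent outside vertex $w$ already produces a strictly larger nontrivial join, and therefore a strictly larger product parabolic, which is precisely what maximality forbids. The remaining ingredients—the reduction to a standard parabolic, the correspondence between product parabolics and join decompositions, and the free-product observation—are routine given the normal form theorem and the results of Antol\'in--Minasyan recalled earlier.
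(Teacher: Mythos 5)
Your proof is correct and follows essentially the same route as the paper: the non-clique case is handled by the same free-product-of-two-infinite-vertex-groups argument, and the clique case by contradicting maximality with a strictly larger product parabolic subgroup. The only cosmetic difference is that in the clique case the paper enlarges $P$ to $G_{\st(v)}=G_v\times G_v^{\perp}$ for a vertex $v\in\Lambda$ having a neighbour outside $\Lambda$, whereas you adjoin the outside vertex $w$ and split $\Lambda\cup\{w\}$ as a join; both produce the same contradiction.
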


 \begin{proof}
 We first assume that the type $\Lambda$ of $P$ is not a clique. We can assume without loss of generality that the type $\Lambda_1$ of $P_1$ contains two non-adjacent vertices $v,w$. Being infinite, the groups $G_v$ and $G_w$ both contain either an infinite order element, or else a finite subgroup of order at least $3$. The free product $G_v*G_w$ (which has a conjugate contained in $P_1$) therefore contains a non-abelian free subgroup.
 
 We now assume that the type $\Lambda$ of $P$ is a clique, and aim to prove that it is isolated. Otherwise, $\Lambda$ is a proper subgraph of a connected component of $\Gamma$. We can therefore find $v\in V\Lambda$ such that there exists a vertex $v'\in\lk(v)\setminus V\Lambda$. Then $G_v\times G_v^{\perp}$ splits as a product of infinite parabolic subgroups and contains $P$ properly. This contradicts that $P$ is a maximal product parabolic subgroup. 
 \end{proof}

Given a finite simple graph $\Gamma$, and an induced subgraph $\Lambda\subseteq\Gamma$, we define the \emph{clique factor}\index{Clique!Clique factor of a subgraph} of $\Lambda$ as the subgraph spanned by all vertices that are joined by an edge to every other vertex of $\Lambda$. If $G$ is a graph product over $\Gamma$, and if $P=gG_\Lambda g^{-1}$, we define the \emph{clique factor}\index{Clique!Clique factor of a parabolic subgroup} of $P$ as the parabolic subgroup $gG_{\Lambda_0}g^{-1}$, where $\Lambda_0$ is the clique factor of~$\Lambda$. 

Let $P$ be a parabolic subgroup, and write $P=C_0\times F_1\times\dots\times F_n$, where $C_0$ is the clique factor of $P$, and no $F_j$ splits as a direct product. We say that a subgroup $F\subseteq Q$ is a \emph{factor}\index{Factor} if it is equal to $C_0$ or to one of the subgroups $F_j$ of the above decomposition. 

\subsubsection{Recognizing untransvectable vertices}\label{sec:zoom-in}
Recall that given a finite simple graph $\Gamma$, a vertex $v\in V\Gamma$ is \emph{untransvectable} if there does not exist any vertex $w\in V\Gamma$ distinct from $v$ such that $\lk(v)\subseteq\st(w)$. The goal of this section is to give a combinatorial/algebraic characterization of untransvectable vertices, which will be our starting point for proving the Vertex Recognition Property in later sections. 

\begin{lemma}\label{lemma:graph}
    Let $\Gamma$ be a finite simple graph, and let $G$ be a graph product over $\Gamma$. A vertex $v\in V\Gamma$ is untransvectable if and only if there exists a chain of parabolic subgroups
    % ---
    \[G=F_0\supseteq P_1\supseteq F_1\supseteq\dots\supseteq P_n\supseteq F_n=G_v\]
    % ---
    such that
    \begin{enumerate}
        \item for every $j\in\{1,\dots,n\}$, $P_j$ is a maximal product parabolic subgroup of $F_{j-1}$ and not of isolated clique type; 
        \item for every $j\in\{1,\dots,n-1\}$, the clique factor of $P_j$ is trivial, and $F_j$ is a factor of $P_j$, and
        \item $F_n$ is the clique factor of $P_n$.
    \end{enumerate}
\end{lemma}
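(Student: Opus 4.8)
The plan is to reduce everything to pure graph combinatorics about the \emph{types} (induced subgraphs of $\Gamma$) of the parabolic subgroups in the chain, and then argue by induction. The bridge is a maximality criterion $(\star)$: a standard parabolic $G_\Theta$ is a maximal product parabolic subgroup if and only if $\Theta$ is a \emph{maximal join}, i.e.\ an induced subgraph which is a join and is maximal for inclusion among joins. One direction is clear, since a join $\Psi\supsetneq\Theta$ gives a larger product $G_\Psi\supsetneq G_\Theta$. For the converse, if $Q=gG_\Psi g^{-1}\supsetneq G_\Theta$ is a product parabolic, then \cref{prop:am}(1) writes $Q\cap G_\Theta=hG_\Upsilon h^{-1}$ with $h\in G_\Theta$ and $\Upsilon\subseteq\Psi\cap\Theta$; as this intersection is $G_\Theta$ and $h$ normalizes $G_\Theta$, uniqueness of types forces $\Upsilon=\Theta$, so $\Theta\subseteq\Psi$, with $\Theta\neq\Psi$ by \cref{prop:am}(2). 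Combined with the fact that types of nested parabolics are nested, and that ``clique factor'', ``factor'' and ``isolated clique type'' are type-level notions, this lets me translate any chain as in the statement into a chain of induced subgraphs $\Gamma\supseteq\Theta_1\supseteq\Phi_1\supseteq\dots\supseteq\Theta_n\supseteq\Phi_n=\{v\}$, where each $\Theta_j$ is a maximal join in $\Phi_{j-1}$, the clique factor of $\Theta_j$ is empty for $j<n$ with $\Phi_j$ a non-clique factor of $\Theta_j$, and $\Phi_n=\{v\}$ is the clique factor of $\Theta_n$.

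I would then record the combinatorial dictionary: $v$ is untransvectable in a graph $\Delta$ exactly when it has neither an \emph{adjacent} partner $w\in\lk_\Delta(v)$ with $\lk_\Delta(v)\subseteq\st_\Delta(w)$ nor a \emph{non-adjacent} one $w\notin\st_\Delta(v)$ with $\lk_\Delta(v)\subseteq\lk_\Delta(w)$. A short link computation then yields: (a) for a join $\Delta=A\circ B$ with $B$ of trivial clique factor and $v\in A$, untransvectability of $v$ in $\Delta$ is equivalent to that in $A$ (a partner lying in $B$ would have to be a clique-factor vertex of $B$); and (b) if $v$ is untransvectable in $\Delta$, then the clique factor of $\st_\Delta(v)$ equals $\{v\}$, and any join $\Theta$ with $\st_\Delta(v)\subsetneq\Theta\subseteq\Delta$ has empty clique factor (a clique-factor vertex of such $\Theta$ would be a transvection partner of $v$).

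For the forward implication I would induct on $|V\Delta|$, building a standard chain. Working in the current factor $\Delta$ (reduced to the connected irreducible case after peeling off joins and passing to the component of $v$, both preserving untransvectability by (a)), I choose $\Theta$ a maximal join containing $\st_\Delta(v)$. If $\Theta=\st_\Delta(v)$, then by (b) its clique factor is $\{v\}$, and I stop with $P_n=G_\Theta$, $F_n=G_v$; maximality is $(\star)$, and $P_n$ is not of isolated clique type as $\Theta$ is not a clique. Otherwise $\Theta\supsetneq\st_\Delta(v)$ has trivial clique factor by (b), so $v$ lies in a non-clique factor $\Xi$ of $\Theta$; I set $P=G_\Theta$, $F=G_\Xi$, and recurse on $\Xi$, which is strictly smaller (since $\Theta\subsetneq\Delta$ because $\Delta$ is not a join) and in which $v$ stays untransvectable (the link computation again, using $\st_\Delta(v)\subseteq\Theta$ so that $\lk_\Delta(v)=\lk_\Xi(v)\cup(\Theta\setminus\Xi)$). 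The extremal cases are direct: if $\Gamma$ is a join with $v$ in its clique factor, untransvectability forces that factor to be $\{v\}$ and $P_1=G$ works; if $\Gamma$ is a clique on at least two vertices there is neither an untransvectable vertex nor an admissible chain.

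For the backward implication I would argue at the type level by downward induction along the chain, showing $v$ is untransvectable in each $\Phi_j$ (trivially for $\Phi_n=\{v\}$). The step from $\Phi_j$ to $\Theta_j$ uses (a); the step from $\Theta_j$ to $\Phi_{j-1}$ uses a \emph{join-enlargement lemma}: if $\Theta$ is a maximal join in $\Phi$ and $v\in\Theta$ is untransvectable in $\Theta$, then $v$ is untransvectable in $\Phi$. Indeed, a partner $w$ of $v$ in $\Phi$ cannot lie in $\Theta$ (restricting $\lk_\Phi(v)\subseteq\st_\Phi(w)$ to $\Theta$ would give a partner inside $\Theta$); but then, with $\Theta=C\circ\Theta^-$ where $C$ is the factor of $\Theta$ containing $v$, the inclusion $\Theta^-\subseteq\lk_\Phi(v)\subseteq\st_\Phi(w)$ forces $w$ adjacent to all of $\Theta^-$, so $(C\cup\{w\})\circ\Theta^-$ is a join strictly containing $\Theta$, contradicting maximality. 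The base case is that $\Phi_n=\{v\}$ being the clique factor of $\Theta_n$ makes $v$ untransvectable in $\Theta_n$, and the lemma together with (a) then propagates untransvectability up to $\Phi_0=\Gamma$. I expect the main obstacle to be the careful handling of conjugation in $(\star)$ and in moving between parabolic subgroups and their types (the intermediate $P_j,F_j$ need not be standard): the Antol\'in--Minasyan intersection and conjugation results (\cref{prop:am}, with \cref{lemma:product-parabolic-subgroup}) are exactly what make this reduction to type-level combinatorics clean, after which both inductive directions rest on the short ``enlarge the join'' and link-restriction computations above.
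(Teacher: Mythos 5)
Your proposal is correct and follows essentially the same route as the paper's proof: your forward induction chooses a maximal join containing $\st_\Delta(v)$ and propagates untransvectability into the factor by the same link computation, while your backward (downward) induction on untransvectability of $v$ in each $\Phi_j$ is a repackaging of the paper's argument tracking a potential transvection partner $w$ down the chain, resting on the identical join-enlargement and clique-factor computations. The only organizational difference is that you isolate the type-level dictionary $(\star)$ between maximal product parabolic subgroups and maximal joins as an explicit lemma proved via \cref{prop:am}, a step the paper handles implicitly through its ``up to global conjugation'' reductions.
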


At first sight, it might not seem intuitive why the conditions given in the lemma characterize untransvectable vertices and their conjugates. One useful observation is that if there exist two vertices $v,w\in V\Gamma$ such that $\lk(v)\subseteq \st(w)$, then any maximal product containing $v$ will also contain $w$ (so our process cannot pinpoint a transvectable vertex). Before reading the proof, the reader is invited to work out the conditions on an example, see Example~\ref{Ex:Zoom-in} below.

\begin{proof}
    We first assume that $v$ is untransvectable, and construct parabolic subgroups $P_j,F_j$ as in the statement. 

    Starting from $F_0=G$, our goal is to define inductively two finite sequences of parabolic subgroups $(P_j)_j=(G_{\Lambda_j})_j$ and $(F_j)_j=(G_{\Upsilon_j})_j$, such that if we have defined $P_j$ and $F_j$ for some $j\in \bN$ then 
    \begin{enumerate}
        \item Either $F_j=G_v$ and we stop.
        \item Or else 
        \begin{enumerate}
            \item We can choose $P_{j+1}$ a maximal product parabolic subgroup of $F_j$ containing $G_{\st(v)}\cap F_j$ which is not of isolated clique type;
            \item And either $G_v$ is the clique factor of $P_{j+1}$ or $P_{j+1}$ has trivial clique factor;
            \item And we denote by $F_{j+1}$ the factor of $P_{j+1}$ containing $G_v$;
             \item Either $F_{j+1}=G_v$, or $v$ is untransvectable in $\Upsilon_{j+1}$ and in particular it is not contained in an isolated clique of $\Upsilon_{j+1}$.
        \end{enumerate}
    \end{enumerate}
    The process has to stop eventually as we are constructing a chain of proper inclusions of parabolic subgroups (except perhaps $F_0=P_1$ if $F_0$ splits as a product). 
    
    So let $j\in \bN$ and assume that we have defined $F_j$ verifying that $v$ is untransvectable in $\Upsilon_j$. If $F_j=G_v$, we stop. 
     
    Now assume that $F_j \neq G_v$. In particular $v$ is not isolated in $F_j$, so we can choose a maximal product parabolic subgroup $P_{j+1}$ of $F_j$ that contains $G_{\st(v)}\cap F_j$. Without loss of generality, we will assume that $P_{j+1}=G_{\Lambda_{j+1}}$ for some induced subgraph $\Lambda_{j+1}\subseteq\Upsilon_j$.
    
    Being an untransvectable vertex of $\Upsilon_j$, the vertex $v$ cannot be contained in an isolated clique of $\Upsilon_j$, so $P_{j+1}$ is not of isolated clique type. Using again that $v$ is untransvectable, we get that either $P_{j+1}=G_{\st(v)}$ and $G_v$ is the clique factor of $P_{j+1}$ (in which case we define $F_{j+1}=G_v$), or else $P_{j+1}$ has trivial clique factor. We assume that the latter holds, and let $F_{j+1}$ be the factor of $P_{j+1}$ that contains $G_v$. 

    Let $\Upsilon_{j+1}\subseteq\Lambda_{j+1}$ be the induced subgraph such that $F_{j+1}=G_{\Upsilon_{j+1}}$. Let $\Upsilon_{j+1}^{\perp}$ be the orthogonal of $\Upsilon_{j+1}$ inside $\Lambda_{j+1}$. We are left with showing that $v$ is an untransvectable vertex of $\Upsilon_{j+1}$. So let $w\in V\Upsilon_{j+1}$ be such that $\lk_{\Upsilon_{j+1}}(v)\subseteq\st_{\Upsilon_{j+1}}(w)$. We aim to prove that $\lk_{\Upsilon_j}(v)\subseteq \st_{\Upsilon_j}(w)$ which will imply $w=v$. As $G_{\st(v)}\cap F_j\subseteq P_{j+1}$, we have 
    \begin{equation*}
      \st_{\Upsilon_j}(v):=\st(v)\cap \Upsilon_j\subseteq \Lambda_{j+1}.
    \end{equation*}
    Since $\Lambda_{j+1}\subseteq \Upsilon_j$ we thus have $\st_{\Lambda_{j+1}}(v)=\st_{\Upsilon_j}(v)$. Therefore, using that $P_{j+1}$ is a product, we have
    \[\lk_{\Upsilon_j}(v)=\lk_{\Lambda_{j+1}}(v)=\lk_{\Upsilon_{j+1}}(v)\circ \Upsilon_{j+1}^{\perp}.\] 
    We deduce that 
    \[\lk_{\Upsilon_j}(v)\subseteq\st_{\Upsilon_{j+1}}(w)\circ\Upsilon_{j+1}^{\perp}\subseteq \st_{\Upsilon_j}(w).\] 
    As $v$ is untransvectable when viewed as a vertex of $\Upsilon_j$, it follows that $w=v$, as desired. 

This finishes our inductive construction, and completes the proof of one implication of the lemma.

\medskip

Conversely, let us assume that there exist two sequences of parabolic subgroups $(F_j)_j$ and $(P_j)_j$ as in the statement. Up to a global conjugation, we can assume that all $F_j$ and $P_j$ are of the form $G_{\Upsilon_j},G_{\Lambda_j}$ for some induced subgraphs $\Upsilon_j,\Lambda_j\subseteq\Gamma$. 
    
     Let $w\in V\Gamma$ be such that $\lk(v)\subseteq\st(w)$. We aim to prove that $w=v$. 
     
     We first prove inductively that $w\in V\Upsilon_j$ and $w\in V\Lambda_{j+1}$ for every $j\in\{0,\dots,n-1\}$, as follows. 
     \begin{itemize}
     \item First, $w\in V\Upsilon_0=V\Gamma$. 
     \item Assuming that $w\in V\Upsilon_j$, we prove that $w\in V\Lambda_{j+1}$. Indeed, assume towards a contradiction that $w\notin V\Lambda_{j+1}$. Write 
     \begin{equation*}
         \Lambda_{j+1}=\Lambda_{j+1}^1\circ\dots\circ\Lambda_{j+1}^k
     \end{equation*} 
     as a maximal join, with $v\in V\Lambda_{j+1}^1$ (namely, $\Lambda_{j+1}^1=\Upsilon_{j+1}$). Since $\lk(v)\subseteq\st(w)$, we have 
     \begin{equation*}
         \Lambda_{j+1}^2\circ\cdots\circ\Lambda_{j+1}^k\subseteq\st(w).
     \end{equation*}
     So let $\langle\Lambda_{j+1}^1,w\rangle$ be the subgraph of $\Upsilon_j$ induced by $\Lambda_{j+1}^1$ and $w$. Then 
     \begin{equation*}
         \langle\Lambda_{j+1}^1,w\rangle\circ\Lambda_{j+1}^2\circ\cdots\circ\Lambda_{j+1}^k
     \end{equation*}
     is again a join, which properly contains $P_{j+1}$. This contradicts the maximality of~$P_{j+1}$.
     \item Assuming that $w\in V\Lambda_{j+1}$ with $j\in\{0,\dots,n-2\}$, we prove that $w\in V\Upsilon_{j+1}$. Indeed, write $\Lambda_{j+1}=\Lambda_{j+1}^1\circ\dots\circ\Lambda_{j+1}^k$ as a maximal join, with $v\in V\Lambda_{j+1}^1=V\Upsilon_{j+1}$. By contradiction, if $w\in V\Lambda_{j+1}^i$ with $i\ge 2$, then using that $\lk(v)\subseteq\st(w)$, we see that $\Lambda_{j+1}^i\subseteq\st(w)$, and therefore $\Lambda_{j+1}\subseteq\st(w)$. This contradicts the fact that $P_{j+1}$ has trivial clique factor.
     \end{itemize}
     
     In particular we have proved that $w\in V\Lambda_n$. The same argument as in the last point shows that $w$ belongs to the clique factor of $P_n$. Since the clique factor of $P_n$ is $G_v$ we thus obtain that $w=v$.
\end{proof}

\begin{figure}[htbp]
    \centering
    \includegraphics[width=0.3\textwidth]{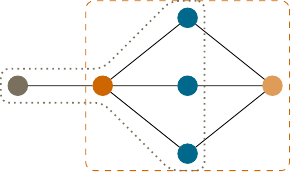}
    \caption{Graph for \cref{Ex:Zoom-in}}
    \label{fig:Example_Zoom-In}
\end{figure}

\begin{Ex}\label{Ex:Zoom-in}
This example illustrates the statement of Lemma~\ref{lemma:graph}.

    Consider $\Gamma$ to be the graph in \cref{fig:Example_Zoom-In} and $G$ a graph product over $\Gamma$. The only untransvectable vertex in $\Gamma$ is the one drawn in \textcolor{Orange}{dark orange} on the leftmost part of the graph. Let us describe the possible chains of parabolic subgroups in $G$ verifying the conditions of \cref{lemma:graph}. 
    
    Up to conjugation, there are exactly two maximal product parabolic subgroups in $G$, so there are \emph{a priori} two possible choices for $P_1$.
    \begin{itemize}
    \item If $P_1$ is taken to be a maximal product with type encompassed by the \textcolor{MFCB}{brown} dots, then $P_1$ has a clique factor $F_1$ equal to a conjugate of a vertex group. This vertex group corresponds to the \textcolor{Orange}{dark orange} vertex, which is untransvectable. Hence here taking $F_1=F_n$ gives the wanted sequence.
    \item On the other hand, if $P_1$ is taken to be the maximal product of $G$ with type encompassed by the \textcolor{Orange}{orange} dashes, then $P_1$ can be decomposed as $P_1=F_1\times F^\prime_1$ where $F_1$ is the factor whose type is induced by the three \textcolor{Turquoise}{blue} vertices and $F^\prime_1$ has type induced by the two (dark and light) \textcolor{Orange}{orange} vertices. Then $P_1$ has trivial clique factor and neither $F_1$ nor $F^\prime_1$ contains a product parabolic subgroup. In particular, we can not find a maximal product $P_2$ in any of the factors of $P_1$ verifying point 1 of \cref{lemma:graph}.
    \end{itemize}
    Hence, up to conjugation, the only possible sequence of parabolic subgroups verifying the conditions of \cref{lemma:graph} is $G=F_0\supseteq P_1\supseteq F_1=F_n$, where $P_1$ is a maximal product parabolic subgroup as in the first point.
\end{Ex}

\subsection{Groupoid-theoretic tools}\label{sec:groupoid-tools}

We now establish a few statements regarding how to exploit the amenability of a groupoid $\cala$ equipped with a cocycle $\rho$ towards a graph product, and in particular get information on the normalizer of $\cala$. The arguments developed in this section are largely inspired by work of Adams \cite{Ada} regarding the indecomposability of equivalence relations generated by probability measure-preserving actions of Gromov hyperbolic groups. Adams's strategy was later developed by Kida in the context of mapping class groups \cite{Kid-memoir} and has already found many applications towards measure equivalence classification/rigidity results in many contexts.   

In Section~\ref{sec:elementary-groupoids}, we introduce the notion of elementarity of $(\cala,\rho)$, an analogue for groupoids to the elementarity of a group action on a tree (i.e.\ having a fixed point or a finite orbit at infinity). See e.g.\ \cite[Section~5.2]{HH-Higman} for a similar treatment in a slightly different context. This notion will only be used in Section~\ref{sec:reducible}. 

Section~\ref{Sec:ExploitingNormalAmenable} contains a lemma in a similar spirit, that will only be used in the proof of Lemma~\ref{lemma:maximal-product}, and is inspired from the case of right-angled Artin groups \cite[Lemma~3.10]{HH21}.

\subsubsection{Elementary groupoids with a cocycle towards a graph product}\label{sec:elementary-groupoids}

Let $\Gamma$ be a finite simple graph. For every $v\in V\Gamma$, let $T_v$ be the Bass--Serre tree of the splitting of $G$ as $G=G_{\st(v)}\ast_{G_{\lk(v)}}G_{\Gamma\setminus\{v\}}$, as in Section~\ref{sec:trees}. We equip $\partial_\infty T_v\cup V(T_v)$ with the observers' topology, for which connected components of complements of points in $T_v$ form a subbasis of open sets. For this topology $\partial_\infty T_v\cup V(T_v)$ is compact and metrizable, and $\partial_\infty T_v$ and $V(T_v)$ are Borel subsets. The induced topology on $\partial_\infty T_v$ is nothing but the visual topology.

Given a standard Borel space $Z$, we denote by $\calp_{<\infty}(Z)$ the set of all non-empty finite subsets of $Z$, equipped with its natural Borel structure. We denote by $\calp_{\le 2}(Z)$ the set of all non-empty subsets of $Z$ of cardinality at most $2$. 

We also recall from \cpageref{Def:TvReg} that $(\partial_\infty T_v)^{\reg}$ is the set of elements in $\partial_\infty T_v$ having trivial elliptic stabilizer.  We also refer to Definition~\ref{def:ProperlySupported}, \cpageref{def:ProperlySupported} for the definition of properly $\mathbb{P}_G$-supported pairs.

\begin{Def}[(Boundary) $G$-elementary]\label{de:elementary}
Let $G$ be a graph product of countably infinite groups over a finite simple graph $\Gamma$, and let $\calg$ be a measured groupoid over a standard probability space, equipped with a strict cocycle $\rho:\calg\to G$. 

We say that $(\calg,\rho)$ is \emph{boundary $G$-elementary} if for every $v\in V\Gamma$, there exists a stably $(\calg,\rho)$-equivariant Borel map $X\to\calp_{\le 2}((\partial_\infty T_v)^{\reg})$.

We say that $(\calg,\rho)$ is \emph{$G$-elementary} if there exists a Borel partition $X=X_1\dunion X_2$ such that $(\calg_{|X_1},\rho)$ is properly $\mathbb{P}_G$-supported, and $(\calg_{|X_2},\rho)$ is boundary $G$-elementary.
\end{Def}

\begin{Rq}\label{Rq:PartitionGElementary}
    When $(\calg,\rho)$ is $G$-elementary, there exists a Borel partition $X=X'_1\dunion X'_2$ such that $(\calg_{|X'_1},\rho)$ is properly $\mathbb{P}_G$-supported and $(\calg_{|X'_2},\rho)$ is tightly $G$-supported and boundary $G$-elementary.
\end{Rq}

Here is a basic example of a non-elementary pair $(\calg,\rho)$.

\begin{lemma}\label{lemma:nonelementary-example}
    Let $G$ be a graph product of countably infinite groups over a finite simple graph $\Gamma$ which is not a clique. Let $\calg$ be a measured groupoid over a standard probability space, and let $\rho:\calg\to G$ be an action-like cocycle.

    Then $(\calg,\rho)$ is not $G$-elementary.
\end{lemma}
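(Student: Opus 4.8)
The plan is to derive a contradiction from the assumption that $(\calg,\rho)$ is $G$-elementary by exploiting the irreducibility-free hypothesis (only that $\Gamma$ is not a clique) together with the action-like property of $\rho$. By \cref{Rq:PartitionGElementary}, if $(\calg,\rho)$ were $G$-elementary we could find a Borel partition $X=X'_1\dunion X'_2$ with $(\calg_{|X'_1},\rho)$ properly $\mathbb{P}_G$-supported and $(\calg_{|X'_2},\rho)$ tightly $G$-supported and boundary $G$-elementary. First I would handle the properly $\mathbb{P}_G$-supported piece: if $X'_1$ had positive measure, then up to a countable partition $\rho(\calg_{|U})$ would be contained in a proper parabolic subgroup $P\subsetneq G$ on some positive measure $U\subseteq X'_1$. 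Since $\rho$ is action-like, \cref{lemma:inclusion-parabolics} forces $\rho^{-1}(G)_{|U}=\calg_{|U}\subseteq\rho^{-1}(P)_{|U}$ to imply $G\subseteq P$, a contradiction. Hence $X'_1$ is null, so we may assume $(\calg,\rho)$ is tightly $G$-supported and boundary $G$-elementary on a conull set.

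The heart of the argument is then to contradict boundary $G$-elementarity. Since $\Gamma$ is not a clique, it has an irreducible join factor that is not a single vertex; choosing a vertex $v$ inside such a factor, \cref{lemma:reg-nonempty} guarantees $(\partial_\infty T_v)^{\reg}\neq\emptyset$ (I would reduce to the irreducible factor so that the regular boundary is nonempty). Boundary $G$-elementarity provides a stably $(\calg,\rho)$-equivariant Borel map $X\to\calp_{\le 2}((\partial_\infty T_v)^{\reg})$. The plan is to view this as a finite-orbit or fixed-point datum at infinity and show it forces $\rho(\calg)$ into a proper parabolic subgroup, contradicting tight $G$-support. Concretely, composing the equivariant map with the $G$-equivariant Borel map $\theta_v:\partial_\infty T_v\to\mathbb{P}_G$ sending a boundary point to its elliptic stabilizer (\cref{lemma:elliptic-stabilizer}) would, on the regular part, assign the trivial subgroup; the content is instead in the stabilizers of the pairs under the $G$-action on $\calp_{\le 2}((\partial_\infty T_v)^{\reg})$.

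The key technical point — and the main obstacle — will be to upgrade the existence of an invariant map into $\calp_{\le 2}((\partial_\infty T_v)^{\reg})$ to a statement that $\rho$ is properly supported, thereby contradicting tight $G$-support. I expect this to go through the amenability framework: the $G$-action on $(\partial_\infty T_v)^{\reg}$ is Borel amenable by \cref{lemma:decomposition-boundary}, and hence so is the $G$-action on $\calp_{\le 2}((\partial_\infty T_v)^{\reg})$ by the elementary stability facts in \cref{sec:amenability}. An equivariant map into $\calp_{\le 2}$ of trivial-stabilizer boundary points should then let us build a cocycle-compatible reduction showing $\rho^{-1}(G)$ is amenable on a positive measure set, or else that the stabilizers of the image points are proper parabolic subgroups. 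Either outcome contradicts what tight $G$-support plus the action-like property allow: the first contradicts that $G$ is nonamenable (as $\Gamma$ is not a clique, $G$ contains a nonabelian free subgroup, so $\rho^{-1}(G)=\calg$ is everywhere nonamenable by the third clause of \cref{de:action-like}), and the second contradicts tightness. The delicate part is managing the factor $v$ lives in versus the whole group $G$ — the regular boundary only sees the irreducible factor, so I would need to argue that an invariant reduction at the level of $T_v$ nonetheless propagates to a proper-support conclusion for the full cocycle, likely by combining the normalization lemmas (\cref{lemma:support-normal}, \cref{lemma:properly-supported}) with the product structure of $G$ coming from the join decomposition.
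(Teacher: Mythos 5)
Your overall architecture coincides with the paper's proof: first kill the properly $\mathbb{P}_G$-supported piece using \cref{lemma:inclusion-parabolics} (equivalently, \cref{lemma:tight-support}, which says $(\calg_{|U},\rho)$ is tightly $G$-supported on every positive measure $U$), then contradict boundary $G$-elementarity by combining the Borel amenability of the $G$-action on $(\partial_\infty T_v)^{\reg}$ (\cref{lemma:decomposition-boundary}), its stability under passing to $\calp_{\le 2}$, and the fact that an equivariant map into a Borel amenable $G$-space together with triviality of the kernel of $\rho$ forces $\calg$ to be amenable (the paper cites \cite[Proposition~3.38]{GH-OutFn} for exactly this transfer), contradicting everywhere non-amenability of $\calg=\rho^{-1}(G)$ from the third clause of \cref{de:action-like}. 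The first of your two hedged outcomes is the correct one; the alternative ``stabilizers of the image pairs are proper parabolic subgroups'' is a red herring and is never needed.

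There is, however, one genuine error: your treatment of the reducible case misapplies \cref{lemma:reg-nonempty}. That lemma says $(\partial_\infty T_v)^{\reg}\neq\emptyset$ if and only if the \emph{whole graph} $\Gamma$ is irreducible; the regular boundary is defined via elliptic stabilizers for the action of all of $G$ on $T_v$, so choosing $v$ inside an irreducible join factor does not make it non-empty --- when $\Gamma$ is reducible, any parabolic coming from a complementary join factor fixes $T_v$ pointwise, and $(\partial_\infty T_v)^{\reg}=\emptyset$ for \emph{every} $v$. Consequently your proposed ``reduction to the irreducible factor,'' and the closing plan to propagate a factor-level conclusion back to $G$ via \cref{lemma:support-normal} and \cref{lemma:properly-supported}, addresses a problem that does not exist and replaces it with a step you cannot carry out as stated (boundary $G$-elementarity is defined through the trees $T_v$ of $G$ itself, not of a factor). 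The fix is immediate and makes the proof shorter, not longer: if $\Gamma$ is reducible, then $\calp_{\le 2}((\partial_\infty T_v)^{\reg})$ is empty, so no $(\calg,\rho)$-equivariant map from a positive-measure base space can exist and boundary $G$-elementarity fails outright; and in fact the paper's amenability argument never uses non-emptiness of the regular boundary at all, so no case distinction is required in the first place.
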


\begin{proof}
   By Lemma~\ref{lemma:tight-support}, \cpageref{lemma:tight-support}, $(\rho^{-1}(G),\rho)$ is tightly $G$-supported, thus since $\calG=\rho^{-1}(G)$ we obtain that for every positive measure Borel subset $U\subseteq X$, the pair $(\calg_{|U},\rho)$ is tightly $G$-supported. It is therefore enough to prove that $(\calg,\rho)$ is not boundary $G$-elementary. So assume towards a contradiction that for every $v\in V\Gamma$, there exists a stably $(\calg,\rho)$-equivariant Borel map $X\to\calp_{\le 2}((\partial_\infty T_v)^{\reg})$. By Lemma~\ref{lemma:decomposition-boundary}, \cpageref{lemma:decomposition-boundary}, the $G$-action on $(\partial_\infty T_v)^{\reg}$ is Borel amenable, and therefore so is the $G$-action on $\calp_{\le 2}((\partial_\infty T_v)^{\reg})$. As $\rho$ has trivial kernel, it follows from \cite[Proposition~3.38]{GH-OutFn} (recasting \cite[Proposition~4.33]{Kid-memoir}) that $\rho^{-1}(G)$ is amenable. Since $\Gamma$ is not a clique, $G$ is not amenable, and we obtain a contradiction to the fact that $\rho$ is action-like (third point of Definition~\ref{de:action-like}, \cpageref{de:action-like}).
\end{proof}

In contrast, Lemma~\ref{lemma:alternative} and Remark~\ref{rk:amenable} below will show that any amenable groupoid is $G$-elementary, with respect to any cocycle. 

Recall that whenever $K$ is a compact metrizable space, we equip $\Prob(K)$ with the weak-$*$ topology, coming from the duality with the space of continuous real-valued functions on $K$ given by the Riesz-Markov-Kakutani theorem. The reader is refered to \cite[Chapter~17]{Kec} for a detailed analysis of the induced Borel structure on $\Prob(K)$, which will justify the measurability of all maps appearing in the proof of the following lemma.

\begin{Lmm}\label{lemma:alternative}
  Let $G$ be a graph product of countably infinite groups over a finite irreducible simple graph $\Gamma$ not reduced to a vertex. Let $\calg$ be a measured groupoid over a standard probability space $X$, and let $\rho:\calg\to G$ be a cocycle.
    
  If for all $v\in V\Gamma$ there exists a $(\calg,\rho)$-equivariant Borel map $X\to\Prob(V(T_v)\cup\partial_\infty T_v)$, then $(\calg,\rho)$ is $G$-elementary. 
\end{Lmm}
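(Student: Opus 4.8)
The plan is to analyze, for each fixed $v\in V\Gamma$, the given $(\calg,\rho)$-equivariant Borel map $\Phi_v:X\to\Prob(V(T_v)\cup\partial_\infty T_v)$ and to extract canonically from each measure $\mu=\Phi_v(x)$ either a proper parabolic subgroup (which will feed the properly $\mathbb{P}_G$-supported piece) or a set of at most two \emph{regular} ends (which will feed the boundary $G$-elementary piece). The central tool will be a canonical, $G$-equivariant, Borel dichotomy for probability measures on the compact space $K_v:=V(T_v)\cup\partial_\infty T_v$: for every $\mu\in\Prob(K_v)$, either there is a canonical nonempty finite subset $F(\mu)\subseteq V(T_v)$, or $\mu$ gives full mass to a canonical set $S(\mu)\subseteq\partial_\infty T_v$ of cardinality at most $2$. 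This is the tree analogue of Adams' argument, already used in similar contexts (see \cite[Section~5.2]{HH-Higman} and \cite{Kid-memoir}); I would establish it by distinguishing whether $\mu$ charges $V(T_v)$ (then take the finite set of vertices of maximal weight), whether it has boundary atoms (take the ends of maximal weight, reducing to a finite vertex set via medians of triples as soon as there are at least three of them), or is non-atomic on the boundary (then the maximal-shadow function on $V(T_v)$ attains its minimum on a finite set, which yields $F(\mu)$).

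Granting this dichotomy, I would process the two branches as follows. In the first branch, composing $F$ with the circumcenter map yields a $(\calg,\rho)$-equivariant Borel map from the relevant part of $X$ to the set of cells (vertices and edges) of $T_v$; since the Bass--Serre tree of $G=G_{\st(v)}\ast_{G_{\lk(v)}}G_{\Gamma\setminus\{v\}}$ is bipartite, $G$ acts without inversions, so the stabilizer of each cell is a proper parabolic subgroup. Partitioning this part of $X$ according to the countably many values of this equivariant cell-map then shows, via the usual computation $\rho(\grpdg)\in\Stab(\mathrm{cell})$, that $(\calg,\rho)$ is properly $\mathbb{P}_G$-supported there. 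In the second branch I would use the equivariant map $\theta_v:\partial_\infty T_v\to\mathbb{P}_G$ of Lemma~\ref{lemma:elliptic-stabilizer}: if every end of $S(\mu)$ is regular (lies in $(\partial_\infty T_v)^{\reg}$), then $x\mapsto S(\Phi_v(x))$ is exactly the desired map into $\calp_{\le 2}((\partial_\infty T_v)^{\reg})$; if some end $\xi\in S(\mu)$ is irregular, I would replace it by the canonical entry vertex $p_\xi$ at which the geodesic ray toward $\xi$ enters the fixed-point subtree of the nontrivial proper parabolic $\theta_v(\xi)$, and feed the resulting canonical finite vertex set back into the circumcenter construction to again obtain proper support. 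Irreducibility of $\Gamma$ is used here to guarantee that the parabolic subgroups produced (for instance the normalizers $P\times P^{\perp}$ from Section~\ref{sec:first-facts}) are indeed proper, and that $(\partial_\infty T_v)^{\reg}$ is meaningful (Lemma~\ref{lemma:reg-nonempty}).

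It then remains to combine the finitely many vertices $v$. I would let $X_2$ be the set of $x$ for which $\Phi_v(x)$ falls in the ``at most two regular ends'' branch for \emph{every} $v\in V\Gamma$, and set $X_1:=X\setminus X_2$. Since $\theta_v$ is equivariant, regularity is preserved by $\rho$, and the whole dichotomy assignment is canonical, these conditions are $\calg$-invariant, so $X_1$ and $X_2$ are ($\calg$-invariant) Borel sets. On $X_2$ the maps $x\mapsto S(\Phi_v(x))$ give, for each $v$, a (stably) $(\calg_{|X_2},\rho)$-equivariant Borel map $X_2\to\calp_{\le 2}((\partial_\infty T_v)^{\reg})$, so $(\calg_{|X_2},\rho)$ is boundary $G$-elementary. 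On $X_1$, every $x$ lies in a properly supported branch for at least one $v$; partitioning $X_1$ according to a choice of such $v$ and then refining the finitely many resulting countable partitions shows that $(\calg_{|X_1},\rho)$ is properly $\mathbb{P}_G$-supported. This produces the partition $X=X_1\sqcup X_2$ demanded by Definition~\ref{de:elementary}.

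The main obstacle I anticipate is the fully measurable and $G$-equivariant implementation of the tree-measure dichotomy, and especially the treatment of measures that concentrate on irregular ends: one must verify that the entry-vertex assignment $\xi\mapsto p_\xi$, together with the maps $F$, $S$, the circumcenter and the maximal-shadow constructions, all depend Borel-measurably on $\mu$. I would justify this using the standard description of the Borel structure on $\Prob(K_v)$ \cite[Chapter~17]{Kec} together with measurable selection, while the algebraic inputs (cell stabilizers are proper parabolics, intersections and normalizers of proper parabolics are proper parabolics) are already available from Section~\ref{sec:first-facts} and the no-inversion property of Bass--Serre trees.
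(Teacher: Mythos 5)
Your overall architecture is sound and runs parallel to the paper's Adams-style argument: the same tools (argmax on countable sets, medians/circumcenters pushed to $V(T_v)$, the elliptic-stabilizer map $\theta_v$, and irreducibility to make normalizers proper) appear in both. However, there is a genuine gap in your treatment of irregular ends. The ``canonical entry vertex $p_\xi$ at which the geodesic ray toward $\xi$ enters the fixed-point subtree of $\theta_v(\xi)$'' is not well-defined: there is no basepoint, hence no ``the'' geodesic ray toward $\xi$, and the fixed subtree $\mathrm{Fix}(\theta_v(\xi))$ contains an entire subray toward $\xi$ (it can be a large subtree with many ends, e.g.\ when several ends share the same elliptic stabilizer), so a ray does not ``enter'' it at a canonical vertex --- any candidate entry point depends on the choice of ray. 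Since your whole strategy for the properly-supported piece rests on producing equivariant maps into \emph{countable} sets and then partitioning by their values, this step fails as written, and it is precisely the step your construction was designed to reduce the irregular case to.

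The repair is to do what the paper does at this point: exploit that $\mathbb{P}_G$ is countable. Map the irregular part of the measure forward under $\theta_v$ (the paper restricts $\nu(x)$ to $(\partial_\infty T_v)^{\para}$, renormalizes, and pushes forward, which also covers measures whose irregular part is non-atomic, not just your $\le 2$ distinguished ends), take the finite set of atoms of maximal mass in $\mathbb{P}_G\setminus\{1,G\}$, and partition by its value. This leaves an algebraic step you never address: the cocycle image then lies in the \emph{setwise} stabilizer $H$ of a finite set of proper non-trivial parabolics, and one must show $H$ sits inside a proper parabolic. This is not covered by ``normalizers of proper parabolics are proper'': one passes to the finite-index subgroup $H^0\le H$ fixing each $Q_i$, gets $H^0\subseteq Q_1\times Q_1^{\perp}$ (proper by irreducibility and Lemma~\ref{lemma:elliptic-stabilizer}), and then needs Lemma~\ref{lemma:finite-index-0} to upgrade from $H^0$ to $H$. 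A further, more minor, point: your ``maximal-shadow'' construction for non-atomic boundary measures is delicate here because $T_v$ is \emph{not} locally finite (vertex groups are infinite), and it is also unnecessary --- whenever the boundary part of the support has at least three points, pushing $\mu^{\otimes 3}$ restricted to distinct triples through the median map gives a measure on the countable set $V(T_v)$, and its maximal atoms form a canonical finite set. The paper sidesteps both issues structurally: it takes $X_1$ maximal properly supported, so that $\calg_{|X_2}$ is tightly $G$-supported, and then all three steps on $X_2$ are contradiction arguments rather than pointwise canonical constructions; you may find that organization strictly easier to make rigorous than a global equivariant dichotomy on $\Prob(V(T_v)\cup\partial_\infty T_v)$.
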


\begin{Rq}\label{rk:amenable}
    In view of Proposition~\ref{prop:amenable-prob}, \cpageref{prop:amenable-prob}
    the assumption of the lemma regarding the existence of a $(\calg,\rho)$-equivariant Borel map $X\to \Prob(V(T_v)\cup\partial_\infty T_v)$ is satisfied whenever $\calg$ is amenable.
\end{Rq}

\begin{proof}
Let $X_1\subseteq X$ be a Borel subset of maximal measure such that $(\calg_{|X_1},\rho)$ is properly $\mathbb{P}_G$-supported. Let $X_2=X\setminus X_1$. Then $(\calg_{|X_2},\rho)$ is tightly $G$-supported. We will prove that $(\calg_{|X_2},\rho)$ is boundary $G$-elementary. Let $v\in V\Gamma$. 

\smallskip

\noindent\textbf{Step 1} Let us prove that for all positive measure Borel subsets $U\subseteq X_2$, there does not exist any $(\calg_{|U},\rho)$-equivariant Borel map $U\to\Prob(V(T_v))$.
\smallskip

Assume towards a contradiction that there exist a positive measure Borel subset $U\subseteq X_2$, and a $(\calg_{|U},\rho)$-equivariant Borel map $U\to\Prob(V(T_v))$. We first build a $(\calG_{|U},\rho)$-equivariant Borel map from $U$ to $V(T_v)$, the construction is summed up in \cref{fig:DemoElementary}.

As $V(T_v)$ is countable, there exists a $G$-equivariant Borel map $\Prob(V(T_v))\to\calp_{<\infty}(V(T_v))$, sending a probability measure $\mu$ to the nonempty finite set consisting of all vertices with maximal $\mu$-measure. 
Using that every bounded set of a $\mathrm{CAT}(0)$ space has a unique circumcenter \cite[Proposition~II.2.7]{BH}, we have a $G$-equivariant map $\calp_{<\infty}(V(T_v))\to T_v$.  %V(T_v)\cup E(T_v)$, where $E(T_v)$ denotes the set of unoriented edges of $T_v$. 
As the action of $G$ on $T_v$ is without edge inversion, there is also a $G$-equivariant map $T_v\to V(T_v)$: one can indeed choose a $G$-invariant orientation of the edges of $T_v$, send every vertex $v$ to itself, and every point in the interior of an edge $e$ to the origin of $e$. % assign to every edge its origin.
Altogether, we obtain a Borel $(\calg_{|U},\rho)$-equivariant map $U\to V(T_v)$. 

Let now $U'\subseteq U$ be a positive measure Borel subset where this map is constant (this exists because $V(T_v)$ is countable). As no vertex of $\Gamma$ is joined to all other vertices, vertex stabilizers of $T_v$ are proper parabolic subgroups of $G$ (see \cpageref{NotationTv}). Therefore $\rho(\calg_{|U'})$ is contained in a proper parabolic subgroup of $G$, which contradicts the fact that $(\calg_{|X_2},\rho)$ is tightly $G$-supported. This completes Step~1.

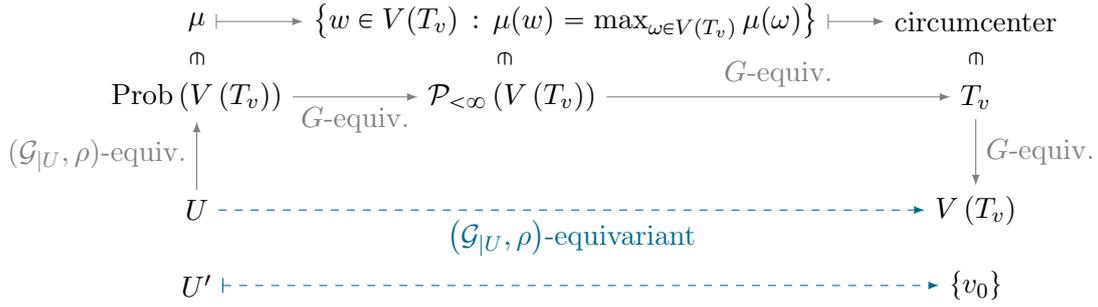
\begin{figure}
    \centering
    \begin{tikzpicture}
  % Mu
  \node (mu) at (-0.25,2) {$\mu$};
  \node[rotate=-90] (muin) at (-0.25,1.5) {$\in$};
  % Image
  \node (immu) at (4.6,2) {$\left\{ w\in V(T_v)\, : \, \mu(w)=\max_{\omega\in V(T_v)}\mu(\omega) \right\}$};
  \node[rotate=-90] (in) at (3.8,1.5) {$\in$};
  \node (circum) at (10,2){circumcenter};
  \node[rotate=-90] (in) at (10,1.5) {$\in$};
  \draw[{Bar[]}->,>=latex,black!50] (mu)--(immu);
  \draw[{Bar[]}->,>=latex,black!50] (immu)--(circum);
  \begin{scope}[shift={(0,-0.5)}]
    \node (XU) at (-0.25,0) {$U$};
    \node (V) at (10,0) {$V\left(T_v  \right)$};
    % % Ligne 2
    \node (Up) at (-0.25,-1) {$U^\prime$};
    \node (vzero) at (10,-1){$\left\{v_0  \right\}$};
    \draw[DeepSkyBlue4,{Bar[]}->,>=latex,dashed] (Up)--(vzero);
  \end{scope}
  \node (proba) at (-0.25,1) {$\mathrm{Prob}\left( V\left(T_v  \right) \right)$};
  \node (pcal) at (3.9,1) {$\mathcal{P}_{<\infty}\left( V\left(T_v \right) \right)$};
  \node (VE) at (10,1) {$T_v$};
  \draw[->,>=latex,black!50] (XU)--(proba)node[midway,anchor=east]{$(\mathcal{G}_{|U},\rho)$-equiv.};
  \draw[->,>=latex,black!50] (proba)--(pcal)node[midway,below,anchor=north]{$G$-equiv.};
  \draw[->,>=latex,black!50] (pcal)--(VE) node[midway,above,anchor=south]{$G$-equiv.};
  \draw[->,>=latex,black!50] (VE)--(V)node[midway,anchor=west]{$G$-equiv.};
  \draw[->,>=latex,DeepSkyBlue4,dashed] (XU) --(V)node[midway,anchor=north,below] {$\left( \mathcal{G}_{|U},\rho \right)$-equivariant};
\end{tikzpicture}
    \caption{Construction of the equivariant map from $U\subseteq X_2$ to $V(T_v)$ in Step~1 of the proof of Lemma~\ref{lemma:alternative}.}
    \label{fig:DemoElementary}
\end{figure}

\smallskip

By assumption we have a $(\calg,\rho)$-equivariant Borel map $X\to \Prob(V(T_v)\cup\partial_\infty T_v)$. Therefore, by Step 1, up to replacing $X_2$ by a conull Borel subset, we have a $(\calg_{|X_2},\rho)$-equivariant Borel map $\nu:X_2\to\Prob(\partial_\infty T_v)$.

\smallskip

\noindent\textbf{Step 2} We prove that for almost every $x\in X_2$, the probability measure $\nu(x)$ gives full measure to $(\partial_\infty T_v)^{\reg}$. 
\smallskip

Let $(\partial_\infty T_v)^{\para}:=\partial_\infty T_v\setminus (\partial_\infty T_v)^{\reg}$\label{Def:Tvpara}. Recall that there exists a $G$-equivariant Borel map $\theta_v:(\partial_\infty T_v)^{\para}\to\mathbb{P}_G\setminus\{1\}$, where $\mathbb{P}_G$ is the set of parabolic subgroups of $G$, sending every $\xi\in (\partial_\infty T_v)^{\para}$ to its elliptic stabilizer (see above \cref{lemma:reg-nonempty}, \cpageref{lemma:reg-nonempty}). Since $\Gamma$ is irreducible we can apply \cref{lemma:elliptic-stabilizer}, to obtain that $\theta_v(\xi)\neq G$ for all $\xi$.

Assume towards a contradiction that there exists a positive measure Borel subset $U\subseteq X_2$ such that $\nu(x)((\partial_\infty T_v)^{\para})>0$ for all $x\in U$. We now restrict each of the probability measures $\nu(x)$ to $(\partial_\infty T_v)^{\para}$, renormalize it to get a probability measure, and push it forward through the map $\theta_v:(\partial_\infty T_v)^{\para}\to\mathbb{P}_G\setminus\{1,G\}$. This yields a Borel $(\calg_{|U},\rho)$-equivariant map $U\to\Prob(\mathbb{P}_G\setminus\{1,G\})$. As $\mathbb{P}_G$ is countable, as above, we deduce a Borel $(\calg_{|U},\rho)$-equivariant map $U\to\calp_{<\infty}(\mathbb{P}_G\setminus\{1,G\})$. Therefore, there exist a positive measure Borel subset $U'\subseteq U$ and a non-empty finite set $\mathcal{Q}=\{Q_1,\dots,Q_k\}$ of proper non-trivial parabolic subgroups of $G$ such that, denoting by $H$ the setwise stabilizer of $\mathcal{Q}$ (for the conjugation action of $G$ on $\mathbb{P}_G$), one has $\rho(\calg_{|U'})\subseteq H$. Let $H^0$ be the finite-index subgroup of $H$ that fixes each $Q_i$ (again, for the conjugation action of $H$ on $\mathbb{P}_G$). Thus $H^0$ normalizes $Q_1$, i.e.\ $H^0\subseteq Q_1\times Q_1^{\perp}$. As $\Gamma$ is irreducible, $Q_1\times Q_1^{\perp}$ is a proper parabolic subgroup of $G$. Lemma~\ref{lemma:finite-index-0}, \cpageref{lemma:finite-index-0} thus ensures that $H$ is contained in a proper parabolic subgroup $Q$ of $G$. Thus $\rho(\calg_{|U'})\subseteq Q$, contradicting the fact that $(\calg_{|X_2},\rho)$ is tightly $G$-supported. This completes Step~2. 
\smallskip

\noindent \textbf{Step 3} We now prove that for almost every $x\in X_2$, the support of $\nu(x)$ has cardinality at most $2$. 

\smallskip

Otherwise, let $U\subseteq X_2$ be a positive measure Borel subset such that for every $x\in U$, the support of $\nu(x)$ has cardinality at least $3$. Then for every $x\in U$, the probability measure $\nu(x)\otimes\nu(x)\otimes\nu(x)$ on $(\partial_\infty T_v)^3$ gives positive measure to the subset $(\partial_\infty T_v)^{(3)}$ consisting of pairwise distinct triples. After renormalizing these measures, we obtain a Borel $(\calg_{|U},\rho)$-equivariant map $U\to\Prob((\partial_\infty T_v)^{(3)})$. Pushing this map forward through the continuous barycenter map $(\partial_\infty T_v)^{(3)}\to V(T_v)$, we derive a Borel $(\calg_{|U},\rho)$-equivariant map $U\to \Prob(V(T_v))$, which is a contradiction to Step~1. This completes Step~3.

\smallskip

By considering the support of $\nu(x)$, we deduce a Borel $(\calg_{|X_2},\rho)$-equivariant map $X_2\to\calp_{\le 2}((\partial_\infty T_v)^{\reg})$, as desired.
\end{proof}

\begin{Lmm}\label{lemma:maximal-map}
 Let $G$ be a graph product of countably infinite groups over a finite irreducible simple graph, not reduced to one vertex. Let $\calg$ be a measured groupoid over a standard probability space~$X$, equipped with a cocycle $\rho:\calg\to G$. Let $\cala\subseteq\calg$ be a measured subgroupoid, and assume that $(\cala,\rho)$ is boundary $G$-elementary and tightly $G$-supported.

Then there exists an essentially unique stably $(\cala,\rho)$-equivariant measurable map $\theta_{v,\max}: X\to \calp_{\le 2}((\partial_\infty T_v)^{\reg})$ such that for every stably $(\cala,\rho)$-equivariant measurable map $\theta:X\to\calp_{\le 2}((\partial_\infty T_v)^{\reg})$ and almost every $x\in X$, one has $\theta(x)\subseteq\theta_{v,\max}(x)$.

If in addition $\calh\subseteq\calg$ is a measured subgroupoid that stably normalizes $\cala$, then the map $\theta_{v,\max}$ is also stably $(\calh,\rho)$-equivariant.
\end{Lmm}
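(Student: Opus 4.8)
The plan is to build $\theta_{v,\max}$ as an essential supremum over the family of all stably $(\cala,\rho)$-equivariant maps into $\calp_{\le 2}((\partial_\infty T_v)^{\reg})$, the crux being that the pointwise union of any two such maps remains valued in $\calp_{\le 2}$. Fix $v\in V\Gamma$. Since $\Gamma$ is irreducible and not reduced to a vertex, no vertex is joined to every other, so the splitting defining $T_v$ is non-trivial and every vertex stabilizer of $T_v$ is a proper parabolic subgroup of $G$. Together with the hypothesis that $(\cala,\rho)$ is tightly $G$-supported, this gives the basic constraint I will use repeatedly: for no positive measure Borel subset $U'\subseteq X$ can there exist a stably $(\cala_{|U'},\rho)$-equivariant map into $V(T_v)$. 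Indeed such a map would be constant and equivariant on a positive measure piece (as $V(T_v)$ is countable and the defining partition is countable), forcing $\rho(\cala_{|U'})$ into a vertex stabilizer, hence into a proper parabolic, contradicting tight support. Finally, boundary $G$-elementarity of $(\cala,\rho)$ (Definition~\ref{de:elementary}) ensures the family $\mathcal{F}_v$ of stably $(\cala,\rho)$-equivariant maps $X\to\calp_{\le 2}((\partial_\infty T_v)^{\reg})$ is non-empty.

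The key step is to show that for $\theta_1,\theta_2\in\mathcal{F}_v$ one has $|\theta_1(x)\cup\theta_2(x)|\le 2$ for a.e.\ $x$. If this failed, there would be a positive measure set $U$ on which $\Sigma(x):=\theta_1(x)\cup\theta_2(x)$ consists of at least three distinct points of $(\partial_\infty T_v)^{\reg}$. Assigning to $\Sigma(x)$ the finite set of medians of its pairwise distinct triples — using the continuous equivariant median map $(\partial_\infty T_v)^{(3)}\to T_v$ followed by the equivariant retraction $T_v\to V(T_v)$ built in Step~1 of the proof of Lemma~\ref{lemma:alternative}, and then the circumcenter of a finite vertex set — produces a stably $(\cala_{|U},\rho)$-equivariant map $U\to V(T_v)$, contradicting the basic constraint. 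Hence $\mathcal{F}_v$ is closed under pointwise union, i.e.\ it is a lattice. With this in hand I maximize $s(\theta):=\int_X|\theta(x)|\,d\mu(x)\in[0,2]$ over $\mathcal{F}_v$: choosing $\theta_n$ with $s(\theta_n)\to\sup$, the increasing unions $\Theta_n=\theta_1\vee\cdots\vee\theta_n$ have cardinality $\le 2$, hence stabilize pointwise to a limit $\theta_{v,\max}$, which lies in $\mathcal{F}_v$ (stable equivariance is inherited by partitioning $X$ according to the stabilization index and the equivariance partitions of the $\Theta_k$) and satisfies $s(\theta_{v,\max})=\sup$. For any $\theta\in\mathcal{F}_v$, maximality of $s$ gives $s(\theta_{v,\max}\vee\theta)=s(\theta_{v,\max})$, whence $\theta(x)\subseteq\theta_{v,\max}(x)$ a.e.; uniqueness is immediate.

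Before treating the normalizer, I record a local form of maximality: for any positive measure $W\subseteq X$ and any $(\cala_{|W},\rho)$-equivariant map $\theta:W\to\calp_{\le 2}((\partial_\infty T_v)^{\reg})$, one has $\theta\subseteq\theta_{v,\max}$ on $W$. This follows by gluing $\theta_{v,\max}|_W\vee\theta$ on $W$ with $\theta_{v,\max}$ on $X\setminus W$: the partition $\{W,X\setminus W\}$ keeps the glued map stably $(\cala,\rho)$-equivariant, the union step keeps it valued in $\calp_{\le 2}$, and global maximality then forces $\theta\subseteq\theta_{v,\max}$ on $W$.

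For the normalizer statement, use that $\calh$ stably normalizes $\cala$ to cover $\calh$ by countably many Borel bisections $B$ for which $\cala$ is $B$-invariant. Transport $\theta_{v,\max}$ through $B$ by setting $\theta'_B(r(\grpdh))=\rho(\grpdh)\,\theta_{v,\max}(s(\grpdh))$ for $\grpdh\in B$. If $\grpda\in\cala$ has both endpoints in $r(B)$ and $\grpdh_1,\grpdh_2\in B$ lie above them, then $B$-invariance gives $\grpdh_2^{-1}\grpda\grpdh_1\in\cala$, and a direct computation using the equivariance of $\theta_{v,\max}$ shows $\theta'_B$ is $(\cala_{|r(B)},\rho)$-equivariant; the same applies to $B^{-1}$ on $s(B)$, using that $\cala$ is also $B^{-1}$-invariant. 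Local maximality yields $\theta'_B\subseteq\theta_{v,\max}$ on $r(B)$ and $\theta'_{B^{-1}}\subseteq\theta_{v,\max}$ on $s(B)$, and then the chain
\[\theta_{v,\max}(r(\grpdh))=\rho(\grpdh)\,\theta'_{B^{-1}}(s(\grpdh))\subseteq\rho(\grpdh)\,\theta_{v,\max}(s(\grpdh))=\theta'_B(r(\grpdh))\subseteq\theta_{v,\max}(r(\grpdh))\]
forces all containments to be equalities, so $\theta_{v,\max}(r(\grpdh))=\rho(\grpdh)\,\theta_{v,\max}(s(\grpdh))$ for a.e.\ $\grpdh\in B$. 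Ranging over the countable cover of $\calh$ gives stable $(\calh,\rho)$-equivariance. I expect the main obstacle to be the key step that unions stay in $\calp_{\le 2}$, since the essential-supremum construction, the local maximality, and the normalizer transport all rest on the lattice structure it provides; the tree geometry (regular boundary, the median map, and the absence of a fixed vertex coming from tight support) is exactly what makes that step work.
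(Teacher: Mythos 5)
Your proof is correct and follows essentially the same route as the paper's: the same key step (pairwise unions of stably equivariant $\calp_{\le 2}$-valued maps remain in $\calp_{\le 2}$, via an equivariant median/barycenter map to $V(T_v)$ whose existence would contradict tight $G$-support), an exhaustion argument producing $\theta_{v,\max}$, a local maximality observation, and transport through bisections covering the normalizer. The only cosmetic differences are that you maximize $\int_X|\theta(x)|\,d\mu$ where the paper maximizes the measure of the locus carrying a cardinality-exactly-$2$ equivariant map (using boundary elementarity on the complement), and that in the normalizer step you run a two-sided sandwich with $B$ and $B^{-1}$ combined with local maximality, where the paper instead notes the transported map is itself maximal and invokes essential uniqueness of the maximal map on the source of each bisection --- the same argument in substance.
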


The argument for the existence and essential uniqueness of $\theta_{v,\max}$ comes from the work of Adams \cite[Lemmas~3.2 and~3.3]{Ada}; we review it here in our setting for convenience, following the presentation from \cite[Lemma~12.12]{GH-OutFn}. The argument for its invariance under a subgroupoid that stably normalizes $\cala$ comes from \cite[Lemma~3.4]{Ada}, we will follow \cite[Corollary~12.13]{GH-OutFn}.

\begin{proof}
The essential uniqueness is clear: if $\theta_{v,\max}^1$ and $\theta_{v,\max}^2$ are two such maps, then for almost every $x\in X$, one has $\theta_{v,\max}^1(x)\subseteq\theta_{v,\max}^2(x)$ and $\theta_{v,\max}^2(x)\subseteq\theta_{v,\max}^1(x)$.

For the existence, we first notice that if $\theta,\theta':X\to\calp_{\le 2}(\partial_\infty T_v)^{\reg}$ are two $(\calg,\rho)$-equivariant Borel maps, then for almost every $x\in X$, the subset $\theta(x)\cup\theta'(x)$ of $(\partial_\infty T_v)^{\reg}$ has cardinality at most $2$. Indeed otherwise, the barycenter argument as in Step~3 of the previous proof would yield a contradiction to the fact that $(\calg,\rho)$ is tightly $G$-supported.

Let now $U\subseteq X$ be a Borel subset of maximal measure such that there exists a stably $(\calg_{|U},\rho)$-equivariant Borel map $\varphi:U\to \calp_{=2}((\partial_\infty T_v)^{\reg})$, where $\calp_{=2}((\partial_\infty T_v)^{\reg})$ is the set of all subsets of $(\partial_\infty T_v)^{\reg}$ of cardinality exactly $2$. 
Furthermore, by boundary $G$-elementarity of $(\cala,\rho)$, there also exists a stably $(\calg_{|X\setminus U},\rho)$-equivariant Borel map $\psi:X\setminus U\to (\partial_\infty T_v)^{\reg}$.
We let $\theta_{v,\max}$ be the map that coincides with $\varphi$ on $U$ and with $\psi$ on $X\setminus U$, and we claim that it satisfies the first conclusion of the lemma. Indeed, let $\theta:X\to\calp_{\le 2}((\partial_\infty T_v)^{\reg})$ be a stably $(\cala,\rho)$-equivariant map. Then $\theta\cup\theta_{v,\max}$ is again $(\cala,\rho)$-equivariant, and therefore $\theta(x)\cup\theta_{v,\max}(x)$ must have cardinality $2$ on $U$ and cardinality $1$ on $X\setminus U$ (almost everywhere). It follows that $\theta(x)\subseteq\theta_{v,\max}(x)$ almost everywhere. 

For future use, we also make the following observation.
\smallskip

\noindent \textbf{Observation} For every positive measure Borel subset $U\subseteq X$, the map $(\theta_{v,\max})_{|U}$ is also the essentially unique maximal stably $(\cala_{|U},\rho)$-equivariant Borel map $U\to\calp_{\le 2}((\partial_\infty T_v)^{\reg})$.

\smallskip
 
 Let now $\calh\subseteq\calg$ be a measured subgroupoid that stably normalizes $\cala$. Using the above observation, we can (and will) assume up to a countable partition of $X$ that $\calh$ normalizes $\cala$.

By definition of normalization between measured subgroupoids, we can write $\calh$ as a countable union of Borel subsets $B_n$ such that for every $n\in\mathbb{N}$,
\begin{itemize}
\item $B_n$ induces a Borel isomorphism $f_n:U_n\to V_n$ between its source $U_n:=s(B_n)$ and its range $V_n:=r(B_n)$, and 
\item for every $\mathbf{a}\in\calg$ and every $\mathbf{b}_1,\mathbf{b}_2\in B_n$, if the composition $\mathbf{b}_1\mathbf{a}\mathbf{b}_2^{-1}$ is well-defined, then $\mathbf{a}\in\cala$ if and only if $\mathbf{b}_1\mathbf{a}\mathbf{b}_2^{-1}\in\cala$. 
\end{itemize}
Up to subdividing $B_n$ if necessary, we will also assume without loss of generality that the cocycle $\rho$ takes a single value on $B_n$, say $\rho(B_n)=\{g_n\}$. 

Let now $n\in\mathbb{N}$ be such that $U_n$ has positive measure. Then the map
\[\begin{array}{cccc}
    \theta'_{v,\max}: & U_n & \to & \calp_{\le 2}((\partial_\infty T_v)^{\reg}) \\
     & x & \mapsto & g_n^{-1}\theta_{v,\max}(f_n(x))
\end{array}
\]
is a maximal stably $(\cala_{|U_n},\rho)$-equivariant Borel map. So by the above observation, it coincides with $\theta_{v,\max}$ on a conull Borel subset of $U_n$. As this is true for every $n\in\mathbb{N}$ such that $U_n$ has positive measure, and the corresponding sets $B_n$ cover $\calh_{|X^*}$ for a conull Borel subset $X^*\subseteq X$, it follows that the map $\theta_{v,\max}$ is (stably) $(\calh,\rho)$-equivariant. 
\end{proof}

\begin{Lmm}\label{lemma:elementary-normal}
Let $G$ be a graph product of countably infinite groups over a finite simple graph. Let $\calg$ be a measured groupoid over a standard probability space $X$, equipped with a strict cocycle $\rho:\calg\to G$. Let $P\subseteq G$ be a parabolic subgroup whose type is irreducible, and let $\rho_P:\calg\to P$ be the cocycle obtained by postcomposing $\rho$ by the natural retraction $G\to P$. Let $\cala,\calh\subseteq\calg$ be measured groupoids, with $\cala$ normalized by $\calh$.  

If $(\cala,\rho_P)$ is $P$-elementary and $(\rho_P)_{|\cala}$ is nowhere trivial, then $(\calh,\rho_P)$ is $P$-elementary.
\end{Lmm}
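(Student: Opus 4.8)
The plan is to decompose the hypothesis using the normal form of Remark~\ref{Rq:PartitionGElementary} and then treat the two resulting pieces with the two relevant transfer lemmas. Throughout, recall that $P$, being parabolic of irreducible type $\Lambda$, is itself a graph product of countably infinite groups over the irreducible graph $\Lambda$, and that $\rho_P$ is a (strict) cocycle $\calg\to P$; all the notions from Definition~\ref{de:elementary} are understood relative to this graph product structure, with $T_v$ the Bass--Serre tree of the splitting of $P$ for $v\in V\Lambda$. We may assume that $\Lambda$ is not reduced to a single vertex: in that degenerate case $P$ has no proper nontrivial parabolic subgroup and $\partial_\infty T_v=\emptyset$, so $P$-elementarity of $(\cala,\rho_P)$ would force $(\cala,\rho_P)$ to be stably trivial, contradicting the nowhere-triviality of $(\rho_P)_{|\cala}$ unless $X$ is null, in which case there is nothing to prove.

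First I would invoke Remark~\ref{Rq:PartitionGElementary}: since $(\cala,\rho_P)$ is $P$-elementary, there is a Borel partition $X=X_1\dunion X_2$ for which $(\cala_{|X_1},\rho_P)$ is properly $\mathbb{P}_P$-supported while $(\cala_{|X_2},\rho_P)$ is tightly $P$-supported and boundary $P$-elementary. The goal then becomes to establish that $(\calh_{|X_1},\rho_P)$ is properly $\mathbb{P}_P$-supported and that $(\calh_{|X_2},\rho_P)$ is boundary $P$-elementary; over the common partition $X=X_1\dunion X_2$ these two facts give exactly that $(\calh,\rho_P)$ is $P$-elementary.

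On $X_1$, restricting to $\calg_{|X_1}$ the countable cover witnessing that $\calh$ normalizes $\cala$ shows that $\calh_{|X_1}$ normalizes $\cala_{|X_1}$, and the restriction of the nowhere trivial cocycle $(\rho_P)_{|\cala}$ to the positive measure set $X_1$ is again nowhere trivial. Lemma~\ref{lemma:properly-supported}, applied to the graph product $P$ over the irreducible graph $\Lambda$ with base space $X_1$, then yields that $(\calh_{|X_1},\rho_P)$ is properly $\mathbb{P}_P$-supported. On $X_2$, the pair $(\cala_{|X_2},\rho_P)$ is simultaneously tightly $P$-supported and boundary $P$-elementary, so for each $v\in V\Lambda$ Lemma~\ref{lemma:maximal-map} supplies the essentially unique maximal stably $(\cala_{|X_2},\rho_P)$-equivariant Borel map $\theta_{v,\max}\colon X_2\to\calp_{\le 2}((\partial_\infty T_v)^{\reg})$. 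Since $\calh_{|X_2}$ normalizes $\cala_{|X_2}$, the last clause of Lemma~\ref{lemma:maximal-map} guarantees that each $\theta_{v,\max}$ is also stably $(\calh_{|X_2},\rho_P)$-equivariant; as $v$ ranges over $V\Lambda$, this is precisely the definition of $(\calh_{|X_2},\rho_P)$ being boundary $P$-elementary.

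The only genuinely nontrivial input is the invariance clause of Lemma~\ref{lemma:maximal-map}: the maximal equivariant map attached to $\cala$ is automatically equivariant under any subgroupoid that normalizes $\cala$, a fact ultimately resting on Adams' argument. I therefore expect the main point to be correctly matching hypotheses so that this lemma applies, namely ensuring tight $P$-support on $X_2$ (delivered by Remark~\ref{Rq:PartitionGElementary}) and verifying that normalization passes to the restriction $\calh_{|X_2}$. The remaining verifications, that normalization, proper and tight support, and nowhere-triviality all restrict correctly to the pieces $X_1$ and $X_2$, are routine.
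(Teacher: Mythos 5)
Your proof is correct and follows essentially the same route as the paper's: Remark~\ref{Rq:PartitionGElementary} to split $X=X_1\dunion X_2$, Lemma~\ref{lemma:properly-supported} applied to $P$ (viewed as a graph product over its irreducible type) on $X_1$, and the normalizer-invariance clause of Lemma~\ref{lemma:maximal-map} on $X_2$. Your preliminary reduction to the case where the type $\Lambda$ is not a single vertex is a harmless extra precaution (it is precisely what makes Lemma~\ref{lemma:maximal-map}, stated for graphs not reduced to one vertex, applicable), a point the paper leaves implicit.
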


\begin{proof}
Since $(\cala,\rho_P)$ is $P$-elementary, there exists a Borel partition $X=X_1\dunion X_2$ such that $(\cala_{|X_1},\rho_P)$ is properly $\mathbb{P}_P$-supported and $(\cala_{|X_2},\rho_P)$ is tightly $P$-supported and boundary $P$-elementary (see Remark~\ref{Rq:PartitionGElementary}).

Since $\cala$ is normalized by $\calh$ and the type of $P$ is irreducible, and $(\rho_P)_{|\cala}$ is nowhere trivial, Lemma~\ref{lemma:properly-supported} ensures that $(\calh_{|X_1},\rho_P)$ is properly $\mathbb{P}_P$-supported.    

Let $\Lambda\subseteq\Gamma$ be the type of $P$, and let $v\in V\Lambda$. Viewing $P$ as a graph product over $\Lambda$, we let $T_v^P$ be the splitting of $P$ associated to $v$ (see \cref{sec:trees}). Let  $\theta_{v,\max}:X_2\to\calp_{\le 2}((\partial_\infty T^P_v)^{\reg})$ be the map given by  Lemma~\ref{lemma:maximal-map}. Since $\cala$ is normalized by $\calh$, the map $\theta_{v,\max}$ is stably $(\calh_{|X_2},\rho_P)$-equivariant. This shows that $(\calh_{|X_2},\rho_P)$ is boundary $P$-elementary, which concludes our proof.
\end{proof}

\subsubsection{Exploiting normal amenable subgroupoids}\label{Sec:ExploitingNormalAmenable}

\begin{Lmm}\label{lemma:adams-argument-v2} Let $G$ be a graph product of countably infinite groups over a finite simple graph. Let $\calg$ be a measured groupoid over a standard probability space $X$, equipped with a cocycle $\rho:\calG\to G$. 

Let $\calA,\caln$ be measured subgroupoids of $\calg$. Let $A,N\subseteq G$ be parabolic subgroups. Assume that 
\begin{itemize}
\item $\cala$ is amenable and normalized by $\caln$, and $\caln$ is non-amenable,
\item $(\cala,\rho)$ is tightly $A$-supported, and $(\caln,\rho)$ is tightly $N$-supported, with $N\subseteq A$,
\item $\rho_{|\caln}$ has trivial kernel.
\end{itemize}
Then the clique factor $C_0$ of $A$ is non-amenable, and in fact $N\cap C_0$ is non-amenable.
\end{Lmm}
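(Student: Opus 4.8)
The statement to prove is \cref{lemma:adams-argument-v2}. Let me sketch the strategy.

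The setup is that $\cala$ is amenable and tightly $A$-supported, $\caln$ is non-amenable, tightly $N$-supported with $N\subseteq A$, normalizes $\cala$, and $\rho_{|\caln}$ has trivial kernel. We want to conclude that the clique factor $C_0$ of $A$ is non-amenable, and more precisely that $N\cap C_0$ is non-amenable. The broad idea, following Adams's argument as adapted by Kida and in \cite{HH21,GH-OutFn}, is that an amenable normal subgroupoid forces its normalizer to ``live'' (up to the relevant retraction) inside a product region determined by $A$, and then amenability considerations pin down where the non-amenability of $\caln$ must come from.

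\emph{Step 1: Locate the normalizer inside $A\times A^\perp$.} First I would apply \cref{lemma:support-normal} to the pair $(\cala,\caln)$: since $(\cala,\rho)$ is tightly $A$-supported and $\caln$ normalizes $\cala$, there is a conull Borel subset $X^*\subseteq X$ with $\rho(\caln_{|X^*})\subseteq A\times A^\perp$. Combined with the hypothesis that $(\caln,\rho)$ is tightly $N$-supported, this already gives $N\subseteq A\times A^\perp$; together with $N\subseteq A$ this is automatic, but the point is that we now understand the normalizing behaviour concretely inside the parabolic $A\times A^\perp$, whose type decomposes as a join. The decomposition $A=C_0\times F_1\times\dots\times F_m$ (clique factor times non-product factors, as in \cref{sec:products-and-factors}) is the combinatorial skeleton I would exploit.

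\emph{Step 2: Use elementarity on the non-clique factors.} For each non-clique factor $F_i$ of $A$, its type is irreducible and not a clique, so by \cref{lemma:nonelementary-example} the pair $(\calg',\rho_{F_i})$ associated to the retraction onto $F_i$ cannot be $F_i$-elementary for an action-like cocycle. The heart of the argument is that an \emph{amenable} groupoid, via \cref{rk:amenable} and \cref{lemma:alternative}, \emph{is} $F_i$-elementary. Projecting $\cala$ to each $F_i$ (postcomposing $\rho$ with the retraction $A\to F_i$, as in \cref{lemma:elementary-normal}), amenability of $\cala$ makes $(\cala,\rho_{F_i})$ $F_i$-elementary; then \cref{lemma:elementary-normal} propagates this to $(\caln,\rho_{F_i})$, so $(\caln,\rho_{F_i})$ is $F_i$-elementary. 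I would then argue, using that $\rho_{|\caln}$ has trivial kernel together with the tight $N$-support and \cref{lemma:amenable-subgroup-subgroupoid}, that the projection of $N$ to each $F_i$ must be amenable --- otherwise $\caln$'s non-elementarity on that factor contradicts the elementarity just derived. Thus $N$ projects to an amenable (in fact, since $F_i$ has no amenable non-trivial parabolic in the relevant sense, essentially trivial or elementary) image in each $F_i$.

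\emph{Step 3: Conclude the non-amenability lands in the clique factor.} Since $N$ is non-amenable (as $(\caln,\rho)$ is tightly $N$-supported with $\rho_{|\caln}$ of trivial kernel and $\caln$ non-amenable, forcing $N$ non-amenable via \cref{lemma:amenable-subgroup-subgroupoid}) but its projection to each non-clique factor $F_i$ is amenable, the non-amenability must survive in the projection to $C_0$. Using \cref{lemma:product-parabolic-subgroup}, the parabolic $N\subseteq A=C_0\times F_1\times\dots\times F_m$ splits as $N=(N\cap C_0)\times(N\cap F_1)\times\dots\times(N\cap F_m)$. Each $N\cap F_i$ is amenable by Step 2, so non-amenability of $N$ forces $N\cap C_0$ to be non-amenable, which in particular gives that $C_0$ itself is non-amenable.

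\emph{Main obstacle.} The delicate point I expect is Step 2: rigorously transferring elementarity from $\cala$ to $\caln$ factor-by-factor, and then converting ``$(\caln,\rho_{F_i})$ is $F_i$-elementary'' into the algebraic conclusion ``the projection of $N$ to $F_i$ is amenable.'' This requires carefully handling the retraction cocycles $\rho_{F_i}$ (they need not have trivial kernel after retraction, so the action-like hypotheses cannot be invoked naively), and using the nowhere-triviality condition in \cref{lemma:elementary-normal} --- which means I must first check that $(\rho_{F_i})_{|\cala}$ is nowhere trivial, possibly only after passing to a suitable positive-measure restriction and partitioning the base space. Managing these restrictions and ensuring the tight-support hypotheses persist under the projections is where the real bookkeeping lies.
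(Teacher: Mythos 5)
Your Step 1 and the first half of your Step 2 are sound: $(\cala,\rho_{F_i})$ is indeed boundary $F_i$-elementary (tight $A$-support of $(\cala,\rho)$ both rules out the properly supported part and gives the nowhere-triviality of $(\rho_{F_i})_{|\cala}$ needed in \cref{lemma:elementary-normal}), and elementarity does transfer to $(\caln,\rho_{F_i})$; this matches the paper's use of \cref{lemma:alternative} and \cref{lemma:maximal-map}. The gap is the second half of your Step 2: from ``$(\caln,\rho_{F_i})$ is $F_i$-elementary'' you cannot deduce that the \emph{group} $N\cap F_i$ is amenable. The only tool pointing that way, \cref{lemma:nonelementary-example}, requires an \emph{action-like} cocycle, and $(\rho_{F_i})_{|\caln}:\caln\to F_i$ is not one: its kernel is $\rho_{|\caln}^{-1}\bigl(N\cap(C_0\times\prod_{j\neq i}F_j)\bigr)$, which contains non-unit elements in all relevant situations, and the lemma only assumes $\rho_{|\caln}$ has trivial kernel, not that $\rho$ is action-like. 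Worse, your intermediate claim is actually \emph{false} under the hypotheses. Take $\Gamma$ the path $a-v-c$ with $G_v=\mathbb{Z}\times F_2$ and $G_a=G_c=\mathbb{Z}$, so $G=G_v\times(G_a\ast G_c)$, $C_0=G_v$, $F_1=G_a\ast G_c$; take a free p.m.p.\ $G$-action with $\calg=G\ltimes X$ and its natural (action-like) cocycle $\rho$, and set $A=N=G$, $\caln=\rho^{-1}\bigl(G_v\times\langle g_ag_c\rangle\bigr)$, $\cala=\rho^{-1}\bigl((\mathbb{Z}\times\{1\})\times\langle g_ag_c\rangle\bigr)$ with $g_a,g_c$ non-trivial. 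Using that $g_ag_c$ and its powers lie in no proper parabolic of $F_1$, and that $\mathbb{Z}\times\{1\}$ is central in $G_v$, one checks all hypotheses hold (amenability, tight supports, normalization via \cref{ex:normal}, trivial kernel); yet $N\cap F_1=F_1\cong F_2$ is non-amenable. So Step 2 cannot be repaired, and Step 3 collapses with it.

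The paper never makes a factor-wise statement about $N$. Instead it combines, over all $j$ and all $v\in V\Lambda_j$, the \emph{maximal} stably $(\cala,\rho_j)$-equivariant maps $\theta^j_{v,\max}$ of \cref{lemma:maximal-map} (which are stably $(\caln,\rho_j)$-equivariant precisely because $\caln$ normalizes $\cala$) into a single stably $(\caln,\rho)$-equivariant Borel map $X\to\prod_{j,v}\calp_{\le 2}((\partial_\infty T_v)^{\reg})$, on which $F_1\times\dots\times F_n$ acts Borel amenably by \cref{lemma:decomposition-boundary}. The conclusion is then by contradiction in the clique direction only: \emph{if} $N\cap C_0$ were amenable, the $N$-action on this product (which factors through the projection to $F_1\times\dots\times F_n$, with kernel $N\cap C_0$) would be Borel amenable, and since $\rho_{|\caln}$ has trivial kernel, \cite[Proposition~3.38]{GH-OutFn} would force $\caln$ to be amenable, contradicting the hypothesis. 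Note also that a groupoid-level salvage of your plan (``$\rho_{|\caln}^{-1}(N\cap F_i)$ is amenable for each $i$'') does not suffice either, since amenability of groupoids is not preserved under generation; the single product map is what makes the argument close.
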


\begin{proof}
 Up to a global conjugation of the cocycle $\rho$, we will assume without loss of generality that $A=G_\Lambda$ for some induced subgraph $\Lambda\subseteq\Gamma$.

Consider a join decomposition $\Lambda=\Lambda_0\circ\Lambda_1\circ\dots\circ\Lambda_n$, where $\Lambda_0$ is the clique factor of $\Lambda$, and $\Lambda_1,\dots,\Lambda_n$ do not admit any non-trivial join decomposition. Remark that for all $j\geq 1$, the graph $\Lambda_j$ contains at least $2$ vertices (otherwise it could be included in the clique factor $\Lambda_0$). This join decomposition of $\Lambda$ induces a direct product decomposition $A=C_0\times F_1\times\dots\times F_n$. Here $C_0$ is the clique factor of $A$.

Let $j\in\{1,\dots,n\}$. Up to replacing $X$ by a conull Borel subset, we can (and will) assume that $\rho(\cala)\subseteq A$. Let $\rho_j:\rho^{-1}(A)\to F_j$ be the cocycle obtained by postcomposing $\rho$ with the $j^{\text{th}}$ projection (see \cref{Fig:DemoAmenableGrpdg}). Since $N\subseteq A$, the cocycle $\rho_j$ is defined on $\cala$ and on $\caln$. Let $v\in V\Lambda_j$, and let $T_v$ be the Bass-Serre tree of the splitting of $F_j$ associated to $v$. Since $\cala$ is amenable, there exists an $(\cala,\rho_j)$-equivariant Borel map $\nu:X\to\Prob(\partial_\infty T_v\cup V(T_v))$, see Remark~\ref{rk:amenable}.

\begin{figure}
    \centering
    \includegraphics[width=\textwidth]{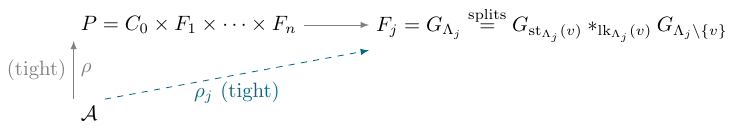}
    \caption{Definition of the map $\rho_j$}\label{Fig:DemoAmenableGrpdg}
\end{figure}

Since $(\cala,\rho)$ is tightly $A$-supported, necessarily $(\cala,\rho_j)$ is tightly $F_j$-supported. Therefore, Lemma~\ref{lemma:alternative} ensures that $(\cala,\rho_j)$ is boundary $F_j$-elementary. And Lemma~\ref{lemma:maximal-map} ensures that there is an essentially unique {maximal} Borel stably $(\cala,\rho_j)$-equivariant map $\theta^{j}_{v,\max}:X\to\calp_{\le 2}((\partial_\infty T_v)^{\reg})$, i.e.\ such that for any such map $\theta$ and almost every $x\in X$, one has $\theta(x)\subseteq\theta^{j}_{v,\max}(x)$. Since $\cala$ is normalized by $\caln$, Lemma~\ref{lemma:maximal-map} also ensures that $\theta^{j}_{v,\max}$ is stably $(\caln,\rho_j)$-equivariant.

Combining the maps $\theta^{j}_{v,\max}$ for all $j\in\{1,\dots,n\}$ and all $v\in V\Lambda_j$, we get a Borel stably $(\caln,\rho)$-equivariant map 
\begin{equation}\label{eq:Produit}
    X\to\prod_{j,v}\calp_{\le 2}((\partial_\infty T_v)^{\reg})
\end{equation}
By Lemma~\ref{lemma:decomposition-boundary}, \cpageref{lemma:decomposition-boundary}, for every $j\in\{1,\dots,n\}$, the action of $F_j$ on the non-empty set $(\partial_\infty T_v)^{\reg}$ is Borel amenable, and therefore so is its action on $\calp_{\le 2}((\partial_\infty T_v)^{\reg})$. So the action of $F_1\times\dots\times F_n$ on the above product is Borel amenable. 

We now prove that $N\cap C_0$ is not amenable, so assume towards a contradiction that it is. Then $N\subseteq (N\cap C_0)\times F_1\times\dots\times F_n$, and as $N\cap C_0$ is amenable it follows that the action of $N$ on the product in \cref{eq:Produit} is again Borel amenable. Since $\rho_{|\caln}$ has trivial kernel, \cite[Proposition~3.38]{GH-OutFn} (which recasts \cite[Proposition~4.33]{Kid-memoir}) ensures that $\caln$ is amenable, a contradiction.  
\end{proof}

%-----------
\section{The irreducible case}\label{sec:strongly-reduced}
In this section and the next, we prove the key Proposition~\ref{prop:vertex-recognition} establishing the Vertex Recognition Property for the class of graph products $G$ with countably infinite vertex groups over finite simple graphs $\Gamma$ with no transvection and no partial conjugation. In the present section, we only consider the case where $\Gamma$ is irreducible, in other words $G$ does not split as a direct product non-trivially. And in the next section, we will provide the extra arguments to allow for $\Gamma$ to be reducible. When $\Gamma$ is irreducible, Lemma~\ref{lemma:strongly-reduced} ensures that $\Gamma$ is strongly reduced in the sense of Definition~\ref{de:strongly-reduced}, that is, it is not possible to obtain a new decomposition of $G$ as a graph product by collapsing a proper subgraph of $\Gamma$. The main goal of the present section is to prove the following proposition.

\begin{Prop}\label{prop:vertex-recognition-strongly-reduced}
The class of all graph products of countably infinite groups over transvection-free strongly reduced finite simple graphs (not reduced to one vertex) satisfies the Vertex Recognition Property. 
\end{Prop}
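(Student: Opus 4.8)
The plan is to produce a characterization of subgroupoids of vertex type that is phrased entirely in the language of amenability, inclusion and normalization of measured subgroupoids, and hence makes no reference to the particular cocycle $\rho_G$ or $\rho_H$. Once such a cocycle-free characterization is in hand, the Vertex Recognition Property is immediate: since $\rho_G$ and $\rho_H$ are action-like cocycles on the \emph{same} groupoid $\calg$, a subgroupoid $\calv$ meets the characterization independently of which cocycle we test against, so $(\calv,\rho_G)$ is of vertex type exactly when $(\calv,\rho_H)$ is. Because $\Gamma_G$ is transvection-free, every vertex of $\Gamma_G$ is untransvectable, so Lemma~\ref{lemma:graph} applies to each vertex group $G_v$ and supplies the combinatorial skeleton — a chain $G=F_0\supseteq P_1\supseteq\dots\supseteq P_n\supseteq F_n=G_v$ of maximal products and factors — that the groupoid argument will mimic step by step.

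First I would establish the two recognition moves that drive the zoom-in, working stably (up to a countable Borel partition of the base space) and replacing each pair by a tightly supported one via Lemma~\ref{lemma:support}. For products: given $\calf=\rho^{-1}(F)$ for a parabolic $F$, I would characterize the subgroupoids $\rho^{-1}(P)$ with $P$ a maximal product parabolic of $F$ (and not of isolated clique type) as the maximal subgroupoids of $\calf$ that split, in the groupoid sense, as two non-trivial subgroupoids normalizing one another, at least one being everywhere non-amenable. Here Lemma~\ref{lemma:maximal-product-nonamenable} guarantees that such a non-amenable factor exists, the action-like hypothesis converts non-amenability of a parabolic into everywhere non-amenability of its preimage, and Example~\ref{ex:normal} makes normalization of preimages of normal subgroups automatic, so the condition is cocycle-free. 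For factors: inside $\calp=\rho^{-1}(P)$, a factor $\rho^{-1}(F_j)$ is a maximal subgroupoid admitting no further such product splitting — its irreducibility certified through the non-elementarity statement of Lemma~\ref{lemma:nonelementary-example} — while the clique factor $\rho^{-1}(C_0)$ is the distinguished piece detected through amenable normal subgroupoids: Lemma~\ref{lemma:adams-argument-v2} shows that any amenable subgroupoid normalized by a non-amenable one forces its intersection with the clique factor to be non-amenable, which pins down $C_0$ regardless of the cocycle.

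Next I would iterate these two moves along the chain of Lemma~\ref{lemma:graph}. Starting from $\calg=\rho^{-1}(G)$, I pass to a stably maximal product subgroupoid (the groupoid analogue of $P_1$), then to one of its factors (the analogue of $F_1$), using Lemma~\ref{lemma:properly-supported} and Lemma~\ref{lemma:elementary-normal} to propagate proper support and elementarity from a normalized subgroupoid to its normalizer at each stage. After finitely many steps the process terminates at the clique factor of the last product; by the converse direction of Lemma~\ref{lemma:graph} this terminal piece is forced to be (a conjugate of) a single vertex group. This last point is exactly where strong reducedness of $\Gamma$ enters, since it prevents the chain from being shortcut and guarantees the terminal factor is an individual vertex group rather than a larger clique. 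Assembling the pieces over a common countable partition then shows that $\calv$ is of vertex type precisely when it satisfies this groupoid-theoretic recognition, which is the statement of the Vertex Recognition Property.

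The main obstacle I anticipate is the faithful translation of the two combinatorial moves into cocycle-free groupoid conditions, together with the bookkeeping forced by working stably. Concretely, the delicate points are (i) ensuring that the groupoid notion of "maximal product" coincides with $\rho^{-1}$ of a maximal product parabolic rather than some coarser or finer decomposition, which rests on the non-emptiness of $(\partial_\infty T_v)^{\reg}$ in the irreducible case (Lemma~\ref{lemma:reg-nonempty}) and on the everywhere non-amenability built into action-like cocycles; and (ii) isolating the clique factor in the presence of possibly amenable vertex groups, where a naive Adams-type non-amenability argument fails and one must instead lean on the asymmetric conclusion of Lemma~\ref{lemma:adams-argument-v2}. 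Controlling all of this over a single countable partition that witnesses every link of the chain simultaneously is the technical heart of the argument; the extension to graphs splitting as a join is then handled separately in Section~\ref{sec:reducible}.
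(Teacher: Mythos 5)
Your overall architecture --- a cocycle-free, purely groupoid-theoretic characterization of vertex-type subgroupoids built on the zoom-in chain of Lemma~\ref{lemma:graph}, from which the Vertex Recognition Property follows at once --- is exactly the paper's strategy, and your product-recognition move is close in spirit to Property~$\Pprod$ (Definition~\ref{de:pprod}). But the technical heart of your argument, the recognition of factors and of the clique factor, would fail. You propose to detect the clique factor $C_0$ of a product $P$ ``through amenable normal subgroupoids'', citing Lemma~\ref{lemma:adams-argument-v2}. That lemma is not a characterization of $C_0$: it only says that if an amenable subgroupoid tightly supported on $A$ is normalized by a non-amenable one supported on $N\subseteq A$, then the clique factor of $A$ meets $N$ non-amenably. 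More fundamentally, in Proposition~\ref{prop:vertex-recognition-strongly-reduced} the vertex groups are arbitrary countably infinite groups, so the clique factor can be a product of non-amenable (even torsion) vertex groups and carries no amenable-normal-subgroupoid signature at all; this is precisely the difficulty flagged in Remark~\ref{rk:prod}. The paper's actual mechanism is different: it characterizes \emph{clique-inclusive co-factors} via Properties~$\Pspec$ and~$\Padm$ (Lemmas~\ref{lemma:spec} and~\ref{lemma:admissible}), where strong reducedness is the essential input --- for every non-clique factor $F_j$ it furnishes a vertex group $B\subseteq F_j$ whose normalizer escapes $N_G(P)$ and the normalizer of every special subproduct containing it, whereas no subgroup of the clique factor has this property --- and then obtains factors and clique factors as minimal, respectively total, intersections of co-factors (Property~$\Pfact$, Lemma~\ref{lemma:factor}). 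Your substitute (``a maximal subgroupoid admitting no further product splitting'') comes with no converse direction: nothing lets you conclude that a subgroupoid satisfying it equals $\rho^{-1}(F_j)$ rather than the preimage of some smaller subgroup.

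The second gap is the terminal step. You claim that after finitely many steps ``the terminal piece is forced to be (a conjugate of) a single vertex group'' by the converse direction of Lemma~\ref{lemma:graph}. At the groupoid level this does not follow: because the recognition of maximal products (Lemma~\ref{lemma:maximal-product}) is only partial --- its second assertion allows the subgroupoid to be merely \emph{contained} in one of isolated clique type --- the chain of properties only yields that $\calv$ is contained in $\rho^{-1}(C)$ for $C$ conjugate to $G_\Upsilon$ with $\Upsilon$ a clique possibly on several vertices, or is equal to $\rho^{-1}(B)$ for a parabolic $B$ larger than a vertex group. The paper must impose three further cocycle-free conditions $\Pvert_2$, $\Pvert_3$, $\Pvert_4$ (Definition~\ref{de:pvert}) to exclude exactly these outcomes, and their verification (Steps 2--4 of Lemma~\ref{lemma:characterization-vertex-groups}) uses strong reducedness again (absence of collapsible subgraphs, to rule out the larger parabolic $B$) and uses transvection-freeness a second time, through Lemma~\ref{lemma:parabolic-untransvectable}, to separate a vertex group from proper subgroups of clique subgroups by comparing normalizers. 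In your write-up transvection-freeness appears only as the hypothesis making Lemma~\ref{lemma:graph} applicable, and strong reducedness only as a remark about the end of the chain; without groupoid analogues of $\Pvert_2$--$\Pvert_4$ the characterization is not pinned down, and the Vertex Recognition Property does not follow.
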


Note that here we allow graphs to have partial conjugations.

\subsection{Overview of the proof}\label{subsec:OverviewOfTheProofGroupoids}

Let $G$ be a graph product of countably infinite vertex groups over a transvection-free strongly reduced finite simple graph, not reduced to a vertex. Let $\calg$ be a measured groupoid over a standard probability space, and let $\rho:\calg\to G$ be an action-like cocycle. Given a measured subgroupoid $\calh\subseteq\calg$, our goal is to “recognize” when $(\calh,\rho)$ is of vertex type (see \cref{Def:VertexType}, \cpageref{Def:VertexType}) using a property that is purely phrased in terms of the structure of $\calh$, i.e.\ with no reference to the cocycle $\rho$. In this way, if $\calg$ comes equipped with two action-like cocycles $\rho_G:\calg\to G$ and $\rho_H:\calg\to H$, then subgroupoids of vertex type with respect to $\rho_G$, are also of vertex type with respect to $\rho_H$, and vice versa. 

The group-theoretic analogue of this problem is to recognize conjugates of vertex groups of $G$ using a purely group-theoretic property. Here we will formulate our strategy in the group-theoretic setting; in a sense, most of the technical work in the sequel of the present section will consist in “translating” this strategy to the groupoid-theoretic framework. With this in mind, we will formulate our characterization using amenability and normalization of subgroups, as these notions have their counterparts for groupoids.

Our starting point is the combinatorial statement provided by Lemma~\ref{lemma:graph}, \cpageref{lemma:graph}: starting from $G$, vertex groups and their conjugates are obtained by successively passing to maximal product parabolic subgroups and zooming in their factors (and at the last step, taking the clique factor). Thus, our goal decomposes into two tasks:
\begin{enumerate}
    \item Characterize maximal product parabolic subgroups of $G$.
    \item When $P$ is a product parabolic subgroup, characterize either 
    \begin{itemize}
    \item the factors of $P$, if $P$ has trivial clique factor, or
    \item the clique factor of $P$, if it is non-trivial. 
    \end{itemize}
\end{enumerate}

\paragraph{Task 1: Characterizing maximal products.}
 This task is carried in Section~\ref{sec:maximal-product}, following a strategy that was already used in \cite{HHI} in the context of right-angled Artin groups.

The idea is the following. If $P=C_0\times F_1\times\dots\times F_k$ is a maximal product (where $C_0$ is the clique factor, and the $F_j$ are irreducible for all $j$), letting $A\subseteq F_j$ be an amenable subgroup and $N=C_0\times F_1\times\dots\times \hat{F}_j\times \dots\times F_k$ (where we have just removed the $j^{\text{th}}$ factor), then $A$ is normalized by $N$, and $N$ is normal in $P$. It turns out that conversely, maximal product parabolic subgroups of $G$ are characterized as those maximal subgroups $P$ of $G$ for which there exists an infinite amenable subgroup~$A$ of $P$, and a non-amenable subgroup $N\subseteq P$, such that $A$ is normalized by $N$, and $N\unlhd P$. This is proved in Lemma~\ref{lemma:maximal-product}, at the groupoid-theoretic level (Property~$\Pprod$ below is the groupoid-theoretic translation of the above property).

\paragraph{Task 2: Characterizing (clique) factors.}

This task is carried in Section~\ref{sec:factors}. Let $P$ be a product parabolic subgroup. 

As a warm-up, let us assume that we know in advance that $P=F_1\times\dots\times F_k$ has trivial clique factor. Then the \emph{co-factors} $F_1\times\dots\times \hat{F}_j\times\dots\times F_k$ are characterized as the maximal normal subgroups $N\unlhd P$ that normalize an infinite amenable subgroup (namely, any $A\subseteq F_j$ infinite amenable). And the factors $F_j$ are then characterized as the minimal non-trivial intersections of co-factors.

This approach is in general a bit too naive for two reasons:
\begin{itemize}
    \item It does not work if the clique factor $C_0$ of $P$ is non-trivial (indeed $C_0$ could contain a normal infinite amenable subgroup $A$, and then $N=P$ in the above would work).
    \item This would also necessit to be able to characterize in advance the existence or not of a non-trivial clique factor.
\end{itemize}
We follow a slightly different approach. This necessits to introduce some extra terminology. We write $P=\mathsf{F}_1\times \dots\times \mathsf{F}_k$, where each $\mathsf{F}_j$ is irreducible (in particular the clique factor has been decomposed as a product of conjugates of vertex groups).

A subgroup $S\subseteq P$ is \emph{special} if $S\unlhd P$ and $N_G(S)\nsubseteq N_G(P)$. One should typically think that certain subproducts of $P$ (i.e.\ products of only some of the subgroups $\mathsf{F}_j$) could be special. See Figure~\ref{fig:Strategie_sous_prod_sepciaux}, \cpageref{fig:Strategie_sous_prod_sepciaux} for an illustration, where the special subroups are exactly the \textcolor{Orange}{orange} vertex groups in the clique factor. In fact in general, if the type of the clique factor is a complete graph on at least two vertices, then the subgroups $\mathsf{F}_j$ whose type is reduced to one vertex are automatically special, using that $\Gamma$ is transvection-free.

Now we say that an infinite subgroup $B\subseteq P$ is \emph{appropriate} if it satisfies the following two properties:
\begin{itemize}
    \item $N_G(B)\nsubseteq N_G(P)$;
    \item if $B$ is contained in a special subgroup $S\unlhd P$, then $N_G(B)\nsubseteq N_G(S)$.
\end{itemize}
We make two important observations:
\begin{itemize}
     \item For every $j\in\{1,\dots,k\}$, there exists an appropriate subgroup $B\subseteq \mathsf{F}_j$, which can be taken to be a conjugate of a vertex group. This is a consequence of our crucial assumption that $\Gamma$ is strongly reduced: we can find $B$ whose normalizer is not contained in $\mathsf{F}_j\times \mathsf{F}_j^{\perp}$, so regardless of whether $\mathsf{F}_j$ is special or not, the above conditions are satisfied. See Figure~\ref{fig:Strategie_sous_prod_sepciaux}, \cpageref{fig:Strategie_sous_prod_sepciaux} for an illustration, where the \textcolor{MFCB}{brown} vertex groups are appropriate.
    \item On the other hand, the clique factor $C_0$ does not contain any appropriate subgroup: indeed, if $B\subseteq C_0$ satisfies $N_G(B)\nsubseteq P\times P^{\perp}$, then the smallest subclique $C'_0\subseteq C_0$ containing $B$ also satisfies $N_G(C'_0)\nsubseteq P\times P^{\perp}$. So $C'_0$ is special, and $N_G(B)\subseteq N_G(C'_0)$. 
\end{itemize}
Armed with these observations, we now deduce that \emph{clique-inclusive cofactors} of $P$ (i.e.\ subgroups of the form $C_0\times F_1\times \dots\times \hat{F}_j\times\dots\times F_k$) are characterized as follows: they are the maximal subgroups $N\unlhd P$ that normalize an appropriate infinite subgroup of $P$.  

And the clique factor is then characterized as the intersection of all clique-inclusive cofactors. (If $C_0=\{1\}$, then factors are characterized as the minimal non-trivial intersection of clique-inclusive cofactors.)

\begin{figure}[htbp]
    \centering
    \includegraphics{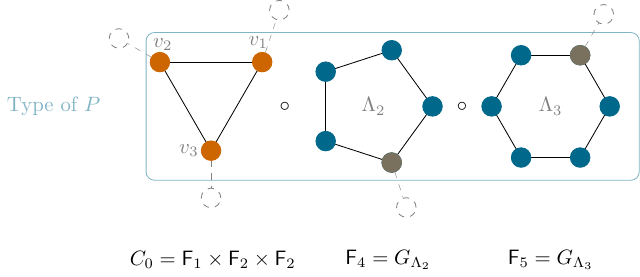}
    \caption{The \textcolor{Orange}{orange} vertex groups are special in $P$, and the \textcolor{MFCB}{brown} vertex groups are appropriate.}
    \label{fig:Strategie_sous_prod_sepciaux}
\end{figure}

\paragraph{Conclusion.} We finish this overview by mentioning that in Section~\ref{sec:vertex-groups}, we complete our proof of Proposition~\ref{prop:vertex-recognition-strongly-reduced} by combining the combinatorial zooming lemma (Lemma~\ref{lemma:graph}) with the previously obtained characterizations of maximal product parabolic subgroups and their factors.

Recall that \cref{Appendix:Isom}, \cpageref{Appendix:Isom} provides a group theoretic analogue of \cref{prop:vertex-recognition-strongly-reduced} and can be read as a warm up for the proof the latter proposition.

\subsection{Recognizing maximal products}\label{sec:maximal-product}

Let now $\calg$ be a measured groupoid over a standard probability space $X$, equipped with an action-like cocycle $\rho:\calg\to G$. Let $\calh$ be a measured subgroupoid of $\calg$. We say that $(\calh,\rho)$ is \emph{of parabolic type}\index{Parabolic!Parabolic type (groupoid framework)} if there exist a partition $X^*=\dunion_{i\in I} X_i$ of a conull Borel subset $X^*\subseteq X$ into at most countably many Borel subsets, and for every $i\in I$, a parabolic subgroup $P_i\subseteq G$ such that  $\calh_{|X_i}=\rho^{-1}(P_i)_{|X_i}$. We say that $(\calh,\rho)$ is \emph{of product type}\index{Product!Product type (groupoid framework)} (resp.\ \emph{of maximal product type}\index{Maximal product!Maximal product type (groupoid framework)}, resp.\ \emph{of clique type}\index{Clique!Clique type (groupoid framework)}, resp.\ \emph{of isolated clique type}\index{Isolated clique type (groupoid framework)} if in the above, one can take $P_i$ to be a product parabolic subgroup (resp.\ a maximal product parabolic subgroup, resp.\ a parabolic subgroup whose type is a clique, resp.\ a parabolic subgroup whose type is an isolated clique) for every $i\in I$. We say that $(\calg,\rho)$ is \emph{nowhere of clique type}\index{Nowhere!Nowhere of clique type} (resp.\ \emph{nowhere of isolated clique type})\index{Nowhere!Nowhere of isolated clique type} if there does not exist any positive measure Borel subset $U\subseteq X$ such that $(\calg_{|U},\rho)$ is of clique type (resp.\ of isolated clique type).

Our goal is now to give a partial characterization of pairs $(\calp,\rho)$ of maximal product type with no reference to the cocycle $\rho$ (see Remark~\ref{rk:prod} below for why we only reach a \emph{partial} characterization). For this we introduce the following property.

\begin{Def}[Property~$\Pprod$]\index{Property!$\Pprod$}\label{de:pprod}
Let $\calg$ be a measured groupoid over a standard probability space, and let $\calp$ be a measured subgroupoid of $\calg$. We say that the pair $(\calg,\calp)$ satisfies \emph{Property $\Pprod$} if it verifies the following assertions.
\begin{description}
    \item[$\Pprod_1$] There exist measured subgroupoids $\cala,\caln\subseteq\calp$ such that 
    \begin{enumerate}
        \item[(a)] $\cala$ is amenable and of infinite type, and is stably normalized by $\caln$;
        \item[(b)] $\caln$ is everywhere non-amenable, and is stably normalized by $\calp$;
    \end{enumerate}
    \item[$\Pprod_2$] Whenever $\calp'$ is a measured subgroupoid of $\calg$ that satisfies Property~$\Pprod_1$, and such that $\calp$ is stably contained in $\calp'$, then $\calp$ and $\calp'$ are stably equal. 
\end{description}
\end{Def}

\begin{Rq}
We mention the following crucial stabilities for Property~$\Pprod$.
\begin{enumerate}
    \item If $(\calg,\calp)$ satisfies Property~$\Pprod$, then for every positive measure Borel subset $U\subseteq X$, the pair $(\calg_{|U},\calp_{|U})$ also satisfies Property~$\Pprod$.
    \item If there exists a Borel partition $X^*=\dunion_{i\in I}X_i$ of a conull Borel subset $X^*\subseteq X$ into at most countably many Borel subsets, such that for every $i\in I$, the pair $(\calg_{|X_i},\calp_{|X_i})$ satisfies Property~$\Pprod$, then $(\calg,\calp)$ satisfies Property~$\Pprod$.
\end{enumerate}
These stabilities will allow us to take restrictions and work up to a countable Borel partition in the proofs below. All other properties that will appear later in this section (Properties~$\Pspec,\Padm,\Pfact,\Pvert$) and the next two also satisfy the same stabilities.
\end{Rq}

Property~$\Pprod$ is the groupoid-theoretic translation of the following idea: if $P=F_1\times\dots\times F_k$ is a maximal direct product in $G$, then (unless possibly if this product comes from a clique and no factor contains an infinite amenable subgroup) there exists an infinite amenable subgroup $A$ in some $F_i$, normalized by $N=F_1\times\dots\times\hat{F}_j\times\dots\times F_k$, which is itself normalized by $P$. It turns out that conversely, maximal subgroups of $G$ with this property are either product parabolic subgroups, or else they could be contained in a parabolic subgroup whose type is an isolated clique. The following lemma, which is inspired from \cite[Lemma~5.4]{HHI}, establishes this fact at the groupoid-theoretic level.

\begin{Lmm}\label{lemma:maximal-product}
Let $G$ be a graph product of countably infinite groups over a finite simple graph $\Gamma$. 
Let $\calg$ be a measured groupoid over a standard probability space $X$, equipped with an action-like cocycle $\rho:\calg\to G$. Let $\calp\subseteq\calg$ be a measured subgroupoid. Then 
\begin{enumerate}
\item If $(\calp,\rho)$ is of maximal product type and nowhere of isolated clique type, then $(\calg,\calp)$ satisfies Property~$\Pprod$.
\item If $(\calg,\calp)$ satisfies Property~$\Pprod$, then there exists a Borel partition $X=X^1\dunion X^2$ such that $(\calp_{|X^1},\rho)$ is of maximal product type, and $\calp_{|X^2}$ is stably contained in a subgroupoid $\calq$ of $\calg_{|X^2}$ such that $(\calq,\rho)$ is of isolated clique type.
\end{enumerate}
\end{Lmm}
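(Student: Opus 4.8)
Looking at this lemma, I need to prove both directions of a partial characterization of maximal product type groupoids via Property $\Pprod$.

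My plan is as follows. For the forward implication (item 1), I would start by assuming $(\calp,\rho)$ is of maximal product type and nowhere of isolated clique type. After a countable Borel partition of the base (using the stability properties just listed), I can assume $(\calp,\rho)$ is tightly $P$-supported for a single maximal product parabolic subgroup $P = C_0 \times F_1 \times \dots \times F_k$. Since $P$ is not of isolated clique type, Lemma~\ref{lemma:maximal-product-nonamenable} guarantees some factor, say $F_j$ (possibly the clique factor contains a free subgroup), contains a non-abelian free subgroup; so I can pick an infinite amenable subgroup $A \subseteq F_j$ (e.g.\ cyclic) and set $N = C_0 \times F_1 \times \dots \times \hat{F_j} \times \dots \times F_k$, which is non-amenable. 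Then I verify Property~$\Pprod_1$ by setting $\cala = \rho^{-1}(A)$ and $\caln = \rho^{-1}(N)$: amenability of $\cala$ follows from Lemma~\ref{lemma:amenable-subgroup-subgroupoid}, its infinite type from $A$ being infinite (via the action-like property), and the normalizations from Example~\ref{ex:normal} together with the fact that $A \unlhd N \unlhd P$. Everywhere non-amenability of $\caln$ comes from $N$ being non-amenable and $\rho$ action-like. For $\Pprod_2$, suppose $\calp$ is stably contained in $\calp'$ satisfying $\Pprod_1$; using Lemma~\ref{lemma:action-like} and Lemma~\ref{lemma:finite-index} I would show the parabolic support of $\calp'$ contains $P$, and then maximality of $P$ as a product parabolic (deduced from the $\Pprod_1$ structure on $\calp'$ forcing its support to be a product subgroup) forces stable equality.

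For the converse (item 2), I assume $(\calg,\calp)$ satisfies Property~$\Pprod$. After applying Lemma~\ref{lemma:support} to partition the base so that $(\calp,\rho)$ is tightly $P$-supported for some parabolic $P$ on each piece, the real content is analyzing the consequences of $\Pprod_1$. Take $\cala,\caln \subseteq \calp$ witnessing $\Pprod_1$. After further partitioning, I can assume $(\cala,\rho)$ is tightly $A$-supported and $(\caln,\rho)$ is tightly $N$-supported for parabolic subgroups $N \subseteq A \subseteq P$. Since $\caln$ stably normalizes $\cala$ with $\cala$ amenable, and $\caln$ is non-amenable, Lemma~\ref{lemma:adams-argument-v2} applies (after checking $\rho_{|\caln}$ has trivial kernel, which holds as $\rho$ is action-like): it tells me the clique factor $C_0$ of $A$ is non-amenable, and $N \cap C_0$ is non-amenable. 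Meanwhile, since $\caln$ is stably normalized by $\calp$, Lemma~\ref{lemma:support-normal} applied to the tightly $N$-supported $\caln$ gives $\rho(\calp) \subseteq N \times N^\perp$, hence $P \subseteq N \times N^\perp$ by Lemma~\ref{lemma:inclusion-parabolics}. Combining these constraints, I expect to conclude that $P$ decomposes as a product (with one factor containing the non-amenable $N \cap C_0$ portion), i.e.\ $P$ is a product parabolic subgroup, unless the entire structure is forced into a clique where the product degenerates.

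The maximality of $P$ as a product type then needs $\Pprod_2$: if $P$ were contained in a strictly larger product parabolic $P'$, I would construct a $\calp'$ with support $P'$ satisfying $\Pprod_1$ (reusing a factorization of $P'$), contradicting stable equality. The main obstacle I anticipate is the second item's dichotomy: disentangling the case where $P$ genuinely splits as a product from the degenerate case where $P$ is swallowed by an isolated clique type subgroupoid $\calq$. This is precisely where the "nowhere of isolated clique type" hypothesis was needed in item 1, and in item 2 I must carefully produce the partition $X = X^1 \dunion X^2$ by separating the base points where the support $P$ is a genuine product (giving $X^1$) from those where the amenability/non-amenability constraints only force $P$ into a clique (giving $X^2$, where $\calp_{|X^2}$ embeds in a clique type $\calq$). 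Managing these supports coherently across the countable partition, and ensuring the maximality argument interacts correctly with $\Pprod_2$ (which quantifies over \emph{all} competing $\calp'$, not just parabolic ones), will require the most care.
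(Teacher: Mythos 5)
Your overall skeleton (countable partitions to reach tight supports, Lemma~\ref{lemma:support-normal} to control normalizers, the product/isolated-clique dichotomy, and maximality via a competing subgroupoid in $\Pprod_2$) is the same as the paper's, but two central steps fail as written.

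First, in item 1 your witnesses for $\Pprod_1$ are wrong. You take $A$ infinite cyclic inside the factor $F_j$ containing a non-abelian free subgroup and $N=C_0\times F_1\times\dots\times\hat{F}_j\times\dots\times F_k$, claiming $N$ is non-amenable. This fails whenever $F_j$ is the \emph{only} non-amenable factor: for the graph product over a path $u-v-w$ with $\mathbb{Z}$ vertex groups, the whole group $P=G_v\times(G_u\ast G_w)\cong\mathbb{Z}\times F_2$ is a maximal product parabolic subgroup, not of isolated clique type, and your $N$ is $G_v\cong\mathbb{Z}$, which is amenable, so condition (b) of $\Pprod_1$ fails. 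The paper's proof avoids this by a case distinction on a splitting $P=P_1\times P_2$ with $P_2$ containing a free subgroup (Lemma~\ref{lemma:maximal-product-nonamenable}): if $P_1$ is amenable one takes $\cala=\rho^{-1}(P_1)$ (the whole factor) and $\caln=\rho^{-1}(P_2)$; only when $P_1$ is non-amenable does one take $A$ cyclic inside $P_2$ and $\caln=\rho^{-1}(P_1)$.

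Second, and more seriously, in item 2 you assert ``$N\subseteq A\subseteq P$'' and then immediately invoke Lemma~\ref{lemma:adams-argument-v2}, whose hypotheses require $N\subseteq A$. But normalization of $\cala$ by $\caln$ only gives $\caln\subseteq\rho^{-1}(A\times A^{\perp})$, hence $N\subseteq A\times A^{\perp}$; the containment $N\subseteq A$ is false in general. Concretely, let $G=F_2\times F_2$ be the graph product over a square $u_1-u_2-u_3-u_4-u_1$ with $\mathbb{Z}$ vertex groups, and take $\calp=\calg$, $\cala=\rho^{-1}(G_{u_1})$, $\caln=\rho^{-1}(G_{u_2}\ast G_{u_4})$: these witness $\Pprod_1$, yet $N=A^{\perp}\nsubseteq A$, and the conclusion of Lemma~\ref{lemma:adams-argument-v2} (non-amenable clique factor of $A$) is false since $A\cong\mathbb{Z}$. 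The paper's argument instead establishes $N\subseteq A\times A^{\perp}$ and $\calp\subseteq\rho^{-1}(N\times N^{\perp})$ and then splits into two cases: if $N$ splits as a product of two infinite parabolics or $N^{\perp}\neq\{1\}$, then $Q=N\times N^{\perp}$ is already the desired product parabolic and Adams's argument is never used; only in the degenerate case ($N$ irreducible and $N^{\perp}=\{1\}$) does one deduce $N\subseteq A$ and $A^{\perp}=\{1\}$ via Lemma~\ref{lemma:product-parabolic-subgroup}, and then apply Lemma~\ref{lemma:adams-argument-v2} to conclude that $N$ is an isolated vertex group. Note that this dichotomy also underlies the verification of $\Pprod_2$ in item 1 (a competing $\calp'$ satisfying $\Pprod_1$ must first be located inside a parabolic which is a product or of isolated clique type), so both halves of your proof inherit this gap.
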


\begin{Rq}\label{rk:prod}
When $\Gamma$ does not contain any isolated clique, then (as in \cite{HHI}) the above lemma gives a characterization of pairs $(\calp,\rho)$ of maximal product type that is independent of the choice of an action-like cocycle $\rho:\calg\to G$. But when $\Gamma$ has isolated cliques, whether or not subgroupoids of isolated clique type satisfy $\Pprod$ depends on the vertex groups. This is why there is an asymmetry in the statement of the lemma.

In particular, we mention that with our definitions, it is unclear to us whether or not a subgroupoid of the form $\rho^{-1}(C)$, where $C$ is a parabolic subgroup associated to an isolated clique on several vertices, with nonamenable vertex groups, always satisfies Property~$\Pprod$. This is the case as soon as one of the vertex groups contains an infinite amenable subgroup $A$, by taking $\cala=\rho^{-1}(A)$ and $\caln=\rho^{-1}(N)$, where $N$ is the direct product of the other vertex groups of the clique. The difficulty comes from the case where the vertex groups are torsion groups, e.g.\ free Burnside groups. This uncertainty required us to make a few adjustments in the statements and in the arguments.
\end{Rq}

\begin{proof}
    \textbf{Step 1} Let us first prove that if $(\calp,\rho)$ is of maximal product type and nowhere of isolated clique type, then $(\calg,\calp)$ satisfies Property~$\Pprod_1$.
    \smallskip
    
    Up to passing to a conull Borel subset of $X$ and performing a countable Borel partition of $X$, we can assume that $\calp=\rho^{-1}(P)$ for some maximal product parabolic subgroup $P$ not of isolated clique type. Write $P=P_1\times P_2$ as a product of two infinite parabolic subgroups. Up to changing the roles of $P_1$ and $P_2$, Lemma~\ref{lemma:maximal-product-nonamenable}, \cpageref{lemma:maximal-product-nonamenable} lets us assume that $P_2$ contains a non-abelian free subgroup. If $P_1$ is amenable, then the conclusion follows by letting $\cala=\rho^{-1}(P_1)$ and $\caln=\rho^{-1}(P_2)$: indeed 
    \begin{itemize}
        \item  $\cala$ is amenable because $A$ is and $\rho$ has trivial kernel (Lemma~\ref{lemma:amenable-subgroup-subgroupoid}, \cpageref{lemma:amenable-subgroup-subgroupoid}), of infinite type because $A$ is infinite and $\rho$ is action-like, and (stably) normalized by $\caln$ because $A$ is normalized by $P_2$ (Example~\ref{ex:normal}, \cpageref{ex:normal}). 
        \item $\caln$ is everywhere non-amenable because $P_2$ is non-amenable and $\rho$ is action-like, and $\caln$ is (stably) normalized by $\calp$ because $N$ is normalized by $P$ (Example~\ref{ex:normal}).
    \end{itemize}
    Otherwise, if $P_1$ is non-amenable, let $A\subseteq P_2$ be an infinite cyclic subgroup; then the conclusion follows by letting $\cala=\rho^{-1}(A)$ and $\caln=\rho^{-1}(P_1)$. 
    \medskip
    
    \textbf{Step 2} Let us prove that if $\calp$ satisfies $\Pprod_1$, then there exist a countable partition $X^*=\dunion_{i\in I}X_i$ of a conull Borel subset $X^*\subseteq X$ into at most countably many Borel subsets $X_i$, and for every $i\in I$, a parabolic subgroup $Q_i$ which is either a product or of isolated clique type, such that $\calp_{|X_i}\subseteq\rho^{-1}(Q_i)_{|X_i}$. 
   
  Let $\cala,\caln\subseteq\calp$ be measured subgroupoids given by $\Pprod_1$. Up to replacing $X$ by a conull Borel subset and taking a countable Borel partition, we will assume that       
\begin{itemize}
\item $\cala$ is normalized by $\caln$, and $\caln$ is normalized by $\calp$;
\item $(\cala,\rho)$ is tightly $A$-supported for some parabolic subgroup $A\subseteq G$ (Lemma~\ref{lemma:support}, \cpageref{lemma:support}) --~notice that $A\neq\{1\}$ because $\cala$ is of infinite type and $\rho$ has trivial kernel; 
\item $(\caln,\rho)$ is tightly $N$-supported for some parabolic subgroup $N\subseteq G$ (Lemma~\ref{lemma:support}) -- notice that $N$ is non-amenable because $\caln$ is everywhere non-amenable and $\rho$ has trivial kernel, see Lemma~\ref{lemma:amenable-subgroup-subgroupoid}, \cpageref{lemma:amenable-subgroup-subgroupoid};
\item $\caln\subseteq\rho^{-1}(A\times A^{\perp})$ and $\calp\subseteq\rho^{-1}(N\times N^{\perp})$ using the invariance of the support by the normalizer (Lemma~\ref{lemma:support-normal}).
\end{itemize}
Notice in particular that $N\subseteq A\times A^{\perp}$, by definition of the support. Now there are two cases.
    
\begin{itemize}    
    \item Case 1: Either $N$ splits as a direct product of two infinite parabolic subgroups, or $N^{\perp}\neq\{1\}$. In this case $Q:=N\times N^{\perp}$ is a product parabolic subgroup, we are done since $\calp\subseteq\rho^{-1}(Q)$. 
    \item Case 2: $N$ does not split as a direct product of two infinite parabolic subgroups, and $N^{\perp}=\{1\}$. Then $\calp \subseteq \rho^{-1}(N)$, and we will now show that in this case $N$ is of isolated clique type, which will conclude. As $N\subseteq A\times A^{\perp}$, we deduce using Lemma~\ref{lemma:product-parabolic-subgroup}, \cpageref{lemma:product-parabolic-subgroup} that $N=(N\cap A) \times (N\cap A^\perp)$. But we assumed that $N$ does not split as a direct product, so either $N\subseteq A$ or $N\subseteq A^\perp$. If the latter holds we have $\{1\}\subsetneq A \subseteq N^\perp$ which contradicts our assumption that $N^\perp$ is trivial. Hence $N\subseteq A$, and $A^{\perp}=\{1\}$ because $N^{\perp}=\{1\}$. Lemma~\ref{lemma:adams-argument-v2}, \cpageref{lemma:adams-argument-v2} (applied to the amenable groupoid $\cala$, which is normalized by the non-amenable groupoid~$\caln$) thus ensures that the clique factor $A_0$ of $A$ is non-amenable, and $N\cap A_0\neq\{1\}$. As $N\subseteq A$, as $N\cap A_0\neq\{1\}$, as above using again Lemma~\ref{lemma:product-parabolic-subgroup}, \cpageref{lemma:product-parabolic-subgroup}, and that $N$ does not split as a direct product we obtain that 
    $N$ is equal to a conjugate of a vertex group. Since $N^{\perp}=\{1\}$, this vertex group is isolated. This completes the proof in Case~2. 
    \end{itemize}
    \medskip
    
    \textbf{Step 3} We now prove that if $(\calp,\rho)$ is of maximal product type, then $(\calg,\calp)$ satisfies Property~$\Pprod_2$: this will complete the proof of the first part of the lemma.
    \smallskip
    
    Let $\calp'$ be a measured subgroupoid of $\calg$ that satisfies Property~$\Pprod_1$, such that $\calp$ is stably contained in $\calp'$. By Step~2, we can find a countable partition $X^*=\dunion_{i\in I}X_i$ of a conull Borel subset $X^*\subseteq X$ into at most countably many Borel subsets $X_i$, and for every $i\in I$, a parabolic subgroup $Q_i$ which is either a product or of isolated clique type, such that $\calp'_{|X_i}\subseteq\rho^{-1}(Q_i)_{|X_i}$. Without loss of generality, we will assume that all subsets $X_i$ have positive measure. Since $\calp$ is stably contained in $\calp'$, up to passing again to a conull Borel subset and performing a countable partition, we will assume that $\calp_{|X_i}\subseteq\rho^{-1}(Q_i)_{|X_i}$. 
    Since $(\calp,\rho)$ is of maximal product type, up to a further partition again, we have $\calp_{|X_i}=\rho^{-1}(P_i)_{|X_i}$ for some maximal product parabolic subgroup $P_i$. It thus follows from Lemma~\ref{lemma:inclusion-parabolics}, \cpageref{lemma:inclusion-parabolics} that $P_i\subseteq Q_i$. We now observe that in fact $P_i=Q_i$. Indeed, either $Q_i$ is a product and then this follows from the maximality of $P_i$. Or $Q_i$ is of isolated clique type and contains the product parabolic subgroup $P_i$, then the corresponding clique is not reduced to a point; so $Q_i$ is a product and as above $P_i=Q_i$.  
    It follows from the above that $\calp'$ is stably contained in $\calp$, showing that $\calp$ satisfies Property~$\Pprod_2$.
    \medskip
    
    \textbf{Step 4} We finally prove the second assertion of the lemma.
    \smallskip
    
    So assume that $(\calg,\calp)$ satisfies Property~$\Pprod$. By Step~2, there exist a countable partition $X^*=\dunion_{i\in I}X_i$ of a conull Borel subset $X^*\subseteq X$ into at most countably many Borel subsets $X_i$, and for every $i\in I$, a parabolic subgroup $Q_i$ which is either of product type or of isolated clique type, such that $\calp_{|X_i}\subseteq\rho^{-1}(Q_i)_{|X_i}$. Up to increasing $Q_i$, we will assume that $Q_i$ is either a {maximal} product or of isolated clique type (in particular $Q_i$ is not a proper subgroup of a parabolic subgroup of isolated clique type). Let $I_2\subseteq I$ be the subset consisting of all $i$ such that $Q_i$ is of isolated clique type, and let $I_1=I\setminus I_2$. Let $X^1$ (resp.\ $X^2$) be the union of all $X_i$ with $i\in I_1$ (resp.\ with $i\in I_2$). Let us now prove that $\calp_{|X_i}$ is actually (stably) equal to $\rho^{-1}(Q_i)_{|X_i}$ for all $i\in I_1$. So let $\calp'$ be a subgroupoid of $\calg$ such that for every $i\in I_1$, one has $\calp'_{|X_i}=\rho^{-1}(Q_i)_{|X_i}$, with $\calp'_{|X^2}=\calp_{|X^2}$. Then $\calp$ is stably contained in $\calp'$. In addition $\calp'$ satisfies Property~$\Pprod_1$: this uses Step~1 of this proof for $\calp'_{|X^1}$, and the fact that $\calp$ satisfies Property~$\Pprod$ for $\calp'_{|X^2}$. So $\Pprod_2$ for $(\calg,\calp)$ implies that $\calp$ and $\calp'$ are stably equal. In particular the second assertion of the lemma follows with $\calq$ a groupoid verifying $\calq_{|X_i}:=\rho^{-1}(Q_i)_{|X_i}$ for every $i\in I_2$.  
\end{proof}

\subsection{Recognizing factors in products}\label{sec:factors}

\paragraph{Special subproducts.} 

Let $P=\mathsf{F}_1\times\dots\times \mathsf{F}_n$ be a product parabolic subgroup, where the subgroups $\mathsf{F}_j$ with $j\ge 1$ are parabolic subgroups that do not further split as direct products of parabolic subgroups (in particular the clique factor of $P$ is fully decomposed). A \emph{subproduct}\index{Subproduct} is a subgroup of $P$ equal to the product of finitely many (possibly only one) factors $\mathsf{F}_j$. A subproduct $S$ of $P$ is \emph{special within $G$}\index{Special subproduct!Special subproduct of a parabolic group}\index{Subproduct!Special subproduct of a parabolic group} if $N_G(S)\supsetneq N_G(P)$, 
that is to say $S\times S^\perp \supsetneq P \times P^\perp$. 

Let now $\calg$ be a measured groupoid over a standard probability space $X$, and let $\rho:\calg\to G$ be an action-like cocycle. Let $\calp$ be a measured subgroupoid such that $(\calp,\rho)$ is of product type. Let $X^*=\dunion_{i\in I}X_i$ be a partition of a conull Borel subset $X^*\subseteq X$, and for every $i\in I$, $P_i$ be a  parabolic subgroup of $G$ of product type, such that $\calp_{|X_i}=\rho^{-1}(P_i)_{|X_i}$. Let $\cals$ be a measured subgroupoid of $\calp$. We say that $(\cals,\rho)$ is \emph{of special subproduct type within $(\calp,\rho)$}\index{Special subproduct!Special subproduct type (groupoid framework)} if, up to passing to a further conull Borel subset and refining the above partition, for every $i\in I$, there exists a special subproduct $S_i$ of $P_i$ such that $\cals_{|X_i}=\rho^{-1}(S_i)_{|X_i}$. We observe that this definition does not depend on the decomposition of $X$ we chose.

We highlight the following property, which translates the notion of a special subproduct at the groupoid-theoretic level.

\begin{Def}[Property $\Pspec$]\index{Property!$\Pspec$}\label{de:pspec}
Let $\calg$ be a measured groupoid over a standard probability space $X$, and let $\cals\subseteq\calp$ be two measured subgroupoids of $\calg$. We say that the triple $(\calg,\calp,\cals)$ satisfies \emph{Property $\Pspec$} if the following two properties hold.
\begin{description}
\item[$\Pspec_1$] The subgroupoid $\cals$ is stably normal in $\calp$.
\item[$\Pspec_2$] There exists a measured subgroupoid $\caln\subseteq\calg$ that stably normalizes $\cals$, such that for every measured subgroupoid $\caln'$ of $\calg$ that stably normalizes $\calp$, and every Borel subset $U\subseteq X$ of positive measure, one has $\caln_{|U}\nsubseteq\caln'_{|U}$.
\end{description}
\end{Def} 

\begin{Lmm}\label{lemma:spec}
Let $G$ be a graph product of countably infinite groups over a finite simple graph $\Gamma$. Let $\calg$ be a measured groupoid over a standard probability space $X$, equipped with an action-like cocycle $\rho:\calg\to G$. Let $\cals\subseteq\calp\subseteq\calg$ be measured subgroupoids, with $(\calp,\rho)$ of product type.  
\begin{enumerate}
    \item If $(\cals,\rho)$ is of special subproduct type within $(\calp,\rho)$, then $(\calg,\calp,\cals)$ satisfies Property~$\Pspec$.
    \item If $(\calg,\calp,\cals)$ satisfies Property~$\Pspec$, then there exist a countable Borel partition $X^*=\dunion_{i\in I}X_i$ of a conull Borel subset $X^*\subseteq X$, and for every $i\in I$, a product parabolic subgroup $P_i$, and a special subproduct $S_i\subseteq P_i$, such that $\calp_{|X_i}=\rho^{-1}(P_i)_{|X_i}$ and $(\cals_{|X_i},\rho)$ is tightly $S_i$-supported.
\end{enumerate}
\end{Lmm}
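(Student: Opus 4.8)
The plan is to translate, via the support machinery of the previous sections, the algebraic statement ``$S$ is a special subproduct of $P$'' into the groupoid-theoretic Property~$\Pspec$. Throughout I would work up to a countable Borel partition of $X$ (legitimate by the stability remark following Definition~\ref{de:pprod}, which applies to $\Pspec$), so that on each piece $\calp=\rho^{-1}(P)$ for a fixed product parabolic subgroup $P$, and --~after a further application of Lemma~\ref{lemma:support}~-- so that $(\cals,\rho)$ is tightly $T$-supported for a fixed parabolic subgroup $T$. The whole argument then reduces to identifying $T$ as a special subproduct of $P$, and conversely.

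For the first assertion, assume on such a piece that $\cals=\rho^{-1}(S)$ with $S$ a special subproduct of $P$. Property~$\Pspec_1$ is immediate: $S$ is a direct factor of $P$, hence normal in $P$, so $\rho^{-1}(S)$ is normalized by $\rho^{-1}(P)=\calp$ by Example~\ref{ex:normal}. For $\Pspec_2$ I would take $\caln:=\rho^{-1}(S\times S^{\perp})=\rho^{-1}(N_G(S))$, which normalizes $\cals$ since $S\trianglelefteq N_G(S)$. If $\caln'$ is any subgroupoid that stably normalizes $\calp$, then since $(\calp,\rho)$ is tightly $P$-supported (Lemma~\ref{lemma:tight-support}), Lemma~\ref{lemma:support-normal} forces $\caln'$ to be stably contained in $\rho^{-1}(P\times P^{\perp})$. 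If we had $\caln_{|U}\subseteq\caln'_{|U}$ on some positive measure $U$, then on a suitable further subpiece $U'\subseteq U$ we would get $\rho^{-1}(S\times S^{\perp})_{|U'}\subseteq\rho^{-1}(P\times P^{\perp})_{|U'}$, and Lemma~\ref{lemma:inclusion-parabolics} would yield $S\times S^{\perp}\subseteq P\times P^{\perp}$, contradicting that $S$ is \emph{special} (i.e.\ $S\times S^{\perp}\supsetneq P\times P^{\perp}$). This establishes $\Pspec_2$.

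For the converse, on a piece as above I would show that $T$ is a special subproduct of $P$ in three steps. First, $T\subseteq P$: since $\cals\subseteq\calp$ we have $\rho(\cals)\subseteq T\cap P$, and as $T\cap P$ is parabolic, tightness of the $T$-support forces $T\cap P=T$. Second, $T$ is a subproduct: by $\Pspec_1$, $\calp$ stably normalizes $\cals$, so Lemma~\ref{lemma:support-normal} gives $P\subseteq T\times T^{\perp}$; then Lemma~\ref{lemma:product-parabolic-subgroup} applied to $P\subseteq T\times T^{\perp}$ yields $P=(P\cap T)\times(P\cap T^{\perp})=T\times(P\cap T^{\perp})$, so $T$ is a direct factor of $P$, and by uniqueness of the join decomposition of the type of $P$ into irreducible components it is a product of some of the $\mathsf{F}_j$, i.e.\ a subproduct. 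In particular $T\times T^{\perp}\supseteq P\times P^{\perp}$ automatically. Third, $T$ is special: using the $\caln$ furnished by $\Pspec_2$, which normalizes the tightly $T$-supported $\cals$, Lemma~\ref{lemma:support-normal} gives $\caln$ stably inside $\rho^{-1}(T\times T^{\perp})$; if $T$ were not special we would have $T\times T^{\perp}=P\times P^{\perp}$, hence $\caln$ stably contained in $\rho^{-1}(P\times P^{\perp})$, which normalizes $\calp$ by Example~\ref{ex:normal} --~contradicting the defining property of $\caln$ in $\Pspec_2$. Setting $S_i:=T$ on each piece finishes the converse.

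The main obstacle I anticipate is not any single step but the bookkeeping of the ``stably'' qualifiers: every inclusion, normalization, and tight-support statement holds only after passing to a conull subset and a countable partition, so I must check that the finitely many partitions (one for the product type of $\calp$, one from Lemma~\ref{lemma:support} for the $T$-support, and those implicit in $\Pspec$) can be simultaneously refined to a single partition on whose pieces $P$, $T$, and the witnessing $\caln$ are all constant. The delicate conceptual point is the passage from ``$T$ is a direct factor of $P$'' to ``$T$ is a subproduct'', which rests on the uniqueness of the irreducible join decomposition of the induced subgraph giving the type of $P$; here I would treat the clique factor with care, since its vertices are individual irreducible components and a direct factor may group several of them together, but this still produces a genuine subproduct.
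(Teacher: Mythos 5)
Your proof is correct and follows essentially the same route as the paper's: the same witness $\caln=\rho^{-1}(S\times S^{\perp})$ for $\Pspec_2$, the same contradiction via Lemmas~\ref{lemma:tight-support}, \ref{lemma:support-normal} and \ref{lemma:inclusion-parabolics} in both directions, and the same use of Lemma~\ref{lemma:product-parabolic-subgroup} to identify the support of $\cals$ as a subproduct. The only cosmetic differences are that you prove $T\subseteq P$ explicitly where the paper folds it into a ``without loss of generality,'' and you justify the passage from direct factor to subproduct via uniqueness of the irreducible join decomposition, where the paper simply cites Lemma~\ref{lemma:product-parabolic-subgroup}.
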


Remark that in our definition of $\Pspec$ --~in contrast with $\Pprod$~-- we do not have any maximality condition. This explains the asymmetry in the above lemma.

\begin{proof}
Without loss of generality, we can assume that $\calp=\rho^{-1}(P)$ for some product parabolic subgroup $P\subseteq G$.

We first prove Assertion~1, so assume that $(\cals,\rho)$ is of special subproduct type within $(\calp,\rho)$. That $(\calg,\calp,\cals)$ satisfies $\Pspec_1$ follows from the fact that subproducts are normal in the ambient product, and Example~\ref{ex:normal}, \cpageref{ex:normal}. We now prove that it satisfies $\Pspec_2$. Up to passing to a conull Borel subset and taking a countable partition of $X$, we will assume that $\cals=\rho^{-1}(S)$ for some special subproduct $S\subseteq P$. Let $\caln=\rho^{-1}(S\times S^{\perp})$, which normalizes $\cals$. Let $\caln'$ be a measured subgroupoid of $\calg$ that stably normalizes $\calp$. Let $U\subseteq X$ be a positive measure Borel subset. Since $(\calp,\rho)$ is tightly $P$-supported (Lemma~\ref{lemma:tight-support}, \cpageref{lemma:tight-support}) and stably normalized by $\caln'$, Lemma~\ref{lemma:support-normal}, \cpageref{lemma:support-normal} ensures that there exists a positive measure Borel subset $V\subseteq U$ such that $\caln'_{|V}\subseteq \rho^{-1}(P\times P^\perp)_{|V}$. Since $S$ is a special subproduct, we have $S\times S^{\perp}\nsubseteq P\times P^{\perp}$. It thus follows from Lemma~\ref{lemma:inclusion-parabolics}, \cpageref{lemma:inclusion-parabolics} that $\caln_{|V}\nsubseteq\caln'_{|V}$; in particular $\caln_{|U}\nsubseteq\caln'_{|U}$, so $\Pspec_2$ holds.

We now prove the second assertion, so assume that $(\calg,\calp,\cals)$ satisfies Property $\Pspec$. Without loss of generality, we can assume that $(\cals,\rho)$ is tightly $S$-supported for some parabolic subgroup $S\subseteq P$. Since $\cals$ is stably normal in $\calp$ (by $\Pspec_1$), ~\cref{lemma:support-normal}, \cpageref{lemma:support-normal}, ensures that $\calp=\rho^{-1}(P)$ is stably contained in $\rho^{-1}(S\times S^{\perp})$, and \cref{lemma:inclusion-parabolics}, \cpageref{lemma:inclusion-parabolics} ensures that $P\subseteq S\times S^{\perp}$. Therefore $S$ is a subproduct of $P$ (see \cref{lemma:product-parabolic-subgroup}).
Let $\caln$ be a measured groupoid as in $\Pspec_2$. Let $\caln'=\rho^{-1}(P\times P^\perp)$, which normalizes $\calp$. Since $\caln$ stably normalizes~$\cals$, by Lemma~\ref{lemma:support-normal}, there exists a positive measure Borel subset $U\subseteq X$ such that $\caln_{|U}\subseteq\rho^{-1}(S\times S^{\perp})_{|U}$. It follows from $\Pspec_2$ that $S\times S^{\perp}$ is not contained in $P\times P^{\perp}$. So $S$ is a special subproduct, which completes the proof.
\end{proof}

\paragraph{Clique-inclusive co-factors.}

Let $P$ be a parabolic subgroup of product type. Write $P=C_0\times {F}_1\times\dots\times {F}_k$ as a direct product of parabolic subgroups, where the $F_j$ do not further split as a direct product of proper parabolic subgroups and $C_0$ denotes the clique factor of $P$. A \emph{clique-inclusive co-factor}\index{Clique-inclusive!Co-factor (subgroup)} of $P$ is a subgroup of the form $C_0\times F_1\times\dots\times \hat {{F}}_{j}\times\dots\times F_k$ for some $j\in\{1,\dots,k\}$. By convention, if $P=C_0$, there is no clique-inclusive co-factor.

Let now $\calg$ be a measured groupoid over a standard probability space $X$, and let $\rho:\calg\to G$ be an action-like cocycle. Let $\calp$ be a measured subgroupoid such that $(\calp,\rho)$ is of product type. Let $X^*=\dunion_{i\in I}X_i$ be a partition of a conull Borel subset $X^*\subseteq X$, and for every $i\in I$, $P_i$ be a product parabolic subgroup of $G$, such that $\calp_{|X_i}=\rho^{-1}(P_i)_{|X_i}$. Let $\calq$ be a measured subgroupoid of $\calp$. We say that $(\calq,\rho)$ is \emph{of clique-inclusive co-factor type within $(\calp,\rho)$}\index{Clique-inclusive!Co-factor type (groupoid framework)} if, up to passing to a further conull Borel subset and refining the above partition, for every $i\in I$, there exists a clique-inclusive co-factor $Q_i$ of $P_i$ such that $\calq_{|X_i}=\rho^{-1}(Q_i)_{|X_i}$. 

We now aim to give a characterization of subgroupoids of clique-inclusive co-factor type. For this, we highlight the following property, an informal explanation is given below~it.

\begin{Def}[Property~$\Padm$]\index{Property!$\Padm$}\label{de:padm}
 Let $\calg$ be a measured groupoid over a standard probability space $X$. Let $\calq\subseteq\calp$ be measured subgroupoids of $\calg$. We say that the triple $(\calg,\calp,\calq)$ satisfies \emph{Property~$\Padm$} if the following three properties hold.
\begin{description}
\item[$\Padm_1$] The groupoid $\calq$ is stably normal in $\calp$.
    \item[$\Padm_2$] There exists a measured subgroupoid $\calb\subseteq \calp$ of infinite type such that 
    \begin{enumerate}
        \item[(a)] the groupoid $\calb$ is stably normalized by a measured subgroupoid $\caln$ of $\calg$ such that 
        \begin{enumerate}
        \item[i.] whenever $\caln'$ is a measured subgroupoid of $\calg$ that stably normalizes $\calp$, for every positive measure Borel subset $U\subseteq X$, one has $\caln_{|U}\nsubseteq\caln'_{|U}$;
        \item[ii.] for every positive measure Borel subset $U\subseteq X$, if $\calb_{|U}$ is contained in a measured subgroupoid $\cals$ of $\calp_{|U}$ such that $(\calg_{|U},\calp_{|U},\cals)$ satisfies Property $\Pspec$, and if~$\caln'$ is a measured subgroupoid of $\calg_{|U}$ that stably normalizes~$\cals$, then for every positive measure Borel subset $V\subseteq U$, one has $\caln_{|V} \nsubseteq \caln'_{|V}$;
        \end{enumerate}
        \item[(b)] the groupoid $\calq$ stably normalizes $\calb$.
        \end{enumerate}
    \item[$\Padm_3$] Whenever $\calq'$ is a measured subgroupoid of $\calp$ such that $(\calg,\calp,\calq')$ satisfies Properties~$\Padm_1$ and~$\Padm_2$, and such that $\calq$ is stably contained in $\calq'$, then $\calq$ and $\calq'$ are stably equal. 
\end{description}
\end{Def}

The idea behind Property~$\Padm$ is the following. Let $P=F_1\times\dots\times F_n$ be a product parabolic subgroup, and for simplicity let us assume that its clique factor is trivial. Let $j\in\{1,\dots,n\}$. The group $Q=F_1\times\dots\times\hat{F}_j\times\dots\times F_n$ is normal in $P$, as in $\Padm_1$. Under our assumption that the defining graph $\Gamma$ is strongly reduced, the subgraph $\Lambda_j$ that gives the type of $F_j$ is not collapsible. We can therefore find $B\subseteq F_j$, conjugate to a vertex group, whose normalizer is not contained in the normalizer of $F_j$. In particular $N_G(B)$ is not contained in $N_G(P)$ (as expressed in its groupoid-theoretic form by $\Padm_2(a.i)$). And if $F_j$ is contained in a special subproduct~$S$, then $N_G(B)$ is also not contained in $N_G(S)$ (as expressed in its groupoid-theoretic form by $\Padm_2(a.ii)$). Finally notice that $B$ is normalized by $Q$, as in $\Padm_2(b)$. Conversely, we will prove (in a groupoid-theoretic context) that these properties are enough to ensure that a subgroup(oid) is contained in one of co-factor type. And the maximality property~$\Padm_3$ is here to ensure that we get the full co-factor, not just a proper subgroup(oid). (In the case where the clique factor of $P$ is non-trivial, the above remains true, except that it now only provides a characterization of the co-factors which are clique-inclusive.)  This is the contents of our next lemma, which as explained above crucially uses the assumption that the underlying graph $\Gamma$ is strongly reduced.

\begin{Lmm}\label{lemma:admissible}
Let $G$ be a graph product of countably infinite groups over a strongly reduced finite simple graph $\Gamma$. Let $\calg$ be a measured groupoid over a standard probability space $X$, equipped with an action-like cocycle $\rho:\calg\to G$. Let $\calq\subseteq\calp\subseteq\calg$ be measured subgroupoids, with $(\calp,\rho)$ of  product type and nowhere of clique type.

Then $(\calq,\rho)$ is of clique-inclusive co-factor type within $(\calp,\rho)$ if and only if $(\calg,\calp,\calq)$ satisfies Property~$\Padm$.
\end{Lmm}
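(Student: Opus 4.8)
The plan is to prove both implications, working throughout up to a countable Borel partition of $X$ and passing to tight supports via Lemma~\ref{lemma:support}. I will freely use that, for a parabolic subgroup $P$, one has $N_G(P)=P\times P^{\perp}$, and that inclusions and normalizations between the groupoids $\rho^{-1}(\cdot)$ translate back to inclusions and normalizations of the parabolic subgroups via Lemmas~\ref{lemma:inclusion-parabolics} and~\ref{lemma:support-normal} (recall $\rho$ is action-like, so Lemma~\ref{lemma:tight-support} applies).

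For the forward direction, suppose $(\calq,\rho)$ is of clique-inclusive co-factor type; after a partition assume $\calp=\rho^{-1}(P)$ and $\calq=\rho^{-1}(Q)$ with $P=C_0\times F_1\times\dots\times F_k$ and $Q=C_0\times F_1\times\dots\times \hat F_{j_0}\times\dots\times F_k$. Property~$\Padm_1$ is immediate from Example~\ref{ex:normal}, since a subproduct is normal in the ambient product. For $\Padm_2$ I would produce $\calb$ from a single conjugate of a vertex group inside the removed factor $F_{j_0}$: writing $\Lambda_{j_0}$ for the irreducible type of $F_{j_0}$ (proper in $\Gamma$, on at least two vertices), strong reducedness of $\Gamma$ forces $\Lambda_{j_0}$ to be non-collapsible (Definition~\ref{de:strongly-reduced}), so there are $x,y\in V\Lambda_{j_0}$ and $z\in V\Gamma\setminus V\Lambda_{j_0}$ with $z$ adjacent to $x$ but not to $y$. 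Taking $B$ to be the conjugate of $G_x$ in $F_{j_0}$ gives $z\in\st(x)\setminus(\Lambda_{j_0}\circ\Lambda_{j_0}^{\perp})$, hence $N_G(B)\nsubseteq N_G(F_{j_0})$. Since $N_G(S)\subseteq N_G(F_{j_0})$ for every subproduct $S\supseteq F_{j_0}$, this single inequality yields both $N_G(B)\nsubseteq N_G(P)$ and $N_G(B)\nsubseteq N_G(S)$ for every special subproduct $S\ni B$; translated through Lemmas~\ref{lemma:support-normal} and~\ref{lemma:inclusion-parabolics} (and Lemma~\ref{lemma:spec}(2), which identifies the special subproduct behind a $\Pspec$-triple) these become exactly $\Padm_2$(a.i) and (a.ii), with $\calb=\rho^{-1}(B)$ of infinite type and $\caln=\rho^{-1}(N_G(B))$, while $\Padm_2$(b) holds because $Q$ centralizes $F_{j_0}\supseteq B$.

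For the backward direction, suppose $(\calg,\calp,\calq)$ satisfies $\Padm$. After a partition, $\calp=\rho^{-1}(P)$ with $P$ a product parabolic having at least one non-clique factor (nowhere of clique type), and $(\calq,\rho)$ is tightly $Q$-supported; by $\Padm_1$ together with Lemmas~\ref{lemma:support-normal},~\ref{lemma:inclusion-parabolics} and~\ref{lemma:product-parabolic-subgroup} the group $Q$ is a subproduct of $P$. Property~$\Padm_2$ furnishes $\calb=\rho^{-1}(B)$ with $B\neq\{1\}$ infinite, $Q\subseteq N_G(B)$ (from (b)), $N_G(B)\nsubseteq N_G(P)$ (from (a.i), tested against $\caln'=\rho^{-1}(N_G(P))$), and $N_G(B)\nsubseteq N_G(S)$ whenever $B\subseteq S$ with $S$ special (from (a.ii) and Lemma~\ref{lemma:spec}). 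The crux is the purely group-theoretic claim that $Q$ must omit some non-clique factor $F_{j_0}$. I would argue by contradiction: if $Q$ contained every $F_j$, then $Q\subseteq N_G(B)=B\times B^{\perp}$ would force each factor of $P$ to lie in $B$ or in $B^{\perp}$, so $B$ would itself be a subproduct; a short computation with links then gives $N_G(P)\subseteq N_G(B)$, upgrading $N_G(B)\nsubseteq N_G(P)$ to $N_G(B)\supsetneq N_G(P)$, i.e.\ $B$ is a proper \emph{special} subproduct containing itself, so (a.ii) with $S=B$ yields the absurdity $N_G(B)\nsubseteq N_G(B)$. Hence $Q$ is stably contained in a clique-inclusive co-factor $Q_{j_0}$, and $\rho^{-1}(Q_{j_0})$ satisfies $\Padm_1,\Padm_2$ by the forward direction; the maximality clause $\Padm_3$ then forces $\calq=\rho^{-1}(Q_{j_0})$ stably. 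The same contradiction argument (using only $\Padm_1,\Padm_2$) also completes the forward direction's $\Padm_3$: any competitor $\calq'$ is stably contained in some co-factor $\rho^{-1}(Q'')$, and since $\calq\subseteq\calq'\subseteq\rho^{-1}(Q'')$ with two clique-inclusive co-factors ordered by inclusion necessarily coinciding, $\calq$ and $\calq'$ are stably equal.

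The main obstacle I expect is the group-theoretic recognition step — showing that an appropriate subgroup cannot spread across all factors (equivalently that $Q$ omits a factor) — together with the careful bookkeeping of the normalizers $S\times S^{\perp}$ and the repeated translation between statements about parabolic subgroups and statements about the groupoids $\rho^{-1}(\cdot)$ up to a partition. This is precisely where the assumption that $\Gamma$ is strongly reduced is indispensable, and where the special-subproduct machinery of Property~$\Pspec$ (Lemma~\ref{lemma:spec}) is doing the real work.
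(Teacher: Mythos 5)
Your proof is correct, and it follows the paper's argument in all essential respects: the same witness construction in the forward direction (a vertex group $G_x$ inside the removed factor whose star escapes $\Lambda_{j_0}\circ\Lambda_{j_0}^{\perp}$, which is exactly where strong reducedness enters), the same translation dictionary via \cref{lemma:support}, \cref{lemma:support-normal} and \cref{lemma:inclusion-parabolics}, the same use of the special/non-special subproduct dichotomy against $\Padm_2$(a.i)/(a.ii), and the same maximality argument for $\Padm_3$ in both directions. The one place where you genuinely reorganize the argument is the backward direction. The paper first shows that the tight support $B$ of $\calb$ is \emph{not} a subproduct (otherwise (a.i) or (a.ii) fails), picks a factor $\mathsf{F}_j$ that $B$ meets properly and non-trivially, and then uses $Q\subseteq B\times B^{\perp}$ together with $P\subseteq Q\times Q^{\perp}$ to localize $Q\cap\mathsf{F}_j$ inside a proper parabolic of $\mathsf{F}_j$ and push $Q$ into the complementary co-factor. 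You instead establish upfront from $\Padm_1$ that $Q$ is a subproduct, and argue by contradiction: if $Q$ contained every non-clique factor, then each such factor would lie in $B$ or $B^{\perp}$ by irreducibility, forcing $B$ to be a subproduct (the clique part of its type being automatically a union of singleton join factors), hence a \emph{special} one by (a.i), contradicting (a.ii) with $S=B$. Both routes rest on the identical dichotomy; yours trades the paper's localization computation $Q\cap\mathsf{F}_j\subseteq N_{\mathsf{F}_j}(B_j)$ for the uniqueness of the refined join decomposition, which is a fair exchange and arguably slightly cleaner.

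Two phrasings should be repaired, though neither affects validity. First, $\Padm_2$ furnishes only an abstract subgroupoid $\calb$ of infinite type, so you cannot write $\calb=\rho^{-1}(B)$; you should say (as your preamble suggests) that $(\calb,\rho)$ is tightly $B$-supported after a partition, so that $\calb\subseteq\rho^{-1}(B)$ --- every step you perform only uses this weaker statement. Second, in the contradiction argument, ``each factor of $P$'' must read ``each non-clique factor of $P$'', since only those are assumed to lie in $Q\subseteq B\times B^{\perp}$; as noted, this suffices because whatever clique vertices appear in the type of $B$ are themselves irreducible join factors.
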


We summarize in \cref{fig:Demo612} the objects appearing in the proof and the link between them.

\begin{figure}
    \centering
    \includegraphics[width=\textwidth]{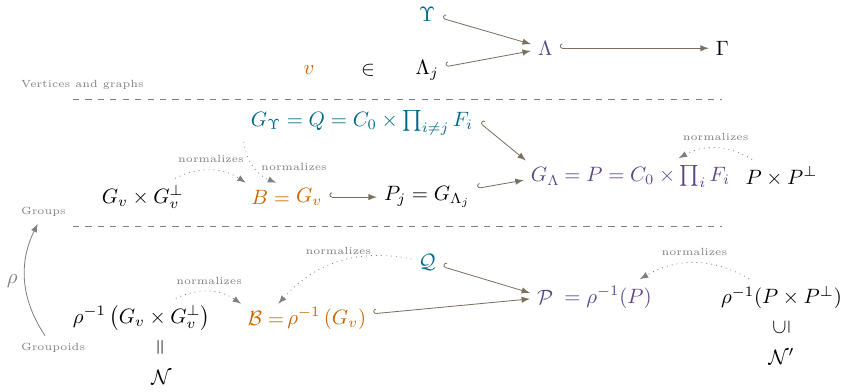}
    \caption{Objects appearing in Step~1 of the proof of \cref{lemma:admissible}}
    \label{fig:Demo612}
\end{figure}

\begin{proof}
\textbf{Step 1} Let us prove that if $(\calq,\rho)$ is of clique-inclusive co-factor type within $(\calp,\rho)$, then $(\calg,\calp,\calq)$ satisfies $\Padm_1$ and $\Padm_2$.
\smallskip

Notice that if $(\calq,\rho)$ is of clique-inclusive co-factor type within $(\calp,\rho)$, then $(\calg,\calp,\calq)$ satisfies $\Padm_1$, because any co-factor is normal in the ambient product (see Example~\ref{ex:normal}). We now prove that $(\calg,\calp,\calq)$ also satisfies Property~$\Padm_2$.  

Up to replacing $X$ by a conull Borel subset and partitioning it into countably many Borel subsets, we can (and will) assume that $\calp=\rho^{-1}(P)$ and $\calq=\rho^{-1}(Q)$, where 
\begin{itemize}
\item $P=C_0\times F_1\times\dots\times F_k$ is a product parabolic subgroup, with (possibly trivial) clique factor $C_0$, with $k\ge 1$, and no $F_i$ decomposes further as a direct product;
\item And $Q=C_0\times F_1\times\dots\times\hat{F}_j\times\dots\times F_k$ for some $j\in\{1,\dots,k\}$. 
\end{itemize}

We now fix $j\in\{1,\dots,k\}$ given by the description of $Q$. Up to a global conjugation, we will also assume that $P=G_\Lambda$ and $Q=G_\Upsilon$ for some induced subgraphs $\Upsilon\subseteq\Lambda\subseteq\Gamma$. We let $\Lambda_j\subseteq\Lambda$ be the subgraph such that $F_j=G_{\Lambda_j}$ --~notice that $\Lambda_j$ is not reduced to a point. We recall from Lemma~\ref{lemma:tight-support}, \cpageref{lemma:tight-support} that $(\calp,\rho)$ is tightly $P$-supported.

Since $\Gamma$ is strongly reduced, the subgraph $\Lambda_j$ (which is not reduced to a point) is not collapsible. Therefore, there exists a vertex $v\in V\Lambda_j$ which is joined to a vertex $w\notin V\Lambda_j$, such that $w$ is not joined to all vertices of $\Lambda_j$. Notice in particular that 
\begin{equation}\label{eq:inclusionGw}
    G_w \subseteq G^\perp_v \quad \text{and}
     \quad G_w\nsubseteq F_j\times F^\perp_j \quad \text{and} 
    \quad G_w\nsubseteq P\times P^\perp.
\end{equation}
Let $\calb=\rho^{-1}(G_v)$. Then $\calb$ is contained in $\calp$, is of infinite type (because $G_v$ is infinite and $\rho$ is action-like) and is normalized by $\calq$ (because $G_v$ is normalized by $Q$, and using Example~\ref{ex:normal}). This proves $\Padm_2(b)$. 

We now prove $\Padm_2(a)$. Let $\caln=\rho^{-1}(G_v\times G_v^{\perp})$, which normalizes $\calb$. 

To check $\Padm_2(a.i)$, let $\caln'$ be a measured subgroupoid of $\calg$ that stably normalizes~$\calp$. Without loss of generality, we will assume that $\caln'$ normalizes $\calp$. Since $(\calp,\rho)$ is tightly $P$-supported, Lemma~\ref{lemma:support-normal}, \cpageref{lemma:support-normal} ensures that $\caln'\subseteq\rho^{-1}(P\times P^{\perp})$ (up to restricting to a conull Borel subset). And \cref{eq:inclusionGw} implies that $G_v\times G_v^{\perp}\nsubseteq P\times P^\perp$. Lemma~\ref{lemma:inclusion-parabolics}, \cpageref{lemma:inclusion-parabolics}, thus ensures that for every positive measure Borel subset $U\subseteq X$, one has $\caln_{|U}\nsubseteq\caln'_{|U}$, as desired.

To check $\Padm_2(a.ii)$, let $U,\cals,\caln',V$ be as in the statement. Let $W\subseteq V$ be a positive measure Borel subset such that $\caln'_{|W}$ normalizes $\cals_{|W}$. Since $(\calg_{|W},\calp_{|W},\cals_{|W})$ satisfies Property~$\Pspec$, the second point of Lemma~\ref{lemma:spec} ensures that, up to replacing $W$ by a positive measure subset, $(\cals_{|W},\rho)$ is tightly $S$-supported for some special subproduct $S$ of $P$. Since by assumption $\calb_{|W}=\rho^{-1}(G_v)_{|W}$ is contained in $\cals_{|W}$, it follows from Lemma~\ref{lemma:inclusion-parabolics}, \cpageref{lemma:inclusion-parabolics} that $G_v\subseteq S$. So $F_j\subseteq S$ since $S$ is a subproduct. By Lemma~\ref{lemma:support-normal}, up to replacing $W$ by a conull Borel subset, we have $\caln'_{|W}\subseteq \rho^{-1}(S\times S^{\perp})_{|W}$, which in turn is contained in $\rho^{-1}(F_j\times F_j^{\perp})_{|W}$. The first two inclusions of \cref{eq:inclusionGw} give that $G_v\times G_v^{\perp}\nsubseteq F_j\times F_j^\perp$, so Lemma~\ref{lemma:inclusion-parabolics} implies that $\caln_{|W}\nsubseteq\caln'_{|W}$. And therefore $\caln_{|V}\nsubseteq\caln'_{|V}$, as desired.
\medskip

\textbf{Step 2} We now assume that $(\calg,\calp,\calq)$ satisfies Properties~$\Padm_1$ and~$\Padm_2$, and we prove that there exists a measured subgroupoid $\calq^\prime$ of $\calg$ such that $(\calq^\prime,\rho)$ is of clique-inclusive co-factor type within $(\calp,\rho)$, and $\calq$ is stably contained in $\calq'$.
\smallskip

Let $\calb,\caln$ be as in $\Padm_2$. Again, up to passing to a conull Borel subset of $X$ and partitioning it into at most countably many Borel subsets, we will assume that $\calp=\rho^{-1}(P)$ where $P=\mathsf{F}_1\times\dots\times \mathsf{F}_n$ is a product parabolic subgroup of $G$, where the $\mathsf{F}_i$ are chosen not to split further (in particular the clique subgroup of $P$ is decomposed as the product of its vertex groups). We will further assume that $\calb$ is normalized by $\caln$ and $\calq$, and that $\calq$ is normal in $\calp$. Using Lemma~\ref{lemma:support}, we can (and will) also assume that $(\calb,\rho)$ is tightly $B$-supported for some parabolic subgroup $B\subseteq P$, and that $(\calq,\rho)$ is tightly $Q$-supported for some parabolic subgroup $Q\subseteq P$.

Since $\calb$ is normalized by $\caln$, Lemma~\ref{lemma:support-normal} ensures (up to restricting to a conull Borel subset) that $\caln\subseteq\rho^{-1}(B\times B^{\perp})$. For $j\in\{1,\dots,n\}$, let $B_j=B\cap \mathsf{F}_j$. 

We claim that there exists $j\in\{1,\dots,n\}$ such that $B_j\neq\{1\}$ and $B_j\neq \mathsf{F}_j$. Indeed, otherwise $B$ would be a subproduct. But in this case, if $B$ was a nonspecial subproduct, then $B\times B^{\perp}\subseteq P\times P^\perp$, so $\caln\subseteq\rho^{-1}(B\times B^{\perp})\subseteq \rho^{-1}(P\times P^\perp)$. Since $\caln':= \rho^{-1}(P\times P^{\perp})$ normalizes $\calp$, this gives a contradiction to Property~$\Padm_2(a.i)$. If $B$ was special, then $(\calg,\calp,\rho^{-1}(B))$ would satisfy Property~$\Pspec$ (Lemma~\ref{lemma:spec}), and the inclusion $\caln\subseteq\rho^{-1}(B\times B^\perp)$ would give a contradiction to Property~$\Padm_2(a.ii)$, applied this time to $\caln^\prime:=\rho^{-1}(B\times B^\perp)$. Hence our claim.

Let $j\in\{1,\dots,n\}$ be as above. The above claim shows that $\mathsf{F}_j$ contains $B_j$ as a proper parabolic subgroup, so necessarily $\mathsf{F}_j$ is not conjugate to a vertex group. Since $\calb$ is normalized by $\calq$, we have $\calq\subseteq\rho^{-1}(B\times B^{\perp})$ (Lemma~\ref{lemma:support-normal}), so $Q\subseteq B\times B^{\perp}$. In particular $Q\cap \mathsf{F}_j$ is contained in $B_j\times B_j^{\perp}$. So $Q\subseteq \mathsf{F}_1\times\dots\times \mathsf{F}_{j-1}\times ((B_j\times B_j^{\perp})\cap \mathsf{F}_j)\times \mathsf{F}_{j+1}\times\dots\times \mathsf{F}_n$. Since $\calq$ is normal in $\calp=\rho^{-1}(P)$ and $(\calq,\rho)$ is tightly $Q$-supported, we have $P\subseteq Q\times Q^{\perp}$, and we deduce that $Q\subseteq \mathsf{F}_1\times\dots\times \hat {\mathsf{F}}_{j}\times\dots\times \mathsf{F}_n$. Let $\calq'=\rho^{-1}(\mathsf{F}_1\times\dots\times \hat{\mathsf{F}}_{j}\times\dots\times \mathsf{F}_n)$. Since $\mathsf{F}_j$ is not conjugate to a vertex group, the subgroup $\mathsf{F}_1\times\dots\times \hat{\mathsf{F}}_{j}\times\dots\times \mathsf{F}_n$ is a clique-inclusive co-factor, and therefore $(\calq',\rho)$ is of clique-inclusive co-factor type within $(\calp,\rho)$. So Step 1 of this proof shows that $\calq'$ satisfies Properties~$\Padm_1$ and~$\Padm_2$. Moreover $\calq\subseteq\calq'$ (up to a conull Borel subset). Hence the wanted assertion. 
\medskip

\noindent\textbf{Step 3} We now assume that $(\calg,\calp,\calq)$ satisfies Property~$\Padm$, and prove that $(\calq,\rho)$ is of clique-inclusive co-factor type within $(\calp,\rho)$.
\smallskip

Step 2 gives a measured subgroupoid $\calq^\prime$ of $\calg$ such that $(\calq^\prime,\rho)$ is of clique-inclusive type within $(\calp,\rho)$, and $\calq$ is stably contained in $\calq'$. Step~1 then ensures that $(\calg,\calp,\calq')$ satisfies $\Padm_1$ and $\Padm_2$. And Assertion~$\Padm_3$ then implies that $\calq$ is stably equal to $\calq'$, showing that $(\calq,\rho)$ is of clique-inclusive co-factor type within $(\calp,\rho)$.
\medskip

\noindent\textbf{Step 4} We are left with showing that if $(\calq,\rho)$ is of clique-inclusive co-factor type within $(\calp,\rho)$, then $(\calg,\calp,\calq)$ satisfies $\Padm_3$.
\smallskip

So assume that $\calq=\rho^{-1}(Q)$ for some clique-inclusive co-factor $Q$ of $P$, and let $\calq'$ be a measured subgroupoid of $\calp$ such that $(\calg,\calp,\calq')$ satisfies $\Padm_1$ and $\Padm_2$, such that $\calq$ is stably contained in $\calq'$. By Step~2 applied to the triple $(\calg,\calp,\calq')$, up to a conull Borel subset and a countable partition of the base space, $\calq'$ is contained in $\rho^{-1}(Q')$ for some clique-inclusive co-factor $Q'$ of $P$. Lemma~\ref{lemma:inclusion-parabolics} implies that $Q\subseteq Q'$, and since $Q$ and $Q'$ are two co-factors of $P$ it follows that $Q=Q'$, which concludes our proof.
\end{proof}

\paragraph{Factors.}

Let $\calg$ be a measured groupoid over a standard probability space $X$, and let $\rho:\calg\to G$ be an action-like cocycle. Let $\calp$ be a measured subgroupoid of $\calg$ such that $(\calp,\rho)$ is of parabolic type. Let $X^*=\dunion_{i\in I}X_i$ be a partition of a conull Borel subset $X^*\subseteq X$ into at most countably many Borel subsets, and for every $i\in I$, let $P_i$ be a  parabolic subgroup of $G$, such that $\calp_{|X_i}=\rho^{-1}(P_i)_{|X_i}$. Let $\calf$ be a measured subgroupoid of $\calp$. We say that $(\calf,\rho)$ is \emph{of factor type within $(\calp,\rho)$}\index{Factor!Factor type (groupoid framework)} if, up to passing to a further conull Borel subset and refining the above partition, for every $i\in I$, there exists a factor $F_i$ of $P_i$ such that $\calf_{|X_i}=\rho^{-1}(F_i)_{|X_i}$. We say that $(\calf,\rho)$ is \emph{of clique factor type within $(\calp,\rho)$}\index{Clique!Clique factor type (groupoid framework)} if additionally, for every $i\in I$ such that $X_i$ has positive measure, $F_i$ is the clique factor of~$P_i$.

We now aim to characterize these notions using a purely groupoid-theoretic property, which is as follows.

\begin{Def}[Property~$\Pfact$]\index{Property!$\Pfact$}\label{de:pfact}
Let $\calg$ be a measured groupoid over a standard probability space $X$. Let $\calf\subseteq\calp$ be measured subgroupoids of $\calg$. We say that the triple $(\calg,\calp,\calf)$ satisfies \emph{Property~$\Pfact$} if the following three properties hold.
\begin{description}
\item[$\Pfact_1$] $\calf$ is of infinite type.
    \item[$\Pfact_2$] There exist $n\ge 1$ and measured subgroupoids $\calq_1,\dots,\calq_n$ of $\calp$ such that $(\calg,\calp,\calq_j)$ satisfies Property~$\Padm$ for every $j\in\{1,\dots,n\}$, and a countable Borel partition $X^*=\dunion_{i\in I} X_i$ of a conull Borel subset $X^*\subseteq X$ such that for every $i\in I$, one has $\calf_{|X_i}=(\calq_1\cap\dots\cap\calq_n)_{|X_i}$.
    \item[$\Pfact_3$] For every measured subgroupoid $\calq$ of $\calp$ such that $(\calg,\calp,\calq)$ satisfies Property~$\Padm$, and every positive measure Borel subset $U\subseteq X$, there exists a positive measure Borel subset $V\subseteq U$ such that either $\calf_{|V}\subseteq\calq_{|V}$ or $(\calf\cap\calq)_{|V}$ is trivial.
\end{description}
\end{Def}

The idea behind Property~$\Pfact$ is the following. If $P=F_1\times\dots\times F_n$ is a product parabolic subgroup with trivial clique factor, then the factors $F_j$ are characterized as the intersections of co-factors (translated in $\Pfact_2$) which are infinite (as in $\Pfact_1$) and minimal (minimality is the aim of $\Pfact_3$). And if $P$ has a non-trivial  clique factor, then all clique-inclusive co-factors contain it, and therefore the clique factor is characterized as being the intersection of all clique-inclusive co-factors. The following lemma expresses this idea at the groupoid-theoretic level.

\begin{lemma}\label{lemma:factor}
Let $G$ be a graph product of countably infinite groups over a strongly reduced finite simple graph $\Gamma$. Let $\calg$ be a measured groupoid over a standard probability space $X$, equipped with an action-like cocycle $\rho:\calg\to G$. Let $P\subseteq G$ be a product parabolic subgroup with clique factor $C_0$, with $P\neq C_0$, and let $\calp=\rho^{-1}(P)$. Let $\calf\subseteq\calp$ be a measured subgroupoid. 
\begin{enumerate}
    \item If $C_0=\{1\}$, then $(\calf,\rho)$ is of factor type within $(\calp,\rho)$ if and only if $(\calg,\calp,\calf)$ satisfies Property~$\Pfact$.
    \item If $C_0\neq\{1\}$, then $(\calf,\rho)$ is of clique factor type within $(\calp,\rho)$ if and only if $(\calg,\calp,\calf)$ satisfies Property~$\Pfact$.
\end{enumerate}
\end{lemma}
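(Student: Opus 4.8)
The plan is to reduce everything to the characterization of clique-inclusive co-factors already obtained in Lemma~\ref{lemma:admissible}, and then to carry out the purely group-theoretic combinatorics of intersecting co-factors at the level of the cocycle, using that $\rho^{-1}$ commutes with intersections and that inclusions of the images are detected by inclusions of parabolics (Lemma~\ref{lemma:inclusion-parabolics}). Throughout I would freely use that all properties involved are stable under restriction and under countable Borel partition, so that after partitioning I may assume $\calf$, as well as each subgroupoid of co-factor type appearing, to be of the form $\rho^{-1}(\,\cdot\,)$ for a fixed parabolic subgroup; I would also use repeatedly that, since $\rho$ is action-like, $\rho^{-1}(K)$ is of infinite type precisely when $K$ is infinite, and that $\rho^{-1}(\{1\})$ is stably trivial. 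Note first that since $P\neq C_0$ the type of $P$ is not a clique, so $(\calp,\rho)=(\rho^{-1}(P),\rho)$ is nowhere of clique type (by tight support, Lemma~\ref{lemma:tight-support}, together with Lemma~\ref{lemma:inclusion-parabolics}); hence Lemma~\ref{lemma:admissible} applies and says that the subgroupoids $\calq\subseteq\calp$ with $(\calg,\calp,\calq)$ satisfying $\Padm$ are exactly those of clique-inclusive co-factor type.

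Write $P=C_0\times F_1\times\dots\times F_k$ with the $F_j$ irreducible; since $P$ is a genuine product with clique factor $C_0$, one has $k\ge 1$, and if moreover $C_0=\{1\}$ then $k\ge 2$. The combinatorial engine is the elementary computation that, for $T\subseteq\{1,\dots,k\}$, the intersection of the clique-inclusive co-factors $Q_i=C_0\times F_1\times\dots\times\hat F_i\times\dots\times F_k$ over $i\in T$ equals $C_0\times\prod_{\ell\notin T}F_\ell$. In particular $\bigcap_{i=1}^k Q_i=C_0$, while $\bigcap_{i\neq j}Q_i=C_0\times F_j$. When $C_0=\{1\}$, the only factor that could be trivial is $C_0$ itself, and $\Pfact_1$ excludes it on both sides, so there "factor type'' effectively means corresponding to one of the $F_j$.

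For the forward implication I would argue as follows. In the clique case ($C_0\neq\{1\}$, $\calf$ of clique factor type, so $\calf=\rho^{-1}(C_0)$ on pieces): $\Pfact_1$ holds because $C_0$ is infinite; $\Pfact_2$ holds by taking $\calq_i=\rho^{-1}(Q_i)$ for $i=1,\dots,k$, each of co-factor type (hence satisfying $\Padm$) with $\bigcap_i\calq_i=\rho^{-1}(\bigcap_iQ_i)=\rho^{-1}(C_0)=\calf$; and $\Pfact_3$ is automatic, since every clique-inclusive co-factor contains $C_0$, whence $\calf\subseteq\calq$ always. In the factor case ($C_0=\{1\}$, $\calf=\rho^{-1}(F_j)$): $\Pfact_1$ holds since $F_j$ is infinite; $\Pfact_2$ uses $F_j=\bigcap_{i\neq j}Q_i$ and the $k-1\ge 1$ co-factors $\calq_i=\rho^{-1}(Q_i)$; and for $\Pfact_3$, a co-factor $Q_m$ either contains $F_j$ (if $m\neq j$, giving $\calf\subseteq\calq$) or meets it trivially ($F_j\cap Q_j=\{1\}$ because $C_0=\{1\}$, giving $\calf\cap\calq$ stably trivial).

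For the converse, suppose $(\calg,\calp,\calf)$ satisfies $\Pfact$. By $\Pfact_2$ and Lemma~\ref{lemma:admissible}, after partitioning $\calf=\rho^{-1}(\bigcap_j Q_{(j)})$ is the preimage of an intersection of clique-inclusive co-factors, hence of a subproduct $R=C_0\times\prod_{\ell\in S}F_\ell$ for some $S\subseteq\{1,\dots,k\}$. It remains to pin down $S$: in the clique case I must get $S=\emptyset$, in the factor case $|S|=1$. The mechanism is uniform: if $R$ properly contains the target (i.e.\ $S\neq\emptyset$ in the clique case, or $|S|\ge 2$ in the factor case), pick $j_0\in S$ and test $\Pfact_3$ against $\calq=\rho^{-1}(Q_{j_0})$ on a piece where $\calf=\rho^{-1}(R)$; then on every positive-measure $V$, Lemma~\ref{lemma:inclusion-parabolics} forbids $\calf_{|V}\subseteq\calq_{|V}$ (as $F_{j_0}\subseteq R$ but $F_{j_0}\not\subseteq Q_{j_0}$), while $\calf\cap\calq=\rho^{-1}\bigl(C_0\times\prod_{\ell\in S\setminus\{j_0\}}F_\ell\bigr)$ is of infinite type — it contains $C_0$ in the clique case, or a second factor $F_{j_1}$ with $j_1\in S$ in the factor case — hence nowhere trivial, contradicting $\Pfact_3$; finally $\Pfact_1$ rules out $S=\emptyset$ in the factor case. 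The main obstacle, and the place demanding care, is exactly this bookkeeping of partitions in the converse: the witnessing subgroupoid $\calq$ must be a genuine co-factor-type subgroupoid arranged to equal $\rho^{-1}(Q_{j_0})$ on the chosen piece, and the "infinite type / non-inclusion'' conclusions must be shown robust under every further restriction $V$. Both follow from the action-like hypothesis together with Lemmas~\ref{lemma:tight-support} and~\ref{lemma:inclusion-parabolics}, but they are precisely the steps where the groupoid formalism does the real work beyond the transparent group-level picture.
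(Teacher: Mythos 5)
Your proposal is correct and follows essentially the same route as the paper's proof: the forward direction exhibits the relevant clique-inclusive co-factors $\rho^{-1}(Q_j)$ and invokes Lemma~\ref{lemma:admissible} to verify $\Pfact_1$--$\Pfact_3$, and the converse writes $\calf$ (after partitioning) as $\rho^{-1}$ of an intersection of co-factors, i.e.\ of a subproduct containing $C_0$, then rules out any superfluous factor by testing $\Pfact_3$ against a single co-factor $\rho^{-1}(Q_{j_0})$, using Lemma~\ref{lemma:inclusion-parabolics} for the non-inclusion and the action-like property for the non-triviality of the intersection. The only cosmetic difference is your explicit bookkeeping with the index set $S$ and the explicit remark that $P\neq C_0$ makes $(\calp,\rho)$ nowhere of clique type, both of which the paper leaves implicit.
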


\begin{proof}
We first assume that $C_0=\{1\}$ and that $(\calf,\rho)$ is of factor type within $(\calp,\rho)$. Write $P=F_1\times\dots\times F_n$, where the factors $F_j$ do not further decompose as products. Up to a conull Borel subset and a countable partition, and a permutation of the factors $F_j$, we will assume that $\calf=\rho^{-1}(F_1)$. In particular $\calf$ is of infinite type because $F_1$ is infinite and $\rho$ is action-like, and hence $\Pfact_1$ is verified. Now, for every $j\in\{2,\dots,k\}$, let $Q_j=F_1\times\dots\times \hat{F}_j\times\dots\times F_k$, and let $\calq_j=\rho^{-1}(Q_j)$. Lemma~\ref{lemma:admissible}, \cpageref{lemma:admissible} ensures that $(\calg,\calp,\calq_j)$ satisfies Property~$\Padm$. In addition $\calf=\calq_2\cap\dots\cap\calq_k$, showing that $\Pfact_2$ holds. Finally, let $\calq$ be a measured subgroupoid of $\calp$ such that $(\calg,\calp,\calq)$ satisfies $\Padm$. By Lemma~\ref{lemma:admissible}, there exist a positive measure Borel subset $U\subseteq X$ and a clique-inclusive co-factor $Q\subseteq P$ such that $\calq_{|U}=\rho^{-1}(Q)_{|U}$. Either $Q=F_2\times\dots\times F_n$, in which case $(\calf\cap\calq)_{|U}$ is trivial, or else $F_1\subseteq Q$, in which case $\calf_{|U}\subseteq\calq_{|U}$. This shows that $\Pfact_3$ holds.

We now assume that $C_0\neq\{1\}$ and that $(\calf,\rho)$ is of clique factor type within $(\calp,\rho)$. Write $P=C_0\times F_1\times\dots\times F_k$, where the $F_j$ do not further decompose as direct products. Up to a conull Borel subset and a countable partition, we will assume that $\calf=\rho^{-1}(C_0)$. In particular $\calf$ is of infinite type because $C_0$ is infinite and $\rho$ is action-like, showing that $\Pfact_1$ holds. For every $j\in\{1,\dots,k\}$, let $Q_j=C_0\times F_1\times\dots\times \hat{F}_j\times\dots\times F_k$, and let $\calq_j=\rho^{-1}(Q_j)$. Lemma~\ref{lemma:admissible} ensures that $(\calg,\calp,\calq_j)$ satisfies Property~$\Padm$. In addition $\calf=\calq_1\cap\dots\cap\calq_k$, showing that $\Pfact_2$ holds. Finally, let $\calq$ be a measured subgroupoid of $\calp$ such that $(\calg,\calp,\calq)$ satisfies $\Padm$. By Lemma~\ref{lemma:admissible}, for every positive measure Borel subset $U\subseteq X$ there exist a positive measure Borel subset $V\subseteq U$ and a clique-inclusive co-factor $Q\subseteq P$ such that $\calq_{|V}=\rho^{-1}(Q)_{|V}$. Then $C_0\subseteq Q$, so $\calf_{|V}\subseteq\calq_{|V}$, showing that $\Pfact_3$ holds.

Conversely, let us now assume that $(\calg,\calp,\calf)$ satisfies Property~$\Pfact$. Let $\calq_1,\dots,\calq_n$ be measured subgroupoids of $\calp$ as in $\Pfact_2$. Up to a conull Borel subset and a countable partition, we will assume that $\calf=\calq_1\cap\dots\cap\calq_n$, and, using Lemma~\ref{lemma:admissible}, that $\calq_j=\rho^{-1}(Q_j)$ for some clique-inclusive co-factor $Q_j$. Thus $\calf=\rho^{-1}(F)$, where $F=Q_1\cap\dots\cap Q_n$ is a subproduct of $P$. Then $F$ is non-trivial as $\calf$ is of infinite type by $\Pfact_1$ while $\rho$ has trivial kernel, and $F$ contains $C_0$. To conclude the proof, there only remains to show that
\begin{itemize}
\item if $C_0=\{1\}$, then $F$ does not split as a product of at least two factors;
\item if $C_0\neq\{1\}$, then $F$ does not split as a product of $C_0$ and at least another factor.
\end{itemize}
 But in both cases, if it did, then we could find a clique-inclusive co-factor $Q$ such that $F\cap Q$ is an infinite proper parabolic subgroup of $F$ (in particular $F\nsubseteq Q$). We now reach a contradiction to $\Pfact_3$ by letting $\calq=\rho^{-1}(Q)$: indeed for every positive measure Borel subset $U\subseteq X$, Lemma~\ref{lemma:inclusion-parabolics} shows that $\calf_{|U}\nsubseteq\calq_{|U}$, while the fact that $F\cap Q$ is infinite and $\rho$ is action-like ensures that $(\calf\cap\calq)_{|U}$ is non-trivial. This completes our proof.
\end{proof}

\subsection{Proof of the Vertex Recognition Property}\label{sec:vertex-groups}

We finally introduce the property that will enable us to characterize subgroupoids of vertex type\footnote{Recall that groupoids of vertex type are defined in \cref{subsec:VRP}, \cpageref{Def:VertexType}} with no reference to an action-like cocycle, under the condition that the defining graph of $G$ is transvection-free and strongly reduced. We first state the property, and then comment it below --~it is inspired from  Lemma~\ref{lemma:graph}.

\begin{Def}[Property~$\Pvert$]\index{Property!$\Pvert$}\label{de:pvert}
Let $\calg$ be a measured groupoid over a standard probability space $X$, and let $\calv$ be a measured subgroupoid of $\calg$. We say that the pair $(\calg,\calv)$ satisfies \emph{Property $\Pvert$} if $\calv$ is of infinite type and the following four properties hold.
\begin{description}
    \item[$\Pvert_1$] There exists a countable Borel partition $X^*=\dunion_{i\in I}X_i$ of a conull Borel subset $X^*\subseteq X$ such that for every $i\in I$, there exist measured subgroupoids \[\calg_{|X_i}=\calf_0\supseteq\calp_1\supseteq\calf_1\supseteq\calp_2\supseteq\calf_2\supseteq\dots\supseteq\calp_n\supseteq\calf_n=\calv_{|X_i}\] of $\calg_{|X_i}$ (with $n\ge 1$) such that
    \begin{enumerate}
        \item[(a)] for every $j\in\{1,\dots,n\}$, the pair $(\calf_{j-1},\calp_j)$ satisfies Property~$\Pprod$;
        \item[(b)] for every $j\in\{1,\dots,n\}$, the triple $(\calg,\calp_j,\calf_j)$ satisfies Property~$\Pfact$.
    \end{enumerate}
    \item[$\Pvert_2$] For every measured subgroupoid $\calw$ of $\calg$ of infinite type which is stably contained in $\calv$, and every measured subgroupoid $\calh\subseteq\calg$, if $\calw$ is stably normalized by $\calh$, then $\calv$ is stably normalized by $\calh$.
    \item[$\Pvert_3$] There exists a measured subgroupoid $\caln$ of $\calg$ that stably normalizes $\calv$, such that for every measured subgroupoid $\calv'$ of $\calg$ such that $(\calg,\calv')$ satisfies Properties~$\Pvert_1$ and $\Pvert_2$, every measured subgroupoid $\caln'$ of $\calg$ that stably normalizes $\calv'$, and every positive measure Borel subset $U\subseteq X$, there exists a positive measure Borel subset $V\subseteq U$ such that either $\caln_{|V}=\caln'_{|V}$ or $\caln_{|V}\nsubseteq\caln'_{|V}$.
    \item[$\Pvert_4$] Whenever $\calv'$ is a measured subgroupoid of $\calg$ such that $(\calg,\calv')$ satisfies Properties~$\Pvert_1,\Pvert_2$ and~$\Pvert_3$, if $\calv$ is stably contained in $\calv'$, then they are stably equal.
    \end{description}
\end{Def}

A few comments regarding the meaning of this property are in order. Property~$\Pvert_1$ is a groupoid translation of the contents of Lemma~\ref{lemma:graph}, \cpageref{lemma:graph}, saying informally that vertex groups can be obtained by successively passing to maximal product parabolic subgroups and their factors. However, if we only considered Property~$\Pvert_1$ and end this process too early, we could end up with a subgroupoid “of parabolic type”\index{Parabolic!Parabolic type (groupoid framework)}, i.e.\ of the form $\rho^{-1}(B)$ for some parabolic subgroup $B\subseteq G$ (possibly after taking a conull subset and a countable Borel partition), where $B$ could be larger than a vertex group. Also, because of the slight asymmetry in our lemma that deals with subgroupoids of maximal product type (Lemma~\ref{lemma:maximal-product}, \cpageref{lemma:maximal-product}), we could possibly end up with a subgroupoid contained in one ``of clique type'' (possibly, a subgroupoid strictly contained in one of vertex type), with no precise control. The role of Properties~$\Pvert_2$,~$\Pvert_3$ and~$\Pvert_4$ is to exclude these extra possibilities, as we now explain. 

Under our assumption that the defining graph $\Gamma$ is strongly reduced, a parabolic subgroup $B$ that is not conjugate to a vertex group, always contains a proper (parabolic) subgroup $A\subseteq B$ with $N_G(A)\nsubseteq N_G(B)$. On the other hand a vertex group does not. Property~$\Pvert_2$ is the groupoid translation of this fact, which will allow us to exclude subgroupoids of parabolic type but not of vertex type.

Assuming that $\Gamma$ is transvection-free, if $B$ is a proper subgroup of a clique subgroup~$C$, then the normalizer of $B$ is strictly contained in the normalizer of any vertex subgroup of~$C$. On the other hand, the normalizer of a vertex group is not strictly contained in the normalizer of any other vertex group. A groupoid version of this distinction is exploited in Property~$\Pvert_3$, to exclude subgroupoids contained in one ``of clique type'' but not contained in one of vertex type.

Finally, a maximality assumption is exploited in Property~$\Pvert_4$ to exclude proper subgroupoids of subgroupoids of vertex type.

Having explained the general idea behind it, we can now turn to the proof of our main lemma, from which the Vertex Recognition Property (see \cref{de:VRP}) follows.

\begin{lemma}\label{lemma:characterization-vertex-groups}
    Let $G$ be a graph product of countably infinite groups over a strongly reduced transvection-free finite simple graph $\Gamma$ not reduced to a vertex.
    Let $\calg$ be a measured groupoid over a standard probability space $X$, equipped with an action-like cocycle $\rho:\calg\to G$. Let $\calv\subseteq\calg$ be a measured subgroupoid. 
    
    Then $(\calv,\rho)$ is of vertex type if and only if $(\calg,\calv)$ satisfies Property~$\Pvert$.
\end{lemma}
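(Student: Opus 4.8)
The plan is to prove the lemma by a bidirectional argument, exploiting the combinatorial characterization of untransvectable vertices from Lemma~\ref{lemma:graph} and the groupoid-theoretic translations of "maximal product" and "factor" established in Lemmas~\ref{lemma:maximal-product} and~\ref{lemma:factor}. Since $\Gamma$ is transvection-free and not reduced to a vertex, \emph{every} vertex is untransvectable, so Lemma~\ref{lemma:graph} applies to all vertex groups. First I would prove the forward implication: assuming $(\calv,\rho)$ is of vertex type, I reduce (via a countable Borel partition) to the case $\calv=\rho^{-1}(G_v)$ for a single vertex $v$. Applying Lemma~\ref{lemma:graph} gives a chain $G=F_0\supseteq P_1\supseteq F_1\supseteq\dots\supseteq P_n\supseteq F_n=G_v$ of parabolic subgroups, and taking preimages under $\rho$ yields the chain of subgroupoids required by $\Pvert_1$. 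Property~$\Pvert_1(a)$ then follows from Lemma~\ref{lemma:maximal-product}(1) (each $P_j$ is a maximal product, not of isolated clique type since $v$ is untransvectable), and $\Pvert_1(b)$ from Lemma~\ref{lemma:factor} (each $F_j$ is a factor, or the clique factor at the last step). That $\calv$ is of infinite type is immediate since $G_v$ is infinite and $\rho$ is action-like.

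The substance of the forward direction is verifying $\Pvert_2$, $\Pvert_3$, and $\Pvert_4$, which encode the \emph{rigidity} of vertex groups among parabolic subgroups. For $\Pvert_2$, the key algebraic input is: since $\Gamma$ is strongly reduced, any parabolic subgroup $B$ that is \emph{not} conjugate to a vertex group contains a proper parabolic $A$ with $N_G(A)\nsubseteq N_G(B)$, whereas a vertex group $G_v$ has the property that any infinite subgroupoid $\calw\subseteq\rho^{-1}(G_v)$, once stably normalized by some $\calh$, forces $\rho(\calh)\subseteq G_v\times G_v^{\perp}=N_G(G_v)$ — this uses Lemma~\ref{lemma:support} to get tight support, Lemma~\ref{lemma:support-normal} to push normalization to the orthogonal, and Lemma~\ref{lemma:inclusion-parabolics} to descend to groups. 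For $\Pvert_3$, I would use that $\Gamma$ is transvection-free: the normalizer of a vertex group is not strictly contained in the normalizer of any other vertex group, so comparing $\caln=\rho^{-1}(G_v\times G_v^{\perp})$ against a competitor $\caln'$ always yields either stable equality or non-containment on a positive measure subset (never strict containment), via Lemma~\ref{lemma:inclusion-parabolics}. For $\Pvert_4$, I use Lemma~\ref{lemma:inclusion-parabolics} again: if $\rho^{-1}(G_v)$ is stably contained in $\calv'$ and $\calv'$ also satisfies $\Pvert_1$–$\Pvert_3$ (hence, by the converse direction, is of vertex type, say $\rho^{-1}(G_w)$ up to conjugation and partition), then $G_v\subseteq G_w$, and distinct vertex groups are incomparable, forcing equality.

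For the converse, I would assume $(\calg,\calv)$ satisfies $\Pvert$ and extract a vertex-type conclusion. Starting from the chain in $\Pvert_1$, I apply Lemma~\ref{lemma:maximal-product}(2) to each $(\calf_{j-1},\calp_j)$ pair: this gives a partition where $\calp_j$ is either of maximal product type or stably contained in one of isolated clique type. Then Lemma~\ref{lemma:factor} applied via $\Pvert_1(b)$ shows $\calf_j$ is of factor (or clique factor) type within $\calp_j$. Threading these together down the chain, I conclude that, up to a countable partition, $\calv=\rho^{-1}(B)$ for some parabolic subgroup $B$. The remaining task — the crux of the converse — is to show $B$ is conjugate to a vertex group. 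This is where $\Pvert_2$, $\Pvert_3$, $\Pvert_4$ do their work: $\Pvert_2$ rules out $B$ being a larger (non-vertex) parabolic, since such a $B$ would admit a proper $A$ with $N_G(A)\nsubseteq N_G(B)$ contradicting the stable-normalization transfer; $\Pvert_3$ rules out $B$ being a proper subgroup of a clique subgroup (where transvection-freeness gives the strict-normalizer-containment that $\Pvert_3$ forbids); and $\Pvert_4$ upgrades any strict containment inside a vertex-type groupoid to equality.

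The main obstacle I anticipate is the converse direction's delicate bookkeeping around the \emph{asymmetry} in Lemma~\ref{lemma:maximal-product} concerning isolated clique types (flagged in Remark~\ref{rk:prod}): because $\Gamma$ is strongly reduced and transvection-free but may still harbor cliques, I must carefully use the transvection-free hypothesis through $\Pvert_3$ to exclude the spurious "sub-clique" possibilities that Property~$\Pprod$ alone cannot eliminate. Concretely, the hardest step is showing that the parabolic $B$ obtained from the chain cannot be a proper parabolic subgroup of a clique subgroup: here I would argue that if $B\subsetneq C$ for a clique subgroup $C$ with $C$ containing a vertex group $G_w$, then $N_G(B)\subsetneq N_G(G_w)$ strictly, and by producing a competitor $\calv'$ of genuine vertex type with normalizer $\caln'=\rho^{-1}(N_G(G_w))$, I contradict the dichotomy in $\Pvert_3$. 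Managing the interplay between stable equalities, positive-measure restrictions, and the countable partitions throughout — while ensuring every invocation of Lemma~\ref{lemma:inclusion-parabolics} is on a genuinely positive measure subset — will require the most care.
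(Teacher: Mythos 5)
Your overall route is the paper's: the same zoom-in chain from Lemma~\ref{lemma:graph}, translated to groupoids by Lemmas~\ref{lemma:maximal-product} and~\ref{lemma:factor}, with $\Pvert_2$ excluding non-clique parabolic supports (via strong reducedness), $\Pvert_3$ excluding cliques on at least two vertices (via untransvectability), and $\Pvert_4$ upgrading containment to equality. There is, however, a genuine logical flaw in your forward verification of $\Pvert_4$: you take a competitor $\calv'$ satisfying only $\Pvert_1$--$\Pvert_3$ and declare that ``by the converse direction'' it is of vertex type. This is circular and, moreover, false as stated: the converse direction of the lemma requires all four properties, including $\Pvert_4$, which $\calv'$ is not assumed to satisfy; and $\Pvert_1$--$\Pvert_3$ alone do not force $\calv'$ to \emph{equal} a subgroupoid of vertex type --- precisely because of the isolated-clique asymmetry in Lemma~\ref{lemma:maximal-product}, the chain in $\Pvert_1$ only pins $\calv'$ down up to containment. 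What is true, and what the paper isolates as two separate intermediate statements, is a pair of containment-only results: $\Pvert_1$ and $\Pvert_2$ imply that $\calv'$ is stably contained in $\rho^{-1}(C)$ for parabolic subgroups $C$ conjugate to clique subgroups, and $\Pvert_1$, $\Pvert_2$ and $\Pvert_3$ together imply stable containment in $\rho^{-1}(A)$ with $A$ conjugate to a vertex group. With the second statement your sandwich argument is repaired: $\rho^{-1}(G_v)\subseteq\calv'\subseteq\rho^{-1}(A)$ stably, Lemma~\ref{lemma:inclusion-parabolics} gives $G_v\subseteq A$, hence $G_v=A$ and all three subgroupoids are stably equal. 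These intermediate statements are exactly what your own converse-direction analysis produces, so the repair costs nothing; but the dependency order must be: prove the containment statements first, then verify $\Pvert_3$ and $\Pvert_4$ for genuine vertex-type subgroupoids using them, and only then assemble the equivalence.

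The same imprecision affects your forward verification of $\Pvert_3$: the competitor $\calv'$ there satisfies only $\Pvert_1$ and $\Pvert_2$, so its tight support is a parabolic subgroup conjugate to $G_\Upsilon$ for a clique $\Upsilon$, possibly with $|V\Upsilon|\ge 2$, and not necessarily another vertex group. The normalizer comparison you need is therefore between $G_v\times G_v^{\perp}$ and $C\times C^{\perp}$ for a clique parabolic $C$, and the relevant fact is Lemma~\ref{lemma:parabolic-untransvectable} (if $G_v\times G_v^{\perp}\subseteq C\times C^{\perp}$ with $v$ untransvectable, then $C=G_v$), not the weaker observation that the normalizer of a vertex group is never strictly contained in the normalizer of another vertex group. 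Once you route the argument through the clique-containment statement and Lemma~\ref{lemma:parabolic-untransvectable} (together with Lemmas~\ref{lemma:support},~\ref{lemma:support-normal} and~\ref{lemma:inclusion-parabolics}, as you already do elsewhere), your proof matches the paper's step by step.
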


\begin{proof}
The proof has four steps.
\bigskip

\textbf{Step 1.} Let us show that if $(\calv,\rho)$ is of vertex type, then $\calv$ is of infinite type and $(\calg,\calv)$ satisfies $\Pvert_1$ and $\Pvert_2$.
\smallskip

Since $\rho$ is action-like and $(\calv,\rho)$ is of vertex type, and since vertex groups are infinite, $\calv$ is of infinite type. Up to passing to a conull Borel subset, taking a countable partition, and conjugating the cocycle, we can assume that $\calv=\rho^{-1}(G_v)$ for some vertex $v\in V\Gamma$. The vertex $v$ is untransvectable because $\Gamma$ is transvection-free.

For $\Pvert_1$, by Lemma~\ref{lemma:graph}, \cpageref{lemma:graph}, there exists a chain of parabolic subgroups \[G=F_0\supseteq P_1\supseteq F_1\supseteq P_2\supseteq F_2\supseteq\dots\supseteq P_n\supseteq F_n=G_v\] such that 
    \begin{enumerate}
    \item for every $j\in\{1,\dots,n\}$, $P_j$ is a maximal product parabolic subgroup of $F_{j-1}$ which is not of isolated clique type, and
    \item for every $j\in\{1,\dots,n-1\}$, the clique factor of $P_j$ is trivial, and $F_j$ is a factor of $P_j$, and
    \item $F_n$ is the clique factor of $P_{n}$.
    \end{enumerate}
For every $j\in\{1,\dots,n\}$, let $\calp_j=\rho^{-1}(P_j)$, and let $\calf_j=\rho^{-1}(F_j)$. These come equipped with action-like cocycles to $P_j,F_j$, respectively. Notice that for every $j\in\{1,\dots,n\}$, the pair $(\calp_j,\rho)$ is nowhere of isolated clique type. By the first assertion of Lemma~\ref{lemma:maximal-product}, \cpageref{lemma:maximal-product}, for every $j\in\{1,\dots,n\}$, the pair $(\calf_{j-1},\calp_j)$ satisfies Property~$\Pprod$. By Lemma~\ref{lemma:factor}, using that $\Gamma$ is strongly reduced, for every $j\in\{1,\dots,n\}$, the triple $(\calg,\calp_j,\calf_j)$ satisfies Property~$\Pfact$.

For $\Pvert_2$, let $\calw$ be a measured subgroupoid of infinite type which is stably contained in $\calv$, and let $\calh\subseteq\calg$ be a measured subgroupoid that stably normalizes $\calw$. Up to a countable partition of the base space, we will assume that $\calw\subseteq\calv$ and that $\calh$ normalizes $\calw$. Then $(\calw,\rho)$ is tightly $G_v$-supported, so Lemma~\ref{lemma:support-normal}, \cpageref{lemma:support-normal}, ensures (up to restricting to a conull Borel subset) that $\calh\subseteq\rho^{-1}(G_v\times G_v^\perp)$. Therefore $\calh$ normalizes $\calv=\rho^{-1}(G_v)$, as desired.
\bigskip

\textbf{Step 2.} Let us show that if $(\calg,\calv)$ satisfies Properties~$\Pvert_1$ and~$\Pvert_2$, then there exist a countable Borel partition $X^*=\dunion_{i\in I}X_i$ of a conull Borel subset $X^*\subseteq X$, and for every $i\in I$, a parabolic subgroup $C_i$ that is conjugate to $G_{\Upsilon_i}$, for some non-empty complete subgraph $\Upsilon_i\subseteq\Gamma$ (possibly reduced to one vertex), such that $\calv_{|X_i}\subseteq\rho^{-1}(C_i)_{|X_i}$.
\smallskip

Up to a countable partition, we can assume that there exist subgroupoids \[\calg=\calf_0\supseteq\calp_1\supseteq\calf_1\supseteq\calp_2\supseteq\calf_2\supseteq\dots\supseteq\calp_n\supseteq\calf_n=\calv\] as in $\Pvert_1$. By inductively applying Lemmas~\ref{lemma:maximal-product} and~\ref{lemma:factor}, we show that for every $i\in\{1,\dots,n\}$, up to passing to a conull Borel subset and taking a countable partition,  
\begin{itemize}
\item Either $\calp_i=\rho^{-1}(P_i)$ for some maximal product parabolic subgroup $P_i$ whose type is not a clique, and $\calf_i=\rho^{-1}(F_i)$ for some factor $F_i$ of $P_i$;
\item Or $\calp_i\subseteq\rho^{-1}(C)$ for some parabolic subgroup $C$ that is conjugate to $G_{\Upsilon}$ for some non-empty complete subgraph $\Upsilon\subseteq\Gamma$.
\end{itemize}
In particular, either $\calv=\rho^{-1}(B)$ for some proper parabolic subgroup $B$, or else $\calv\subseteq\rho^{-1}(C)$ for some parabolic group $C$ that is conjugate to $G_{\Upsilon}$ for some non-empty complete subgraph $\Upsilon\subseteq\Gamma$. 
To show the wanted assertion, it suffices to exclude the case where $\calv=\rho^{-1}(B)$ for a parabolic subgroup $B$ whose type is an induced subgraph $\Lambda\subseteq\Gamma$ that contains at least $2$ vertices. Up to conjugating the cocycle we will assume that $\calv=\rho^{-1}(G_\Lambda)$ and aim for a contradiction. Since $\Gamma$ is strongly reduced, there exists a vertex $w\in V\Lambda$ such that $G_w\times G_w^{\perp}\nsubseteq G_{\Lambda}\times G_{\Lambda}^{\perp}$, as otherwise the subgraph $\Lambda$ would be collapsible. We will now use Property~$\Pvert_2$ to reach a contradiction. For this, let $\calw=\rho^{-1}(G_w)$. Then $\calw$ is contained in $\calv$, and $\calw$ is of infinite type because $G_w$ is infinite and $\rho$ is action-like. Let $\calh=\rho^{-1}(G_w\times G_w^{\perp})$, which normalizes $\calw$ (Example~\ref{ex:normal}, \cpageref{ex:normal}). So Property~$\Pvert_2$ implies that $\calv$ is stably normalized by $\calh$. As $(\calv,\rho)$ is tightly $G_\Lambda$-supported (Lemma~\ref{lemma:tight-support}, \cpageref{lemma:tight-support}), it follows from Lemma~\ref{lemma:support-normal}, \cpageref{lemma:support-normal} that $\calh\subseteq\rho^{-1}(G_{\Lambda}\times G_{\Lambda}^{\perp})$. But since $G_w\times G_w^{\perp}\nsubseteq G_{\Lambda}\times G_{\Lambda}^{\perp}$, this is a contradiction to Lemma~\ref{lemma:inclusion-parabolics}, \cpageref{lemma:inclusion-parabolics}. This contradiction completes our proof of Step~2.  

\bigskip

\textbf{Step 3.} Let us show that if $(\calv,\rho)$ is of vertex type, then $(\calg,\calv)$ satisfies Property~$\Pvert_3$. 
\smallskip

Again we assume that $\calv=\rho^{-1}(G_v)$ for some $v\in V\Gamma$. Let $\caln=\rho^{-1}(G_v\times G_v^{\perp})$, which normalizes $\calv$ (Example~\ref{ex:normal}, \cpageref{ex:normal}). Let $\calv'$ be a measured subgroupoid of $\calg$ that satisfies $\Pvert_1$ and $\Pvert_2$, let $\caln'$ be a measured subgroupoid of $\calg$ that stably normalizes~$\calv'$ and let $U\subseteq X$ be a positive measure Borel subset. By Lemma~\ref{lemma:support}, \cpageref{lemma:support}, we can find a positive measure Borel subset $V\subseteq U$ such that $(\calv'_{|V},\rho)$ is tightly $C$-supported for some parabolic subgroup $C\subseteq G$. Up to partitioning, we can (and will) also assume that $\caln'_{|V}$ normalizes $\calv'_{|V}$. By the previous step, $C$ is conjugate to $G_{\Upsilon}$ for some non-empty complete subgraph $\Upsilon\subseteq\Gamma$. Since $v$ is untransvectable, it follows from Lemma~\ref{lemma:parabolic-untransvectable}, \cpageref{lemma:parabolic-untransvectable}, that either $C=G_v$, or else $G_v\times G_v^{\perp}\nsubseteq C\times C^{\perp}$. If $C=G_v$, then Lemma~\ref{lemma:support-normal}, \cpageref{lemma:support-normal} ensures that $\caln'_{|V}\subseteq\caln_{|V}$ (up to a null set), and we are done. If $G_v\times G_v^{\perp}\nsubseteq C\times C^{\perp}$, then Lemma~\ref{lemma:inclusion-parabolics}, \cpageref{lemma:inclusion-parabolics} ensures that $\caln_{|V}\nsubseteq\rho^{-1}(C\times C^{\perp})_{|V}$. As $\caln'_{|V}$ normalizes $\calv'_{|V}$, we also have $\caln'_{|V}\subseteq\rho^{-1}(C\times C^{\perp})_{|V}$ (Lemma~\ref{lemma:support-normal}, \cpageref{lemma:support-normal}). So we deduce that $\caln_{|V}\nsubseteq\caln'_{|V}$, and again Property~$\Pvert_3$ holds.
\bigskip

\textbf{Step 4.} Let us finally show that if $(\calg,\calv)$ satisfies Properties~$\Pvert_1$,~$\Pvert_2$ and~$\Pvert_3$, then there exist a countable Borel partition $X^*=\dunion_{i\in I}X_i$ of a conull Borel subset $X^*\subseteq X$, and for every $i\in I$, a parabolic subgroup $A_i$ that is conjugate to a vertex subgroup, such that $\calv_{|X_i}\subseteq \rho^{-1}(A_i)_{|X_i}$.
\smallskip

Otherwise, in view of Step~2 (and Lemma~\ref{lemma:support}, \cpageref{lemma:support}), there would exist a positive measure Borel subset $U\subseteq X$ such that $(\calv_{|U},\rho)$ is tightly $C$-supported for some parabolic subgroup $C$ conjugate to $G_{\Upsilon}$, where $\Upsilon\subseteq\Gamma$ is a complete subgraph of size at least $2$. Let $\caln$ be a measured subgroupoid given by $\Pvert_3$. Since $\caln$ stably normalizes $\calv$, Lemma~\ref{lemma:support-normal}, \cpageref{lemma:support-normal}, ensures that, up to replacing $U$ by a positive measure subset, we have $\caln_{|U}\subseteq\rho^{-1}(C\times C^{\perp})_{|U}$. Let $A\subseteq C$ be a parabolic subgroup that is equal to a conjugate of a vertex group. Let $\calv'$ be a measured subgroupoid of $\calg$ such that $\calv'_{|U}=\rho^{-1}(A)_{|U}$, and $\calv'_{|X\setminus U}=\calv_{|X\setminus U}$. Using Step~1, we see that $(\calg,\calv')$ satisfies Properties~$\Pvert_1$ and~$\Pvert_2$. Let $\caln'$ be such that $\caln'_{|U}=\rho^{-1}(A\times A^{\perp})_{|U}$, and $\caln'_{|X\setminus U}=\caln_{|X\setminus U}$. Then $\caln'$ stably normalizes $\calv'$. As every vertex of $\Gamma$ is untransvectable, the inclusion $C\times C^{\perp}\subseteq A\times A^{\perp}$ is strict. Lemma~\ref{lemma:inclusion-parabolics}, \cpageref{lemma:inclusion-parabolics} thus implies that $\caln_{|V}\subsetneq\caln'_{|V}$ for every positive measure Borel subset $V\subseteq U$, contradicting $\Pvert_3$.
\bigskip

\textbf{Conclusion.} If $(\calv,\rho)$ is of vertex type, then $\Pvert_1$, $\Pvert_2$ and $\Pvert_3$ are verified by Steps 1 and 3. Finally, take $\calv'$ verifying the hypothesis of $\Pvert_4$. By Step~4, up to taking a conull Borel subset and a countable partition, there exists a subgroup $A\subseteq G$ that is conjugate to a vertex group, such that $\calv'\subseteq\rho^{-1}(A)$. Up to refining the partition, we also assume that $\calv=\rho^{-1}(B)$ for some subgroup $B\subseteq G$ that is conjugate to a vertex group. Since $\calv$ is stably contained in $\calv'$, we deduce that $\rho^{-1}(B)$ is stably contained in $\rho^{-1}(A)$. By Lemma~\ref{lemma:inclusion-parabolics}, \cpageref{lemma:inclusion-parabolics}, this implies that $B\subseteq A$, and in fact $A=B$ since they are both conjugate to vertex groups. Hence $(\calg,\calv)$ satisfies $\Pvert$. 

Conversely, if $(\calg,\calv)$ verifies $\Pvert$ then using the maximality assumption provided by Property~$\Pvert_4$ and the above Step 4, we get that $(\calv,\rho)$ is of vertex type.
\end{proof}

We are now in position to complete our proof of Proposition~\ref{prop:vertex-recognition-strongly-reduced}.

\begin{proof}
    Let $G,H$ be two graph products of countably infinite groups over transvection-free strongly reduced finite simple graphs. Let $\calg$ be a measured groupoid over a standard probability space, equipped with action-like cocycles $\rho_G:\calg\to G$ and $\rho_H:\calg\to H$. Let $\calv\subseteq\calg$ be a measured subgroupoid, and assume that the pair $(\calv,\rho_G)$ is of vertex type; we aim to prove that $(\calv,\rho_H)$ is of vertex type.

    By Lemma~\ref{lemma:characterization-vertex-groups}, applied to the cocycle $\rho_G$, the pair $(\calg,\calv)$ satisfies Property~$\Pvert$. And by Lemma~\ref{lemma:characterization-vertex-groups}, applied to the cocycle $\rho_H$, the pair $(\calv,\rho_H)$ is of vertex type, as desired.  
\end{proof}

% --------- 
\section{The reducible case and proof of Proposition~\ref{prop:vertex-recognition}}\label{sec:reducible}
In this section, we complete our proof of Proposition~\ref{prop:vertex-recognition}, by treating the case of reducible graph products.

\subsection{Goal of the section}

The main result of this section is the following proposition, which extends Proposition~\ref{prop:vertex-recognition-strongly-reduced} to the case of direct products.

\begin{Prop}\label{prop:vertex-recognition-join}
The class of graph products of countably infinite groups over finite simple graphs $\Gamma$ that splits as a join $\Gamma=\Gamma_1\circ\dots\circ\Gamma_k$ (with $k\ge 1$) of transvection-free strongly reduced graphs $\Gamma_j$ on at least $2$ vertices, satisfies the Vertex Recognition Property.
\end{Prop}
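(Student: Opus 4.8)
The plan is to reduce the reducible case to the strongly reduced case already settled in \cref{prop:vertex-recognition-strongly-reduced}, the bridge between the two being the top-level direct product decomposition $G=G_{\Gamma_1}\times\dots\times G_{\Gamma_k}$. The first point to record is structural: since each $\Gamma_j$ is transvection-free, strongly reduced and has at least two vertices, it is necessarily irreducible. Indeed a nontrivial join always contains a collapsible induced subgraph on at least two vertices (\cref{Ex:CollapsibleGraphs}), so the only strongly reduced nontrivial join is a single edge, which carries a transvection; hence no $\Gamma_j$ is a nontrivial join. Consequently the clique factor of $\Gamma$ is empty, the $G_{\Gamma_j}$ are exactly the irreducible, non-clique direct factors of $G$, and each $G_{\Gamma_j}$ is normal in $G$ with $G_{\Gamma_j}^{\perp}=\prod_{l\neq j}G_{\Gamma_l}$. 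In particular every conjugate of a vertex group $G_v$ with $v\in V\Gamma_j$ lies in $G_{\Gamma_j}$. Therefore $(\calv,\rho_G)$ is of vertex type in $\calg$ if and only if, up to a countable partition, $\calv$ is stably contained in some factor subgroupoid $\rho_G^{-1}(G_{\Gamma_j})$ and is of vertex type there for the restricted cocycle $\rho_G\colon\rho_G^{-1}(G_{\Gamma_j})\to G_{\Gamma_j}$, which is again action-like by \cref{rk:action-like-restriction}.

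Granting this reduction, the proof splits into two tasks. The first, and the heart of the matter, is a cocycle-independent groupoid characterization of the factor subgroupoids $\rho^{-1}(G_{\Gamma_j})$, ensuring that $(\calf,\rho_G)$ is of irreducible factor type if and only if $(\calf,\rho_H)$ is. I would characterize such an $\calf$ as a subgroupoid that is everywhere non-amenable, stably normal in $\calg$, and directly indecomposable, meaning that it admits no stable splitting as an internal direct product of two everywhere non-amenable, mutually normalizing subgroupoids. That the $\rho^{-1}(G_{\Gamma_j})$ satisfy this, and that nothing else does, rests on the classification of normal parabolic subgroups of a product: the support and inclusion lemmas (\cref{lemma:tight-support,lemma:support-normal,lemma:inclusion-parabolics}) force any everywhere non-amenable, stably normal subgroupoid to be, up to partition, of the form $\rho^{-1}(Q)$ with $Q$ a normal parabolic subgroup, hence a subproduct $\prod_{j\in S}G_{\Gamma_j}$; indecomposability then pins down $|S|=1$, while everywhere non-amenability together with the fact that no $\Gamma_j$ is a clique excludes the degenerate cases. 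The second task is then routine: restricting both cocycles to a recognized factor $\calf=\rho_G^{-1}(G_{\Gamma_j})=\rho_H^{-1}(hH_{\Delta_l}h^{-1})$ (the second identification being factor recognition for $\rho_H$, valid up to a partition absorbing the conjugators) produces action-like cocycles from $\calf$ onto two graph products $G_{\Gamma_j},H_{\Delta_l}$ over transvection-free strongly reduced graphs on at least two vertices, to which \cref{prop:vertex-recognition-strongly-reduced} applies directly; combined with the equivalence of the previous paragraph for each cocycle, this yields that $(\calv,\rho_G)$ is of vertex type if and only if $(\calv,\rho_H)$ is.

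I expect the main obstacle to be precisely the cocycle-independent recognition of the top-level factors. The mechanism of the strongly reduced case (Properties $\Padm$ and $\Pfact$ and \cref{lemma:admissible,lemma:factor}) is unavailable here, because it hinges on producing inside a factor a parabolic subgroup whose normalizer escapes the normalizer of the factor, and this is impossible for the $G_{\Gamma_j}$ exactly because they are normal in $G$, so $N_G(G_{\Gamma_j})=G$. This is the reason the elementarity machinery of \cref{sec:elementary-groupoids} was developed: the non-amenability and indecomposability of $\rho^{-1}(G_{\Gamma_j})$ must be read off from the (non-)existence of equivariant maps to boundaries of Bass--Serre trees rather than from normalizer inclusions. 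Concretely, \cref{lemma:nonelementary-example} certifies that each $\rho^{-1}(G_{\Gamma_j})$ is non-elementary, while \cref{lemma:alternative,lemma:adams-argument-v2,lemma:elementary-normal} convert the configurations ``amenable normalized by non-amenable'' and ``directly decomposable'' into elementarity, thereby ruling out that a proper subproduct, a strict sub-parabolic, or an amenable piece could masquerade as a factor. Packaging these into stability-under-restriction-and-partition statements, and verifying that the internal-product formulation of indecomposability is insensitive to the choice of action-like cocycle, is where the technical weight lies; once this is in place, the assembly above completes \cref{prop:vertex-recognition-join}, and hence, via \cref{cor:strongly-reduced} and \cref{prop:vertex-recognition-strongly-reduced}, the full Vertex Recognition Property of \cref{prop:vertex-recognition}.
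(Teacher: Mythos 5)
Your global architecture coincides with the paper's: recognize the top-level direct factors of $G$ in a cocycle-independent way, then restrict both cocycles to a recognized factor and invoke \cref{prop:vertex-recognition-strongly-reduced} (this packaging is the paper's \cref{lemma:full-characterization}), and you correctly diagnose both why the mechanism of \cref{lemma:admissible,lemma:factor} is unavailable for normal factors and that the elementarity machinery of \cref{sec:elementary-groupoids} is the substitute. The gap is in your proposed recognition of the factors. The claim that \cref{lemma:tight-support,lemma:support-normal,lemma:inclusion-parabolics} force any everywhere non-amenable, stably normal subgroupoid to be, up to partition, of the form $\rho^{-1}(Q)$ with $Q$ a normal parabolic subgroup is false: those lemmas control only the parabolic \emph{support}, so they yield a tight containment $\calq\subseteq\rho^{-1}(Q)$, never an equality, and a stably normal subgroupoid need not be the preimage of any parabolic subgroup. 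Concretely, let $N$ be the kernel of the canonical surjection $G_{\Gamma_1}\twoheadrightarrow\prod_{v\in V\Gamma_1}G_v$. Then $N$ is normal in $G$ (it is normal in $G_{\Gamma_1}$ and centralized by the other factors), non-amenable (it contains the non-abelian free kernel of $G_v\ast G_w\to G_v\times G_w$ for two non-adjacent vertices of $\Gamma_1$), intersects every conjugate of every vertex group trivially (so it is not parabolic), and its parabolic support is all of $G_{\Gamma_1}$. Hence $\rho_G^{-1}(N)$ is everywhere non-amenable, stably normal, and tightly $G_{\Gamma_1}$-supported; since your notion of internal direct product of subgroupoids is never defined and $N$ admits no visible product decomposition, nothing in your three conditions excludes it, and your characterization has no maximality clause that could. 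The final assembly then breaks: the recognized factor matched under $\rho_H$ could be $\rho_H^{-1}(N_H)$ for a non-parabolic normal subgroup $N_H$ of $H$, and the restricted cocycle then takes values in a group which is not a graph product over a transvection-free strongly reduced graph, so \cref{prop:vertex-recognition-strongly-reduced} cannot be applied to it.

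The paper closes exactly this gap by characterizing \emph{co-factors} rather than factors. Property~$\Pcofact$ (\cref{de:pcofact}) asks that $\calq$ be of infinite type, stably normal in $\calg$, and stably normalize an infinite-type \emph{amenable} subgroupoid, together with a maximality condition $\Pcofact_2$. The amenable certificate exists inside the complementary factor because each $\Gamma_j$ contains two non-adjacent vertices; the elementarity machinery (\cref{lemma:alternative,lemma:elementary-normal,lemma:nonelementary-example}) shows that any subgroupoid satisfying $\Pcofact_1$ has stably trivial projection to some factor, i.e.\ is stably \emph{contained} in a co-factor preimage --- a containment statement, which is exactly what these tools can deliver --- and the maximality condition then upgrades containment to stable equality (\cref{lemma:cofactor-2}). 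Factors are recovered afterwards as minimal non-trivial intersections of co-factors (Property~$\Pfactp$, \cref{lemma:factor-2}), which sidesteps any need to define direct indecomposability of a groupoid; the same lemma also settles that $G$ is reducible if and only if $H$ is. To repair your direct characterization you would at minimum need to add a maximality clause and develop a stable theory of internal product decompositions of measured groupoids, at which point you would essentially be re-proving \cref{lemma:cofactor-2,lemma:factor-2}.
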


In view of Corollary~\ref{cor:strongly-reduced}, this covers all graph products over transvection-free graphs with no partial conjugation, and therefore Proposition~\ref{prop:vertex-recognition} follows from Proposition~\ref{prop:vertex-recognition-join}. In the previous section, we have treated the case where $k=1$, we now need to tackle the reducible case.

\subsection{Proof of the Vertex Recognition Property}

Given a graph product $G$ that splits as a direct product with trivial clique factor, the following property will enable us to recognize the co-factors of the product, at a groupoid-theoretic level.

\begin{Def}[Property~$\Pcofact$]\index{Property!$\Pcofact$}\label{de:pcofact}
Let $\calg$ be a measured groupoid over a standard probability space, and let $\calq$ be a measured subgroupoid of $\calg$. We say that $(\calg,\calq)$ satisfies \emph{Property~$\Pcofact$} if the following two properties hold. 
 \begin{description}
    \item[$\Pcofact_1$] The groupoid $\calq$ is of infinite type and stably normal in $\calg$, and it stably normalizes an amenable measured subgroupoid $\cala$ of $\calg$ of infinite type.
     \item[$\Pcofact_2$] Whenever $\calq'$ is a measured subgroupoid of $\calg$ such that $(\calg,\calq')$ satisfies Property~$\Pcofact_1$, and such that $\calq$ is stably contained in $\calq'$, then $\calq$ and $\calq'$ are stably equal.
     \end{description}
\end{Def}

The idea behind Property~$\Pcofact$ is the following. Assume that a graph product $G$ splits non-trivially as a product $G=F_1\times\dots\times F_k$, with trivial clique factor. Then each $Q_j=F_1\times\dots\times\hat{F}_j\times\dots\times F_k$ is normal in $G$ and normalizes an infinite amenable subgroup $A\subseteq F_j$. Conversely, any maximal subgroup of $G$ with these properties is a co-factor. The following lemma is the groupoid-theoretic version of this fact (since all clique factors are trivial in this section, for simplicity, we will talk about ``subgroupoids of co-factor type'' as a shortcut for ``subgroupoids of clique-inclusive co-factor type'').

\begin{Lmm}\label{lemma:cofactor-2}
Let $G$ be a graph product over a finite simple graph $\Gamma$ with countably infinite vertex groups and trivial clique factor. Let $\calg$ be a measured groupoid over a standard probability space $X$, equipped with an action-like cocycle $\rho:\calg\to G$. Let $\calq\subseteq\calg$ be a measured subgroupoid. 

 Then $(\calg,\calq)$ satisfies Property~$\Pcofact$ if and only if $G$ is reducible and $(\calq,\rho)$ is of co-factor type within $(\calg,\rho)$.
\end{Lmm}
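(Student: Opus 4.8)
The plan is to mirror the strategy already established for Property~$\Pprod$ (Lemma~\ref{lemma:maximal-product}), Property~$\Padm$ (Lemma~\ref{lemma:admissible}) and the factor lemma (Lemma~\ref{lemma:factor}): translate the underlying algebraic statement about co-factors of a direct product into the groupoid-theoretic conditions $\Pcofact_1$ and $\Pcofact_2$, and prove the equivalence by going through tight supports and the action-like machinery. As in those lemmas, I would organize the proof into two directions, each with an intermediate support-reduction step.

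First I would prove the ``only if'' direction in the easy sense: assume $G$ is reducible, say $G=F_1\times\dots\times F_k$ with $k\ge 2$ and trivial clique factor, and $(\calq,\rho)$ is of co-factor type, so (up to a conull subset and a countable partition, and conjugating the cocycle) $\calq=\rho^{-1}(Q)$ with $Q=F_1\times\dots\times\hat{F}_j\times\dots\times F_k$ for some $j$. Then $\calq$ is of infinite type (since $Q$ is infinite and $\rho$ is action-like) and stably normal in $\calg$ via Example~\ref{ex:normal}, since $Q\unlhd G$. Picking any infinite amenable subgroup $A\subseteq F_j$ (an infinite cyclic subgroup will do, as the vertex groups are infinite), the subgroupoid $\cala=\rho^{-1}(A)$ is amenable by Lemma~\ref{lemma:amenable-subgroup-subgroupoid}, of infinite type, and stably normalized by $\calq$ because $A$ is normalized by $Q$; this gives $\Pcofact_1$. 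For $\Pcofact_2$, I would argue exactly as in Step~3 of the proof of Lemma~\ref{lemma:maximal-product}: take $\calq'$ satisfying $\Pcofact_1$ with $\calq$ stably contained in $\calq'$, use the (yet to be established) support-reduction to write $\calq'\subseteq\rho^{-1}(Q')$ for $Q'$ a co-factor, invoke Lemma~\ref{lemma:inclusion-parabolics} to get $Q\subseteq Q'$, and conclude $Q=Q'$ since distinct co-factors are incomparable.

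The heart of the proof — and the main obstacle — is the converse support-reduction step: assuming $(\calg,\calq)$ satisfies $\Pcofact_1$, show that $G$ is reducible and $\calq$ is (up to partition) contained in $\rho^{-1}(Q)$ for a co-factor $Q$. Here I would run the same scheme as Step~2 of Lemma~\ref{lemma:maximal-product}. Let $\cala,\calq$ be as in $\Pcofact_1$; after a countable partition I may assume $(\cala,\rho)$ is tightly $A$-supported and $(\calq,\rho)$ is tightly $Q$-supported for parabolic subgroups $A,Q\subseteq G$, with $A\neq\{1\}$ (as $\cala$ is of infinite type and $\rho$ has trivial kernel). Since $\calq$ normalizes $\cala$, Lemma~\ref{lemma:support-normal} gives $Q\subseteq A\times A^{\perp}$; since $\calg=\rho^{-1}(G)$ stably normalizes $\calq$ (as $\calq\unlhd\calg$), Lemma~\ref{lemma:support-normal} applied the other way gives $G\subseteq Q\times Q^{\perp}$, i.e.\ $Q\times Q^{\perp}=G$, so $Q$ and $Q^{\perp}$ are complementary parabolics. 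Provided $Q^\perp\neq\{1\}$, this decomposition $G=Q\times Q^\perp$ already forces $G$ reducible, and $Q$ is a subproduct of co-factor form; the maximality condition $\Pcofact_2$ then upgrades this inclusion to the full co-factor exactly as in Step~4 of Lemma~\ref{lemma:maximal-product}, by comparing $\calq$ with $\calq'=\rho^{-1}(Q\times Q^\perp\text{-type co-factor})$.

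The delicate point I expect to fight with is excluding the degenerate case $Q^{\perp}=\{1\}$ (equivalently $Q=G$), which would make $\calq$ stably equal to $\calg$ and fail to witness reducibility. To rule this out I would exploit the amenable normalized subgroupoid $\cala$ together with Lemma~\ref{lemma:adams-argument-v2}: if $Q=G$ then $\calq=\calg$ is everywhere non-amenable (as $G$ is non-amenable when $\Gamma$ is not a clique, using the action-like hypothesis and Lemma~\ref{lemma:nonelementary-example}), and $\cala$ is an infinite amenable subgroupoid normalized by the non-amenable $\calg$; applying Lemma~\ref{lemma:adams-argument-v2} with $A$ the tight support and $N=G$ would force the clique factor $C_0$ of $A$ to be non-amenable with $N\cap C_0\neq\{1\}$, and since the clique factor of $G$ is trivial this yields a contradiction after tracing through the product decomposition — this is precisely the mechanism used in Case~2 of Step~2 of Lemma~\ref{lemma:maximal-product}. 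Thus $Q^\perp\neq\{1\}$, $G$ is genuinely reducible, and $\calq$ is of co-factor type. Assembling the two directions, together with the maximality argument for $\Pcofact_2$, completes the equivalence.
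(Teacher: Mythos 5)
Your overall architecture (easy direction, plus a single support-reduction step that is then used both to verify $\Pcofact_2$ and to prove the converse) matches the paper's, and your treatment of the non-degenerate case is sound: from $\calq$ normalizing $\cala$ and $\calq$ being stably normal in $\calg$, Lemma~\ref{lemma:support-normal} together with tight supports does give $Q\subseteq A\times A^{\perp}$ and $Q\times Q^{\perp}=G$, and when $Q^{\perp}\neq\{1\}$ the support $Q$ is a proper subproduct, hence contained in a co-factor, after which the maximality argument closes as you describe. (One small repair along the way: the infinite amenable subgroup $A\subseteq F_j$ in the easy direction does not come from the vertex groups being infinite --- an infinite group need not contain $\mathbb{Z}$, think of infinite torsion groups --- but from the fact that the type of $F_j$ is irreducible on at least two vertices, so $F_j$ contains a free product of two infinite groups and hence an element of infinite order.)

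The genuine gap is your handling of the degenerate case $Q=G$, which is the actual heart of the lemma. First, full tight support does not make $\calq$ stably equal to $\calg$: for instance $\rho^{-1}(\langle g\rangle)$, where $g$ and all its non-trivial powers have parabolic support equal to $G$ (such $g$ exist, cf.\ the proof of Lemma~\ref{lemma:reg-nonempty}), is tightly $G$-supported yet amenable and far from all of $\calg$; so neither stable equality nor everywhere non-amenability of $\calq$ follows from $Q=G$. Second, and decisively, Lemma~\ref{lemma:adams-argument-v2} requires the tight support $N$ of the \emph{non-amenable normalizing} subgroupoid to be contained in the tight support $A$ of the \emph{amenable} one; in your situation $N=Q=G$ while all you know about $A$ is $A\times A^{\perp}=G$, so unless $A=G$ (which nothing forces --- an amenable $\cala$ will typically have a proper subproduct as support) the hypothesis $N\subseteq A$ fails and the lemma simply cannot be invoked. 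Thus the configuration you must exclude --- $\cala$ amenable and tightly $A$-supported with $A$ a proper subproduct, normalized by a $\calq$ of full support which is stably normal in $\calg$ --- is untouched by your toolkit. This is exactly where the paper abandons support bookkeeping and uses the elementarity machinery instead: pick $j$ with $A\cap F_j\neq\{1\}$, let $\rho_j:\calg\to F_j$ be the projected cocycle, note that amenability of $\cala$ makes $(\cala,\rho_j)$ $F_j$-elementary (Lemma~\ref{lemma:alternative} and Remark~\ref{rk:amenable}), and propagate $F_j$-elementarity twice along normalization via Lemma~\ref{lemma:elementary-normal} --- from $\cala$ to $\calq$, then from $\calq$ to $\calg$ --- contradicting Lemma~\ref{lemma:nonelementary-example}, since $\rho_j$ is action-like on $\rho^{-1}(F_j)$ and the type of $F_j$ is not a clique. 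The conclusion is that $(\rho_j)_{|\calq}$ is stably trivial, which yields in one stroke both that $G$ is reducible (otherwise $\rho_j=\rho$ has trivial kernel and $\calq$ would be stably trivial, contradicting infinite type) and that $\calq$ is stably contained in $\rho^{-1}(F_1\times\dots\times\hat{F}_j\times\dots\times F_k)$. Without this propagation argument, or a genuine substitute for it, your converse direction does not close --- and since your verification of $\Pcofact_2$ in the forward direction relies on the same reduction applied to an arbitrary $\calq'$ satisfying $\Pcofact_1$, the gap infects both implications.
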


\begin{proof}
Let $G=F_1\times\dots\times F_k$ be the decomposition of $G$ into its irreducible components (possibly with $k=1$). 

\textbf{Step 1} We first assume that $G$ is reducible (i.e.\ $k\ge 2$) and that $(\calq,\rho)$ is of co-factor type within $(\calg,\rho)$, and prove that $(\calg,\calq)$ satisfies $\Pcofact_1$.
\smallskip

Up to a conull Borel subset and a countable partition, we will assume that $\calq=\rho^{-1}(Q)$, where $Q=F_1\times\dots\times\hat F_j\times\dots\times F_k$ for some $j\in\{1,\dots,k\}$. Then $\calq$ is of infinite type (because $Q$ is infinite and $\rho$ is action-like) and normalized by $\calg$ (because $Q$ is normal in $G$). Let $A\subseteq F_j$ be an infinite amenable subgroup: this always exists, because the type of $F_j$ (i.e.\ the underlying subgraph of $\Gamma$) contains at least $2$ vertices. The conclusion follows by letting $\cala=\rho^{-1}(A)$.
\medskip

 \textbf{Step 2} Let us prove that if $(\calg,\calq)$ satisfies $\Pcofact_1$ then $G$ is reducible (i.e.\ $k\ge 2$) and $\calq$ is stably contained in a subgroupoid $\calq'$ of $\calg$ such that $(\calq',\rho)$ is of co-factor type within $(\calg,\rho)$.
\smallskip

Let $\cala$ be given by $\Pcofact_1$. Without loss of generality, we will assume that $\cala$ is normalized by $\calq$. For every $j\in\{1,\dots,k\}$, let $\rho_j:\calg\to F_j$ be the cocycle obtained by post-composing $\rho$ by the projection $G\to F_j$. Up to a countable Borel partition of $X$, we will assume that $(\cala,\rho)$ is tightly $A$-supported for some parabolic subgroup $A\subseteq G$ (and $A\neq\{1\}$ because $\cala$ is of infinite type). 

\noindent $\bullet$ Let $j\in\{1,\dots,k\}$ be such that $A\cap F_j\neq\{1\}$ (such a $j$ exists by \cref{lemma:product-parabolic-subgroup}, \cpageref{lemma:product-parabolic-subgroup}). In particular $(\rho_j)_{|\cala}$ is nowhere trivial (by the second point of the definition of tight support, \cpageref{tight-support}).\\
\noindent $\bullet$ Let us prove that $(\rho_j)_{|\calq}$ is stably trivial.\\ 
So assume towards a contradiction that $(\rho_j)_{|\calq}$ is not stably trivial.
\begin{itemize}
    \item We first show that there exists a Borel subset $V$ of positive measure such that  $\rho_j$ is nowhere trivial on~$\calq_{|V}$.\\
   Let $U\subseteq X$ be a Borel subset of maximal measure such that $\rho_j$ is stably trivial on $\calq_{|U}$. Our assumption ensures that $V=X\setminus U$ has positive measure. And $\rho_j$ is nowhere trivial on~$\calq_{|V}$. 
    \item We now show that $(\rho^{-1}(F_j)_{|V},\rho_j)$ is $F_j$-elementary (in the sense of \cref{de:elementary}).\\
    Lemma~\ref{lemma:alternative}, \cpageref{lemma:alternative} and Remark~\ref{rk:amenable}, \cpageref{rk:amenable} ensure that $(\cala,\rho_j)$ is $F_j$-elementary. Applying Lemma~\ref{lemma:elementary-normal}, \cpageref{lemma:elementary-normal} twice then shows that $(\calq,\rho_j)$ and $(\calg_{|V},\rho_j)$ are $F_j$-elementary. In particular $(\rho^{-1}(F_j)_{|V},\rho_j)$ is $F_j$-elementary. 
    \item Since $\rho_j:\rho^{-1}(F_j)_{|V} \rightarrow F_j$ is action-like and the type of $F_j$ is not a clique, it contradicts  Lemma~\ref{lemma:nonelementary-example}, \cpageref{lemma:nonelementary-example}. 
    
    This contradiction shows that $(\rho_j)_{|\calq}$ is stably trivial.
\end{itemize}
\noindent $\bullet$ Let us show that $G$ is reducible.\\
Assume that $G$ is irreducible, namely that $k=1$. Then $\rho_1=\rho$ and since $\rho$ has trivial kernel, the previous point implies that $\calq$ is stably trivial. This contradicts the fact that $\calq$ is of infinite type (by $\Pcofact_1$). 

\noindent $\bullet$ Taking $Q:=F_1\times \cdots \times \hat{F}_j\times \cdots F_k$ and $\calq^\prime:=\rho^{-1}(Q)$ gives the wanted conclusion for Step 2.
\medskip

\textbf{Conclusion} If $G$ is reducible and $(\calq,\rho)$ is of co-factor type within $(\calg,\rho)$, then it verifies $\Pcofact_1$ by Step 1. To check $\Pcofact_2$ take $\calq^\prime$ such that $(\calg,\calq')$ satisfies $\Pcofact_1$ with $\calq$ stably contained in $\calq^\prime$. 
By Step $2$, there exists $\calq''$ such that $\calq^\prime$ is also stably contained in $\calq''$ and $(\calq'',\rho)$ is of co-factor type within $(\calg,\rho)$. Since an inclusion of co-factors is always an equality, we deduce that $\calq$ and $\calq''$ are stably equal, in particular $\calq$ and $\calq'$ are stably equal.

Now assume that $(\calg,\calq)$ satisfies $\Pcofact$. By Step 2, $G$ is reducible, and there exists $\calq'$ such that $\calq$ is stably contained in $\calq'$ and $(\calq',\rho)$ is of co-factor type within $(\calg,\rho)$. By Step 1, $(\calg,\calq^\prime)$ verifies $\Pcofact_1$ and thus $\Pcofact_2$ for $(\calg,\calq)$ gives that $\calq$ and $\calq^\prime$ are stably equal. In particular $(\calq,\rho)$ is of co-factor type within $(\calg,\rho)$.
\end{proof}

We now consider the following property which is a very slight variation over Property~$\Pfact$ from the previous section (Definition~\ref{de:pfact}), with $\Pcofact$ instead of $\Padm$.

\begin{Def}[Property~$\Pfactp$]\index{Property!$\Pfactp$}\label{de:pfact2}
Let $\calg$ be a measured groupoid over a standard probability space $X$. Let $\calf\subseteq\calg$ be a measured subgroupoid. We say that the pair $(\calg,\calf)$ satisfies \emph{Property~$\Pfactp$} if the following three properties hold.
\begin{description}
\item[$\Pfactp_1$] The subgroupoid $\calf$ is of infinite type.
\item[$\Pfactp_2$] There exist measured subgroupoids $\calq_1,\dots,\calq_n$ of $\calg$, with $n\ge 1$, and such that $(\calg,\calq_j)$ satisfies Property~$\Pcofact$ for every $j\in\{1,\dots,n\}$, and there exists a countable Borel partition $X^*=\dunion_{i\in I} X_i$ of a conull Borel subset $X^*\subseteq X$ such that for every $i\in I$, one has $\calf_{|X_i}=(\calq_1\cap\dots\cap\calq_n)_{|X_i}$.
\item[$\Pfactp_3$] For every subgroupoid $\calq$ of $\calg$ such that $(\calg,\calq)$ satisfies Property~$\Pcofact$, and every positive measure Borel subset $U\subseteq X$, there exists a positive measure Borel subset $V\subseteq U$ such that either $\calf_{|V}\subseteq\calq_{|V}$ or $(\calf\cap\calq)_{|V}$ is trivial.
\end{description}
\end{Def}

Replacing the use of \cref{lemma:admissible}, \cpageref{lemma:admissible} by \cref{lemma:cofactor-2} in the proof of the first point of Lemma~\ref{lemma:factor}, \cpageref{lemma:factor} we obtain the following statement.

\begin{lemma}\label{lemma:factor-2}
Let $G$ be a reducible graph product of countably infinite groups over a finite simple graph $\Gamma$ with trivial clique factor. Let $\calg$ be a measured groupoid over a standard probability space $X$, equipped with an action-like cocycle $\rho:\calg\to G$. Let $\calf\subseteq\calg$ be a measured subgroupoid. 

Then $(\calf,\rho)$ is of factor type within $(\calg,\rho)$ if and only if $(\calg,\calf)$ satisfies Property~$\Pfactp$. \qed
\end{lemma}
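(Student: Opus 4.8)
The plan is to deduce Lemma~\ref{lemma:factor-2} from the proof of the first assertion of Lemma~\ref{lemma:factor}, by performing the same argument with Property~$\Pcofact$ playing the role of Property~$\Padm$. Since $G$ is reducible with trivial clique factor, the relevant case of Lemma~\ref{lemma:factor} is precisely the case $C_0=\{1\}$, where factors are characterized as minimal infinite intersections of co-factors. The only difference between the two situations is that in Lemma~\ref{lemma:factor} the co-factors were recognized groupoid-theoretically through Property~$\Padm$ (via Lemma~\ref{lemma:admissible}), whereas here, because $G$ splits as an honest direct product into irreducible factors, co-factors are recognized through the simpler Property~$\Pcofact$ (via Lemma~\ref{lemma:cofactor-2}). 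Since Properties~$\Pfact$ and~$\Pfactp$ differ only by this substitution of $\Padm$ for $\Pcofact$, the logical structure of the proof is unchanged.

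Concretely, I would argue as follows. For the forward direction, assume $(\calf,\rho)$ is of factor type within $(\calg,\rho)$. Up to a conull Borel subset, a countable partition, and a permutation of the factors, write $G=F_1\times\dots\times F_k$ (with $k\ge 2$ since $G$ is reducible) and assume $\calf=\rho^{-1}(F_1)$. Then $\calf$ is of infinite type because $F_1$ is infinite and $\rho$ is action-like, giving $\Pfactp_1$. For $\Pfactp_2$, set $Q_j=F_1\times\dots\times\hat F_j\times\dots\times F_k$ and $\calq_j=\rho^{-1}(Q_j)$ for $j\in\{2,\dots,k\}$; Lemma~\ref{lemma:cofactor-2} ensures that each $(\calg,\calq_j)$ satisfies $\Pcofact$, and $\calf=\calq_2\cap\dots\cap\calq_k$ since $F_1=Q_2\cap\dots\cap Q_k$. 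For $\Pfactp_3$, let $\calq$ satisfy $\Pcofact$; by Lemma~\ref{lemma:cofactor-2} there is a positive measure $U$ and a co-factor $Q$ with $\calq_{|U}=\rho^{-1}(Q)_{|U}$, and then either $Q=F_2\times\dots\times F_k$ (so $(\calf\cap\calq)_{|U}$ is trivial, using that $\rho$ has trivial kernel and $F_1\cap Q=\{1\}$) or $F_1\subseteq Q$ (so $\calf_{|U}\subseteq\calq_{|U}$ by Lemma~\ref{lemma:inclusion-parabolics}).

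For the converse, assume $(\calg,\calf)$ satisfies $\Pfactp$. Take $\calq_1,\dots,\calq_n$ as in $\Pfactp_2$; up to a conull subset and a countable partition, using Lemma~\ref{lemma:cofactor-2}, write $\calq_j=\rho^{-1}(Q_j)$ for co-factors $Q_j$, so that $\calf=\rho^{-1}(F)$ with $F=Q_1\cap\dots\cap Q_n$ a subproduct of $G$. Then $F$ is non-trivial because $\calf$ is of infinite type by $\Pfactp_1$ while $\rho$ has trivial kernel. It remains to show $F$ does not split as a product of at least two factors. If it did, one could find a co-factor $Q$ with $F\cap Q$ an infinite proper parabolic subgroup of $F$, so that $F\nsubseteq Q$; letting $\calq=\rho^{-1}(Q)$, Lemma~\ref{lemma:inclusion-parabolics} gives $\calf_{|U}\nsubseteq\calq_{|U}$ for every positive measure $U$, while $F\cap Q$ infinite together with $\rho$ action-like forces $(\calf\cap\calq)_{|U}$ non-trivial, contradicting $\Pfactp_3$. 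Hence $F$ is a single irreducible factor, and $(\calf,\rho)$ is of factor type within $(\calg,\rho)$.

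I do not anticipate a genuine obstacle here, since the statement is explicitly flagged in the excerpt as obtained from the proof of the first point of Lemma~\ref{lemma:factor} by replacing the invocation of Lemma~\ref{lemma:admissible} with Lemma~\ref{lemma:cofactor-2}. The only point requiring minor care is bookkeeping the countable Borel partitions and conull subsets so that the various applications of Lemmas~\ref{lemma:cofactor-2},~\ref{lemma:inclusion-parabolics}, and the tight-support/kernel-triviality properties hold simultaneously; this is routine given the stabilities already established for these properties.
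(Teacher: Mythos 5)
Your proposal is correct and follows exactly the route the paper intends: the paper's own ``proof'' of Lemma~\ref{lemma:factor-2} is precisely the observation that one reruns the $C_0=\{1\}$ case of the proof of Lemma~\ref{lemma:factor}, substituting Property~$\Pcofact$ and Lemma~\ref{lemma:cofactor-2} for Property~$\Padm$ and Lemma~\ref{lemma:admissible}, which is what you carry out in detail. Both directions of your argument (including the use of trivial kernel for triviality of $(\calf\cap\calq)_{|V}$, Lemma~\ref{lemma:inclusion-parabolics} for non-inclusion, and the subproduct argument in the converse) match the paper's template step for step.
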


For the following statement, we refer to Definition~\ref{de:pvert}, \cpageref{de:pvert} from the previous section for the definition of Property~$\Pvert$.

\begin{lemma}\label{lemma:full-characterization}
Let $G$ be a reducible graph product of countably infinite groups over a finite simple graph $\Gamma$ with no transvection and no partial conjugation. Let $\calg$ be a measured groupoid over a standard probability space $X$, and let $\rho:\calg\to G$ be an action-like cocycle. Let $\calv\subseteq\calg$ be a measured subgroupoid.

Then $(\calv,\rho)$ is of vertex type if and only if there exist a measured subgroupoid $\calf\subseteq\calg$ such that $(\calg,\calf)$ satisfies Property~$\Pfactp$, and a countable Borel partition $X^*=\dunion_{i\in I}X_i$ of a conull Borel subset $X^*\subseteq X$ such that for every $i\in I$, one has $\calv_{|X_i}\subseteq\calf_{|X_i}$, and the pair $(\calf_{|X_i},\calv_{|X_i})$ satisfies Property~$\Pvert$. 
\end{lemma}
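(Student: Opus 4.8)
The plan is to reduce the reducible case entirely to the irreducible recognition already carried out in Lemma~\ref{lemma:characterization-vertex-groups}, using Property~$\Pfactp$ to single out a direct factor of $G$ and then running the strongly reduced vertex recognition \emph{inside} that factor. First I would record the structural input. Since $\Gamma$ has no transvection and no partial conjugation, Corollary~\ref{cor:strongly-reduced}, \cpageref{cor:strongly-reduced}, yields a join decomposition $\Gamma=\Gamma_1\circ\dots\circ\Gamma_k$ in which every $\Gamma_j$ is strongly reduced, transvection-free and not reduced to a vertex; as $G$ is reducible, $k\ge 2$, the clique factor is trivial, and $G=F_1\times\dots\times F_k$ with $F_j:=G_{\Gamma_j}$ an irreducible graph product to which Lemma~\ref{lemma:characterization-vertex-groups} applies. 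In particular the factors of $G$ are exactly $F_1,\dots,F_k$, and I would isolate the elementary but crucial fact that any $G$-conjugate of a vertex group $G_v$ with $v\in V\Gamma_j$ already lies in $F_j$: writing $g=(g_1,\dots,g_k)$ one has $gG_vg^{-1}=g_jG_vg_j^{-1}\subseteq F_j$.

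For the forward implication, assume $(\calv,\rho)$ is of vertex type. Up to a countable Borel partition of a conull subset I may assume $\calv=\rho^{-1}(P)$ with $P=gG_vg^{-1}$ conjugate to a vertex group and $v\in V\Gamma_j$. Set $\calf:=\rho^{-1}(F_j)$. By the observation above $P\subseteq F_j$, so $\calv\subseteq\calf$; since $F_j$ is a factor of $G$, the pair $(\calf,\rho)$ is of factor type within $(\calg,\rho)$, whence $(\calg,\calf)$ satisfies $\Pfactp$ by Lemma~\ref{lemma:factor-2}, \cpageref{lemma:factor-2}. The restricted cocycle $\rho\colon\calf\to F_j$ is action-like (Remark~\ref{rk:action-like-restriction}, \cpageref{rk:action-like-restriction}), and $P$ is a parabolic subgroup of $F_j$ conjugate within $F_j$ to the vertex group $G_v$, so $(\calv,\rho)$ is of vertex type inside $F_j$; applying Lemma~\ref{lemma:characterization-vertex-groups} to the groupoid $\calf$, the group $F_j$ and the cocycle $\rho$ then gives that $(\calf,\calv)$ satisfies $\Pvert$. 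Gluing over the countable partition, and using that $\Pfactp$ is stable under countable partitions, assembles a single $\calf$ enjoying all the required properties.

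For the converse, suppose $\calf$ satisfies $(\calg,\calf)\in\Pfactp$ together with a partition $X^*=\dunion_i X_i$ on which $\calv_{|X_i}\subseteq\calf_{|X_i}$ and $(\calf_{|X_i},\calv_{|X_i})$ satisfies $\Pvert$. By Lemma~\ref{lemma:factor-2} the pair $(\calf,\rho)$ is of factor type within $(\calg,\rho)$, so after refining to a common partition $X=\dunion_m Z_m$ I may assume $\calf_{|Z_m}=\rho^{-1}(F_{j_m})_{|Z_m}$ for some factor $F_{j_m}$, while $(\calf_{|Z_m},\calv_{|Z_m})$ still satisfies $\Pvert$ by stability under restriction. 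Again $\rho\colon\calf_{|Z_m}\to F_{j_m}$ is action-like, so Lemma~\ref{lemma:characterization-vertex-groups} shows that $(\calv_{|Z_m},\rho)$ is of vertex type inside $F_{j_m}$: up to a further partition $\calv=\rho^{-1}(Q)$ with $Q$ a parabolic subgroup of $F_{j_m}$ conjugate to a vertex group of $F_{j_m}$. Since $F_{j_m}=G_{\Gamma_{j_m}}$ is itself a parabolic subgroup of $G$, such a $Q$ is a parabolic subgroup of $G$ conjugate to a vertex group of $G$ (Section~\ref{sec:first-facts}), so $(\calv,\rho)$ is of vertex type in $G$.

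The genuinely new combinatorial and groupoid-theoretic content has all been absorbed into Lemma~\ref{lemma:factor-2} (recognizing the factor via $\Pfactp$) and Lemma~\ref{lemma:characterization-vertex-groups} (recognizing the vertex group inside the irreducible factor via $\Pvert$), so no fresh argument is needed here. The only delicate point, and the one I would be most careful about, is the bookkeeping of the several countable partitions and the systematic use of the relevant stabilities: $\Pfactp$ stable under countable partition, to assemble the global $\calf$ in the forward direction, and $\Pvert$ stable under restriction, to transport it to the common refinement $\dunion_m Z_m$ in the converse. Both are instances of the stabilities recorded after Definition~\ref{de:pprod}, and I would make sure to cite them explicitly rather than leave the gluing implicit.
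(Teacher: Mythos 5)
Your proof is correct and follows essentially the same route as the paper's: both directions reduce to Lemma~\ref{lemma:factor-2} (to identify $\calf$ with $\rho^{-1}(F_j)$ for a factor $F_j$) and Lemma~\ref{lemma:characterization-vertex-groups} applied to the restricted action-like cocycle $\calf\to F_j$, with Corollary~\ref{cor:strongly-reduced} supplying that each join factor $\Gamma_j$ is strongly reduced, transvection-free and not a single vertex. The only cosmetic difference is that you avoid conjugating the cocycle by observing that $G$-conjugates of $G_v$ stay inside $F_j$, and you make the partition/gluing bookkeeping explicit where the paper leaves it implicit; both are fine.
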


\begin{proof}
Let $\Gamma=\Gamma_1\circ\dots\circ\Gamma_k$ be the decomposition of $\Gamma$ as a join of irreducible subgraphs (well-defined up to permutation of the factors), and let $G=F_1\times\dots\times F_k$ be the corresponding decomposition of $G$ as a direct product (since $G$ is reducible, $k\geq 2$). Notice that as $\Gamma$ is transvection-free, no $\Gamma_i$ is reduced to a point, in other words $G$ has trivial clique factor. 

We first assume that $(\calv,\rho)$ is of vertex type. Up to a countable partition of the base space $X$, and up to conjugating the cocycle, we will assume that $\calv=\rho^{-1}(G_v)$ for some $v\in V\Gamma$. Let $j\in\{1,\dots,k\}$ be such that $v\in V\Gamma_j$. Let $\calf=\rho^{-1}(F_j)$. By Lemma~\ref{lemma:factor-2}, \cpageref{lemma:factor-2} the pair $(\calg,\calf)$ satisfies Property~$\Pfactp$. In addition, $\Gamma_j$ has no transvection and no partial conjugation (Remark~\ref{rk:transvection-free-factors}, \cpageref{rk:transvection-free-factors}), so it is strongly reduced by Lemma~\ref{lemma:strongly-reduced}, \cpageref{lemma:strongly-reduced}. It thus follows from Lemma~\ref{lemma:characterization-vertex-groups}, \cpageref{lemma:characterization-vertex-groups} (applied to the groupoid $\calf$ and to the cocycle $\rho$, viewed as an action-like cocycle $\calf\to F_j$), that $(\calf,\calv)$ satisfies Property~$\Pvert$.

Conversely, assume that there exist a subgroupoid $\calf$ as in the lemma -- up to reasoning on each subset of the partition, we will assume that $(\calf,\calv)$ satisfies Property~$\Pvert$. Since $(\calg,\calf)$ satisfies Property~$\Pfactp$,  Lemma~\ref{lemma:factor-2} ensures that, up to a countable Borel partition of $X$ and passing to a conull Borel subset, we have $\calf=\rho^{-1}(F_j)$ for some $j\in\{1,\dots,k\}$. Since $\Gamma_j$ is transvection-free and strongly reduced, and $(\calf,\calv)$ satisfies Property~$\Pvert$, it follows from Lemma~\ref{lemma:characterization-vertex-groups}, \cpageref{lemma:characterization-vertex-groups} that $(\calv,\rho)$ is of vertex type, as desired.
\end{proof}

We are now in position to complete the proof of the main proposition of the section.

\begin{proof}[Proof of Proposition~\ref{prop:vertex-recognition-join}]
    Let $G$ and $H$ be two graph products of countably infinite groups over finite simple graphs that split (possibly trivially) as joins of strongly reduced transvection-free finite simple graphs on at least $2$ vertices.
    
    Let $\calg$ be a measured groupoid equipped with two action-like cocycles $\rho_G:\calG\rightarrow G$ and $\rho_H:\calG\rightarrow H$.
    
    Let us first show that $G$ is reducible if and only if $H$ is.\\
    If $G$ is reducible there exists a subgroupoid $\calq$ in $\calg$ such that $(\calq,\rho_G)$ is of co-factor type within $(\calg,\rho_G)$. By \cref{lemma:cofactor-2}, \cpageref{lemma:cofactor-2}, the pair $(\calg,\calq)$ verifies $\Pcofact$ and therefore $H$ is also reducible. 
    
    If $G$ is reducible, the conclusion of Proposition~\ref{prop:vertex-recognition-join}, \cpageref{prop:vertex-recognition-join} is now an immediate consequence of Lemma~\ref{lemma:full-characterization}, \cpageref{lemma:full-characterization}, as we have characterized subgroupoids of $\calg$ of vertex type with respect to some action-like cocycle $\rho$, using a groupoid-theoretic property that does not refer to the cocycle $\rho$.
    
    And if $G$ is irreducible, the conclusion of \cref{prop:vertex-recognition-join}, \cpageref{prop:vertex-recognition-join} follows from \cref{lemma:characterization-vertex-groups}, \cpageref{lemma:characterization-vertex-groups}.
\end{proof}

\begin{proof}[{Proof of Proposition~\ref{prop:vertex-recognition}}]\label{ProofPropVRPGen}
    By Corollary~\ref{cor:strongly-reduced}, if a finite simple graph $\Gamma$ is transvection-free and has no partial conjugation, then $\Gamma$ splits as $\Gamma=\Gamma_1\circ\dots\circ\Gamma_k$ (possibly with $k=1$), in such a way that every $\Gamma_j$ is transvection-free, strongly reduced, and has at least two vertices. Proposition~\ref{prop:vertex-recognition} thus follows from Proposition~\ref{prop:vertex-recognition-join}.
\end{proof}

This completes the proof of the main theorem of \cref{Part:MEClassification} (\cref{theo:classificationversionintro}, \cpageref{theo:classificationversionintro}).
%----------
\section{Amenable vertex groups and untransvectable vertices}\label{Sec:AmenalbeUntransvectable}
\subsection{Main result}

The contents of this section is not needed for the main theorems of Part~\ref{Part:MEClassification}, but will be used in Section~\ref{Sec:InverseProblem}. In this section specifically, we will consider graph products where all vertex groups are amenable. In this case, more can be said without assuming that the graph has no transvection and no partial conjugation. Namely, given a subgroupoid $\calg$ over a standard probability space $X$ equipped with an action-like cocycle $\rho:\calg\to G$, we can still recognize subgroupoids that correspond to parabolic subgroups of untransvectable vertex type, in the following sense.

Let $\Gamma$ be a finite simple graph. Recall that a vertex $v\in V\Gamma$ is \emph{untransvectable}\index{Untransvectable!Untransvectable vertex}\label{Def:UntransvectableVertex} if there does not exist any vertex $w\in V\Gamma$ distinct from $v$ and such that $\lk(v)\subseteq\st(w)$. A parabolic subgroup $P\subseteq G$ is \emph{of untransvectable vertex type}\index{Untransvectable!Untransvectable vertex type (parabolic subgroup)} if it is conjugate to $G_v$ for some untransvectable vertex $v$.

Given $\calg$ and $\rho$ as above, and a measured subgroupoid $\calh\subseteq\calg$, we say that $(\calh,\rho)$ is \emph{of untransvectable vertex type}\index{Untransvectable!Untransvectable vertex type (groupoid framework)}  if there exist a conull Borel subset $X^*\subseteq X$, a partition $X^*=\dunion_{i\in I}X_i$ into at most countably many Borel subsets, and for every $i\in I$, a parabolic subgroup $P_i$ of untransvectable vertex type such that for every $i\in I$, one has $\calh_{|X_i}=\rho^{-1}(P_i)_{|X_i}$.

We say that a finite simple graph $\Gamma$ is \emph{clique-reduced}\index{Clique-reduced graph} if there do not exist distinct vertices $v,w\in V\Gamma$ with $\st(v)=\st(w)$. Equivalently $\Gamma$ is clique-reduced if and only if there does not exist any collapsible clique (in the sense of Definition~\ref{de:strongly-reduced}, \cpageref{de:strongly-reduced}) on at least $2$ vertices in $\Gamma$.

The main result of this section is the following proposition.

\begin{Prop}\label{prop:untransvectable}
    Let $G,H$ be two graph products over clique-reduced finite simple graphs with countably infinite amenable vertex groups. Let $\calg$ be a measured groupoid over a standard probability space, equipped with two action-like cocycles $\rho_G:\calg\to G$ and $\rho_H:\calg\to H$. Let $\calv\subseteq\calg$ be a measured subgroupoid.

    Then, the pair $(\calv,\rho_G)$ is of untransvectable vertex type if and only if the pair $(\calv,\rho_H)$ is of untransvectable vertex type.
\end{Prop}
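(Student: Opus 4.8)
The plan is to mimic the structure of the proof of Proposition~\ref{prop:vertex-recognition-join}, but now with untransvectable vertices playing the role of arbitrary vertices, and exploiting the amenability of the vertex groups to bypass the need for the no-transvection and no-partial-conjugation hypotheses. The goal is to produce a purely groupoid-theoretic property $\Pamen$ (with no reference to either cocycle) that characterizes subgroupoids of untransvectable vertex type. Once such a property is isolated, the proposition follows formally: if $(\calv,\rho_G)$ is of untransvectable vertex type, then $(\calg,\calv)$ satisfies $\Pamen$, and then $(\calv,\rho_H)$ is of untransvectable vertex type by the same characterization applied to $\rho_H$. So the whole content is to build and verify this characterization.

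\textbf{Key steps.} First I would establish the combinatorial backbone: I expect a variant of Lemma~\ref{lemma:graph} adapted to untransvectable vertices. In fact Lemma~\ref{lemma:graph} as stated already characterizes untransvectable vertices via a zoom-in chain $G=F_0\supseteq P_1\supseteq\dots\supseteq F_n=G_v$ through maximal product parabolics and their factors, so the combinatorics is already in hand. The crucial new ingredient is that \emph{all vertex groups are amenable}; this makes the amenability-based recognition of products and factors much cleaner. In particular, when $G_v$ is amenable, the subgroupoid $\rho^{-1}(G_v)$ is itself amenable (Lemma~\ref{lemma:amenable-subgroup-subgroupoid}), which is a strong, cocycle-independent marker. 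I would then reuse, essentially verbatim, the groupoid-theoretic recognition lemmas of Section~\ref{sec:strongly-reduced}: Lemma~\ref{lemma:maximal-product} (Property~$\Pprod$ recognizes maximal product type up to isolated clique ambiguity), Lemma~\ref{lemma:admissible} (Property~$\Padm$ recognizes clique-inclusive co-factors, \emph{provided the graph is strongly reduced}), and Lemma~\ref{lemma:factor} (Property~$\Pfact$ recognizes factors and clique factors). The subtlety is that here I cannot assume $\Gamma$ is strongly reduced, only clique-reduced; so the factor-recognition arguments that relied on strong reducedness (via non-collapsibility of the type subgraph $\Lambda_j$) must be re-examined. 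The clique-reduced hypothesis is exactly what rules out distinct vertices $v,w$ with $\st(v)=\st(w)$, i.e.\ collapsible cliques on at least two vertices, which is the failure mode that would make an untransvectable vertex group indistinguishable from a clique product. This is where amenability saves us: an untransvectable vertex group is amenable and its zoom-in chain terminates at a clique factor which, being untransvectable and clique-reduced, is a single vertex group.

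\textbf{Assembling $\Pvert$-analogue and the main obstacle.} I would then define a property $\Pamen$ paralleling Property~$\Pvert$ (Definition~\ref{de:pvert}): require $\calv$ amenable and of infinite type, with a $\Pvert_1$-style zoom-in chain through $\Pprod$ and $\Pfact$ steps terminating at $\calv$, together with maximality/normalizer conditions ($\Pvert_2$, $\Pvert_3$, $\Pvert_4$ analogues) to pin down that the terminal subgroupoid is of vertex type and that the vertex is untransvectable. The untransvectability is enforced precisely as in Step~3 of Lemma~\ref{lemma:characterization-vertex-groups}, using Lemma~\ref{lemma:parabolic-untransvectable}: an untransvectable $v$ satisfies the dichotomy $C=G_v$ or $G_v\times G_v^\perp\nsubseteq C\times C^\perp$ against any clique parabolic $C$, which distinguishes it from transvectable vertices. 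The main obstacle I anticipate is handling the reducible/join case and the isolated-clique ambiguity simultaneously without the no-partial-conjugation hypothesis: Lemma~\ref{lemma:maximal-product} only gives a \emph{partial} characterization of maximal products (Remark~\ref{rk:prod}), and the clique-reduced-but-not-strongly-reduced graphs admit non-collapsible-yet-transvectable configurations that $\Padm$ might mishandle. I expect to need a careful argument, leveraging that amenability forces the terminal factor to be a \emph{single} amenable vertex group (so $\calv$ amenable rules out the multi-vertex isolated clique case), to close the gap. Concretely, I would add to $\Pamen$ the requirement that $\calv$ be amenable, and verify that under clique-reducedness this amenability plus the chain conditions force the terminal clique factor to be a single vertex, with untransvectability secured by the normalizer conditions; the verification that these conditions are both necessary (Step~1 direction) and sufficient (Steps~2--4 direction) is the technical heart, proceeding by the same induction on the zoom-in chain as in Lemma~\ref{lemma:characterization-vertex-groups}.
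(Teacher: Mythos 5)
Your overall strategy --- isolate a cocycle-free groupoid property characterizing untransvectable vertex type, built on the zoom-in chain of Lemma~\ref{lemma:graph}, and use clique-reducedness plus a normalizer condition to force the terminal clique factor to be a single vertex --- is indeed the paper's strategy, and your identification of amenability of the vertex groups as the key new marker is correct. But there is a genuine gap at the heart of your sketch: you propose to reuse Lemma~\ref{lemma:admissible} (Property~$\Padm$) and Lemma~\ref{lemma:factor} (Property~$\Pfact$) ``essentially verbatim'' to recognize factors of the intermediate products $P_j$. Both lemmas require the defining graph to be \emph{strongly reduced}: the proof that a clique-inclusive co-factor satisfies $\Padm_2$ needs the type $\Lambda_j$ of each factor to be non-collapsible, and a clique-reduced graph can perfectly well contain collapsible non-clique subgraphs (e.g.\ the graph $\Gamma$ of Figure~\ref{fig:Collapsible} is clique-reduced but not strongly reduced). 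When $\Lambda_j$ is collapsible, every subgroup $B\subseteq F_j$ satisfies $N_G(B)\subseteq N_G(F_j)$, so no ``appropriate'' subgroup exists and the $\Padm$-based recognition of co-factors genuinely fails, not just its proof. Your proposed repair (adding the requirement that $\calv$ be amenable) does not touch this problem, since the failure occurs at the intermediate steps of the chain, not at the terminal one; moreover your parenthetical claim that amenability of $\calv$ ``rules out the multi-vertex isolated clique case'' is false: a clique product of amenable vertex groups is again amenable, so amenability alone cannot distinguish a vertex group from a larger clique subgroup.

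The paper fills this gap by abandoning $\Padm$ and $\Pfact$ altogether in this setting, and this is precisely where the amenability of \emph{all} vertex groups (not just of $\calv$) is exploited. It introduces Property~$\Pclique$ (Definition~\ref{de:pclique}): clique factors are recognized as maximal amenable normal subgroupoids of infinite type (Lemma~\ref{lemma:clique}); this works because clique parabolic subgroups are amenable while factors with non-clique type contain non-abelian free subgroups. Factors of the intermediate products are then recognized via Property~$\Pf$ (Definition~\ref{de:pfact2}), which is built on $\Pcofact$ rather than on $\Padm$: a co-factor is a maximal normal subgroupoid of infinite type that stably normalizes an infinite-type amenable subgroupoid (Lemmas~\ref{lemma:cofactor-2} and~\ref{lemma:factor-2}), and these lemmas require only trivial clique factor, not strong reducedness. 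With these tools, the characterizing property $\Pamen$ (Definition~\ref{de:pamen}) needs only the chain condition $\Pamen_1$ (with $\Pclique$ encoding both ``trivial clique factor'' at intermediate steps and ``terminal subgroupoid equals the clique factor'') and the single normalizer condition $\Pamen_2$; untransvectability then comes for free from Lemma~\ref{lemma:graph} once clique-reducedness together with $\Pamen_2$ (via Lemmas~\ref{lemma:support-normal} and~\ref{lemma:inclusion-parabolics}) forces the terminal clique factor to be a single vertex group. In particular, no analogues of $\Pvert_3$, $\Pvert_4$, or of Lemma~\ref{lemma:parabolic-untransvectable} are needed, contrary to what you anticipate.
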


\begin{Rq}
 The assumption that the two graphs are clique-reduced is necessary. Otherwise, here is a counterexample. Let $G$ be the graph product over the graph $\bar\Gamma$ from Figure~\ref{fig:Collapsible}, \cpageref{fig:Collapsible}, with vertex groups $\{G_v\}_v$ isomorphic to $\mathbb{Z}^2$. Then $G$ is also isomorphic to the graph product $H$ over the graph $\Gamma$ (from the same figure) with vertex groups isomorphic to $\mathbb{Z}$. We fix an isomorphism $\theta:H\to G$ sending vertex groups inside vertex groups. Let $\calg$ be a measured groupoid with an action-like cocycle $\rho_G:\calg\to G$, and let $\rho_H=\rho_G$ (through the chosen isomorphism $H\to G$). If $v\in V\bar\Gamma$, then $(\rho_G^{-1}(G_v),\rho_G)$ is of untransvectable vertex type, while $(\rho_G^{-1}(G_v),\rho_H)$ is not.    
\end{Rq}

Our motivation behind proving Proposition~\ref{prop:untransvectable} is that it is a useful in the measure equivalence classification of right-angled Artin groups. In fact, as recorded in Corollary~\ref{cor:untransvectable-extension-graph} below, it provides finer information than \cite{HH21}, which only dealt with right-angled Artin groups with finite outer automorphism groups. Among concrete applications, let us mention the following.
\begin{itemize}
    \item Corollary~\ref{cor:untransvectable-extension-graph} below will be used in Section~\ref{Sec:ExemplesMEVRP} to provide new concrete examples of right-angled Artin groups that can be distinguished from the viewpoint of measure equivalence.
    \item In Part~\ref{Part:QuantitativeResults} of the present work, Proposition~\ref{prop:untransvectable} will enable us to obtain quantitative estimates and solve the inverse problem in quantitative orbit equivalence for a large family of right-angled Artin groups.
    \item In ongoing work of Huang and the second-named author (which has largely motivated the present section), the authors expect to use the results obtained here towards finer results in measure equivalence classification of the right-angled Artin groups.
\end{itemize}

To state our main corollary, we make the following definition.

\begin{Def}[Untransvectable extension graph]\label{de:untransvectable-extension}
Let $G$ be a graph product over a finite simple graph $\Gamma$. The \emph{untransvectable extension graph}\index{Untransvectable!Untransvectable extension graph}\index{Extension graph!Untransvectable extension graph} is the subgraph $\Gamma_G^{ue}\subseteq\Gamma_G^e$ spanned by all vertices corresponding to parabolic subgroups of untransvectable vertex type. 
\end{Def}

Notice that $\Gamma_G^{ue}$ is a $G$-invariant subgraph of $\Gamma_G^e$, for the $G$-action on $\Gamma_G^e$ coming from the $G$-action by conjugation on itself. We warn the reader that $\Gamma_G^{ue}$ might be disconnected, and might be empty. 

We also refer to Definition~\ref{Def:MEWitness}, \cpageref{Def:MEWitness} for the definition of an ME-witness.

\begin{Cor}\label{cor:untransvectable-extension-graph}
Let $G$ and $H$ be two graph products over finite simple graphs $\Gamma_G,\Gamma_H$, with countably infinite amenable vertex groups.

Then $(\Gamma_G^{ue},\Gamma_H^{ue})$ is an ME-witness for $G,H$. In particular, if $G$ and $H$ are measure equivalent, then $\Gamma_G^{ue}$ and $\Gamma_H^{ue}$ are isomorphic.
\end{Cor}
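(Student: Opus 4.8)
The plan is to apply the abstract machinery of Kida's blueprint, now with the untransvectable extension graphs playing the role of the graphs $\mathfrak{I}_G,\mathfrak{I}_H$. Concretely, I would set $\mathfrak{C}_G$ to be the set of all parabolic subgroups of $G$ of untransvectable vertex type (i.e.\ the vertex set of $\Gamma_G^{ue}$), and let $\mathfrak{I}_G=\Gamma_G^{ue}$, the induced subgraph of the extension graph spanned by these. The key inputs are already in place. First, \cref{prop:untransvectable} is exactly the statement that a subgroupoid $(\calv,\rho_G)$ is of untransvectable vertex type if and only if $(\calv,\rho_H)$ is of untransvectable vertex type; in the language of ME-witnesses this is precisely the first condition of \cref{Def:MEWitness}, namely that $(\calC_1,\rho_G)$ is stably $\mathfrak{C}_G$ iff $(\calC_2,\rho_H)$ is stably $\mathfrak{C}_H$ (being "stably $\mathfrak{C}_G$" for this choice of $\mathfrak{C}_G$ is the same as being of untransvectable vertex type).

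For the second condition of the ME-witness definition, I need to check that adjacency in $\Gamma_G^{ue}$ is detected at the groupoid level independently of the cocycle. Here I would invoke \cref{lemma:adjacency}: for parabolic subgroups $P,Q$ conjugate to vertex groups, the groups $P,Q$ are distinct and commute if and only if $\calQ=\rho^{-1}(Q)$ normalizes $\calP=\rho^{-1}(P)$ and differs from it. Since adjacency in $\Gamma_G^e$ is by definition commutation of the corresponding vertex-group conjugates, and since $\Gamma_G^{ue}$ is the \emph{induced} subgraph on the untransvectable vertices, adjacency in $\Gamma_G^{ue}$ is likewise characterized by this purely groupoid-theoretic normalization condition. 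Thus if $(\calC_1,\rho_G)$ and $(\calC_2,\rho_G)$ are stably $\mathfrak{C}_G$, they are $\mathfrak{I}_G$-adjacent iff the corresponding groupoid normalization/distinctness holds, which by \cref{lemma:adjacency} transfers verbatim to the $\rho_H$-side. This gives the second ME-witness condition.

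With $(\Gamma_G^{ue},\Gamma_H^{ue})$ established as an ME-witness, the "in particular" clause follows by feeding this into \cref{prop:witness}. One should verify the hypotheses of that proposition: $\mathfrak{C}_G,\mathfrak{C}_H$ must be conjugacy-invariant countable sets of pairwise non-commensurable subgroups, and the conjugation actions of $G,H$ must extend to graph automorphisms. Conjugacy-invariance and countability are immediate from the definition of untransvectable vertex type and the fact that $\mathbb{P}_G$ is countable; the fact that the $G$-action on $\Gamma_G^e$ by conjugation restricts to the $G$-invariant subgraph $\Gamma_G^{ue}$ is noted in the text just after \cref{de:untransvectable-extension}. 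Pairwise non-commensurability of distinct conjugates of vertex groups should be extracted from the action-like framework (distinct parabolics conjugate to vertex groups that were commensurable would have a common finite-index subgroup, contradicting \cref{lemma:finite-index} together with the fact that vertex groups are infinite). Granting these, \cref{prop:witness} produces, for every measure equivalence coupling $\Omega$ between $G$ and $H$, a $(G\times H)$-equivariant measurable map $\Omega\to\Isom(\Gamma_G^{ue},\Gamma_H^{ue})$; non-emptiness of this isomorphism set when $G,H$ are measure equivalent yields $\Gamma_G^{ue}\cong\Gamma_H^{ue}$.

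The main obstacle I anticipate is not the ME-witness packaging itself—which is largely bookkeeping once \cref{prop:untransvectable} and \cref{lemma:adjacency} are granted—but rather confirming that the hypotheses of \cref{prop:witness} are genuinely met for $\mathfrak{C}_G,\mathfrak{C}_H$ under the \emph{weaker} standing assumptions of this section (amenable vertex groups, clique-reduced graphs, but \emph{without} assuming no transvection and no partial conjugation). In particular I would want to double-check that pairwise non-commensurability of distinct elements of $\mathfrak{C}_G$ holds and that the adjacency characterization via \cref{lemma:adjacency} is insensitive to the possible presence of transvections—here the restriction to \emph{untransvectable} vertices is what keeps $\Gamma_G^{ue}$ canonical and cocycle-independent, so I would be careful that nothing in the argument secretly uses transvection-freeness of the ambient graph beyond what \cref{prop:untransvectable} already encapsulates.
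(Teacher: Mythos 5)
Your proposal is correct and is essentially the paper's own proof: \cref{prop:untransvectable} supplies the first condition of \cref{Def:MEWitness} (being stably $\mathfrak{C}_G$ for your choice of $\mathfrak{C}_G$ is exactly being of untransvectable vertex type), \cref{lemma:adjacency} supplies the second (adjacency in $\Gamma_G^{ue}$ being commutation of the corresponding distinct conjugates of vertex groups), and \cref{prop:witness} then gives the ``in particular'' clause. The hypothesis checks you add for \cref{prop:witness} --- pairwise non-commensurability via \cref{lemma:finite-index}, conjugacy-invariance and countability, and the extension of the conjugation action to graph automorphisms --- are left implicit in the paper, and your closing caution is well placed, since clique-reducedness is a genuine hypothesis of \cref{prop:untransvectable} on which this argument (the paper's included) relies.
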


\begin{proof}
Let $\calg$ be a measured groupoid over a standard probability space, and let $\rho_G:\calg\to G$ and $\rho_H:\calg\to H$ be two action-like cocycles. Notice that $(\calg,\rho_G)$ is stably $\Gamma_G^{ue}$ in the sense of Section~\ref{sec:blueprint} if and only if it is of untransvectable vertex type. Therefore, the first point of Definition~\ref{Def:MEWitness} is exactly the contents of Proposition~\ref{prop:untransvectable}, \cpageref{prop:untransvectable}. And the second point of Definition~\ref{Def:MEWitness} is a consequence of Lemma~\ref{lemma:adjacency}, \cpageref{lemma:adjacency}, which characterizes adjacency in the (untransvectable) extension graph with no reference to the action-like cocycle. Finally, the additional part of the corollary follows from Proposition~\ref{prop:witness}, \cpageref{prop:witness}.
\end{proof}

In particular, Corollary~\ref{cor:untransvectable-extension-graph} establishes the untransvectable extension graph as a new invariant in the measure equivalence classification of right-angled Artin groups. Notice that when $\Gamma_G$ and $\Gamma_H$ are transvection-free, then $\Gamma_G^{ue}=\Gamma_G^e$ and $\Gamma_H^{ue}=\Gamma_H^e$, and Corollary~\ref{cor:untransvectable-extension-graph} recovers \cite[Theorem~2]{HH21}. Let us mention that, although we believe that the techniques from \cite{HH21} could have been extended to prove the results from the present section, we decided to use a different path, more in the spirit of the previous sections of the present work.

\subsection{Detecting clique subgroups}

We make the following definition.

\begin{Def}[Property~$\Pclique$]\index{Property!$\Pclique$}\label{de:pclique}
    Let $\calp$ be a measured groupoid over a standard probability space. Let $\calc\subseteq\calp$ be a measured subgroupoid. We say that $(\calp,\calc)$ satisfies Property~$\Pclique$ if the following two properties hold.
    \begin{description}
        \item[$\Pclique_1$] The subgroupoid $\calc$ is amenable, of infinite type, and stably normal in $\calp$.
        \item[$\Pclique_2$] If $\calc'$ is a measured subgroupoid of $\calp$ such that $(\calp,\calc')$ satisfies Property~$\Pclique_1$, then $\calc'$ is stably contained in $\calc$. 
    \end{description}
\end{Def}

\begin{Lmm}\label{lemma:clique}
    Let $G$ be a graph product of countably infinite amenable groups over a finite simple graph $\Gamma$. Let $\calg$ be a measured groupoid over a standard probability space, equipped with an action-like cocycle $\rho:\calg\to G$. Let $P\subseteq G$ be a product parabolic subgroup, and let  $\calp=\rho^{-1}(P)$. 

     Let $C\subseteq P$ be the clique factor of $P$.
\begin{itemize}
    \item If $C=\{1\}$ then no measured subgroupoid $\calc$ of $\calp$ is such that $(\calp,\calc)$ satisfies Property~$\Pclique$.
    \item If $C$ is non-trivial then a measured subgroupoid $\calc\subseteq\calp$ is such that $(\calp,\calc)$ satisfies Property~$\Pclique$ if and only if $\calc$ is stably equal to $\rho^{-1}(C)$.
\end{itemize}
\end{Lmm}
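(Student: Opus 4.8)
The strategy is to translate the group-theoretic characterization of the clique factor into the groupoid framework, using the tools already developed in this part. The key group-theoretic fact is that the clique factor $C$ of a product parabolic subgroup $P$ is precisely the \emph{maximal} normal infinite amenable parabolic subgroup of $P$: indeed, writing $P = C \times F_1 \times \dots \times F_k$ with each $F_j$ irreducible and non-clique, any subgroup normalized by $P$ that is amenable must, after taking its parabolic support, avoid contributing any factor $F_j$ (since each $F_j$ contains a non-abelian free subgroup by Lemma~\ref{lemma:maximal-product-nonamenable}, and a normal subgroup that intersects $F_j$ non-trivially would force too much non-amenability). So I would first prove the easy direction: if $C \ne \{1\}$, then $(\calp, \rho^{-1}(C))$ satisfies $\Pclique$. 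That $\rho^{-1}(C)$ is amenable follows from Lemma~\ref{lemma:amenable-subgroup-subgroupoid} (since $C$ is amenable and $\rho$ has trivial kernel), of infinite type since $C$ is infinite and $\rho$ is action-like, and stably normal in $\calp$ via Example~\ref{ex:normal} (as $C \unlhd P$). This gives $\Pclique_1$.

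\textbf{The converse / maximality.} For both $\Pclique_2$ and the $C = \{1\}$ case, I would take an arbitrary measured subgroupoid $\calc' \subseteq \calp$ satisfying $\Pclique_1$ and show $\calc'$ is stably contained in $\rho^{-1}(C)$. After a countable Borel partition (using Lemma~\ref{lemma:support}), assume $(\calc', \rho)$ is tightly $A$-supported for some parabolic $A \subseteq P$, with $A \ne \{1\}$ since $\calc'$ is of infinite type and $\rho$ has trivial kernel. Because $\calc'$ is amenable and stably normal in $\calp = \rho^{-1}(P)$, Lemma~\ref{lemma:support-normal} gives $\calp \subseteq \rho^{-1}(A \times A^\perp)$, hence $P \subseteq A \times A^\perp$ by Lemma~\ref{lemma:inclusion-parabolics}; this forces $A$ to be a subproduct of $P$ (Lemma~\ref{lemma:product-parabolic-subgroup}). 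Now the crucial step: I claim $A$ must be contained in the clique factor $C$. If some irreducible non-clique factor $F_j$ of $P$ satisfied $A \cap F_j \ne \{1\}$, I would apply Lemma~\ref{lemma:adams-argument-v2}, with $\cala = \calc'$ the amenable groupoid tightly $A$-supported and $\caln$ a tightly supported non-amenable subgroupoid coming from the normalization of $\calc'$ by $\calp$ (exploiting that $F_j$ contains a non-abelian free group via Lemma~\ref{lemma:maximal-product-nonamenable}). That lemma concludes the clique factor of $A$ is non-amenable; but since the vertex groups here are amenable, the clique factor of any parabolic is amenable, yielding a contradiction. Therefore $A$ meets no $F_j$, so $A \subseteq C$, and Lemma~\ref{lemma:inclusion-parabolics} gives $\calc' \subseteq \rho^{-1}(A) \subseteq \rho^{-1}(C)$ stably. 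When $C = \{1\}$ this shows $A = \{1\}$, contradicting $A \ne \{1\}$, so no such $\calc'$ exists; when $C \ne \{1\}$ this establishes $\Pclique_2$ for $\calc = \rho^{-1}(C)$.

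\textbf{Main obstacle.} The delicate point is the application of Lemma~\ref{lemma:adams-argument-v2} to rule out $A$ meeting a non-clique factor. I must produce the non-amenable normalizing subgroupoid $\caln$ with the right tight support and trivial-kernel hypotheses: the natural candidate is to take a non-amenable parabolic $N$ inside $F_j \times A^\perp$ (or more precisely tied to the free subgroup of $F_j$ that normalizes $A$), set $\caln = \rho^{-1}(N)$, and verify that $\caln$ normalizes $\calc'$ and that $(\caln, \rho)$ is tightly $N$-supported with $\rho_{|\caln}$ of trivial kernel. Checking that the amenability of the vertex groups indeed forces the clique factor of $A$ to be amenable (so that Lemma~\ref{lemma:adams-argument-v2}'s conclusion is genuinely contradictory) is where the standing hypothesis on vertex groups is essential, and I would spell out that the clique factor of a parabolic subgroup is a direct product of conjugates of vertex groups, hence amenable. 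Once this contradiction is secured, the remaining bookkeeping — stabilities of $\Pclique$ under restriction and countable partition, and assembling the two directions — is routine.
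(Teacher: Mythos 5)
Your proof is correct in substance, but it follows a genuinely different route from the paper's, so let me compare. The paper treats the two bullets separately: for $C=\{1\}$ it observes that any $\calc$ satisfying $\Pclique_1$ would make the pair $(\calp,\calp)$ satisfy Property~$\Pcofact$, and then invokes Lemma~\ref{lemma:cofactor-2} to force $\calp$ to be stably equal to the preimage of a proper co-factor of $P$, contradicting Lemma~\ref{lemma:inclusion-parabolics}; for the maximality step it never passes through $P\subseteq A\times A^{\perp}$, but instead post-composes $\rho$ with the projections $\rho_j:\calp\to F_j$, uses amenability of $\calc'$ to show $(\calc',\rho_j)$ is $F_j$-elementary (Lemma~\ref{lemma:alternative} and Remark~\ref{rk:amenable}), propagates elementarity to the normalizer $(\calp,\rho_j)$ via Lemma~\ref{lemma:elementary-normal}, and contradicts Lemma~\ref{lemma:nonelementary-example}. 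Your unified treatment --- tight support $A$, show $A\subseteq C$ in all cases, so that $C=\{1\}$ yields an outright contradiction with $A\neq\{1\}$ --- is a legitimate alternative that bypasses the $\Pcofact$ machinery of Section~\ref{sec:reducible} entirely; the price is that your contradiction comes from Lemma~\ref{lemma:adams-argument-v2}, whose conclusion (a non-amenable clique factor of $A$) is absurd only because all vertex groups are amenable, whereas the paper's elementarity route does not use vertex amenability at this step (this costs nothing, since amenability of the vertex groups is a standing hypothesis). Note also that the two arguments bottom out in the same boundary machinery, as Lemma~\ref{lemma:adams-argument-v2} is itself proved from Lemmas~\ref{lemma:alternative} and~\ref{lemma:maximal-map}.

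There is, however, one misstep in exactly the place you flagged as the main obstacle, and it must be repaired: Lemma~\ref{lemma:adams-argument-v2} requires the normalizing groupoid $\caln$ to be tightly $N$-supported with $N\subseteq A$, while your candidate $N$ lives inside $F_j\times A^{\perp}$, which is in general not contained in $A$ (and a free subgroup of $F_j$ is not parabolic, so it cannot serve as a tight support either). The repair is already contained in your own deductions: from $P\subseteq A\times A^{\perp}$ and Lemma~\ref{lemma:product-parabolic-subgroup} you get $F_j=(F_j\cap A)\times(F_j\cap A^{\perp})$, and since $F_j$ is irreducible, $A\cap F_j\neq\{1\}$ forces $F_j\subseteq A$. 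So take $N=F_j$ and $\caln=\rho^{-1}(F_j)$: it is tightly $F_j$-supported (Lemma~\ref{lemma:tight-support}); it is everywhere non-amenable by the third condition in the definition of an action-like cocycle, since the type of $F_j$ is irreducible and not reduced to a vertex, hence contains two non-adjacent vertices, so $F_j$ contains a non-abelian free subgroup (note that Lemma~\ref{lemma:maximal-product-nonamenable}, which you cite for this, is stated only for maximal product parabolic subgroups --- the fact itself is elementary); the restriction $\rho_{|\caln}$ has trivial kernel; and $\caln$ normalizes $\calc'$ because it is contained in $\calp$, which normalizes $\calc'$. With this choice all hypotheses of Lemma~\ref{lemma:adams-argument-v2} hold and your contradiction goes through; the remaining assembly of the two directions is routine, as you say.
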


\begin{proof}
    We first assume that $C=\{1\}$, and assume towards a contradiction that there exists a measured subgroupoid $\calc\subseteq\calp$ such that $(\calp,\calc)$ satisfies Property~$\Pclique$. Then $(\calp,\calp)$ satisfies Property~$\Pcofact$ (recall Definition~\ref{de:pcofact}, \cpageref{de:pcofact}). So Lemma~\ref{lemma:cofactor-2}, \cpageref{lemma:cofactor-2} applies and shows that $\calp$ is stably equal to $\rho^{-1}(Q)$ for some cofactor $Q\subseteq P$. This is a contradiction to Lemma~\ref{lemma:inclusion-parabolics}, \cpageref{lemma:inclusion-parabolics} using that $\rho$ is action-like.

    We now assume that $C\neq\{1\}$.
    \begin{itemize}
        \item We first show that $\rho^{-1}(C)$ satisfies Property $\Pclique$.\\
    The subgroup $C$ is infinite and amenable because vertex groups are infinite and amenable. Let $\calc=\rho^{-1}(C)$. Then $\calc$ is amenable because $C$ is amenable and $\rho$ has trivial kernel (Lemma~\ref{lemma:amenable-subgroup-subgroupoid}, \cpageref{lemma:amenable-subgroup-subgroupoid}), of infinite type because $C$ is infinite and $\rho$ is action-like, and normal in $\calp$ because $C$ is normal in $P$ (Example~\ref{ex:normal}, \cpageref{ex:normal}). So $(\calp,\calc)$ satisfies Property~$\Pclique_1$.

    To check Property~$\Pclique_2$, let $\calc'\subseteq\calp$ be an amenable subgroupoid of infinite type which is stably normal in $\calp$. We aim to prove that $\calc'$ is stably contained in~$\calc$. Without loss of generality, we will assume that $\calc'$ is normalized by $\calp$. We write $P=C\times F_1\times\dots\times F_k$, where the subgroups $F_j$ do not further split as a direct product. For every $j\in\{1,\dots,k\}$, let $\rho_j:\calp\to F_j$ be the cocycle obtained by post-composing $\rho$ by the projection $G\to F_j$. Up to a countable Borel partition of $X$, we will assume using Lemma~\ref{lemma:support}, \cpageref{lemma:support} that $(\calc',\rho)$ is tightly $A$-supported for some parabolic subgroup $A\subseteq P$.
    We aim to prove that $A\subseteq C$. So assume by contradiction that this is not the case. Using Lemma~\ref{lemma:product-parabolic-subgroup}, \cpageref{lemma:product-parabolic-subgroup} we can find $j\in\{1,\dots,k\}$ such that $A\cap F_j\neq\{1\}$. In particular $(\rho_j)_{|\calc'}$ is nowhere trivial. Since $\calc'$ is amenable, Lemma~\ref{lemma:alternative}, \cpageref{lemma:alternative} and Remark~\ref{rk:amenable}, \cpageref{rk:amenable} ensure that $(\calc',\rho_j)$ is $F_j$-elementary. Lemma~\ref{lemma:elementary-normal}, \cpageref{lemma:elementary-normal} therefore implies that $(\calp,\rho_j)$ is also $F_j$-elementary. In particular $(\rho^{-1}(F_j),\rho_j)$ is $F_j$-elementary, which contradicts Lemma~\ref{lemma:nonelementary-example}, \cpageref{lemma:nonelementary-example} (since $F_j$ is not a clique). Therefore $A\cap F_j=\{1\}$ for all $j$ and thus $A\subseteq C$ as wanted.  
    \item The stability of Property $\Pclique$ under restriction and stabilization implies that any subgroupoid stably equal to $\rho^{-1}(C)$, satisfies itself Property~$\Pclique$.
    \item Now if $\calc$ satisfies Property~$\Pclique$, we showed in the first point that it is stably contained in $\rho^{-1}(C)$. By the maximality property $\Pclique_2$ it is therefore stably equal to $\rho^{-1}(C)$.\qedhere
    \end{itemize}
    \end{proof}

\subsection{Proof of Proposition~\ref{prop:untransvectable}}

When all vertex groups are amenable, subgroupoids of isolated clique type are amenable, and therefore they do not satisfy Property~$\Pprod$ from Definition~\ref{de:pprod}, \cpageref{de:pprod}.\\ Lemma~\ref{lemma:maximal-product}, \cpageref{lemma:maximal-product} thus reformulates as follows in this case.

\begin{Lmm}\label{lemma:maximal-product-amenable-vertex-groups}
Let $G$ be a graph product of countably infinite amenable groups over a finite simple graph $\Gamma$. 
Let $\calg$ be a measured groupoid over a standard probability space $X$, equipped with an action-like cocycle $\rho:\calg\to G$. Let $\calp\subseteq\calg$ be a measured subgroupoid. 

Then $(\calp,\rho)$ is of maximal product type and nowhere of isolated clique type if and only if $(\calg,\calp)$ satisfies Property~$\Pprod$. \qed
\end{Lmm}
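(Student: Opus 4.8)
The plan is to read this statement as the amenable-vertex-group specialization of Lemma~\ref{lemma:maximal-product}, whose only asymmetry came precisely from the possibility that subgroupoids of isolated clique type might or might not satisfy Property~$\Pprod$. When all vertex groups are amenable this ambiguity disappears, because isolated clique subgroupoids become amenable and therefore cannot satisfy $\Pprod_1$. The forward implication is free: if $(\calp,\rho)$ is of maximal product type and nowhere of isolated clique type, then $(\calg,\calp)$ satisfies $\Pprod$ by the first assertion of Lemma~\ref{lemma:maximal-product}, which already holds for arbitrary vertex groups. So all the work is in the converse.

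For the converse I would first record the key amenability observation. If $C\subseteq G$ is a parabolic subgroup of isolated clique type, then $C$ is conjugate to $G_{\Upsilon}$ where $\Upsilon$ is a clique forming a connected component of $\Gamma$; hence $G_{\Upsilon}=\prod_{v\in V\Upsilon}G_v$ is a finite direct product of amenable groups, and is therefore amenable. Since $\rho$ has trivial kernel, Lemma~\ref{lemma:amenable-subgroup-subgroupoid} then shows that $\rho^{-1}(C)$ is amenable. Using that amenability is inherited by measured subgroupoids and by restrictions, it follows that any subgroupoid stably contained in a subgroupoid of isolated clique type is itself amenable over each piece of the relevant countable partition.

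Armed with this, I would assume $(\calg,\calp)$ satisfies $\Pprod$ and invoke the second assertion of Lemma~\ref{lemma:maximal-product}: there is a Borel partition $X=X^1\dunion X^2$ with $(\calp_{|X^1},\rho)$ of maximal product type and $\calp_{|X^2}$ stably contained in a subgroupoid of isolated clique type, hence amenable over each piece of a countable partition of $X^2$. The claim is that $X^2$ is null. Otherwise I could restrict to a positive measure piece $X_i\subseteq X^2$ on which $\calp_{|X_i}$ is amenable; the stability of $\Pprod$ under restriction gives that $(\calg_{|X_i},\calp_{|X_i})$ satisfies $\Pprod_1$, producing an everywhere non-amenable subgroupoid $\caln\subseteq\calp_{|X_i}$, which contradicts the amenability of $\calp_{|X_i}$. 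Thus $X^2$ is null and $(\calp,\rho)$ is of maximal product type. Exactly the same contradiction, applied to any hypothetical positive measure subset $U$ with $(\calp_{|U},\rho)$ of isolated clique type, rules that out and yields that $(\calp,\rho)$ is nowhere of isolated clique type, completing the equivalence.

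I expect the argument to be essentially routine once Lemma~\ref{lemma:maximal-product} is in hand. The only point requiring genuine care --- and the closest thing to an obstacle --- is the bookkeeping between ``stably contained'', ``everywhere non-amenable'', and the restriction-stability of $\Pprod$: one must ensure that the non-amenable $\caln$ supplied by $\Pprod_1$ really lives over a positive measure set on which $\calp$ has already been forced to be amenable, so that the contradiction is legitimate rather than merely over a null set.
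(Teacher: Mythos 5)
Your proposal is correct and is essentially the paper's own argument: the paper proves this lemma in one remark, observing that when the vertex groups are amenable, subgroupoids of isolated clique type are amenable (via Lemma~\ref{lemma:amenable-subgroup-subgroupoid}) and hence cannot satisfy Property~$\Pprod$, so the asymmetric alternative in Lemma~\ref{lemma:maximal-product}(2) collapses and the equivalence follows. Your more explicit bookkeeping with the partition $X=X^1\dunion X^2$ and the contradiction between the everywhere non-amenable $\caln$ from $\Pprod_1$ and the amenability of $\calp$ over $X^2$ is exactly the content the paper leaves implicit.
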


We now introduce the following variation over Property~$\Pvert$ (Definition~\ref{de:pvert}). 

\begin{Def}[Property~$\Pamen$]\index{Property!$\Pamen$}\label{de:pamen}
Let $\calg$ be a measured groupoid over a standard probability space $X$, and let $\calv$ be a measured subgroupoid of $\calg$. We say that the pair $(\calg,\calv)$ satisfies \emph{Property $\Pamen$} if the following two properties hold.
\begin{description}
\item[$\Pamen_1$] There exists a countable Borel partition $X^*=\dunion_{i\in I}X_i$ of a conull Borel subset $X^*\subseteq X$ such that for every $i\in I$, there exist measured subgroupoids \[\calg_{|X_i}=\calf_0\supseteq\calp_1\supseteq\calf_1\supseteq\calp_2\supseteq\calf_2\supseteq\dots\supseteq\calp_n\supseteq\calc_n=\calv_{|X_i}\] of $\calg_{|X_i}$ such that
        \begin{enumerate}
            \item for every $j\in\{1,\dots,n\}$, the pair $(\calf_{j-1},\calp_j)$ satisfies Property~$\Pprod$ (see Definition~\ref{de:pprod}, \cpageref{de:pprod});
            \item for every $j\in\{1,\dots,n-1\}$, the pair $(\calp_j,\calf_j)$ satisfies Property~$\Pf$ (see Definition~\ref{de:pfact2}, \cpageref{de:pfact2});
            \item for every $j\in\{1,\dots,n-1\}$ and every positive measure Borel subset $U\subseteq X_i$, there does not exist any subgroupoid $\calc$ of $(\calp_j)_{|U}$ such that $((\calp_j)_{|U},\calc)$ satisfies Property~$\Pclique$ (see Definition~\ref{de:pclique}, \cpageref{de:pclique});
            \item the pair $(\calp_n,\calc_n)$ satisfies Property~$\Pclique$.
        \end{enumerate}
\item[$\Pamen_2$] For every measured subgroupoid $\calw$ of $\calg$ of infinite type which is stably contained in $\calv$, and every measured subgroupoid $\calh\subseteq\calg$, if $\calw$ is stably normalized by $\calh$, then $\calv$ is stably normalized by $\calh$.
        \end{description}
\end{Def}

\begin{Lmm}\label{lemma:untransvectable-vertex}
    Let $G$ be a graph product of countably infinite amenable groups over a clique-reduced finite simple graph $\Gamma$.  
    Let $\calg$ be a measured groupoid over a standard probability space $X$, equipped with an action-like cocycle $\rho:\calg\to G$. Let $\calv\subseteq\calg$ be a measured subgroupoid. 
    
    Then $(\calv,\rho)$ is of untransvectable vertex type if and only if $(\calg,\calv)$ satisfies Property~$\Pamen$.
\end{Lmm}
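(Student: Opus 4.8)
The plan is to prove \cref{lemma:untransvectable-vertex} by mirroring the structure of the proof of \cref{lemma:characterization-vertex-groups}, replacing the combinatorial input with a variant of \cref{lemma:graph} adapted to untransvectable vertices in graphs that are merely clique-reduced (rather than strongly reduced and transvection-free). The key observation is that the chain of parabolic subgroups from \cref{lemma:graph} terminates at $G_v$ precisely when $v$ is untransvectable, and that $\Pamen_1$ is the groupoid-theoretic transcription of exactly such a chain, with the clique factor $F_n = \calc_n$ now isolated via Property~$\Pclique$ (\cref{lemma:clique}) instead of Property~$\Pfact$. Conditions~(1)--(4) of $\Pamen_1$ encode, in order: $\calp_j$ is a maximal product of $\calf_{j-1}$ (and, by \cref{lemma:maximal-product-amenable-vertex-groups}, nowhere of isolated clique type since all vertex groups are amenable), $\calf_j$ is a factor of $\calp_j$, the product $\calp_j$ has trivial clique factor for $j < n$ (by the first bullet of \cref{lemma:clique}), and finally $\calc_n$ is the (nontrivial) clique factor of $\calp_n$ (by the second bullet of \cref{lemma:clique}).

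First I would prove the forward direction. Assuming $(\calv,\rho)$ is of untransvectable vertex type, I reduce (after a countable partition and conjugating the cocycle) to $\calv = \rho^{-1}(G_v)$ for an untransvectable vertex $v$. Applying \cref{lemma:graph} gives a chain $G = F_0 \supseteq P_1 \supseteq \dots \supseteq P_n \supseteq F_n = G_v$ with the stated product/factor/clique-factor properties. Setting $\calp_j = \rho^{-1}(P_j)$, $\calf_j = \rho^{-1}(F_j)$, and $\calc_n = \rho^{-1}(F_n) = \rho^{-1}(G_v)$, I verify the four clauses of $\Pamen_1$ using \cref{lemma:maximal-product-amenable-vertex-groups} for clause~(1), \cref{lemma:factor-2} (or \cref{lemma:factor}) for clause~(2), and \cref{lemma:clique} for clauses~(3) and~(4)---the point being that the $P_j$ with $j<n$ have trivial clique factor so admit no subgroupoid satisfying $\Pclique$, while $P_n = G_{\st(v)}$ has clique factor $G_v$. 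For $\Pamen_2$, I argue exactly as in Step~1 of the proof of \cref{lemma:characterization-vertex-groups}: any infinite-type $\calw$ stably contained in $\calv$ is tightly $G_v$-supported, so \cref{lemma:support-normal} forces any normalizing $\calh$ into $\rho^{-1}(G_v \times G_v^\perp)$, which normalizes $\calv$.

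For the converse I would assume $(\calg,\calv)$ satisfies $\Pamen$ and run the chain inductively, applying \cref{lemma:maximal-product-amenable-vertex-groups} and \cref{lemma:factor-2} at each stage to conclude that, up to a countable partition, either $\calf_j = \rho^{-1}(F_j)$ for a factor $F_j$ of a maximal product $P_j$ with trivial clique factor, or $\calc_n = \rho^{-1}(C)$ where $C$ is the clique factor of $P_n$ (by clause~(4) and \cref{lemma:clique}). The crucial new point---where clique-reducedness replaces strong reducedness---is showing that $C$ is actually a single vertex group rather than a larger clique subgroup $G_\Upsilon$ with $|V\Upsilon| \ge 2$, and that the resulting vertex is untransvectable. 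If $C = G_\Upsilon$ with two distinct vertices $w, w'$ satisfying $\st(w) = \st(w')$, clique-reducedness is what forbids this (since such a clique would be collapsible); and $\Pamen_2$ plays the role of $\Pvert_2$ in excluding that $\calv$ corresponds to a proper subclique whose normalizer strictly dominates that of a vertex group. I then use clause~(3) together with the trivial-clique-factor property propagated down the chain, exactly as clause~(2) of \cref{lemma:graph} is used in its converse direction, to certify that $v$ is untransvectable via \cref{lemma:parabolic-untransvectable}.

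The main obstacle I anticipate is the converse direction's final identification step: unlike in \cref{lemma:characterization-vertex-groups}, Property~$\Pamen$ contains no analogue of the maximality clauses $\Pvert_3$ and $\Pvert_4$, so I cannot directly invoke a maximality argument to rule out $\calv$ being a proper subgroupoid of a vertex-type one or being contained in a clique of higher rank. The resolution will hinge on clause~(4): since $\calc_n$ must itself satisfy $\Pclique$ relative to $\calp_n$, \cref{lemma:clique} pins $\calc_n$ down to be stably equal to $\rho^{-1}(C)$ where $C$ is the \emph{full} clique factor, not a proper subclique; combined with clique-reducedness (which ensures $C$ is a single untransvectable vertex group once we know the product structure above has bottomed out), this yields the exact identification. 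Care is needed to check that $v$, arising as the clique factor of the last maximal product, is genuinely untransvectable and not merely the endpoint of a prematurely terminated chain---here I would invoke \cref{lemma:parabolic-untransvectable} and the fact that the graph is clique-reduced to rule out the transvectable case, since a transvection $\lk(v) \subseteq \st(w)$ would force $w$ into every maximal product containing $v$, obstructing the isolation of $G_v$ as a clique factor.
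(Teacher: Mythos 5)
Your overall architecture matches the paper's proof: the forward direction (reduce to $\calv=\rho^{-1}(G_v)$, feed the chain from \cref{lemma:graph} into \cref{lemma:maximal-product-amenable-vertex-groups}, \cref{lemma:factor-2} and \cref{lemma:clique}, and verify $\Pamen_2$ via tight support and \cref{lemma:support-normal}) and the bulk of the converse (reconstruct a chain of parabolic subgroups from $\Pamen_1$, then use clique-reducedness together with $\Pamen_2$ --- playing the role of $\Pvert_2$ --- to exclude that $\calc_n$ corresponds to a clique $G_\Upsilon$ with $|V\Upsilon|\ge 2$, exactly via the choice of $w\in V\Upsilon$ with $\lk(w)\nsubseteq\Upsilon\circ\Upsilon^{\perp}$, \cref{lemma:support-normal} and \cref{lemma:inclusion-parabolics}) are the same argument as in the paper. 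Your observation that clause~(4) of $\Pamen_1$, via \cref{lemma:clique}, pins $\calc_n$ down to the \emph{full} clique factor, so that no analogue of $\Pvert_3$/$\Pvert_4$ is needed, is also exactly how the paper dispenses with those maximality clauses.

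The one flaw is your final certification step. You propose to "certify that $v$ is untransvectable via \cref{lemma:parabolic-untransvectable}", but that lemma takes untransvectability of $v$ as a \emph{hypothesis} (it says: if $v$ is untransvectable and $G_v\times G_v^{\perp}\subseteq C\times C^{\perp}$ for a clique parabolic $C$, then $C=G_v$), so it cannot produce untransvectability as a conclusion; as written this step does not follow. The correct tool is the converse ("if") direction of \cref{lemma:graph} itself: once you know $C_n$ is conjugate to a single vertex group $G_v$, the chain you have reconstructed --- maximal products not of isolated clique type, trivial clique factors for $j<n$, and $G_v$ equal to the clique factor of $P_n$ --- is precisely the hypothesis of that biconditional, which then yields untransvectability of $v$ directly. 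This is what the paper does, and note that your own informal justification (a transvection $\lk(v)\subseteq\st(w)$ would force $w$ into every maximal product containing $v$) is exactly the content of the proof of that direction of \cref{lemma:graph}, so the substance is available to you; only the citation needs to be repaired.
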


\begin{proof}
We first assume that $(\calv,\rho)$ is of untransvectable vertex type, and prove that $(\calg,\calv)$ satisfies Property~$\Pamen$.
\smallskip

Up to passing to a conull Borel subset of $X$, taking a countable partition, and conjugating the cocycle, we will assume that $\calv=\rho^{-1}(G_v)$, where $v\in V\Gamma$ is untransvectable. 
 
We start with $\Pamen_1$. Let \[G=F_0\supseteq P_1\supseteq F_1\supseteq\dots\supseteq P_n\supseteq C_n=G_v\] be parabolic subgroups provided by Lemma~\ref{lemma:graph}, \cpageref{lemma:graph} (here we write $F_n=C_n$). For every $j\in\{1,\dots,n\}$, let $\calp_j=\rho^{-1}(P_j)$ and $\calf_{j-1}=\rho^{-1}(F_{j-1})$, and let $\calc_n=\rho^{-1}(C_n)$. Since $P_j$ is a maximal product parabolic subgroup of $F_{j-1}$ and is not of isolated clique type, Lemma~\ref{lemma:maximal-product}, \cpageref{lemma:maximal-product} ensures that $(\calf_{j-1},\calp_j)$ satisfies Property~$\Pprod$ for every $j\in\{1,\dots,n\}$. Since $P_j$ has trivial clique factor for every $j\in\{1,\dots,n-1\}$, and $F_j$ is a factor of $P_j$, Lemma~\ref{lemma:factor-2}, \cpageref{lemma:factor-2} applies and shows that $(\calp_j,\calf_j)$ satisfies Property~$\Pf$. Since $C_n$ is the clique factor of $P_n$, Lemma~\ref{lemma:clique}, \cpageref{lemma:clique} ensures that $(\calp_n,\calc_n)$ satisfies Property~$\Pclique$. On the other hand, for every $j\in\{1,\dots,n-1\}$, since $P_j$ has trivial clique factor, Lemma~\ref{lemma:clique} ensures that for every positive measure Borel subset $U\subseteq X$, there does not exist any measured subgroupoid $\calc$ of $(\calp_j)_{|U}$ such that $((\calp_j)_{|U},\calc)$ satisfies Property~$\Pclique$. This proves Property~$\Pamen_1$.

We now prove that $(\calg,\calv)$ satisfies Property~$\Pamen_2$. Let $\calw$ and $\calh$ be as in $\Pamen_2$. Since $\calw$ is of infinite type and stably contained in $\calv$, up to a countable Borel partition of a conull Borel subset of $X$, we will assume that $(\calw,\rho)$ is tightly $G_v$-supported. As $\calw$ is stably normalized by $\calh$, Lemma~\ref{lemma:support-normal}, \cpageref{lemma:support-normal} allows us to further assume that $\calh\subseteq\rho^{-1}(G_v\times G_v^{\perp})$. But since $G_v$ is normal in $G_v\times G_v^{\perp}$, it follows that $\calv$ is (stably) normalized by $\calh$ (Example~\ref{ex:normal}, \cpageref{ex:normal}), as desired. 

\medskip

 Conversely, let us assume that $(\calg,\calv)$ satisfies Property~$\Pamen$, and prove that $(\calv,\rho)$ is of untransvectable vertex type.

Up to a countable partition, we can assume that there exist measured subgroupoids \[\calg=\calf_0\supseteq\calp_1\supseteq\calf_1\supseteq\calp_2\supseteq\calf_2\supseteq\dots\supseteq\calp_n\supseteq\calc_n=\calv\] 
as in $\Pamen_1$. Using Lemmas~\ref{lemma:factor-2},~\ref{lemma:clique} and~\ref{lemma:maximal-product-amenable-vertex-groups}, up to a further partition, we can assume that there exists a chain of parabolic subgroups \[G=F_0\supseteq P_1\supseteq F_1\supseteq\dots\supseteq P_n\supseteq C_n\] such that
    \begin{enumerate}
    \item for every $j\in\{1,\dots,n\}$, one has $\calp_j=\rho^{-1}(P_j)$, and $\calf_{j-1}=\rho^{-1}(F_{j-1})$, and $\calc_n=\rho^{-1}(C_n)$, and 
    \item for every $j\in\{1,\dots,n\}$, $P_j$ is a maximal product parabolic subgroup of $F_{j-1}$ and not of isolated clique type (Lemma~\ref{lemma:maximal-product-amenable-vertex-groups}, \cpageref{lemma:maximal-product-amenable-vertex-groups} using the first assertion of $\Pamen_1$),
    \item for every $j\in\{1,\dots,n-1\}$, the clique factor of $P_j$ is trivial (Lemma~\ref{lemma:clique}, using the third assertion of $\Pamen_1$), and $F_j$ is a factor of $P_j$ (Lemma~\ref{lemma:factor-2}, \cpageref{lemma:factor-2} using the second assertion of $\Pamen_1$), and
    \item $C_n$ is the clique factor of $P_n$ (Lemma~\ref{lemma:clique}, \cpageref{lemma:clique} using the fourth assertion of $\Pamen_1$).
    \end{enumerate} 
    \smallskip
    
    We now claim that $C_n$ is conjugate to $G_v$ for some $v\in V\Gamma$. This will be enough to conclude our proof, as Lemma~\ref{lemma:graph}, \cpageref{lemma:graph} then implies that $v$ is untransvectable.

    We are thus left with proving the above claim. Assume that $C_n$ is conjugate to $G_{\Upsilon}$, for a clique subgraph $\Upsilon\subseteq \Gamma$ which contains at least two vertices. Up to conjugating the cocycle, we will assume without loss of generality that $C_n=G_{\Upsilon}$. Since $\Gamma$ is clique-reduced, we can find a vertex $w\in V\Upsilon$ such that $\lk(w)\nsubseteq\Upsilon\circ\Upsilon^{\perp}$. We now aim for a contradiction using Property~$\Pamen_2$. So let $\calw=\rho^{-1}(G_w)$, and let $\calh=\rho^{-1}(G_w\times G_w^{\perp})$. Then $\calw\subseteq\calv$, and $\calw$ is normalized by $\calh$ (Example~\ref{ex:normal}, \cpageref{ex:normal}). Property~$\Pamen_2$ implies that $\calv$ is stably normalized by $\calh$. Since $(\calv,\rho)$ is tightly $G_{\Upsilon}$-supported (Lemma~\ref{lemma:tight-support}, \cpageref{lemma:tight-support}), Lemma~\ref{lemma:support-normal} enables us to find a positive measure Borel subset $U\subseteq X$ such that $\calh_{|U}\subseteq\rho^{-1}(G_{\Upsilon}\times G_{\Upsilon}^{\perp})$. It follows from Lemma~\ref{lemma:inclusion-parabolics}, \cpageref{lemma:inclusion-parabolics} that $G_w\times G_w^{\perp}\subseteq G_{\Upsilon}\times G_{\Upsilon}^{\perp}$, which contradicts our choice of~$w$. This contradiction completes our proof.
\end{proof}

We are now in position to complete the proof of the main proposition of this section.

\begin{proof}[Proof of Proposition~\ref{prop:untransvectable}]
    Assume that $(\calv,\rho_G)$ is of untransvectable vertex type. By Lemma~\ref{lemma:untransvectable-vertex} applied to the cocycle $\rho_G$, the pair $(\calg,\calv)$ satisfies Property~$\Pamen$. And Lemma~\ref{lemma:untransvectable-vertex} applied to the cocycle $\rho_H$ then proves that $(\calv,\rho_H)$ is of untransvectable vertex type. By symmetry the other implication also holds. 
\end{proof}

% --------------------
\section{Examples}
\label{Sec:ExemplesMEVRP}
To conclude this part, we provide some examples illustrating the different cases treated in the past sections. All the graph products presented in this section are in fact right-angled Artin groups (though they are sometimes viewed as graph products with non-cyclic vertex groups). So our work provides new cases of the measure equivalence classification of right-angled Artin groups, beyond those treated in \cite{HH21}.
On the other hand, some cases remain open, such as right-angled Artin groups defined over trees.

\begin{Ex}\label{Ex:PentagoneF2F3} The following example is an illustration of \cref{theo:classificationversionintro}.

    The two graph products defined in \cref{fig:Pentagone1} are not measure equivalent. 
    
    Indeed, the graph (namely the pentagon) is transvection-free and has no partial conjugation. Therefore \cref{theo:classificationversionintro} applies: if $G$ and $H$ were measure equivalent, then we would have a graph isomorphism sending the vertex group $F_2$ to an orbit equivalent vertex group in $H$. Such a vertex group does not exist in $H$. Therefore $G$ and $H$ cannot be measure equivalent.
\end{Ex}
\begin{figure}[htbp]
    \centering
    \includegraphics{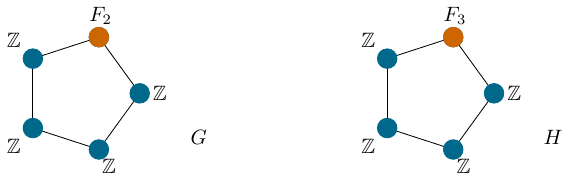}
    \caption{Groups from \cref{Ex:PentagoneF2F3}}
    \label{fig:Pentagone1}
\end{figure}

\begin{Ex}\label{Ex:PentaDeplie}
    The following example is an illustration of Corollary~\ref{Cor:VRPandGraphMorphism}.

    Let $G$ and $H$ be as in \cref{fig:ExemplePentagoneDeplie} (the two leftmost graphs). If $G$ and $H$ were measure equivalent, then we would have a graph homomorphism $\sigma:\Gamma_G \rightarrow \Gamma_H$ such that $G_v$ is measure equivalent to $H_{\sigma(v)}$ for all $v\in V\Gamma_G$. In particular $\sigma$ must send \textcolor{Turquoise}{blue} vertices to \textcolor{Turquoise}{blue} vertices. In particular, by looking at the restriction of $\sigma$ to the leftmost pentagon in $\Gamma_G$ (all of whose vertices are \textcolor{Turquoise}{blue}), there would exist a graph homomorphism from a pentagon to a segment of length $3$, which is not the case. Therefore $G$ and $H$ are not measure equivalent.
\end{Ex}

\begin{figure}[htbp]
    \centering
    \includegraphics[width=\textwidth]{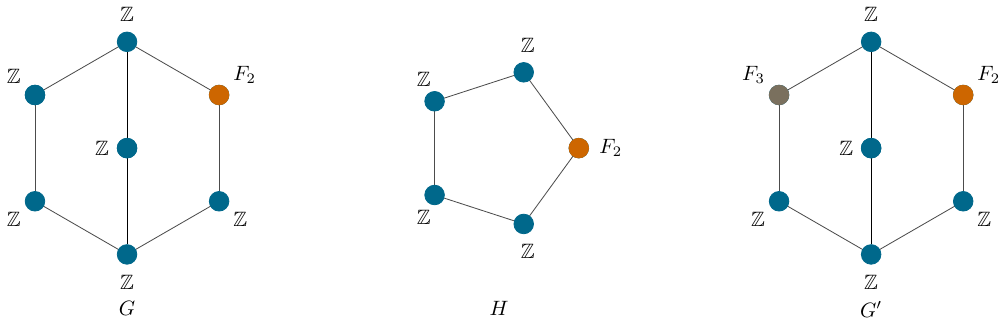}
    \caption{Graphs and groups from \cref{Ex:PentaDeplie,Rq:PbOuvertPentagones}}
    \label{fig:ExemplePentagoneDeplie}
\end{figure}

\begin{Rq}\label{Rq:PbOuvertPentagones} 
We do not know however whether or not the groups $H$ and $G'$ are measure equivalent. There do exist graph homomorphisms $\Gamma_G\to\Gamma_H$ and $\Gamma_H\to\Gamma_G$, such that a vertex and its image always carry measure equivalent vertex groups, so Corollary~\ref{Cor:VRPandGraphMorphism} does not provide any obstruction. In fact, if the \textcolor{MFCB}{brown} vertex group was isomorphic to $F_2$, then $G'$ would be isomorphic to an index $2$ subgroup of $H$ (whence measure equivalent to $H$). However, with the \textcolor{MFCB}{brown} vertex group isomorphic to $F_3$, we have no reason to expect that the groups $G'$ and $H$ should be measure equivalent.
\end{Rq}

\begin{Ex}\label{Ex:3}
    The following example is an illustration of \cref{prop:untransvectable}.
    
    Let $G$ and $H$ be the two groups defined in the left half of \cref{fig:Ex3}, \cpageref{fig:Ex3}. Then $G$ and $H$ are not measure equivalent. 
    
    Indeed, first remark that in \cref{fig:Ex3} all untransvectable vertices are drawn in \textcolor{Orange}{orange}. Second, note that $H$ can be rewritten as a right-angled Artin group over the graph drawn at the top of the right half of \cref{fig:Ex3} (in particular, with this decomposition all vertex groups are now amenable). In the latter graph, all vertices are transvectable. 
    If $G$ and $H$ were measure equivalent then we could apply \cref{prop:untransvectable}, \cpageref{prop:untransvectable} and get the existence of an \textcolor{Orange}{untransvectable} vertex in the rightmost graph defining $H$. Hence $G$ and $H$ can not be measure equivalent. For the same reason, the groups $H$ and $H'$ are not measure equivalent.
    
    In fact, as a consequence of Corollary~\ref{cor:untransvectable-extension-graph}, we can also prove that $G$ and $H'$ are not measure equivalent. Indeed, the untransvectable extension graph $\Gamma_{H'}^e$ (see Definition~\ref{de:untransvectable-extension}) is totally disconnected: its vertices represent the conjugates of the leftmost \textcolor{Orange}{orange} vertex group and of the rightmost \textcolor{Orange}{orange} vertex group, and no two distinct conjugates of these vertex groups commute. On the other hand $\Gamma_G^e$ contains an edge, because all \textcolor{Orange}{orange} vertex groups are untransvectable.
    
    One could provide many variations over the above example. But many cases remain open: the most basic ones is that our methods do not allow to decide whether or not right-angled Artin groups defined over trees of diameter at least $3$ are measure equivalent or not.
\end{Ex}

\begin{figure}[htbp]
  \centering
  \includegraphics[width=0.9\textwidth]{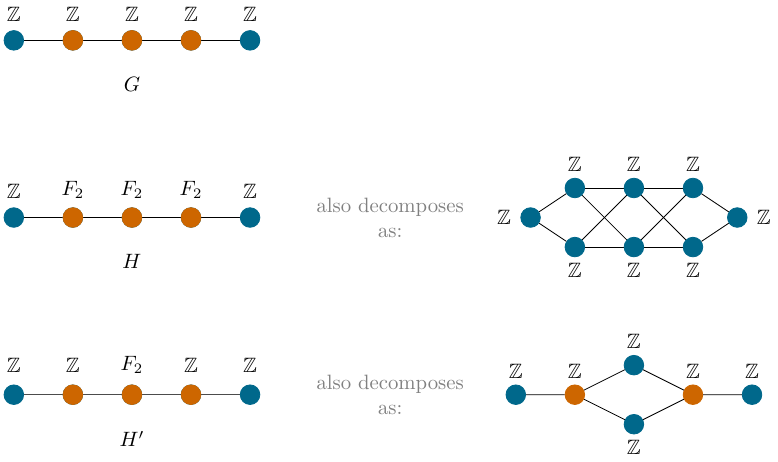}
  \caption{Groups from \cref{Ex:3}}
  \label{fig:Ex3}
\end{figure}

%==================================================================
%==================================================================

\newpage

\part{Quantitative aspects and the inverse problem for RAAGs}\label{Part:QuantitativeResults}

The goal of the present part is to obtain a quantitative version of the previous classification theorem (\cref{theo:classificationversionintro}, \cpageref{theo:classificationversionintro}) and solve the inverse problem in quantitative measure equivalence for a large family of right-angled Artin groups. We refer to \cref{Sec:QuantitativeDefinitions}, \cpageref{Sec:QuantitativeDefinitions} for the definition of quantitative orbit equivalence.

In \cref{Sec:FactToProd} we prove that when two graph products are defined over the same graph, the integrability at the level of the vertex groups can be lifted at the level of the graph products themselves (\cref{Prop:CouplageGraphProd}). This actually does not need any assumption on the defining graph and relies on a previous result of Huang and the second named author \cite{HH21}, generalizing an argument of Gaboriau \cite[{$\mathbf{P}_{\mathrm{ME}}\mathbf{6}$}]{Gab}. In \cref{Sec:ProdToFact} we show a generalized version of \cref{Prop:DescenteDuProfil}, proving for a large family of graph products that the integrability at the level of the graph products passes at the level of the vertex groups.

The inverse problem is finally solved in \cref{Sec:InverseProblem}.

% ----------------------------
% Passage de la quantification entre sommets à celle du produit graphé
% ----------------------------
\section{From the factors to the graph product}\label{Sec:FactToProd}
We prove here \cref{Prop:CouplageGraphProd} which we recall below --~in the introduction, we used the language of integrable orbit equivalence couplings instead of pairings, but these are equivalent, see Section~\ref{Sec:QuantitativeDefinitions}.

\begin{Th3}
 Let $G$ and $H$ be two graph products over a finite simple graph $\Gamma$, with finitely generated vertex groups $(G_v)_{v\in V\Gamma}$ and $(H_v)_{v\in V\Gamma}$, respectively.
 
  Let $\varphi,\psi :[1,+\infty[\rightarrow[1,+\infty[ $ be two non-decreasing functions. Assume that for every $v\in V\Gamma$, there exists a $(\varphi,\psi)$-integrable orbit equivalence pairing from $G_v$ to~$H_v$. 
  
  Then there exists a $(\varphi,\psi)$-integrable orbit equivalence pairing from
  $G$ to $H$.
\end{Th3}

This theorem is a quantitative version of \cite[Proposition~4.2]{HH21}. Our proof is actually based on the pairing built in \cite{HH21} to prove the latter proposition. In order to
keep this article self contained we recall the construction below.

\paragraph{Construction of the pairing} From now on we fix an enumeration $\{v_1,\ldots,v_n\}$ of the vertices of $\Gamma$.

For all $i\in\{1,\ldots,n\}$, let $(Y_i,\nu_i)$ be an orbit equivalence pairing between $G_{v_i}$ and~$H_{v_i}$. Consider also a free measure-preserving action of $G$ on a standard probability space~$Z$. Let $X=Z\times Y_1 \times \cdots \times Y_n$ and endow $X$ with the product probability measure denoted by $\mu$. An element $x$ of $X$ is denoted by $x=(z,y_1,\dots,y_n)$ where $z\in Z$ and $y_i\in Y_i$ for all $i\in \{1,\ldots,n\}$. We also denote by $r_i:G\to G_{v_i}$ the retraction which is the identity on $G_{v_i}$ and sends all other factors to $\{1\}$. We can thus define a free measure-preserving $G$-action on $X$, where the action on each $Y_i$ is through $r_i$, that is to say:
\begin{equation}\label{eq:DefActionGsurXProd}
g(z,y_1,\ldots,y_n)=\left(gz, r_1(g)y_1,\ldots,r_n(g)y_n\right). 
\end{equation}

To define the action of $H$ on $X$ we first define the action of $H_{v_i}$ on
$X$ for $i\in \{1,\ldots,n\}$. So let $i\in \{1,\ldots,n\}$ and denote by $c_i:G_{v_i}\times Y_i \rightarrow H_{v_i}$ and $c^{\prime}_i:H_{v_i}\times Y_{i} \rightarrow G_{v_i}$ the two orbit equivalence cocycles associated to the pairing between $G_{v_i}$ and $H_{v_i}$. Recall that $c_i(g,y_i)$ is defined for a.e.\ $y_i\in Y_i$, as the unique $h\in H_{v_i}$ such that $gy_i=hy_i$. Similarly $c^{\prime}_i(h,y_i)$ is the unique $g\in G_{v_i}$ such that $gy_i=hy_i$. Hence, for $h\in H_{v_i}$ we define for a.e.\ $(z,y_1,\ldots,y_n)\in X$, 
\begin{equation*}
    h(z,y_1,\ldots,y_n):=c^{\prime}_i(h,y_i)(z,y_1,\ldots,y_n).
\end{equation*}
This leads to a well defined free action of $H_{v_i}$ on $X$.

To obtain an action of the entire group $H$, we now show that if $h_i,h_j$ belong to adjacent vertex groups $H_{v_i}$, $H_{v_j}$ respectively, then $h_i \big(h_j(z,y_1,\ldots,y_n)\big)=h_j\big(h_i (z,y_1,\ldots,y_n)\big)$.\\
First remark that, by definition of the retractions, when $g$ belongs to some vertex group $G_{v_l}$ the above \cref{eq:DefActionGsurXProd} becomes
\begin{equation}\label{eq:ActionGsurX}
    g(z,y_1,\ldots,y_n)
    =\left(gz,\, y_1,\, \ldots,\, y_{l-1},\, g y_l,\, y_{l+1},\, \ldots,\, y_n\right). 
\end{equation}
Finally, since for all $l$ the cocycle $c^\prime_l$ goes from $H_{v_l}\times Y_l$ to $G_{v_l}$ and since $v_i$ and $v_j$ are adjacent in $\Gamma$, we thus get that $c^{\prime}_i(h_i,y_i)$ and $c^{\prime}_j(h_j,y_j)$ belong to adjacent vertex groups. Therefore, using \cref{eq:ActionGsurX} for the second and the fourth equalities, we get
\begin{align*}
    h_i \Big(h_j(z,y_1,\ldots,y_n)\Big)
    &=h_i \Big(c^{\prime}_j(h_j,y_j) (z,y_1,\ldots,y_n) \Big),\\
    &=c^{\prime}_i(h_i,y_i) \Big(c^{\prime}_j(h_j,y_j) (z,y_1,\ldots,y_n) \Big),\\
    &=c^{\prime}_j(h_j,y_j)  \Big( c^{\prime}_i(h_i,y_i)(z,y_1,\ldots,y_n) \Big),\\
    &=h_j \Big( c^{\prime}_i(h_i,y_i)(z,y_1,\ldots,y_n) \Big),\\
    &=h_j\Big(h_i (z,y_1,\ldots,y_n)\Big).
\end{align*}

The action defined at the level of the vertex groups thus extends at the level of the entire graph product $H$. The fact that this $H$-action is free follows from the freeness of the $G$-action on $Z$ and a normal form argument.

We can now use this construction to prove our result.

\begin{proof}[Proof of \cref{Prop:CouplageGraphProd}] 
  Keeping all notations from above, let $X=Z\times Y_1\times \cdots \times Y_n$ be the
  pairing defined in the last paragraph, where for all $i\in\{1,\ldots,n\}$  we choose $Y_i$ to be a $(\varphi,\psi)$-integrable orbit equivalence pairing from $G_{v_i}$ to $H_{v_i}$. Denote by $c:G\times X\rightarrow H$ and $c^\prime:H\times X \rightarrow G$ the two cocycles associated to the pairing $(X,\mu)$. Finally recall that an element $x$ of $X$ is denoted by $x=(z,y_1,\dots,y_n)$ where $z\in Z$ and $y_i\in Y_i$ for all $i\in \{1,\ldots,n\}$.

  For any $i\in\{1,\ldots,n\}$ denote by $S^i_{G}$ and $S^i_{H}$ 
  finite generating sets of
  $G_{v_i}$ and $H_{v_i}$ respectively. Then $S_G:=\sqcup^n_{i=1}S^i_{G}$ generates $G$ and $S_H:=\sqcup^n_{i=1}S^i_{H}$ generates $H$. Therefore, to prove that this pairing is $(\varphi,\psi)$-integrable we only need to show that for all $s\in S_G$ and all $s^\prime \in S_H$ we have
  \begin{align}
    \int_{x\in X} \varphi \left( \big|c(s,x)\big|_{S_H} \right)
    d\mu(x) &< +\infty,\label{eq:CondIntegrabilityPhi}\\
    \int_{x\in X} \psi \left(\big| c^\prime(s^\prime,x)\big|_{S_G}\right)
    d\mu(x) &< +\infty.\label{eq:CondIntegrabilityPsi}
  \end{align}

  \noindent \textbf{$\varphi$-integrability} 
  Consider $s\in S_{G}$ and let $i\in\{1,\ldots,n\}$ be such that $s\in S^i_{G}$. For a.e.\ $x\in X$, using first the definition of the action of $H$ on $X$ and then using that $c^{\prime}_i\big(c_i(s,y_i),y_i\big)=s$, we obtain
  \begin{equation*}
      c_i(s,y_i)x=c^\prime_i\big(c_i(s,y_i),y_i\big)x=sx,
  \end{equation*}
  and thus $c(s,x)=c_i(s,y_i)$. Therefore, by choice of $S_H$ we get that 
  \begin{equation*}
      |c(s,x)|_{S_H}=|c_i(s,y_i)|_{S_H}=|c_i(s,y_i)|_{S^i_H}.
  \end{equation*} 
  Finally, using that all spaces $Z$ and $Y_j$ have measure $1$ and that $c_i$ is $\varphi$-integrable on $Y_i$, we thus obtain that \cref{eq:CondIntegrabilityPhi} is verified.
  \medskip
  
  \noindent\textbf{$\psi$-integrability} Now consider $s^{\prime}\in S_{H}$ and let $i\in\{1,\ldots,n\}$ be such that $s^\prime\in S^i_{H}$. Then by definition of the $H$-action on $X$, we have $s^{\prime}x=c^{\prime}_i\left(s^{\prime},y_i\right)x$ for a.e.\ $x\in X$. Therefore $c^\prime(s^\prime,x)=c^\prime_i(s^\prime,y_i)$ for a.e.\ $x\in X$, and since the latter element belongs to $G_{v_i}$ we thus obtain
  \begin{equation*}
    \left\vert c^\prime\big(s^{\prime},x\big)\right\vert_{S_G}
    =\left\vert c^\prime_i\left(s^{\prime},y_i\right)\right\vert_{S_{G}}
    =\left\vert c^\prime_{i}\left(s^{\prime},y_i\right)\right\vert_{S^i_{G}}.
  \end{equation*}
  But by assumption, $c^{\prime}_i$ is $\psi$-integrable, thus \cref{eq:CondIntegrabilityPsi} is verified and hence the pairing constructed above is $(\varphi,\psi)$-integrable.
\end{proof}
% ----------------------------
% Passage de la quantification entre prod graphé à celle entre groupes de sommets
% ----------------------------
\section{From the graph product to the factors}\label{Sec:ProdToFact}
The goal of this section is to prove \cref{Prop:DescenteDuProfil}, \cpageref{Prop:DescenteDuProfil}, which states that the quantification at the level of the graph products passes at the level of the vertex groups. 

\subsection{The general statement and applications}
\subsubsection{Main result}
What we actually show here is the following more general version of the latter theorem. We refer to \cref{de:VRP}, \cpageref{de:VRP} for the definition of the Vertex Recognition Property and to \cref{Def:Untransvectable}, \cpageref{Def:Untransvectable} for the definition of an untransvectable vertex.

\begin{Th}\label{Th:ProdToFactWithVRP}
    Let $\varphi,\psi :[1,+\infty[\rightarrow[1,+\infty[ $ be two increasing unbounded functions and let $G$ and $H$ be two graph products defined over $\Gamma_G,\Gamma_H$ respectively, with infinite finitely generated vertex groups. Assume that there exists a $(\varphi,\psi)$-integrable measure equivalence coupling from $G$ to $H$.

    \begin{enumerate}
        \item If for all $v\in V\Gamma_G$ and all $w\in V\Gamma_H$ the vertex groups $G_v$ and  $H_w$ are amenable then for all {untransvectable vertices} $v\in V\Gamma_G$, their exist $\sigma(v)\in V\Gamma_H$ and a $(\varphi,\psi)$-integrable measure equivalence coupling from $G_v$ to $H_{\sigma(v)}$.
        \item If $G$ and $H$ both belong to a same class of groups $\classC$ satisfying the Vertex Recognition Property, then there exist a graph homomorphism $\sigma:\Gamma_G \rightarrow \Gamma_H$ and for all $v\in V\Gamma_G$ a $(\varphi,\psi)$-integrable measure equivalence coupling from $G_v$ to $H_{\sigma(v)}$.
        \item If $\Gamma_G$ and $\Gamma_H$ are transvection-free with no partial conjugation, then there exists a graph isomorphism $\sigma: \Gamma_G\to \Gamma_H$ and for all $v\in V\Gamma_G$ a $(\varphi,\psi)$-integrable measure equivalence coupling from $G_v$ to $H_{\sigma(v)}$.
    \end{enumerate}
\end{Th}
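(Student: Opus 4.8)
The plan is to treat the three cases uniformly by first matching vertex groups and producing the restricted couplings at the \emph{qualitative} level, and only then verifying that integrability descends. As in \cref{Ex:GroupoidsAssociatedToCouplings}, I would start from the given $(\varphi,\psi)$-integrable coupling $(\Omega,m,X_G,X_H)$, choose the fundamental domains so that $m(X_G\cap X_H)>0$, and form the measured groupoid $\calg$ over $X:=X_G\cap X_H$ with its two action-like cocycles $\rho_G:\calg\to G$ and $\rho_H:\calg\to H$. The three cases then differ only in the recognition tool: in case~(1) I would apply \cref{prop:untransvectable} to each untransvectable $v$; in case~(2) the Vertex Recognition Property, through the argument of \cref{Cor:VRPandGraphMorphism}; and case~(3) is the special instance of~(2) where $\classC$ is the class of \cref{prop:vertex-recognition}, in which one moreover upgrades the graph homomorphism $\sigma$ to an isomorphism exactly as in \cref{prop:coupling-restricted} (using that the extension graphs are isomorphic together with Hua's rigidity). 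In every case the output is a positive measure Borel subset $U\subseteq X$, a vertex $\sigma(v)$, and an element $h_v\in H$ with $\rho_G^{-1}(G_v)_{|U}=\rho_H^{-1}(h_vH_{\sigma(v)}h_v^{-1})_{|U}$.

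Writing $M=G_v$ and $N=h_vH_{\sigma(v)}h_v^{-1}$, this equality of subgroupoids is, via \cref{Rq:CouplageRestriction}, precisely the hypothesis of \cref{lemma:me-subgroups}. Applying that lemma yields an $(M\times N)$-invariant subspace $\Omega'\subseteq\Omega$ together with fundamental domains $X_M,X_N$ contained in fundamental domains for the $G$- and $H$-actions, so that $(\Omega',m,X_M,X_N)$ is a measure equivalence coupling from $G_v$ to $N\cong H_{\sigma(v)}$. Since integrability is insensitive to conjugation of the target and to the choice of generating set, it suffices to prove that this coupling is $(\varphi,\psi)$-integrable. I would deliberately \emph{not} pass to the common fundamental domain furnished by orbit equivalence: as \cref{Rq:AsymmetryOEMEQt} explains, that domain need not be integrable, which is exactly why one keeps the a priori distinct domains $X_M\subseteq X_G$ and $X_N\subseteq X_H$ produced by \cref{lemma:me-subgroups}.

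The heart of the matter is the integrability check, and this is where I expect the main difficulty. Tracing \cref{lemma:me-subgroups}, the restricted cocycle is $c_1'(g,x)=c_1(\kappa(g\cdot x),g\cdot x)\,c_1(g,x)\,c_1(\kappa(x),x)^{-1}\in N$, where $\kappa$ is the bring–back map into $U$ taking values in $M=G_v$. The naive estimate, bounding $|c_1'(g,x)|_{S_N}$ by the triangle inequality on this three–term product, fails on two counts: it forces a dilation $\varphi(3\,\cdot)$ that the hypothesis on $\varphi$ does not control, and its outer factors depend on the varying elements $\kappa(x),\kappa(g\cdot x)$, so they are not cocycle values with a fixed first coordinate.

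Both defects should be cured by the convexity of vertex groups inside a graph product. Because $N$ is conjugate to a vertex group $H_{\sigma(v)}$, \cref{Lmm:LengthAndNormalForm} applies: for $z\in N$ the reduced word realizing $z$ in $H$ contains $z$ as a single syllable, whence $|z|_{S_N}\le|z|_{S_H}$. The key geometric input I would establish is that the net $N$–displacement recorded by $c_1'(g,x)$ occurs as a syllable of a reduced word for the ambient cocycle value $c_1(g,x)$ along the journey from $x$ to $g\cdot x$ — the excursions introduced by $\kappa$ stay in $G_v$ and, by the normal form, cannot shorten that syllable. This gives the clean pointwise bound $|c_1'(g,x)|_{S_N}\le|c_1(g,x)|_{S_H}$, with neither a dilation nor a dependence on $\kappa$. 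Integrating over $X_N\subseteq X_H$ for the fixed element $g\in G_v$ and invoking the $(\varphi,\psi)$-integrability of the ambient coupling then yields $\int_{X_N}\varphi(|c_1'(g,x)|_{S_N})\,dm<+\infty$; the symmetric estimate for $c_2'$ with $\psi$ in place of $\varphi$ is identical. Assembling these over all (untransvectable) vertices produces the required couplings, and case~(3) follows from case~(2) upon upgrading $\sigma$ to a graph isomorphism. I expect the precise syllable bookkeeping for $c_1'$ under the bring–back maps $\kappa,\lambda$ — making rigorous that they never increase the relevant syllable length — to be the most technical and delicate part of the write-up.
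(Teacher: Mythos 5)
Your overall architecture is the same as the paper's: the groupoid over $X_G\cap X_H$ with its two action-like cocycles, the three recognition inputs (\cref{prop:untransvectable} in case (1), the argument of \cref{Cor:VRPandGraphMorphism} in case (2), \cref{prop:coupling-restricted} in case (3)), the restriction to vertex groups via \cref{lemma:me-subgroups} and \cref{Rq:CouplageRestriction}, and \cref{Lmm:LengthAndNormalForm} as the engine of the length estimate. The gap is in the integrability step, and it sits exactly where the paper introduces its one genuinely new idea. Your key claim — the pointwise bound $|c_1'(g,x)|_{S_N}\le|c_1(g,x)|_{S_H}$ for the cocycle $c_1'(g,x)=c_1(\kappa(g\cdot x),g\cdot x)\,c_1(g,x)\,c_1(\kappa(x),x)^{-1}$ produced by \cref{lemma:me-subgroups} — is false in general. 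Your justification conflates the two sides of the coupling: the values of $\kappa$ do lie in $M=G_v$, but the elements that actually multiply $c_1(g,x)$ on the left and right are their images $c_1(\kappa(g\cdot x),g\cdot x)$ and $c_1(\kappa(x),x)$ under the ambient cocycle, and these are arbitrary elements of $H$ (the hypothesis of \cref{lemma:me-subgroups} only controls cocycle values between points of $U$); in particular their heads may well contain elements of $N$, which is exactly what \cref{Lmm:LengthAndNormalForm} forbids. Concretely, for $x\in U$ with $g\cdot x\notin U$ one has $\kappa(x)=e$ and, by the cocycle relation, $c_1'(g,x)=c_1\big(\kappa(g\cdot x)\,g,\,x\big)$: this is the ambient cocycle evaluated at a group element that varies with $x$ and can be arbitrarily large, so its $H$-length is not dominated by $|c_1(g,x)|_{S_H}$, and one cannot invoke $(\varphi,\psi)$-integrability either, since that is an integral for each \emph{fixed} group element.

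What is true is that $c_1'$ becomes controlled only after a further change of fundamental domain, i.e.\ after replacing it by an $N$-cohomologous cocycle. Writing $c_1(\kappa(g\cdot x),g\cdot x)=n_a\tilde a$ and $c_1(\kappa(x),x)=n_b\tilde b$ with $n_a,n_b\in N$ and $\tilde a,\tilde b$ the coset representatives whose heads contain no element of $N$, \cref{Lmm:LengthAndNormalForm} gives $|n_a^{-1}c_1'(g,x)\,n_b|_{S_N}\le|c_1(g,x)|_{S_H}$; the uncontrolled factors $n_a,n_b$ (which depend on $x$) are precisely the discrepancy between your bound and a correct one. The paper's \cref{Lmm:ProdToParabolicsQt} packages this repair: it introduces the map $\vartheta$ selecting in each coset $P_H h$ the representative whose head avoids $P_H$, uses $\vartheta\circ\alpha$ to shift the fundamental domain to $X''_H$, and only for the resulting cocycle $c''_1(g,x'')=\vartheta(\alpha(g\cdot x))\,c(g,x)\,\vartheta(\alpha(x))^{-1}$ does the normal-form lemma yield the clean inequality $|c''_1(g,x'')|_{S_{P_H}}\le|c(g,x)|_{S_H}$. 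Since $(\varphi,\psi)$-integrability of a coupling is a statement about suitably chosen fundamental domains, this suffices for the theorem; but your write-up, which deliberately fixes the domains $X_M,X_N$ coming out of \cref{lemma:me-subgroups} and asserts the bound for the corresponding cocycle, cannot be completed as stated — the missing idea is this $\vartheta$-renormalization of the fundamental domain.
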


The second assertion applies in particular whenever $\Gamma_G$ and $\Gamma_H$ are transvection-free and strongly reduced (Proposition~\ref{prop:vertex-recognition-strongly-reduced}, \cpageref{prop:vertex-recognition-strongly-reduced}).

The three conclusions of the above theorem provide quantitative versions of three situations already encountered in Part~\ref{Part:MEClassification} of this work. Namely, 
\begin{itemize}
\item the first part is a quantitative version of the work from Section~\ref{Sec:AmenalbeUntransvectable};
\item the second part is a version of Corollary~\ref{Cor:VRPandGraphMorphism}, \cpageref{Cor:VRPandGraphMorphism};
\item the third part (which is exactly Theorem~\ref{Prop:DescenteDuProfil}, \cpageref{Prop:DescenteDuProfil} from the introduction), is a quantitative version of Theorem~\ref{theo:classificationversionintro}.
\end{itemize}
We also refer to Section~\ref{Sec:ExemplesMEVRP} for illustrations of these various situations.

Recall also that when $G$ is a right-angled Artin group, asking $\Gamma_G$ to be transvection-free with no partial conjugation is equivalent to requiring $\mathrm{Out}(G)$ to be finite. So in particular the first and second assertions of our theorem apply to a larger family of right-angled Artin groups than the ones with finite outer automorphism group. 

Finally, we refer to Remark~\ref{Rq:AsymmetryOEMEQt} for a discussion on the asymmetry between Theorem~\ref{Prop:CouplageGraphProd} (stated in terms of orbit equivalence) and Theorem~\ref{Th:ProdToFactWithVRP} (stated in terms of measure equivalence).

\subsubsection{Obstructions in the non-amenable world}\label{sec:ApplicationOfProdToFact} As explained in the introduction, quantitative measure equivalence responds well to geometry, which sometimes provides us with upper bounds to the possible quantifications between two given groups. So far, most of the known obstructions were in the amenable world, our \cref{Th:ProdToFactWithVRP} above allows to extend them to a large family of non-amenable groups. We recall below some of the known geometric obstructions and detail the case of the isoperimetric profile that will be used in the next section.

Among the previously known obstructions were the results of Bowen and Austin in \cite{AustinBowen}. Indeed, Bowen proved that growth is stable under integrable measure equivalence. Austin moreover showed that two $L^1$-measure equivalent groups of polynomial growth have bi-Lipschitz asymptotic cones. This implies for example that $\bZ^4$ and the Heisenberg group $H_3$, although having the same growth rate, cannot be $L^1$-measure equivalent. Another obstruction relies on the notion of \emph{isoperimetric profile}. Let $S_G$ be a finite generating set of $G$ and recall that the \emph{$S_G$-boundary} of a finite subset $A$ of $G$ is defined as $\partial_{S_G}A:=\{g\in A \ : \ \exists s\in S_G, \ gs\notin A \}$. The \emph{isoperimetric profile}\index{Isoperimetric profile} of a finitely generated group $G$, with respect to a finite generating set $S$ is the function $I_G:\mathbb{N}^*\to\mathbb{R}$ defined by
\begin{equation*}
    I_{G}(n):= \sup_{A\subseteq G,\ 0<|A|\leq n} \frac{|A|}{|\partial_{S_G} A|}.
\end{equation*}
Note that (by the Følner criterion) a group is amenable if and only if its isoperimetric profile is unbounded.

Delabie, Koivisto, Le Maître and Tessera \cite{DKLMT} showed the monotonicity under quantitative measure equivalence of the isoperimetric profile and more precisely of its \emph{asymptotic behaviour}. More formally, if $f$ and $g$ are two real functions we denote $f \preccurlyeq g$\label{Nota:AsymptoBehaviour} if there exists some constant $C>0$ such that $f(x)=\mathcal{O}\big(g(Cx)\big)$ as $x$ tends to infinity. We write $f\simeq g$ if $f \preccurlyeq g$ and $g\preccurlyeq f$. 

\begin{Th}[{\cite[Th.1]{DKLMT}}] \label{Th:ProfiletOE}
  Let $G$ and $H$ be two finitely generated groups admitting a
  $(\varphi,L^0)$-integrable measure equivalence coupling from $G$ to $H$. If both $\varphi$ and $t/\varphi(t)$ are non-decreasing then $\varphi \circ I_{H} \preccurlyeq I_G$.
\end{Th}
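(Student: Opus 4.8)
The plan is to prove a direct transference: from a near-extremal set for $I_H$ I will manufacture, through the coupling, a subset of the coupling space with a good $G$-Følner ratio, and then slice it down to an honest finite subset of $G$. Write $(\Omega,m,X_G,X_H)$ for the $(\varphi,L^0)$-integrable coupling from $G$ to $H$, normalized so that $m(X_H)=1$; let $c\colon G\times X_H\to H$ be the associated measure equivalence cocycle, which is $\varphi$-integrable, and set $C_\varphi:=\sum_{s\in S_G}\int_{X_H}\varphi(|c(s,x)|_{S_H})\,dm(x)<+\infty$. Fix $n$ and choose a finite $A\subseteq H$ with $|A|\le n$ nearly realizing $\rho:=I_H(n)=|A|/|\partial_{S_H}A|$.

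First I would form $U_A:=A*X_H\subseteq\Omega$, of measure $m(U_A)=|A|$, and compute its $G$-boundary using that the $G$- and $H$-actions on $\Omega$ commute. For $h\in A$, $x\in X_H$ and $s\in S_G$ one has $s*(h*x)=\bigl(h\,c(s,x)^{-1}\bigr)*(s\cdot x)$ with $s\cdot x\in X_H$; since $X_H$ is a fundamental domain for the free $H$-action, this point lies in $U_A$ if and only if $h\,c(s,x)^{-1}\in A$. Hence the $s$-boundary of $U_A$ has measure $\int_{X_H}|A\setminus A\,c(s,x)|\,dm(x)$, and I bound the integrand by $\min\bigl(|c(s,x)|_{S_H}\,|\partial_{S_H}A|,\ |A|\bigr)=|\partial_{S_H}A|\cdot\min(|c(s,x)|_{S_H},\rho)$, where the first term is the standard estimate $|A\setminus A\gamma|\le|\gamma|_{S_H}\,|\partial_{S_H}A|$ and the second is the trivial bound.

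The crux is the pointwise inequality $\min(t,\rho)\le \tfrac{\rho}{\varphi(\rho)}\,\varphi(t)$, valid for all $t\ge 1$: for $t\le\rho$ it is exactly the hypothesis that $t/\varphi(t)$ is non-decreasing, and for $t>\rho$ it is exactly the hypothesis that $\varphi$ is non-decreasing — this is precisely where both conditions on $\varphi$ are consumed. Integrating and summing over $s\in S_G$ then yields $m(\partial_G U_A)\le |\partial_{S_H}A|\cdot\tfrac{\rho}{\varphi(\rho)}\,C_\varphi=|A|\,C_\varphi/\varphi(\rho)$, so that $U_A$ has $G$-Følner ratio $m(U_A)/m(\partial_G U_A)\ge \varphi(\rho)/C_\varphi=\varphi(I_H(n))/C_\varphi$, while $m(U_A)\le n$.

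Finally I would convert this measured estimate into a bound on the combinatorial profile $I_G$. Identifying $\Omega\cong G\times X_G$ as a $G$-space and writing $U_A$ through its slices $B_x:=\{g\in G:(g,x)\in U_A\}\subseteq G$, Fubini gives $\int_{X_G}|B_x|\,d\mu=m(U_A)\le n$ and $\int_{X_G}|\partial_{S_G}B_x|\,d\mu\lesssim m(\partial_G U_A)$; a Markov/averaging argument then produces a single slice with $|B_x|\le Cn$ and $|B_x|/|\partial_{S_G}B_x|\gtrsim \varphi(I_H(n))$, whence $I_G(Cn)\gtrsim\varphi(I_H(n))$ and therefore $\varphi\circ I_H\preccurlyeq I_G$. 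The routine part is the $\min$-inequality that makes $\varphi$ appear, and the part I expect to be most delicate is this measured-to-combinatorial conversion: one must set up the fundamental-domain decomposition carefully, check that the left-translation boundary appearing from the $G$-action is comparable to the right-translation boundary in the definition of $I_G$ (up to replacing $S_G$ by $S_G\cup S_G^{-1}$), and run the averaging so as to extract simultaneously a slice of controlled size and of good isoperimetric ratio.
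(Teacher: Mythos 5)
A preliminary remark on the benchmark: the paper does not prove Theorem~\ref{Th:ProfiletOE} at all --- it is imported verbatim from \cite[Theorem~1]{DKLMT} --- so your attempt can only be measured against the argument in that reference. Your first three paragraphs correctly reproduce its core mechanism: the commutation identity $s*(h*x)=\bigl(h\,c(s,x)^{-1}\bigr)*(s\cdot x)$, the exact boundary formula $\int_{X_H}|A\setminus A\,c(s,x)|\,dm(x)$, the two bounds $|A\setminus A\gamma|\le|\gamma|_{S_H}|\partial_{S_H}A|$ and $|A\setminus A\gamma|\le|A|$, and the pointwise inequality $\min(t,\rho)\le\frac{\rho}{\varphi(\rho)}\varphi(t)$, which is indeed exactly where the two monotonicity hypotheses on $\varphi$ are consumed. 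Modulo the routine bookkeeping you flag (symmetrized generating sets, left versus right boundaries, near-optimal $A$ and $\varphi(t/2)\ge\varphi(t)/2$), this correctly yields $m(U_A)=|A|\le n$ and $m(U_A)/m(\partial_{S_G}U_A)\ge\varphi(I_H(n))/2C_\varphi$ in the measured sense.

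The gap is in your last paragraph, and it is not merely a delicacy: the Markov/averaging claim, as stated, does not hold. Let $N(y)=\#\{g\in G: g*y\in X_H\}$, a $G$-invariant multiplicity function viewed on $X_G\cong\Omega/G$; one computes $\int_{X_G}N\,dm=m(X_H)=1$, and since the coupling carries no integrability whatsoever in the $H$-to-$G$ direction ($L^0$), nothing controls $N$ beyond $L^1$. The slice sizes are $F(y):=|B_y|=\sum_{h\in A}N(h^{-1}\cdot y)$, so $\int_{X_G}F\,dm=|A|\le n$ gives only \emph{average} size control, and the entire mass of $U_A$ may sit on slices of size $M\gg n$. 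Markov does give you, separately, that the set $\{y:|\partial B_y|\le 2\epsilon|B_y|\}$ carries at least half the $F$-mass and that $m(\{F>Kn\})<1/K$, but nothing forces a nonempty slice to satisfy both constraints: in the concentration scenario every nonempty good-ratio slice has size $M\gg n$. Such a slice only certifies $I_G(M)\ge\varphi(I_H(n))/C$ at the wrong scale, which is strictly weaker than the required $I_G(Cn)\ge\varphi(I_H(n))/C$, since $\preccurlyeq$ compares the profiles at linearly comparable arguments and $I_G$ is merely non-decreasing. Nor can you chop a large good slice into pieces of size $n$ while keeping its ratio: in $\mathbb{Z}^2$ a square of side $L$ has ratio about $L$, whereas every subset of cardinality $n\ll L^2$ has ratio at most about $\sqrt{n}$.

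So the missing idea is a transfer in which the finite sets attached to points of the base have an \emph{essential-sup} bound on their cardinality (say $\le|A|$), after which your Markov step becomes immediate; securing such a uniform bound is precisely where the proof in \cite{DKLMT} does its real work, and it does not follow from the construction $U_A=A*X_H$ with arbitrary fundamental domains. Be warned also that the tempting repair --- re-choosing $X_H$ so that it meets every $G$-orbit at most once, which would force $N\le 1$ and hence $F\le|A|$ --- is not free: changing the fundamental domain replaces $c$ by a cohomologous cocycle $\alpha(g\cdot x)\,c(g,x)\,\alpha(x)^{-1}$, and the $\varphi$-integrability hypothesis, which is attached to the given fundamental domains, need not survive an uncontrolled transfer function $\alpha$. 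Your paragraphs 1--3 would feed essentially unchanged into a correct completion, but the averaging step as sketched cannot close the argument.
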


As mentioned above, when $G$ and $H$ are non-amenable groups, both their isoperimetric profiles are bounded. Therefore, the latter theorem does not provide any obstruction for non-amenable groups. Our \cref{Th:ProdToFactWithVRP}, however, does: it allows us to extend the condition on the profile to the world of graph products as in the example below.

\begin{Ex}\label{Ex:ExampleUntransvectable}
  Let $G$ and $H$ be the graph products defined in \cref{fig:ExampleUntransvectable}.
  
  By \cite[Theorem 1.9]{DKLMT}, for all $\varepsilon>0$ there exists an $(L^{1/2-\varepsilon},L^0)$-integrable orbit equivalence pairing from $\bZ^2$ to $\bZ$. Therefore by \cref{Prop:CouplageGraphProd}, the groups $G$ and $H$ are $(L^{1/2-\varepsilon},L^0)$-integrably orbit equivalent for all $\varepsilon >0$.
  
  Now assume that we have a $(\varphi,L^0)$-integrable measure equivalence coupling from $G$ to $H$. The \textcolor{Orange}{orange} vertex on the left is untransvectable, therefore \cref{Th:ProdToFactWithVRP} provides us with a vertex group in $G$ that is $(\varphi,L^0)$-integrably measure equivalent to $\bZ^2$, namely a $(\varphi,L^0)$-integrable measure equivalence coupling from $\bZ^2$ to $\bZ$.
  
  Recall that $I_\bZ(x)\simeq x$ and $I_{\bZ^2}(x)\simeq \sqrt{x}$. 
  Applying \cref{Th:ProfiletOE} to the latter coupling gives that the integrability $\varphi$ cannot be better than~$L^{1/2}$. 
\end{Ex}

\begin{figure}[htbp]
    \centering
    \includegraphics[width=0.75\textwidth]{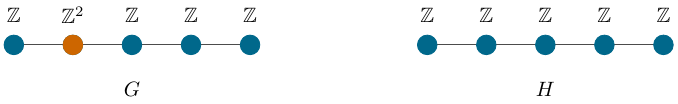}
    \caption{Graph products of \cref{Ex:ExampleUntransvectable}}
    \label{fig:ExampleUntransvectable}
\end{figure}

The next section contains the final tools needed to prove the theorem.

\subsection{Integrability at the vertex level} 
The strategy of the proof of Theorem~\ref{Th:ProdToFactWithVRP} is the following.
In the three cases we will have, inside the coupling $\Omega$, a subcoupling between a vertex group of $G$ and a vertex group of $H$. It will be obtained using the results from \cref{Part:MEClassification} and \cref{lmm:couplage-sommets-groupoides} below.

In the first case, we will be able to obtain such a coupling for all untransvectable vertices using \cref{Sec:AmenalbeUntransvectable}. 
When working with the second case, namely the Vertex Recognition Property, we use \cref{Cor:VRPandGraphMorphism} instead. And the third case relies on \cref{prop:coupling-restricted}. The final tool will be \cref{Lmm:ProdToParabolicsQt}, which provides a way to obtain the wanted integrability of the subcouplings between vertex groups. 

We first start by rewriting \cref{lemma:me-subgroups} in the graph product framework and with the groupoid theoretic vocabulary, as explained in Remark~\ref{Rq:CouplageRestriction}. We refer to \cref{sec:background-groupoids} for background on measured groupoids.

\begin{lemma}[Restating \cref{lemma:me-subgroups}]\label{lmm:couplage-sommets-groupoides} 
Let $G$ and $H$ be two graph products of countably infinite vertex groups, defined over finite simple graphs $\Gamma_G,\Gamma_H$. 
    Let $(\Omega,m,X_G,X_H)$ be a measure equivalence coupling from $G$ to $H$, chosen so that $X_G\cap X_H$ has positive measure. Let $\calg$ be the associated measured groupoid over $X_G\cap X_H$ and $\rho_G$ (resp.\ $\rho_H$) the corresponding cocycles. Let $v\in V\Gamma_G$ and $w\in V\Gamma_H$. Assume that there exists a positive measure Borel subset $U\subseteq X_G\cap X_H$ such that \[\rho_G^{-1}(G_v)_{|U}=\rho_H^{-1}(H_w)_{|U}.\]

    Then there exist a $(G_v\times H_w)$-invariant Borel subspace $\Omega'\subseteq\Omega$, and Borel fundamental domains $X_v,X_w$ for the actions of $G_v,H_w$ on $\Omega'$, such that
    \begin{enumerate}
        \item $(\Omega',m,X_v,X_w)$ is a measure equivalence coupling from $G_v$ to $H_w$, and
        \item $X_v,X_w$ are contained in fundamental domains for the respective actions of $G,H$ on $\Omega$.
    \end{enumerate}
\end{lemma}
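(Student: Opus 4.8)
The statement is labelled as a restating of \cref{lemma:me-subgroups}, and Remark~\ref{Rq:CouplageRestriction} already explains the dictionary between the hypotheses of the two lemmas. So the plan is essentially to reduce the present statement to \cref{lemma:me-subgroups} by verifying that the single hypothesis $\rho_G^{-1}(G_v)_{|U}=\rho_H^{-1}(H_w)_{|U}$ translates into the two cocycle conditions required there, and then to read off the two numbered conclusions from the output of that lemma.

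\textbf{Setting up the translation.} First I would recall, following the second point of \cref{Ex:GroupoidsAssociatedToCouplings} and \cref{Ex:CocycleGrAssociatedToAME}, that the measured groupoid $\calg$ over $X:=X_G\cap X_H$ comes with the two natural cocycles $\rho_G:\calg\to G$ and $\rho_H:\calg\to H$, and that these are built from the measure equivalence cocycles $c_1:G\times X_H\to H$ and $c_2:H\times X_G\to G$ via the identification $(g,x)\mapsto(c_1(g,x),x)$. Applying \cref{lemma:me-subgroups} with $M=G_v$ and $N=H_w$, the hypothesis to check is precisely the pair of bullet conditions in that lemma: for a.e.\ $x\in U$ and every $g\in G_v$ with $gx\in U$ one has $c_1(g,x)\in H_w$, and symmetrically for $c_2$. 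By Remark~\ref{Rq:CouplageRestriction}, these two conditions are together equivalent to the single groupoid equality $\rho_G^{-1}(G_v)_{|U}=\rho_H^{-1}(H_w)_{|U}$, which is exactly what we are assuming. I would spell this equivalence out in one or two sentences: an element of $\rho_G^{-1}(G_v)_{|U}$ is a groupoid element $\grpdg$ with $s(\grpdg),r(\grpdg)\in U$ and $\rho_G(\grpdg)\in G_v$, and the condition $\rho_H(\grpdg)\in H_w$ for all such $\grpdg$ is the translation of $c_1(g,x)\in H_w$, and vice versa.

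\textbf{Invoking the subgroup-restriction lemma and reading off the conclusion.} Once the hypotheses are matched, \cref{lemma:me-subgroups} directly produces a $(G_v\times H_w)$-invariant Borel subspace $\Omega'\subseteq\Omega$ together with Borel fundamental domains $X_v,X_w$ for the $G_v$- and $H_w$-actions on $\Omega'$, such that $(\Omega',m,X_v,X_w)$ is a measure equivalence coupling from $G_v$ to $H_w$ and such that $X_v,X_w$ are contained in fundamental domains for the $G$- and $H$-actions on $\Omega$. This is verbatim the two numbered conclusions of the present statement (with $M\leftrightarrow G_v$, $N\leftrightarrow H_w$, $X_M\leftrightarrow X_v$, $X_N\leftrightarrow X_w$), so the proof is complete upon the translation. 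The one point deserving a line of care is that \cref{lemma:me-subgroups} is stated for arbitrary countable groups $G,H$ and subgroups $M\subseteq G$, $N\subseteq H$; here $G,H$ are graph products with countably infinite vertex groups and $M=G_v$, $N=H_w$ are vertex groups, so all the standing hypotheses of the cited lemma are satisfied with no extra work.

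\textbf{Expected main obstacle.} Since the real content has already been proved in \cref{lemma:me-subgroups}, there is essentially no analytic difficulty; the only thing to be careful about is the bookkeeping of the dictionary. The subtle point is ensuring that the equality of restricted groupoids is interpreted with source \emph{and} range in $U$ (so that both cocycle directions are controlled), which is what yields the symmetric pair of conditions on $c_1$ and $c_2$ rather than just one of them. I would therefore state the equivalence carefully, citing Remark~\ref{Rq:CouplageRestriction}, and then simply apply the lemma; the whole proof is short and should read as ``this is \cref{lemma:me-subgroups} phrased groupoid-theoretically.''
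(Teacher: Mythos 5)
Your proposal is correct and is exactly the paper's own route: the paper gives no separate proof of this lemma, presenting it as \cref{lemma:me-subgroups} rewritten via the dictionary of Remark~\ref{Rq:CouplageRestriction}, with $M=G_v$, $N=H_w$, which is precisely your reduction. The translation you spell out (the groupoid equality over $U$ being equivalent to the symmetric pair of cocycle conditions on $c_1$ and $c_2$) is the only content, and you have it right.
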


The following lemma, contrarily to the previous one, relies on the graph product structure since it makes use of the normal form.

\begin{Lmm}\label{Lmm:ProdToParabolicsQt}
   Let $G$ and $H$ be two graph products over finite simple graphs $\Gamma_G,\Gamma_H$, and let $P_G\subseteq G$ and $P_H \subseteq H$ be two respective parabolic subgroups. 
   Let $(\Omega,m,X_G,X_H)$ be a $(\varphi,\psi)$-integrable measure equivalence coupling from $G$ to $H$. If there exist
   \begin{itemize}
       \item a $(P_G\times P_H)$-invariant Borel subset $\Omega^\prime\subseteq\Omega$, such that $(\Omega^\prime,m,X_{P_G},X_{P_H})$ is a measure equivalence coupling from $P_G$ to $P_H$; 
       \item and $X^\prime_G$, $X^\prime_H$ fundamental domains for the $G$- and $H$-actions respectively, such that $X_{P_G}\subseteq X^\prime_G$ and $X_{P_H}\subseteq X^\prime_H$;
   \end{itemize}
   then $P_G$ and $P_H$ are $(\varphi,\psi)$-integrably measure equivalent over $\Omega^\prime$ (for suitable fundamental domains).
\end{Lmm}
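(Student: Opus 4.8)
I need to show that if the whole coupling $(\Omega,m,X_G,X_H)$ is $(\varphi,\psi)$-integrable from $G$ to $H$, and if I have a sub-coupling $\Omega'$ between the parabolic subgroups $P_G$ and $P_H$ whose fundamental domains $X_{P_G}, X_{P_H}$ embed into (translates of) fundamental domains $X'_G, X'_H$ of the full groups, then the restricted coupling is itself $(\varphi,\psi)$-integrable. The plan is to compare the measure equivalence cocycles of the sub-coupling with those of the ambient coupling, and to exploit the key point that elements of $P_G, P_H$ behave well with respect to word length inside the graph product — this is exactly where Green's normal form theorem, in the form of Lemma~\ref{Lmm:LengthAndNormalForm}, enters.

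**Main steps.** First I would set up notation: let $c:G\times X_H\to H$ and $c':H\times X_G\to G$ be the ambient measure equivalence cocycles, and let $c_P:P_G\times X_{P_H}\to P_H$ and $c'_P:P_H\times X_{P_G}\to P_G$ be the cocycles of the restricted coupling $(\Omega',m,X_{P_G},X_{P_H})$. The crucial observation is that for $g\in P_G$ and a.e.\ $x\in X_{P_H}$, the element $c_P(g,x)$ is the unique element of $P_H$ bringing $g*x$ back to $X_{P_H}$; since $X_{P_H}\subseteq X'_H$ (a fundamental domain for the full $H$-action) and $c(g,x)$ is the unique element of $H$ bringing $g*x$ back to $X'_H$, the two cocycle values agree: $c_P(g,x)=c(g,x)$ for a.e.\ $x\in X_{P_H}$, where I identify $c$ with the cocycle relative to the fundamental domain $X'_H$ (legitimate up to passing to cohomologous cocycles, by Remark~\ref{Rq:CohomologousCocyclesFundamentalDomains}). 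Here I use that the fundamental domains are chosen so $X_{P_H}$ sits inside $X'_H$, so that landing in $X_{P_H}$ via $P_H$ is consistent with landing in $X'_H$ via $H$. Symmetrically $c'_P(h,x)=c'(h,x)$ for $h\in P_H$, a.e.\ $x\in X_{P_G}$.

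**The integrability estimate.** Once the cocycle identification is in place, the estimate is almost immediate. Fixing finite generating sets $S_{P_G}\subseteq S_G$ and $S_{P_H}\subseteq S_H$ compatible with the inclusions of vertex generating sets, I would compare word lengths: for $h=c_P(g,x)\in P_H$ I need $|h|_{S_{P_H}}$ controlled by $|h|_{S_H}$. Since $P_H$ is a parabolic subgroup, its own generating set is a sub-collection of the vertex generating sets, and by Lemma~\ref{Lmm:LengthAndNormalForm} (or the convexity of parabolic subgroups recorded via Green's normal form, $|g|_S=|\underline{\mathsf{w}}|_S$ for a reduced word) one has $|h|_{S_{P_H}}=|h|_{S_H}$ for $h\in P_H$ — the word length of a parabolic element computed in the ambient group equals its word length computed inside the parabolic, because parabolic subgroups are \emph{convex} (undistorted) in graph products. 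Therefore
\[
\int_{X_{P_H}}\varphi\bigl(|c_P(g,x)|_{S_{P_H}}\bigr)\,dm(x)
=\int_{X_{P_H}}\varphi\bigl(|c(g,x)|_{S_H}\bigr)\,dm(x)
\le \int_{X_H}\varphi\bigl(|c(g,x)|_{S_H}\bigr)\,dm(x)<+\infty,
\]
the middle inequality because $X_{P_H}\subseteq X'_H$ has finite measure and $\varphi\ge 0$, and the final finiteness being the $(\varphi,\psi)$-integrability hypothesis for the ambient coupling (relative to $X'_H$). The symmetric computation with $\psi$, $c'_P$ and $X_{P_G}\subseteq X'_G$ handles the other cocycle. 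This shows $(\Omega',m,X_{P_G},X_{P_H})$ is $(\varphi,\psi)$-integrable from $P_G$ to $P_H$.

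**Anticipated obstacle.** The main technical point — and the step I would be most careful about — is the identification $c_P(g,x)=c(g,x)$, which rests on the precise relationship between the fundamental domains $X_{P_H}$ and $X'_H$. A priori the unique $H$-element returning $g*x$ to $X'_H$ need only \emph{a posteriori} lie in $P_H$; I must verify, using the $(P_G\times P_H)$-invariance of $\Omega'$ together with $g\in P_G$, that $g*x$ stays in $\Omega'$ and that its $H$-return point coincides with its $P_H$-return point. Concretely, since $\Omega'$ is $P_H$-invariant and $g*x\in\Omega'$, the full $H$-orbit of $g*x$ meets $X'_H$ in a point of $\Omega'\cap X'_H$, and one must argue this point lies in $X_{P_H}$ and is reached by an element of $P_H$; this is where the hypothesis $X_{P_H}\subseteq X'_H$ combined with the fact that $X_{P_H}$ is a fundamental domain for $P_H$ on $\Omega'$ does the work, possibly after adjusting by a cohomology (changing the representative fundamental domain). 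Equivalently, one may phrase the whole comparison in the groupoid language of Remark~\ref{Rq:CouplageRestriction}: the groupoid $\rho_H^{-1}(P_H)$ restricted to the relevant base set is the groupoid of the sub-coupling, and the cocycles literally restrict. The convexity/undistortion of parabolics is standard and cited to Green, so the only genuinely delicate bookkeeping is matching up the domains and verifying the cocycles agree on the nose rather than merely up to bounded error.
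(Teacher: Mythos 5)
There is a genuine gap, and it sits exactly where you flagged your ``anticipated obstacle.'' Your identification $c_P(g,x)=c'(g,x)$, where $c'$ denotes the cocycle relative to the fundamental domain $X'_H$, is correct (the paper makes the same observation: since $X_{P_H}\subseteq X'_H$, one has $c'(P_G\times X_{P_H})\subseteq P_H$). The problem is your final finiteness claim, which you yourself state as ``the $(\varphi,\psi)$-integrability hypothesis for the ambient coupling (relative to $X'_H$).'' That is not the hypothesis: the lemma assumes integrability of the cocycles relative to the \emph{original} fundamental domains $X_G,X_H$, whereas $X'_G,X'_H$ are merely some fundamental domains containing $X_{P_G},X_{P_H}$. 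Passing from $X_H$ to $X'_H$ replaces $c$ by a cohomologous cocycle $c'(g,x')=\alpha(g\cdot x)c(g,x)\alpha(x)^{-1}$ for a measurable transfer function $\alpha:X_H\to H$ which can be completely wild, and cohomologous cocycles need not have comparable integrability. This is precisely the asymmetry the paper isolates in \cref{Rq:AsymmetryOEMEQt}: knowing a coupling is $(\varphi,\psi)$-integrable for one pair of fundamental domains does not let you transfer that integrability to another pair. So invoking \cref{Rq:CohomologousCocyclesFundamentalDomains} legitimizes the \emph{identification} of cocycles up to cohomology, but not the \emph{estimate}; as written, your chain of (in)equalities bounds the restricted cocycle by a quantity that is not known to be finite.

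The paper's proof exists to close exactly this gap, and it does so with a construction you do not have. Instead of working with $X'_H$, it builds a third fundamental domain $X''_H=\{\vartheta(\alpha(x))\cdot x : x\in X_H\}$, where $\vartheta(h)$ is the minimal-length representative of the right coset $P_H h$ (equivalently, the representative whose head contains no element of $P_H$). The associated cocycle
\[
c''_1(g,x'')=\vartheta\bigl(\alpha(g\cdot x)\bigr)\,c(g,x)\,\vartheta\bigl(\alpha(x)\bigr)^{-1}
\]
is still $P_H$-valued on $P_G\times X''_{P_H}$, and — this is the real role of \cref{Lmm:LengthAndNormalForm}, which your proof uses only for the easy undistortion statement $|h|_{S_{P_H}}=|h|_{S_H}$ — since the conjugating elements $\vartheta(\cdot)$ have no head in $P_H$, the syllables of the $P_H$-element $c''_1(g,x'')$ survive in a reduced word for $c(g,x)$, giving the pointwise bound $|c''_1(g,x'')|_{S_H}\le |c(g,x)|_{S_H}$. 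This compares the restricted cocycle against the \emph{original} cocycle $c$ relative to $X_H$, whose $\varphi$-integrability is actually assumed, and the lemma follows. Without some such device for choosing the cohomology representative with controlled word length, the argument cannot be completed.
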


Recall that when $(\Omega,m,X_G,X_H)$ is a measure equivalence coupling from $G$ to $H$, we have an induced action of $G$ on $X_H$ and an induced action of $H$ on $X_G$; both actions are denoted by “$\cdot$” (see \cpageref{Nota:ActionDomFond} and \cref{fig:Orbites}, \cpageref{fig:Orbites}). Recall also that given a fundamental domain $X_H$ one can identify $\Omega/H$ with $X_H$ and --~through this identification~-- changing the fundamental domain amounts to conjugating the cocycles and vice versa (see \cref{Rq:CohomologousCocyclesFundamentalDomains}, \cpageref{Rq:CohomologousCocyclesFundamentalDomains}).

\begin{proof}
Up to replacing $\Omega'$ and the fundamental domains by translates, we can (and will) assume without loss of generality that $P_G=G_\Upsilon$ and $P_H=H_\Lambda$ for some induced subgraphs $\Upsilon\subseteq\Gamma_G$ and $\Lambda\subseteq\Gamma_H$.

    We will prove that $\Omega^\prime$ is $(\varphi,L^0)$-integrable (and thus exhibit an appropriate fundamental domain). By symmetry (i.e.\ interchanging the roles of $G$ and $H$), this will be enough to derive that $\Omega^\prime$ is also $(L^0,\psi)$-integrable (again, for an appropriate fundamental domain), and the lemma will follow.

    If $x$ belongs to $X_H$ we will denote by $x^\prime$ the unique element of $X^\prime_H$ in the same $H$-orbit as $x$.
    Denote by $c$ and $c^\prime$ the measure equivalence cocycles associated to the fundamental domains $X_H$ and $X^\prime_H$ respectively, namely
    \begin{align*}
        c:G\times X_H &\rightarrow H,\\
        c^\prime:G\times X^\prime_H &\rightarrow H.
    \end{align*}
    By assumption, $c$ is $\varphi$-integrable and since $X_{P_H}\subseteq X^\prime_H$ we have $c^\prime(P_G\times X_{P_H})\subseteq P_H$. Moreover, since $X^\prime_H$ and $X_H$ are two fundamental domains for the same $H$-action, the above two cocycles are cohomologous. That is to say, there exists $\alpha:X_H\rightarrow H$ such that $c^\prime(g,x^\prime)=\alpha(g\cdot x)c(g,x)\alpha(x)^{-1}$ for all $g\in G$ and a.e. $x\in X_H$. 

    Now let $\vartheta:H\to H$ be defined by letting $\vartheta(h)$ be the element with the smallest word length such that $P_H\vartheta(h)=P_Hh$. Equivalently $\vartheta(h)$ is the unique representative of the coset $P_Hh$ whose head (see \cpageref{Def:Tail} for the definition) contains no element in $P_H$. By definition $\vartheta$ is constant on all right cosets of $P_H$. Now let 
    \begin{equation*}
        X^{\prime \prime}_{H}:=\{ \vartheta\big(\alpha(x)\big) \cdot x \ : \ x\in X_{H}\}.
    \end{equation*}
    As above, if $x$ belongs to $X_H$ we denote by $x^{\prime \prime}$ the element of $X^{\prime \prime}_H$ corresponding to the same $H$-orbit. We denote by $X^{\prime \prime}_{P_H}$ the image of $X_{P_H}$ under the identification $x^\prime \mapsto x^{\prime \prime}$. Let us prove that $X^{\prime \prime}_{P_H}$ is the appropriate fundamental domain.
  
    For all $g\in P_G$ and a.e. $x\in X_H$ we define 
    \begin{equation*}
    \cpp (g,x^{\prime \prime}) 
    = \vartheta \left(\alpha(g\cdot x)\right)
    c(g,x) 
    \vartheta \big(\alpha(x)\big)^{-1}.
    \end{equation*}
    Remark that by definition of $\vartheta$ this cocycle $\cpp$ is $P_H$-cohomologous to $c^\prime$ (through the identification $x^\prime \mapsto x^{\prime \prime}$). In particular, its restriction to $P_G\times X^{\prime\prime}_{P_H}$ takes its values in $P_H$ and also defines a measure equivalence cocycle on $\Omega^\prime$ (namely, the measure equivalence cocycle associated to the fundamental domain $X^{\prime \prime}_{P_H}$). 
    
    Now, for all $v\in V\Gamma_H$ let $S_{H_v}$ be a finite generating set of $H_v$ and recall that $S_H=\cup_{v\in V\Gamma_H}S_{H_v}$ generates $H$. We also let $S_{P_H}:=\cup_{v\in V\Lambda}S_{H_v}$, where $\Lambda\subseteq\Gamma_H$ is the type of $P_H$ -- notice that $S_{P_H}$ is a finite generating set of $P_H$. Our definition of $\vartheta$, together with \cref{Lmm:LengthAndNormalForm}, \cpageref{Lmm:LengthAndNormalForm} imply that for a.e.\ $x^{\prime\prime} \in X^{\prime\prime}_{P_H}$ we have 
    \begin{equation*}
        \left\vert \cpp (g,x^{\prime \prime})\right\vert_{S_{P_H}}
        =\left\vert \cpp (g,x^{\prime \prime})\right\vert_{S_H}
        \leq |c(g,x)|_{S_H}.
    \end{equation*}
    %$$.
    Hence $\cpp$ is a $\varphi$-integrable cocycle since $c$ is itself $\varphi$-integrable, as wanted.
\end{proof}

\subsection{Proof of the main theorem}\label{subsec:ProofQtTh}
We now turn to the proof of \cref{Th:ProdToFactWithVRP}.

\begin{proof}[Proof of \cref{Th:ProdToFactWithVRP}]
  Let $(\Omega,m,X_G,X_H)$ be a $(\varphi,\psi)$-integrable measure equivalence coupling from $G$ to $H$, and let $c_1:G\times X_H\to H$ and $c_2:H\times X_G \rightarrow G$ be the two associated measure equivalence cocycles. Up to replacing $X_H$ by $hX_H$ for some $h\in H$, we can (and will) assume that $\mu(X_G\cap X_H)>0$ --~notice indeed that this changes the measure equivalence cocycle from $G$ to $H$ by a conjugation by $h$, and therefore does not affect its integrability. Let $X=X_G\cap X_H$.
    
  Let $\calg$ be the measured groupoid over $X$ given by the restriction of the $G$- and $H$-actions on $X_H$ and $X_G$ respectively (see \cref{Ex:GroupoidsAssociatedToCouplings}, \cpageref{Ex:GroupoidsAssociatedToCouplings}). It is naturally equipped with two cocycles $\rho_G:\calg\to G$ and $\rho_H:\calg\to H$, which are action-like (Lemma~\ref{lemma:action-like}, \cpageref{lemma:action-like}). We now distinguish three different cases corresponding to the three different sets of hypotheses in our theorem.
  
  \begin{description}
    \item[Case 1] If for all $v\in V\Gamma_G$ and all $w\in V\Gamma_H$ the vertex groups $G_v$ and $H_w$ are amenable then \cref{prop:untransvectable}, \cpageref{prop:untransvectable} ensures that for every untransvectable vertex $v\in V\Gamma_G$ there exist $\sigma(v)\in V\Gamma_H$, $h\in H$ and a Borel subset $U\subseteq X_G\cap X_H$ of positive measure such that
    \begin{equation*}
    \rho^{-1}_G(G_v)_{|U}=\rho^{-1}_H(hH_{\sigma(v)}h^{-1})_{|U}.
    \end{equation*} 
    Considering now $H^\prime$ the graph product over $\Gamma_H$ with vertex groups $\{h H_v h^{-1}\}_{v\in V\Gamma_H}$, we can apply \cref{lmm:couplage-sommets-groupoides} to $G$ and $H^\prime$ and obtain a measure equivalence coupling~$\Omega^\prime$ from $G_v$ to $hH_{\sigma(v)}h^{-1}$ verifying the hypotheses of \cref{Lmm:ProdToParabolicsQt}. The latter lemma gives the $(\varphi,\psi)$-integrability of~$\Omega^\prime$, for well-chosen fundamental domains.
    \item[Case 2] If $G$ and $H$ both belong to a class of groups $\classC$ having the Vertex Recognition Property then the proof of \cref{Cor:VRPandGraphMorphism}, \cpageref{Cor:VRPandGraphMorphism} ensures that there exists a graph homomorphism $\sigma:\Gamma_G\rightarrow \Gamma_H$ such that \emph{for all} $v\in V\Gamma_G$, the groups $G_v$ and $H_{\sigma(v)}$ verify the hypothesis of \cref{lmm:couplage-sommets-groupoides}. And therefore, as above there exists a $(\varphi,\psi)$-integrable coupling from $G_v$ to $H_{\sigma(v)}$.
    \item[Case 3] If we assume that $\Gamma_G$ and $\Gamma_H$ are transvection-free graphs with no partial conjugation, then \cref{prop:coupling-restricted,rk:requirement}
    ensure that there exist a graph isomorphism $\sigma:\Gamma_G\rightarrow \Gamma_H$, and for all $v\in V\Gamma_G$ a Borel subset $\Omega_v\subseteq \Omega$ which is a measure equivalence coupling from $G_v$ to $H_{\sigma(v)}$ and satisfy the hypotheses of \cref{Lmm:ProdToParabolicsQt}. The latter result then gives the desired integrability.\qedhere
  \end{description}
\end{proof}   
 
% ----------------------------
\section{The inverse problem for right-angled Artin groups}
\label{Sec:InverseProblem}
The goal of this section is to prove \cref{Th:PrescribedCouplingRAAG}, \cpageref{Th:PrescribedCouplingRAAG} which answers the inverse problem for right-angled Artin groups, namely: given a right-angled Artin group $H$ and a quantification $\varphi$, can we find a group $G$ and a measure equivalence coupling which is $(\varphi,L^0)$-integrable from $G$ to $H$? 

As discussed in the introduction (see \cref{Sec:OptimIPIntro}, \cpageref{Sec:OptimIPIntro}), we also ask this coupling to be optimal (or close to be). The optimality here comes from the \emph{geometry} of the involved groups, more precisely from the obstruction given by the isoperimetric profile of the vertex groups (see \cref{sec:ApplicationOfProdToFact}).

%============================================================
\subsection{Main theorem}
The following restates Theorem~\ref{Th:PrescribedCouplingRAAG} from the introduction.

\begin{Th6}%\label{Th:PrescribedcouplingRAAGZd}
    Let $\Gamma$ be a finite simple graph that contains an untransvectable vertex 
    and let $\rho$ be a non-decreasing map such that $x\mapsto x/\rho(x)$ is non-decreasing. If $H$ is a right-angled Artin group over $\Gamma$ then there exists a group $G$ and an orbit equivalence pairing from $G$ to $H$ which is a $(\varphi_\varepsilon,L^0)$-integrable for all $\varepsilon>0$, where 
    \begin{equation*}
        \varphi_{\varepsilon}(x):=\frac{\rho\circ \log(x)}{\left( \log\circ\rho\circ\log(x) \right)^{1+\varepsilon}}.
    \end{equation*}
    Moreover, this coupling is optimal up to a log factor in the following sense: if there exists a $(\varphi,L^0)$-integrable measure equivalence coupling from $G$ to $H$, then $\varphi\preccurlyeq \rho \circ \log$.
\end{Th6}

\paragraph{Discussion on the hypotheses}

First remark that the hypotheses on $\rho$ lead us to deal with quantifications having sub-logarithmic growth. On the other hand, Huang and the second-named author showed a superrigidity phenomenon under $(L^1,L^0)$-measure equivalence \cite[Theorem 1]{HH-L1}. More precisely, they showed that if $G$ is a right-angled Artin group with finite outer automorphism group and $H$ is any countable group with bounded torsion such that there exists an $(L^1,L^0)$-measure equivalence coupling from $H$ to $G$, then $H$ is finitely generated and quasi-isometric to $G$. In some cases this implies that $G$ and $H$ are commensurable \cite{Hua-QI}. We thus have here two completely different behaviors: a lot of flexibility for sub-logarithmic quantification versus superrigidity under $L^1$-integrability.

Second, we mention that one could extend the above theorem when $H$ is a graph product over $\Gamma$ of non-trivial finitely generated free abelian groups up to adding some assumptions on the map $\varphi_\varepsilon$ (concavity) as in \cite[Corollary 1.8]{Esc1}. 

On the other hand, we do not know whether the theorem holds when $H$ is a non-abelian free group, and more generally we cannot treat the case where $H$ splits as a graph product of non-abelian free groups. 

\subsection{\texorpdfstring{Proof of \cref{Th:PrescribedCouplingRAAG}}{Proof of the theorem on prescribed couplings}} 
The proof relies on two main tools: the resolution of the above inverse problem for $H=\bZ$ (or equivalently, when $\Gamma$ is reduced to one vertex) already established in \cite{Esc1}, and the results specific to graph products developed in the present article, namely \cref{Prop:CouplageGraphProd,Th:ProdToFactWithVRP} (\cpageref{Prop:CouplageGraphProd,Th:ProdToFactWithVRP}).

We start by recalling the known results for the case of $\Gamma$ reduced to one vertex and then turn to our proof of the above \cref{Th:PrescribedCouplingRAAG}.

\paragraph{Background on the inverse problem}
When $H=\bZ$ (or equivalently, when $\Gamma$ is reduced to one vertex) building couplings with prescribed integrability with $H$ has been done by the first named author in \cite[Theorem~1.7]{Esc1}, which we recall below. The optimality is obtained in this case using the obstruction given in \cref{Th:ProfiletOE}, which relies on the isoperimetric profile (see \cref{sec:ApplicationOfProdToFact} for the definition of the profile $I_G$).

\begin{Th}[Escalier \cite{Esc1}]\label{Th:CouplageOptimZ}
  For all non-decreasing maps $\rho$ such that $x\mapsto x/\rho(x)$ is non-decreasing, there exists a group $\Delta$ such that
  \begin{itemize}
  \item $I_{\Delta}\simeq {\rho}\circ \log$;
  \item There exists an orbit equivalence pairing that is $(\varphi_{\varepsilon},\exp \circ \rho)$-integrable from $\Delta$ to $\bZ$ for all 
    $\varepsilon>0$, where $\varphi_{\varepsilon}(x):={\rho\circ
      \log(x)}/{\left( \log\circ\rho\circ\log(x) \right)^{1+\varepsilon}}$.
  \end{itemize}
\end{Th}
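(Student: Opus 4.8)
The plan is to realize $\Delta$ as a lamplighter-type group of Brieussel--Zheng diagonal-product type, and then to orbit-equivalence it to $\bZ$ through an explicit Følner tiling sequence, reading off the two integrabilities from the geometry of the tiles. The hypotheses on $\rho$ (non-decreasing, with $x\mapsto x/\rho(x)$ non-decreasing) guarantee that $\rho\circ\log$ lies in the range $[\mathrm{const},\log]$ of isoperimetric profiles realized by such groups: indeed $x/\rho(x)$ non-decreasing forces $\rho$ to be sublinear, so $\rho(\log x)\preccurlyeq\log x$, while $\rho\to+\infty$ (which we may assume, since otherwise no amenable group has the required profile) guarantees that $\Delta$ is amenable. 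First I would take $\Delta$ to be a diagonal product of copies of the lamplighter $(\bZ/2\bZ)\wr\bZ$, with the range and mixing parameters tuned so that the explicit Følner sets $T_1\subseteq T_2\subseteq\cdots$ of the construction satisfy $|T_k|/|\partial_{S_\Delta}T_k|\simeq\rho(\log|T_k|)$. The Brieussel--Zheng estimates for diagonal products then give $I_\Delta\simeq\rho\circ\log$, which is the first bullet. One records along the way that the $T_k$ form a nested Følner \emph{tiling} sequence: up to a negligible boundary layer, $T_{k+1}$ is tiled by left translates of $T_k$.

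Second, I would build the coupling. Being amenable, $\Delta$ is orbit equivalent to $\bZ$ by Ornstein--Weiss; the point is to produce a \emph{quantitative} such pairing from the tiling sequence. Concretely I would realize both actions on a common standard probability space $(X,\mu)$ by an adic/odometer construction: a point of $X$ is a compatible sequence of positions inside the nested tiles, the $\Delta$-action permutes positions inside tiles by left translation, and the $\bZ$-action is the successor map of a bi-infinite enumeration threaded through the tiling (the lexicographic order on the tower of tiles). By construction the two actions share the same orbits on a conull set, so $(X,\mu)$ is an orbit equivalence pairing from $\Delta$ to $\bZ$, with associated cocycles $c:\Delta\times X\to\bZ$ and $c':\bZ\times X\to\Delta$.

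Third comes the integrability bookkeeping, where the precise functions $\varphi_\varepsilon$ and $\exp\circ\rho$ appear. For $c'$: a generator of $\bZ$ sends a point to its successor in the enumeration, which lies in the same scale-$k$ tile except on a set of measure comparable to $|\partial T_{k-1}|/|T_k|$; on this exceptional set the $\Delta$-displacement is at most the $S_\Delta$-diameter of $T_k$, and summing $(\exp\circ\rho)(\mathrm{diam}\,T_k)$ against these masses over all scales yields $\int_X(\exp\circ\rho)(|c'(1,x)|)\,d\mu<+\infty$ once the tile sizes are chosen to grow fast enough. For $c$: a generator of $\Delta$ displaces a point by a bounded amount in the $\Delta$-geometry, so its $\bZ$-cocycle at scale $k$ is controlled by the number of scale-$k$ tiles it crosses; the Følner property bounds the measure of points lying within one generator-step of a scale-$k$ boundary by $\simeq 1/\rho(\log|T_k|)$, and the resulting series converges precisely thanks to the $(\log\circ\rho\circ\log)^{1+\varepsilon}$ denominator built into $\varphi_\varepsilon$. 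This reproduces the sharp thresholds for the target $H=\bZ$, and the obstruction $\varphi\circ I_{\bZ}\preccurlyeq I_\Delta$ of Theorem~\ref{Th:ProfiletOE} (with $I_\bZ\simeq\mathrm{id}$ and $I_\Delta\simeq\rho\circ\log$) confirms that no integrability much beyond $\rho\circ\log$ is attainable, so the coupling is optimal up to the logarithmic corrections.

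The main obstacle is the delicate matching of exponents in the third step. Getting exactly $\varphi_\varepsilon(x)=\rho\log x/(\log\rho\log x)^{1+\varepsilon}$ on one side and $\exp\circ\rho$ on the other requires choosing the tile sizes, and estimating both $|\partial T_k|/|T_k|$ and $\mathrm{diam}(T_k)$, so that two competing series (boundary-proportion sums weighted against the prescribed functions) both converge at the sharpest possible rate; the barely-convergent $(1+\varepsilon)$ correction is precisely the residue of this optimization, and controlling it demands tight two-sided control on the geometry of the $T_k$ throughout the diagonal-product construction.
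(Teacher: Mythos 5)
The paper offers no proof of this statement: it is quoted verbatim from Escalier \cite[Theorem~1.7]{Esc1}, the only in-paper indication of its proof being the remark that $\Delta$ is a diagonal product of lamplighter groups in the sense of Brieussel--Zheng \cite{BZ}. Your reconstruction follows essentially the route of that cited proof — a Brieussel--Zheng diagonal product with isoperimetric profile $\rho\circ\log$, coupled to $\bZ$ through a Følner tiling sequence in the style of \cite{DKLMT}, with the $(1+\varepsilon)$ loss arising from a barely convergent series over scales — so it matches the source in approach, modulo the harmless imprecision that the lamp groups in the Brieussel--Zheng construction are varying finite groups (e.g.\ dihedral or expander quotients) tuned to prescribe the profile, rather than fixed copies making each factor $(\bZ/2\bZ)\wr\bZ$.
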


We mention that the group $\Delta$ given by the above theorem, is a diagonal product of lamplighter groups as defined by Brieussel-Zheng in \cite{BZ}.

\begin{Claim}\label{Claim:OptimaliteCouplageZd} 
    The coupling given by the above theorem is optimal up to a log factor in the sense of \cref{Th:PrescribedCouplingRAAG}, namely: for every non-decreasing map $\varphi$ if there exists a $(\varphi,L^0)$-integrable measure equivalence coupling from $\Delta$ to $\bZ$ then $\varphi \preccurlyeq I_{\Delta}\simeq \rho\circ\log$.
\end{Claim}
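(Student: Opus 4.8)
The plan is to deduce the claim from the monotonicity of the isoperimetric profile under quantitative measure equivalence (\cref{Th:ProfiletOE}), applied to the coupling from $\Delta$ to $\bZ$, combined with the two profile computations $I_{\bZ}(n)\simeq n$ and $I_{\Delta}\simeq\rho\circ\log$. The second comparison is exactly the first conclusion of \cref{Th:CouplageOptimZ}, while $I_{\bZ}(n)\simeq n$ is elementary: a finite subset of $\bZ$ can always be compared to an interval $[1,n]$, whose boundary has size at most $2$, so that $I_{\bZ}(n)\geq n/2$, and of course $I_{\bZ}(n)\leq n$. In particular $\mathrm{id}\preccurlyeq I_{\bZ}$, which is the inequality we will use.

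Concretely, assume first that $\varphi$ is non-decreasing, that $t\mapsto t/\varphi(t)$ is non-decreasing, and that there exists a $(\varphi,L^0)$-integrable measure equivalence coupling from $\Delta$ to $\bZ$. Then \cref{Th:ProfiletOE}, applied with $G=\Delta$ and $H=\bZ$, yields $\varphi\circ I_{\bZ}\preccurlyeq I_{\Delta}$. Since $I_{\bZ}(n)\geq n/2$ and $\varphi$ is non-decreasing, we obtain $\varphi(n/2)\leq\varphi\bigl(I_{\bZ}(n)\bigr)=\mathcal{O}\bigl(I_{\Delta}(Cn)\bigr)$ for some $C>0$; the change of variable $m=n/2$ then gives $\varphi(m)=\mathcal{O}\bigl(I_{\Delta}(2Cm)\bigr)$, that is $\varphi\preccurlyeq I_{\Delta}\simeq\rho\circ\log$, as claimed. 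So under the extra regularity assumption on $\varphi$ the proof is immediate.

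The delicate point, and the step I expect to be the main obstacle, is removing that extra assumption: \cref{Th:ProfiletOE} requires both $\varphi$ and $t\mapsto t/\varphi(t)$ to be non-decreasing, whereas the claim allows an arbitrary non-decreasing $\varphi$. I would argue by contraposition and a regularization. Suppose $\varphi\not\preccurlyeq\rho\circ\log$; the goal is to produce an \emph{admissible} minorant $\varphi'\leq\varphi$ (one for which both $\varphi'$ and $t\mapsto t/\varphi'(t)$ are non-decreasing) that still satisfies $\varphi'\not\preccurlyeq\rho\circ\log$. Since any $(\varphi,L^0)$-integrable coupling is a fortiori $(\varphi',L^0)$-integrable, the previous paragraph applied to $\varphi'$ would force $\varphi'\preccurlyeq\rho\circ\log$, a contradiction, showing that no such coupling can exist when $\varphi\not\preccurlyeq\rho\circ\log$. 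The existence of $\varphi'$ leans on the fact that the target $\rho\circ\log$ is sublinear, with $t/(\rho\circ\log)(t)\to+\infty$ (here the hypothesis that $t\mapsto t/\rho(t)$ be non-decreasing enters). When $\varphi$ dominates a linear function one may simply take $\varphi'=\mathrm{id}$, which is admissible, and \cref{Th:ProfiletOE} then instantly produces the absurdity $n\preccurlyeq\rho\circ\log$. The genuinely delicate case is when $\varphi$ is sublinear: one passes to the envelope $\varphi'(t)=t/M(t)$ where $M(t)=\sup_{1\leq s\leq t}s/\varphi(s)$ is the least non-decreasing majorant of $t/\varphi(t)$, so that $t/\varphi'(t)=M(t)$ is non-decreasing and $\varphi'\leq\varphi$; the remaining work is to verify that $\varphi'$ is non-decreasing and, crucially, that the regularization does not destroy the property $\varphi'\not\preccurlyeq\rho\circ\log$. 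This last comparison, quantifying how much mass the envelope can shave off near the points where $\varphi$ exceeds $\rho\circ\log$, is the real-analytic heart of the argument and is where I expect most of the effort to go.
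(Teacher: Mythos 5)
Your first paragraph, together with your ``dominates a linear function'' branch, is essentially the paper's entire proof: the paper first argues that $t\mapsto t/\varphi(t)$ must be non-decreasing --- otherwise $\varphi$ admits a linear lower bound, the coupling is then $(L^1,L^0)$-integrable, and \cref{Th:ProfiletOE} applied to the admissible function $\mathrm{id}$ gives $x\simeq I_{\bZ}(x)\preccurlyeq I_{\Delta}(x)\simeq\rho\circ\log(x)$, contradicting that $x/\rho(x)$ is non-decreasing --- and then applies \cref{Th:ProfiletOE} to $\varphi$ itself, exactly as in your first paragraph. So up to that point the two arguments coincide. The difference is that the paper's dichotomy (``$t/\varphi(t)$ non-decreasing, or else $\varphi$ bounded below by a linear map'') is cruder than yours --- its two cases do not literally exhaust all non-decreasing $\varphi$ --- and your envelope $\varphi'(t)=t/M(t)$ is aimed precisely at the functions in between, which the paper never needs to regularize.

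The genuine gap is in that remaining branch, and you name it yourself: the whole contradiction rests on the implication ``$\varphi\not\preccurlyeq\rho\circ\log$ implies $\varphi'\not\preccurlyeq\rho\circ\log$'', which you defer as ``where I expect most of the effort to go'' and never prove; as written, the proposal therefore does not establish the Claim for a general non-decreasing $\varphi$. The good news is that the implication is true, so your scheme can be closed. Since $\varphi$ is non-decreasing and $\geq 1$, for $t\geq t_0$ one has $M(t)\leq\max\bigl(M(t_0),\,t/\varphi(t_0)\bigr)\leq\max\bigl(t_0,\,t/\varphi(t_0)\bigr)$, hence $\varphi'\bigl(t_0\varphi(t_0)\bigr)\geq\varphi(t_0)$: a spike of $\varphi$ above $\rho\circ\log$ at $t_0$ survives in the envelope, merely delayed to $t_0\varphi(t_0)$. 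On the other side, $t/\rho(t)$ non-decreasing yields $\rho(a+b)\leq\rho(a)+\rho(1)b$, so $\rho\bigl(\log(Bt_0\varphi(t_0))\bigr)\leq\rho(\log t_0)+\rho(1)\bigl(\log B+\log\varphi(t_0)\bigr)$, whose last term is $o\bigl(\varphi(t_0)\bigr)$. Consequently, a bound $\varphi'(t)\leq A\rho(\log(Bt))$ for large $t$ would force $\varphi(t_0)\leq 2A\rho(\log t_0)$ whenever $\varphi(t_0)$ is large enough, and this is incompatible with $\varphi\not\preccurlyeq\rho\circ\log$ (note $\varphi$ is then unbounded, bounded functions being trivially $\preccurlyeq\rho\circ\log$). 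Two further remarks: $\varphi'$ is indeed non-decreasing (if $M(t_2)>M(t_1)$ then $\varphi'(t_2)=\inf_{t_1<s\leq t_2}t_2\varphi(s)/s\geq\varphi(t_1)\geq\varphi'(t_1)$, and otherwise it is clear), and once the above lemma is available your preliminary linear/sublinear case distinction becomes superfluous, since the envelope argument handles every non-admissible $\varphi$ at once.
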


\begin{proof}[Proof of the Claim]
    Let $\Delta$ be the group given by the above theorem. Assume that there exists a $(\varphi,L^0)$-integrable measure equivalence coupling from $\Delta$ to $H$, for some non-decreasing map $\varphi$ and let us prove that $\varphi \preccurlyeq I_{\Delta}\simeq \rho\circ\log$.
    \medskip

    \noindent$\bullet$ Let us first prove that $x\mapsto x/\varphi(x)$ is necessarily non-decreasing.\\
    So assume towards a contradiction that $x\mapsto x/\varphi(x)$ is decreasing. Then $\mathrm{id}\preccurlyeq \varphi$ and thus any $\varphi$-integrable cocycle is at least $L^1$-integrable. Therefore the coupling is also $(L^1,L^0)$-integrable from $\Delta$ to $H$. Thus, recalling that $I_{H}(x)\simeq x$ and $I_{\Delta}\simeq \rho \circ \log$ and using \cref{Th:ProfiletOE}, we get that in this case 
    \begin{equation*}
         x \simeq I_{H}(x) \preccurlyeq I_{\Delta}(x) \simeq \rho \circ \log(x)
    \end{equation*}
    But the map $x\mapsto x/\rho(x)$ is non-decreasing, which contradicts the above inequality. Hence the map $x\mapsto x/\varphi(x)$ has to be non-decreasing.
    \medskip
    
    \noindent$\bullet$ Since $t\mapsto t/\varphi(t)$ is non-decreasing, we can now apply \cref{Th:ProfiletOE} and obtain that $\varphi \circ I_{H}\preccurlyeq I_{\Delta}$. 
    Since $I_{H}\simeq \mathrm{id}$, the last inequality becomes $\varphi \preccurlyeq I_{\Delta}\simeq \rho\circ\log$. As wanted.
\end{proof}

\paragraph{The case of graph products} We now turn to the proof of the general case.
The existence of the group $G$ and the pairing are given by \cref{Th:CouplageOptimZ,Prop:CouplageGraphProd}. The optimality is then obtained using \cref{Claim:OptimaliteCouplageZd,Th:ProdToFactWithVRP}.

\begin{proof}[Proof of \cref{Th:PrescribedCouplingRAAG}]
  Let $\Gamma$ be a finite simple graph that contains an untransvectable vertex, denoted by $v$, and $H$ be right-angled Artin group over $\Gamma$. 
  
  \begin{description}
  \item[Construction of the coupling] Let $\rho$ and for all $\varepsilon>0$ let $\varphi_\varepsilon$ be as in the theorem. Then by \cref{Th:CouplageOptimZ} there exists a group $\Delta$ admitting a $(\varphi_\varepsilon,L^0)$-integrable orbit equivalence pairing from $\Delta$ to $\bZ$. 
  Denote by $G$ the graph product over $\Gamma$, with vertex groups $G_v=\Delta$ and $G_w=H_w$ for all $w\neq v$ in $V\Gamma$. Then by \cref{Prop:CouplageGraphProd}, \cpageref{Prop:CouplageGraphProd} there exists a $(\varphi_\varepsilon ,L^0)$-integrable orbit equivalence pairing from $G$ to $H$.
  \item[Optimality] Let $\varphi$ be a non-decreasing map and assume that we have a $(\varphi,L^0)$-integrable measure equivalence coupling from $G$ to $H$. By the first point of  \cref{Th:ProdToFactWithVRP}, \cpageref{Th:ProdToFactWithVRP} there exists $\sigma(v)\in V\Gamma$ and a $(\varphi,L^0)$-integrable measure equivalence coupling from $G_v=\Delta$ to $H_{\sigma(v)}\simeq \bZ$.
  So applying \cref{Claim:OptimaliteCouplageZd} to the latter coupling leads $\varphi \preccurlyeq \rho\circ\log$. Therefore, the $(\varphi_\varepsilon,L^0)$-integrable coupling constructed in the first part of the proof is optimal up to a logarithmic error, as wanted.\qedhere
  \end{description}
\end{proof}

%=================================================================
%=================================================================
%\newpage\part{Rigidity under ergodicity assumptions}
%\label{Part:Superrigidity}
%\input{OE_superrigidity}
\newpage
\part{Rigidity under ergodicity assumptions}
\label{Part:Superrigidity}
The goal of this part is to prove orbit equivalence rigidity theorems for actions of graph products that satisfy certain strong ergodicity assumptions. Our main theorems are Theorem~\ref{theo:strong-rigidity} (giving a rigidity statement among actions of graph products) and Theorem~\ref{theo:superrigidity} (giving a superrigidity statament, comparing two actions $G\actson X$ and $H\actson Y$ where only $G$ is a graph product, while $H$ is an arbitrary countable group). This is in the vein of the work of Monod and Shalom for direct products \cite{MS}, and generalizes the case of right-angled Artin groups \cite{HHI}. 

\section{Rigidity among graph products}\label{sec:rigidity-ergodicity}

\subsection{Some terminology}

We start by recalling some terminology that will be used throughout the section.

\paragraph*{Ergodicity assumptions.}\label{par:ergodicity}

Recall that a measure-preserving action of a countable group $G$ on a standard probability space $(X,\mu)$ is 
\begin{itemize}
\item \emph{weakly mixing}\index{Weakly mixing}\index{Mixing!Weakly mixing} if the diagonal action of $G$ on $X\times X$ is ergodic. 
\item \emph{totally ergodic}\index{Totally ergodic} if every infinite subgroup of $G$ acts ergodically on $X$. 
\item \emph{mildly mixing}\index{Mildly mixing}\index{Mixing!Mildly mixing} if it is ergodic and if for any measurable subset $U\subseteq X$ with $\mu(U),\mu(X\setminus U)>0$, and any sequence of pairwise distinct elements $g_n$ in $G$, one has $\liminf\mu(g_nU\triangle U)>0$. By \cite{SW}, this is equivalent to the following requirement, which is in fact the original definition of mild mixing due to Furstenberg and Weiss \cite{FW}: for every measure-preserving ergodic $G$-action on a standard Borel space $S$ equipped with a non-atomic $\sigma$-finite Borel measure, the diagonal $G$-action on $X\times S$ is ergodic.
\item \emph{strongly mixing} \index{Strongly!Strongly mixing}\index{Mixing!Strongly mixing} if given any two Borel subsets $U,V\subseteq X$, and any sequence $(g_n)_{n\in\mathbb{N}}\in G^{\mathbb{N}}$ of pairwise distinct elements, one has $\lim_{n\to +\infty}\mu(g_nU\cap V)=\mu(U)\mu(V)$.
\end{itemize}
We notice that strongly mixing actions are in particular mildly mixing, that mildly mixing actions of countably infinite groups are weakly mixing, and that weakly mixing actions are ergodic (and totally ergodic actions are also ergodic).

We also mention that if $G$ has a subgroup $H$ which acts weakly mixingly on $X$, then the $G$-action on $X$ is weakly mixing. This is however not true for the total ergodicity, mild mixing and strong mixing conditions.

\paragraph*{(Virtual) conjugation of actions.} 
Two measure-preserving actions $G\actson (X,\mu)$ and $H\actson (Y,\nu)$ on standard probability spaces are \emph{conjugate} if there exist a group isomorphism $\theta:G\to H$ and a measure space isomorphism $f:X\to Y$ such that $f(g \cdot x)=\theta(g)f(x)$ for every $g\in G$ and almost every $x\in X$.

There exists a weaker notion of \emph{virtually conjugate} actions, which takes into account the possibility of passing to finite-index subgroups and taking the quotient by a finite normal subgroup. We now recall the definition, see e.g.\ \cite[Definition~1.3]{Kid-oe}.

First, given a countable group $G$, a finite-index subgroup $H\subseteq G$, and a measure-preserving action of $H$ on a standard probability space $X$, there is a notion of an action of $G$ on a standard finite measure space $\Ind_H^G(X)$, induced from that of $H$, defined as follows. Let $\theta:H\to G$ be the inclusion map. The space $X\times G$ (where $G$ is equipped with the counting measure) is equipped with a measure-preserving action of $H\times G$ given by $(h,g)\cdot (x,g')=(hx,\theta(h)g'g^{-1})$. We then let $\Ind_H^G(X):= H\backslash (X\times G)$: this is naturally a standard finite measure space (isomorphic to $X\times G/H$), which we renormalize to a probability space. The $G$-action on $X\times G$ descends to a probability measure-preserving $G$-action on $\Ind_H^G(X)$.

Notice for future use that, following the notation from \cite[Notation~4.3]{Vae}, we can more generally define the induced action $G\actson \Ind_H^G(X,\theta)$ whenever $\theta:H\to G$ is an injective homomorphism with finite-index image. In the particular case where $\theta$ is an isomorphism, then the $G$-action on $X$ and the $H$-action on $\Ind_H^G(X,\theta)$ are conjugate (via the isomorphism $\theta:H\to G$ and the measure space isomorphism $x\mapsto [x,e]$, where $[x,e]$ denotes the class of $(x,e)$).

Notice also that, if a countable group $G$ has a measure-preserving action on a standard probability space $(X,\mu)$, and if $F_G\unlhd G$ is a finite normal subgroup, then there is a quotient measure-preserving action of $G/F_G\actson (X,\mu)/F_G$, where $(X,\mu)/F_G$ is the space of $F_G$-orbits in $X$, renormalized to a probability measure.

Two measure-preserving actions $G\actson (X,\mu)$ and $H\actson (Y,\nu)$ are \emph{virtually conjugate} if we can find 
\begin{itemize}
\item short exact sequences 
\[1\to F_G\to G\to G_1\to 1 \quad \text{and}\quad 1\to F_H\to H\to H_1 \to 1,\]
where $F_G\unlhd G$ and $F_H\unlhd H$ are finite;
\item finite-index subgroups $G_2\subseteq G_1$ and $H_2\subseteq H_1$;
\item and conjugate measure-preserving actions $G_2\actson (X_2,\mu_2)$ and $H_2\actson (Y_2,\nu_2)$, such that 
\begin{itemize}
    \item the action $G_1\actson (X,\mu)/F_G$ is conjugate to $G_1\actson \Ind_{G_2}^{G_1}(X_2)$, and \smallskip
    \item the action $H_1\actson (X,\mu)/F_H$ is conjugate to $H_1\actson \Ind_{H_2}^{H_1}(X_2)$.
\end{itemize}
\end{itemize}

\subsection{Rigidity of actions within the class of graph products}\label{Sec:StrongRigidity}

We make the following definition.

\begin{Def}
Let $G$ be a graph product of countable groups over a finite simple graph. A free, probability measure preserving action of $G$ on a standard probability space $X$ is \emph{vertexwise ergodic}\index{Vertexwise!Vertexwise ergodic} (resp.\ \emph{vertexwise weakly mixing}\index{Vertexwise!Vertexwise weakly mixing}\index{Weakly mixing!Vertexwise weakly mixing}\index{Mixing!Vertexwise weakly mixing}, resp. \emph{vertexwise totally ergodic}\index{Vertexwise!Vertexwise totally ergodic}) if for every vertex group $G_v$, the action of $G_v$ on $X$ is ergodic (resp.\ weakly mixing, resp.\ totally ergodic).
\end{Def}

When $G$ is a graph product with countably infinite vertex groups, notice that vertexwise weak mixing and vertexwise total ergodicity are satisfied for all strongly mixing actions of $G$ on $X$. Besides Bernoulli actions, this includes for example actions coming from embedding $G$ as a closed subgroup of $\mathrm{SL}(n,\mathbb{R})$ with non-compact image, and using the Howe--Moore property --~see \cite[Theorem~3.2]{HW} for examples of faithful representations of graph products into $\mathrm{SL}(n,\mathbb{Z})$. See more generally \cite[Corollary~B]{BdlN} for constructions of faithful representations of graph products into $\mathrm{GL}(n,\mathbb{C})$, assuming the linearity over $\mathbb{C}$ of the vertex groups.

Recall that a group $G$ has the \emph{unique root property}\index{Unique root property} if for every $g_1,g_2\in G$ and $n\ge 1$, if $g_1^n=g_2^n$, then $g_1=g_2$. In this section, we establish the following theorem. We refer to Section~\ref{sec:me-background-1} for the notions of stable orbit equivalence and compression constants.

\begin{Th}\label{theo:strong-rigidity}
  Let $\Gamma_G,\Gamma_H$ be two connected finite simple graphs, with no vertex joined to every other vertex. Let $G,H$ be graph products with countably infinite torsion-free vertex groups over $\Gamma_G,\Gamma_H$, respectively. Let $G\actson X$ and $H\actson Y$ be two free, vertexwise ergodic, measure-preserving actions on standard probability spaces. Assume that either
  \begin{itemize}
      \item $G\actson X$ is vertexwise weakly mixing, or
      \item $G\actson X$ is vertexwise totally ergodic, or
      \item $H$ has the unique root property.
  \end{itemize}
 If there is a stable orbit equivalence $f$ from $G\actson X$ to $H\actson Y$ with compression constant $\kappa(f)\ge 1$, then the two actions are conjugate (and every SOE cocycle $c:G\times X\to H$ associated to $f$ is cohomologous to a group isomorphism $G\to H$).
 \end{Th}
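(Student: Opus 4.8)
The plan is to follow the strategy of \cite{HHI} (to which the theorem's hypotheses are explicitly tailored) and deduce everything from the measure equivalence classification machinery of Part~\ref{Part:MEClassification}, upgrading the conclusion from orbit equivalence to conjugacy by exploiting the ergodicity assumptions. First I would observe that a stable orbit equivalence $f$ from $G\actson X$ to $H\actson Y$ produces a measure equivalence coupling $\Omega$ between $G$ and $H$ (by \cite[Theorem~3.3]{Fur}, recalled in Section~\ref{sec:me-background-1}), with compression constant $\kappa(f)\ge 1$. Since $\Gamma_G,\Gamma_H$ are connected with no vertex joined to all others, and with countably infinite (hence, being torsion-free, transvection-free-compatible) vertex groups, I would want to invoke \cref{prop:me-oe} to conclude that $\Omega$ is in fact an orbit equivalence coupling, i.e.\ $\kappa(f)=1$. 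A subtlety here is that \cref{prop:me-oe} requires the defining graphs to be transvection-free and have no partial conjugation, which is \emph{not} assumed in the present theorem; so the honest route is to use instead the factorization through the right-angled building (\cref{cor:factor-through-building}, valid for any class with the Vertex Recognition Property) together with the rank-preservation of \cref{cor:rank}, giving a $(G\times H)$-equivariant map $\Omega\to\Isom(\bD_G,\bD_H)$ and hence a common fundamental domain. This forces $\kappa(f)=1$ and, via \cref{prop:coupling-restricted}, yields a graph isomorphism $\sigma:\Gamma_G\to\Gamma_H$ and orbit equivalences between $G_v$ and $H_{\sigma(v)}$.

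Next I would extract from the equivariant map $\Theta:\Omega\to\Isom(\bD_G,\bD_H)$ a genuine \emph{group isomorphism} structure. The point is that $\Theta$ descends to a measurable map $X\to\Isom(\bD_G,\bD_H)$ which, on a conull set, is essentially constant: this is where the ergodicity hypotheses enter. Using that the action is vertexwise ergodic and $\Gamma_G$ is connected, the natural cocycle $G\times X\to\Aut(\bD_G)\cong\Aut(\Gamma_G^e)$ built from $\Theta$ must be cohomologous to a homomorphism, because an essentially $G$-invariant measurable map to the (countable-vertex, locally finite in the relevant sense) automorphism group is constrained by ergodicity to take values in a single coset. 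Concretely, I would show that the SOE cocycle $c:G\times X\to H$ associated to $f$ is cohomologous to one that restricts, on each vertex group $G_v$, to a homomorphism $G_v\to H_{\sigma(v)}$ which is itself (by the vertex-level orbit equivalence and Popa-type cocycle superrigidity inputs, or by the Monod--Shalom untwisting) an isomorphism of the vertex-level actions up to conjugacy.

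The heart of the argument, and the step where the three alternative hypotheses are used, is the \emph{untwisting} of the cocycle $c$ into a genuine isomorphism. Having identified $c$ up to cohomology with a map respecting the vertex-group and building structure, I would assemble the vertex-level isomorphisms $G_v\xrightarrow{\sim}H_{\sigma(v)}$ into a global isomorphism $\alpha:G\to H$ using Green's normal form (Section~\ref{Sec:NormalForms}) and the adjacency dictionary (\cref{lemma:adjacency-extension}): commutation relations are preserved because $\sigma$ is a graph isomorphism. The three hypotheses each guarantee that this assembled $\alpha$ is compatible with the actions, i.e.\ that $c$ is cohomologous to $\alpha$ rather than merely cohomologous to $\alpha$ up to an inner/automorphic ambiguity on each syllable: vertexwise weak mixing kills the ambiguity by the standard weak-mixing uniqueness of the untwisting map; vertexwise total ergodicity handles it by ergodicity of each infinite subgroup; and the unique root property on $H$ removes the remaining finite ambiguity coming from the fact that a priori the cocycle is only determined up to a character, since torsion-freeness plus unique roots forbid the nontrivial finite-order twists.

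The main obstacle I anticipate is precisely this last untwisting: showing that the measurable cocycle, which a priori could twist differently over different points of $X$, is cohomologous to a \emph{fixed} group homomorphism, and that this homomorphism is bijective rather than merely injective with finite-index image. The danger is an induced-action phenomenon (as in the virtual conjugacy discussion) producing only virtual conjugacy; the three ergodicity hypotheses are exactly what rules this out, and I would need to verify carefully that under each of them the compression constant being $1$ (already established above) plus the building-level rigidity forces the isomorphism to be surjective. I expect the cleanest route is to mirror \cite{HHI} closely: first untwist on a single weakly mixing or totally ergodic vertex group to pin down $c$ there as a literal isomorphism, then propagate along the connected graph $\Gamma_G$ using that adjacent vertex groups commute and generate parabolic products on which ergodicity again applies, and finally check that the resulting global map intertwines the actions on a conull set. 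Closing this propagation step, and confirming it genuinely yields conjugacy (not just stable orbit equivalence with a group isomorphism on the nose), is where the real work lies.
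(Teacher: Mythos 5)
Your proposal has a genuine gap at its foundation. The route you call the ``honest route'' --- factoring the coupling through $\Isom(\bD_G,\bD_H)$ via \cref{cor:factor-through-building} and then invoking \cref{prop:coupling-restricted} to get $\kappa(f)=1$, a graph isomorphism $\sigma$, and vertex-level orbit equivalences --- is not available under the hypotheses of this theorem. \cref{cor:factor-through-building} requires the groups to lie in a class satisfying the Vertex Recognition Property, and that property is only established (Propositions~\ref{prop:vertex-recognition}, \ref{prop:vertex-recognition-strongly-reduced} and~\ref{prop:vertex-recognition-join}) for \emph{transvection-free} defining graphs. Here $\Gamma_G,\Gamma_H$ are merely connected with no vertex joined to every other vertex; such graphs can have transvections (a path on four vertices, for instance), and in that case the Vertex Recognition Property genuinely fails: by \cref{rk:VRP-transvection-free} it would force every automorphism to send vertex groups to conjugates of vertex groups, which transvections violate. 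So you have no equivariant map to the building, no common fundamental domain, no $\sigma$, and no vertex-level orbit equivalence to start from. (In the paper, $\kappa(f)=1$ is not established first at all; it falls out at the very end, inside \cref{lemma:propagation}, from Vaes' lemma once the cocycle is untwisted to a homomorphism $\theta\colon G\to H$, since then $\kappa(f)=1/[H:\theta(G)]\ge 1$ forces surjectivity.)

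The second half of your proposal (untwist on one vertex group, then propagate along the connected graph using the three alternative hypotheses) does match the paper, via \cref{lemma:propagation}. But the first half --- actually untwisting the SOE cocycle to a homomorphism on \emph{one} vertex group --- is the hard part, and your sketch of it does not work: the claim that ergodicity forces an equivariant measurable map into an automorphism group to be essentially constant is false (ergodicity constant-ifies \emph{invariant} maps, not equivariant ones). What the paper does instead is: rewrite $G,H$ as graph products over joins of strongly reduced graphs (collapsing preserves vertexwise ergodicity), then use only the recognition lemmas valid without transvection-freeness (Lemmas~\ref{lemma:maximal-product}, \ref{lemma:factor}, \ref{lemma:recognize-star}, \ref{lemma:reducibility-invariant}) together with the combinatorial Lemmas~\ref{lemma:HHI-1} and~\ref{lemma:HHI-2} to produce \emph{two} maximal product parabolic subgroups $P_G^1,P_G^2$ whose clique factors $C_G^1,C_G^2$ commute and intersect trivially, matched on a positive measure subset with analogous data $P_H^i,C_H^i$ in $H$. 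Monod--Shalom (Theorem~\ref{theo:ms-2}) then untwists the cocycle on each $P_G^i$ only \emph{modulo an error valued in $C_H^i$}; the key step missing from your proposal is the comparison of these two untwistings on $C_G^1\subseteq P_G^1\cap P_G^2$: since $C_H^1$ and $C_H^2$ commute and intersect trivially, applying the retractions $r_1,r_2$ together with a Poincar\'e recurrence argument and Lemma~\ref{lemma:root} kills the error terms and shows $c_{|C_G^1\times X}$ is cohomologous to an honest homomorphism. Without this two-product comparison mechanism (or a substitute), your proof cannot get started.
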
  
 
 We mention that the last two cases will be useful in the next section to obtain superrigidity statements (\cref{Sec:Superrigidity}).
 
\begin{Rq}[Discussion on the connectedness hypothesis]
   The assumption that the defining graph $\Gamma_G$ is connected cannot be removed. In fact, Bernoulli actions of non-abelian free groups fail to satisfy the form of rigidity provided by Theorems~\ref{theo:strong-rigidity}, in view of works of Bowen \cite{Bow2,Bow3}.
\end{Rq}

\begin{Rq}[Examples of groups with the unique root property]
Assuming that $H$ satisfies the unique root property is equivalent to assuming that each of its vertex groups satisfies the unique root property \cite[Theorem~6.5]{BeFe}. The class of groups that satisfy this property is vast. It includes, among others, all torsion-free nilpotent groups \cite{Mal}, and more generally all bi-orderable groups (including all Baumslag--Solitar groups $\BS(1,n)$ with $n\ge 1$, Thompson's group or more generally diagram groups \cite{GS}), all torsion-free hyperbolic groups (using the fact that the centralizer of every non-trivial element is cyclic \cite[Corollary~III.$\Gamma$.3.10]{BH}) and more generally toral relatively hyperbolic groups (even more generally, torsion-free groups whose maximal abelian subgroups are malnormal, called \emph{CSA} groups, which also include limit groups \cite[Proposition~8]{MR}), and some torsion-free finite-index subgroups of surface mapping class groups \cite{BP} and $\Out(F_N)$ \cite{Gue}. The unique root property is also stable under certain group-theoretic operations like taking wreath products \cite{Bau}.
\end{Rq}

\paragraph*{Proof strategy and outline of the section.} Our proof of Theorem~\ref{theo:strong-rigidity} follows the strategy used by Huang, Ioana and the second-named author in \cite{HHI}. In particular we will follow very closely the presentation of \cite[Propositions~6.1 and~6.2]{HHI}. The idea is the following. Let $c$ be a stable orbit equivalence cocycle associated to $f$. 
\begin{itemize}
\item We first untwist $c$ (i.e.\ change it within its cohomology class) to a group homomorphism on some vertex group of $G$. The main tools are our recognition statements from Section~\ref{sec:strongly-reduced}, ensuring in particular that the orbit equivalence relation “remembers” the maximal product parabolic subgroups, together with the work of Monod--Shalom on direct products \cite{MS}.
\item Once this is done, a propagation argument enables to untwist the cocycle on the whole group $G$. We give three versions of this propagation argument: one uses weak mixing at the level of the vertex groups, one uses total ergodicity, and the third relies on the unique root property for $H$. 
\end{itemize}
The section is organized as follows. After presenting the propagation arguments in Section~\ref{sec:propagation}, we review and establish a few extra tools in Sections~\ref{sec:more-tools} to~\ref{sec:creg}, and complete our proof of Theorem~\ref{theo:strong-rigidity} in Section~\ref{sec:proof-strong-rigidity}.

\subsubsection{Three propagation arguments}\label{sec:propagation}

\paragraph*{Propagation via weak mixing.}

The idea of exploiting weak mixing to propagate rigidity from a normal subgroup to an ambient group arose in the work of Popa, see \cite[Lemma~3.6]{Pop}, which finds its roots in \cite[Lemma~5.7]{Pop3}. Popa's arguments were formulated in the language of von Neumann algebras; we will use Furman's formulation \cite[Lemma~3.5]{Fur-Popa} whose statement is more ergodic-theoretic. 

\begin{lemma}\label{lemma:propagation-weak-mixing}
    Let $G,H$ be countable groups. Let $G\actson X$ be a measure-preserving action on a standard probability space $X$, and $c:G\times X\to H$ be a cocycle. Let $G_1,\dots,G_k$ be subgroups of $G$ satisfying the following three assumptions: 
    \begin{itemize}
    \item the set $G_1\cup\dots\cup G_k$ generates $G$;
    \item for every $j\in\{1,\dots,k-1\}$, the subgroup $G_{j+1}$ normalizes $G_j$;
    \item for every $j\in\{1,\dots,k-1\}$, the subgroup $G_j$ acts weakly mixingly on $X$.
    \end{itemize}
    Assume that there exists a group homomorphism $\theta_1:G_1\to H$ such that for every $g\in G_1$ and almost every $x\in X$, one has $c(g,x)=\theta_1(g)$.

Then there exists a group homomorphism $\theta:G\to H$ such that for every $g\in G$ and almost every $x\in X$, one has $c(g,x)=\theta(g)$. 
\end{lemma}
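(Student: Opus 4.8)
The plan is to propagate the untwisting of the cocycle $c$ from the subgroup $G_1$ to the whole group $G$ by induction on $j$, successively showing that $c$ coincides almost everywhere with a group homomorphism on the subgroup $\langle G_1,\dots,G_j\rangle$. The key inductive step is the following: suppose we have already produced a homomorphism $\theta_j:K_j\to H$ (where $K_j:=\langle G_1,\dots,G_j\rangle$) with $c(g,x)=\theta_j(g)$ for a.e.\ $x$ and all $g\in K_j$, and suppose $G_{j+1}$ normalizes $G_j$. I would like to merge the rigidity on $K_j$ with the (a priori unknown) behaviour of $c$ on $G_{j+1}$, using the weak mixing of $G_j$ on $X$. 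The natural tool is Popa's weak-mixing-from-a-normal-subgroup lemma, in Furman's ergodic-theoretic formulation \cite[Lemma~3.5]{Fur-Popa}, applied with the relatively normal pair $(G_j,G_{j+1})$ acting on $X$.

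Concretely, first I would observe that the hypothesis ``$G_{j+1}$ normalizes $G_j$'' makes $G_j$ a normal subgroup of the subgroup $L_{j+1}:=\langle G_j,G_{j+1}\rangle$, and that $G_j$ acts weakly mixingly on $X$ (for $j\le k-1$, which is exactly the range where weak mixing is assumed). The cocycle relation, together with the fact that $c$ restricted to $G_j$ is the homomorphism $\theta_j|_{G_j}$, shows that for each fixed $h\in G_{j+1}$ the map $x\mapsto c(h,x)$ is essentially determined up to the $G_j$-twisting; weak mixing of the normal subgroup $G_j$ then forces $x\mapsto c(h,x)$ to be essentially constant and to assemble into a homomorphism on $L_{j+1}$. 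This is precisely the content of \cite[Lemma~3.5]{Fur-Popa}: a cocycle that is cohomologous to (here, equal to) a homomorphism on a weakly mixing normal subgroup is itself cohomologous to a homomorphism on the larger group. Since we have arranged $c$ to be \emph{equal} (not merely cohomologous) to $\theta_j$ on $K_j\supseteq G_j$, I expect the conjugating map to be trivial and the resulting homomorphism to extend $\theta_j$, so that the two definitions agree on the overlap $G_j$ and glue to a homomorphism $\theta_{j+1}$ on $\langle K_j,G_{j+1}\rangle=K_{j+1}$.

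I would set up the induction carefully so that at each stage the relevant normal subgroup $G_j$ (the one required to be weakly mixing) is the one being quotiented by; the hypotheses are asymmetric, with weak mixing only assumed for $j\in\{1,\dots,k-1\}$, so the final step $j=k$ uses $G_{k-1}$ (or more precisely a subgroup already known to be weakly mixing inside $K_{k-1}$) as the normal weakly mixing subgroup rather than $G_k$ itself. Since weak mixing passes from a subgroup to any group containing it, $K_j$ acts weakly mixingly on $X$ for every $j\ge 1$, which gives flexibility in choosing the weakly mixing normal subgroup at the last step. After $k$ steps, $K_k=\langle G_1,\dots,G_k\rangle=G$ by the first hypothesis, and the accumulated $\theta:=\theta_k:G\to H$ satisfies $c(g,x)=\theta(g)$ for all $g\in G$ and a.e.\ $x$.

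The main obstacle I anticipate is purely bookkeeping rather than conceptual: ensuring that the homomorphisms $\theta_j$ produced at successive steps are genuinely \emph{compatible} (i.e.\ that $\theta_{j+1}$ restricts to $\theta_j$ on $K_j$), so that they glue rather than merely existing separately. This compatibility is what lets me maintain the invariant ``$c=\theta_j$ on $K_j$'' exactly, which in turn is what feeds into the next application of \cite[Lemma~3.5]{Fur-Popa}. I would secure it by noting that the extension provided by the weak-mixing lemma is essentially unique once we insist $c$ equals a homomorphism on the weakly mixing normal subgroup, so it must agree with $\theta_j$ wherever the latter is already defined. The verification that the cited lemma applies with the precise normality and weak-mixing hypotheses available at each stage is the only point requiring genuine care.
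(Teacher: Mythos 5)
Your proposal is correct and follows essentially the same route as the paper: the key step in both is Furman's formulation of Popa's weak-mixing argument \cite[Lemma~3.5]{Fur-Popa}, applied inductively to the weakly mixing subgroup $G_j$ normalized by $G_{j+1}$, with the final homomorphism assembled via the cocycle relation. The only difference is bookkeeping --- the paper untwists $c$ on each $G_j$ separately and assembles once at the end (where the compatibility you worry about is automatic, since the essential value of $c(g,\cdot)$ is unique), whereas you maintain a homomorphism on $\langle G_1,\dots,G_j\rangle$ and glue at every step; both versions go through.
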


\begin{proof}
We first prove by induction that for every $j\in\{1,\dots,k\}$, there exists a group homomorphism $\theta_j:G_j\to H$ such that for every $g\in G_j$ and almost every $x\in X$, one has $c(g,x)=\theta_j(g)$. This holds when $j=1$ by assumption. And assuming this holds for $G_j$, then \cite[Lemma~3.5]{Fur-Popa}, applied to the subgroup $G_j$ of $G$, with $Y$ reduced to a point and $\rho=\theta_j$, ensures that it also holds for $G_{j+1}$.

Now, as $G_1\cup\dots\cup G_k$ generates $G$, using the cocycle relation, we deduce that for every $g\in G$, the map $c(g,\cdot)$ is almost everywhere constant. Since $G$ is countable, there exists a conull Borel subset $X^*\subseteq X$ such that for every $g\in G$, the map $c(g,\cdot)_{|X^*}$ is constant. And the cocycle relation ensures that the map $\theta$ sending $g$ to the constant value of $c(g,\cdot)_{|X^*}$ is a group homomorphism, as desired.
\end{proof}

\paragraph*{Propagation via ergodicity and unique roots.}
 The following lemma exploits the unique root property to propagate rigidity. It is a very slight variation over \cite[Lemma~4.2]{HHI}.

\begin{lemma}\label{lemma:propagation-unique-root}
    Let $G,H$ be countable groups, and assume that $H$ satisfies the unique root property. Let $G\actson X$ be a measure-preserving action on a standard probability space $X$, and $c:G\times X\to H$ be a cocycle. Let $G_1,\dots,G_k$ be subgroups of $G$ satisfying the following three assumptions: 
    \begin{itemize}
    \item the set $G_1\cup\dots\cup G_k$ generates $G$;
    \item for every $j\in\{1,\dots,k-1\}$, the subgroup $G_{j+1}$ centralizes $G_j$;
    \item for every $j\in\{1,\dots,k-1\}$, the subgroup $G_j$ acts ergodically on $X$.
    \end{itemize}
    Assume that there exists a group homomorphism $\theta_1:G_1\to H$ such that for every $g\in G_1$ and almost every $x\in X$, one has $c(g,x)=\theta_1(g)$.

Then there exists a group homomorphism $\theta:G\to H$ such that for every $g\in G$ and almost every $x\in X$, one has $c(g,x)=\theta(g)$. 
\end{lemma}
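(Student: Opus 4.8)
The plan is to mirror the proof of Lemma~\ref{lemma:propagation-weak-mixing}, but to replace the weak-mixing input (Furman's \cite[Lemma~3.5]{Fur-Popa}) by an argument based on ergodicity together with the unique root property, propagating along centralizing rather than normalizing subgroups. The goal is an inductive statement: for every $j\in\{1,\dots,k\}$ there is a group homomorphism $\theta_j:G_j\to H$ with $c(g,x)=\theta_j(g)$ for all $g\in G_j$ and a.e.\ $x\in X$. The base case $j=1$ is the hypothesis. Once the induction is complete, the conclusion follows exactly as in Lemma~\ref{lemma:propagation-weak-mixing}: since $G_1\cup\dots\cup G_k$ generates $G$ and $H$ is countable, the cocycle relation forces each $c(g,\cdot)$ to be essentially constant on a common conull Borel subset $X^*$, and $g\mapsto c(g,\cdot)_{|X^*}$ is then the desired homomorphism $\theta:G\to H$.

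For the inductive step, suppose $\theta_j:G_j\to H$ satisfies $c(h,x)=\theta_j(h)$ for every $h\in G_j$ and a.e.\ $x$, and that $G_j$ acts ergodically on $X$ while $G_{j+1}$ centralizes $G_j$. First I would fix $g\in G_{j+1}$ and exploit that $g$ and $h$ commute in $G$ for every $h\in G_j$. Applying the cocycle relation to $gh=hg$ gives, for a.e.\ $x$,
\begin{equation*}
c(g,h\cdot x)\,c(h,x)=c(h,g\cdot x)\,c(g,x),
\end{equation*}
and substituting $c(h,x)=\theta_j(h)=c(h,g\cdot x)$ (valid for a.e.\ $x$ since the action is measure-preserving and $\theta_j$ is constant) yields
\begin{equation*}
c(g,h\cdot x)=\theta_j(h)\,c(g,x)\,\theta_j(h)^{-1}
\end{equation*}
for a.e.\ $x$. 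Because this holds for every $h\in G_j$ and $G_j$ acts ergodically, the standard trick is to consider the map $x\mapsto c(g,x)$ and extract a $G_j$-invariant quantity. Concretely, I would argue that the essential range of $c(g,\cdot)$ is a single $\theta_j(G_j)$-conjugacy orbit, and then use ergodicity of $G_j$ to conclude that $c(g,\cdot)$ takes values in the centralizer of $\theta_j(G_j)$ in an appropriate sense. The cleanest route is this: the function $x\mapsto c(g,x)$ is, up to the above twisting, $G_j$-invariant; since $H$ is countable, ergodicity of $G_j$ forces $c(g,\cdot)$ to be essentially constant once we have shown the conjugation action is trivial on its value.

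The main obstacle, and where the unique root property enters, is to show that the conjugation in the displayed identity is actually trivial, i.e.\ that $c(g,x)$ centralizes $\theta_j(h)$ for all $h$, so that $c(g,\cdot)$ is genuinely $G_j$-invariant and hence (by ergodicity and countability of $H$) essentially constant, with constant value defining $\theta_{j+1}(g)$. This is precisely the content of \cite[Lemma~4.2]{HHI}, of which the present lemma is a ``very slight variation'': the unique root property is used to rule out the pathology where conjugation by $\theta_j(h)$ permutes a finite set of possible values of $c(g,x)$ nontrivially, which would otherwise survive ergodicity. I would therefore either cite \cite[Lemma~4.2]{HHI} directly for this step, or reproduce its short argument: if $c(g,x)$ took two distinct essential values conjugate under $\theta_j(G_j)$, one produces two elements of $H$ with a common power (coming from the commutation relations and the finiteness of the orbit), contradicting unique roots unless they coincide. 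Finally I would check that $\theta_{j+1}:G_{j+1}\to H$, defined by the essentially constant value of $c(\cdot,x)$, is a homomorphism, which is immediate from the cocycle relation evaluated at a point of the common conull set, completing the induction.
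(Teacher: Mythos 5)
Your proposal is correct in substance and follows the same strategy as the paper's proof: the same induction on $j$, the same use of the cocycle relation combined with the commutation $gh=hg$ to produce conjugation identities between values of $c(g,\cdot)$ and elements $\theta_j(g')$, and the same final appeal to the unique root property through an identity of the form $\left(h_1\theta_j(g')h_1^{-1}\right)^N=\theta_j(g')^N$, which forces $h_1$ to commute with $\theta_j(g')$ and hence forces the essential values of $c(g,\cdot)$ to coincide.

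The one place you diverge from the paper, and where your sketch has an unproved assertion, is the source of that common-power identity. The paper obtains it locally: fixing two level sets $X_{i_1},X_{i_2}$ of $c(g,\cdot)$ with values $h_{i_1},h_{i_2}$, ergodicity of $G_j$ gives $g'$ with $h_{i_2}\theta_j(g')=\theta_j(g')h_{i_1}$, and then Poincar\'e recurrence applied to $X_{i_1}$ and this single element $g'$ yields $k>0$ with $h_{i_1}\theta_j(g')^k=\theta_j(g')^kh_{i_1}$; no global statement about the set of essential values is needed. You instead argue globally, via the twisted equivariance $c(g,h\cdot x)=\theta_j(h)c(g,x)\theta_j(h)^{-1}$: the essential range of $c(g,\cdot)$ is a single $\theta_j(G_j)$-conjugacy orbit, and you invoke ``the finiteness of the orbit'' to find a power of the conjugation permutation acting trivially. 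That finiteness is used but never justified in your proposal. It is true, and the justification is short: the level sets $E_{h_0}=\{x\,:\,c(g,x)=h_0\}$, for $h_0$ in the orbit, are pairwise disjoint and are permuted by the measure-preserving $G_j$-action compatibly with conjugation; since conjugation acts transitively on the orbit, all $E_{h_0}$ have the same measure, and since their union is conull this common measure is positive, so the orbit is finite. With that observation inserted (or with the finiteness step replaced outright by the paper's Poincar\'e recurrence argument, which avoids it), your induction closes and the rest of your argument — essential constancy of $c(g,\cdot)$, the homomorphism property from the cocycle relation, and the conclusion over the generating set $G_1\cup\dots\cup G_k$ — matches the paper.
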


\begin{proof}
As in the previous proof, it is enough to prove by induction that for every $j\in\{1,\dots,k\}$, there exists a group homomorphism $\theta_j:G_j\to H$ such that for every $g\in G_j$ and almost every $x\in X$, one has $c(g,x)=\theta_j(g)$. In fact, it is enough to prove that for every $g\in G_j$, the map $c(g,\cdot)$ is essentially constant: indeed, the map sending $g$ to the essential value of $c(g,\cdot)$ is then automatically a group homomorphism, in view of the cocycle relation. 

This holds for $j=1$ by assumption. Assume that it holds for some $j\in\{1,\dots,k-1\}$, and let $g\in G_{j+1}$. Let $X=\dunion_{i\in I} X_i$ be a countable Borel partition such that for every $i\in I$, the map $c(g,\cdot)_{|X_i}$ is constant, with value denoted by $h_i\in H$ (see also \cref{fig:Propagation1}). We aim to prove that $h_{i_1}=h_{i_2}$ for any $i_1,i_2\in I$.
\begin{figure}[htbp]
    \centering
    \includegraphics{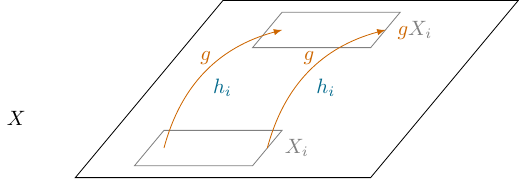}
    \caption{On $X_i$ the value of $c(g,\cdot)$ is constant, equal to $h_i$.}
    \label{fig:Propagation1}
\end{figure}

So let $i_1,i_2\in I$. Since $G_j$ acts ergodically on $X$, there exists $g'\in G_j$ such that $g'X_{i_1}\cap X_{i_2}=:V$ has positive measure. Let also $U:=(g')^{-1}(V)$ (see also \cref{fig:Propagation2}). Since $g\in G_{j+1}$ and $g'\in G_j$, we have $gg'=g'g$. So for every $x\in U$, we have $gg'\cdot x=g'g \cdot x$. Applying the cocycle $c$ gives
\[c(g,g'\cdot x)c(g',x)=c(g',g \cdot x)c(g,x),\] 
in other words 
\begin{equation}\label{eq:propagation}
h_{i_2}\theta_j(g')=\theta_j(g')h_{i_1}.
\end{equation}
On the other hand, using Poincaré recurrence, there also exists $k>0$ such that the subset $X_{i_1}\cap (g')^kX_{i_1}=:Z$ has positive measure. Let $W:=(g')^{-k}(Z)$ (see also \cref{fig:Propagation3}). Then for every $x\in W$, we have $g(g')^k\cdot x=(g')^kg \cdot x$, so applying the cocycle $c$ gives
\[c\big(g,(g')^k\cdot x\big)c\big((g')^k,x\big)=c\big((g')^k,g \cdot x\big)c(g,x),\] 
in other words
\begin{equation}\label{eq:hthetaj}
    h_{i_1}\theta_j(g')^k=\theta_j(g')^kh_{i_1}.
\end{equation}
This rewrites as $\left(h_{i_1}\theta_j(g')h_{i_1}^{-1}\right)^k=\theta_j(g')^k$. Using the unique root property for $H$, we deduce that $h_{i_1}$ and $\theta_j(g')$ commute. Therefore, Equation~\eqref{eq:propagation} yields $h_{i_2}=h_{i_1}$, as desired.  
\end{proof}

\begin{figure}[htbp]
    \centering
    \includegraphics[width=\textwidth]{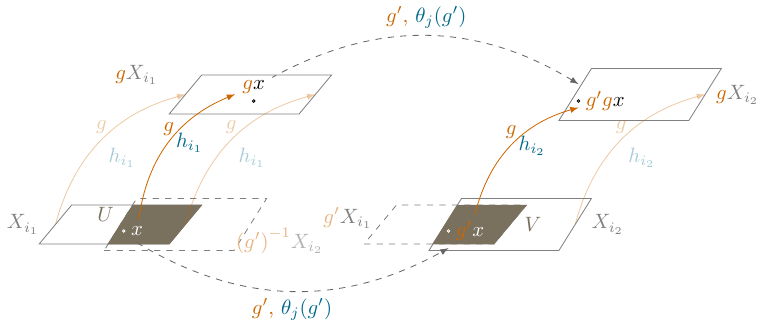}
    
    We represent here how we obtain \cref{eq:propagation}. In \textcolor{Orange}{orange} are represented the actions of elements in $G$, while in \textcolor{Turquoise}{blue} are written the corresponding values of the cocycle $c$. The sets $U$ and $V$ are drawn in \textcolor{MFCB}{brown}.\\
    %\textcolor{violet}{Bon, c’est touffu. Dis-moi si ça ne te plaît pas.}
    \caption{Illustration for \cref{eq:propagation}}
    \label{fig:Propagation2}
\end{figure}

\begin{figure}[htbp]
    \centering
    \includegraphics[width=\textwidth]{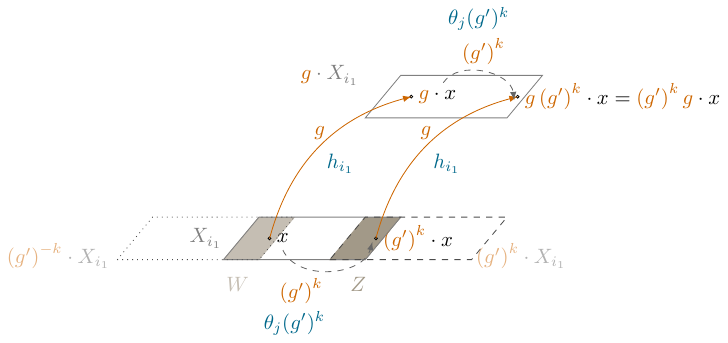}
    \caption{Illustration for \cref{eq:hthetaj}}
    \label{fig:Propagation3}
\end{figure}

\paragraph*{Propagation via total ergodicity.}
The following lemma uses total ergodicity to propagate rigidity. The argument is a slight variation over the previous one, but does not necessit to assume that $H$ satisfies the unique root property.

\begin{lemma}\label{lemma:propagation-totally-ergodic}
    Let $G,H$ be countable groups, and assume that $G$ is torsion-free. Let $G\actson X$ be a measure-preserving action on a standard probability space $X$, and let $c:G\times X\to H$ be a cocycle. Let $G_1,\dots,G_k$ be subgroups of $G$ satisfying the following three assumptions: 
    \begin{itemize}
    \item the set $G_1\cup\dots\cup G_k$ generates $G$;
    \item for every $j\in\{1,\dots,k-1\}$, the subgroup $G_{j+1}$ centralizes $G_j$;
    \item for every $j\in\{1,\dots,k-1\}$, the subgroup $G_j$ acts totally ergodically on $X$.
    \end{itemize}
    Assume that there exists a group homomorphism $\theta_1:G_1\to H$ such that for every $g\in G_1$ and almost every $x\in X$, one has $c(g,x)=\theta_1(g)$.

Then there exists a group homomorphism $\theta:G\to H$ such that for every $g\in G$ and almost every $x\in X$, one has $c(g,x)=\theta(g)$. 
\end{lemma}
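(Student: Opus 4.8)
The plan is to follow the proof of Lemma~\ref{lemma:propagation-unique-root} very closely, replacing the single use of the unique root property of $H$ by a twofold use of total ergodicity together with the torsion-freeness of $G$. As in that proof, it suffices to show by induction on $j\in\{1,\dots,k\}$ that for every $g\in G_j$ the map $c(g,\cdot)$ is essentially constant: the assignment sending $g$ to this essential value is then automatically a homomorphism $\theta_j:G_j\to H$ by the cocycle relation, and since $G_1\cup\dots\cup G_k$ generates $G$, the homomorphisms $\theta_j$ assemble (again via the cocycle relation, after restricting to a conull Borel subset on which all the relevant maps are genuinely constant) into the desired $\theta:G\to H$. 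The base case $j=1$ is exactly the hypothesis.

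For the induction step, assume the claim for $G_j$, giving a homomorphism $\theta_j$ with $c(g',x)=\theta_j(g')$ for all $g'\in G_j$ and a.e.\ $x$. Fix $g\in G_{j+1}$ and set $\phi:=c(g,\cdot):X\to H$, a measurable map taking at most countably many values. Since $G_{j+1}$ centralizes $G_j$ we have $gg'=g'g$ for every $g'\in G_j$; applying the cocycle relation to both sides of this identity and using the inductive description of $c(g',\cdot)$ yields the equivariance relation $\phi(g'\cdot x)=\theta_j(g')\,\phi(x)\,\theta_j(g')^{-1}$ for all $g'\in G_j$ and a.e.\ $x$ (the analogue here of Equation~\eqref{eq:propagation} from the previous proof).

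Now fix a nontrivial $g'\in G_j$ (such an element exists in the cases of interest, where the $G_j$ are infinite); since $G$ is torsion-free, $\langle g'\rangle$ is infinite, so by total ergodicity it acts ergodically on $X$. Passing to conjugation orbits, the map sending $x$ to the $\langle\theta_j(g')\rangle$-conjugation orbit of $\phi(x)$ is $\langle g'\rangle$-invariant, hence essentially constant, equal to a single orbit $O\subseteq H$. Because $\phi$ takes only countably many values on a probability space, some level set $\phi^{-1}(h)$ with $h\in O$ has positive measure; the equivariance relation and measure-preservation then force every level set $\phi^{-1}(h')$ with $h'\in O$ to have this same positive measure, so $O$ must be finite, say $O=\{h_1,\dots,h_m\}$, with conjugation by $\theta_j(g')$ acting on it as a single $m$-cycle. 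Consequently $g'$ cyclically permutes the $m$ level sets $\phi^{-1}(h_i)$, each of measure $1/m$, and $g'^m$ preserves each of them setwise. As $G$ is torsion-free, $g'^m\neq 1$, so $\langle g'^m\rangle$ is infinite and acts ergodically; an ergodic action cannot preserve a set of measure $1/m\in(0,1)$, forcing $m=1$, i.e.\ $\phi$ is essentially constant. This completes the induction and hence the proof.

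The main obstacle is the correct setup and exploitation of the equivariance relation: one must verify that the essential image of $\phi$ is a single \emph{finite} $\langle\theta_j(g')\rangle$-conjugation orbit (the finiteness being where the probability-space constraint, combined with the fact that $c$ takes countably many values, enters), and then pinpoint the precise place where \emph{total} ergodicity — rather than mere ergodicity — is indispensable, namely the ergodicity of the proper infinite subgroup $\langle g'^m\rangle$. The torsion-freeness of $G$ is used exactly to guarantee that both $\langle g'\rangle$ and $\langle g'^m\rangle$ are infinite, so that the total ergodicity hypothesis applies to each of them.
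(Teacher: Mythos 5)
Your proof is correct, but the core of the induction step is genuinely different from the paper's. Both arguments reduce to showing that $\phi:=c(g,\cdot)$ is essentially constant for $g\in G_{j+1}$, and both start from the same commutation-induced relation $\phi(g'\cdot x)=\theta_j(g')\phi(x)\theta_j(g')^{-1}$ for $g'\in G_j$. From there the paper fixes two level sets $X_{i_1},X_{i_2}$ of $\phi$, uses ergodicity of the \emph{full} group $G_j$ to find $g'$ connecting them (yielding Equation~\eqref{eq:propagation-2}), then Poincar\'e recurrence (Equation~\eqref{eq:hthetaj-2}) and ergodicity of $\langle (g')^k\rangle$ (Equation~\eqref{eq:hthetaj-3}) to show that $h_{i_1}$ commutes with both $\theta_j(g')^{km}$ and $\theta_j(g')^{km+1}$, hence with $\theta_j(g')$, and concludes $h_{i_1}=h_{i_2}$ algebraically. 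You instead fix a single non-trivial $g'\in G_j$, show that the essential image of $\phi$ is one $\langle\theta_j(g')\rangle$-conjugation orbit (ergodicity of $\langle g'\rangle$ applied to the countably-valued invariant orbit map), that this orbit is finite of some size $m$ with all level sets of measure $1/m$ (disjointness plus measure preservation), and that $(g')^m$ preserves each level set, so ergodicity of $\langle (g')^m\rangle$ forces $m=1$. Your route avoids both Poincar\'e recurrence and the consecutive-powers trick, and it uses strictly less: only ergodicity of the infinite cyclic subgroups $\langle g'\rangle$ and $\langle (g')^m\rangle$ for one non-trivial $g'$, never ergodicity of all of $G_j$. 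Torsion-freeness of $G$ enters at the same point in both arguments, namely to guarantee that these cyclic subgroups are infinite so that total ergodicity applies. Your parenthetical hedge about the existence of a non-trivial $g'$ is harmless: since $G$ is torsion-free, any non-trivial $G_j$ is infinite, and in the degenerate case $G_j=\{1\}$ the paper's own proof likewise relies on the convention that total ergodicity entails ergodicity of $G_j$ itself, which forces $X$ to be a point modulo null sets and makes the conclusion trivial.
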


\begin{proof}
As in the previous proof, it is enough to prove by induction that for every $j\in\{1,\dots,k\}$ and every $g\in G_j$ the map $c(g,\cdot)$ is essentially constant. 

This holds for $j=1$ by assumption. Assume that it holds for some $j\in\{1,\dots,k-1\}$, and let $g\in G_{j+1}$. Let $X=\dunion_{i\in I} X_i$ be a countable Borel partition such that for every $i\in I$, the map $c(g,\cdot)_{|X_i}$ is constant, with value denoted by $h_i\in H$ (as in \cref{fig:Propagation1}, \cpageref{fig:Propagation1}). We aim to prove that $h_{i_1}=h_{i_2}$ for any $i_1,i_2\in I$.
 
For any $i_1,i_2\in I$, as in the previous proof (see discussion above Equation~\eqref{eq:propagation}), the ergodicity of the action of $G_j$ enables us to find $g'\in G_j$ such that
\begin{equation}\label{eq:propagation-2}
h_{i_2}\theta_j(g')=\theta_j(g')h_{i_1}.
\end{equation}
And as in the previous proof (see the discussion above Equation~\eqref{eq:hthetaj}), we can also find $k>0$ such that
\begin{equation}\label{eq:hthetaj-2}
    h_{i_1}\theta_j(g')^k=\theta_j(g')^kh_{i_1}.
\end{equation}
Since $G$ is torsion-free and $G_j$ acts totally ergodically on $X$, in particular $(g')^k$ acts ergodically on $X$. So let $m>0$ be such that $(g')^{km}X_{i_1}\cap (g')^{-1}(X_{i_1})$ has positive measure. In particular, there exists a positive measure Borel subset $U\subseteq X_{i_1}$ such that $(g')^{km+1}(U)\subseteq X_{i_1}$. For almost every $x\in U$, since $gg'=g'g$, we have \[c\left(g,(g')^{km+1}\cdot x\right)c\left((g')^{km+1},x\right)=c\left((g')^{km+1},g\cdot x\right)c(g,x),\] which rewrites as 
\begin{equation}\label{eq:hthetaj-3}
h_{i_1}\theta_j(g')^{km+1}=\theta_j(g')^{km+1}h_{i_1}.
\end{equation}
Equation~\eqref{eq:hthetaj-2} implies that $(h_{i_1}\theta_j(g')h_{i_1}^{-1})^{km}=\theta_j(g')^{km}$, and Equation~\eqref{eq:hthetaj-3} gives
\[(h_{i_1}\theta_j(g')h_{i_1}^{-1})^{km+1}=\theta_j(g')^{km+1}.\] 
Combining these two equations yields $h_{i_1}\theta_j(g')h_{i_1}^{-1}=\theta_j(g')$. In other words $h_{i_1}$ and $\theta_j(g')$ commute. Therefore, Equation~\eqref{eq:propagation-2} yields $h_{i_2}=h_{i_1}$, as desired.  
\end{proof}

\paragraph{Untwisting the cocycle on one vertex group is enough.} 

The following lemma, which is essentially \cite[Lemma~4.3]{HHI}, reduces the proof of Theorem~\ref{theo:strong-rigidity} to untwisting the SOE cocycle to a group homomorphism on one vertex group.

\begin{lemma}\label{lemma:propagation}
     Let $G,H$ be two graph products with countably infinite vertex groups over connected finite simple graphs $\Gamma_G,\Gamma_H$ with trivial clique factors. Let $G\actson X$ and $H\actson Y$ be two free, ergodic, measure-preserving actions on standard probability spaces.
     Assume that either 
     \begin{itemize}
     \item $G\actson X$ is vertexwise weakly mixing or
     \item $G\actson X$ is vertexwise totally ergodic, and $G$ is torsion-free, or 
     \item $G\actson X$ is vertexwise ergodic, and $H$ has the unique root property.
     \end{itemize}
      Assume that there exists a stable orbit equivalence $f$ from $G\actson X$ to $H\actson Y$, with $\kappa(f)\ge 1$, and let $c:G\times X\to H$ be an SOE cocycle associated to $f$.
     
     Assume that $c$ is cohomologous to a cocycle $c'$ for which there exist $v\in V\Gamma_G$ and a group homomorphism $\theta_v:G_v\to H$, such that for every $g\in G_v$ and almost every $x\in X$, one has $c'(g,x)=\theta_v(g)$.
     
     Then the actions $G\actson X$ and $H\actson Y$ are conjugate (and $c$ is cohomologous to an isomorphism).
\end{lemma}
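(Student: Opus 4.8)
The statement reduces the whole rigidity theorem to the single task of untwisting the SOE cocycle on \emph{one} vertex group; thus the plan is to combine this hypothesis with the appropriate propagation lemma to untwist on all of $G$, and then to upgrade a cocycle-untwisting to an honest conjugacy of actions. First I would replace $c$ by the cohomologous cocycle $c'$ given in the hypothesis, so that there exist $v\in V\Gamma_G$ and a homomorphism $\theta_v:G_v\to H$ with $c'(g,x)=\theta_v(g)$ for all $g\in G_v$ and a.e.\ $x\in X$. Since this is harmless (cohomologous cocycles yield virtually/actually conjugate conclusions after the final step), I work with $c'$ throughout.

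\textbf{Propagating from one vertex group to $G$.} The core step is to find an enumeration $v=v_1,v_2,\dots,v_m$ of $V\Gamma_G$ such that consecutive vertex groups interact in the way required by one of Lemmas~\ref{lemma:propagation-weak-mixing}, \ref{lemma:propagation-unique-root} or \ref{lemma:propagation-totally-ergodic}. Because $\Gamma_G$ is connected with trivial clique factor, I can order the vertices so that each $v_{j+1}$ is adjacent to some earlier vertex; adjacency of $v,w$ means $G_v$ and $G_w$ commute, which is exactly the ``centralizes'' hypothesis needed for the total-ergodicity and unique-root versions, and (being stronger than mere normalization) also suffices for the weak-mixing version. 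Concretely, I would run an induction over a connected ordering, grouping at each stage the already-untwisted vertex groups into a subgroup $G^{(j)}$ on which $c'$ already agrees with a homomorphism, and applying the relevant propagation lemma with $G_1=G^{(j)}$ (acting weakly mixingly / totally ergodically / ergodically, by the vertexwise hypothesis and the fact that a subgroup acting weakly mixingly forces the ambient product to act weakly mixingly) and $G_2=G_{v_{j+1}}$. In the weak-mixing case I invoke Lemma~\ref{lemma:propagation-weak-mixing}; in the torsion-free totally-ergodic case Lemma~\ref{lemma:propagation-totally-ergodic}; in the case where $H$ has the unique root property Lemma~\ref{lemma:propagation-unique-root}. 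In all three cases the output is a group homomorphism $\theta:G\to H$ with $c'(g,x)=\theta(g)$ for every $g\in G$ and a.e.\ $x\in X$.

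\textbf{From an untwisted cocycle to a conjugacy.} Once the SOE cocycle is cohomologous to a genuine homomorphism $\theta:G\to H$, I would argue as in the final part of \cite[Lemma~4.3]{HHI}. A stable orbit equivalence $f$ with an SOE cocycle equal to a homomorphism $\theta$ forces $\theta$ to be injective (using freeness of the $H$-action, since $\theta(g)\cdot f(p(x))=f(p(g\cdot x))$ and distinct $g$ move $x$ to distinct orbits) and to have image of index exactly $1/\kappa(f)\le 1$; since $\kappa(f)\ge 1$ we get $\kappa(f)=1$, so $f$ is an honest orbit equivalence and $\theta$ is onto. Thus $\theta:G\to H$ is an isomorphism and $f$ intertwines the two actions via $\theta$, giving a conjugacy; tracing back through the cohomology $c\sim c'\sim\theta$ shows $c$ itself is cohomologous to an isomorphism, as claimed.

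\textbf{Main obstacle.} I expect the delicate point to be arranging the induction so that the intermediate subgroups $G^{(j)}$ satisfy the exact ergodicity hypothesis demanded by the propagation lemma: vertexwise weak mixing transfers to any subgroup containing a weakly mixing vertex group, but vertexwise total ergodicity and vertexwise ergodicity do \emph{not} automatically pass to products, so in the latter two cases I must be careful to apply the lemma with $G_1$ a \emph{single} already-untwisted vertex group acting (totally) ergodically and $G_2$ the next vertex group, iterating vertex-by-vertex along a connected ordering rather than lumping all previous groups together. Verifying that such a connected, commutation-respecting ordering exists (which uses connectedness of $\Gamma_G$ and that adjacency encodes commutation via Lemma~\ref{lemma:adjacency-extension}) and that the partial homomorphisms glue consistently across the ordering is where the real bookkeeping lies.
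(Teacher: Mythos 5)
Your overall route is the one the paper takes: propagate the untwisting along the connected graph $\Gamma_G$ using Lemmas~\ref{lemma:propagation-weak-mixing}, \ref{lemma:propagation-unique-root} and~\ref{lemma:propagation-totally-ergodic}, then use $\kappa(f)\ge 1$ to force the resulting homomorphism to be an isomorphism. However, there are two genuine gaps.

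First, your ``grouping'' induction in the weak-mixing case does not satisfy the hypotheses of Lemma~\ref{lemma:propagation-weak-mixing}: you would need $G_2=G_{v_{j+1}}$ to normalize $G_1=G^{(j)}=\langle G_{v_1},\dots,G_{v_j}\rangle$, but $G_{v_{j+1}}$ only commutes with the vertex groups of the neighbours of $v_{j+1}$. For instance, if $\Gamma_G$ is a path $a-b-c$, then $G_c$ does not normalize $G_a\times G_b$. You correctly identify (for the other two cases) that one must work vertex group by vertex group; the same fix is needed in the weak-mixing case. The cleanest formulation, which is what the paper does, is to take a single walk $G_1,\dots,G_k$ through $\Gamma_G$ visiting every vertex, with repetitions allowed and $G_1=G_v$: the propagation lemmas are stated precisely to allow repetitions and only require consecutive normalization, so one application finishes the propagation. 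So this gap is easily repaired, but as written your weak-mixing branch fails.

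Second, and more seriously, your final step is wrong as stated. Injectivity of $\theta$ does not follow from freeness: if $\theta(g)=e$, then $f(p(g\cdot x))=f(p(x))$, hence $p(g\cdot x)=p(x)$; but $p(g\cdot x)$ and $p(x)$ lie in the same $G$-orbit by the very definition of $p$, so there is no conflict with freeness of either action. What one can extract from such an argument is only that $\ker\theta$ is \emph{finite} (an infinite kernel would preserve the countable fibers of $f\circ p$ and admit no invariant probability disintegration); a finite kernel is exactly the phenomenon that forces general statements such as Theorem~\ref{theo:superrigidity} to conclude only \emph{virtual} conjugacy. The paper excludes it using the hypothesis that $\Gamma_G$ has trivial clique factor --- a hypothesis your proof never uses --- which guarantees that $G$ has no non-trivial finite normal subgroup \cite[Corollary~2.9]{MV}, and then invokes \cite[Lemma~4.7]{Vae}, which delivers in one stroke: $\theta$ injective, $\theta(G)$ of finite index in $H$, the formula $\kappa(f)=1/[H:\theta(G)]$, and the conjugacy of $H\actson Y$ with the induced action $\Ind_G^H(X,\theta)$. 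Only then does $\kappa(f)\ge 1$ give $[H:\theta(G)]=1$ and an honest conjugacy. Your index formula and the passage from ``$\theta$ is an isomorphism'' to ``the actions are conjugate'' also rest on this lemma (or an equivalent construction of the conjugating map from $f\circ p$); they are not formalities that follow from the definitions.
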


\begin{proof}
The proof is almost identical to that of \cite[Lemma~4.3]{HHI}, we include it for the convenience of the reader.

Since $\Gamma_G$ is connected, we can write $\{G_w\}_{w\in V\Gamma_G}=\{G_1,\dots,G_k\}$ (possibly with repetitions among the $G_j$) in such a way that for every $j\in\{1,\dots,k-1\}$, the group $G_{j+1}$ centralizes $G_j$. We can further assume that $G_1=G_v$, with $v\in V\Gamma_G$ as in the statement of the lemma. It follows that the subgroups $G_1,\dots,G_k$ satisfy all the assumptions from either Lemma~\ref{lemma:propagation-weak-mixing} (if we had assumed that $G \actson X$ is vertexwise weakly mixing) or Lemma~\ref{lemma:propagation-unique-root} (if we had assumed that $G\actson X$ is vertexwise ergodic and $H$ has the unique root property), or Lemma~\ref{lemma:propagation-totally-ergodic} (if we had assumed that $G\actson X$ is vertexwise totally ergodic and $G$ is torsion-free).

Lemma~\ref{lemma:propagation-weak-mixing} (or Lemma~\ref{lemma:propagation-unique-root}, or Lemma~\ref{lemma:propagation-totally-ergodic}) therefore implies that $c$ is cohomologous to a group homomorphism $\theta:G\to H$. As the clique factor of $\Gamma_G$ is trivial, $G$ has no non-trivial finite normal subgroup (see e.g. \cite[Corollary~2.9]{MV}). It thus follows from \cite[Lemma~4.7 and its proof]{Vae} that $\theta$ is injective, $\theta(G)$ has finite index in $H$, the action $H\actson Y$ is conjugate to the $H$-action on $\Ind_G^H(X,\theta)$, and $\kappa(f)=\frac{1}{[H:\theta(G)]}$. As $\kappa(f)\ge 1$, we deduce that $[H:\theta(G)]=1$. In particular $\theta$ is a group isomorphism, so the actions $G\actson X$ and $H\actson Y$ are conjugate, as desired.
\end{proof}

\subsubsection{Combinatorial tools}\label{sec:more-tools}

The following two lemmas are restatements of \cite[Lemmas~1.4 and~1.5]{HHI}. In \cite{HHI}, these two results are formulated for right-angled Artin groups, however their proofs do not use the structure of the vertex groups, and are also valid for graph products. The second case in the following lemma is illustrated in Figure~\ref{fig:penta-commuting-centers}.

\begin{lemma}[{\cite[Lemmas~1.4 and~1.5]{HHI}}]\label{lemma:HHI-1}
   Let $G$ be a graph product over a connected finite simple graph $\Gamma$, such that $\Gamma$ contains no vertex joined to every other vertex. Then 
    \begin{itemize}
        \item either $G$ contains a maximal product parabolic subgroup with trivial clique factor;
        \item or else $G$ contains two maximal product parabolic subgroups $P_1,P_2$ with non-trivial clique factors $C_1,C_2$, with $C_2\subseteq C_1^{\perp}$ and $C_1\subseteq C_2^{\perp}$.
    \end{itemize}
\end{lemma}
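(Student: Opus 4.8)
The plan is to analyze the clique factor of $\Gamma$ and argue by a dichotomy on whether or not $G$ admits a maximal product parabolic subgroup with trivial clique factor. First I would recall the relevant structural facts: since $\Gamma$ is connected and no vertex is joined to every other vertex, for every vertex $v$ the orthogonal $\lk(v)=\{v\}^\perp$ is a proper subgraph of $\Gamma$, so $G_{\st(v)}=G_v\times G_{\lk(v)}$ is a proper product parabolic subgroup. Thus $G$ always contains product parabolic subgroups, and (since $\Gamma$ is finite) contains \emph{maximal} ones. The goal is to understand the clique factors of these maximal products.

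Next I would set up the dichotomy. If some maximal product parabolic subgroup has trivial clique factor, we are in the first case and done. So assume every maximal product parabolic subgroup has non-trivial clique factor; the task is to produce two such subgroups $P_1,P_2$ with clique factors $C_1,C_2$ satisfying $C_2\subseteq C_1^\perp$ and $C_1\subseteq C_2^\perp$. The key idea I expect to exploit is the following combinatorial observation on types. Writing the type of a maximal product $P$ as a join $\Lambda=\Lambda_0\circ\Lambda_1\circ\dots\circ\Lambda_n$ with $\Lambda_0$ the clique factor (the part of $\Lambda$ joined to everything in $\Lambda$), the clique factor being non-trivial means $\Lambda_0\neq\emptyset$. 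I would pick a vertex $c_1\in V\Lambda_0$, so $C_1$ is conjugate to a parabolic containing $G_{c_1}$. By maximality of $P_1$, the vertex $c_1$ is joined to all other vertices of $\Lambda$; I would then use the hypothesis (no vertex joined to \emph{every} vertex of $\Gamma$) to find a vertex $w\notin V\Lambda$ not joined to $c_1$, and build a second maximal product $P_2$ whose clique factor $C_2$ is ``orthogonal'' to $C_1$. Concretely, the orthogonality relations $C_2\subseteq C_1^\perp$ and $C_1\subseteq C_2^\perp$ translate at the level of types into: every vertex in the type of $C_2$ is adjacent to every vertex in the type of $C_1$, which is the defining condition for the join $C_1\circ C_2$ to sit inside a common star.

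The main obstacle I anticipate is the careful bookkeeping needed to guarantee that the two clique factors are \emph{mutually} orthogonal rather than merely that one contains the other up to conjugacy. Since the statement allows us to take $P_1,P_2$ conjugate to standard parabolics, I would first reduce (by conjugation) to the case where both $P_1$ and $P_2$ are standard, i.e.\ of the form $G_{\Lambda}$, so that the relations $C_i\subseteq C_j^\perp$ become purely graph-theoretic statements about adjacency. I would then construct $P_1,P_2$ directly from two vertices $c_1,c_2$ chosen so that $c_1$ and $c_2$ are non-adjacent but both lie in cliques that survive as clique factors of maximal products: the point is that when the clique factor of every maximal product is non-trivial, one can arrange $\Gamma$ to contain a configuration of two cliques $C_1,C_2$ with $C_1\circ C_2$ a join but $C_1,C_2$ each a clique factor of its respective maximal product. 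Since this is a restatement of \cite[Lemmas~1.4 and~1.5]{HHI} and those proofs are purely combinatorial (independent of the vertex groups, using only the join-decomposition structure of types of parabolic subgroups recalled in Section~\ref{sec:first-facts}), I would verify that each step of the Huang--Ioana--Horbez argument refers only to the defining graph $\Gamma$ and never to the cyclic structure of right-angled Artin vertex groups, and then invoke it verbatim. The technical heart is the case analysis distinguishing when the clique factor of a maximal product can or cannot be absorbed into a larger product, which is exactly where the hypothesis ``no vertex joined to every other vertex'' is used.
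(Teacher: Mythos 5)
Your proposal is correct and takes essentially the same route as the paper: the paper gives no independent proof of this lemma, but simply cites \cite[Lemmas~1.4 and~1.5]{HHI} and observes that those arguments are purely combinatorial statements about the defining graph (types of parabolic subgroups and their join decompositions), never using the cyclic structure of the vertex groups, hence carry over verbatim to graph products. Your concluding step — verifying the Huang--Horbez--Ioana argument is vertex-group-independent and invoking it — is exactly the paper's justification, and your preliminary sketch of the dichotomy is consistent with (though subsumed by) that citation.
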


\begin{lemma}[{\cite[Lemma~1.5]{HHI}}]\label{lemma:HHI-2}
Let $G$ be a graph product over a finite simple graph, and let $P_1,P_2\subseteq G$ be two distinct maximal product parabolic subgroups, with respective clique factors $C_1,C_2$.
Then $C_1\cap C_2=\{1\}$.
\end{lemma}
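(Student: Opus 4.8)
The plan is to argue by contradiction: assume $D:=C_1\cap C_2\neq\{1\}$ and produce a product parabolic subgroup strictly containing $P_1$, contradicting the maximality of $P_1$. Note first that $D$ is a parabolic subgroup, being an intersection of parabolic subgroups, and that both $C_1,C_2$ must be nontrivial (if $C_i=\{1\}$ then $D=\{1\}$). Since $D\subseteq C_1$ and $C_1$ is of clique type, $D$ is itself of clique type. Throughout I will use that the normalizer of a parabolic subgroup $Q$ equals $Q\times Q^{\perp}$, and the decomposition lemma for parabolic subgroups of a direct product (Lemma~\ref{lemma:product-parabolic-subgroup}).

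The first key step is to show that $P_i\subseteq N_G(D)=D\times D^{\perp}$ for $i\in\{1,2\}$. Write $P_i=C_i\times F_i$, where $C_i$ is the clique factor and $F_i$ the complementary factor. On one hand, $F_i$ centralizes $C_i$, hence centralizes $D\subseteq C_i$, so $F_i\subseteq C_G(D)\subseteq N_G(D)$. On the other hand, $D$ is a parabolic subgroup contained in the clique-type parabolic $C_i$; decomposing $C_i$ as a product of its vertex groups and applying Lemma~\ref{lemma:product-parabolic-subgroup} (together with the fact that a parabolic subgroup contained in a single vertex group is $\{1\}$ or a conjugate of that vertex group, by Proposition~\ref{prop:am}), one sees that $D$ is a direct factor of $C_i$, so $C_i$ normalizes $D$. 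Therefore $P_i=\langle C_i,F_i\rangle$ normalizes $D$. Since moreover $D\subseteq C_i\subseteq P_i$, Lemma~\ref{lemma:product-parabolic-subgroup} applied inside $N_G(D)=D\times D^{\perp}$ yields $P_i=(P_i\cap D)\times(P_i\cap D^{\perp})=D\times B_i$, where $B_i:=P_i\cap D^{\perp}$ is a parabolic subgroup of $D^{\perp}$.

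The second step builds the enveloping product. Let $R$ be the smallest parabolic subgroup of $G$ containing $\langle B_1,B_2\rangle$. As $B_1,B_2\subseteq D^{\perp}$ and $D^{\perp}$ is parabolic, we have $R\subseteq D^{\perp}$, so $R$ commutes with $D$ and $R\cap D\subseteq D^{\perp}\cap D=\{1\}$; hence the product $D\times R$ is a genuine internal direct product. It is parabolic: realizing $D$ as a conjugate of some $G_\Theta$ and $R$ as a parabolic of $D^{\perp}$ (a conjugate of $G_{\Theta^{\perp}}$), the join $\Theta\circ\Xi$ with $\Xi\subseteq\Theta^{\perp}$ exhibits $D\times R$ as a conjugate of $G_{\Theta\circ\Xi}$. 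It contains $P_1=D\times B_1$ and $P_2=D\times B_2$, and both factors $D$ and $R\supseteq B_1$ are nontrivial, so $D\times R$ is a \emph{product} parabolic subgroup. Finally, if $D\times R=P_1$ then, comparing $D^{\perp}$-components via Lemma~\ref{lemma:product-parabolic-subgroup}, we would get $R=B_1$, forcing $P_2=D\times B_2\subseteq P_1$; by maximality of $P_2$ this gives $P_1=P_2$, contradicting $P_1\neq P_2$. Hence $P_1\subsetneq D\times R$, contradicting the maximality of $P_1$. This contradiction shows $D=\{1\}$.

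The main obstacle I anticipate is the careful verification in the first step that $D$ is a direct factor of the clique-type parabolic $C_i$ (so that $C_i$, and then $P_i$, normalizes $D$ and one gets the inclusion $P_i\subseteq D\times D^{\perp}$), together with the accompanying check that $D\times R$ is honestly a parabolic subgroup rather than merely an abstract product. Both reduce to bookkeeping with the types of parabolic subgroups and the Antolín--Minasyan results, but must be carried out cleanly, since everything hinges on these normalizer inclusions being rigorous.
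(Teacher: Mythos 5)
Your proof is correct, and a first remark is that the paper itself contains no proof of this lemma: it is quoted from \cite[Lemma~1.5]{HHI} (stated there for right-angled Artin groups) together with the remark that the argument carries over to graph products verbatim. Your argument is precisely the natural graph-product version of that argument: the engine is that a nontrivial parabolic subgroup $D=C_1\cap C_2$ forces both $P_1$ and $P_2$ into $N_G(D)=D\times D^{\perp}$, which is a product parabolic subgroup, so that maximality collapses $P_1=P_2$. All the individual steps check out: $D$ is parabolic of clique type, hence a standard direct factor of each $C_i$ (your use of Lemma~\ref{lemma:product-parabolic-subgroup} and Proposition~\ref{prop:am} here is the right bookkeeping), so $C_i$ normalizes $D$ while $F_i$ centralizes it, giving $P_i\subseteq D\times D^{\perp}$ and then $P_i=D\times B_i$.

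Two smaller points. First, there is one unjustified assertion: you state that $R\supseteq B_1$ is nontrivial, which is needed for $D\times R$ to be a \emph{product} parabolic subgroup, but you never prove $B_1\neq\{1\}$. This does hold and is a one-line fix: if $B_1=B_2=\{1\}$ then $P_1=D=P_2$, contradicting distinctness (alternatively, $B_1=\{1\}$ gives $P_1=D\subseteq P_2$, and maximality of $P_1$ forces $P_1=P_2$); either way $R\supseteq\langle B_1,B_2\rangle$ is nontrivial. Second, the detour through the parabolic closure $R$ of $\langle B_1,B_2\rangle$ can be skipped entirely: once $P_1,P_2\subseteq D\times D^{\perp}$ is established, note that $D\times D^{\perp}$ has type $\Theta\circ\Theta^{\perp}$, which splits as a nontrivial join (hence $D\times D^{\perp}$ is a product parabolic subgroup) unless $D$ is a conjugate of a single vertex group and $D^{\perp}=\{1\}$; in that degenerate case $P_i\subseteq D$ admits no product parabolic subgroup at all, a contradiction, and in the main case maximality of both $P_1$ and $P_2$ gives $P_1=D\times D^{\perp}=P_2$ directly. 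So your proof is sound and matches the intended mechanism, just with one recoverable omission and one avoidable layer.
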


\begin{figure}[htbp]
    \centering
    \includegraphics[width=0.75\textwidth]{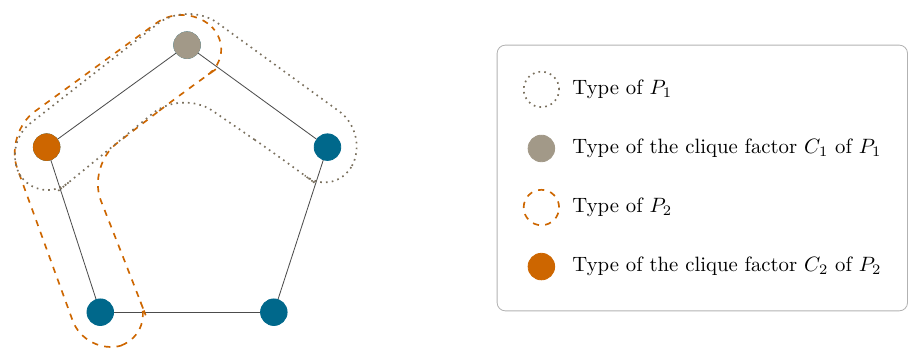}
    \caption{Example of products with commuting clique factors}
    \label{fig:penta-commuting-centers}
\end{figure}

\subsubsection{Groupoid tools: Detecting products with trivial clique factor}

We refer to \cref{de:strongly-reduced} for the notion of a strongly reduced graph. The following lemma is a variation over the results presented in Section~\ref{sec:strongly-reduced} of the present work -- the reader is refered to this section for all relevant definitions, in particular Definition~\ref{de:padm}, \cpageref{de:padm} for Property~$\Padm$. It gives a groupoid theoretic characterization of product parabolic subgroups with non-trivial clique factor.

 \begin{lemma}\label{lemma:recognize-star}
 Let $G$ be a graph product of countably infinite groups over a strongly reduced finite simple graph $\Gamma$. Let $\calg$ be a measured groupoid over a standard probability space $X$, equipped with an action-like cocycle $\rho:\calg\to G$. Let $P$ be a product parabolic subgroup whose type is not a clique. Let $\calp=\rho^{-1}(P)$.

 Then $P$ has a non-trivial clique factor if and only if given any non-empty finite set $\{\calq_1,\dots,\calq_n\}$ of measured subgroupoids of $\calp$, if $(\calg,\calp,\calq_j)$ satisfies Property~$\Padm$ for every $j\in\{1,\dots,n\}$, then $\calq_1\cap\dots\cap\calq_n$ is of infinite type.
 \end{lemma}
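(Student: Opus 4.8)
The plan is to translate the group-theoretic characterization of clique factors into the groupoid setting, exploiting the fact that the clique-inclusive co-factors are exactly the subgroupoids satisfying Property~$\Padm$ (Lemma~\ref{lemma:admissible}). Recall that by Lemma~\ref{lemma:admissible}, a subgroupoid $\calq\subseteq\calp$ is of clique-inclusive co-factor type within $(\calp,\rho)$ if and only if $(\calg,\calp,\calq)$ satisfies Property~$\Padm$. So the intersections $\calq_1\cap\dots\cap\calq_n$ appearing in the statement correspond, up to a countable Borel partition and passing to conull subsets, to subgroupoids of the form $\rho^{-1}(Q_1\cap\dots\cap Q_n)$, where each $Q_j$ is a clique-inclusive co-factor of $P$.

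First I would prove the forward implication. Assume $P$ has a non-trivial clique factor $C_0$, and write $P=C_0\times F_1\times\dots\times F_k$. By Lemma~\ref{lemma:admissible}, for each $j$ we may (after a countable Borel partition) take $\calq_j=\rho^{-1}(Q_j)$ for some clique-inclusive co-factor $Q_j$. The key combinatorial observation is that \emph{every} clique-inclusive co-factor contains $C_0$ by definition (it is of the form $C_0\times F_1\times\dots\times\hat F_i\times\dots\times F_k$). Hence $Q_1\cap\dots\cap Q_n\supseteq C_0\neq\{1\}$, so $Q_1\cap\dots\cap Q_n$ is an infinite parabolic subgroup, and since $\rho$ is action-like, $\calq_1\cap\dots\cap\calq_n=\rho^{-1}(Q_1\cap\dots\cap Q_n)$ is of infinite type. (One must be slightly careful to carry out the partition uniformly for the finitely many $\calq_j$ and take a common refinement; this is routine, and the infinite-type conclusion is local so it suffices to check it on each piece of positive measure.)

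For the converse I would argue contrapositively: assume the clique factor $C_0$ of $P$ is trivial, i.e.\ $P=F_1\times\dots\times F_k$ with each $F_j$ irreducible and not a clique (using that the type of $P$ is not a clique). I would exhibit a specific finite family $\{\calq_1,\dots,\calq_k\}$ of clique-inclusive co-factor subgroupoids whose intersection is stably trivial. Taking $Q_j=F_1\times\dots\times\hat F_j\times\dots\times F_k$ for $j\in\{1,\dots,k\}$, one has $Q_1\cap\dots\cap Q_k=\{1\}$ at the group level. Letting $\calq_j=\rho^{-1}(Q_j)$, Lemma~\ref{lemma:admissible} ensures $(\calg,\calp,\calq_j)$ satisfies Property~$\Padm$, while $\calq_1\cap\dots\cap\calq_k=\rho^{-1}(\{1\})$ is stably trivial since $\rho$ has trivial kernel. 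This witnesses the failure of the infinite-type condition, completing the contrapositive. The main obstacle I anticipate is bookkeeping with the ``up to countable partition'' quantifiers: the subgroupoids $\calq_j$ given abstractly by Property~$\Padm$ are only of co-factor type after refining partitions that may differ for each $j$, so I would first take a common refinement $X^*=\dunion_{i\in I}X_i$ on which all $\calq_j$ are simultaneously of the form $\rho^{-1}(Q_{j,i})_{|X_i}$ with $Q_{j,i}$ a clique-inclusive co-factor, and then argue piecewise on each $X_i$ of positive measure, using that both ``infinite type'' and ``stably trivial'' are local properties detected on positive measure subsets.
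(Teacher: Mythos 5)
Your proposal is correct and follows essentially the same route as the paper's proof: the forward direction applies Lemma~\ref{lemma:admissible} together with the observation that every clique-inclusive co-factor contains the clique factor $C_0$ (so $\rho^{-1}(C_0)\subseteq\calq_1\cap\dots\cap\calq_n$ up to partition, giving infinite type), and the converse exhibits the explicit family $\calq_j=\rho^{-1}(F_1\times\dots\times\hat F_j\times\dots\times F_k)$ whose intersection is trivial because $\rho$ has trivial kernel. The only detail you elide is the verification of the hypothesis of Lemma~\ref{lemma:admissible} that $(\calp,\rho)$ is nowhere of clique type, which the paper checks in one line using that the type of $P$ is not a clique, that $\rho$ is action-like, and Lemma~\ref{lemma:inclusion-parabolics}.
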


 \begin{proof}
 Write $P=C_0\times F_1\times\dots\times F_k$, where $C_0$ is the clique factor of $P$, and the $F_j$ do not further split as direct products of parabolic subgroups. We have $k\ge 1$ because the type of $P$ is not a clique. 
 
 We first assume that $C_0\neq\{1\}$. Let $\calq_1,\dots,\calq_n$ be finitely many measured subgroupoids of $\calg$ such that the triples $(\calg,\calp,\calq_j)$ all satisfy Property~$\Padm$. Since the type of $P$ is not a clique and $\rho$ is action-like, the pair $(\calp,\rho)$ is nowhere of clique type (Lemma~\ref{lemma:inclusion-parabolics}, \cpageref{lemma:inclusion-parabolics}). Therefore, we can apply Lemma~\ref{lemma:admissible}, \cpageref{lemma:admissible} and deduce that for every $j\in\{1,\dots,n\}$, $(\calq_j,\rho)$ is of clique-inclusive co-factor type within $(\calp,\rho)$. Thus, up to a conull Borel subset of $X$ and a countable Borel partition, we have $\rho^{-1}(C_0)\subseteq \calq_1\cap\dots\cap\calq_n$. In particular $\calq_1\cap\dots\cap\calq_n$ is of infinite type.

 We now assume that $C_0=\{1\}$. For every $j\in\{1,\dots,k\}$, let
 \[\calq_j=\rho^{-1}(F_1\times\dots\times \hat{F}_j\times\dots\times F_k).\]
 By Lemma~\ref{lemma:admissible}, \cpageref{lemma:admissible} for every $j\in\{1,\dots,k\}$, the triple $(\calg,\calp,\calq_j)$ satisfies Property~$\Padm$. In addition $\calq_1\cap\dots\cap\calq_n$ is trivial.
 \end{proof}

 \begin{lemma}\label{lemma:reducibility-invariant}
     Let $G,H$ be two graph products with countably infinite vertex groups, over finite simple graphs $\Gamma_G,\Gamma_H$ which are not cliques. If $G$ and $H$ are measure equivalent and $\Gamma_G$ is reducible, then $\Gamma_H$ is reducible. 
 \end{lemma}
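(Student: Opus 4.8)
The plan is to detect reducibility through a purely groupoid-theoretic property — namely Property~$\Pprod$ applied to the pair $(\calg,\calg)$ — and then transfer it from one cocycle to the other. To the measure equivalence coupling between $G$ and $H$ I associate a measured groupoid $\calg$ over a standard probability space $X$ together with two action-like cocycles $\rho_G\colon\calg\to G$ and $\rho_H\colon\calg\to H$, as in \cref{Ex:GroupoidsAssociatedToCouplings} and \cref{lemma:action-like-1}. Since $\Gamma_G$ and $\Gamma_H$ are not cliques, reducibility of the graph product is the same as saying that the whole group is a product parabolic subgroup (its type is the whole defining graph, which is then a join). The key observation is that $\calg=\rho_G^{-1}(G)=\rho_H^{-1}(H)$, so reducibility of $G$ (resp.\ $H$) is encoded by whether $(\calg,\rho_G)$ (resp.\ $(\calg,\rho_H)$) is of maximal product type; by \cref{lemma:maximal-product} this is governed by Property~$\Pprod$ of the pair $(\calg,\calg)$, which makes no reference to any cocycle. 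This cocycle-independence is exactly what lets reducibility pass from $G$ to $H$.

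First I would verify that $(\calg,\rho_G)$ — that is, the pair $(\calp,\rho_G)$ with $\calp=\calg$ — is of maximal product type and nowhere of isolated clique type. Since $\Gamma_G$ is reducible, $G$ is a product parabolic subgroup; being the whole group, it is maximal for inclusion among product parabolic subgroups, so the trivial partition with $P=G$ witnesses that $(\calg,\rho_G)$ is of maximal product type. It is nowhere of isolated clique type: if $(\calg_{|U},\rho_G)$ were of isolated clique type on a positive measure subset $U$, then on some positive measure $X_i$ we would have $\rho_G^{-1}(G)_{|X_i}=\rho_G^{-1}(Q_i)_{|X_i}$ with $Q_i$ of isolated clique type, and \cref{lemma:inclusion-parabolics} would force $G=Q_i$, i.e.\ $\Gamma_G$ a clique, contradicting the hypothesis. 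Then the first assertion of \cref{lemma:maximal-product} yields that $(\calg,\calg)$ satisfies Property~$\Pprod$.

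Now, as Property~$\Pprod$ of $(\calg,\calg)$ is cocycle-free, I would apply the second assertion of \cref{lemma:maximal-product} to the cocycle $\rho_H$: there is a Borel partition $X=X^1\dunion X^2$ such that $(\calg_{|X^1},\rho_H)$ is of maximal product type, and $\calg_{|X^2}$ is stably contained in a subgroupoid $\calq$ with $(\calq,\rho_H)$ of isolated clique type. On $X^1$, stably $\rho_H^{-1}(H)_{|X_i}=\rho_H^{-1}(P_i)_{|X_i}$ for a maximal product parabolic $P_i\subseteq H$; by \cref{lemma:inclusion-parabolics} this gives $H=P_i$, so $H$ is a product parabolic subgroup and $\Gamma_H$ is a join, hence reducible. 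On $X^2$, stably $\rho_H^{-1}(H)_{|X_i}\subseteq\rho_H^{-1}(Q_i)_{|X_i}$ with $Q_i$ of isolated clique type, and \cref{lemma:inclusion-parabolics} forces $H=Q_i$, i.e.\ $\Gamma_H$ a clique, contradicting the hypothesis. Thus $X^2$ is null, $X^1$ has full measure, and $\Gamma_H$ is reducible.

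The main obstacle — and the only place where the full strength of the hypotheses enters — is the asymmetry in \cref{lemma:maximal-product}: Property~$\Pprod$ only characterizes maximal product type up to the escape hatch of isolated clique type. Controlling this escape hatch is precisely what requires the assumption that neither $\Gamma_G$ nor $\Gamma_H$ is a clique, which I exploit through the rigidity of inclusions of parabolic subgroups in \cref{lemma:inclusion-parabolics}. I expect no difficulty from the clique factors of the graphs, since the whole argument only concerns the ambient group viewed as a parabolic subgroup of itself, never its internal product decomposition.
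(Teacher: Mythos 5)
Your proof is correct and follows essentially the same route as the paper's: both establish Property~$\Pprod$ for $(\calg,\calg)$ via Lemma~\ref{lemma:maximal-product}(1) applied to $\rho_G$, then apply Lemma~\ref{lemma:maximal-product}(2) to $\rho_H$ and use Lemma~\ref{lemma:inclusion-parabolics} together with the non-clique hypotheses to rule out the isolated clique alternative. Your write-up merely spells out details (the trivial partition witnessing maximal product type, and the null-measure argument for $X^2$) that the paper's proof leaves implicit.
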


\begin{proof}
    Since $G$ and $H$ are measure equivalent, we can find a measured groupoid $\calg$ over a standard probability space $X$, equipped with two action-like cocycles $\rho_G:\calg\to G$ and $\rho_H:\calg\to H$ (see Example~\ref{Ex:CocycleAssociatedToAnAction}, \cpageref{Ex:CocycleAssociatedToAnAction}, and Lemma~\ref{lemma:action-like-1}, \cpageref{lemma:action-like-1}). Since $\Gamma_G$ is reducible, $(\calg,\rho_G)$ itself is of maximal product type within $(\calg,\rho_G)$. And it is not of isolated clique type because $\Gamma_G$ is not a clique. By Lemma~\ref{lemma:maximal-product}(1), \cpageref{lemma:maximal-product} applied to the cocycle $\rho_G$, the pair $(\calg,\calg)$ satisfies Property~$\Pprod$. And by Lemma~\ref{lemma:maximal-product}(2) applied to the cocycle $\rho_H$, we deduce that $(\calg,\rho_H)$ is also of maximal product type (it is nowhere contained in a subgroupoid of isolated clique type because $\Gamma_H$ is not a clique). This implies that $\Gamma_H$ is reducible, as desired. 
\end{proof}

\subsubsection{\texorpdfstring{A theorem of Monod--Shalom}{Monod--Shalom's theorems for direct products of Creg groups}}\label{sec:creg}

We will need the following theorem which comes from the work of Monod--Shalom (see \cite[p.~862]{MS}). Its formulation comes from \cite{HHI}.

\begin{Th}[{Monod--Shalom, see \cite[Lemma~3.1]{HHI}}]\label{theo:ms-2}
	Let $G,H$ be countable groups. Let $G\actson X$ and $H\actson Y$ be free ergodic measure-preserving actions on standard probability spaces. Let $f:U\to V$ be a stable orbit equivalence from $G\actson X$ to $H\actson Y$ (where $U\subseteq X$ and $V\subseteq Y$ are positive measure Borel subsets). Let $c:G\times X\to H$ be an SOE cocycle associated to $f$.
	
	Let $A\unlhd G$ and $B\unlhd H$ be normal subgroups acting ergodically on $X,Y$, and assume that for every $x\in U$, one has $f((A\cdot x)\cap U)= (B\cdot f(x))\cap V$. 
	
Then there exist a group isomorphism $\alpha:G/A\to H/B$, and a cocycle $c':G\times X\to H$ such that
\begin{itemize}
    \item $c'$ is $H$-cohomologous to $c$;
    \item for every $g\in G$ and almost every $x\in X$, if $x,g\cdot x\in U$, then $c'(g,x)=c(g,x)$;
    \item for every $g\in G$ and almost every $x\in X$, one has $c^\prime(g,x)\in\alpha(gA)$. 
\end{itemize}
\end{Th}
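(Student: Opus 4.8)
The plan is to first normalize the SOE cocycle so that it carries $A$ into $B$ exactly, then observe that its $B$-class becomes $A$-invariant and hence essentially constant by ergodicity, producing a homomorphism $G/A\to H/B$, and finally to run the symmetric construction for $f^{-1}$ and check that the two homomorphisms are mutually inverse. \textbf{Step 1 (support on $U$).} I would choose the projection $p$ defining $c$ so that $p(x)=x$ for $x\in U$ (possible as recalled in the excerpt). For $a\in A$ and a.e.\ $x\in U$ with $ax\in U$, the defining relation gives $f(ax)=c(a,x)\cdot f(x)$; the hypothesis $f((A\cdot x)\cap U)=(B\cdot f(x))\cap V$ forces $f(ax)\in B\cdot f(x)$, and freeness of the $H$-action then yields $c(a,x)\in B$.

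\textbf{Step 2 (untwisting off $U$).} Since $A\actson X$ is ergodic, almost every $A$-orbit meets $U$, so a measurable selection provides $x\mapsto a_x\in A$ with $a_x x\in U$, taking $a_x=e$ on $U$. I would set $\varphi(x):=c(a_x,x)$ and define $c'(g,x):=\varphi(gx)\,c(g,x)\,\varphi(x)^{-1}$. This is an $H$-coboundary, hence $c'$ is $H$-cohomologous to $c$, and since $\varphi\equiv e$ on $U$ it coincides with $c$ whenever $x,gx\in U$. A short computation with the cocycle identity rewrites $c'(a,x)=c\big(a_{ax}\,a\,a_x^{-1},\,a_xx\big)$ for $a\in A$; here $a_xx\in U$ and $a_{ax}\,a\,a_x^{-1}\in A$ sends $a_xx$ to $a_{ax}(ax)\in U$, so Step~1 gives $c'(a,x)\in B$ for a.e.\ $x\in X$.

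\textbf{Step 3 (descending to the quotient).} Using $A\unlhd G$ together with $c'(A\times X)\subseteq B$, for each fixed $g\in G$ the cocycle identity applied to $ga$ and to $ga=(gag^{-1})g$ shows $c'(g,ax)B=c'(g,x)B$ for every $a\in A$. Thus $x\mapsto c'(g,x)B\in H/B$ is $A$-invariant, hence essentially constant by ergodicity of $A$; I denote its value $\alpha(gA)$. The same computation shows it depends only on $gA$, projecting the cocycle identity shows $\alpha$ is a homomorphism $G/A\to H/B$, and by construction $c'(g,x)\in\alpha(gA)$ for a.e.\ $x$. As $G$ is countable, one conull set works simultaneously for all $g$, giving the three bullet points with this $\alpha$.

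\textbf{Step 4 (bijectivity — the main obstacle).} Running Steps~1--3 for the inverse stable orbit equivalence $f^{-1}\colon V\to U$ with an associated SOE cocycle $c''\colon H\times Y\to G$ produces a homomorphism $\beta\colon H/B\to G/A$, and it remains to prove $\beta\circ\alpha=\mathrm{id}$ and $\alpha\circ\beta=\mathrm{id}$. The key input is the standard inverse-cocycle identity on $U$: for $x\in U$ and $g\in G$ with $gx\in U$, the element $c(g,x)\in H$ sends $f(x)$ to $f(gx)$, while $c''(c(g,x),f(x))$ is exactly the element of $G$ carrying $x$ to $gx$, namely $g$ by freeness. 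Projecting these identities to $G/A$ and $H/B$, and using $A$- and $B$-ergodicity to wash out the basepoint, yields that $\alpha$ and $\beta$ are mutually inverse. I expect the genuine difficulty to lie here rather than in the construction of $\alpha$: one must track the almost-everywhere statements carefully and, via Poincar\'e recurrence and the ergodicity of $A$ and $B$, guarantee that the relevant returns to $U$ (resp.\ $V$) occur on positive-measure sets, so that the pointwise identities above actually have content and the one-sided constructions can be reconciled.
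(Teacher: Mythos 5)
The paper does not prove this statement: it is quoted as an external result of Monod--Shalom, in the formulation of \cite[Lemma~3.1]{HHI}, and is used as a black box in the proof of Theorem~\ref{theo:strong-rigidity}. Your untwist-and-descend argument is correct and is essentially the standard proof from those sources: normalize the projection so that $c(a,x)\in B$ on returns of $U$ to itself, conjugate by $\varphi(x)=c(a_x,x)$ to get $c'(A\times X)\subseteq B$ everywhere, use normality of $A$ and $B$ plus ergodicity of $A$ to make $x\mapsto c'(g,x)B$ essentially constant, and invert $\alpha$ via the symmetric construction for $f^{-1}$.

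Two clarifications. First, in Step~4 the tool that actually works is ergodicity of $A$ rather than Poincar\'e recurrence: recurrence only produces returns along powers $g^n$, which would identify $\beta(\alpha(g^nA))$ rather than $\beta(\alpha(gA))$. Instead, since $A\cdot gU$ is conull, pick $a\in A$ with $U\cap agU$ of positive measure; on that set the identity $c''\bigl(c(ag,x),f(x)\bigr)=ag$ combined with the coset constraints gives $\beta(\alpha(agA))=agA$, and normality of $A$ yields $agA=gA$, hence $\beta\circ\alpha=\mathrm{id}$ (and $\alpha\circ\beta=\mathrm{id}$ by symmetry). Second, your choice of projection with $p_{|U}=\mathrm{id}$ is not a loss of generality but the intended reading of the statement: the second bullet point only makes sense for a cocycle normalized this way (for a general projection, $c(a,x)$ on $U$-returns need not lie in $B$, so the second and third bullets would be incompatible), and this is exactly the normalization the paper imposes on $c$ when it invokes the theorem in the proof of Theorem~\ref{theo:strong-rigidity}.
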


\subsubsection{End of the proof}\label{sec:proof-strong-rigidity}

We now complete our proof of Theorem~\ref{theo:strong-rigidity}. We follow the proof of \cite[Propositions~6.1 and~6.2]{HHI} very closely, with only a few departures in the presentation (in particular, we have decided to treat the cases corresponding to \cite[Proposition~6.1]{HHI} and \cite[Proposition~6.2]{HHI} in a harmonized way).

\begin{proof}[Proof of Theorem~\ref{theo:strong-rigidity}]
We will prove the following: if $G\actson X$ and $H\actson Y$ are vertexwise ergodic free, probability measure-preserving actions, and if $c:G\times X\to H$ is an SOE cocycle, then there exists a vertex group $G_v\subseteq G$ such that $c_{|G_v\times X}$ is $H$-cohomologous to a group homomorphism $G_v\to H$. In view of Lemma~\ref{lemma:propagation}, this will be enough to conclude our proof.

By collapsing some subgraphs if necessary, we can write $G$ and $H$ as graph products over finite simple graphs $\bar\Gamma_G$ and $\bar\Gamma_H$ that satisfy the following two conditions:
\begin{itemize}
\item $\bar\Gamma_G$ and $\bar\Gamma_H$ are joins (possibly with only one join factor) of strongly reduced graphs in the sense of Definition~\ref{de:strongly-reduced}, \cpageref{de:strongly-reduced}.
\item The graph $\bar\Gamma_G$ does not contain any vertex $v$ such that $\bar\Gamma_G=\st(v)$, and neither does $\bar\Gamma_H$.
\end{itemize}
The vertex groups for the graph product structure of $G$ over $\bar\Gamma_G$ contain those for its graph product structure over $\Gamma_G$, so in particular the action of $G\actson X$ is still vertexwise ergodic with respect to the graph product structure over $\bar\Gamma_G$ (and likewise for $H$). Therefore $\bar\Gamma_G$ and $\bar\Gamma_H$ still satisfy all assumptions from Theorem~\ref{theo:strong-rigidity}. Thus, from now on, we will assume without loss of generality that $\Gamma_G$ and $\Gamma_H$ are joins of  strongly reduced graphs to start with.  

 Let $U\subseteq X$ and $V\subseteq Y$ be the positive measure Borel subsets defined as the source and range of the stable orbit equivalence $f$. In other words $f$ is a measure-scaling isomorphism $U\to V$ that sends orbits (restricted to $U$) to orbits (restricted to $V$). After renormalizing the measures on $U$ and $V$ to probability measures, the groupoid $(G\ltimes X)_{|U}$ is isomorphic (via $f$) to $(H\ltimes Y)_{|V}$. We denote by $\calg$ this common groupoid, which naturally comes equipped with two cocycles $\rho_G:\calg\to G$ and $\rho_H:\calg\to H$, which are action-like in view of Lemma~\ref{lemma:action-like-1}, \cpageref{lemma:action-like-1} and Remark~\ref{rk:action-like-restriction}, \cpageref{rk:action-like-restriction}. We view $\calg$ as a measured groupoid over $U$.

Our proof has three steps.
 \medskip
 
 \noindent \textbf{{Step 1.}} We show that there exist 
 \begin{itemize}
\item parabolic subgroups $P_G^1,P_G^2\subseteq G$, and $P_H^1,P_H^2\subseteq H$, 
\item normal infinite parabolic subgroups $C_G^i\unlhd P_G^i$ and $C_H^i\unlhd P_H^i$ for every $i\in\{1,2\}$, such that
\begin{itemize}
    \item $C_G^1$ and $C_G^2$ intersect trivially and commute,
    \item $C_H^1$ and $C_H^2$ intersect trivially and commute,
    \item for every $i\in\{1,2\}$, one has $P_G^i=C_G^i\times (C_G^i)^{\perp}$ and $P_H^i=C_H^i\times (C_H^i)^{\perp}$,
\end{itemize}
\item a positive measure Borel subset $W\subseteq U$, such that
\begin{itemize}
    \item for every $i\in\{1,2\}$, one has $\rho_G^{-1}(P_G^i)_{|W}=\rho_H^{-1}(P_H^i)_{|W}$,
    \item for every $i\in\{1,2\}$, one has $\rho_G^{-1}(C_G^i)_{|W}=\rho_H^{-1}(C_H^i)_{|W}$.
\end{itemize}
\end{itemize}

\smallskip

The proof of Step~1 decomposes into three cases.
 
 \smallskip
 
\noindent \textbf{Case 1:} The graph $\Gamma_G$ decomposes non-trivially as a join.

 \smallskip
 
  Write $\Gamma_G=\Gamma_1\circ\dots\circ\Gamma_k$, where each $\Gamma_j$ does not split further as a join -- notice that no $\Gamma_j$ is reduced to one vertex, as $\Gamma_G$ is not contained in the star of a vertex by assumption. Lemma~\ref{lemma:reducibility-invariant} ensures that $\Gamma_H$ also splits as a join: we write $\Gamma_H=\Lambda_1\circ\dots\circ\Lambda_n$ in such a way that no $\Lambda_j$ further splits as a join. Here again our assumption on $\Gamma_H$ ensures that no $\Lambda_j$ is reduced to one vertex. 
  
  We let $P_G^1=P_G^2=G$, and $P_H^1=P_H^2=H$, and we let $C_G^1$ and $C_G^2$ be two different factors of $G$ (so $C_G^1$ and $C_G^2$ intersect trivially and commute). 
  
  We now apply Lemma~\ref{lemma:factor-2}, \cpageref{lemma:factor-2}. This lemma gives a characterization of being of factor type with no reference to any action-like cocycle, and therefore it ensures that there exists a positive measure Borel subset $W\subseteq U$, and distinct factors $C_H^1,C_H^2$ of $H$, such that $\rho_G^{-1}(C_G^i)_{|W}=\rho_H^{-1}(C_H^i)_{|W}$ for every $i\in\{1,2\}$. This completes Step~1 in this case.

\medskip

\noindent \textbf{Case 2:} The graph $\Gamma_G$ does not decompose non-trivially as a join, and $G$ contains a maximal product parabolic subgroup $P_G$ with trivial clique factor.

\smallskip

We let $P_G^1=P_G^2:=P_G$, and let $C_G^1$ and $C_G^2$ be two distinct factors of $P_G$ (so $C_G^1$ and $C_G^2$ commute and intersect trivially). Notice that the maximality of $P_G$ ensures that $P_G=C_G^i\times (C_G^i)^{\perp}$ for every $i\in\{1,2\}$. 

 By Lemma~\ref{lemma:reducibility-invariant}, the graph $\Gamma_H$ does not decompose non-trivially as a join either. And since $\Gamma_G$ and $\Gamma_H$ do not contain isolated cliques (because they are connected), there is no subgroupoid of $\calg$ of isolated clique type for either $\rho_G$ or $\rho_H$, even in restriction to a positive measure subset. Lemma~\ref{lemma:maximal-product}, \cpageref{lemma:maximal-product} therefore implies that we can find a positive measure Borel subset $V\subseteq U$, and a maximal product parabolic subgroup $P_H\subseteq H$, such that $\rho^{-1}(P_G)_{|W}=\rho_H^{-1}(P_H)_{|W}$. We let $P_H^1=P_H^2:=P_H$.
 
 We now apply Lemma~\ref{lemma:factor}, \cpageref{lemma:factor}. This lemma gives a characterization of being of factor type with no reference to any action-like cocycle, and therefore it ensures that, up to replacing $W$ by a positive measure Borel subset, there exist two distinct factors $C_H^1,C_H^2$ of $P_H$ such that $\rho^{-1}(C_H^i)_{|W}=\rho^{-1}(C_G^i)_{|W}$ for every $i\in\{1,2\}$. And the maximality of $P_H$ ensures that $P_H=C_H^i\times (C_H^i)^{\perp}$ for every $i\in\{1,2\}$.  This completes the proof of Step~1 in this case.
 
 \medskip

\noindent \textbf{Case 3:} The graph $\Gamma_G$ does not decompose non-trivially as a join, and $G$ does not contain any maximal product parabolic subgroup $P_G$ with trivial clique factor.

\smallskip
 
 In this case, Lemma~\ref{lemma:HHI-1} ensures that $G$ contains two distinct maximal product parabolic subgroups $P_G^1,P_G^2$ whose clique factors $C_G^1,C_G^2$ are non-trivial and commute; in this case $C_G^1\cap C_G^2=\{1\}$ by Lemma~\ref{lemma:HHI-2}. For every $i\in\{1,2\}$, the maximality of $P_G^i$ ensures that $P_G^i=C_G^i\times (C_G^i)^{\perp}$. 
 
 Again by Lemma~\ref{lemma:reducibility-invariant}, the graph $\Gamma_H$ does not decompose non-trivially as a join either.
 
Since $\Gamma_G$ has no isolated clique, Lemma~\ref{lemma:maximal-product}, \cpageref{lemma:maximal-product} gives a characterization of being of maximal product type that does not depend on any action-like cocycle. Since $\Gamma_H$ has no isolated clique, Lemma~\ref{lemma:maximal-product}, applied to the cocycle $\rho_H$, therefore ensures that there exist maximal product parabolic subgroups $P_H^1,P_H^2$ and a positive measure Borel subset $W\subseteq U$ such that for every $i\in\{1,2\}$, we have $\rho_G^{-1}(P_G^i)_{|W}=\rho_H^{-1}(P_H^i)_{|W}$. Notice that, since $P_G^1\neq P_G^2$, we have $\rho_G^{-1}(P_G^1)_{|W}\neq \rho_G^{-1}(P_G^2)_{|W}$ (see Lemma~\ref{lemma:inclusion-parabolics}, \cpageref{lemma:inclusion-parabolics}), and therefore $P_H^1\neq P_H^2$. 

By Lemma~\ref{lemma:recognize-star}, the fact that $P_G^i$ has a non-trivial clique factor is characterized by a groupoid-theoretic property of $\rho_G^{-1}(P_G^i)$ which is independent from the action-like cocycle. It follows that $P_H^1$ and $P_H^2$ also have non-trivial clique factors, which we denote respectively by $C_H^1$ and $C_H^2$. By Lemma~\ref{lemma:HHI-2}, we have $C_H^1\cap C_H^2=\{1\}$. 

Likewise, Lemma~\ref{lemma:factor}, which characterizes the clique factor by a groupoid-theoretic property, ensures that, up to replacing $W$ by a positive measure Borel subset, we have $\rho_G^{-1}(C_G^i)_{|W}=\rho_H^{-1}(C_H^i)_{|W}$ for every $i\in\{1,2\}$. 

%\medskip

Since $C_G^1$ and $C_G^2$ commute, we deduce that $\rho_G^{-1}(C_G^1)_{|W}$ and $\rho_G^{-1}(C_G^2)_{|W}$ normalize each other, and thus the groups $C_H^1$ and $C_H^2$ commute (as in the proof of Lemma~\ref{lemma:adjacency}, \cpageref{lemma:adjacency}). 

Finally, the maximality of $P_H^1$ and $P_H^2$ as product parabolic subgroups ensures that $P_H^1=C_H^1\times (C_H^1)^{\perp}$ and $P_H^2=C_H^2\times (C_H^2)^{\perp}$.

\medskip

\noindent \textbf{Step 2.} We show that for every $i\in\{1,2\}$, there exist
\begin{itemize}
    \item a group homomorphism $\alpha_i:P_G^i\to P_H^i$, which restricts to an isomorphism from $C_G^i$ to $C_H^i$, with $(\alpha_1)_{|C_G^1\times C_G^2}=(\alpha_2)_{|C_G^1\times C_G^2}$, and
    \item measurable maps $\varphi_i:X\to H$ and $\kappa_i:P_G^i\times X\to C_H^i$,
\end{itemize}
such that \begin{equation}\label{eq:cocycle-2}
c(g,x)=\varphi_i(g \cdot x)\alpha_i(g)\kappa_i(g,x)\varphi_i(x)^{-1}, ~~~\forall g\in P_G^i, ~\text{a.e.~} x\in X.
\end{equation}
Informally, this means that we have untwisted the cocycle on $P_G^i$ to a group homomorphism, “up to an error in $C_H^i$”, for every $i\in\{1,2\}$.

\smallskip

Up to replacing $c$ by a cohomologous cocycle, we can assume that $c:G\times X\to H$ is an SOE cocycle associated to $f_{|W}$, and that whenever $x,g\cdot x\in W$, the element $c(g,x)$ is the unique element $h\in H$ such that $f(g\cdot x)=h\cdot f(x)$. Recall also that $P_G^i$ acts ergodically on $X$, and that $P_G^i$ and $P_H^i$ have the same orbits in restriction to $W$ (because $\rho_G^{-1}(P_G^i)_{|W}=\rho_H^{-1}(P_H^i)_{|W}$). It follows that $c$ is cohomologous to a cocycle $c_i:G\times X\to H$ having the following two properties (see e.g.\ the discussion at the end of Section~\ref{sec:cocycles}, \cpageref{sec:cocycles}):
\begin{itemize}
\item $(c_i)_{|P_G^i\times X}$ is an SOE cocycle associated to $f_{|W}$, viewed as a stable orbit equivalence between the actions $P_G^i\actson X$ and $P_H^i\actson Y$,
\item $c(g,x)=c_i(g,x)$ whenever $x,g\cdot x\in W$. 
\end{itemize}
For every $i\in\{1,2\}$, the normal subgroups $C_G^i\unlhd P_G^i$ and $C_H^i\unlhd P_H^i$ act ergodically on $X$ and on $Y$. In addition $C_G^i$ and $C_H^i$ have the same orbits in restriction to $W$ (because $\rho_G^{-1}(C_G^i)_{|W}=\rho_H^{-1}(C_H^i)_{|W}$). We can therefore apply Theorem~\ref{theo:ms-2} to the stable orbit equivalence $f_{|W}$, and deduce that, up to replacing $c_i$ by a cohomologous cocycle, we can assume that the following hold:
\begin{itemize}
    \item there is a group isomorphism $\bar\alpha_i:P_G^i/C_G^i\to P_H^i/C_H^i$ such that for every $g\in P_G^i$ and almost every $x\in X$, one has $c_i(g,x)\in\bar\alpha_i\left(gC_G^i\right)$;
    \item one has $c_i(g,x)=c(g,x)$ whenever $x,g \cdot x\in W$.
\end{itemize}
Let $r_i:P_H^i\to (C_H^i)^{\perp}$ be the retraction (sending $C_H^i$ to $\{1\}$). Let \[c'_i=(r_i\circ c_i)_{|(C_G^i)^{\perp}\times X},\] a cocycle with values in $(C_H^i)^{\perp}$. Through the natural isomorphisms $P_G^i/C_G^i\to (C_G^i)^{\perp}$ and $P_H^i/C_H^i\to (C_H^i)^{\perp}$ (given by the choice of the unique representative), $\bar\alpha_i$ yields an isomorphism $\alpha_i:(C_G^i)^{\perp}\to (C_H^i)^{\perp}$ such that 
\begin{equation}\label{eq:alpha}
c'_i(g,x)=\alpha_i(g),~~~\forall g\in (C_G^i)^{\perp},~ \text{a.e~} x\in X.
\end{equation}
Recall that $C_G^1\subseteq (C_G^2)^{\perp}$ (because the parabolic subgroups $C_G^1$ and $C_G^2$ intersect trivially and commute), and $C_H^1\subseteq (C_H^2)^\perp$. We will now prove the following claim. 

\begin{Claim}\label{claim:alphaCi}
We have $\alpha_2(C_G^1)=C_H^1$.
\end{Claim}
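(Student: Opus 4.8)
The plan is to exploit the groupoid equality $\rho_G^{-1}(C_G^1)_{|W}=\rho_H^{-1}(C_H^1)_{|W}$ together with the untwisting $c'_2(g,x)=\alpha_2(g)$, and then upgrade a finite-index statement into an exact one via the action-like property and the torsion-free root lemma. First I would record two structural facts. Since $C_G^1$ and $C_G^2$ commute and intersect trivially, $C_G^1$ is contained in $C_G^2\times (C_G^2)^{\perp}$ and meets $C_G^2$ trivially, so Lemma~\ref{lemma:product-parabolic-subgroup} gives $C_G^1\subseteq (C_G^2)^{\perp}$; in particular $\alpha_2$ is defined on $C_G^1$. The same argument gives $C_H^1\subseteq (C_H^2)^{\perp}$, so the retraction $r_2$ restricts to the identity on $C_H^1$. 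Set $\calc:=\rho_G^{-1}(C_G^1)_{|W}=\rho_H^{-1}(C_H^1)_{|W}$.

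Next I would establish a cocycle identity on $\calc$. Take $\grpdg\in\calc$, represented by a pair $(g,x)$ with $g\in C_G^1$ and $x,g\cdot x\in W$, so that $\rho_G(\grpdg)=g$ and $\rho_H(\grpdg)=c(g,x)$. Since $\grpdg\in\rho_H^{-1}(C_H^1)$, we have $c(g,x)\in C_H^1\subseteq (C_H^2)^{\perp}$. Using that $c_2(g,x)=c(g,x)$ whenever $x,g\cdot x\in W$, together with Equation~\eqref{eq:alpha} and the fact that $r_2$ is the identity on $(C_H^2)^{\perp}$, I obtain $\alpha_2(g)=c'_2(g,x)=r_2\big(c(g,x)\big)=c(g,x)$. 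Hence $\alpha_2\circ\rho_G=\rho_H$ on $\calc$, a cocycle identity taking its values in $C_H^1$.

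Then comes the upgrade. The identity shows $\rho_G(\calc)\subseteq L:=\alpha_2^{-1}(C_H^1)\cap C_G^1$, whence $\rho_G^{-1}(L)_{|W}=\calc=\rho_G^{-1}(C_G^1)_{|W}$. Because $\rho_G$ is action-like, the second item of Definition~\ref{de:action-like} forbids an infinite-index inclusion $L\subseteq C_G^1$ from producing equal restricted groupoids; therefore $[C_G^1:L]<\infty$, while $\alpha_2(L)\subseteq C_H^1$ by construction of $L$. As the vertex groups of $H$ are infinite and torsion-free, Lemma~\ref{lemma:root} (applied inside $H$ to the subgroup $\alpha_2(C_G^1)$, its finite-index subgroup $\alpha_2(L)$, and the parabolic subgroup $C_H^1$) then yields $\alpha_2(C_G^1)\subseteq C_H^1$.

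Finally, the reverse inclusion follows by symmetry: the entire construction is invariant under interchanging $(G,\rho_G,C_G^{\bullet},X)$ with $(H,\rho_H,C_H^{\bullet},Y)$ and $\alpha_2$ with $\alpha_2^{-1}$ (the Monod--Shalom isomorphisms in the two directions being mutually inverse), so the same argument gives $\alpha_2^{-1}(C_H^1)\subseteq C_G^1$, that is $C_H^1\subseteq\alpha_2(C_G^1)$. Combined with the previous inclusion this gives $\alpha_2(C_G^1)=C_H^1$. I expect the passage from a finite-index to an exact statement to be the main obstacle: the ``return set'' $\rho_G(\calc)$ need not exhaust $C_G^1$, and it is precisely the conjunction of the action-like property (which rules out any genuine drop of index) with the torsion-free root lemma that closes this gap.
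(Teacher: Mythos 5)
Your forward inclusion $\alpha_2(C_G^1)\subseteq C_H^1$ is correct, and it takes a genuinely different route from the paper's. Where the paper argues element by element, using Poincar\'e recurrence to find, for each $g\in C_G^1$, a power $g^n$ with positive-measure returns to $W$, so that $\alpha_2(g)^n=c_2(g^n,x)\in C_H^1$ and Lemma~\ref{lemma:root} then gives $\alpha_2(g)\in C_H^1$, you argue globally: the cocycle identity $\alpha_2\circ\rho_G=\rho_H$ on $\calc$ shows that $\rho_G^{-1}(L)_{|W}=\rho_G^{-1}(C_G^1)_{|W}$ for $L=\alpha_2^{-1}(C_H^1)\cap C_G^1$, the second item of Definition~\ref{de:action-like} forces $[C_G^1:L]<\infty$, and a single application of Lemma~\ref{lemma:root} inside $H$ concludes. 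All the ingredients you invoke (the groupoid equalities from Step~1, the normalization $c_2=c$ on returns to $W$, Equation~\eqref{eq:alpha}) are available at this point of the proof, so this half is sound and arguably cleaner than the paper's.

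The reverse inclusion, however, has a genuine gap: it rests entirely on the parenthetical claim that the Monod--Shalom isomorphisms in the two directions are mutually inverse, and nothing available justifies this. Theorem~\ref{theo:ms-2} is a pure existence statement: it produces \emph{some} isomorphism $\alpha$ and \emph{some} cocycle $c'$ cohomologous to $c$ with $c'(g,x)\in\alpha(gA)$, and such an $\alpha$ is not canonical --- two admissible choices differ a priori by a measurable twist, and even pinning $\alpha$ down up to inner automorphisms would require a separate untwisting argument you have not supplied. So running the construction on $f^{-1}$ yields some isomorphism $\beta_2\colon (C_H^2)^{\perp}\to (C_G^2)^{\perp}$ for which your argument gives $\beta_2(C_H^1)\subseteq C_G^1$, but you cannot substitute $\alpha_2^{-1}$ for $\beta_2$. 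The paper avoids symmetry altogether: given $h\in C_H^1$, it uses surjectivity of the single map $\alpha_2$ to choose $g_0\in (C_G^2)^{\perp}$ with $\alpha_2(g_0)=h$, uses recurrence and the matching of $C_G^1$- and $C_H^1$-orbits on $W$ to produce $m>0$ and $g\in C_G^1$ with $\alpha_2(g)=h^m$, deduces $g_0^m=g\in C_G^1$ by injectivity, and concludes $g_0\in C_G^1$ from Lemma~\ref{lemma:root} applied in $G$. Your own framework can be repaired in the same spirit, with no symmetry claim: the cocycle identity also gives $\rho_H(\calc)\subseteq \alpha_2(C_G^1)\cap C_H^1$, so the action-like property of $\rho_H$ shows this intersection has finite index in $C_H^1$; combined with your forward inclusion, $\alpha_2(C_G^1)$ has finite index in $C_H^1$, hence every $h\in C_H^1$ has a power $h^k\in\alpha_2(C_G^1)$, and then $g_0=\alpha_2^{-1}(h)$ satisfies $g_0^k\in C_G^1$, so Lemma~\ref{lemma:root} in $G$ yields $g_0\in C_G^1$, i.e.\ $h\in\alpha_2(C_G^1)$.
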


\begin{proof}[Proof of the Claim]
We first prove that $\alpha_2(C_G^1)\subseteq C_H^1$. So let $g\in C_G^1$. By Poincaré recurrence, for almost every $x\in W$, there exists $n>0$ such that $g^n\cdot x\in W$. Since $x,g^n\cdot x\in W$, our definition of the cocycles $c_i$ ensures that  $c_2(g^n,x)=c(g^n,x)$. Since $C_G^1$ and $C_H^1$ have the same orbits in restriction to $W$, we deduce that $c_2(g^n,x)\in C_H^1$, so $c_2(g^n,x)=c'_2(g^n,x)$. Since $g^n$ belongs to $C_G^1$, which is contained in $(C_G^2)^{\perp}$, our definition of $\alpha_2$ (see Equation~\eqref{eq:alpha}) shows that $c_2(g^n,x)=\alpha_2(g)^n$. It follows that $\alpha_2(g)^n\in C_H^1$, and Lemma~\ref{lemma:root}, \cpageref{lemma:root} implies that $\alpha_2(g)\in C_H^1$. This proves that $\alpha_2(C_G^1)\subseteq C_H^1$. 

Conversely, let us now prove that $C_H^1\subseteq\alpha_2(C_G^1)$. So let $h\in C_H^1$. By Poincaré recurrence again, for almost every $y\in W$, there exists $m>0$ such that $h^m\cdot f(y)\in f(W)$. Since the orbits of $C_G^1$ and of $C_H^1$ coincide in restriction to $W\approx f(W)$, we can find $g\in C_G^1$ such that 
%\Ccom{j'ai enlevé $c_1(g,y)=$ (on voulait peut-être dire $c(g,y)=$)} 
$c_2(g,y)=h^m$. We deduce that
\begin{equation*}
    \alpha_2(g)=c'_2(g,y)=r_2\circ c_2(g,y)= r_2(h^m)=h^m.
\end{equation*}% $\alpha_2(g)=h^m$. 
Since $\alpha_2:(C_G^2)^{\perp}\to (C_H^2)^{\perp}$ is an isomorphism, we can also find $g_0\in (C_G^2)^\perp$ such that $\alpha_2(g_0)=h$. Then $\alpha_2(g_0^m)=\alpha_2(g)$, so $g_0^m=g$. Lemma~\ref{lemma:root} implies that $g_0\in C_G^1$, and we are done.
\end{proof}

The above claim shows that $\alpha_2$ restricts to an isomorphism from $C_G^1$ to $C_H^1$. Symmetrically $\alpha_1$ restricts to an isomorphism from $C_G^2$ to $C_H^2$.

We now extend $\alpha_1$ to $P_G^1$ by letting $(\alpha_1)_{|C_G^1}=(\alpha_{2})_{|C_G^1}$. Likewise, we extend $\alpha_2$ to $P_G^2$ by letting $(\alpha_2)_{|C_G^2}=(\alpha_1)_{|C_G^2}$. With this extension, 
for every $i\in\{1,2\}$, we have
%every $g\in P_G^i$ and almost every $x\in X$, we now observe that 
\begin{equation}\label{eq:cocycle}
 c_i(g,x)\in\alpha_i(g)C_H^i \quad \forall g\in P_G^i, \ \text{a.e.}\ x\in X.
\end{equation}
Indeed, 
\begin{itemize}
\item if $g\in C_G^i$, then $c_i(g,x)$ belongs to $\bar\alpha_i(C_G^i)=C_H^i$, and by our extension of $\alpha_i$ to $C^i_G$ and \cref{claim:alphaCi} we get $\alpha_i(g)\in C_H^i$;
\item if $g\in (C_G^i)^{\perp}$, this comes from the fact that $c_i(g,x)\in \bar\alpha_i(gC^i_G)$ and the definition of~$\alpha_i$;
\item in general, every element $g\in P_G^i$ can be decomposed as $g=g_1g_2$ with $g_1\in (C_G^i)^{\perp}$ and $g_2\in C_G^i$, and Equation~\eqref{eq:cocycle} follows from the cocycle relation.
\end{itemize}

\medskip

At this point, for every $i\in\{1,2\}$, using Equation~\eqref{eq:cocycle} together with the fact that $c$ and $c_i$ are $H$-cohomologous, we have measurable maps $\varphi_i:X\to H$ and $\kappa_i:P_G^i\times X\to C_H^i$ such that for every $g\in P_G^i$ and a.e.\ $x\in X$, we have 
\begin{equation}\label{eq:cocycle-4}
    c(g,x)=\varphi_i(g \cdot x)c_i(g,x)\varphi_i(x)^{-1},
\end{equation}
and $c_i(g,x)=\alpha_i(g)\kappa_i(g,x)$, so \cref{eq:cocycle-2} is verified.
\medskip

\noindent \textbf{Step 3.} We show that $c_{|C_G^1\times X}$ is $H$-cohomologous to the homomorphism $\alpha_1$. This will be done by comparing the two ways of untwisting the cocycle $c$ given by Equation~\eqref{eq:cocycle-2}, for $i=1$ and $i=2$. 

Let $X^*=\sqcup_{j\in J}X_j$ be a partition of a conull Borel subset $X^*\subseteq X$ into at most countably many Borel subsets, such that for every $j\in J$, the maps $\varphi_1$ and $\varphi_2$ are constant when restricted to $X_j$, with respective values denoted by $\gamma_{1,j},\gamma_{2,j}$, and Equation~\eqref{eq:cocycle-2} holds everywhere on $X^*$.

Let $h\in C_H^1$ be such that $h$ does not belong to any proper parabolic subgroup of $C_H^1$, and neither do its proper powers (see e.g.\ the proof of \cref{lemma:reg-nonempty} for the existence of $h$). Let $g\in C_G^1$ such that $\alpha_1(g)=\alpha_2(g)=h$. For every $j\in J$, let $x\in X_j$ and $k_j>0$ be such that $g^{k_j}\cdot x\in X_j$ (these exist by Poincaré recurrence). Since $C_G^1\subseteq P_G^1\cap P_G^2$, we can use Equation~\eqref{eq:cocycle-2} with both $i=1$ and $i=2$ to write 
\begin{equation}\label{eq:cocycle-3}
c(g^{k_j},x)=\gamma_{1,j}\alpha_1(g)^{k_j}\kappa_1(g^{k_j},x)\gamma_{1,j}^{-1}=\gamma_{2,j}\alpha_2(g)^{k_j}\kappa_2(g^{k_j},x)\gamma_{2,j}^{-1}. 
\end{equation}
Let $r_2:H\to C_H^2$ be the natural retraction (see the last paragraph of Section~\ref{sec:first-facts}, \cpageref{sec:first-facts}). Since $C_H^1\subseteq (C_H^2)^{\perp}$ (by Step $1$), we have $r_2(C_H^1)=\{1\}$. In particular remark that $\alpha_1(g), \alpha_2(g),\kappa_1(g^{k_j},x)\in C^1_H$. By applying $r_2$ to the rightmost equality of Equation~\eqref{eq:cocycle-3}, we deduce that $\kappa_2(g^{k_j},x)=1$ for every $x\in X^*$.

Let $\beta_j=\gamma_{2,j}^{-1}\gamma_{1,j}$. Using that $\kappa_2(g^{k_j},x)=1$, Equation~\eqref{eq:cocycle-3} rewrites as 
\[\beta_{j}\alpha_1(g)^{k_j}\kappa_1(g^{k_j},x)\beta_j^{-1}=\alpha_2(g)^{k_j}.\]
Therefore $\beta_j C_H^1\beta_j^{-1}\cap C_H^1$ contains $\alpha_2(g)^{k_j}$, so our choice of $g$ ensures that $\beta_j C_H^1\beta_j^{-1}\cap C_H^1$ cannot be a proper parabolic subgroup of $C_H^1$. It follows that $\beta_jC_H^1\beta_j^{-1}\supseteq C_H^1$. The second point of Proposition~\ref{prop:am} implies that in fact  $\beta_jC_H^1\beta_j^{-1}=C_H^1$, i.e.\ $\beta_j$ belongs to the normalizer of $C_H^1$. 

In other words, for every $x\in X_j$, the constant value of $\varphi_2(x)^{-1}\varphi_1(x)$ belongs to $C_H^1\times (C_H^1)^{\perp}$. Since this is true for every $j\in J$, there exist measurable maps $\eta_1:X\to C_H^1$ and $\mu_1:X\to (C_H^1)^{\perp}$ such that for every $x\in X^*$, one has $\varphi_2(x)=\varphi_1(x)\eta_1(x)\mu_1(x)$. Now, by rewriting Equation~\eqref{eq:cocycle-2} with $i=2$, we deduce that for every $x\in X^*$ and every $g\in P_G^1$, one has \[c(g,x)=\varphi_1(g \cdot x)\eta_1(g \cdot x)\mu_1(g \cdot x)\alpha_2(g)\kappa_2(g,x)\mu_1(x)^{-1}\eta_1(x)^{-1}\varphi_1(x)^{-1},\] in other words 
\begin{equation}\label{eq:4}
\varphi_1(g\cdot x)^{-1}c(g,x)\varphi_1(x)=\eta_1(g\cdot x)\mu_1(g \cdot x)\alpha_2(g)\kappa_2(g,x)\mu_1(x)^{-1}\eta_1(x)^{-1}. 
\end{equation}
Now, when $g\in C_G^1$, using Equations~\eqref{eq:cocycle} and~\eqref{eq:cocycle-4}, the leftmost term belongs to $C_H^1$. We now apply the retraction $r_1:H\to C_H^1$ (which sends $(C_H^1)^{\perp}$ to $\{1\}$, so in particular sends $C_H^2$ to $\{1\}$), to Equation~\eqref{eq:4}. We deduce that \[\varphi_1(g \cdot x)^{-1}c(g,x)\varphi_1(x)=\eta_1(g \cdot x)\alpha_2(g)\eta_1(x)^{-1}, \quad \forall g\in C_G^1,~\text{a.e.}~x\in X .\] This proves that there exists a measurable map $\psi:X\to H$ and a homomorphism $\alpha_1:C_G^1\to H$ such that for every $g\in C_G^1$ and a.e.\ $x\in X$, one has \[c(g,x)=\psi(g \cdot x)\alpha_1(g)\psi(x)^{-1}.\] In other words, we have untwisted the cocycle $c$ to a group homomorphism on some vertex group of $G$. An application of Lemma~\ref{lemma:propagation} then concludes our proof of Theorem~\ref{theo:strong-rigidity}.
\end{proof}

\subsection{An application to von Neumann algebras}\label{sec:von-neumann-2}

When combined with a uniqueness theorem for Cartan subalgebras established by Chifan and Kunnawalkam Elayavalli in the framework of Popa's deformation/rigidity theory, Theorem~\ref{theo:strong-rigidity} has the following consequence at the level of von Neumann algebras. We refer to \cref{sec:von-neumann}, \cpageref{sec:von-neumann} for the needed definitions.

\begin{Cor}\label{cor:strong-rigidity-avn}
  Let $\Gamma_G,\Gamma_H$ be two connected finite simple graphs, with no vertex joined to every other vertex. Let $G,H$ be graph products with countably infinite torsion-free vertex groups over $\Gamma_G,\Gamma_H$, respectively. Let $G\actson X$ and $H\actson Y$ be two free, vertexwise ergodic, measure-preserving actions on standard probability spaces. Assume that either
  \begin{itemize}
 \item $G\actson X$ is vertexwise weakly mixing, or
 \item $G\actson X$ is vertexwise totally ergodic, or
 \item $H$ has the unique root property.
 \end{itemize}
 If the $\mathrm{II}_1$ factors $L(G\actson X)$ and $L(H\actson Y)$ are isomorphic (or more generally, if $L(H\actson Y)$ is isomorphic to an amplification $L(G\actson X)^t$ with $t\in (0,1]$), then the two actions are conjugate.
 \end{Cor}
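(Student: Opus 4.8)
The plan is to follow the same route as in the proof of Corollary~\ref{cor:fundamental}, deducing a stable orbit equivalence from the $W^*$-isomorphism and then invoking the rigidity theorem just established. Write $\calr_G=\calr(G\actson X)$ and $\calr_H=\calr(H\actson Y)$, so that $L(G\actson X)=L(\calr_G)$ and $L(H\actson Y)=L(\calr_H)$, and let $t\in(0,1]$ be such that $L(\calr_H)$ is isomorphic to the amplification $L(\calr_G)^t$. First I would fix a Borel subset $U\subseteq X$ with $\mu(U)=t$, so that $L(\calr_G)^t$ identifies with $L((\calr_G)_{|U})$, whose canonical Cartan subalgebra is $L^\infty(U)$.

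Next I would use the Cartan rigidity theorem of Chifan and Kunnawalkam Elayavalli \cite[Theorem~1.3]{CKE}, which applies to $L(\calr_H)$ since $H$ is a graph product with torsion-free countably infinite vertex groups over a connected graph with no vertex joined to every other vertex: it ensures that $L^\infty(Y)$ is, up to unitary conjugacy, the unique Cartan subalgebra of $L(\calr_H)$. Composing the given isomorphism $L((\calr_G)_{|U})\to L(\calr_H)$ with a suitable unitary, I may therefore assume that it carries $L^\infty(U)$ onto $L^\infty(Y)$. By Feldman--Moore \cite{FM}, an isomorphism of group measure space von Neumann algebras taking one Cartan subalgebra onto the other is implemented by an isomorphism of the underlying equivalence relations; this shows that $(\calr_G)_{|U}$ and $\calr_H$ are isomorphic.

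Such an isomorphism is exactly a stable orbit equivalence $f$ from $G\actson X$ to $H\actson Y$ whose source has measure $\mu(U)=t$ and whose range is all of $Y$, so its compression constant is $\kappa(f)=\nu(Y)/\mu(U)=1/t\ge 1$. I would then be in position to apply Theorem~\ref{theo:strong-rigidity}, which yields that the two actions are conjugate. The only points requiring care --~rather than a genuine obstacle~-- are checking that the hypotheses of \cite{CKE} are met by both factors, and that the amplification is oriented so that the resulting compression constant is at least $1$; the latter is forced by the constraint $t\le 1$ together with the asymmetric ergodicity hypotheses, which pin down the direction $G\to H$. The substantive analytic input is entirely contained in the cited Cartan rigidity theorem, while the rigidity content is supplied by Theorem~\ref{theo:strong-rigidity}, so the deduction itself is a clean bridge between the two.
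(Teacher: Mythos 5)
Your proposal is correct and follows essentially the same route as the paper's proof: apply the Cartan rigidity theorem of Chifan--Kunnawalkam Elayavalli to $L(H\actson Y)$, identify $L(G\actson X)^t$ with $L(\calr_{|U})$ for a subset $U$ of measure $t$, invoke Feldman--Moore to upgrade the Cartan-preserving isomorphism to an isomorphism of equivalence relations, and feed the resulting stable orbit equivalence of compression constant $1/t\ge 1$ into Theorem~\ref{theo:strong-rigidity}. The only cosmetic difference is your remark about checking the hypotheses of \cite{CKE} for ``both factors'': as in the paper, uniqueness of the Cartan subalgebra is only needed on the $H$-side.
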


\begin{proof}
By \cite[Theorem~1.3]{CKE}, the subalgebra $L^\infty(Y)$ is, up to unitary conjugation, the unique Cartan subalgebra of $L(H\actson Y)$. Let $U\subseteq X$ be a Borel subset of measure~$t$, and let $\calr$ be the restriction to $U$ of the orbit equivalence relation for the $G$-action on~$X$. Then $L(G\actson X)^t\simeq L(\calr)$, and $L^\infty(U)$ is a Cartan subalgebra. So there exists an isomorphism $\theta:L(H\actson Y)\to L(G\actson X)^t$ sending $L^\infty(Y)$ to $L^\infty(U)$. By \cite{SingerOE,FM}, this implies that the orbit equivalence relation of the $H$-action on $Y$ is isomorphic to $\calr$. In particular, there exists a stable orbit equivalence with compression $\frac{1}{t}\ge 1$ from $G\actson X$ to $H\actson Y$. Theorem~\ref{theo:strong-rigidity} thus implies that the two actions are conjugate.
\end{proof}

\begin{Rq}
As in Corollary~\ref{cor:fundamental}, we could also derive the triviality of the fundamental group of any action $G\actson X$ which is either vertexwise weakly mixing (or vertexwise totally ergodic), with $G$ torsion-free, or vertexwise ergodic, with $G$ having the unique root property. Likewise, the associated von Neumann algebra $L(G\actson X)$ has trivial fundamental group.   
\end{Rq}

\section{Superrigidity}\label{Sec:Superrigidity}

The goal of this section is to show the following theorem. 

\begin{Th}\label{theo:superrigidity}
 Let $\Gamma$ be a connected finite simple graph, with no vertex joined to every other vertex. Let $G$ be a graph product with countably infinite vertex groups over $\Gamma$, satisfying the unique root property, and let $H$ be an arbitrary countable group. Let $G\actson X$ and $H\actson Y$ be two free, ergodic, measure-preserving actions on standard probability spaces. Assume that
 %\Ccom{j'ai écrit la version complète ici}
 \begin{itemize}
     \item one of the following two options holds:
     \begin{itemize}
         \item either $G\actson X$ is vertexwise ergodic, and $G$ has the unique root property, 
         \item or $G\actson X$ is vertexwise totally ergodic, and $G$ is torsion-free,
     \end{itemize}
     \item and $H\actson Y$ is mildly mixing. 
 \end{itemize}

 If the actions $G\actson X$ and $H\actson Y$ are stably orbit equivalent, then they are virtually conjugate.
\end{Th}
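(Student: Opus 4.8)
The plan is to transfer the rigidity among graph products established in Theorem~\ref{theo:strong-rigidity} to the present superrigidity statement via a composition of couplings, following the strategy of Monod--Shalom \cite{MS} as implemented for right-angled Artin groups in \cite{HHI}. First I would use the dictionary between stable orbit equivalence and measure equivalence (Section~\ref{sec:me-background-1}, based on \cite{Fur}): the stable orbit equivalence $f$ between $G\actson X$ and $H\actson Y$ produces a measure equivalence coupling $\Omega$ from $G$ to $H$, together with an associated SOE cocycle $c:G\times X\to H$. As in the proof of Theorem~\ref{theo:strong-rigidity}, by collapsing subgraphs we may assume $\Gamma$ is a join of strongly reduced graphs with no vertex joined to every other vertex, so that $G$ has trivial clique factor and no non-trivial finite normal subgroup, while retaining the relevant ergodicity of $G\actson X$. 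The goal is then to untwist $c$, up to passing to finite-index subgroups and quotients by finite normal subgroups, to a group isomorphism; by the definition of virtual conjugacy recalled above and the description of induced actions via \cite[Lemma~4.7]{Vae}, this will yield the desired conclusion.

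Second I would form the self-coupling $\Sigma:=\Omega\times_H\check\Omega$ of $G$, where $\check\Omega$ is the coupling from $H$ to $G$ obtained by flipping $\Omega$. Since $G$ is a graph product of countably infinite groups over a transvection-free graph with no partial conjugation, the Vertex Recognition Property (Proposition~\ref{prop:vertex-recognition}) applies, and Corollary~\ref{cor:factor-through-building} provides a $(G\times G)$-equivariant Borel map $\Phi:\Sigma\to\Isom(\bD_G,\bD_G)=\Aut(\bD_G)$. Recall from Corollary~\ref{cor:rank} that every element of $\Aut(\bD_G)$ preserves the rank of vertices, and that $G$ acts freely and transitively on rank $0$ vertices; thus $G$ sits inside $\Aut(\bD_G)$ as a discrete subgroup admitting the strict fundamental domain $\{f:f(e_G)=e_G\}$, with compact open rank $0$ vertex stabilizers.

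The main obstacle is the third step: using the mild mixing of $H\actson Y$ to descend the equivariant map $\Phi$ to a genuine group homomorphism. Writing $\Sigma$ as the quotient of $\Omega\times\check\Omega$ by the anti-diagonal $H$-action, the map $\Phi$ corresponds to an $H$-invariant, $(G\times G)$-equivariant Borel map on $\Omega\times\check\Omega$. Fixing a point in the $\check\Omega$-factor and restricting, one obtains $H$-equivariant data valued in the countable set of rank $0$ vertices of $\bD_G$, on which $H$ acts through the homomorphism one seeks to construct. By ergodicity, the pushforward of the (renormalized) measure on $\Omega/G$ is an $H$-invariant probability measure concentrated on a single orbit, forcing that orbit to be finite; the finer Furstenberg--Weiss characterization of mild mixing recalled above is then what controls the associated cocycle and forces the data to be essentially constant along $H$-orbits. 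This should yield a homomorphism $\tau:H\to\Aut(\bD_G)$ whose image commensurates $G$, with finite kernel $F_H\unlhd H$ and such that the measure equivalence cocycle $H\times(\Omega/G)\to G$ is cohomologous to $\tau$. I expect verifying this descent, and controlling the finite error groups and covolumes precisely, to be the delicate part.

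Finally, the homomorphism $\tau$ identifies a finite-index subgroup of $H/F_H$ with a finite-index subgroup of $G$ inside $\Aut(\bD_G)$, both being graph products; this realizes the stable orbit equivalence as a stable orbit equivalence between two vertexwise ergodic graph product actions, to which Theorem~\ref{theo:strong-rigidity} applies after passing to the relevant finite-index subgroups and induced actions. Under either set of hypotheses --~$G\actson X$ vertexwise ergodic with $G$ having the unique root property, or $G\actson X$ vertexwise totally ergodic with $G$ torsion-free~-- the propagation Lemmas~\ref{lemma:propagation-unique-root} and~\ref{lemma:propagation-totally-ergodic} apply, the connectedness of $\Gamma$ supplying the required chain of commuting, ergodically acting vertex groups, so the straightened cocycle is cohomologous to an isomorphism between these subgroups. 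Unwinding the induction and the finite quotients then gives precisely a virtual conjugacy between $G\actson X$ and $H\actson Y$, completing the proof.
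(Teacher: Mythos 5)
Your overall strategy (transfer the rigidity of Theorem~\ref{theo:strong-rigidity} to a superrigidity statement via composition of couplings and mild mixing, in the spirit of Monod--Shalom) is the right one, and it is indeed what the paper does. But your implementation has a genuine gap at its core. You route the argument through the right-angled building: you invoke the Vertex Recognition Property (Proposition~\ref{prop:vertex-recognition}) and Corollary~\ref{cor:factor-through-building} to factor the self-coupling $\Sigma=\Omega\times_H\check\Omega$ through $\Aut(\bD_G)$. This is not available under the hypotheses of Theorem~\ref{theo:superrigidity}: the graph $\Gamma$ is only assumed connected with no vertex joined to every other vertex, and it may very well have transvections and partial conjugations (e.g.\ a path on four vertices), which the Vertex Recognition Property requires. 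Collapsing subgraphs makes $\Gamma$ strongly reduced but does \emph{not} make it transvection-free, so Proposition~\ref{prop:vertex-recognition} simply does not apply. This is precisely why the proof of Theorem~\ref{theo:strong-rigidity} avoids the VRP altogether and only uses recognition of maximal product parabolic subgroups, factors and clique factors (Lemmas~\ref{lemma:maximal-product}, \ref{lemma:factor}, \ref{lemma:recognize-star}), which hold for strongly reduced graphs. The building-based factorization you propose is the Kida-style machinery of Part~\ref{Part:SuperrigityHigman}, which operates under much stronger hypotheses; it is not the mechanism behind the superrigidity theorem here.

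Moreover, the step you yourself flag as ``the delicate part'' — using mild mixing of $H\actson Y$ to descend the equivariant map to a homomorphism $\tau:H\to\Aut(\bD_G)$ with finite kernel, commensurated image and cohomologous cocycle — is exactly the content that the paper does not re-prove but imports as a black box: \cite[Theorem~7.1]{HHI}. The paper's proof consists of verifying its two hypotheses: (i) $(G,\calf)$ is \emph{strongly cocycle rigid} in the sense of \cite[Section~7]{HHI}, where $\calf$ is the collection of vertex groups (resp.\ of infinite subgroups of vertex groups, in the totally ergodic case) — this is exactly what Theorem~\ref{theo:strong-rigidity} provides, since vertexwise (total) ergodicity is $\calf$-ergodicity; and (ii) $G$ is ICC, which is Lemma~\ref{Lmm:ICC} (proved via acylindrical hyperbolicity of the join factors). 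Your proposal omits the ICC verification entirely, and your final step — identifying finite-index subgroups of $G$ and $H/F_H$ inside $\Aut(\bD_G)$ as graph products and re-applying Theorem~\ref{theo:strong-rigidity} to them — is also unjustified: finite-index subgroups of a graph product need not carry a graph product structure of the required form, nor need the restricted actions remain vertexwise ergodic for such a structure. The correct transfer works at the level of cocycles for $G$ itself (untwisting the composed self-coupling cocycle $G\times X\to G$ using strong cocycle rigidity, then using mild mixing à la Monod--Shalom), with all finite-index and finite-kernel ambiguities arising only on the $H$ side; that is what \cite[Theorem~7.1]{HHI} packages.
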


Recall that a group is \emph{ICC}\index{ICC (infinite conjugacy classe)} (standing for \emph{infinite conjugacy classes}\index{Infinite conjugacy classes (ICC)}) if the conjugacy class of every non-trivial element of $G$ is infinite. We will need the following lemma.

\begin{Lmm}\label{Lmm:ICC}
  If $G$ is a graph product over a finite simple graph $\Gamma$ such that no vertex is joined to every other vertex, then $G$ is ICC.
\end{Lmm}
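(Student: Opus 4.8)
The plan is to prove that for every $g\in G\setminus\{1\}$ the conjugacy class $\{hgh^{-1}:h\in G\}$ is infinite, using the standing assumption that the vertex groups are infinite. This infiniteness is genuinely needed: for finite vertex groups $\bZ/2\bZ\ast\bZ/2\bZ\cong D_\infty$ satisfies the hypothesis but fails to be ICC. Since being an infinite conjugacy class is invariant under replacing $g$ by a conjugate, I would first replace $g$ by a cyclically reduced conjugate and set $S:=\supp(g)$, which is then well defined (independent of the reduced expression). By the normal form and conjugacy theory for graph products \cite{Gre,AM}, a cyclically reduced element is conjugate into a parabolic $G_\Lambda$ if and only if $S\subseteq\Lambda$; applied to the two vertex stabilizer types appearing in the splitting at a vertex $v$, this shows that $g$ acts hyperbolically on the Bass--Serre tree $T_v$ (see \cref{sec:trees}) exactly when $v\in S$ and $S\nsubseteq\st(v)$. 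I would then split according to whether $S$ is a clique. Note first that the hypothesis excludes both the one-vertex case (a lone vertex is vacuously joined to every other vertex) and any cone vertex, so for every $v$ there is a vertex $w\neq v$ with $w\not\sim v$; in particular $|V\Gamma|\ge 2$.

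\emph{Case 1: $S$ is not a clique.} Then there are non-adjacent $u,w\in S$, and for this $u$ we have $u\in S$ and $w\in S\setminus\st(u)$, so $S\nsubseteq\st(u)$ and hence $g$ is hyperbolic on $T_u$, with some axis $A_g$. The hypothesis gives $\Gamma\setminus\st(u)\neq\emptyset$, so the splitting $G=G_{\st(u)}\ast_{G_{\lk(u)}}G_{\Gamma\setminus\{u\}}$ is nontrivial and its action on $T_u$ is minimal; moreover $[G_{\st(u)}:G_{\lk(u)}]=|G_u|=\infty$ (as $\st(u)=\{u\}\cup\lk(u)$ gives $G_{\st(u)}=G_u\times G_{\lk(u)}$), so $T_u$ is not a line. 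By minimality, $A_g$ is therefore not $G$-invariant, so I can pick $s\in G$ with $sA_g\neq A_g$ and set $t:=sgs^{-1}$, a hyperbolic element with axis $A_t=sA_g\neq A_g$. Since a hyperbolic isometry of a tree is determined by its axis and no nontrivial power of it stabilizes any other line, the axes $t^nA_g$ of the conjugates $t^ngt^{-n}$ are pairwise distinct, producing infinitely many conjugates of $g$.

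\emph{Case 2: $S$ is a clique.} Then $g\in G_S=\prod_{v\in S}G_v$, and I write $g=\prod_{v\in S}g_v$ with each $g_v\in G_v\setminus\{1\}$. Fixing any $v\in S$, the hypothesis yields a vertex $w\neq v$ with $w\not\sim v$; since $S$ is a clique containing $v$, necessarily $w\notin S$. Let $r:G\to G_{\{v,w\}}=G_v\ast G_w$ be the canonical retraction (the product is free because $v,w$ are non-adjacent). Then $r(g)=g_v\neq 1$, and for $b\in G_w$ one has $r(bgb^{-1})=bg_vb^{-1}$. As the centralizer in a free product of a nontrivial element of a factor is contained in that factor, the map $b\mapsto bg_vb^{-1}$ is injective on $G_w$; since $G_w$ is infinite, the elements $\{bgb^{-1}:b\in G_w\}$ have infinitely many distinct $r$-images and hence are pairwise distinct. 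So again the conjugacy class of $g$ is infinite.

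The main obstacle is Case~1: establishing hyperbolicity of $g$ on a well-chosen $T_u$ (which rests on the cyclically-reduced conjugacy criterion and the translation to the splittings $T_v$) and then manufacturing infinitely many conjugates with pairwise distinct axes. The decisive leverage is that the no-cone-vertex hypothesis makes every $T_v$ a nontrivial minimal splitting while infiniteness of the vertex groups forces each $T_u$ to be a non-line, so that no axis can be $G$-invariant; these are precisely the degeneracies ruled out by the hypothesis.
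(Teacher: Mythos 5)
Your proof is correct, and it takes a genuinely different route from the paper's. The paper first reduces to the irreducible join factors of $\Gamma$ (using that a direct product of ICC groups is ICC, and that the no-cone-vertex hypothesis forces each irreducible factor to be a non-clique), and then concludes by citation: each factor is acylindrically hyperbolic by \cite[Corollary~2.13]{MO}, has no non-trivial finite normal subgroup by \cite[Corollary~2.9]{MV}, and is therefore ICC by \cite[Theorem~8.14]{DGO}. Your argument, by contrast, is elementary and self-contained within the paper's own toolkit: cyclic reduction and supports from Green's normal form theory, hyperbolicity on the Bass--Serre trees $T_v$ of \cref{sec:trees}, uniqueness of axes, and the retraction onto $G_v\ast G_w$ combined with the centralizer property in free products; all of these steps check out (in Case~1, a non-trivial power of $t$ preserving $A_g$ would make $A_g$ its axis, forcing $A_g=A_t$, and in Case~2 the injectivity of $b\mapsto bg_vb^{-1}$ on $G_w$ is exactly the free-product centralizer argument). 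What the paper's route buys is generality in the vertex groups: via the Minasyan--Osin dichotomy it covers arbitrary non-trivial vertex groups, the only exceptions being the degenerate infinite-dihedral configurations. What your route buys is independence from heavy external machinery, at the cost of the infiniteness assumption on vertex groups (needed in your Case~1 so that $T_u$ is not a line, and in Case~2 so that conjugating by $G_w$ produces infinitely many conjugates). Your remark that some such hypothesis is genuinely necessary is a sharp one: under the paper's blanket convention that vertex groups are merely non-trivial, the lemma as literally stated fails, e.g.\ for $\mathbb{Z}/2\mathbb{Z}\ast\mathbb{Z}/2\mathbb{Z}\cong D_\infty$, or for $D_\infty\times D_\infty$ arising from a square with all vertex groups $\mathbb{Z}/2\mathbb{Z}$; so one must either assume infinite vertex groups (your reading, which matches the application in Theorem~\ref{theo:superrigidity}, where vertex groups are countably infinite) or exclude the dihedral cases (which is implicit in the paper's appeal to \cite{MO}).
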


\begin{proof}
    Write $\Gamma=\Gamma_1\circ\dots\circ\Gamma_k$, where each $\Gamma_j$ is irreducible (possibly $k=1$). It is enough to prove that each $G_{\Gamma_j}$ is ICC. So let $j\in\{1,\dots,k\}$. Our assumption that $\Gamma$ is not equal to the star of one of its vertices implies that $\Gamma_j$ is not a clique. By \cite[Corollary~2.13]{MO}, the group $G_{\Gamma_j}$ is acylindrically hyperbolic (equivalently by \cite[Theorem~1.2]{Osi}, the group $G$ contains a proper infinite hyperbolically embedded subgroup in the sense of \cite[Definition~2.1]{DGO}). In addition, by \cite[Corollary~2.9]{MV}, the group $G_{\Gamma_j}$ has no non-trivial finite normal subgroup. So by \cite[Theorem~8.14]{DGO}, the group $G_{\Gamma_j}$ is ICC. So $G$ is ICC, as desired.
\end{proof}

We are now in position to prove Theorem~\ref{theo:superrigidity}.

\begin{proof}[Proof of Theorem~\ref{theo:superrigidity}]
We first assume that the action $G\actson X$ is vertexwise ergodic and $G$ has the unique root property. In the terminology from \cite[Section~7]{HHI}, vertexwise ergodicity of $G\actson X$ is exactly $\calf$-ergodicity, with $\calf$ the collection of all vertex groups of $G$. And Theorem~\ref{theo:strong-rigidity} shows that $(G,\calf)$ is strongly cocycle rigid in the sense of \cite[Section~7]{HHI}. Since $G$ is ICC (Lemma~\ref{Lmm:ICC}), the conclusion therefore follows from \cite[Theorem~7.1]{HHI}.

We now assume that the action $G\actson X$ is vertexwise totally ergodic and $G$ is torsion-free. In the terminology from \cite[Section~7]{HHI}, vertexwise total ergodicity is exactly $\calf'$-ergodicity, with $\calf'$ the collection of all infinite subgroups of vertex groups of $G$. As above, Theorem~\ref{theo:strong-rigidity} shows that $(G,\calf')$ is strongly cocycle rigid in the sense of \cite[Section~7]{HHI}, and the conclusion follows from \cite[Theorem~7.1]{HHI}.
\end{proof}

\begin{Rq}
Again, using the uniqueness of the Cartan subalgebra, up to unitary conjugacy, for $L(G\actson X)$, Theorem~\ref{theo:superrigidity} can be reformulated as a $W^*$-rigidity statement.
\end{Rq}

Let us finally mention the following corollary to Theorem~\ref{theo:superrigidity}, which states the rigidity of graph product actions within the class of mixing group actions.

\begin{Cor}[\cref{theo:superrigidityintro}]
  Let $G,H$ be two torsion-free countable groups. Let $G\actson X$ and $H\actson Y$ be two mixing measure-preserving actions on standard probability spaces. Assume that $G$ splits as a graph product over a connected finite simple graph $\Gamma$, with no vertex joined to every other vertex.
  
  If the two actions $G\actson X$ and $H\actson Y$ are orbit equivalent, then they are conjugate.
\end{Cor}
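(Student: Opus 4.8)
The plan is to deduce this corollary from \cref{theo:superrigidity} (in its torsion-free case) and then to promote the resulting virtual conjugacy to an honest conjugacy using the mixing hypotheses. Throughout, recall that orbit equivalence is by definition an equivalence between \emph{free} actions, and that (strong) mixing implies ergodicity, so all the standing regularity hypotheses are in force.

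First I would verify the hypotheses of \cref{theo:superrigidity}, taking for $H$ our arbitrary torsion-free countable group. The vertex groups of $G$ are non-trivial and torsion-free, hence countably infinite, so $G$ is a graph product with countably infinite vertex groups over $\Gamma$. Next, since $G\actson X$ is strongly mixing it is totally ergodic: if $K\subseteq G$ is an infinite subgroup and $A\subseteq X$ is $K$-invariant, then choosing pairwise distinct $k_n\in K$ gives $\mu(A)=\mu(k_nA\cap A)\to\mu(A)^2$, whence $\mu(A)\in\{0,1\}$. As every infinite subgroup of a vertex group $G_v$ is in particular an infinite subgroup of $G$, the action $G\actson X$ is vertexwise totally ergodic; together with the torsion-freeness of $G$ this places us in the second case of \cref{theo:superrigidity}. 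Finally, $H\actson Y$, being strongly mixing, is mildly mixing. Regarding the orbit equivalence between $G\actson X$ and $H\actson Y$ as a stable orbit equivalence (with $U=X$ and $V=Y$), \cref{theo:superrigidity} then shows that the two actions are \emph{virtually conjugate}.

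It remains to upgrade virtual conjugacy to conjugacy, which I expect to be the main (albeit mild) point of the argument. By definition of virtual conjugacy there are finite normal subgroups $F_G\unlhd G$ and $F_H\unlhd H$, finite-index subgroups $G_2\subseteq G_1:=G/F_G$ and $H_2\subseteq H_1:=H/F_H$, and conjugate actions $G_2\actson X_2$ and $H_2\actson Y_2$, such that $G_1\actson X/F_G$ is conjugate to $G_1\actson\Ind_{G_2}^{G_1}(X_2)$ and $H_1\actson Y/F_H$ is conjugate to $H_1\actson\Ind_{H_2}^{H_1}(Y_2)$. Since $G$ and $H$ are torsion-free, the finite normal subgroups $F_G,F_H$ are trivial, so $G_1=G$, $H_1=H$, and the quotient spaces are just $X$ and $Y$. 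The key step is to show that $G_2=G$ and $H_2=H$. For this I would use that the induced action $G\actson\Ind_{G_2}^{G}(X_2)$ admits the transitive $G$-action on the finite coset space $G_2\backslash G$ as a measurable factor. A factor of a mixing action is mixing; but if $[G:G_2]\ge 2$ this transitive action cannot be mixing, because the stabilizer of the identity coset is (a conjugate of) the infinite group $G_2$, so for $U$ equal to that coset and distinct $g_n\in G_2$ one has $\mu(g_nU\cap U)=\mu(U)=1/[G:G_2]$, which does not converge to $\mu(U)^2=1/[G:G_2]^2$. Hence $[G:G_2]=1$, and symmetrically $[H:H_2]=1$ (here $H$ is infinite, being orbit equivalent to the infinite group $G$).

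With $G_2=G$ and $H_2=H$, the inclusions $G_2\hookrightarrow G_1$ and $H_2\hookrightarrow H_1$ are isomorphisms, so the induced actions degenerate: $\Ind_{G}^{G}(X_2)$ is conjugate to $G\actson X_2$ and $\Ind_{H}^{H}(Y_2)$ is conjugate to $H\actson Y_2$. Thus $G\actson X$ is conjugate to $G\actson X_2$, which is conjugate (via a group isomorphism $G\to H$) to $H\actson Y_2$, which in turn is conjugate to $H\actson Y$. Composing these conjugacies shows that $G\actson X$ and $H\actson Y$ are conjugate, as desired.
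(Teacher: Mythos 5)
Your overall route is the same as the paper's: check that mixing gives vertexwise total ergodicity (placing you in the torsion-free/totally-ergodic case of \cref{theo:superrigidity}) and mild mixing of $H\actson Y$, apply that theorem to get virtual conjugacy, use torsion-freeness to kill the finite normal subgroups, and then use mixing to rule out a proper finite-index subgroup in the induced-action description. Your last step differs only cosmetically from the paper's: you observe that $\Ind_{G_2}^{G}(X_2)$ factors onto the (non-mixing) transitive action on the finite set $G_2\backslash G$, whereas the paper notes that $G_2$ fails to act ergodically on $\Ind_{G_2}^{G}(X_2)$, contradicting total ergodicity; both arguments are correct.

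There is, however, one genuine gap: freeness. You dispose of it by declaring that ``orbit equivalence is by definition an equivalence between free actions,'' but the corollary deliberately does \emph{not} assume the actions $G\actson X$ and $H\actson Y$ are free (compare with \cref{theo:strong-rigidity} and \cref{theo:superrigidity}, where ``free'' appears explicitly in the hypotheses). An orbit equivalence between the two given actions is simply a measure space isomorphism sending orbits to orbits, which makes sense without freeness; and \cref{theo:superrigidity} cannot be invoked until the actions are known to be (essentially) free. The paper closes this gap by citing Tucker-Drob's theorem that mixing probability measure-preserving actions of countable groups are almost free, i.e.\ almost every point has finite stabilizer; since $G$ and $H$ are torsion-free, these stabilizers are trivial, so both actions are essentially free. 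This is a short but non-trivial external input, and without it your proof establishes only the weaker statement in which freeness is added to the hypotheses. Adding this one step (mixing $+$ torsion-free $\Rightarrow$ essentially free) makes your argument complete and essentially identical to the paper's.
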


\begin{proof}
Being mixing, the two actions are in particular totally ergodic (and in particular the action $G\actson X$ is vertexwise totally ergodic) and mildly mixing. They are also essentially free by \cite[Theorem~1.4]{TD}. Therefore Theorem~\ref{theo:superrigidity} applies and shows that the two actions are virtually conjugate. But $G$ and $H$ have no finite normal subgroup, and an induced action of the form $\Ind_{G'}^G(X)$ is never mixing if $G'\subsetneq G$ (because $G'$ does not act ergodically on $\Ind_{G'}^G(X)$). Therefore the actions $G\actson X$ and $H\actson Y$ are conjugate.
\end{proof}

\newpage
\part{Measure equivalence rigidity of some graph products}
\label{Part:SuperrigityHigman}
This chapter deals with measure equivalence rigidity of graph products. Contrary to the previous chapter, we will not make any ergodicity assumption on the couplings we consider; but on the other hand, we will impose some strong rigidity conditions on the vertex groups.

A general question is the following: to what extent is it true that if the vertex groups are rigid in measure equivalence, then the graph product is rigid in measure equivalence? 

We will not answer this question in full generality, but we will give a sufficient criterion in Section~\ref{sec:rigidity-criterion}, and an example in Section~\ref{sec:higman}. Our example is, to our knowledge, the first example of a group which is rigid in measure equivalence, but not in quasi-isometry, among torsion-free groups. On this matter, let us make two remarks.
\begin{itemize}
\item It is not true that a graph product of quasi-isometrically rigid groups is quasi-isometrically rigid. A basic observation is that in a graph product $G$, changing a vertex group $G_v$ by $G_v\times\mathbb{Z}/2\mathbb{Z}$ can often yield a group which is quasi-isometric but not commensurable to $G$. So the phenomena established in this chapter are radically different from their counterparts in quasi-isometry.
\item In the converse direction, examples of groups that are rigid in quasi-isometry but not in measure equivalence exist. First, some amenable groups are quasi-isometrically rigid, while countably infinite amenable groups are all measure equivalent by the Ornstein--Weiss theorem \cite{OW80}. Quasi-isometrically rigid amenable groups include $\mathbb{Z}^n$ for every $n\in\mathbb{N}$ (Pansu \cite{Pan}) or solvable Baumslag--Solitar groups (Farb--Mosher \cite{FaMo}). Examples of non-amenable groups that are rigid in quasi-isometry but not in measure equivalence include many right-angled Artin groups, see work of Huang \cite{Hua-QI} on the quasi-isometry side.
\end{itemize}

When dealing with the above question, it is in fact useful to impose a very strong form of rigidity on the vertex groups $G_v$, and require at least that for every $v\in V\Gamma$, every self measure equivalence coupling of $G_v$ factors through the tautological coupling on $G_v$ itself, by left-right multiplication. This will be an important assumption in our criterion in Section~\ref{sec:rigidity-criterion}.

To illustrate the subtleness of the question, let us make the following observation in the context of $L^1$-measure equivalence: it is possible to have a graph product $G$ of countably infinite groups over a finite simple graph with no transvection and no partial conjugation (and not reduced to a point), where all vertex groups are rigid in $L^1$-measure equivalence in the sense that every countable group that is $L^1$-measure equivalent to $G_v$ is virtually isomorphic to $G_v$, but $G$ is not. Indeed, it is enough to take the vertex groups to be isomorphic to $\mathbb{Z}$, and observe that some right-angled Artin groups are not rigid in $L^1$-measure equivalence \cite{HH-L1}, while $\mathbb{Z}$ is \cite[Corollary~1.2]{AustinBowen}.

\section{An ME-rigidity criterion for graph products}\label{sec:rigidity-criterion}

The goal of the present section is to prove the following theorem. The definition of Monod--Shalom's class $\Creg$ is given just below the statement.

\begin{Th}\label{theo:rigidity-criterion}
    Let $\Gamma$ be a finite simple graph with no transvection and no partial conjugation, with $\Aut(\Gamma)=\{1\}$. For every $v\in V\Gamma$, let $G_v$ be a countably infinite group, and let $G$ be the graph product over $\Gamma$ with vertex groups $(G_v)_{v\in V\Gamma}$. Assume that the following four hypotheses hold.
    \begin{enumerate}
        \item[$(\mathrm{H}_1)$] For every vertex $v\in V\Gamma$, there exists an extension $\hat{G}_v=G_v\rtimes F_v$ with $F_v$ finite, such that for every self measure equivalence coupling $\Omega_v$ of $G_v$, there exists a Borel $(G_v\times G_v)$-equivariant map $\Omega_v\to \hat{G}_v$.
        \item[$(\mathrm{H}_2)$] For every vertex $v\in V\Gamma$, the group $G_v$ is torsion-free, has no proper finite-index subgroup, and belongs to the class $\calc_\reg$.
        \item[$(\mathrm{H}_3)$] The vertex groups are pairwise not measure equivalent.
        \item[$(\mathrm{H}_4)$] For every clique $C$ with $VC=\{v_1,\dots,v_k\}$, the image of every injective homomorphism $G_{v_1}\to G_{v_1}\times\dots\times G_{v_k}$ is equal to $G_{v_1}\times \{1\}\times\dots\times\{1\}$. 
    \end{enumerate}
    Then every torsion-free countable group which is measure equivalent to $G$, is in fact isomorphic to $G$.
\end{Th}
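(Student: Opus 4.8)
The plan is to follow the blueprint described at the beginning of this part: factor self-couplings of $G$ through $\Aut(\bD_G)$, transfer this to a coupling with $H$ via Furman's technique, and then reconstruct the graph-product structure on $H$ from its induced action on the building $\bD_G$. First I would observe that, by Proposition~\ref{prop:vertex-recognition}, the class of graph products of countably infinite groups over transvection-free graphs with no partial conjugation (not reduced to a vertex) enjoys the Vertex Recognition Property; the degenerate case where $\Gamma$ is a single vertex reduces to the measure equivalence rigidity of $G_v$ itself and follows directly from $(\mathrm{H}_1)$ and $(\mathrm{H}_2)$, so I may assume $\Gamma$ has at least two vertices. Corollary~\ref{cor:factor-through-building} then provides, for every self measure equivalence coupling $\Omega$ of $G$, a $(G\times G)$-equivariant Borel map $\Omega\to\Aut(\bD_G)$. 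Now let $H$ be a torsion-free countable group with a coupling $\Sigma$ with $G$. Composing $\Sigma$ with its mirror $\check\Sigma$ produces a self-coupling $\Sigma\times_H\check\Sigma$ of $G$; feeding this into the above factorization and running the standard argument of Furman \cite{Fur-me} (in the form used in \cite{Kid-me,GH-OutFn}) I would obtain a homomorphism $\varrho\colon H\to\Aut(\bD_G)$ with finite kernel, together with a $(G\times H)$-equivariant Borel map $\Sigma\to\Aut(\bD_G)$, where $G$ embeds through its action on $\bD_G$ and $H$ acts via $\varrho$. As $H$ is torsion-free, $\varrho$ is injective, so $H$ acts on $\bD_G$.

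Next I would analyze this $H$-action cell by cell. A cell of $\bD_G$ is a standard clique coset, whose $G$-stabilizer is a conjugate of a standard clique subgroup, hence isomorphic to a product $\prod_{v\in V\Upsilon}G_v$ over a clique $\Upsilon$. Restricting the coupling $\Sigma$ along the equivariant map, in the spirit of Lemma~\ref{lemma:me-subgroups}, shows that the $H$-stabilizer of such a cell is measure equivalent to its $G$-stabilizer. By $(\mathrm{H}_2)$ the factors $G_v$ lie in Monod--Shalom's class $\calc_\reg$, so their rigidity theorem for products \cite{MS} forces the $H$-stabilizer to split, up to finite index and finite kernel, as a product of groups each measure equivalent to one of the $G_v$; hypothesis $(\mathrm{H}_3)$ matches the factors uniquely. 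For each factor, composing its coupling with $G_v$ and the mirror and invoking $(\mathrm{H}_1)$ yields an injection into $\hat G_v=G_v\rtimes F_v$ with finite-index image; torsion-freeness discards the $F_v$ part, and the absence of proper finite-index subgroups in $G_v$ from $(\mathrm{H}_2)$ forces the factor to be isomorphic to $G_v$ with coupling index exactly $1$. Hence every cell stabilizer for the $H$-action is isomorphic to the corresponding $G$-stabilizer.

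The crucial and most delicate step is then a volume argument. The $G$-action on $\bD_G$ admits a strict fundamental domain $Y$ (the base chamber, with $G\backslash\bD_G$ a finite complex canonically encoding $\Gamma$; here $\Aut(\Gamma)=\{1\}$ guarantees that the combinatorial type of $Y$ is rigidly determined, so cell types are matched without ambiguity). Because the stabilizers are matched with coupling index $1$ at every cell, a coupling-index computation forces the global compression constant $[G:H]_\Sigma$ to equal $1$ and, crucially using that each $G_v$ has no proper finite-index subgroup, shows that $H$ too acts on $\bD_G$ with the very same strict fundamental domain $Y$. A strict fundamental domain endows $H$ with the structure of a simple complex of groups over $Y$ in the sense of Bridson--Haefliger \cite[Chapter~II.12]{BH}, whose local groups are the stabilizers $\prod_{v\in V\Upsilon}G_v$ and whose face maps are injective homomorphisms between such products.

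Finally I would invoke hypothesis $(\mathrm{H}_4)$ to pin down these inclusions: every injection $G_{v_1}\hookrightarrow G_{v_1}\times\dots\times G_{v_k}$ is the canonical factor inclusion, from which one deduces that all face maps are the standard inclusions $\prod_{V\Lambda}G_v\hookrightarrow\prod_{V\Upsilon}G_v$. Thus the complex of groups of $H$ over $Y$ is isomorphic to the canonical graph-product complex of groups of $G$; since $\bD_G$ is $\mathrm{CAT}(0)$, hence simply connected, the fundamental group of this complex of groups is $G$, so $H\cong G$. The main obstacle I anticipate is precisely the volume/coupling-index step of the third paragraph: upgrading the cell-wise identification of stabilizers to a genuinely \emph{common} strict fundamental domain requires controlling the global compression constant and excluding the possibility that $H$ stabilizes cells with strictly smaller stabilizers, which is exactly where the no-proper-finite-index-subgroup hypothesis and the rigid combinatorics furnished by $\Aut(\Gamma)=\{1\}$ have to be combined with care.
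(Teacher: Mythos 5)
Your proposal follows essentially the same route as the paper's proof: factor self-couplings of $G$ through $\Aut(\bD_G)$, run a Furman--Kida transfer to get an $H$-action on $\bD_G$ together with a $(G\times H)$-equivariant map, identify the $H$-stabilizers of cells with the $G$-stabilizers using $(\mathrm{H}_1)$--$(\mathrm{H}_3)$, use a coupling-index argument to produce a common strict fundamental domain, and conclude with the simple-complex-of-groups argument and $(\mathrm{H}_4)$. All five steps match Sections~\ref{sec:self-couplings}--\ref{sec:complex-of-groups} of the paper.

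There is, however, one genuine gap at the transfer step. The ``standard argument of Furman'' you invoke to extract a homomorphism $\varrho\colon H\to\Aut(\bD_G)$ and a $(G\times H)$-equivariant map $\Sigma\to\Aut(\bD_G)$ does not follow from Corollary~\ref{cor:factor-through-building} alone: the transfer theorem (in the form of \cite[Theorem~3.5]{Kid-amalgam}, which the paper applies in Proposition~\ref{prop:factorize-through-building}) additionally requires $\Aut(\bD_G)$ to be \emph{strongly ICC relative to $G$}, i.e.\ the only Borel probability measure on $\Aut(\bD_G)$ invariant under conjugation by every element of $G$ is the Dirac mass at the identity. This is exactly what makes the equivariant map on the composed self-coupling $\Sigma\times_H\check\Sigma$ essentially unique, and essential uniqueness is what allows it to descend to a well-defined homomorphism on $H$; without it, one only gets maps to probability measures on $\Aut(\bD_G)$ and the construction of $\varrho$ breaks down. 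In the paper this is Lemma~\ref{lemma:icc}, and its proof is a substantive piece of work (minimality and strong proximality of the $G$-action on the regular boundary of the $\mathrm{CAT}(0)$ cube complex $\bD_G$, reducing to the right-angled Artin case of \cite{HH-L1}), not a routine verification. A secondary, smaller inaccuracy: you locate the role of $\Aut(\Gamma)=\{1\}$ in ``rigidly determining the combinatorial type of $Y$''; its actual use is to show that $G$ and $\Aut(\bD_G)$ have the \emph{same orbits of cubes} on $\bD_G$ (Lemma~\ref{lemma:same-orbits}), which is what makes $\theta^{-1}\bigl(\Stab_{\Aut(\bD_G)}(\sigma)\bigr)$ a genuine measure equivalence coupling between $\Stab_G(\sigma)$ and $\Stab_H(\sigma)$ (Lemma~\ref{lemma:hh}) and what excludes two points of $Y_G$ lying in a common $H$-orbit in Lemma~\ref{lemma:same-fundamental-domain}. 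The remainder of your outline, including the volume argument you single out as the main obstacle, is carried out in the paper exactly as you describe (Lemmas~\ref{lemma:coupling-index-1} and~\ref{lemma:same-fundamental-domain}).
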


As usual, throughout the section, all the stated equivariances are valid almost everywhere.

The class $\mathcal{C}_{\mathrm{reg}}$, introduced by Monod--Shalom in \cite{MS2}, is the class of all countable groups $G$ such that $\mathrm{H}^2_{\mathrm{b}}(G,\ell^2(G))\neq\{0\}$. Of particular importance for us is the following example.

\begin{Ex}\label{ex:creg}
    Recall that a group is \emph{acylindrically hyperbolic} in the sense of Osin \cite{Osi} if it admits a non-elementary acylindrical action on a hyperbolic space (see also Appendix~\ref{sec:acyl-hyp}, \cpageref{sec:acyl-hyp}). By a theorem of Hamenstädt \cite[Theorem~A]{Ham}, all acylindrically hyperbolic groups belong to the class $\Creg$. 
\end{Ex}

\begin{Rq}\label{rk:icc}
Hypothesis~$(\mathrm{H}_2)$ implies in particular that all vertex groups are ICC (i.e.\ every non-trivial conjugacy class in $G_v$ is infinite), since a countable torsion-free $\calc_\reg$ group is always ICC by \cite[Proposition~7.11]{MS}.
\end{Rq}

\paragraph*{Outline of the proof, and organization of the section.} 

Recall that the group $G$ acts on its right-angled building $\bD_G$ (see Section~\ref{sec:def-building}, \cpageref{sec:def-building} for its definition). We first prove that the $G$-stabilizer $G_u$ of every vertex $u\in V\bD_G$ is rigid in measure equivalence in a strong sense: every self-coupling of $G_u$ factors through the action on (some finite-index extension of) $G_u$ by left-right multiplication. When $u$ is a rank $1$ vertex, the group $G_u$ is conjugate to a vertex group for the graph product structure of $G$, and rigidity is exactly the contents of Hypothesis~$(\mathrm{H}_1)$. Stabilizers of higher rank vertices split as direct products of stabilizers of rank $1$ vertices, and we exploit an argument from the successive works of Monod--Shalom \cite{MS} and Kida \cite{Kid-me} to establish the measure equivalence rigidity of these direct products. This step is carried in Section~\ref{sec:self-couplings}.

Let now $H$ be a torsion-free group which is measure equivalent to $G$. We know from Part~\ref{Part:MEClassification} of the present work that every self-coupling of $G$ factors through $\Aut(\bD_G)$. By a standard technique, originating from the work of Furman \cite{Fur-me}, we can transfer the rigidity from self-couplings to arbitrary couplings, and deduce that $H$ also acts on $\bD_G$, and that (through this action) every measure equivalence coupling $\Omega$ between $G$ and $H$ factors through $\Aut(\bD_G)$. This is proved in Section~\ref{sec:factoring}. A key technical ingredient in the proof is to establish a strong ICC property for the Polish group $\Aut(\bD_G)$ -- our proof of this fact relies on previous work of Huang and the second-named author \cite{HH-L1}.

In Section~\ref{sec:action-of-h}, we then analyze this $H$-action on $\bD_G$. In a way that is similar to our work in Part~\ref{Part:MEClassification}, we prove that for every vertex $u\in V\bD_G$, the coupling $\Omega$ (between $G$ and $H$) restricts to a measure equivalence coupling between $G_u$ and $H_u$. And the combination of Hypotheses~$(\mathrm{H}_1)$ and~$(\mathrm{H}_2)$ then forces $H_u$ to be isomorphic to $G_u$ (the facts that $H_u$ is torsion-free and that $G_u$ has no proper finite-index subgroup are used to obtain an actual isomorphism, not a virtual one). In other words $H$ acts on $\bD_G$ with the same vertex stabilizers as $G$. And, by crucially using our assumption that all groups considered have no proper finite-index subgroup, we also prove in Section~\ref{sec:same-fundamental-domain} that the $H$-action on $\bD_G$ shares a common fundamental domain (which is a subcomplex) with the action of $G$.

In the terminology of Bridson--Haefliger \cite{BH}, this says that $H$ has the structure of a simple complex of groups over the fundamental domain $Y$ for the $G$-action on $\bD_G$. In other words $H$ is exactly the group which is obtained from the free product of all $H$-stabilizers of cells in $Y$, by taking the quotient by the relations coming from the natural inclusions of stabilizers (coming from face inclusions of the cells). Recall that each cell stabilizer splits as a direct product of some of the vertex groups $G_v$, and there remains to understand the inclusion homomorphisms. This is where we crucially use our Hypothesis~$(\mathrm{H}_4)$: it forces all inclusion homomorphisms to be factor inclusions. In other words $H$ has exactly the presentation of a graph product, and therefore $H$ is isomorphic to $G$.

\subsection{Self-couplings of clique subgroups}\label{sec:self-couplings}

Using the fact that all vertex groups belong to the class $\calc_\reg$, we extend Assumption~$(\mathrm{H}_1)$ to all self-couplings of clique subgroups of $G$ (and not just to self-couplings of vertex groups). This is the contents of the following proposition, which was essentially established by Kida \cite[Section~7]{Kid-me}, relying on work of Monod--Shalom \cite{MS}. We provide a brief argument as Kida's statements were formulated in the setting of mapping class groups.

\begin{Prop}\label{prop:product-me}
    Let $G_1,\dots,G_n$ be torsion-free countable groups in the class $\Creg$, and for every $i\in\{1,\dots,n\}$, let $\hat{G}_i$ be a finite-index extension of $G_i$.  Let $G=G_1\times\dots\times G_n$, and let $\hat G=\hat{G}_1\times\dots\times\hat{G}_n$. Assume that 
    \begin{itemize}
    \item for every $i\in\{1,\dots,n\}$, and every self measure equivalence coupling $\Omega_i$ of $G_i$, there exists a $(G_i\times G_i)$-equivariant Borel map $\Omega_i\to \hat{G}_i$, and
    \item for any distinct $i,j\in\{1,\dots,n\}$, the groups $G_i$ and $G_j$ are not measure equivalent.
    \end{itemize}
    Then for every self measure equivalence coupling $\Omega$ of $G$, there exists a $(G\times G)$-equivariant Borel map $\Omega\to \hat{G}$.
\end{Prop}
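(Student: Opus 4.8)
The plan is to prove Proposition~\ref{prop:product-me} by induction on $n$, where the base case $n=1$ is exactly the first hypothesis. The heart of the argument is the inductive step, which relies on the structure theory of Monod--Shalom for measure equivalence couplings of direct products of class $\Creg$ groups. Recall that for a group in $\Creg$, second bounded cohomology with $\ell^2$-coefficients is nontrivial, and Monod--Shalom showed (see \cite{MS}) that this yields strong constraints on how such groups sit inside couplings. The key mechanism is that bounded cohomology classes can be ``induced'' through couplings, so that a self-coupling $\Omega$ of $G=G_1\times\dots\times G_n$ gives rise, via the coupling, to a measurable cocycle-type data that respects the product decomposition.

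First I would set up the coupling-induced homomorphism framework. Given a self measure equivalence coupling $\Omega$ of $G$, I would consider the two embeddings of $G$ into $\Omega$ (as left and right factors) and use the class $\Creg$ property to analyze how each factor $G_i\times\{1\}$ (on the left) interacts with the factors on the right. Concretely, using the Monod--Shalom machinery (the analogue of what Kida carries out in \cite[Section~7]{Kid-me}), the nonvanishing of $\mathrm{H}^2_\mathrm{b}(G_i,\ell^2(G_i))$ forces the image of each left factor, after pushing through the coupling, to be ``aligned'' with a single right factor, rather than spreading across several. This is the crucial rigidity input: a class $\Creg$ factor cannot be diffusely distributed among several commuting factors on the other side. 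The precise statement I would extract is that $\Omega$ induces a bijection $\sigma$ of $\{1,\dots,n\}$ such that the self-coupling restricts (in an appropriate measured-groupoid sense, or after a measurable partition) to a measure equivalence coupling between $G_i$ and $G_{\sigma(i)}$ for each $i$.

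Next I would invoke the second hypothesis, that the $G_i$ are pairwise non-measure-equivalent, to deduce that $\sigma$ must be the identity permutation: if $\sigma(i)\neq i$, then $G_i$ and $G_{\sigma(i)}$ would be measure equivalent (via the restricted coupling), contradicting the assumption. Having pinned down $\sigma=\mathrm{id}$, for each $i$ the coupling restricts to a self measure equivalence coupling $\Omega_i$ of $G_i$, and the first hypothesis supplies a $(G_i\times G_i)$-equivariant Borel map $\Omega_i\to\hat{G}_i$. I would then assemble these factorwise maps into a single $(G\times G)$-equivariant Borel map $\Omega\to\hat{G}=\hat{G}_1\times\dots\times\hat{G}_n$, checking that the product of the equivariant maps is well-defined and equivariant for the product action; here the commutation of the factors and the measurability of the partitions are what need care.

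The main obstacle will be making rigorous the ``alignment'' step, i.e.\ showing that each left factor $G_i$ pairs with exactly one right factor through the coupling. In the Monod--Shalom/Kida framework this is where all the functional-analytic weight lies: one must use the induction of bounded cohomology classes through the coupling together with the fact that a nonzero class in $\mathrm{H}^2_\mathrm{b}(G_i,\ell^2(G_i))$ cannot survive if $G_i$ were split across commuting subgroups, because $\ell^2$ of a direct product decomposes and the relevant class would have to be supported on a single factor. I expect to have to phrase this carefully in the language of measurable cocycles and the associated coupling index, and to cite the relevant results of \cite{MS} (their treatment of irreducible couplings and of the class $\Creg$) rather than reprove them. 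The torsion-freeness of the $G_i$ is used to rule out degenerate finite-index phenomena and to ensure the equivariant maps land cleanly in the finite-index extensions $\hat{G}_i$; I would verify that the finite groups $F_i$ do not obstruct the assembly, since they act compatibly on each $\hat{G}_i$ and the product structure is preserved.
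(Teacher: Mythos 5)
Your route is essentially the paper's: invoke the Monod--Shalom/Kida structure theory for self-couplings of products of $\Creg$ groups to align each left factor with a single right factor, use the pairwise non-measure-equivalence hypothesis to force the resulting permutation to be the identity, then apply the first hypothesis factorwise and assemble. However, two points in your sketch are genuine gaps rather than routine details.

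First, the structure theorem you invoke (Kida's adaptation in \cite[Section~7]{Kid-me} of the argument of \cite[Theorem~1.16]{MS}) is a statement about \emph{ergodic} couplings, whereas the proposition quantifies over all self-couplings of $G$. The paper therefore begins by disintegrating $\Omega$ into ergodic self-couplings (second point of \cite[Lemma~2.2]{Fur-me}) and, crucially, cites \cite[Corollary~5.4]{FMW} to pass from equivariant maps on the ergodic components back to a single Borel $(G\times G)$-equivariant map on all of $\Omega$; this gluing is not automatic, and your sketch never addresses non-ergodic couplings. Second, the conclusion you propose to extract from Monod--Shalom --- that $\Omega$ ``restricts'' to a coupling between $G_i$ and $G_{\sigma(i)}$, perhaps after a measurable partition --- is weaker than what your final assembly step needs. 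The precise output of Kida's argument is, for each $i$, a space $\Omega_i$ carrying a $(G_i\times G_i)$-action, a projection $p_i:\Omega\to\Omega_i$ that is $(G_i\times G_i)$-equivariant \emph{and} $(G_j\times G_j)$-invariant for every $j\neq i$, together with a countable $(G_i\times G_i)$-invariant partition of a conull part of $\Omega_i$ into self-couplings of $G_i$. The invariance of $p_i$ under the other factors is exactly what makes the product map $(\theta_1\circ p_1,\dots,\theta_n\circ p_n)$ equivariant for the full $(G\times G)$-action; knowing only that the coupling restricts to factorwise couplings on subsets of $\Omega$ gives equivariant maps defined on those subsets, with no mechanism to extend them to $\Omega$. (A minor point: the induction on $n$ you announce is vestigial --- your argument never uses an inductive hypothesis, and the paper's proof is not inductive.)
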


\begin{proof}
By considering the ergodic decomposition, every self measure equivalence coupling of $G$ can be disintegrated into ergodic self measure equivalence couplings, see the second point of \cite[Lemma~2.2]{Fur-me}. Therefore, in view of \cite[Corollary~5.4]{FMW} (applied to $G=H$, with $\Omega$ in place of their space $X$, and $\hat G$ in place of their space $Y$), it is enough to treat the case where the action of $G\times G$ on $\Omega$ is ergodic. From now on we work under this extra assumption.

Using an argument from the proof of \cite[Theorem~1.16]{MS}, Kida proved in \cite[Section~7, p.~1895]{Kid-me} that for every $i\in\{1,\dots,n\}$, there exists a standard measure space $\Omega_i$, equipped with 
\begin{itemize}
    \item a measure-preserving action of $G_i\times G_i$;
    \item a measurable projection $p_i:\Omega\to\Omega_i$ which is both $(G_i\times G_i)$-equivariant, and $(G_j\times G_j)$-invariant for every $j\neq i$;
    \item a $(G_i\times G_i)$-invariant Borel partition $\Omega_i^*=\dunion_k\Omega_{i,k}$ of a $(G_i\times G_i)$-invariant conull Borel subset $\Omega_i^*$ into at most countably many subspaces $\Omega_{i,k}$, such that each $\Omega_{i,k}$ is a self measure equivalence coupling of $G_i$.
\end{itemize}
(Notice that Kida has a space $\Omega_i$ which is invariant under $G_i\times G_{t(i)}$ for some bijection $t:\{1,\dots,n\}\to\{1,\dots,n\}$, but our second assumption implies that $t(i)=i$.) 

For every $i\in\{1,\dots,n\}$ and every $k\in\mathbb{N}$, let $\theta_{i,k}:\Omega_{i,k}\to \hat{G}_i$ be a $(G_i\times G_i)$-equivariant Borel map, given by our first assumption. Combining these maps as $k$ varies in $\mathbb{N}$ yields a $(G_i\times G_i)$-equivariant Borel map $\theta_i:\Omega_i\to \hat{G}_i$. Then the map $(\theta_1\circ p_1,\dots,\theta_n\circ p_n):\Omega\to \hat{G}$ gives the desired almost $(G\times G)$-equivariant Borel map.
\end{proof}

\subsection{Factoring the coupling through the right-angled building}

The goal of this section is to prove the following proposition (we refer to Section~\ref{sec:def-building}, \cpageref{sec:def-building},  for the definition of the right-angled building $\bD_G$).

\begin{Prop}\label{prop:factorize-through-building}
    Let $G$ be a graph product with countably infinite vertex groups, over a finite simple graph $\Gamma$ with no transvection and no partial conjugation. Let $H$ be a countable group, and let $\Omega$ be a measure equivalence coupling between $G$ and $H$. Let $\bD_G$ be the right-angled building of $G$.

    Then there exist a homomorphism $\iota: H\to\Aut(\bD_G)$ and a $(G\times H)$-equivariant Borel map $\theta:\Omega\to\Aut(\bD_G)$, where the action of $G\times H$ on $\Aut(\bD_G)$ is via $(g,h)\cdot f=gf\iota(h)^{-1}$.
\end{Prop}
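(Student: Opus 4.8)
The plan is to combine the main classification result of Part~\ref{Part:MEClassification} with the standard technique of transferring rigidity from self-couplings to arbitrary couplings, a method going back to Furman \cite{Fur-me}. The starting point is Corollary~\ref{cor:factor-through-building}, \cpageref{cor:factor-through-building}: since $\Gamma$ has no transvection and no partial conjugation, the class of graph products of countably infinite groups over $\Gamma$ satisfies the Vertex Recognition Property (Proposition~\ref{prop:vertex-recognition}, \cpageref{prop:vertex-recognition}), so every measure equivalence coupling between two such graph products factors through a $(G\times H)$-equivariant Borel map to $\Isom(\bD_G,\bD_H)$. To exploit this when $H$ is an \emph{arbitrary} countable group, I would first compose the coupling $\Omega$ between $G$ and $H$ with its mirror to build a self measure equivalence coupling $\Sigma=\Omega\times_H\check{\Omega}$ of $G$ (where $\check{\Omega}$ denotes $\Omega$ with the roles of $G$ and $H$ swapped), as in \cite[Section~2]{Fur-me} or the composition of couplings recalled on p.~300 of \cite{Fur-survey}. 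Applying the self-coupling version of the above factorization to $\Sigma$ yields a $(G\times G)$-equivariant Borel map $\Sigma\to\Aut(\bD_G)$.

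The heart of the argument is then to descend this map on the self-coupling $\Sigma$ to a map on the original coupling $\Omega$, together with a genuine homomorphism $\iota:H\to\Aut(\bD_G)$. First I would produce, from the $(G\times H)$-equivariant Borel map $\theta_0:\Omega\to\Isom(\bD_G,\bD_H)$ provided by Corollary~\ref{cor:factor-through-building}, a corresponding object valued in $\Aut(\bD_G)$. The key point is an essential-uniqueness (strong ICC) statement for $\Aut(\bD_G)$: I would establish that the only $(G\times G)$-equivariant Borel map from the \emph{trivial} self-coupling $G\times G$ (or more precisely the relevant amplified self-coupling) to $\Aut(\bD_G)$ is, up to the obvious ambiguity, essentially constant/canonical. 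This rigidity of equivariant maps into $\Aut(\bD_G)$ follows from the strong ICC property of the Polish group $\Aut(\bD_G)$, which I would extract from the previous work of Huang and the second-named author \cite{HH-L1} (this is precisely the technical input flagged in the outline of Section~\ref{sec:rigidity-criterion}). Concretely, strong ICC guarantees that two equivariant maps into $\Aut(\bD_G)$ that agree after composition with the coupling's mirror must differ by a fixed automorphism, which is what pins down a well-defined homomorphism.

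With the strong ICC property in hand, the formal mechanism is the following. Consider the map $\Omega\to\Isom(\bD_G,\bD_H)$ and fix a base identification $\bD_H\cong\bD_G$ (which exists because $\Gamma_G$ and $\Gamma_H$ are isomorphic graphs --~here in fact $G$ and $H$ are coupled, $\Gamma_H$ is determined, and Lemma~\ref{lemma:building}, \cpageref{lemma:building} identifies the buildings). Post-composing $\theta_0$ with this identification gives a $(G\times H)$-equivariant Borel map $\theta:\Omega\to\Aut(\bD_G)$, where $H$ acts through some a priori only measurable cocycle. The equivariance relation $\theta((g,h)\cdot\omega)=g\,\theta(\omega)\,\beta(h)^{-1}$ defines a measurable map $\beta:H\to\Aut(\bD_G)$; using the cocycle relation for $\theta$ together with the freeness of the $H$-action and the strong ICC property of $\Aut(\bD_G)$, one checks that $\beta$ is (almost everywhere equal to) a genuine group homomorphism $\iota:H\to\Aut(\bD_G)$ independent of $\omega$. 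This is the step where the essential uniqueness is indispensable: without it the map $\beta$ could depend measurably on $\omega$ and fail to be a homomorphism. Setting $(g,h)\cdot f=gf\iota(h)^{-1}$ then makes $\theta$ exactly $(G\times H)$-equivariant as required.

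\textbf{Main obstacle.} The principal difficulty will be establishing and correctly invoking the strong ICC property of $\Aut(\bD_G)$ for the Polish group topology, and verifying that it yields the essential uniqueness of equivariant maps in the measurable category. Care is needed because $\Aut(\bD_G)$ is an uncountable Polish group rather than a discrete group, so the usual ICC arguments for discrete groups (as in Lemma~\ref{Lmm:ICC}, \cpageref{Lmm:ICC}) do not transfer directly; one must use the stronger notion of strong ICC appropriate to measure equivalence of Polish-group-valued couplings, which is exactly what \cite{HH-L1} supplies in the building setting. A secondary technical point is the bookkeeping of the composition of couplings and the compatibility of the mirror construction with the factorization of Corollary~\ref{cor:factor-through-building}; this is routine but must be done with attention to the equivariance conventions $(g,h)\cdot f=gf\iota(h)^{-1}$.
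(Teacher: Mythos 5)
Your overall strategy is the right one, and it matches the paper's: factor \emph{self}-couplings of $G$ through $\Aut(\bD_G)$ (Corollary~\ref{cor:factor-through-building} applied with $H=G$), establish a strong ICC property for $\Aut(\bD_G)$ relative to $G$ via \cite{HH-L1}, and then transfer rigidity from self-couplings to the arbitrary coupling $\Omega$ by a Furman-type argument. Your first paragraph (forming $\Sigma=\Omega\times_H\check\Omega$ and applying the self-coupling factorization) and your identification of the strong ICC input are exactly the two hypotheses the paper verifies; the paper then packages them as saying that $G$ is \emph{coupling rigid with respect to} $(\Aut(\bD_G),\iota_G)$ in the sense of \cite[Definition~3.3]{Kid-amalgam}, and concludes by citing the transfer theorem \cite[Theorem~3.5]{Kid-amalgam}, which produces both $\iota$ and $\theta$ at once.

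However, your execution of the descent step (paragraphs two and three) contains a genuine error: it is circular. You invoke ``the $(G\times H)$-equivariant Borel map $\theta_0:\Omega\to\Isom(\bD_G,\bD_H)$ provided by Corollary~\ref{cor:factor-through-building}'' and then ``fix a base identification $\bD_H\cong\bD_G$ (which exists because $\Gamma_G$ and $\Gamma_H$ are isomorphic graphs)''. None of these objects exist in the setting of the proposition: $H$ is an \emph{arbitrary} countable group, it carries no graph product structure, there is no graph $\Gamma_H$, no building $\bD_H$, and Corollary~\ref{cor:factor-through-building} does not apply to the coupling $\Omega$ (it requires both groups to lie in a class with the Vertex Recognition Property). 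Producing an action of $H$ on $\bD_G$ is precisely the \emph{conclusion} of the proposition, not an available input. The repair is to never leave the world of $\Sigma$ and $\Aut(\bD_G)$: from the $(G\times G)$-equivariant map $\Sigma\to\Aut(\bD_G)$ and the strong ICC property (the paper's Lemma~\ref{lemma:icc}, stating that the only Borel probability measure on $\Aut(\bD_G)$ invariant under conjugation by $G$ is $\delta_{\mathrm{id}}$), one either quotes \cite[Theorem~3.5]{Kid-amalgam} as a black box, or reproves it: the strong ICC forces essential uniqueness of equivariant maps on self-couplings, and applying this to the two ways of composing $\Omega\times_H\check\Omega\times_G\Omega\to\Aut(\bD_G)$ yields a map $\theta:\Omega\to\Aut(\bD_G)$ whose $H$-equivariance defect is independent of $\omega$, which defines the homomorphism $\iota$. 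As written, your middle step would not compile into a proof.
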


We already know that self-couplings of $G$ factor through $\Aut(\bD_G)$ (Corollary~\ref{cor:factor-through-building}, \cpageref{cor:factor-through-building}). Our proof relies on a general strategy to transfer this fact to an information about arbitrary couplings between $G$ and $H$. This strategy was initiated by Furman in \cite{Fur-me}, and we will use a version found in \cite{Kid-amalgam} (or alternatively \cite{BFSIntegrability}). This argument requires establishing a strong ICC property for the Polish group $\Aut(\bD_G)$, which is the contents of our next lemma.

\begin{lemma}\label{lemma:icc}
Let $G$ be a graph product of countably infinite groups over a finite simple graph $\Gamma$, such that no vertex $v\in V\Gamma$ satisfies $\Gamma=\st(v)$. Let $\bD_G$ be its right-angled building.

Then the only Borel probability measure on $\Aut(\bD_G)$ which is invariant by the conjugation by any element of $G$ is the Dirac mass at the identity.
\end{lemma}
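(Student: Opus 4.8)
I would begin by recording the ambient structure. The group $L:=\Aut(\bD_G)$ is a Polish group for the topology of pointwise convergence on the vertex set, and $G$ embeds in $L$ via its left-multiplication action on standard clique cosets; this embedding is injective and has discrete, closed image (an automorphism is determined by its action on the rank $0$ vertices $V_0\bD_G$, which $G$ permutes simply transitively, so a convergent sequence of elements of $G$ is eventually constant). Recall that $\bD_G$ is a finite-dimensional $\mathrm{CAT}(0)$ cube complex on which $G$ acts cocompactly (with fundamental domain a subcomplex), that automorphisms preserve ranks of vertices (Corollary~\ref{cor:rank}), and that $G$ is ICC (Lemma~\ref{Lmm:ICC}, whose hypothesis ``no vertex joined to every other vertex'' is exactly the present one). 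I would also note the exact factorisation $L=G\cdot L_0$, where $L_0:=\Stab_L(\{e\})$ and $G\cap L_0=\{1\}$: indeed every $f\in L$ decomposes uniquely as $f=\ell_{f(e)}\circ\sigma_f$ with $\ell_{f(e)}\in G$ (left translation by the rank $0$ vertex $f(e)$) and $\sigma_f:=\ell_{f(e)}^{-1}\circ f\in L_0$. Finally, Lemma~\ref{lemma:building} gives a cubical isomorphism $\bD_G\cong\bD_A$, where $A$ is the right-angled Artin group over $\Gamma$, so that $L\cong\Aut(\bD_A)$; this lets me import the analysis of $\Aut(\bD_A)$ from \cite{HH-L1}.

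\textbf{Strategy.} Let $\mu$ be a Borel probability measure on $L$ invariant under conjugation by $G$; the goal is $\mu=\delta_{\mathrm{id}}$. First I would treat the atomic part: if $\mu(\{f\})>0$ for some $f$, then by invariance its whole $G$-conjugacy class carries the same mass, so the class is finite and $C_G(f)$ has finite index in $G$, hence is infinite. Using the faithfulness and cocompactness of $G\curvearrowright\bD_G$ together with the contracting dynamics of $G$ on $\bD_G$, I would argue that an element commuting with such a large subgroup must fix $\bD_G$ pointwise, i.e.\ $f=\mathrm{id}$. The substantial point is the diffuse part: I would push $\mu$ forward through the continuous action of $L$ on the visual (or Roller) boundary $\partial\bD_G$, obtaining a $G$-conjugation-invariant object on homeomorphisms of the compact space $\partial\bD_G$, and then use the north--south / proximal dynamics of the contracting elements of $G$ (available through the acylindrical hyperbolicity of the irreducible factors used in Lemma~\ref{Lmm:ICC}, and the structure theory of $\Aut(\bD_A)$ from \cite[Section~2.3]{HH-L1}) to force any invariant measure to concentrate, ruling out a nontrivial diffuse part. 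Combined with the atomic analysis this yields $\mu=\delta_{\mathrm{id}}$.

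\textbf{The reducible case.} Since the hypothesis permits $\Gamma$ to be a join (for instance a square), $G$ may split as a direct product $G=G_{\Gamma_1}\times\dots\times G_{\Gamma_k}$ with no global contracting isometry, so the boundary argument cannot be applied to $G$ as a whole. Here I would use that $\bD_G$ correspondingly splits as a product of the factor buildings $\bD_{G_{\Gamma_i}}$, that this product decomposition is canonical (irreducibility and ranks being preserved by automorphisms), and run the argument factorwise, combining the resulting concentration statements on each irreducible factor into the claim for $\mu$.

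\textbf{Main obstacle.} The genuine difficulty is the diffuse part: ruling out diffuse $G$-invariant measures on $L$ is not formal (the finite-orbit observation only controls atoms), and requires the boundary dynamics of the cube complex and the fine description of $\Aut(\bD_A)$ from \cite{HH-L1}. A secondary but real technical point is the bookkeeping involved in transferring statements phrased in \cite{HH-L1} for the standard copy of the right-angled Artin group $A$ inside $\Aut(\bD_A)$ to our subgroup $G$ (equivalently, to the image of $G$ under the isomorphism $\bD_G\cong\bD_A$, which acts simply transitively on $V_0$ but need not coincide with $A$), together with the factorwise treatment of the join case. Once Lemma~\ref{lemma:icc} is in place, it feeds directly into the Furman-type bootstrap used to prove Proposition~\ref{prop:factorize-through-building}.
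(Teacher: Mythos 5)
There is a genuine gap: the central step of the lemma is asserted rather than proved. Your setup (the Polish group structure, the factorisation $L=G\cdot L_0$, the reduction $\bD_G\cong\bD_A$ via Lemma~\ref{lemma:building}) is correct but mostly unused, and the two substantive claims you make are both left open. For the atomic part, the statement that an $f\in\Aut(\bD_G)$ commuting with a finite-index (hence infinite) subgroup of $G$ must be the identity is itself a nontrivial rigidity assertion about the inclusion $G\subseteq\Aut(\bD_G)$ — essentially a special case of the lemma — and your one-line appeal to ``contracting dynamics'' does not establish it (note also that when $\Gamma$ is a join, $G$ contains no contracting isometry of $\bD_G$ at all, so this sketch cannot even get started without the factorwise reduction you defer to the end). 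For the diffuse part, which you yourself flag as the main obstacle, you offer only the hope that ``north--south / proximal dynamics \dots force any invariant measure to concentrate''; no mechanism is given, and the visual boundary you propose to use is not the object for which the needed dynamical statements are available in the literature.

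For comparison, the paper's proof supplies exactly the missing mechanism, and does so without any atomic/diffuse case split. It works with the \emph{regular} (Roller) boundary $R(\bD_G)$ rather than the visual boundary. Step~1: when $\Gamma$ is irreducible, the $G$-action on $R(\bD_G)$ is minimal and strongly proximal; this follows from non-elementarity of the action (Caprace--Hume, \cite[Section~6.2]{CH}), essentiality (inherited from the $A_\Gamma$-action on $\bD_\Gamma$, which has the same orbits by Lemma~\ref{lemma:building}), and \cite[Proposition~1]{KS}. Step~2: writing $\Gamma=\Gamma_1\circ\dots\circ\Gamma_k$ with irreducible factors (none a vertex, by the standing hypothesis) and $R=R(\bD_1)\times\dots\times R(\bD_k)$, the map $\Aut(\bD_G)\to\Homeo(R)$ is injective and allows one to push $\mu$ forward to a conjugation-invariant measure $\bar\mu$ on $\Homeo(R)$ (\cite[Lemma~5.6]{HH-L1}); then the abstract lemma \cite[Lemma~5.2]{HH-L1} — for a group acting factorwise minimally, strongly proximally and faithfully on a product of compact spaces, any measure on the homeomorphism group invariant under conjugation by the group is $\delta_{\mathrm{id}}$ — gives $\bar\mu=\delta_{\mathrm{id}}$, and injectivity pulls this back to $\mu=\delta_{\mathrm{id}}$. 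If you want to salvage your outline, you should replace the visual boundary by $R(\bD_G)$, prove (or cite) minimality and strong proximality factorwise as above, and then either invoke \cite[Lemma~5.2]{HH-L1} or reprove it; once that is done, your separate treatment of atoms becomes redundant.
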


The conclusion of the lemma precisely says that $\Aut(\bD_G)$ is \emph{strongly ICC} relative to $G$ in the sense of Bader--Furman--Sauer \cite[Definition~2.2]{BFSIntegrability} (or that the inclusion  $G\subseteq\Aut(\bD_G)$ is \emph{strongly ICC}\index{ICC!Strongly ICC}\index{Strongly ICC} in the terminology of \cite[Section~4.2]{HH-L1}).

\begin{proof} 
This was proved when $G$ is the right-angled Artin group $A_\Gamma$ over $\Gamma$ in \cite[Proposition~5.1]{HH-L1}. We now explain how to reduce the proof to the case of a right-angled Artin group. We will use the language of $\mathrm{CAT}(0)$ cube complexes and their half-spaces, for which we refer to \cite{Sag}. We will also need the notion of the \emph{regular boundary} of a $\mathrm{CAT}(0)$ cube complex $X$, a natural compact subspace of its Roller boundary equal to the closure of the set of regular points in the sense of Fern\'os in \cite[Definition~7.3]{Fer}, also denoted $S(X)$ in \cite{KS}. All the necessary background for this proof was also reviewed in \cite[Section~5.2]{HH-L1}.

Let $\mathbb{D}_\Gamma$ be the right-angled building of the right-angled Artin group over $\Gamma$. Recall from Lemma~\ref{lemma:building}, \cpageref{lemma:building}, that there exists an isomorphism from $\bD_G$ to $\bD_\Gamma$ that sends $G$-orbits to $A_\Gamma$-orbits.

\medskip

\noindent\textbf{Step 1.} Assuming that $\Gamma$ is irreducible (and not reduced to one vertex), we will prove that the $G$-action on the regular boundary $R(\bD_G)$ is minimal and strongly proximal in the sense of Furstenberg \cite{Furs}, i.e.\ it is minimal and the $G$-orbit of any probability measure on $R(\bD_G)$ contains a Dirac mass in its weak-$*$ closure. 

By \cite[Lemma~5.3]{HH-L1}, the building $\mathbb{D}_G\approx\mathbb{D}_\Gamma$ is irreducible (it does not split as a product), and 
\begin{itemize}
\item $A_\Gamma$ acts non-elementarily, i.e.\ with no global fixed point, and no finite orbit in the visual boundary $\partial_\infty\mathbb{D}_\Gamma$, and 
\item $A_\Gamma$ acts essentially, in the sense of Caprace--Sageev \cite{CS}, i.e.\ no $A_\Gamma$-orbit stays in a bounded neighborhood of a half-space of $\mathbb{D}_\Gamma$.
\end{itemize}
The first of these properties (non-elementarity) was in fact proved by Caprace--Hume \cite[Section~6.2]{CH} in the more general context of graph products, so it applies to the $G$-action on $\bD_G$. The second property (essentiality) for the $G$-action on $\bD_G$ follows from the same property for the $A_\Gamma$-action on $\bD_\Gamma$ as these two actions have the same orbits.

It thus follows from \cite[Proposition~1]{KS} that the $G$-action on the regular boundary $R(\bD_G)$ is minimal and strongly proximal, as claimed. 

\medskip

\noindent \textbf{Step 2.} We now conclude the proof of the lemma. Write $\Gamma=\Gamma_1\circ \dots\circ \Gamma_k$ as a join, where each $\Gamma_j$ is irreducible (possibly with $k=1$). Notice that no $\Gamma_j$ is reduced to a vertex, by our assumption that $\Gamma$ is not contained in the star of a vertex. Let $G=G_1\times\dots\times G_k$ be the corresponding direct product decomposition. For every $j\in\{1,\dots,k\}$, let $\bD_j$ be the right-angled building of $G_j$, and let $R=R(\bD_1)\times\dots\times R(\bD_k)$. By Step~1, for every $j\in\{1,\dots,k\}$, the action of $G_j$ on $R(\bD_j)$ is minimal and strongly proximal, and it is faithful in view of \cite[Lemma~5.6]{HH-L1}. Let now $\mu$ be a probability measure on $\Aut(\bD_G)$ which is invariant under conjugation by every element of $G$. By \cite[Lemma~5.6]{HH-L1}, we can pushforward $\mu$ to a probability measure $\bar\mu$ on $\Homeo(R)$ which is invariant under the conjugation by every element of $G$. It thus follows from \cite[Lemma~5.2]{HH-L1} that $\bar\mu$ is the Dirac mass at the identity. Using again that the map $\Aut(\bD_G)\to \Homeo(R)$ is injective (\cite[Lemma~5.6]{HH-L1}), it follows that $\mu$ is the Dirac mass at the identity. 
\end{proof}

We are now in position to prove Proposition~\ref{prop:factorize-through-building}.

\begin{proof}[Proof of Proposition~\ref{prop:factorize-through-building}]
    Let $\iota_G:G\to\Aut(\bD_G)$ be the natural inclusion. By Corollary~\ref{cor:factor-through-building}, \cpageref{cor:factor-through-building}, for every self measure equivalence coupling $\Sigma$ of $G$, there exists a $(G\times G)$-equivariant Borel map $\Sigma\to\Aut(\bD_G)$. Combined with Lemma~\ref{lemma:icc}, this precisely says that $G$ is coupling rigid with respect to $(\Aut(\bD_G),\iota_G)$ in the sense of Kida \cite[Definition~3.3]{Kid-amalgam}. 
    The conclusion thus follows from \cite[Theorem~3.5]{Kid-amalgam}.
\end{proof}

\subsection{\texorpdfstring{Properties of the $H$-action on $\bD_G$}{Step 2: A common fundamental domain for G and H on the Davis complex}}\label{sec:action-of-h}

In the whole section, we will work in the following setting. 

\paragraph{General setting.} Let $\Gamma$ be a finite simple graph with no transvection and no partial conjugation, with $\Aut(\Gamma)=\{1\}$, and $G$ be a graph product with countably infinite vertex groups over $\Gamma$ that satisfies Assumptions~$(\mathrm{H}_1)$,~$(\mathrm{H}_2)$ and~$(\mathrm{H}_3)$ from Theorem~\ref{theo:rigidity-criterion} (Assumption~$(\mathrm{H}_4)$ will only be used in Section~\ref{sec:complex-of-groups}). Let $H$ be a torsion-free countable group which is measure equivalent to $G$, and let $(\Omega,m)$ be a measure equivalence coupling between $G$ and $H$. Proposition~\ref{prop:factorize-through-building} gives us  
\begin{itemize}
    \item a homomorphism $\iota:H\to\Aut(\bD_G)$;
    \item a $(G\times H)$-equivariant Borel map $\theta:\Omega\to\Aut(\bD_G)$.
\end{itemize}
In particular $\iota$ gives an action of $H$ on $\bD_G$, and from now on we will work with this action of $H$. 

\subsubsection{The $H$-action is type preserving}

Recall from Section~\ref{sec:def-building} that vertices of $\bD_G$ correspond to left cosets of the form $gG_\Upsilon$, for some $g\in G$ and some clique subgraph $\Upsilon\subseteq\Gamma$. The rank of a vertex is the cardinality of $V\Upsilon$. We also define the \emph{type}\index{Type!Type of a vertex of $\bD_G$} of a vertex of $\bD_G$ as the corresponding subgraph $\Upsilon$. More generally, given a cube $\sigma\subseteq\bD_G$, we define the \emph{type}\index{Type!Type of a cube in $\bD_G$} of $\sigma$ as the set consisting of all types of vertices of $\sigma$. Notice that the type of $\sigma$ is determined by the type of its smallest and largest vertices, for the partial order given by  inclusion (when viewing vertices as cosets in $G$).

The fact that $\Aut(\Gamma)=\{1\}$ will be used in the form of the following lemma.

\begin{lemma}\label{lemma:same-orbits}
    The actions of $G$ and of $\Aut(\bD_G)$ on $\bD_G$ have the same orbits of cubes. In particular, the action of $\Aut(\bD_G)$ on $\bD_G$ preserves types of cubes.
\end{lemma}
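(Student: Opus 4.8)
The plan is to prove the two assertions in turn, with the second being an immediate consequence of the first. Recall that we want to show that the $\Aut(\bD_G)$-orbits of cubes coincide with the $G$-orbits of cubes, and deduce that $\Aut(\bD_G)$ preserves types.

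First I would recall the basic structural facts about $\bD_G$ established in Section~\ref{subsec:ExtensionGraphs}. By Lemma~\ref{lemma:building}, \cpageref{lemma:building}, there is a cubical isomorphism $\bD_G\to\bD_\Gamma$ (with $\bD_\Gamma$ the right-angled building of the right-angled Artin group $A_\Gamma$) that carries $G$-orbits to $A_\Gamma$-orbits; moreover by Corollary~\ref{cor:rank}, \cpageref{cor:rank}, every automorphism of $\bD_G$ preserves ranks of vertices. The $G$-action is transitive on the set of standard clique cosets of each fixed type $\Upsilon$ (two cosets $g_1 G_{\Upsilon_1}$ and $g_2 G_{\Upsilon_2}$ are in the same $G$-orbit if and only if $\Upsilon_1 = \Upsilon_2$, see Section~\ref{sec:def-building}). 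So a $G$-orbit of vertices is exactly the set of vertices of a fixed type $\Upsilon$, and since types are in bijection with cliques of $\Gamma$, the $G$-orbits of vertices are indexed by cliques of $\Gamma$.

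The heart of the argument is to show that an arbitrary $f\in\Aut(\bD_G)$ preserves the \emph{type} of each vertex, not merely its rank. This is where the hypothesis $\Aut(\Gamma)=\{1\}$ enters, and I expect it to be the main obstacle. The strategy is as follows: the link of the rank~$0$ vertex $e_G$ inside $\bD_G$ encodes the graph $\Gamma$ — its rank~$1$ vertices adjacent to $e_G$ correspond to the vertex groups $G_v$ (hence to vertices of $\Gamma$), and two such rank~$1$ vertices are adjacent to a common rank~$2$ vertex precisely when the corresponding vertices of $\Gamma$ are adjacent (this adjacency criterion was already used in the proof of Proposition~\ref{prop:coupling-restricted}, \cpageref{prop:coupling-restricted}). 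Since $f$ preserves ranks (Corollary~\ref{cor:rank}) and cubical adjacency, it induces a graph automorphism $\sigma_f$ of $\Gamma$ from the rank~$1$ neighbors of $e_G$ to the rank~$1$ neighbors of $f(e_G)$. After translating by the element $g\in G$ with $g\cdot f(e_G)=e_G$ (which exists as $G$ is transitive on rank~$0$ vertices), we obtain a genuine automorphism of $\Gamma$, which must be trivial by hypothesis. Thus $g\circ f$ fixes $e_G$ and fixes the type of each rank~$1$ vertex adjacent to it; propagating this along the building (every vertex of type $\Upsilon$ is the top vertex of a cube whose rank~$1$ sub-vertices have types the singletons $\{v\}$ with $v\in V\Upsilon$, and these are determined) shows that $g\circ f$ preserves types of all vertices.

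Once $f$ is known to preserve types of vertices, I would conclude quickly. Preserving types means that each $\Aut(\bD_G)$-orbit of vertices is contained in a single type class, which is a single $G$-orbit; conversely $G\subseteq\Aut(\bD_G)$, so the orbits coincide on vertices. Since cubes are determined by their vertex sets and both actions are cubical, the orbits coincide on all cubes, giving the first assertion. The second assertion is then immediate: an automorphism carrying a cube $\sigma$ to a cube $\sigma'$ in the same $G$-orbit must carry the type of $\sigma$ to the type of $\sigma'$, but $G$ preserves types (since $G$ acts by left translation, which does not change the clique subgraph $\Upsilon$ associated to a coset), so $\sigma$ and $\sigma'$ have the same type. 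Hence every automorphism of $\bD_G$ preserves types of cubes.
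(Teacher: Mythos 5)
Your route is genuinely different from the paper's: the paper proves that $G$ and $\Aut(\bD_G)$ have the same orbits of \emph{vertices} by quoting \cite[Corollary~6.3]{HH-L1} for the right-angled Artin group $A_\Gamma$ (where the relevant group $A_\Gamma\rtimes\Aut(\Gamma)$ equals $A_\Gamma$ since $\Aut(\Gamma)=\{1\}$) and transferring to $G$ via Lemma~\ref{lemma:building}, whereas you argue directly inside $\bD_G$ using links of rank~$0$ vertices. That strategy can be made to work, but as written it has a genuine gap at the propagation step. What you actually establish is that $g\circ f$ preserves the types of the rank~$1$ vertices \emph{adjacent to $e_G$}, i.e.\ of the cosets $G_v$. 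Your parenthetical justification for the propagation only explains how the type of a higher-rank vertex is read off from the types of the rank~$1$ vertices below it in its cube; but for a vertex $hG_\Upsilon$ with $h\notin G_\Upsilon$, those rank~$1$ vertices are the cosets $hG_v$, which are not adjacent to $e_G$, so nothing you have proved controls the types of their images ("these are determined" is unjustified). The repair is to run your link argument at \emph{every} rank~$0$ vertex $x=\{h\}$, not just at $e_G$: the rank~$1$ vertices adjacent to $x$ are exactly the cosets $hG_v$, the type map identifies this set (with adjacency given by sharing a rank~$2$ neighbour) canonically with $\Gamma$, and composing the bijection induced by $f$ with the two canonical identifications at $x$ and at $f(x)$ gives an automorphism of $\Gamma$, hence the identity. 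This yields type preservation for all rank~$1$ vertices simultaneously (no base point and no translation by $g$ are needed), after which your cube argument does determine the types of all higher-rank vertices.

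There is a second gap at the end: the inference "the orbits coincide on vertices and both actions are cubical, hence the orbits coincide on cubes" is a non sequitur. Knowing that each vertex of $\sigma$ lies in the same $G$-orbit as the corresponding vertex of $f(\sigma)$ does not produce a \emph{single} element $g\in G$ with $g\sigma=f(\sigma)$; this coherence is exactly what Step~2 of the paper's proof supplies, and you need it or an equivalent. Concretely: since $f$ preserves types, the minimal vertices $v_{\min}$ of $\sigma$ and $v'_{\min}$ of $f(\sigma)$ have the same type, hence $v'_{\min}=gv_{\min}$ for some $g\in G$; the maximal vertex of a cube is the \emph{unique} coset of its type containing the minimal vertex, so $g$ automatically carries $v_{\max}$ to $v'_{\max}$; and a cube is the interval between its minimal and maximal vertices, whence $g\sigma=f(\sigma)$. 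With these two repairs your argument is complete, and your deduction of the second assertion of the lemma from the first (using that $G$ itself preserves types) is fine.
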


\begin{proof}
Since the $G$-action on $\bD_G$ preserves types of vertices, it preserves types of cubes, and therefore it is enough to prove that $G$ and $\Aut(\bD_G)$ have the same orbits of cubes.

\medskip

\noindent\textbf{Step 1:} The actions of $G$ and $\Aut(\bD_G)$ on $\bD_G$ have the same orbits of vertices.

\smallskip

    When $G=A_\Gamma$ is the right-angled Artin group over $\Gamma$, this was proved in \cite[Corollary~6.3]{HH-L1}: indeed the group $\hat{G}$ appearing in \cite[Corollary~6.3]{HH-L1} splits as a semi-direct product $G\rtimes\Aut(\Gamma)$ (see its definition in the first paragraph of \cite[Section~6.1]{HH-L1}), so it coincides with $G$ under our assumption that $\Aut(\Gamma)=\{1\}$.

    The general case now follows from the fact that there always exists an isomorphism $\bD_G\to\bD_\Gamma$ (where $\bD_\Gamma$ is the right-angled building for $A_\Gamma$), sending $G$-orbits of vertices to $A_\Gamma$-orbits of vertices (Lemma~\ref{lemma:building}, \cpageref{lemma:building}).  
    
    \medskip

\noindent\textbf{Step 2:} The actions of $G$ and $\Aut(\bD_G)$ on $\bD_G$ have the same orbits of cubes.

\smallskip

Let $\sigma,\sigma'\subseteq\bD_G$ be two cubes in the same $\Aut(\bD_G)$-orbit; we aim to prove that they are in fact in the same $G$-orbit. Let $v_{\min},v_{\max}$ be the minimal and maximal vertices of $\sigma$, and let $v'_{\min},v'_{\max}$ be the minimal and maximal vertices of $\sigma'$. Since $\Aut(\bD_G)$ preserves types of vertices (as a consequence of Step~1), the vertices $v_{\min}$ and $v'_{\min}$ are in the same $\Aut(\bD_G)$-orbit. Using Step~1 again, they are in fact in the same $G$-orbit: there exists $g\in G$ such that $v'_{\min}=gv_{\min}$. Likewise, the vertices $v_{\max}$ and $v'_{\max}$ are in the same $\Aut(\bD_G)$-orbit. In particular they have the same type, which we denote by $\Lambda$ (a subgraph of $\Gamma$). So $v_{\max}$ (resp.\ $v'_{\max}$) is the unique coset of $G_{\Lambda}$ that contains the coset of $G$ corresponding to $v_{\min}$ (resp.\ to $v'_{\min}=gv_{\min}$). Therefore $v'_{\max}=gv_{\max}$, and this in enough to ensure that $\sigma'=g\sigma$.
\end{proof}

Let us record the following consequence of Lemma~\ref{lemma:same-orbits}.

\begin{Cor}\label{cor:stab-cube}
    Let $\sigma\subseteq\bD_G$ be a cube, and let $v$ be the unique vertex of $\sigma$ of minimal rank. Then the stabilizers of $v$ and of $\sigma$ in $\Aut(\bD_G)$ coincide.
\end{Cor}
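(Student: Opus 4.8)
The plan is to prove Corollary~\ref{cor:stab-cube} as a direct consequence of Lemma~\ref{lemma:same-orbits}, using the combinatorial structure of the right-angled building. Let $\sigma\subseteq\bD_G$ be a cube and let $v$ be its unique vertex of minimal rank. First I would observe that $\Stab_{\Aut(\bD_G)}(\sigma)\subseteq\Stab_{\Aut(\bD_G)}(v)$: indeed, by Lemma~\ref{lemma:same-orbits}, every automorphism in $\Aut(\bD_G)$ preserves types of cubes, and in particular preserves ranks of vertices (this also follows from Corollary~\ref{cor:rank}). So any automorphism fixing $\sigma$ setwise must permute the vertices of $\sigma$ while preserving ranks; since $v$ is the \emph{unique} vertex of $\sigma$ of minimal rank, it must be fixed. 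This gives one inclusion.

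The reverse inclusion $\Stab_{\Aut(\bD_G)}(v)\subseteq\Stab_{\Aut(\bD_G)}(\sigma)$ is the substantial direction. Here I would use the explicit description of cubes in $\bD_G$ in terms of intervals in the poset $\mathfrak{F}$ of standard clique cosets (Section~\ref{sec:def-building}). The cube $\sigma$ corresponds to an interval $I_{F_1,F_2}$ with $F_1\subseteq F_2$, where $F_1$ is exactly the minimal vertex $v$ (viewed as a standard clique coset), and $F_2$ is the maximal vertex. I would then argue that the maximal vertex $F_2$ of $\sigma$ is canonically determined by $v$: writing $v=gG_{\Upsilon_1}$ and $F_2=gG_{\Upsilon_2}$ with $\Upsilon_1\subseteq\Upsilon_2$, the coset $F_2$ is the unique standard clique coset of type $\Upsilon_2$ containing $v$. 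Now let $f\in\Aut(\bD_G)$ fix $v$. Since $f$ preserves types of cubes (Lemma~\ref{lemma:same-orbits}), the image $f(\sigma)$ is a cube of the same type as $\sigma$, whose minimal vertex is $f(v)=v$. But a cube is determined by its minimal and maximal vertices, and the maximal vertex $f(F_2)$ has type $\Upsilon_2$ and contains $f(v)=v$; by the uniqueness just noted, $f(F_2)=F_2$. Hence $f$ fixes both the minimal and maximal vertex of $\sigma$, which forces $f(\sigma)=\sigma$.

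The main obstacle I anticipate is making precise the claim that a cube of $\bD_G$ is completely determined by its minimal and maximal vertices, together with the uniqueness of the maximal vertex of a given type above a fixed vertex. Both facts are built into the identification of $\bD_G$ with the cube complex associated to the poset of intervals of $\mathfrak{F}$ (via \cite[Proposition~A.38]{AB}): an interval $I_{F_1,F_2}$ is a Boolean lattice determined precisely by its bottom and top elements $F_1,F_2$, and the top element of type $\Upsilon_2$ over a given bottom $F_1=gG_{\Upsilon_1}$ is $gG_{\Upsilon_2}$, which is unique because standard clique cosets $g_1G_{\Upsilon}$ and $g_2G_{\Upsilon}$ of the same type coincide if and only if $g_1^{-1}g_2\in G_{\Upsilon}$ (as recorded in Section~\ref{sec:def-building}). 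I would state these two poset-theoretic facts cleanly at the start and then feed them into the two inclusions above, keeping the argument short. The remark that $v$ is the unique vertex of minimal rank in $\sigma$ is immediate from the Boolean lattice structure, since $F_1$ is the unique minimum of the interval.
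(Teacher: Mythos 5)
Your proof is correct and takes essentially the same approach as the paper: the inclusion $\Stab_{\Aut(\bD_G)}(\sigma)\subseteq\Stab_{\Aut(\bD_G)}(v)$ from type/rank preservation together with the uniqueness of the minimal vertex, and the reverse inclusion from the fact that $\sigma$ is the unique cube of its type with minimal vertex $v$. The only difference is cosmetic: the paper states this uniqueness as an observation, while you justify it explicitly via the interval structure of the poset $\mathfrak{F}$.
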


\begin{proof}
Since $v$ is the unique vertex of $\sigma$ of its type and the action of $\Aut(\bD_G)$ preserves types of vertices (Lemma~\ref{lemma:same-orbits}), we have $\Stab_{\Aut(\bD_G)}(\sigma)\subseteq\Stab_{\Aut(\bD_G)}(v)$.

The converse inclusion follows from the observation that $\sigma$ is the unique cube of its type that contains $v$ as its minimal vertex. Since $\Aut(\bD_G)$ preserves types of cubes (Lemma~\ref{lemma:same-orbits}), this implies that the $\Aut(\bD_G)$-stabilizer of $v$ also preserves $\sigma$.
\end{proof}

\subsubsection{Stabilizers for the actions of $G$ and $H$ are measure equivalent}

Given a cube $\sigma\subseteq V\bD_G$, we let $\mathrm{Stab}_G(\sigma),\mathrm{Stab}_H(\sigma)$ and $\Stab_{\Aut(\bD_G)}(\sigma)$ be the respective stabilizers of $\sigma$ for the actions of $G$, of $H$, and of $\Aut(\bD_G)$ on $\bD_G$. We let $\Omega_\sigma:=\theta^{-1}(\Stab_{\Aut(\bD_G)}(\sigma))$, where we recall that $\theta:\Omega\to\Aut(\bD_G)$ is our given $(G\times H)$-equivariant Borel map. Then $\Omega_\sigma$ is a Borel subset of $\Omega$ which is $(\mathrm{Stab}_G(\sigma)\times \mathrm{Stab}_H(\sigma))$-invariant. The following lemma is a special case of \cite[Corollary~4.4]{HH-L1}.

\begin{lemma}\label{lemma:hh}
For every cube $\sigma\subseteq V\bD_G$, the space $\Omega_\sigma$ is a measure equivalence coupling between $\mathrm{Stab}_G(\sigma)$ and $\mathrm{Stab}_H(\sigma)$.
\end{lemma}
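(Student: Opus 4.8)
The plan is to deduce Lemma~\ref{lemma:hh} from the general machinery of \cite{HH-L1}, but first let me lay out why one expects the statement to hold and what needs to be verified. Recall that $\theta:\Omega\to\Aut(\bD_G)$ is $(G\times H)$-equivariant, where $G$ acts on $\Aut(\bD_G)$ by left multiplication (via the natural inclusion $\iota_G$) and $H$ acts by right multiplication through $\iota$. For a fixed cube $\sigma$, the subset $\Omega_\sigma=\theta^{-1}(\Stab_{\Aut(\bD_G)}(\sigma))$ is indeed invariant under $\Stab_G(\sigma)\times\Stab_H(\sigma)$: if $(g,h)\in\Stab_G(\sigma)\times\Stab_H(\sigma)$ and $\theta(\omega)=f$ fixes $\sigma$, then $\theta((g,h)\cdot\omega)=gf\iota(h)^{-1}$ also fixes $\sigma$, since $g$ and $\iota(h)$ both preserve $\sigma$. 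So $\Omega_\sigma$ carries commuting actions of $\Stab_G(\sigma)$ and $\Stab_H(\sigma)$.

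First I would verify that these two actions on $\Omega_\sigma$ are free and admit finite-measure fundamental domains, which is exactly what is needed for $\Omega_\sigma$ to be a measure equivalence coupling. Freeness is inherited from freeness of the $G$- and $H$-actions on $\Omega$. For the fundamental domains, the key point is a volume/fiber argument: since $\Stab_{\Aut(\bD_G)}(\sigma)$ sits inside $\Aut(\bD_G)$ and the map $\theta$ is equivariant, one pulls back a fundamental domain for $\Stab_G(\sigma)$ (resp.\ $\Stab_H(\sigma)$) acting on $\Stab_{\Aut(\bD_G)}(\sigma)$ to a fundamental domain on $\Omega_\sigma$. The finiteness of the measure follows because $\Omega_\sigma$ is cut out from $\Omega$, on which $G$ and $H$ already have finite-measure fundamental domains, and because the cube $\sigma$ has a well-defined $G$- and $H$-orbit structure. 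Concretely this is the mechanism of \cite[Corollary~4.4]{HH-L1}, where one restricts a coupling that factors through a Polish group $L$ (here $L=\Aut(\bD_G)$) to the preimage of a closed subgroup (here $\Stab_L(\sigma)$), obtaining a coupling of the corresponding stabilizers.

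Thus I would simply invoke \cite[Corollary~4.4]{HH-L1} as the excerpt indicates, after checking that its hypotheses are met in our situation: namely that $\theta:\Omega\to\Aut(\bD_G)$ is a $(G\times H)$-equivariant Borel map with $G,H$ acting by left/right multiplication through $\iota_G$ and $\iota$, and that $\Stab_{\Aut(\bD_G)}(\sigma)$ is a closed subgroup of the Polish group $\Aut(\bD_G)$ (which holds since it is the stabilizer of a cube, a pointwise-convergence-closed condition). The main obstacle I anticipate is ensuring that the pulled-back fundamental domains have \emph{finite} measure and that the restricted actions remain free almost everywhere; but both of these are precisely what \cite[Corollary~4.4]{HH-L1} is designed to handle, so the work reduces to matching notation. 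I would therefore write:

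\begin{proof}
By Proposition~\ref{prop:factorize-through-building}, the map $\theta:\Omega\to\Aut(\bD_G)$ is $(G\times H)$-equivariant, where $G$ acts on $\Aut(\bD_G)$ by left multiplication (via the inclusion $\iota_G:G\to\Aut(\bD_G)$) and $H$ acts by right multiplication via $\iota$. The stabilizer $\Stab_{\Aut(\bD_G)}(\sigma)$ is a closed subgroup of the Polish group $\Aut(\bD_G)$, being the set of automorphisms that fix the finitely many vertices of $\sigma$. The conclusion then follows directly from \cite[Corollary~4.4]{HH-L1}, applied with $L=\Aut(\bD_G)$ and with the closed subgroup $\Stab_{\Aut(\bD_G)}(\sigma)$: the preimage $\Omega_\sigma=\theta^{-1}(\Stab_{\Aut(\bD_G)}(\sigma))$ is a measure equivalence coupling between $\Stab_G(\sigma)$ and $\Stab_H(\sigma)$.
\end{proof}
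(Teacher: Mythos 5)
There is a genuine gap: you never verify (or even mention) the key hypothesis of \cite[Corollary~4.4]{HH-L1}, namely that $G$ and $\Aut(\bD_G)$ act on $\bD_G$ \emph{with the same orbits}. This is not a formality. The corollary is not a statement about preimages of arbitrary closed subgroups of a Polish group; it concerns stabilizers of points of a countable set $K$ acted on by $L$, under the orbit-matching assumption. In the paper this hypothesis is supplied by Lemma~\ref{lemma:same-orbits}, which is exactly where the standing assumption $\Aut(\Gamma)=\{1\}$ enters (via \cite[Corollary~6.3]{HH-L1}); the paper also first reduces to the case where $\sigma$ is a vertex using Corollary~\ref{cor:stab-cube}, so as to match the setup of the cited corollary, before invoking it with $K=\bD_G$, $L=\Aut(\bD_G)$, $\mathsf{G}=G$, $\mathsf{H}=H$.

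Here is why the omission is fatal rather than cosmetic. You identified the anticipated obstacle as freeness and finiteness of the fundamental domains, but those come almost for free: since the $L$-orbit of $\sigma$ is countable, $\Omega$ is partitioned into the countably many Borel sets $\theta^{-1}(\{f \mid f(\sigma)=\tau\})$, $\tau\in L\cdot\sigma$, the $G$-translates (resp.\ $H$-translates) of $\Omega_\sigma$ are indexed by $G/\Stab_G(\sigma)$ (resp.\ $H/\Stab_H(\sigma)$) and are pairwise disjoint, and fundamental domains of finite measure are inherited from those of $G$ and $H$ on $\Omega$. The real issue is that $\Omega_\sigma$ must have \emph{positive} measure to be a measure equivalence coupling at all (recall the paper's convention that the measure is non-zero), and nothing in your argument rules out $\theta^{-1}(\Stab_{\Aut(\bD_G)}(\sigma))$ being null or empty: the countable partition above only guarantees that \emph{some} piece $\theta^{-1}(\{f\mid f(\sigma)=\tau\})$ is non-null, and one needs an element $g\in G$ with $g\tau=\sigma$ to translate that piece onto $\Omega_\sigma$. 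This is precisely what transitivity of $G$ on the $\Aut(\bD_G)$-orbit of $\sigma$ (i.e.\ Lemma~\ref{lemma:same-orbits}) provides, and it is the only place where nontrivial geometric input enters; stated with only the hypotheses you check (equivariance of $\theta$ and closedness of the stabilizer), the corollary would be false.
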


\begin{proof}
In view of Corollary~\ref{cor:stab-cube}, it is enough to prove the lemma when $\sigma$ is a vertex, so we will assume so.

    By Lemma~\ref{lemma:same-orbits}, the groups $G$ and $\Aut(\bD_G)$ have the same orbits of vertices for their actions on $\bD_G$. The lemma therefore follows from \cite[Corollary~4.4]{HH-L1}, applied with $K=\bD_G$, with $L=\Aut(\bD_G)$, with $\mathsf{G}=G$, and with $\mathsf{H}=H$.
\end{proof}

As a particular case, we obtain the following statement.

\begin{Cor}\label{cor:rank-0}
    The $H$-stabilizer of every rank $0$ vertex of $\bD_G$ is trivial.
\end{Cor}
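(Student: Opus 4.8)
The plan is to combine Lemma~\ref{lemma:hh} with the rigidity of the trivial group in measure equivalence. First I would recall that the action of $G$ on the set of rank $0$ vertices of $\bD_G$ is free: a rank $0$ vertex corresponds to a coset $gG_\emptyset=\{g\}$, and since $G$ acts by left translation on cosets, the stabilizer of $\{g\}$ is trivial. Thus for every rank $0$ vertex $u$ one has $\Stab_G(u)=\{1\}$.

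Next, applying Lemma~\ref{lemma:hh} to the cube $\sigma=u$, the space $\Omega_u$ is a measure equivalence coupling between $\Stab_G(u)=\{1\}$ and $\Stab_H(u)$. It therefore remains to observe that a countable group which is measure equivalent to the trivial group must be finite. Indeed, in such a coupling $(\Omega_u,m)$, the orbits of the trivial group are single points, so $\Omega_u$ itself is a fundamental domain for the trivial group, whence $m(\Omega_u)<+\infty$. On the other hand $\Stab_H(u)$ acts freely and measure-preservingly on $\Omega_u$, with a Borel fundamental domain $X_H$ satisfying $\Omega_u=\bigsqcup_{h\in\Stab_H(u)}hX_H$ up to a null set; as the coupling measure $m$ is non-zero, necessarily $m(X_H)>0$. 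By invariance of $m$ this gives $m(\Omega_u)=|\Stab_H(u)|\,m(X_H)$, and since $m(\Omega_u)<+\infty$ while $m(X_H)>0$, the group $\Stab_H(u)$ is finite.

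Finally, since $H$ is torsion-free by hypothesis, its finite subgroup $\Stab_H(u)$ is trivial, which is the desired conclusion. The only point requiring a little care is the elementary measure-theoretic argument that measure equivalence with the trivial group forces finiteness; everything else is immediate from Lemma~\ref{lemma:hh} and the freeness of the $G$-action on rank $0$ vertices, so I do not anticipate any genuine obstacle.
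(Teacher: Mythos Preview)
Your proof is correct and follows exactly the same approach as the paper: apply Lemma~\ref{lemma:hh} to see that $\Stab_H(u)$ is measure equivalent to the trivial group $\Stab_G(u)$, deduce finiteness, and conclude using that $H$ is torsion-free. The paper's proof is identical but simply asserts the finiteness step without spelling out the elementary measure-theoretic argument you provide.
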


\begin{proof}
    Let $v\in V\bD_G$ be a rank $0$ vertex. By Lemma~\ref{lemma:hh}, the stabilizer $\mathrm{Stab}_H(v)$ is measure equivalent to $\mathrm{Stab}_G(v)$, which is trivial. Therefore $\mathrm{Stab}_H(v)$ is finite, and in fact trivial because $H$ is torsion-free.  
\end{proof}

In fact we will deduce more generally that $\mathrm{Stab}_G(\sigma)$ and $\mathrm{Stab}_H(\sigma)$ are isomorphic for every cube $\sigma\subseteq\bD_G$, in view of the following lemma.

\begin{lemma}\label{lemma:index-rigid}
   Let $\sigma\subseteq \bD_G$ be a cube. Let $K$ be a torsion-free countable group which is measure equivalent to $\mathrm{Stab}_G(\sigma)$.
   
   Then $K$ is isomorphic to $\mathrm{Stab}_G(\sigma)$.
\end{lemma}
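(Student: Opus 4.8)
The statement to prove is that any torsion-free countable group $K$ measure equivalent to $\mathrm{Stab}_G(\sigma)$ is in fact isomorphic to $\mathrm{Stab}_G(\sigma)$. The first step is to understand the structure of $\mathrm{Stab}_G(\sigma)$. By Corollary~\ref{cor:stab-cube}, we may assume that $\sigma$ is a vertex $v$ of $\bD_G$, and then $\mathrm{Stab}_G(v)$ is conjugate to a standard clique subgroup $G_\Upsilon$ for some clique $\Upsilon\subseteq\Gamma$, which splits as a direct product $G_{v_1}\times\dots\times G_{v_k}$ of the vertex groups indexed by $V\Upsilon=\{v_1,\dots,v_k\}$. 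Thus $\mathrm{Stab}_G(\sigma)$ is a finite direct product of vertex groups, each of which is torsion-free, belongs to the class $\calc_\reg$ by $(\mathrm{H}_2)$, and satisfies the strong self-coupling rigidity provided by $(\mathrm{H}_1)$; moreover by $(\mathrm{H}_3)$ the factors are pairwise non measure equivalent.

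Next I would apply Proposition~\ref{prop:product-me} to the product $\mathrm{Stab}_G(\sigma)=G_{v_1}\times\dots\times G_{v_k}$, with the finite-index extensions $\hat{G}_{v_i}=G_{v_i}\rtimes F_{v_i}$ from $(\mathrm{H}_1)$. The hypotheses of that proposition are exactly $(\mathrm{H}_1)$ (self-couplings of each factor factor through $\hat{G}_{v_i}$) and $(\mathrm{H}_3)$ (the factors are pairwise non measure equivalent), so we obtain that every self measure equivalence coupling of $\mathrm{Stab}_G(\sigma)$ factors through $\hat{G}:=\hat{G}_{v_1}\times\dots\times\hat{G}_{v_k}$, which is a finite-index extension of $\mathrm{Stab}_G(\sigma)$. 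In other words $\mathrm{Stab}_G(\sigma)$ is rigid in measure equivalence in the strong sense that self-couplings factor through (a finite-index extension of) the tautological coupling.

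The heart of the argument is then to transfer this self-coupling rigidity to the statement about arbitrary groups $K$ measure equivalent to $\mathrm{Stab}_G(\sigma)$. Here I would invoke the standard technique due to Furman \cite{Fur-me} (and reused throughout this paper, e.g.\ in the proof of Proposition~\ref{prop:factorize-through-building}): a group whose self-couplings all factor through a strongly ICC finite-index extension $\hat{G}$ is rigid in measure equivalence, so that any $K$ measure equivalent to it is \emph{virtually isomorphic} to $\mathrm{Stab}_G(\sigma)$. Concretely, composing a coupling between $K$ and $\mathrm{Stab}_G(\sigma)$ with its mirror yields a self-coupling of $\mathrm{Stab}_G(\sigma)$, and the factorization through $\hat{G}$ produces a homomorphism $K\to\hat{G}$ with finite kernel and finite-index image. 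The strong ICC property needed here (that $\hat G$ is strongly ICC relative to $\mathrm{Stab}_G(\sigma)$) follows because $\mathrm{Stab}_G(\sigma)$ is ICC by Remark~\ref{rk:icc} and $\hat{G}$ is a finite extension of it.

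Finally I would upgrade virtual isomorphism to genuine isomorphism. The virtual isomorphism gives short exact sequences and finite-index subgroups relating $K$ and $\mathrm{Stab}_G(\sigma)$. Since $K$ is torsion-free, it has no nontrivial finite normal subgroup, so the relevant finite kernel on the $K$ side is trivial. On the $\mathrm{Stab}_G(\sigma)$ side, each vertex group $G_{v_i}$ has no proper finite-index subgroup by $(\mathrm{H}_2)$, and a product of groups with no proper finite-index subgroup again has no proper finite-index subgroup; hence the finite-index subgroups appearing in the virtual isomorphism must be the whole groups. Combining these two facts collapses the virtual isomorphism to an actual isomorphism $K\cong\mathrm{Stab}_G(\sigma)$. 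The main obstacle I anticipate is the bookkeeping in this last transfer-and-upgrade step: one must carefully track the finite kernels and finite-index subgroups in Furman's construction and verify that the torsion-freeness of $K$ together with the absence of proper finite-index subgroups of $\mathrm{Stab}_G(\sigma)$ forces all of them to be trivial, rather than merely finite.
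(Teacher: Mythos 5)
Your first three steps coincide with the paper's proof: identify $\mathrm{Stab}_G(\sigma)$ with a clique subgroup $G_\Upsilon=G_{v_1}\times\dots\times G_{v_k}$, apply Proposition~\ref{prop:product-me} (using $(\mathrm{H}_1)$, $(\mathrm{H}_2)$, $(\mathrm{H}_3)$) to get that self-couplings of $G_\Upsilon$ factor through $\hat{G}_\Upsilon=\hat{G}_{v_1}\times\dots\times\hat{G}_{v_k}$, and then invoke the Furman--Kida transfer (the paper cites \cite[Theorem~6.1]{Kid-me}) to obtain a homomorphism $\pi:K\to\hat{G}_\Upsilon$ with finite kernel and finite-index image, which is injective since $K$ is torsion-free.

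The final step, however, has a genuine gap. From injectivity of $\pi$ and Hypothesis~$(\mathrm{H}_2)$ you correctly get $G_\Upsilon=\pi(K)\cap G_\Upsilon\subseteq\pi(K)$, i.e.\ $K$ contains a finite-index copy of $\mathrm{Stab}_G(\sigma)$; but your claim that this ``collapses the virtual isomorphism to an actual isomorphism'' does not follow from the two facts you cite. Torsion-freeness of $K$ and the absence of proper finite-index subgroups of $G_\Upsilon$ do not abstractly rule out that $K$ (equivalently $\pi(K)$) is a \emph{proper} torsion-free finite extension of $G_\Upsilon$: a torsion-free group can sit as a proper finite-index subgroup of a torsion-free group (e.g.\ $\mathbb{Z}^2$ inside the Klein bottle group), and having no proper finite-index subgroups is a property of the subgroup, not an obstruction on its overgroups. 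The missing idea, which is exactly how the paper closes the argument, is to exploit the \emph{split} structure $\hat{G}_{v_i}=G_{v_i}\rtimes F_{v_i}$ provided by $(\mathrm{H}_1)$: since $\pi(K)$ contains $G_\Upsilon$, which is the kernel of the projection $\hat{G}_\Upsilon\to F_{v_1}\times\dots\times F_{v_k}$, the subgroup $\pi(K)$ is the full preimage of its image $F'$ in that finite quotient, and the splitting gives a copy of $F'$ inside $\pi(K)$; torsion-freeness of $K$ then forces $F'=\{1\}$, hence $\pi(K)=G_\Upsilon$ and $K\cong\mathrm{Stab}_G(\sigma)$. Without appealing to this semidirect product structure (which you state in your setup but never use), the concluding inference is invalid as written.
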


\begin{proof}
The group $\mathrm{Stab}_G(\sigma)$ is isomorphic to $G_\Upsilon=G_{v_1}\times\dots\times G_{v_k}$ for some clique $\Upsilon$ of $\Gamma$ with $V\Upsilon=\{v_1,\dots,v_k\}$. We also write $\hat G_\Upsilon:=\hat G_{v_1}\times\dots\times\hat G_{v_k}$. Recall that each $G_{v_i}$ is ICC (Remark~\ref{rk:icc}), in particular $G_\Upsilon$ is ICC.

Hypotheses~$(\mathrm{H}_1)$,~
$(\mathrm{H}_2)$ and~$(\mathrm{H}_3)$, together with Proposition~\ref{prop:product-me}, ensure that for every self measure equivalence coupling $\Omega_\Upsilon$ of $G_\Upsilon$, there exists a $(G_\Upsilon\times G_\Upsilon)$-Borel map $\Omega_\Upsilon\to \hat{G}_\Upsilon$. Therefore \cite[Theorem~6.1]{Kid-me} implies that there exists a homomorphism $\pi_\sigma:K\to \hat G_\Upsilon$ with finite kernel, and finite-index image. Since $K$ is torsion-free, $\pi_\sigma$ is in fact injective. Then $\pi_\sigma(K)\cap G_\Upsilon$ is a finite-index subgroup of $G_\Upsilon$, and is therefore equal to $G_\Upsilon$ in view of Hypothesis~$(\mathrm{H}_2)$. Recall also from Hypothesis~$(\mathrm{H}_1)$ that for every $i\in\{1,\dots,k\}$, we have a semi-direct product $\hat{G}_{v_i}=G_{v_i}\rtimes F_{v_i}$, with $F_{v_i}$ finite. Then the image of $\pi_\sigma(K)$ in each $F_{v_i}$ is trivial: otherwise $\pi_\sigma(K)$ would intersect a lift of $F_{v_1}\times\dots\times F_{v_k}$ to $\hat G_{\Upsilon}$ non-trivially, contradicting that $K$ is torsion-free. Therefore $\pi_\sigma(K)=G_\Upsilon$, which shows that $K$ is isomorphic to $\mathrm{Stab}_G(\sigma)$.
\end{proof}

\subsubsection{A common strict fundamental domain for the actions of $G$ and $H$}\label{sec:same-fundamental-domain}

Let $Y_G\subseteq\bD_G$ be the standard fundamental domain for $G$, spanned by all vertices of the form $G_\Upsilon$, where $\Upsilon\subseteq\Gamma$ is a clique. Notice that $Y_G$ is a \emph{strict fundamental domain}\index{Strict fundamental domain} for the $G$-action on $\bD_G$, i.e.\ it is a subcomplex that meets every $G$-orbit in exactly one point. We will prove in Lemma~\ref{lemma:same-fundamental-domain} below that $Y_G$ is also a strict fundamental domain for the $H$-action on $\bD_G$. We start with a lemma.

\begin{lemma}\label{lemma:coupling-index-1}
For every rank $1$ vertex $v\in VY_G$, every measure equivalence coupling between $\Stab_G(v)$ and $\Stab_H(v)$ has coupling index $1$.
\end{lemma}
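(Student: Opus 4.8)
The plan is to reduce the statement to the rigidity of self-couplings of the vertex group $G_v$ furnished by Hypothesis~$(\mathrm{H}_1)$. First I would record that, since $v$ is a rank~$1$ vertex of $Y_G$, we have $\Stab_G(v)=G_v$ for some vertex group $G_v$, and that by Lemma~\ref{lemma:index-rigid} the group $\Stab_H(v)$ is isomorphic to $G_v$; fix such an isomorphism $\phi\colon G_v\to\Stab_H(v)$. Given an arbitrary measure equivalence coupling $\Sigma$ between $\Stab_G(v)=G_v$ and $\Stab_H(v)$ of coupling index $\lambda$, I would relabel the $\Stab_H(v)$-action through $\phi$, declaring the new right $G_v$-action to be $g\cdot\omega:=\phi(g)\cdot\omega$, so as to view $\Sigma$ as a \emph{self}-coupling $\Sigma'$ of $G_v$. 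Since relabelling one side by a group isomorphism does not alter orbits, and hence not fundamental domains nor their measures, the self-coupling $\Sigma'$ still has coupling index $\lambda$. It therefore suffices to prove that every self-coupling of $G_v$ has coupling index~$1$.

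By Hypothesis~$(\mathrm{H}_1)$ there is a $(G_v\times G_v)$-equivariant Borel map $\theta\colon\Sigma'\to\hat G_v$, where $\hat G_v=G_v\rtimes F_v$ carries the left–right multiplication action of $G_v\times G_v$. The key observation is that $\hat G_v$ is itself an orbit equivalence coupling of $G_v$: because $G_v$ is normal in $\hat G_v$, the left and right $G_v$-cosets of any element coincide, so the two $G_v$-actions on $\hat G_v$ have identical orbits, and the finite subset $F_v\subseteq\hat G_v$ is a transversal for these cosets, hence a common (strict) fundamental domain. In particular $\hat G_v$ has coupling index~$1$.

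It then remains to descend this common fundamental domain through $\theta$. I would set $Y:=\theta^{-1}(F_v)$ and check that it is a common fundamental domain for the two $G_v$-actions on $\Sigma'$. For any left $G_v$-orbit $\mathcal O$ of $\Sigma'$, equivariance sends $\mathcal O$ onto the orbit $\theta(\mathcal O)=G_v\cdot\theta(\omega)$ of $\hat G_v$; since both are free transitive $G_v$-sets and $\theta$ is equivariant, $\theta|_{\mathcal O}$ is a bijection, and $\theta(\mathcal O)$ meets $F_v$ in exactly one point, so $Y\cap\mathcal O$ is a single point. The same argument with the right $G_v$-action shows $Y$ meets every right orbit once as well. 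Thus $\Sigma'$ admits a common fundamental domain, so its coupling index is~$1$, giving $\lambda=1$.

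The only genuinely delicate point is this descent: $\theta$ is merely a Borel equivariant map, with no control on how it transforms the measure, so one cannot argue via measure preservation. The resolution is that the common fundamental domain property is purely orbit-wise and combinatorial, and is preserved under equivariant maps precisely because equivariant maps between free transitive $G_v$-sets are bijections; finiteness and positivity of $m(Y)$ are then automatic, since every fundamental domain for the (measure-preserving) left $G_v$-action has the same measure as the given finite one. Everything else is bookkeeping with the definitions of coupling index and fundamental domains.
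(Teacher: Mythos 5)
Your proof is correct, and its key step is genuinely different from the paper's. Both arguments begin with the same reduction: Lemma~\ref{lemma:index-rigid} identifies $\Stab_H(v)$ with $G_v=\Stab_G(v)$, so it suffices to show that every self measure equivalence coupling of $G_v$ has coupling index $1$. From there the paper takes a two-step detour: it first proves that every self-coupling of the finite extension $\hat G_v=G_v\rtimes F_v$ has index $1$, by applying Hypothesis~$(\mathrm{H}_1)$ and upgrading the resulting $(G_v\times G_v)$-equivariant map to a $(\hat G_v\times\hat G_v)$-equivariant one via \cite[Lemma~5.8]{Kid-me} --- a step that consumes the ICC property of $G_v$ (Remark~\ref{rk:icc}, hence Hypothesis~$(\mathrm{H}_2)$) and the normality of $G_v$ in $\hat G_v$ --- and then pulls back $\{1\}$, which is a common fundamental domain for the two $\hat G_v$-actions; afterwards it transfers the conclusion down to self-couplings of $G_v$ by composition of couplings. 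You instead observe that the normality of $G_v$ in $\hat G_v$ already makes $F_v$ a common fundamental domain for the two \emph{$G_v$-}actions on $\hat G_v$ (left and right cosets coincide, and $F_v$ is a transversal), so the merely $(G_v\times G_v)$-equivariant map provided by $(\mathrm{H}_1)$ suffices: pulling back $F_v$ gives a common fundamental domain on the self-coupling. This avoids both the equivariance upgrade (and thus the ICC input) and the composition of couplings, making your argument shorter and more self-contained; what the paper's route buys in exchange is only the auxiliary fact that self-couplings of $\hat G_v$ have index $1$, which it does not need elsewhere. Your handling of the measure-theoretic fine points is also sound: a fundamental domain pulls back orbit-wise through an equivariant Borel map because the multiplication actions on $\hat G_v$ are free, and finiteness and positivity of $m\bigl(\theta^{-1}(F_v)\bigr)$ follow since any two Borel fundamental domains of a measure-preserving free action of a countable group have equal measure.
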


\begin{proof}
In view of Lemmas~\ref{lemma:hh} and~\ref{lemma:index-rigid}, the groups $\Stab_G(v)$ and $\Stab_H(v)$ are isomorphic, so it is enough to show that every self measure equivalence coupling of $\Stab_G(v)$ has coupling index $1$. 

Recall that since $v\in VY_G$, the stabilizer $\Stab_G(v)$ is a vertex group of $G$ (for its graph product structure). Hypothesis~$(\mathrm{H}_1)$ gives us a finite-index extension $\widehat{\Stab}_G(v)$.

We first observe that every self measure equivalence coupling $\hat\Sigma_v$ of $\widehat{\Stab}_G(v)$ has coupling index $1$. Indeed, since $\Stab_G(v)$ has finite index in $\widehat{\Stab}_G(v)$, the space $\hat\Sigma_v$ is also a self measure equivalence coupling of $\Stab_G(v)$. So Hypothesis~$(\mathrm{H}_1)$ yields a Borel $(\Stab_G(v)\times\Stab_G(v))$-equivariant map $\theta_v:\hat\Sigma_v\to \widehat{\Stab}_G(v)$. Since $\Stab_G(v)$ is ICC (Remark~\ref{rk:icc}) and normal in $\widehat{\Stab}_G(v)$, it follows from \cite[Lemma~5.8]{Kid-me} that $\theta_v$ is in fact $(\widehat{\Stab}_G(v)\times\widehat{\Stab}_G(v))$-equivariant. Then $\theta_v^{-1}(\{1\})$ is a common Borel fundamental domain for the two actions of $\widehat{\Stab}_G(v)$ on $\hat \Sigma_v$, so $\hat{\Sigma}_v$ has coupling index $1$.

Let now $\Sigma_v$ be a self measure equivalence coupling of $\Stab_{G}(v)$. Starting from $\Sigma_v$ and from a measure equivalence coupling between $\Stab_G(v)$ and $\widehat{\Stab}_G(v)$ of coupling index $[\widehat{\Stab}_G(v):{\Stab}_G(v)]$, by composition of couplings, one obtains a self measure equivalence coupling of $\widehat{\Stab}_G(v)$ with the same coupling index as $\Sigma_v$ (see e.g.\ Item~c on page~300 of \cite{Fur-survey}, below Definition~2.1). It thus follows from the previous paragraph that the coupling index of $\Sigma_v$ is~$1$, as desired.
\end{proof}

\begin{lemma}\label{lemma:same-fundamental-domain}
   The subcomplex $Y_G$ is a strict fundamental domain for the $H$-action on~$\bD_G$.
\end{lemma}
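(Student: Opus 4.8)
The plan is to prove that $Y_G$ is a strict fundamental domain for the $H$-action by establishing two facts: every $H$-orbit meets $Y_G$, and it meets $Y_G$ in at most one point. Recall that $Y_G$ is spanned by the vertices $G_\Upsilon$ for $\Upsilon\subseteq\Gamma$ a clique, and that by \cref{lemma:same-orbits} the actions of $G$, $H$ and $\Aut(\bD_G)$ all share the same orbits of cubes and preserve types of vertices. The natural approach is a \emph{volume (coupling index) argument}, exploiting \cref{lemma:coupling-index-1}, which tells us that for every rank $1$ vertex $v\in VY_G$, the coupling $\Omega_v$ between $\Stab_G(v)$ and $\Stab_H(v)$ has coupling index $1$.

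First I would translate the coupling index $1$ statement into combinatorial data about the two actions. For a rank $1$ vertex $v\in VY_G$, consider the edge of $\bD_G$ joining $v$ to the unique rank $0$ vertex $e_G$ it contains (i.e. the coset $G_\Upsilon$ with $\Upsilon=\{w\}$ contains the identity coset). The $G$-action on the set of rank $0$ vertices contained in (equivalently, adjacent to) $v$ is free and transitive (it is a copy of $G_w=\Stab_G(v)$ acting on its own cosets), and likewise the $H$-action on the rank $0$ vertices below $v$ is free by \cref{cor:rank-0}. The coupling index of $\Omega_v$ computes the ratio between the number of $\Stab_H(v)$-orbits and $\Stab_G(v)$-orbits on the common set of rank $0$ vertices adjacent to $v$ (this is the standard dictionary between coupling index and relative indices of the two actions sharing orbits, as in \cref{lemma:commensurability} and the discrete-coupling picture). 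Since \cref{lemma:coupling-index-1} forces this index to be $1$, and since $\Stab_G(v)$ acts transitively on the rank $0$ vertices adjacent to $v$, I would deduce that $\Stab_H(v)$ also acts transitively on them. The key consequence: every rank $0$ vertex of $\bD_G$ is $H$-equivalent to $e_G$ (propagating transitivity along adjacent rank $1$ vertices, using that $\bD_G$ is connected and its $1$-skeleton is spanned by rank $0$ and rank $1$ vertices). Hence the $H$-orbit of $e_G$ contains all rank $0$ vertices, and since every cube of $\bD_G$ lies in the $G$-orbit (hence the $\Aut(\bD_G)$-orbit, hence is $H$-reachable from $Y_G$) of a cube meeting $Y_G$, every $H$-orbit of cubes meets $Y_G$.

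Next I would prove that $Y_G$ meets each $H$-orbit in exactly one point, i.e. that no non-trivial $H$-translate of a cube in $Y_G$ lands back in $Y_G$. Since $H$ preserves types (\cref{lemma:same-orbits}), it suffices to treat cubes of a fixed type, and by \cref{cor:stab-cube} it suffices to treat vertices. Suppose $h\cdot G_\Upsilon=G_{\Upsilon'}$ with both in $Y_G$; types are preserved so $\Upsilon=\Upsilon'$, meaning $h$ stabilizes the type-$\Upsilon$ vertex $G_\Upsilon$ setwise only if $h\in\Stab_H(G_\Upsilon)$, otherwise it sends it to a \emph{distinct} vertex of the same type in $Y_G$. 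But $Y_G$ contains exactly one vertex of each clique-type (namely $G_\Upsilon$ itself), so distinct vertices of the same type in $\bD_G$ cannot both lie in $Y_G$; the only possibility is $h\cdot G_\Upsilon=G_\Upsilon$, which is not a violation. To rule out that two distinct points of $Y_G$ are $H$-equivalent via cubes of the \emph{same} $H$-orbit, I combine the freeness at rank $0$ (\cref{cor:rank-0}) with the transitivity established above: the rank $0$ vertices of $Y_G$ reduce to the single vertex $e_G$, and the injectivity of $\theta$ on the relevant fundamental domain (coupling index $1$ giving a \emph{common} fundamental domain for the two actions on $\Omega_v$) transfers to injectivity of the quotient map $Y_G\to H\backslash\bD_G$.

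The main obstacle I anticipate is making rigorous the passage from the measure-theoretic statement ``coupling index $=1$'' to the purely combinatorial statement ``$\Stab_H(v)$ acts transitively on the rank $0$ vertices adjacent to $v$''. This requires care because $H$ acts on $\bD_G$ through the homomorphism $\iota$, whose kernel and image are not a priori controlled, and one must check that the $H$-action on rank $0$ vertices below $v$ is genuinely free and that its number of orbits equals the coupling index. I would handle this by invoking \cref{lemma:hh} to identify $\Omega_v$ as the measure equivalence coupling between $\Stab_G(v)$ and $\Stab_H(v)$ arising precisely from these two actions sharing orbits on the rank $0$ vertices adjacent to $v$, then reading off the coupling index as the ratio of fundamental domain sizes exactly as in the discrete dictionary of \cref{lemma:commensurability}. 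The crucial input here is that $\Stab_G(v)=G_w$ has \emph{no proper finite-index subgroup} (Hypothesis~$(\mathrm{H}_2)$), which prevents $\Stab_H(v)$ from acting with more than one orbit while still yielding index $1$; this is exactly the point flagged in the introduction as requiring that $\Hig_\sigma$ has no proper finite-index subgroup.
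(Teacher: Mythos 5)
Your proposal runs on the same engine as the paper's proof: the coupling-index-$1$ statement (\cref{lemma:coupling-index-1}) applied at a rank $1$ vertex $w$, together with the fact that $\Stab_G(w)$ acts freely and transitively, while $\Stab_H(w)$ acts freely (\cref{cor:rank-0}), on the rank $0$ vertices adjacent to $w$ -- so that two $\Stab_H(w)$-inequivalent such vertices would force coupling index at least $2$ -- plus type preservation (\cref{lemma:same-orbits}) for strictness. The paper organizes this by contradiction (a point of $\bD_G$ missed by the $H$-orbits of $Y_G$ produces, along an alternating rank $0$/rank $1$ path, a rank $1$ vertex flanked by two $H$-inequivalent rank $0$ vertices), whereas you argue positively (transitivity of $\Stab_H(w)$ on adjacent rank $0$ vertices, then propagation); the difference is organizational, not mathematical.

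Two of your justifications, however, are flawed as written, though both are repairable from what you have already established. First, the parenthetical ``(hence the $\Aut(\bD_G)$-orbit, hence is $H$-reachable from $Y_G$)'' is a non sequitur: lying in the same $\Aut(\bD_G)$-orbit as a cube meeting $Y_G$ does not imply lying in the same $H$-orbit as one -- that is precisely what is being proven. The correct deduction from your transitivity is: every cube of $\bD_G$ is a face of a cube containing a rank $0$ vertex $u$; choose $h\in H$ with $h\cdot u=e_G$; and every cube containing $e_G$ lies in $Y_G$, so $h$ carries the original cube into $Y_G$. Second, in the strictness part, the claim that coupling index $1$ yields a \emph{common} fundamental domain for the two actions on $\Omega_v$ is unjustified (this is exactly the ME-versus-OE subtlety the paper repeatedly warns about), and the sentence about ``injectivity of $\theta$'' transferring to the quotient map $Y_G\to H\backslash\bD_G$ is not an argument. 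Fortunately it is also unnecessary: your type-based argument for vertices extends directly to points -- if $h\cdot x=y$ with $x,y\in Y_G$, the minimal cubes $\sigma_x,\sigma_y$ containing them lie in $Y_G$, and since $h$ preserves types, each vertex of $\sigma_x$ is sent to the unique vertex of $Y_G$ of its type, forcing $\sigma_x=\sigma_y$ and $h$ to fix $\sigma_x$ pointwise, whence $x=y$ -- which is how the paper concludes. Finally, note that your transitivity is established only at rank $1$ vertices of $Y_G$, while propagation along a path requires it at arbitrary rank $1$ vertices; this is handled either by translating the path back to $e_G$ using the group elements already constructed, or by observing that \cref{lemma:coupling-index-1} transfers to all rank $1$ vertices since their $G$- and $H$-stabilizers are isomorphic to those of vertices of $Y_G$ (\cref{lemma:hh} and \cref{lemma:index-rigid}); this point deserves to be made explicit.
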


\begin{proof}
Assume towards a contradiction that $Y_G$ is not a strict fundamental domain for the $H$-action on $\bD_G$. This means that 
\begin{itemize}
    \item either two distinct points $x,y$ of $Y_G$ belong to the same $H$-orbit,
    \item or there is a point in $\bD_G$ which is not in the $H$-orbit of any point of $Y_G$.
\end{itemize}

We start by excluding the first option. Assuming that $x,y\in Y_G$ belong to the same $H$-orbit, we will prove that $x=y$. Let $\sigma_x,\sigma_y$ be the cubes of minimal dimension that contain $x,y$; these are contained in $Y_G$ because $Y_G$ is a subcomplex of $\bD_G$. And $\sigma_x,\sigma_y$ also belong to the same $H$-orbit, in particular they belong to the same $\Aut(\bD_G)$-orbit. In view of Lemma~\ref{lemma:same-orbits}, the cubes $\sigma_x,\sigma_y$ are in the same $G$-orbit. Since $Y_G$ is a strict fundamental domain for the $G$-action, it follows that $\sigma_x=\sigma_y$. But since automorphisms of $\bD_G$ preserve types of vertices, every automorphism that preserves $\sigma_x$ acts as the identity on $\sigma_x$. It follows that $x=y$, as desired. The first option is excluded.

We now assume that there exists a point $x$ in $\bD_G$ which is not in the $H$-orbit of any point of $Y_G$, and aim for a contradiction. Let $\sigma$ be a cube that contains $x$ and contains a rank $0$ vertex $v$. Then $\sigma$ is not in the $H$-orbit of any cube of $Y_G$. Denoting by $v_0$ the unique rank $0$ vertex of $Y_G$, it follows that $v$ is not in the $H$-orbit of $v_0$. 

The rank $0$ vertices $v_0$ and $v$ can be joined by an edge path, such that vertices along the path alternate between rank $0$ and rank $1$ vertices (see \cref{fig:CheminDeligne} for an example). Let $v_1$ be the last rank $0$ vertex along this path that belongs to the $H$-orbit of $v_0$, and let $v_2$ be the next rank $0$ vertex along this path. Then $v_1$ and $v_2$ are rank $0$ vertices of $\bD_G$ in distinct $H$-orbits which are adjacent to a common rank $1$ vertex $w$. 

\begin{figure}[htbp]
    \centering
    \includegraphics[width=0.6\textwidth]{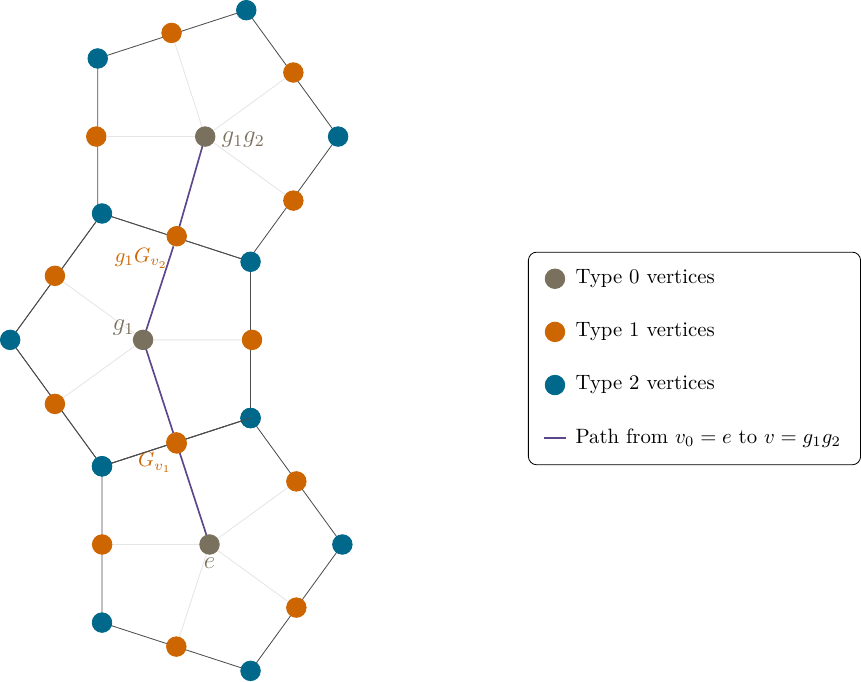}
    \caption{Illustration of a path going from $v_0=e$ to $v=g_1g_2$ (where $g_i\in G_{v_i}$, for $i\in \{1,2\}$) such that vertices on the path alternate between rank $1$ and $0$.}
    \label{fig:CheminDeligne}
\end{figure}

    Recall from Lemma~\ref{lemma:hh} that %$\Aut_w(\bD_G)$ denotes the stabilizer of $w$ in $\Aut(\bD_G)$, and that 
    \[\Omega_w:=\theta^{-1}(\Stab_{\Aut(\bD_G)}(w))\] is a measure equivalence coupling between the stabilizers $\Stab_G(w)$ and $\Stab_H(w)$. For every $i\in\{1,2\}$, let \[X_{w,i}:=\theta^{-1}(\{f\in\Stab_{\Aut(\bD_G)}(w)\mid f(v_i)=v_1\}).\] The action of $\Stab_G(w)$ on the set of rank $0$ vertices adjacent to $w$ is nothing but the action of $\Stab_G(w)$ by left translations on itself. In particular it is free and transitive, so for every $i\in\{1,2\}$, the set $X_{w,i}$ is a Borel fundamental domain for the action of $\Stab_G(w)$ on $\Omega_w$. In particular $m(X_{w,1})=m(X_{w,2})$.
    
    For every $i\in\{1,2\}$, the group $\Stab_H(v_i)$ is trivial (Corollary~\ref{cor:rank-0}). 
        Therefore, since $v_1$ and $v_2$ are not in the same $\Stab_H(w)$-orbit, the set $X_{w,1}\cup X_{w,2}$ meets almost every $\Stab_H(w)$-orbit in $\Omega_w$ at most once, and therefore it is contained in a Borel fundamental domain $Y_w$ for the action of $\Stab_H(w)$ on $\Omega_w$. In particular $m(Y_w)\ge 2 m(X_{w,1})$, so the coupling index satisfies  \[[\Stab_G(w):\Stab_H(w)]_{\Omega_w}\ge 2.\] This is a contradiction to Lemma~\ref{lemma:coupling-index-1}, which completes our proof.
\end{proof}

\subsection{Exploiting the structure of simple complex of groups}\label{sec:complex-of-groups}

In this final step, we will exploit the structure of $H$ as a simple complex of groups in the sense of Bridson--Haefliger \cite[Chapter~II.12]{BH} to get a fine control on the nature of the group $H$. We continue with the same general setting as in Section~\ref{sec:action-of-h}, and assume in addition that the group $G$ satisfies Hypothesis~$(\mathrm{H}_4)$ from Theorem~\ref{theo:rigidity-criterion}.

Let $\mathfrak{P}$ be the poset of cubes of $\bD_G$, ordered by inclusion. Then $\bD_G$ has a stratification indexed by $\mathfrak{P}$, whose strata are exactly the cubes of $\bD_G$: here \emph{stratification}\index{Stratification} is understood in the sense of \cite[Definition~II.12.1]{BH}, i.e.\ 
\begin{itemize}
\item $\bD_G$ is the union of its strata; 
\item the intersection of two strata, if non-empty, is again a stratum;
\item for every point in $\bD_G$, the intersection of all strata that contain $x$ is again a stratum (namely, it is the cube of smallest dimension that contains $x$).
\end{itemize}
The actions of $G$ and of $H$ on $\bD_G$ are strata-preserving, i.e.\ every element sends every stratum bijectively onto a stratum. And in view of Lemma~\ref{lemma:same-fundamental-domain}, the subcomplex $Y_G$ is a strict fundamental domain for the strata-preserving actions of $G$ and $H$ in the sense of \cite[Definition~II.12.7]{BH}: every cube of $\bD_G$ has a unique translate in $Y_G$.

In addition $\bD_G$ is simply connected (it is even $\mathrm{CAT}(0)$). Therefore, we can apply \cite[Corollary~II.12.22]{BH} and deduce that $H$ is the direct limit (amalgam) of the isotropy subgroups of the cubes contained in $Y_G$, in the sense of \cite[Definition~II.12.12]{BH}. The meaning of this is the following. Let \[\mathfrak{Q}=\{\sigma\in\mathfrak{P}\mid \sigma\subseteq Y_G\}.\] As before, for every cube $\sigma\subseteq Y_G$, we let $\mathrm{Stab}_H(\sigma)$ be the stabilizer of $\sigma$ for the $H$-action on $\bD_G$. Also, given cubes $\tau\subseteq\sigma$ both contained in $Y_G$, we let $\psi_{\tau\sigma}:\mathrm{Stab}_H(\sigma)\to \Stab_H(\tau)$ be the inclusion. Then $H$ is isomorphic to the group obtained from the free product of the groups $\mathrm{Stab}_H(\sigma)$, with $\sigma$ ranging in $\mathfrak{Q}$, by imposing as only extra relations that $\psi_{\tau\sigma}(h)=h$, whenever $\tau\subseteq\sigma$ both belong to $\mathfrak{Q}$ and $h\in \mathrm{Stab}_H(\sigma)$. 

We are now in position to prove the main theorem of this section.

\begin{proof}[Proof of Theorem~\ref{theo:rigidity-criterion}]
Let $v_1,\dots,v_n$ be the rank $1$ vertices in $Y_G$. Let $\sigma\subseteq Y_G$ be a cube, and let $w$ be its minimal vertex. Let $v_{i_1},\dots,v_{i_k}$ be the rank $1$ vertices that satisfy $v_{i_j}\le w$ (where the partial order $\le$ is just the inclusion of the cosets represented by these vertices). We say that $v_{i_1},\dots,v_{i_k}$ are the rank $1$ vertices \emph{below} $\sigma$ (notice that in the case where $\sigma$ contains the rank $0$ vertex of $Y_G$, there are no rank $1$ vertices below $\sigma$). For every $j\in\{1,\dots,k\}$, there is a cube $\sigma'_j$ in $Y_G$ that contains $v_{i_j}$ as its minimal vertex, and also contains $w$. So Corollary~\ref{cor:stab-cube} implies that, for every $j\in\{1,\dots,k\}$, we have
\[\Stab_{\Aut(\bD_G)}(v_{i_j})=\Stab_{\Aut(\bD_G)}(\sigma'_j)\subseteq\Stab_{\Aut(\bD_G)}(\sigma)=\Stab_{\Aut(\bD_G)}(w). \]
Therefore the $H$-stabilizer of $v_{i_j}$ is equal to $\Stab_H(\sigma'_j)$, and is contained in $\Stab_H(w)=\mathrm{Stab}_H(\sigma)$.

By Lemmas~\ref{lemma:hh} and~\ref{lemma:index-rigid}, we already know that the stabilizer $\mathrm{Stab}_H(\sigma)$ is abstractly isomorphic to $\mathrm{Stab}_G(\sigma)$, which splits as $\mathrm{Stab}_G(\sigma)=G_{v_{i_1}}\times\dots\times G_{v_{i_k}}$ (here we naturally identify the rank $1$ vertices $v_{i_1},\dots,v_{i_k}$ with vertices of the defining graph $\Gamma$, and the groups $G_{v_{i_j}}$ are exactly vertex groups for the graph product structure of $G$).

In addition, for every $j\in\{1,\dots,k\}$, we have an inclusion map $\Stab_H(v_{i_j})\to \mathrm{Stab}_H(\sigma)$, and $\Stab_H(v_{i_j})$ is isomorphic to $G_{v_{i_j}}$ (\cref{lemma:hh,lemma:index-rigid}). It thus follows from Hypothesis~$(\mathrm{H}_4)$ that the image of $\Stab_H(v_{i_j})$ in $\mathrm{Stab}_H(\sigma)$ is exactly equal to 
\[ \{1\}\times\dots\times\{1\}\times G_{v_{i_j}}\times\{1\}\times\dots\times\{1\},\]
under the above identification between $\mathrm{Stab}_H(\sigma)$ and $\mathrm{Stab}_G(\sigma)$. Therefore $\mathrm{Stab}_H(\sigma)$ splits as the direct product $\Stab_H({v_{i_1}})\times\dots\times  \Stab_H({v_{i_k}})$, in such a way that the inclusion maps $\Stab_H({v_{i_j}})\to  \mathrm{Stab}_H(\sigma)$ are just factor inclusions.

In other words, given any finite collection of rank $1$ vertices $v_{i_1},\dots,v_{i_k}$ spanning a clique when viewed as vertices of $\Gamma$, the subgroup generated by $\Stab_H(v_{i_1}),\dots, \Stab_H(v_{i_k})$ is equal to their direct product, and it is also equal to the stabilizer of any cube $\sigma$ such that the collection of rank $1$ vertices below $\sigma$ is exactly $\{v_{i_1},\dots,v_{i_k}\}$. From this, it follows that whenever $\sigma,\tau$ are two cubes with $\tau\subseteq\sigma$, the inclusion map $\mathrm{Stab}_H(\sigma)\to \Stab_H(\tau)$ is a factor inclusion.

It follows that the direct limit of the groups $\Stab_H(v_i)$ is nothing but their graph product over $\Gamma$, and it follows from \cite[Corollary~II.12.22]{BH} that this direct limit is isomorphic to $H$. Since $\Stab_H(v_i)$ is isomorphic to $G_{v_i}$, this shows that $H$ is isomorphic to $G$. %(\cite[Corollary~II.12.22]{BH}). 
\end{proof}

\section{An ME-rigid group which is not QI-rigid}\label{sec:higman}

In this section, we provide an example of a graph product that satisfies all assumptions from Theorem~\ref{theo:rigidity-criterion}. Our example will involve Higman groups \cite{Hig}, defined for $k\ge 4$ as \[\Hig_k=\langle a_1,\dots, a_k \mid a_ia_{i+1}a_i^{-1}=a_{i+1}^{2} \quad \forall i~(\text{mod~} k)\rangle.\] 
Our construction will enable us to prove the following theorem, which we recall from the introduction.

\begin{Th4}
    Let $\Gamma$ be a finite simple graph with no transvection and no partial conjugation, such that $\Aut(\Gamma)=\{1\}$. Let $(k_v)_{v\in V\Gamma}$ be a sequence of integers at least $5$, such that $k_v$ does not divide $k_w$ whenever $v\neq w$. Let $G$ be the graph product over $\Gamma$ with vertex groups $(\Hig_{k_v})_{v\in V\Gamma}$. Then
    \begin{enumerate}
        \item If a torsion-free countable group $H$ is measure equivalent to $G$, then $H$ is isomorphic to $G$.
        \item There exists an infinite family of torsion-free countable groups that are all quasi-isometric to $G$, and pairwise not commensurable.
    \end{enumerate}
\end{Th4}

\subsection{Homomorphisms between Higman groups}

The following proposition is an elaboration on work of Martin \cite{Mar2}, who proved the second assertion in the case where $k=4$. We will follow his proof very closely.

\begin{Prop}\label{prop:homomorphisms-higman}
Let $k,k'\ge 4$. Then
\begin{itemize}
    \item There exists a non-trivial homomorphism $\Hig_{k'}\to\Hig_k$ if and only if $k$ divides $k'$.
    \item Every non-trivial homomorphism $\Hig_k\to\Hig_k$ is an automorphism.
\end{itemize}
\end{Prop}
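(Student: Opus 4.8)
The plan is to analyze homomorphisms between Higman groups via their actions on the associated CAT(0) square complexes, following Martin's strategy. Recall that $\Hig_k$ acts on a CAT(0) square complex $X_k$ (the one built from the presentation, with stabilizers of vertices being the cyclic subgroups $\langle a_i\rangle$ and the Baumslag--Solitar relators giving the edge identifications). The key structural facts I would assemble first are: the $a_i$ generate infinite cyclic subgroups, adjacent generators satisfy a $\BS(1,2)$ relation, and $\Hig_k$ is acylindrically hyperbolic (in particular ICC and with no nontrivial finite normal subgroup), hence has trivial center. I would also record that $\Hig_k$ has no proper finite-index subgroup, which is the property (H$_2$) invoked elsewhere and is due to Higman.

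First I would prove the ``only if'' direction and the rigidity of endomorphisms together, since they share the same mechanism. Given a non-trivial homomorphism $\varphi:\Hig_{k'}\to\Hig_k$, the central point is to understand the image of each generator $a_i$. Because adjacent generators $a_i,a_{i+1}$ satisfy $a_ia_{i+1}a_i^{-1}=a_{i+1}^2$, their images satisfy the same relation in $\Hig_k$, so I must classify pairs $(g,h)$ in $\Hig_k$ with $ghg^{-1}=h^2$. This is where the geometry of the square complex enters: such a Baumslag--Solitar pair forces $h$ to be conjugate into some $\langle a_j\rangle$ and constrains $g$ accordingly, using that maximal cyclic subgroups are malnormal-like (their commensurated structure is rigid) and that the only $\BS(1,2)$ subgroups are the ``standard'' ones coming from adjacent vertices of the defining $k$-cycle. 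The main obstacle will be this step: showing that a solution to the relation $ghg^{-1}=h^2$ with $h\neq 1$ must be conjugate to a standard adjacent pair $(a_j,a_{j+1})$ (up to the symmetries), and in particular that $\varphi$ cannot collapse any generator to the identity once one generator survives. I would handle this by examining fixed-point sets and the action on $X_k$, arguing that the cyclic sequence $a_1,\dots,a_{k'}$ maps to a cyclic sequence of standard pairs, which wraps around the $k$-cycle, forcing $k\mid k'$.

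Once the image of the generators is pinned down, reconstructing the homomorphism is straightforward: the images $\varphi(a_i)$ trace out a closed path in the $k$-cycle of length $k'$, and since $k\mid k'$ this path goes around $k'/k$ times. When $k=k'$ the path goes around exactly once, so $\varphi$ permutes the $\langle a_i\rangle$ cyclically (after conjugation), and matching the $\BS(1,2)$ orientations shows $\varphi$ is a bijective endomorphism, hence an automorphism; this uses that $\Hig_k$ is Hopfian (it is finitely presented and residually related, or one can argue directly from surjectivity plus the rigidity of the combinatorial structure). For the converse of the first assertion, when $k\mid k'$ I would exhibit an explicit non-trivial homomorphism $\Hig_{k'}\to\Hig_k$ by sending $a_i\mapsto a_{i \bmod k}$ (reducing indices modulo $k$); one checks directly that all defining relations of $\Hig_{k'}$ are satisfied because the relation $a_ia_{i+1}a_i^{-1}=a_{i+1}^2$ is preserved under this reduction, and the map is non-trivial since the $a_j$ are non-trivial in $\Hig_k$.

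I expect the genuinely hard part to be the classification of Baumslag--Solitar pairs inside $\Hig_k$ and the control of where generators can go, since this requires the detailed CAT(0) geometry of the square complex and the acylindricity of the action; the reconstruction and the explicit homomorphism are comparatively routine. I would lean on Martin's $k=4$ argument as a template, checking at each step that nothing in it used $k=4$ beyond the cyclic combinatorics of the defining polygon, which generalizes verbatim to arbitrary $k\ge 4$.
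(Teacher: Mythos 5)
Your treatment of the first bullet point follows the paper's proof (which itself follows Martin): the explicit index-reduction homomorphism for the converse, and for the forward direction the induced map on the polygonal complexes together with the wrapping argument for the immersed oriented $k'$-gon inside the $k$-gon. However, already there your key classification claim is overstated: it is \emph{false} that every solution of $ghg^{-1}=h^2$ with $h\neq 1$ is conjugate to a standard adjacent pair $(a_j,a_{j+1})$. Inside $\BS(1,2)=\langle a_j,a_{j+1}\rangle$ itself, the pair $(a_j,a_{j+1}^3)$ satisfies the relation, and $a_{j+1}^3$ is not conjugate to $a_{j+1}$. What Martin proves (and the paper cites, Lemmas~2.4 and~3.6 of his paper) is only a containment statement: $\theta$ sends each $\langle a'_i,a'_{i+1}\rangle$ \emph{into} a unique conjugate of some $\langle a_j,a_{j+1}\rangle$. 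That weaker statement suffices for the wrapping argument and hence for $k\mid k'$, but it does not pin the images of the generators down to conjugates of generators, so it does not give you surjectivity when $k=k'$.

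This undermines your argument for the second bullet, which contains the genuine gap: you deduce "automorphism" from "surjective plus Hopfian", but neither ingredient is available. Surjectivity fails for the reason above, and the Hopfian property of $\Hig_k$ cannot be justified by "finitely presented and residually [finite]": Higman's group has no proper finite-index subgroups and no non-trivial finite quotients at all, so Malcev's theorem is inapplicable, and Hopficity of $\Hig_k$ is not an independently known input — for $k=4$ it is essentially a \emph{consequence} of the statement you are proving (this is Martin's theorem). The paper's proof avoids this circle entirely: since $\theta_*$ sends boundaries of $2$-cells to immersed $k$-gons, and immersed $k$-gons in $X_k$ bound $2$-cells (for $k\ge 5$ this is \cite[Lemma~4.4]{HH-Higman}; $k=4$ is Martin's case), one may post-compose $\theta$ with an inner automorphism so that $\theta_*$ preserves the fundamental $k$-gon $P_k$; then $(\theta^k)_*=(\theta_*)^k$ fixes $P_k$ pointwise, and Martin's Lemma~3.9 yields $\theta^k=\mathrm{id}$, so $\theta$ is invertible with inverse $\theta^{k-1}$. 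If you want to salvage your outline, you should replace the surjectivity-plus-Hopfian step by this finite-order argument.
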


\begin{proof}
We let $\{a_1,\dots,a_k\}$ be a standard generating set of $\Hig_{k}$, and $\{a'_1,\dots,a'_{k'}\}$ be a standard generating set of $\Hig_{k'}$ (as in their standard presentations).

If $k$ divides $k'$, then the map sending the generator $a'_i$ of $\Hig_{k'}$ to the generator $a_{i}$ of $\Hig_k$ (with the index $i$ considered modulo $k$), extends to a non-trivial homomorphism from $\Hig_{k'}$ to $\Hig_k$.  

Conversely, assume that there exists a non-trivial homomorphism  $\theta:\Hig_{k'}\to\Hig_k$. We will prove that $k$ divides $k'$, and that if $k=k'$ then $\theta$ is an isomorphism, which will establish the proposition. Most of the proof follows \cite[Section~3]{Mar2} very closely. Martin's proof was written when $k=k'=4$, but the arguments readily extend to all $k,k'\ge 4$; we will provide a guide through the proof.

First, observe that $\theta$ cannot send any generator of $\Hig_{k'}$ to $1$, as otherwise the defining relations of $\Hig_{k'}$ would imply that every generator is sent to $1$, contradicting that $\theta$ is non-trivial.

As in \cite[Section~1.1]{Mar2} or \cite[Section~5]{Mar}, the group $\Hig_k$ (resp.\ $\Hig_{k'}$) acts on a $\mathrm{CAT}(0)$ polygonal complex $X_k$ (resp.\ $X_{k'}$) with strict fundamental domain a $2$-dimensional $k$-gon (resp.\ a $2$-dimensional $k'$-gon). For both actions, stabilizers of $2$-dimensional cells are trivial, edge stabilizers are isomorphic to $\mathbb{Z}$ (conjugate to the cyclic subgroups generated by the generators $a_i$ or $a'_i$), and vertex stabilizers are isomorphic to $\BS(1,2)$ (conjugate to the subgroups of the form $\langle a_i,a_{i+1}\rangle$, or $\langle a'_i,a'_{i+1}\rangle$, with indices understood modulo $k$ or $k'$). Moreover, for every edge $e=vw$ of $X_k$, exactly one of the homomorphisms $\Stab_G(e)\hookrightarrow \Stab_G(v)$ and $\Stab_G(e)\hookrightarrow \Stab_G(w)$ has undistorted image. As in \cite[Section~2.1]{Mar2}, we orient the edge towards the vertex $w$ such that the inclusion $\Stab_G(e)\hookrightarrow \Stab_G(w)$ is undistorted. If $e$ is oriented towards $w$, and $e'$ is any other edge incident on $w$, then $\Stab_G(e)\cap \Stab_G(e')=\{1\}$: this is the contents of \cite[Lemma~2.1]{Mar2}. In particular, as in \cite[Corollary~2.3]{Mar2}, the fixed point set $\mathrm{Fix}(h)$ of any non-trivial element of $\Hig_k$ is always contained in the outward star of a vertex, i.e.\ in the union of all edges emanating from that vertex ($\mathrm{Fix}(h)$ can be empty or reduced to one vertex).  

 As in \cite[Lemmas~2.4 and~3.6]{Mar2} (with the same proof), the homomorphism $\theta$ sends every subgroup $\langle a'_i,a'_{i+1}\rangle$ into a unique subgroup of the form $g \langle a_j,a_{j+1}\rangle g^{-1}$. In particular $\theta$ induces a map $\theta_\ast:VX_{k'}\to VX_k$. As in \cite[Lemma~3.7]{Mar2}, the map $\theta_\ast$ extends to an orientation-preserving map from the $1$-skeleton $X^1_{k'}$ to the $1$-skeleton $X^1_k$. And $\theta_\ast$ does not collapse any edge to a vertex (see the second paragraph of the proof of \cite[Lemma~3.8]{Mar2}).

Consider the vertices $v_1,\dots,v_{k'}$ of $X_{k'}$ corresponding to the subgroups $\langle a'_i,a'_{i+1}\rangle$ with $i$ varying in $\{1,\dots,k'\}$ (with indices considered modulo $k'$). They are aligned along a directed $k'$-gon $P_{k'}$. Therefore, there images under $\theta_\ast$ are aligned along an (immersed) oriented $k'$-gon $P$ in $X_k$. The projection of $P$ under the quotient map $X^1_{k}\to X^1_{k}/G$ is an immersed oriented $k'$-gon inside an oriented $k$-gon. This implies that $k$ divides $k'$, and proves the first part of the proposition.

We now prove the second assertion of the proposition, so from now on we assume that $k=k'$, and continue with the above notations. Since the case where $k=4$ was proved by Martin \cite{Mar2}, we will assume that $k\ge 5$. 

The map $\theta_\ast$ sends the boundary of any $2$-cell in $X_k$ to an immersed $k$-gon. And an immersed $k$-gon in $X_k$ is always the boundary of a $2$-cell in view of \cite[Lemma~4.4]{HH-Higman}. So $\theta_\ast$ sends the fundamental $k$-gon $P_k\subseteq X_k$ to a translate of $P_k$. Therefore, up to post-composing $\theta$ by an inner automorphism, we can assume that $\theta_\ast(P_k)=P_k$. In particular the map $(\theta_\ast)^k=(\theta^k)_\ast$ fixes $P_k$ pointwise. It follows from \cite[Lemma~3.9]{Mar2} (whose proof remains valid for every $k\ge 5$) that $\theta^k$ is the identity, in particular $\theta$ is an automorphism.
\end{proof}

\subsection{\texorpdfstring{Measure equivalence rigidity of the graph product $G$}{Measure equivalence rigidity of G}}

We are now in position to complete our proof of the first part of Theorem~\ref{theo:qi-me}, \cpageref{theo:qi-me}.

\begin{proof}[Proof of Theorem~\ref{theo:qi-me}(1)]
We apply the criterion provided by Theorem~\ref{theo:rigidity-criterion} to the graph product $G$. 

Hypothesis~$(\mathrm{H}_1)$ is given by \cite[Theorem~1.5]{HH-Higman}.

For Hypothesis~$(\mathrm{H}_2)$, notice that the vertex groups, being Higman groups, are torsion-free (as recorded for example in \cite[Lemma~2.6]{HH-Higman}) and have no proper finite-index subgroups (as proved by Higman \cite{Hig}). They are acylindrically hyperbolic (see \cite[Corollary~4.26]{MO} or \cite[Theorem~B]{Mar}), and therefore they belong to the class $\calc_\reg$ (see Example~\ref{ex:creg}). This verifies Hypothesis~$(\mathrm{H}_2)$ from Theorem~\ref{theo:rigidity-criterion}. 

For Hypothesis~$(\mathrm{H}_3)$, notice that if $\Hig_{k_v}$ is measure equivalent to $\Hig_{k_w}$, then \cite[Theorem~1.1]{HH-Higman} implies that $\Hig_{k_v}$ and $\Hig_{k_w}$ are virtually isomorphic (i.e.\ isomorphic up to taking the quotient by a finite normal subgroup, and passing to a finite-index subgroup). Since Higman groups are torsion-free and have no proper finite-index subgroup, it follows that $\Hig_{k_v}$ is isomorphic to $\Hig_{k_w}$. In view of Proposition~\ref{prop:homomorphisms-higman}, this is impossible unless $v=w$. This checks Hypothesis~$(\mathrm{H}_3)$. 

Finally, Hypothesis~$(\mathrm{H}_4)$ is a consequence of Proposition~\ref{prop:homomorphisms-higman}, as we are assuming that $k_v$ does not divide $k_w$ whenever $v\neq w$. 

The conclusion thus follows from Theorem~\ref{theo:rigidity-criterion}.
\end{proof}

\subsection{Flexibility in quasi-isometry}

Recall that two finitely generated groups $G,H$ are \emph{bi-Lipschitz equivalent} if there exists a bi-Lipschitz bijection from $G$ to $H$, equipped with word metrics associated to finite generating sets.

\begin{lemma}\label{lemma:qi}
Let $G,H$ be two graph products of finitely generated groups over a finite simple graph $\Gamma$. Assume that for every $v\in V\Gamma$, the vertex groups $G_v$ and $H_v$ are bi-Lipschitz equivalent.

Then $G$ and $H$ are bi-Lipschitz equivalent.
\end{lemma}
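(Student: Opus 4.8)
The plan is to build the bi-Lipschitz bijection $F\colon G\to H$ from the given bi-Lipschitz bijections $f_v\colon G_v\to H_v$ by working with Green's normal forms (reviewed in Section~\ref{Sec:NormalForms}). Recall that a normal form for $g\in G$ is a reduced word $\underline{\mathsf{w}}=(g_1,\dots,g_k)$ with each $g_i$ a non-trivial element of some vertex group $G_{v_i}$, and that $|g|_{S_G}=\sum_i |g_i|_{S_{v_i}}$ for the generating set $S_G=\bigcup_v S_v$. Fixing bi-Lipschitz bijections $f_v\colon(G_v,|\cdot|_{S_v})\to(H_v,|\cdot|_{T_v})$ with $f_v(1)=1$ (normalize so that the identity goes to the identity, which only changes the constants), I would define $F$ via the combined bijection construction of Section~\ref{subsec:ExtensionGraphs}: given $g$ with reduced word $(g_1,\dots,g_k)$, set $F(g)$ to be the element of $H$ represented by $(f_{v_1}(g_1),\dots,f_{v_k}(g_k))$. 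Since each $f_{v_i}$ fixes the identity and is a bijection, this word is again reduced, and by Green's normal form theorem $F$ is a well-defined bijection $G\to H$ (its inverse is combined from the $f_v^{-1}$).

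The key point is that $F$ is bi-Lipschitz. Let $\lambda\ge 1$ be a common bi-Lipschitz constant for all the $f_v$, so that $\lambda^{-1}|x|_{S_v}\le |f_v(x)|_{T_v}\le \lambda|x|_{S_v}$ for all $v$ and all $x\in G_v$. The crucial structural fact I would invoke is that the word length of an element equals the $S$-length of any of its reduced words: if $(g_1,\dots,g_k)$ is reduced for $g$, then $|g|_{S_G}=\sum_{i=1}^k|g_i|_{S_{v_i}}$ (this is the observation recorded after the statement of Green's theorem, citing \cite[Corollary~2.5]{BR}). Applying this to both $g$ and $F(g)$, and using that $(f_{v_1}(g_1),\dots,f_{v_k}(g_k))$ is a reduced word for $F(g)$, I get
\[
|F(g)|_{T_H}=\sum_{i=1}^k |f_{v_i}(g_i)|_{T_{v_i}}
\le \lambda \sum_{i=1}^k |g_i|_{S_{v_i}}=\lambda\,|g|_{S_G},
\]
and symmetrically $|F(g)|_{T_H}\ge \lambda^{-1}|g|_{S_G}$. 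This controls $F$ on elements; to get the genuine bi-Lipschitz estimate on the metric $d_G(g,g')=|g^{-1}g'|_{S_G}$ I need to compare $F(g)^{-1}F(g')$ with $F(g^{-1}g')$. The cleanest route is to argue that the syllable-length decomposition behaves well under left multiplication: more precisely, I would reduce the two-point estimate to the one-point estimate by exploiting that the combined bijection $F$ is equivariant enough — it commutes with the relevant normal-form operations — so that $|F(g)^{-1}F(g')|_{T_H}$ and $|g^{-1}g'|_{S_G}$ are each comparable (up to multiplicative constants) to the number and sizes of the syllables in which $g$ and $g'$ differ.

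The main obstacle, and the step requiring genuine care, is precisely this passage from controlling $|F(g)|$ to controlling the pairwise distances $d_H(F(g),F(g'))$, because $F$ is not a homomorphism: in general $F(g^{-1}g')\neq F(g)^{-1}F(g')$. To handle this I would use the ``swap and cancel'' description of reduced words (two reduced words represent the same element iff related by adjacent-commuting swaps) to show that cancellation in the product $g^{-1}g'$ is mirrored, syllable by syllable, by cancellation in $F(g)^{-1}F(g')$, since the $f_v$ are bijections fixing the identity and cancellation only ever happens between syllables lying in a common vertex group. Concretely, I expect to write $g^{-1}g'$ in a form that isolates the common prefix/suffix structure and then apply the one-point length estimate above to the ``difference'' part; the bookkeeping is routine once one grants that the $f_v$ preserve which syllables are non-trivial and which pairs lie in the same vertex group. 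An alternative, perhaps slicker, formulation is to observe that both $(G,d_G)$ and $(H,d_H)$ are bi-Lipschitz to the $\ell^1$-type tree-of-spaces metric determined by the normal form, with $F$ intertwining these models with constant $\lambda$; this reduces the whole lemma to the single-element computation displayed above. Either way, the only analytic input is the uniform bi-Lipschitz constant $\lambda$ and the additivity of length over syllables, so no further hypotheses on the vertex groups are needed.
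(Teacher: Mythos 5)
Your proposal is correct and takes essentially the same approach as the paper, whose entire proof is the single sentence that, through normal forms, the bi-Lipschitz bijections $f_v$ combine to a bi-Lipschitz bijection $G\to H$ — i.e., exactly the combined bijection you construct, with your one-point length estimate filling in the paper's implicit computation. The two-point subtlety you flag is real and your mirrored-cancellation resolution works; the cleanest bookkeeping is to reduce to adjacent points on a geodesic, i.e., to bound $d_H\bigl(F(x),F(xs)\bigr)\le\lambda$ for a single generator $s\in S_v^{\pm 1}$, where the reduced word of $xs$ differs from that of $x$ only by appending, deleting, or merging one $G_v$-syllable $x_k$, and in the merging case $d_H\bigl(F(x),F(xs)\bigr)=\bigl|f_v(x_k)^{-1}f_v(x_k s)\bigr|_{T_v}\le\lambda$ by the bi-Lipschitz property of $f_v$ applied to the pair $(x_k,x_k s)$.
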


\begin{proof}
Through normal forms, a family of bi-Lipschitz bijections from $G_v$ to $H_v$ induces a bi-Lipschitz bijection from $G$ to $H$.  
\end{proof}

\begin{proof}[Proof of Theorem~\ref{theo:qi-me}(2)]
Let $v\in V\Gamma$. For every $n\in\mathbb{N}$, let $G_n$ be the graph product over $\Gamma$ whose vertex groups are the same as $G$, except that $G_v$ is replaced by $G_v\times\mathbb{Z}/n\mathbb{Z}$.

The group $G_v$ is non-amenable (it is even acylindrically hyperbolic \cite[Corollary~4.26]{MO}). Therefore, for every $n\in\mathbb{N}$, the groups $G_v$ and $G_v\times\mathbb{Z}/n\mathbb{Z}$ are bi-Lipschitz equivalent \cite{Why}, so Lemma~\ref{lemma:qi} ensures that all groups $G_n$ are bi-Lipschitz equivalent and therefore quasi-isometric.

On the other hand, since $G_v$ has no proper finite-index subgroup \cite{Hig}, when $n\neq m$ the groups $G_v\times\mathbb{Z}/n\mathbb{Z}$ and $G_v\times\mathbb{Z}/m\mathbb{Z}$ are never strongly commensurable (and they are not strongly commensurable to any $G_w$ either, as can be seen using Proposition~\ref{prop:homomorphisms-higman}). It thus follows from Theorem~\ref{theo:classification-commensurability}, \cpageref{theo:classification-commensurability}, that $G_n$ and $G_m$ are not commensurable for $n\neq m$.

Finally, the groups $G_n$ are not torsion-free, but they all admit a finite-index torsion-free subgroup. Indeed, a normal form argument shows that every torsion element of $G_n$ is $G_n$-conjugate to an element of the subgroup $\{e\}\times \mathbb{Z}/n\mathbb{Z} \subseteq G_v\times\mathbb{Z}/n\mathbb{Z}$. Let $\theta_n:G_n\to\mathbb{Z}/n\mathbb{Z}$ be the homomorphism which coincides with the second projection on the vertex group $G_v\times\mathbb{Z}/n\mathbb{Z}$, and is the identity on every vertex group $G_w$ with $w\neq v$. Then the kernel of $\theta_n$ is a torsion-free finite-index subgroup $G_n^0$ of $G_n$.

The groups $G_n^0$ are then torsion-free, and they are all quasi-isometric to $G$, but pairwise non-commensurable.
\end{proof}

%=================================================================
%=================================================================
\newpage
\part*{Appendix}
\addcontentsline{toc}{part}{Appendix}

\appendix \section{On the isomorphism problem for graph products}\label{Appendix:Isom}

We refer to Definition~\ref{de:strongly-reduced}, \cpageref{de:strongly-reduced} for the definition of a strongly reduced graph. The main goal of this appendix is to prove the following theorem, see also Theorem~\ref{theo:conjugating-automorphism-join} below for a version including the case of direct products.

\begin{Th}\label{theo:conjugating-automorphism}
    Let $G,H$ be graph products over strongly reduced finite simple graphs $\Gamma_G,\Gamma_H$, not reduced to a vertex.
    
    Then for every isomorphism $f:G\to H$ and every untransvectable vertex $v\in V\Gamma_G$, there exists an untransvectable vertex $w\in V\Gamma_H$ such that $f(G_v)$ is conjugate to $H_w$.
\end{Th}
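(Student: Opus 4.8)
The plan is to translate the groupoid-theoretic machinery of \cref{sec:strongly-reduced} into the language of groups, which is in fact the cleaner setting for an isomorphism statement. Given an isomorphism $f:G\to H$, I would observe that $f$ induces an identification of $G$ with $H$, so the single group $H$ comes equipped with two graph product structures: its ``native'' one over $\Gamma_H$, and the one over $\Gamma_G$ transported by $f$. The key is to characterize conjugates of untransvectable vertex groups by a property that is \emph{intrinsic} to the abstract group, making no reference to a particular graph product structure. This is exactly the purpose of the algebraic zoom-in process from \cref{lemma:graph}, \cpageref{lemma:graph}: an untransvectable vertex group is reached from $G$ by successively passing to maximal product parabolic subgroups (not of isolated clique type) and then to factors, ending with a clique factor that is a single vertex group. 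The central task is to recognize each of these operations---being a maximal product parabolic subgroup, being a factor of a product, being the clique factor---using only amenability, inclusion, and normalization of subgroups, precisely as Properties~$\Pprod$, $\Padm$, $\Pfact$ and $\Pvert$ do at the groupoid level.

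First I would establish the group-theoretic analogues of the recognition lemmas. The main inputs are already available in this excerpt: \cref{lemma:maximal-product-nonamenable} guarantees that a maximal product parabolic subgroup not of isolated clique type has a factor containing a nonabelian free subgroup, so one can set up the ``$A$ amenable normalized by $N$ nonamenable, $N\trianglelefteq P$'' characterization of maximal products (the group version of Lemma~\ref{lemma:maximal-product}); \cref{lemma:product-parabolic-subgroup,lemma:adjacency-extension,prop:am} handle the product/factor bookkeeping; and \cref{lemma:parabolic-untransvectable} is what lets the clique factor be pinned down to a single untransvectable vertex group. For the factor and co-factor recognitions I would reproduce the special-subproduct/appropriate-subgroup dichotomy described in the overview of \cref{subsec:OverviewOfTheProofGroupoids}, where the crucial use of strong reducedness is that a non-clique factor $F_j$ has type $\Lambda_j$ which is not collapsible, hence contains a vertex group $B$ with $N_G(B)\nsubseteq N_G(F_j)$. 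Each of these statements is strictly simpler than its groupoid counterpart because there are no base-space partitions, no ``stably'' qualifiers, and no cocycles: subgroups are just subgroups. Isomorphism invariance is automatic, since an isomorphism $f:G\to H$ preserves amenability, inclusion and normalization, so it carries any subgroup satisfying the intrinsic characterization to another such subgroup.

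Assembling these, I would prove the group-theoretic analogue of \cref{lemma:characterization-vertex-groups}: a subgroup $V\subseteq G$ is a conjugate of an untransvectable vertex group if and only if it satisfies the intrinsic Property~$\Pvert$ (infinite, realizable as the terminal term of a zoom-in chain, with the maximality and normalizer conditions $\Pvert_2$, $\Pvert_3$, $\Pvert_4$ excluding proper parabolics, clique subgroups of rank $\ge 2$, and proper subgroups). The ``only if'' direction is exactly \cref{lemma:graph} together with the recognition lemmas; the ``if'' direction follows the four-step argument of \cref{lemma:characterization-vertex-groups}, using \cref{lemma:parabolic-untransvectable} at the final step to force the clique factor down to a single untransvectable vertex group. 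Since $V$ is untransvectable-vertex-type for the $G$-structure iff it satisfies the intrinsic property iff $f(V)$ satisfies it for the $H$-structure iff $f(V)$ is a conjugate of an untransvectable vertex group of $H$, the theorem follows directly, with $w$ being the vertex such that $f(G_v)$ is conjugate to $H_w$ (and $w$ is untransvectable because the characterization detects exactly untransvectable vertices).

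The main obstacle I anticipate is faithfully carrying out the ``appropriate subgroup'' argument for factor recognition in the purely group-theoretic setting while keeping the clique factor under control, since this is where the groupoid proof genuinely needs the strongly-reduced hypothesis and where the asymmetry between clique and non-clique factors lives. Concretely, the delicate point is proving that the clique factor contains no appropriate subgroup whereas each non-clique factor does (the two bullet observations in the overview): this requires the fact that a smallest subclique containing a given $B\subseteq C_0$ with $N_G(B)\nsubseteq P\times P^{\perp}$ is itself special, and that strong reducedness yields, for genuine factors, a vertex group escaping $F_j\times F_j^{\perp}$. I would isolate this as a standalone combinatorial lemma about types in $\Gamma_G$ before plugging it into the $\Pvert$-characterization. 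Everything else---the propagation up the zoom-in chain, the exclusion of over-large parabolics via non-collapsibility, and the final descent to a single vertex via \cref{lemma:parabolic-untransvectable}---should then be a routine, if lengthy, transcription, and the version allowing $\Gamma$ to split as a join (Theorem~\ref{theo:conjugating-automorphism-join}) would be deduced exactly as \cref{prop:vertex-recognition-join} is deduced from \cref{prop:vertex-recognition-strongly-reduced}, by first recognizing co-factors via the group analogue of Property~$\Pcofact$ and then applying the irreducible case inside each factor.
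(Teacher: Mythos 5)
Your overall architecture is the right one, and it matches the paper's: find an intrinsic, isomorphism-invariant characterization of conjugates of untransvectable vertex groups via a zoom-in chain, with the special-subproduct/co-factor recognition resting on strong reducedness. But as written the proposal has a genuine gap, and it is exactly at the point where the paper's appendix \emph{departs} from a transcription of \cref{sec:strongly-reduced}. Theorem~\ref{theo:conjugating-automorphism} makes \emph{no} assumption on the vertex groups: they may be finite, amenable, or uncountable (this added generality is the whole point of the appendix, cf.\ Remark~\ref{rk:groups-groupoids}). Your recognition of maximal products is the direct transcription of Property~$\Pprod$ (``$A$ infinite amenable normalized by $N$ non-amenable, $N\unlhd P$''), justified by \cref{lemma:maximal-product-nonamenable}. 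That lemma assumes countably infinite vertex groups, and the characterization it supports is simply false in the generality of the theorem. Concretely, take $G$ the right-angled Coxeter group over a pentagon (a strongly reduced, transvection-free graph, so the theorem applies): every maximal product parabolic subgroup is conjugate to $G_{\st(v)}\cong \mathbb{Z}/2\mathbb{Z}\times D_\infty$, which is amenable, contains no non-abelian free subgroup, and hence contains no non-amenable subgroup $N$ at all. Your proposed property fails to hold for the very subgroups it is meant to detect. The paper's Property~$\Qprod$ replaces non-amenability by \emph{non-cyclicity} (a non-trivial $g\in P$ centralized by a non-cyclic $N\unlhd P$), and proving that this characterizes maximal products (Lemma~\ref{lemma:qprod}) requires a genuinely new ingredient --- the structure of centralizers in graph products (\cite[Theorem~56]{Bar}) --- not an adaptation of the amenability argument. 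The same objection hits your treatment of the join case, since Property~$\Pcofact$ is also amenability-based.

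The second gap concerns the final recognition step. You propose to transcribe $\Pvert_2$--$\Pvert_4$ and conclude via \cref{lemma:parabolic-untransvectable}. But Step~4 of \cref{lemma:characterization-vertex-groups} uses that \emph{every} vertex of $\Gamma$ is untransvectable (i.e.\ transvection-freeness), and the whole $\Pvert$ machinery inherits the asymmetry of \cref{lemma:maximal-product} (Remark~\ref{rk:prod}): it cannot by itself exclude candidates sitting inside isolated clique subgroups. Theorem~\ref{theo:conjugating-automorphism} assumes only that the \emph{single} vertex $v$ is untransvectable, so a transcription of \cref{sec:strongly-reduced} proves at best the weaker statement where both graphs are transvection-free. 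The paper resolves this by inserting \emph{thick free factors} into the zoom-in chain (Property~$\Qisol$ and the subgroups $L_j$ of \cref{lemma:graph-2}), which is precisely the tool that discards isolated vertex groups arising in the process --- see Remark~\ref{rk:groups-vs-groupoids} --- and by replacing $\Pvert_2$--$\Pvert_4$ with the parabolic-support condition $\Qvert$(5) (for every non-trivial $W\subseteq V$, $N_G(W)\subseteq N_G(V)$), which has no groupoid counterpart. Neither ingredient appears in your proposal, and without them the isolated-clique/transvectable ambiguity cannot be eliminated. You did correctly anticipate the delicacy of the special-subproduct argument for factor recognition; but the two issues above, not that one, are where a direct transcription breaks down.
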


When combined with \cite[Theorem~3.11]{GM}, Theorem~\ref{theo:conjugating-automorphism} has the following consequence to the isomorphism problem for graph products.

\begin{Cor}\label{cor:isomorphism}
Let $G,H$ be graph products over strongly reduced finite simple graphs $\Gamma_G,\Gamma_H$, not reduced to a vertex, with $\Gamma_G$ transvection-free. 

If $G$ and $H$ are isomorphic, then there exists an isomorphism $\sigma:\Gamma_G\to \Gamma_H$ such that for every $v\in V\Gamma_G$, the vertex groups $G_v$ and $H_{\sigma(v)}$ are isomorphic.
\end{Cor}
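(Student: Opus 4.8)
The plan is to derive Corollary~\ref{cor:isomorphism} by feeding the parabolic-preservation provided by Theorem~\ref{theo:conjugating-automorphism} into the reconstruction result \cite[Theorem~3.11]{GM}. The first observation is that, because $\Gamma_G$ is transvection-free, every vertex $v\in V\Gamma_G$ is untransvectable: indeed, being transvection-free means precisely that no two distinct vertices $v,w$ satisfy $\lk(v)\subseteq\st(w)$, which is exactly the defining condition for untransvectability of $v$. Hence Theorem~\ref{theo:conjugating-automorphism} applies to \emph{every} vertex of $\Gamma_G$: for the given isomorphism $f\colon G\to H$ and each $v\in V\Gamma_G$, the image $f(G_v)$ is conjugate in $H$ to a (necessarily untransvectable) vertex group $H_w$. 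I would record this as the key input, namely that $f$ carries every vertex group of $G$ to a conjugate of a vertex group of $H$.

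Second, I would extract a candidate map $\sigma\colon V\Gamma_G\to V\Gamma_H$ by setting $\sigma(v)=w$, where $H_w$ is the vertex group to a conjugate of which $f(G_v)$ is equal. This is well defined because the type of a parabolic subgroup is uniquely determined by the subgroup (Section~\ref{sec:first-facts}, via \cite[Corollary~3.8]{AM}): a conjugate of $H_w$ has type $\{w\}$, so $w$ is unique. The restriction of $f$ then gives an isomorphism $G_v\xrightarrow{\ \sim\ }H_{\sigma(v)}$, which already yields the isomorphism of vertex groups demanded in the conclusion. Injectivity of $\sigma$ follows since distinct vertex groups $G_v,G_{v'}$ have distinct types and hence are non-conjugate, so their $f$-images are non-conjugate and cannot both be conjugate to the same $H_w$. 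Moreover $\sigma$ preserves adjacency: $v,v'$ are adjacent in $\Gamma_G$ exactly when $G_v$ and $G_{v'}$ commute, which holds iff $f(G_v)$ and $f(G_{v'})$ commute; by Lemma~\ref{lemma:adjacency-extension} together with the uniqueness of types, this commutation forces $\sigma(v)$ and $\sigma(v')$ to be distinct and adjacent in $\Gamma_H$.

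Third — and this is the step I expect to be the main obstacle — I must upgrade $\sigma$ from an adjacency-preserving injection to an actual graph isomorphism $\Gamma_G\to\Gamma_H$. The difficulty is the asymmetry of the hypotheses: $\Gamma_G$ is transvection-free, but $\Gamma_H$ is only assumed strongly reduced, so a priori $\Gamma_H$ could contain transvectable vertices not hit by $\sigma$, and one cannot symmetrically invoke Theorem~\ref{theo:conjugating-automorphism} for $f^{-1}$ on all of $V\Gamma_H$. This is exactly what \cite[Theorem~3.11]{GM} is designed to resolve: once one knows that an isomorphism between graph products over strongly reduced graphs sends vertex groups to conjugates of vertex groups, it produces a graph isomorphism of the defining graphs that matches vertex groups up to isomorphism. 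Having verified its hypothesis in the first step, I would invoke it to conclude that $\sigma$ is a graph isomorphism with $G_v\cong H_{\sigma(v)}$ for every $v$. (An alternative, more self-contained route would be to first argue that isomorphism with a transvection-free $G$ forces $\Gamma_H$ to be transvection-free as well — so that Theorem~\ref{theo:conjugating-automorphism} applies to $f^{-1}$ on every vertex of $\Gamma_H$ — and then check that the resulting map on $V\Gamma_H$ is a two-sided inverse of $\sigma$; but I expect the cleanest and shortest argument to go through \cite{GM}.)
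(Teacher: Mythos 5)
Your proposal is correct and takes essentially the same route as the paper's own proof: both note that transvection-freeness of $\Gamma_G$ makes every vertex untransvectable, apply Theorem~\ref{theo:conjugating-automorphism} to conclude that $f$ sends each vertex group of $G$ to a conjugate of a vertex group of $H$, and then invoke \cite[Theorem~3.11]{GM} to obtain the graph isomorphism matching vertex groups up to isomorphism. Your intermediate hand-construction of $\sigma$ (well-definedness, injectivity, adjacency preservation) is harmless but unnecessary, since \cite[Theorem~3.11]{GM} already delivers the graph isomorphism once the vertex-group-preservation hypothesis is verified.
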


\begin{proof}
Let $f:G\to H$ be an isomorphism. Since $\Gamma_G$ is transvection-free, every vertex $v\in V\Gamma_G$ is untransvectable. Hence, Theorem~\ref{theo:conjugating-automorphism} implies that $f$ sends every vertex group $G$ to a conjugate of a vertex group of $H$. The conclusion therefore follows from \cite[Theorem~3.11]{GM}.
\end{proof}

Another consequence of Theorem~\ref{theo:conjugating-automorphism}, regarding the acylindrical hyperbolicity of $\Aut(G)$, will be derived in Section~\ref{sec:acyl-hyp}.

\begin{Rq}[Comparison with the framework of measured groupoids]\label{rk:groups-groupoids}
In the case where the defining graphs $\Gamma_G$ and $\Gamma_H$ are strongly reduced and transvection-free, Theorem~\ref{theo:conjugating-automorphism} ensures that $f$ sends every vertex group to a conjugate of a vertex group. When $G$ and $H$ are countable, this can be seen as an analogue of the Vertex Recognition Property from our measured group-theoretic framework (Definition~\ref{de:VRP}). Notice however that Theorem~\ref{theo:conjugating-automorphism} has two advantages compared to the work in Part~\ref{Part:MEClassification}, namely:
\begin{itemize}
    \item it allows for finite vertex groups, and also for uncountable vertex groups;
    \item even when the graph is not transvection-free, it enables to recognize untransvectable vertices; in this respect, this is similar to our work in Section~\ref{Sec:AmenalbeUntransvectable}, though in the latter section vertex groups were assumed to be amenable.
\end{itemize}
We will say more on the comparison between the group-theoretic and groupoid-theoretic frameworks in Remark~\ref{rk:groups-vs-groupoids}.
\end{Rq}

\begin{Rq}[Comparison with Genevois's work]\label{rk:genevois}
We now compare our assumptions in Theorem~\ref{theo:conjugating-automorphism} with those of \cite[Theorem~8.1]{Gen}. We have the extra assumption that $\Gamma_G$ and $\Gamma_H$ are strongly reduced. On the other hand, we do not assume that the vertex groups are graphically irreducible (a group $G_v$ is \emph{graphically irreducible}\index{Graphically irreducible} if whenever it decomposes as a graph product over a finite simple graph, it has to be over a clique \cite[Definition~3.2]{Gen}). 

Both assumptions (strongly reduced graphs and graphically irreducible vertex groups) cannot be removed simultaneously. In a sense, we decompose $G,H$ \emph{minimally} as non-trivial graph products, while Genevois decomposes them \emph{maximally} as graph products. 

None of the theorems implies the other. In particular, our theorem allows for infinitely generated vertex groups, which sometimes do not admit any maximal decomposition as a graph product over a finite simple graph (e.g.\ $F_\infty$). In fact, it is also unknown whether or not a finitely generated group always admits a decomposition as a graph product where every vertex group is graphically irreducible.  
\end{Rq}

\subsection{Proof of Theorem~\ref{theo:conjugating-automorphism}}

Our proof of Theorem~\ref{theo:conjugating-automorphism} will follow the one in Section~\ref{sec:strongly-reduced} quite closely, the main novelty being the use of thick free factors in Sections~\ref{sec:thick} and~\ref{sec:thick-free-factors}. It can be read independently from Section~\ref{sec:strongly-reduced}, and in fact can serve as a warm-up before reading its groupoid-theoretic version. To emphasize the parallel between this appendix and Section~\ref{sec:strongly-reduced}, the group-theoretic counterparts of Properties~$\Pprod,\Padm$, etc.\ from Section~\ref{sec:strongly-reduced} will be called $\Qprod,\Qadm$, etc.

The goal is to characterize untransvectable vertex groups by a purely group-theoretic property that will be preserved under group isomorphisms.

\subsubsection{Thick free factors}\label{sec:thick}

A \emph{free factor}\index{Free factor}\index{Factor!Free Factor} of a group $G$ is a subgroup $A\subseteq G$ such that there exists $B\subseteq G$ with $G=A\ast B$. By Bass--Serre theory, $A$ is a free factor of $G$ if and only if there exists an action of $G$ on a simplicial tree $T$ with trivial edge stabilizers, such that $A$ is equal to the stabilizer of a vertex of $T$.

Notice for future use that if $A$ is a free factor of $G$, and $P$ is a subgroup of $G$ that contains $A$, then $A$ is a free factor of $P$: indeed, letting $T$ be a $G$-tree with trivial edge stabilizers with one vertex stabilizer equal to $A$, the tree $T$ is also a $P$-tree with trivial edge stabilizers with one vertex stabilizer equal to $A$.

A group is \emph{freely indecomposable}\index{Freely indecomposable} if it does not admit any action on a simplicial tree with trivial edge stabilizers with no global fixed point. In particular, the only free factors of a freely indecomposable group $G$ are $\{1\}$ and $G$.

Let $G$ be a graph product over a finite simple graph $\Gamma$. Let $\Lambda_1,\dots,\Lambda_k$ be the connected components of $\Gamma$ that contain at least two vertices, and let $v_1,\dots,v_\ell$ be the isolated vertices of $\Gamma$. Then 
\[G=G_{\Lambda_1}\ast\dots\ast G_{\Lambda_k}\ast G_{v_1}\ast\dots\ast G_{v_\ell}.\] 
A subgroup of $G$ which is conjugate to $G_{\Lambda_j}$ for some $j\in\{1,\dots,k\}$ will be called a \emph{thick free factor}\index{Thick free factor}\index{Free factor!Thick free factor}\index{Factor!Thick free factor} of $G$. 

Notice that thick free factors of $G$ are indeed free factors of $G$, and they are freely indecomposable. Conversely, we make the following observation.

\begin{lemma}\label{lemma:freely-indecomposable}
    Let $G$ be a graph product over a finite simple graph $\Gamma$, and let $L\subseteq G$ be a freely indecomposable free factor of $G$.
    
    Then either $L$ is a thick free factor, or else $H$ is conjugate to a subgroup of $G_v$ for some isolated vertex $v\in V\Gamma$.
\end{lemma}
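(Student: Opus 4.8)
The statement asserts that a freely indecomposable free factor $L$ of a graph product $G$ is either a thick free factor (conjugate to $G_{\Lambda_j}$ for a connected component $\Lambda_j$ on at least two vertices), or else is conjugate into a vertex group $G_v$ for an isolated vertex. The natural tool is Bass--Serre theory applied to the free product decomposition
\[
G=G_{\Lambda_1}\ast\dots\ast G_{\Lambda_k}\ast G_{v_1}\ast\dots\ast G_{v_\ell},
\]
where $\Lambda_1,\dots,\Lambda_k$ are the connected components with at least two vertices and $v_1,\dots,v_\ell$ are the isolated vertices. Let $T$ be the Bass--Serre tree of this splitting: it is a $G$-tree with trivial edge stabilizers whose vertex stabilizers are exactly the conjugates of the factors $G_{\Lambda_j}$ and $G_{v_i}$ (together with the trivial stabilizers of the ``free'' vertices when the decomposition has a free part). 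First I would use the freely indecomposability of $L$ to argue that $L$ is elliptic in $T$, i.e.\ fixes a vertex; this is where the freely indecomposable hypothesis is essential, since a subgroup acting on a tree with trivial edge stabilizers either fixes a point or else splits nontrivially as a free product (or has a free quotient), contradicting freely indecomposability.

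\textbf{Key steps.} The argument decomposes as follows. First, I would invoke the standard fact that a freely indecomposable subgroup acting on a simplicial tree with trivial edge stabilizers must be elliptic: if $L$ acted without a global fixed point on $T$, then restricting the action of $L$ to its minimal invariant subtree would exhibit $L$ as the fundamental group of a graph of groups with trivial edge groups, hence as a nontrivial free product or an HNN extension over the trivial group, contradicting freely indecomposability. (When $L$ is elliptic, it is contained in a single vertex stabilizer.) Second, having placed $L$ inside a conjugate of some factor, I would distinguish cases: if $L$ lies in a conjugate of $G_{\Lambda_j}$ with $\Lambda_j$ of size at least two, then I must upgrade ``$L$ is contained in a thick free factor'' to ``$L$ \emph{equals} that thick free factor.'' For this I would use that $L$ is not merely a subgroup but a free \emph{factor} of $G$: since $G_{\Lambda_j}$ is itself a free factor of $G$ and $L\subseteq gG_{\Lambda_j}g^{-1}$, the subgroup $L$ is a free factor of $gG_{\Lambda_j}g^{-1}$ (by the observation recalled in the excerpt, that a free factor of $G$ contained in a subgroup $P\supseteq L$ is a free factor of $P$, applied after conjugation). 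But $gG_{\Lambda_j}g^{-1}\cong G_{\Lambda_j}$ is freely indecomposable (its defining graph $\Lambda_j$ is connected), so its only free factors are $\{1\}$ and itself; as $L\neq\{1\}$ this forces $L=gG_{\Lambda_j}g^{-1}$, a thick free factor. Third, if instead $L$ lies in a conjugate of an isolated vertex group $G_{v_i}$, we are already in the second alternative of the conclusion, and nothing further is required.

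\textbf{Main obstacle.} The delicate point is the upgrade in the second case from containment in a thick free factor to equality with it, and in particular verifying cleanly that a nontrivial free factor of $G$ that happens to sit inside a freely indecomposable free factor must coincide with it. The subtlety is that one must apply the ``free factor of a free factor'' transitivity correctly under conjugation, and one must know that $G_{\Lambda_j}$ is freely indecomposable precisely because $\Lambda_j$ is connected (so that $G_{\Lambda_j}$ admits no nontrivial free product decomposition coming from a disconnection of $\Lambda_j$, and no other free splitting either since vertex groups are nontrivial and connected graph products do not split as free products nontrivially). I would take care that $L$ being a free factor of $G$ is used in its full strength here, not merely that $L$ is a subgroup; a freely indecomposable \emph{subgroup} of a thick free factor need not equal it, so the free-factor hypothesis is doing real work. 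Once this is settled the remaining verifications are routine Bass--Serre bookkeeping.
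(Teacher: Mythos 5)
Your proposal is correct and follows essentially the same route as the paper: the same free product decomposition into thick factors and isolated vertex groups, the same Bass--Serre tree with trivial edge stabilizers, ellipticity of $L$, and the same ``free factor of a free factor'' transitivity to force $L=gG_{\Lambda_j}g^{-1}$ in the thick case. The only (harmless) difference is that you re-derive ellipticity via a minimal-subtree argument, whereas under the paper's definition of freely indecomposable (no fixed-point-free action on a tree with trivial edge stabilizers) this step is immediate.
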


\begin{proof}
Write $G=G_{\Lambda_1}\ast\dots\ast G_{\Lambda_k}\ast G_{v_1}\ast\dots\ast G_{v_\ell}$, where the $G_{\Lambda_j}$ are thick, and each $v_j$ is an isolated vertex. Let $T$ be the Bass--Serre tree of this decomposition of $G$ as a free product: this is a $G$-tree with trivial edge stabilizers.

Being freely indecomposable, the group $H$ fixes a vertex of $T$. So
\begin{itemize}
\item either $L$ is conjugate to a subgroup of $G_{v_j}$ for some isolated vertex $v_j$ (and we are done), 
\item or else $L$ is non-trivial and conjugate to a subgroup of a thick free factor $G_{\Lambda_j}$ of $G$.
\end{itemize}
We assume that the latter case holds, and without loss of generality that $H\subseteq G_{\Lambda_j}$. Since $L$ is a free factor of $G$, it is also a free factor of $G_{\Lambda_j}$. And since $G_{\Lambda_j}$ is freely indecomposable and $L$ is non-trivial, we have $L=G_{\Lambda_j}$. 
\end{proof}

\subsubsection{A combinatorial lemma}

Our starting point for proving Theorem~\ref{theo:conjugating-automorphism} is the following variation over Lemma~\ref{lemma:graph}, \cpageref{lemma:graph}.

\begin{lemma}\label{lemma:graph-2}
    Let $\Gamma$ be a strongly reduced finite simple graph not reduced to one vertex, and let $G$ be a graph product over $\Gamma$. Let $B\subseteq G$ be a parabolic subgroup. The following two assertions are equivalent. 
    \begin{enumerate}
     \item $B$ is conjugate to $G_v$ for some untransvectable vertex $v\in V\Gamma$.
     \item There exist $n\geq 1$ and a chain of parabolic subgroups
    % ---
    \[G=F_0\supseteq L_1\supseteq P_1\supseteq F_1\supseteq\dots\supseteq L_n\supseteq P_n\supseteq F_n=B\]
    % ---
    such that
    \begin{enumerate}
    \item for every $j\in\{1,\dots,n\}$, $L_j$ is a thick free factor of $F_{j-1}$ whose type is not a clique,
        \item for every $j\in\{1,\dots,n\}$, $P_j$ is a maximal product parabolic subgroup of $L_{j}$,
        \item for every $j\in\{1,\dots,n-1\}$, the clique factor of $P_j$ is trivial, and $F_j$ is a factor of $P_j$, 
        \item $F_n$ is the clique factor of $P_n$,
        \item for every non-trivial subgroup $W\subseteq F_n$, one has $N_G(W)\subseteq N_G(F_n)$.
    \end{enumerate}
    \end{enumerate}
\end{lemma}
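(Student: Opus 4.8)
The plan is to mimic closely the proof of Lemma~\ref{lemma:graph}, \cpageref{lemma:graph}, while inserting at each stage the passage to a thick free factor before extracting the maximal product. The main new ingredient is that we can no longer assume the ambient graph is connected at each step: after restricting to a factor $F_j$, its type may become disconnected, and we must first pass to a thick free factor (a conjugate of the connected-component subgroup $G_{\Lambda}$) in order to be in a position to apply the zoom-in argument. Accordingly, each triple $(L_j, P_j, F_j)$ will be constructed by first choosing the thick free factor $L_j$ of $F_{j-1}$ containing (a conjugate of) $G_v$, then running exactly the step from Lemma~\ref{lemma:graph} inside $L_j$, whose type is now connected (and not a clique, hence carries a genuine product).

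For the implication $(1)\Rightarrow(2)$, I would argue by induction, maintaining the invariant that $v$ is an untransvectable vertex of the current factor's type. Starting from $F_0 = G$, given $F_{j-1} = G_{\Upsilon_{j-1}}$ with $v$ untransvectable in $\Upsilon_{j-1}$ and $F_{j-1} \neq G_v$, I first take $L_j$ to be the thick free factor of $F_{j-1}$ containing $G_v$: since $v$ is not isolated in $\Upsilon_{j-1}$ (an untransvectable non-full vertex cannot be isolated, as isolation would make it a free factor with no product to zoom into, forcing $F_{j-1}=G_v$), the connected component of $v$ in $\Upsilon_{j-1}$ has at least two vertices, so $L_j$ is genuinely thick and its type is not a clique unless it is a single edge's worth of a product. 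Then, exactly as in Lemma~\ref{lemma:graph}, I choose a maximal product parabolic $P_j$ of $L_j$ containing $G_{\st(v)}\cap L_j$; untransvectability of $v$ in the type of $L_j$ guarantees (as in the original proof) that either $G_v$ is the clique factor of $P_j$ (and we stop with $F_n := G_v$, checking condition (e) via Lemma~\ref{lemma:parabolic-untransvectable}, \cpageref{lemma:parabolic-untransvectable}) or $P_j$ has trivial clique factor and we set $F_j$ to be the factor of $P_j$ containing $G_v$, verifying that $v$ remains untransvectable in its type by the same link-computation as in Lemma~\ref{lemma:graph}. The extra condition (e), that every non-trivial subgroup of $F_n=G_v$ has normalizer inside $N_G(G_v) = G_v \times G_v^{\perp}$, follows because $v$ is untransvectable: a conjugate-of-vertex-group parabolic normalized appropriately must coincide with $G_v$, which is precisely the content I would extract from Lemma~\ref{lemma:parabolic-untransvectable}.

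For the converse $(2)\Rightarrow(1)$, I would show that any chain satisfying (a)--(e) forces $F_n$ to be a conjugate of $G_v$ for an untransvectable $v$. Up to global conjugation I realize all subgroups as standard parabolics $G_{\Upsilon_j}, G_{\Lambda_j}$ and run the same tracking argument as in the second half of Lemma~\ref{lemma:graph}: for any vertex $w$ with $\lk(v)\subseteq\st(w)$ (where $v$ is a vertex of the clique factor $F_n$), I prove inductively that $w$ lies in each $V\Upsilon_j$ and $V\Lambda_j$, using maximality of $P_j$ and triviality of its clique factor to exclude $w$ from the wrong join-factors, exactly as before; the passage through $L_j$ is harmless since $w$, lying below $v$ in the link order, stays in the thick component. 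The new condition (e) is what pins down that $F_n$ is a single vertex group rather than a larger clique subgroup: if $F_n=G_{\Upsilon}$ with $|\Upsilon|\geq 2$, then for $w\in V\Upsilon$ the subgroup $W=G_w$ would satisfy $N_G(W)=G_w\times G_w^{\perp}\nsubseteq G_{\Upsilon}\times G_{\Upsilon}^{\perp}=N_G(F_n)$ whenever $w$ is linked outside $\st$ of another vertex of $\Upsilon$, contradicting (e) — this is where clique-reducedness (strong reducedness) of $\Gamma$ is used to produce such a $w$. The hardest part, as in the original, is the careful bookkeeping in the converse direction to ensure $w=v$; I expect the interplay between the thick-free-factor step and the link inclusions to require the most attention, since one must check that passing to $L_j$ does not lose the vertex $w$ being tracked, which I would handle by noting that $L_j$ and $F_{j-1}$ share the same link of $v$ in the relevant thick component.
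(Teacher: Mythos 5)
Your overall route differs from the paper's, which treats Lemma~\ref{lemma:graph} as a black box in both directions instead of re-running its induction. For $(1)\Rightarrow(2)$ the paper takes the chain $G=F_0\supseteq P_1\supseteq F_1\supseteq\dots\supseteq P_n\supseteq F_n=G_v$ already furnished by Lemma~\ref{lemma:graph} and merely inserts $L_j$ equal to the thick free factor of $F_{j-1}$ containing $P_j$ (a maximal product parabolic subgroup which is not of isolated clique type automatically lies in one, its type is not a clique, and $P_j$ remains maximal inside $L_j$). For $(2)\Rightarrow(1)$ it first uses (e) together with strong reducedness to deduce that $F_n$ is conjugate to a vertex group, then verifies the hypotheses of Lemma~\ref{lemma:graph} for the chain with the $L_j$'s deleted --- the key point being that maximality of $P_j$ in $L_j$ upgrades to maximality in $F_{j-1}$, because any product parabolic subgroup of $F_{j-1}$ containing $P_j$ has connected type (a join of non-empty graphs is connected) and is therefore contained in $L_j$ --- and then quotes Lemma~\ref{lemma:graph} to get untransvectability. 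Your plan of redoing the induction and the tracking argument can be made to work, and your use of (e) plus non-collapsibility in the converse to pin $F_n$ down to a single vertex is exactly the paper's; but two points are left hanging: the maximality upgrade just described, which you only gesture at (``the passage through $L_j$ is harmless''), and, in the forward direction, the possibility that the component of $v$ is a clique, which you flag (``unless it is a single edge's worth of a product'') but never exclude --- it is excluded because any second vertex of a clique component would give a transvection of $v$ inside the current type, contradicting the inductive invariant.

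The genuine gap is your verification of condition (e) in $(1)\Rightarrow(2)$. You assert that it ``follows because $v$ is untransvectable'' via Lemma~\ref{lemma:parabolic-untransvectable}, but that lemma concerns a parabolic subgroup $C$ conjugate to a clique subgroup satisfying $G_v\times G_v^{\perp}\subseteq C\times C^{\perp}$; condition (e) is a statement about $N_G(W)$ for an \emph{arbitrary} non-trivial subgroup $W\subseteq F_n$, which need not be parabolic, and your sketch never explains why such an $N_G(W)$ normalizes any parabolic subgroup in the first place. The correct (and much simpler) argument, which is the one the paper gives, is via parabolic supports: the only non-trivial parabolic subgroup of $G$ contained in $G_v$ is $G_v$ itself, so $G_v$ is the parabolic support of every non-trivial $W\subseteq G_v$, and hence $N_G(W)\subseteq N_G(\tilde W)=N_G(G_v)$. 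In particular condition (e) holds for \emph{every} vertex group, untransvectable or not; untransvectability is not what makes (e) true --- its role is to make the rest of the chain exist (via Lemma~\ref{lemma:graph}) in the forward direction, and it is a \emph{conclusion}, not an ingredient, in the converse.
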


In our proof, we will use the following fact: every subgroup $A\subseteq G$ is contained in a unique smallest parabolic subgroup $\tilde{A}$, called the \emph{parabolic support}\index{Parabolic!Parabolic support} of $A$ (it is also the intersection of all parabolic subgroups that contain $A$). And one has $N_G(A)\subseteq N_G(\tilde{A})$. 

Remark for future use, that it implies that if $N$ is a subgroup of $G$ that normalizes $A$, then $\tilde{N}$ normalizes $\tilde{A}$.

\begin{proof}
We first prove that $(1)\Rightarrow (2)$.
\smallskip

Without loss of generality we will assume that $B=G_v$, with $v$ untransvectable. 
Consider subgroups $F_j,P_j$ given by Lemma~\ref{lemma:graph}. In particular they satisfy Assertions~(c) and~(d). Since $P_j$ is a maximal product parabolic subgroup of $F_{j-1}$, it is contained in a thick free factor $L_j$ of $F_{j-1}$. And $P_j$ is a maximal product parabolic subgroup of $L_j$. Additionally, since $P_j$ is not of isolated clique type, the type of $L_j$ is not a clique. So Assertions~(a) and~(b) are also satisfied. Finally, Assertion~(e) follows from the fact that $F_n=G_v$ is the parabolic support of any non-trivial subgroup $W\subseteq F_n$.
\medskip

We now prove that $(2)\Rightarrow (1)$.
\smallskip

Assume that we have subgroups $F_j,L_j,P_j$ as in the statement. We will prove that
\begin{itemize}
    \item the subgroups $F_j$ and $P_j$ satisfy the three conditions from Lemma~\ref{lemma:graph}, and
    \item $F_n=B$ is conjugate to a vertex group $G_v$. 
\end{itemize}
Once these two facts are proved, the untransvectability of $v$ follows from Lemma~\ref{lemma:graph}. 

We first prove the second fact. Assertion~(d) ensures that $F_n$ is conjugate to $G_{\Upsilon}$, for some complete subgraph $\Upsilon\subseteq\Gamma$. If $|V\Upsilon|\ge 2$, the fact that $\Gamma$ is strongly reduced ensures that $\Upsilon$ is not collapsible, so there exists a vertex $w\in V\Upsilon$ such that $N_G(G_w)\nsubseteq N_G(G_\Upsilon)$. This contradicts Assertion~(e). Therefore $|V\Upsilon|=1$, in other words $F_n$ is conjugate to a vertex group.

We now prove the first fact. The second and third conditions from Lemma~\ref{lemma:graph} are exactly Assertions~(c) and~(d). We focus on the first condition, so let $j\in \{1,\dots,n\}$. Assertions~(a) and~(b) ensure that $P_j$ is a product parabolic subgroup of $F_{j-1}$. We observe that as such, it is maximal. Indeed, if $P_j\subseteq P'$ with $P'\subseteq F_{j-1}$ a product parabolic subgroup, then the type of $P'$ is a connected subgraph of the type of $F_{j-1}$, and therefore $P'$ must be contained in the free factor $L_j$. There remains to prove that $P_j$ is not of isolated clique type. If $j\in\{1,\dots,n-1\}$, this follows from the fact that the clique factor of $P_j$ is trivial. If $j=n$, since $L_n$ is a thick free factor, $P_n$ is not conjugate to a vertex group. And since the clique factor of $P_n$ is $F_n$, which is conjugate to a vertex group (by the previous paragraph), the type of $P_n$ cannot be a clique on at least $2$ vertices either. So $P_n$ is not of isolated clique type. 
\end{proof}

In view of Lemma~\ref{lemma:graph-2}, we are left with giving a group-theoretic characterization of thick free factors, maximal products and their factors and clique factors.

\begin{Rq}[More on the comparison with the groupoid framework]\label{rk:groups-vs-groupoids}
As a follow-up to Remark~\ref{rk:groups-groupoids}, let us continue our comparison between the group-theoretic and groupoid-theoretic frameworks, and explain why we reach a stronger conclusion in this appendix than in Sections~\ref{sec:strongly-reduced} and~\ref{Sec:AmenalbeUntransvectable} (namely, a characterization of all conjugates of untransvectable vertex groups, without assuming $\Gamma$ to be transvection-free, and with no assumption on the vertex groups).

In the zoom-in process described in Lemmas~\ref{lemma:graph} and~\ref{lemma:graph-2}, it can happen that the type $\Upsilon_j$ of one of the factors $F_j$ is disconnected. A delicate situation for us is when one of the connected components of $\Upsilon_j$ is a clique (possibly just one vertex). In Section~\ref{sec:strongly-reduced}, when this happens, Property~$\Pprod$ did not allow us to recognize maximal products in $F_j$ from subgroup(oid)s that are contained in an isolated clique factor. This is why in Step~4 of the proof of Lemma~\ref{lemma:characterization-vertex-groups} (the key lemma that characterizes subgroupoids of vertex type), we had to use an extra argument, which crucially relied on the fact that $\Gamma$ was transvection-free, to distinguish actual subgroupoids of vertex type from those that are only contained in a subgroupoid of clique type.

In Section~\ref{Sec:AmenalbeUntransvectable}, the situation was slightly different: since we were assuming that all vertex groups were amenable, the isolated clique subgroups were always amenable, which enabled us to easily distinguish them from the other maximal product subgroups.

In this appendix, the notion of a thick free factor is the crucial tool to discard isolated vertex groups that arise in the process. This is why it is important for us to introduce the groups $L_j$ in Lemma~\ref{lemma:graph-2}, that we were not using in Part~\ref{Part:MEClassification} of this work.
\end{Rq}

\subsubsection{Maximal products are preserved}

The following lemma is a variation on Lemma~\ref{lemma:maximal-product-nonamenable}, \cpageref{lemma:maximal-product-nonamenable}.

\begin{lemma}\label{lemma:product}
    Let $G$ be a non-abelian graph product over a connected finite simple graph $\Gamma_G$ which is not reduced to a vertex.

    If $P_1,P_2$ are parabolic subgroups such that $P=P_1\times P_2$ is a maximal product parabolic subgroup, then either $P_1$ or $P_2$ is non-cyclic.  
\end{lemma}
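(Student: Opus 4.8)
The plan is to argue by contradiction: I will suppose that both $P_1$ and $P_2$ are cyclic and contradict the maximality of $P$ by producing a strictly larger product parabolic subgroup. First I would normalize the setup. Recalling from Section~\ref{Subsec:ParabolicSubgroup} that the type of a product parabolic subgroup splits as a join of the types of its factors, I can write $P=gG_\Lambda g^{-1}$ with $\Lambda=\Lambda_1\circ\Lambda_2$, and $P_i=gG_{\Lambda_i}g^{-1}$, where $\Lambda_i$ is the type of $P_i$. Up to the global conjugation by $g$, I may assume $P=G_\Lambda$ and $P_i=G_{\Lambda_i}$.

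The first key step is to show that cyclicity of $P_1$ and $P_2$ forces $\Lambda$ to be a clique. Indeed, if $\Lambda_1$ contained two non-adjacent vertices $a,b$, then the standard parabolic subgroup $G_{\{a,b\}}=G_a\ast G_b$ would be a free-product subgroup of $G_{\Lambda_1}$; since the vertex groups are non-trivial, $G_a\ast G_b$ is non-abelian, hence non-cyclic, contradicting that $G_{\Lambda_1}$ is cyclic. Therefore $\Lambda_1$, and likewise $\Lambda_2$, is a clique, and then $\Lambda=\Lambda_1\circ\Lambda_2$ is a clique as well.

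Next I would rule out the possibility $\Lambda=\Gamma_G$: in that case $G=G_\Lambda=P_1\times P_2$ would be a direct product of two cyclic (hence abelian) groups, contradicting the assumption that $G$ is non-abelian. So $\Lambda$ is a proper subgraph of the connected graph $\Gamma_G$, and connectedness provides an edge crossing the boundary of $\Lambda$, i.e.\ a vertex $v\in V\Lambda$ together with a vertex $v'\in\lk(v)\setminus V\Lambda$ (take any path from a vertex of $\Lambda$ to a vertex outside it, and look at its first edge leaving $\Lambda$). Since $\Lambda$ is a clique containing $v$, every vertex of $\Lambda$ lies in $\st(v)=\{v\}\circ\lk(v)$, whence $G_\Lambda\subseteq G_{\st(v)}=G_v\times G_v^{\perp}$. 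As $v'\in\lk(v)$, the link $\lk(v)$ is non-empty, so $G_{\st(v)}$ is genuinely a direct product of two non-trivial parabolic subgroups, i.e.\ a product parabolic subgroup; and since $v'\in V\st(v)\setminus V\Lambda$, the inclusion $G_\Lambda\subseteq G_{\st(v)}$ is strict. This contradicts the maximality of $P=G_\Lambda$, completing the argument.

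I do not expect a serious obstacle, as the statement is a close variant of Lemma~\ref{lemma:maximal-product-nonamenable}. The only genuinely new point is the passage from ``$\Lambda_i$ not a clique'' to non-cyclicity, which is handled by the elementary free-product observation above; the main thing to be careful about is the correct extraction of the boundary vertices $v,v'$ from connectedness and the verification that $G_{\st(v)}$ strictly contains $G_\Lambda$, both of which are routine.
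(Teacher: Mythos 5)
Your proof is correct and relies on exactly the same two ingredients as the paper's: the observation that two non-adjacent vertices in the type of a cyclic factor would yield a non-abelian free product $G_a\ast G_b$ (so cyclicity forces the types, hence $\Lambda$, to be cliques), and the use of connectedness to produce the product parabolic subgroup $G_{\st(v)}=G_v\times G_{\lk(v)}$ contradicting maximality. The only difference is organizational: by arguing by contradiction you absorb the case where $\Gamma_G$ is a clique (handled separately, via non-abelianness of $G$, in the paper) into the single step ruling out $\Lambda=\Gamma_G$.
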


 \begin{proof}
 We first treat the case where $\Gamma_G$ is a clique. Since $G$ is non-abelian, either $P_1$ or $P_2$ is non-cyclic.
 
 We now assume that $\Gamma_G$ is not a clique. Let $\Lambda$ be the type of $P$. Without loss of generality, we will assume that $P=G_{\Lambda}$, and that $\Lambda=\Lambda_1\circ\Lambda_2$, with $P_1=G_{\Lambda_1}$ and $P_2=G_{\Lambda_2}$.
 
 We first prove that $\Lambda$ is not a clique. 
 \begin{itemize}
 \item If $\Lambda=\Gamma_G$, by assumption its type is not a clique. 
 \item Otherwise, since $\Gamma_G$ is connected, we can find two adjacent vertices $v,w$, with $v\in V\Lambda$ and $w\notin V\Lambda$. Then $G_{\st(v)}$ is a parabolic subgroup of product type, which is different from $P$ because it contains $G_w$. By maximality of $P$, we have $P\nsubseteq G_{\st(v)}$, which implies that $\Lambda\nsubseteq\st(v)$. So $\Lambda$ is not a clique.
\end{itemize}
 
 We can therefore assume without loss of generality that the type $\Lambda_1$ of $P_1$ contains two non-adjacent vertices $v,w$. Then $P_1$ contains a subgroup isomorphic to $G_v*G_w$, and is therefore non-cyclic.
 \end{proof}

The following definition can be seen as a group theoretic analogue of Property~$\Pprod$ (see \cref{de:pprod}, \cpageref{de:pprod}).

\begin{Def}[Property~$\Qprod$]\index{Property!$\Qprod$}\label{de:qprod}
Let $G$ be a group, and $P\subseteq G$ be a subgroup. We say that the pair $(G,P)$ satisfies Property~$\Qprod$ if it verifies the following assertions.
\begin{description}
    \item[$\Qprod_1$] There exists a non-trivial element $g\in P$, and a non-cyclic subgroup $N\subseteq P$, such that $N$ centralizes $g$ and is normal in $P$. 
    \item[$\Qprod_2$] $P$ is maximal for inclusion among all subgroups $P'\subseteq G$ that satisfy Property~$\Qprod_1$. 
\end{description}
\end{Def}

\begin{Rq}\label{rk:iso}
This property is stable under group isomorphisms. That is, if $f:G\to H$ is an isomorphism between two groups, and if $P\subseteq G$ is a subgroup such that $(G,P)$ satisfies Property~$\Qprod$, then $(H,f(P))$ satisfies Property~$\Qprod$. All properties considered in the present section will be stable under group isomorphisms.
\end{Rq}

The following lemma is a group theoretic analogue of \cref{lemma:maximal-product}, \cpageref{lemma:maximal-product}. 

\begin{lemma}\label{lemma:qprod}
Let $G$ be a non-abelian graph product over a connected finite simple graph $\Gamma_G$ which is not reduced to a vertex. Let $P\subseteq G$ be a subgroup. 

Then $P$ is a maximal product parabolic subgroup if and only if $(G,P)$ satisfies Property~$\Qprod$.
\end{lemma}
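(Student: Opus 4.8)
The plan is to prove both implications, following the blueprint of the groupoid-theoretic Lemma~\ref{lemma:maximal-product} but in the simpler group-theoretic setting. Since Property~$\Qprod$ is stable under isomorphisms (Remark~\ref{rk:iso}), establishing this characterization is exactly what we need to transport maximal product parabolic subgroups along isomorphisms. Throughout I will freely use the basic facts on parabolic subgroups recalled in Section~\ref{Subsec:ParabolicSubgroup}, in particular Proposition~\ref{prop:am}, and the parabolic support construction introduced in the proof of Lemma~\ref{lemma:graph-2}.

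First I would prove the forward implication: if $P$ is a maximal product parabolic subgroup, then $(G,P)$ satisfies $\Qprod$. Write $P=P_1\times P_2$ as a product of two non-trivial parabolic subgroups. By Lemma~\ref{lemma:product}, one of the factors, say $P_2$, is non-cyclic. Picking any non-trivial $g\in P_1$ and setting $N=P_2$, the subgroup $N$ is non-cyclic, centralizes $g$ (since $P_1$ and $P_2$ commute), and is normal in $P=P_1\times P_2$. Hence $\Qprod_1$ holds. For $\Qprod_2$, suppose $P\subseteq P'$ with $(G,P')$ satisfying $\Qprod_1$; I need $P'=P$. The key point is that a subgroup $P'$ satisfying $\Qprod_1$ is forced to be a product parabolic subgroup: taking the data $g,N$ for $P'$, pass to parabolic supports. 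The support $\widetilde{N}$ is non-cyclic (whence a proper infinite parabolic subgroup), is normalized by $P'$ hence by $\widetilde{P'}$, and centralizes $g$ so $\widetilde{N}\subseteq \widetilde{\langle g\rangle}^{\perp}$. Using Proposition~\ref{prop:am}(1) and the splitting of normalizers as $\widetilde{N}\times\widetilde{N}^{\perp}$, one deduces that $\widetilde{P'}$ (which contains both $\widetilde{\langle g\rangle}$ and $\widetilde{N}$ with $\widetilde{N}\subseteq\widetilde{\langle g\rangle}^\perp$) splits as a non-trivial product. Since $P\subseteq P'\subseteq\widetilde{P'}$ and $P$ is a maximal product parabolic subgroup, maximality gives $P=\widetilde{P'}$, so $P'=P$.

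Next I would prove the converse: if $(G,P)$ satisfies $\Qprod$, then $P$ is a maximal product parabolic subgroup. The main work is to show that any $P$ satisfying $\Qprod_1$ is contained in a product parabolic subgroup. Let $g,N$ be the data. As above, form the parabolic supports: $\widetilde{N}$ is an infinite (non-cyclic) parabolic subgroup normalized by $\widetilde{P}$, and centralizing $g$ means $\widetilde{N}\subseteq (\widetilde{\langle g\rangle})^{\perp}$, equivalently their types are orthogonal in $\Gamma_G$. Let $Q:=\widetilde{N}\times\widetilde{N}^{\perp}$, which is a parabolic subgroup (the normalizer of $\widetilde{N}$, by Proposition~\ref{prop:am}). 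Since $\widetilde{N}$ is non-cyclic its type has at least two vertices, so $Q$ splits non-trivially as a product; and since $P$ normalizes $N$, the support $\widetilde{P}$ normalizes $\widetilde{N}$, giving $\widetilde{P}\subseteq Q$, whence $P\subseteq Q$. Thus every $P$ satisfying $\Qprod_1$ lies in a product parabolic subgroup $Q$, and $Q$ itself satisfies $\Qprod_1$ (it is a product). The maximality clause $\Qprod_2$ then forces $P=Q$, so $P$ is a product parabolic subgroup, and it is maximal among product parabolic subgroups precisely because any strictly larger product parabolic subgroup would contradict $\Qprod_2$ (every product parabolic subgroup satisfies $\Qprod_1$ by Lemma~\ref{lemma:product}).

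The main obstacle I anticipate is the careful passage to parabolic supports and the bookkeeping of orthogonality of types in $\Gamma_G$. The statement of $\Qprod_1$ is phrased with an arbitrary element $g$ and an arbitrary non-cyclic subgroup $N$, not with parabolic subgroups, so the crux is to show that the supports $\widetilde{\langle g\rangle}$ and $\widetilde{N}$ behave well: that normalization and centralization at the level of subgroups correctly descend to the desired product structure and orthogonality at the level of supports. This requires invoking Proposition~\ref{prop:am} (both the intersection statement and the description of normalizers as $P\times P^{\perp}$) and the fact that $N_G(A)\subseteq N_G(\widetilde{A})$. I would also need to handle the degenerate clique case separately, exactly as in Lemma~\ref{lemma:product}, to ensure non-cyclicity of a factor is available. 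Once these support arguments are in place, the two implications assemble cleanly.
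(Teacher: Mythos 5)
Your forward direction is essentially the paper's: for a maximal product parabolic subgroup $P=P_1\times P_2$, Lemma~\ref{lemma:product} provides a non-cyclic factor, and taking $N$ to be that factor and $g$ a non-trivial element of the other factor verifies $\Qprod_1$. Moreover, your treatment of $\Qprod_2$ and of the converse correctly reduces both, exactly as in the paper, to a single containment statement: \emph{every} subgroup satisfying $\Qprod_1$ is contained in a product parabolic subgroup. The problem is that your argument for this containment statement — which is the heart of the proof — does not work.

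The load-bearing claim in your support argument, namely that ``$N$ centralizes $g$, so $\widetilde{N}\subseteq\widetilde{\langle g\rangle}^{\perp}$'', is false. Centralizing $g$ only gives $N\subseteq Z_G(g)\subseteq N_G\bigl(\widetilde{\langle g\rangle}\bigr)=\widetilde{\langle g\rangle}\times\widetilde{\langle g\rangle}^{\perp}$, hence $\widetilde{N}\subseteq\widetilde{\langle g\rangle}\times\widetilde{\langle g\rangle}^{\perp}$; for instance if $v,w$ are adjacent vertices with cyclic vertex groups, $g$ generates $G_v$ and $N=G_v\times G_w$, then $\widetilde{N}=G_{\{v,w\}}\nsubseteq G_{\lk(v)}=\widetilde{\langle g\rangle}^{\perp}$. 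Your conclusion that $\widetilde{N}^{\perp}\neq\{1\}$ (and hence that $Q=\widetilde{N}\times\widetilde{N}^{\perp}$ is a genuine product containing $P$) rests entirely on this false orthogonality, and the case $\widetilde{N}^{\perp}=\{1\}$ is precisely the hard case, which your proposal has no mechanism to handle. (A secondary error: ``$\widetilde{N}$ non-cyclic, whence its type has at least two vertices'' fails here because vertex groups are arbitrary — $N$ could be a non-cyclic subgroup of a single non-cyclic vertex group.) The paper's Step~2 deals with the degenerate case as follows: when $\widetilde{N}^{\perp}=\{1\}$ one has $P\subseteq\widetilde{N}\subseteq\widetilde{A}\times\widetilde{A}^{\perp}$, where $\widetilde{A}$ is the support of $g$, and one runs a trichotomy on $\widetilde{A}$: if $\widetilde{A}$ is conjugate to a vertex group then $\widetilde{A}^{\perp}\neq\{1\}$ because $\Gamma_G$ is connected and not reduced to a vertex (this is where the connectedness hypothesis is actually used, and your proposal never invokes it at this point); if $\widetilde{A}$ splits as a product one is done; otherwise one invokes the centralizer theorem \cite[Theorem~56]{Bar}, which gives $Z_G(g)=\langle g\rangle\times\widetilde{A}^{\perp}$, so non-cyclicity of $N\subseteq Z_G(g)$ forces $\widetilde{A}^{\perp}\neq\{1\}$. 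Without this case analysis — in particular the external input from \cite{Bar} — the containment statement, and with it both $\Qprod_2$ and the converse implication, remains unproved. Finally, a smaller slip: your parenthetical ``every product parabolic subgroup satisfies $\Qprod_1$ by Lemma~\ref{lemma:product}'' is inaccurate, since that lemma concerns \emph{maximal} product parabolic subgroups; a non-maximal product such as $\mathbb{Z}/2\mathbb{Z}\times\mathbb{Z}/3\mathbb{Z}$ is cyclic and satisfies no version of $\Qprod_1$. The paper avoids this by first enlarging to a maximal product parabolic subgroup and only then applying its Step~1.
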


\begin{proof}
\textbf{Step 1.} Let us first prove that if $P$ is a maximal product parabolic subgroup, then $(G,P)$ satisfies Property~$\Qprod_1$.
\smallskip

Write $P=P_1\times P_2$ as a product of two parabolic subgroups, with $P_1$ non-cyclic (see Lemma~\ref{lemma:product}). Then Property~$\Qprod_1$ is satisfied by letting $g\in P_2$ be any non-trivial element, and letting $N=P_1$. 

\medskip

\noindent\textbf{Step 2.} Let us now prove that if $(G,P)$ satisfies Property~$\Qprod_1$, then $P$ is contained in a product parabolic subgroup.
\smallskip

So let $g\in P$ and $N\subseteq P$ be as in Property~$\Qprod_1$. Let $\tilde A\subseteq G$ be the parabolic support of $g$, and let $\tilde{N}$ be the parabolic support of $N$. Notice that $\tilde A$ and $\tilde{N}$ are non-trivial (because $g$ and $N$ are non-trivial). Then $\tilde{N}\subseteq \tilde A\times \tilde A^{\perp}$, and $P\subseteq \tilde{N}\times\tilde{N}^{\perp}$.
\begin{itemize}
    \item If $\tilde{N}^\perp\neq\{1\}$, then $P$ is contained in $\tilde{N}\times \tilde{N}^\perp$, which is a product of two non-trivial subgroups. 
    \item Now assume that $\tilde{N}^\perp=\{1\}$, then $P\subseteq \tilde{N}\subseteq \tilde{A}\times \tilde{A}^\perp$.
    \begin{itemize}
    \item If $\tilde{A}$ is conjugate to a vertex group, then $\tilde{A}^{\perp}\neq\{1\}$ because $\Gamma_G$ is connected and not reduced to one vertex. So we are done in this case.
    \item If $\tilde{A}$ splits non-trivially as a product, we are done.
    \item Otherwise, we can apply \cite[Theorem~56]{Bar}, which says that if $\tilde{A}$ is not conjugate to a vertex group and does not split as a product, then $Z_G(g)=\langle g\rangle\times \tilde{A}^{\perp}$. Since the centralizer of $g$ is non-cyclic, we deduce that $\tilde{A}^{\perp}\neq\{1\}$, and again we are done.
    \end{itemize}
\end{itemize}
    
    \medskip
    
\noindent \textbf{Step 3.} We now prove that if $P$ is a maximal product parabolic subgroup, then $(G,P)$ satisfies Property~$\Qprod_2$.\smallskip

So let $P'\subseteq G$ be a subgroup that satisfies Property~$\Qprod_1$, with $P\subseteq P'$. Step~2 implies that $P'$ is contained in a product parabolic subgroup $Q$. Then $P\subseteq Q$, and by maximality $P=Q$. In particular $P=P'$, as desired.
    
\medskip

\noindent \textbf{Step 4.} We are left with proving that if $(G,P)$ satisfies Property~$\Qprod$, then $P$ is a maximal product parabolic subgroup. 
\smallskip

By Step~2, $P$ is contained in a product parabolic subgroup $Q$. Up to increasing $Q$, we will assume that $Q$ is a \emph{maximal} product parabolic subgroup, so Step~1 ensures that $Q$ satisfies Property~$\Qprod_1$. And Property~$\Qprod_2$ for $P$ thus ensures that $P=Q$, so $P$ is a maximal product parabolic subgroup.
\end{proof}

\subsubsection{Thick free factors are preserved}\label{sec:thick-free-factors}

\begin{Def}[Property $\Qisol$]\index{Property!$\Qisol$}
    Let $F$ be a group, and let $L\subseteq F$ be a subgroup. We say that the triple $(G,F,L)$ satisfies Property~$\Qisol$ if the following three properties hold.
    \begin{description}
    \item[$\Qisol_1$] $L$ is a free factor of $F$.
    \item[$\Qisol_2$] $L$ is freely indecomposable. 
    \item[$\Qisol_3$] There exists a non-trivial subgroup $B\subseteq L$ such that $N_G(B)\nsubseteq N_G(L)$.
    \end{description}
\end{Def}

\begin{lemma}\label{lemma:isol}
Let $G$ be a graph product over a strongly reduced finite simple graph $\Gamma$, let $F\subseteq G$ be a parabolic subgroup, and let $L\subseteq F$ be a subgroup. 

Then $L$ is a thick free factor of $F$ if and only if $(G,F,L)$ satisfies Property~$\Qisol$.
\end{lemma}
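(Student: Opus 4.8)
The plan is to prove both implications, using the characterization of thick free factors as the freely indecomposable free factors whose type is a non-clique connected component.

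First I would prove the forward implication. Suppose $L$ is a thick free factor of $F$, so up to conjugation $L=G_\Lambda$ for some connected component $\Lambda$ of the type of $F$ with $|V\Lambda|\ge 2$. Then $\Qisol_1$ holds by the definition of a thick free factor (thick free factors are free factors, as noted just above Lemma~\ref{lemma:freely-indecomposable}), and $\Qisol_2$ holds because $G_\Lambda$, being the graph product over a \emph{connected} graph on at least two vertices, is freely indecomposable. For $\Qisol_3$, I would use that $\Gamma$ is strongly reduced. Since $\Lambda$ is connected and contains at least two vertices, it is not a clique whose vertices all have the same link outside $\Lambda$; more precisely, I want a vertex $w\in V\Lambda$ whose star is strictly larger than $\Lambda\circ\Lambda^\perp$, i.e.\ $N_G(G_w)\nsubseteq N_G(G_\Lambda)$. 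This follows because if every $w\in V\Lambda$ satisfied $\lk(w)\subseteq V\Lambda\cup(\Lambda^\perp)$, then $\Lambda$ would be collapsible (all vertices of $\Lambda$ having the same link in $\Gamma\setminus\Lambda$, namely $V\Lambda^\perp$), contradicting that $\Gamma$ is strongly reduced and $\Lambda$ is proper on at least two vertices. Taking $B=G_w$ gives $N_G(B)=G_w\times G_w^\perp\nsubseteq G_\Lambda\times G_\Lambda^\perp=N_G(L)$, establishing $\Qisol_3$.

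For the converse, suppose $(G,F,L)$ satisfies $\Qisol$. By $\Qisol_1$ and $\Qisol_2$, the subgroup $L$ is a freely indecomposable free factor of $F$. Since $F$ is itself a parabolic subgroup, it is a graph product over its type, so Lemma~\ref{lemma:freely-indecomposable} (applied inside $F$) shows that either $L$ is a thick free factor of $F$, or else $L$ is conjugate (inside $F$) to a subgroup of $G_v$ for some isolated vertex $v$ of the type of $F$. In the latter case, the parabolic support $\tilde L$ of $L$ is conjugate to a subgroup of such a $G_v$, so $\tilde L$ is conjugate into a vertex group; I then want to show this contradicts $\Qisol_3$. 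Indeed, any non-trivial subgroup $B\subseteq L$ has parabolic support $\tilde B\subseteq\tilde L$ contained in a conjugate of $G_v$, and since $v$ is isolated in $F$, one can check using the normalizer computation (the normalizer of a parabolic is $P\times P^\perp$, Proposition~\ref{prop:am} and the surrounding facts in Section~\ref{sec:first-facts}) that $N_G(B)\subseteq N_G(\tilde B)\subseteq N_G(L)$, contradicting $\Qisol_3$. Hence the latter case is impossible and $L$ must be a thick free factor of $F$.

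The main obstacle I expect is the careful bookkeeping in the converse direction: controlling the parabolic support of $B$ and verifying the inclusion $N_G(B)\subseteq N_G(L)$ in the isolated-vertex case. The subtlety is that $L$ need not be parabolic, so one must pass through parabolic supports and use that $N_G(A)\subseteq N_G(\tilde A)$ together with the relation between the support of $L$ and that of $F$; one also has to be attentive to the fact that conjugation takes place inside $F$ rather than $G$, which requires tracking conjugators. The forward direction's use of strong reducedness to produce the witness $w$ is the other place where care is needed, but there the collapsibility criterion of Definition~\ref{de:strongly-reduced} makes the argument clean.
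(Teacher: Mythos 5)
Your forward direction matches the paper's: the paper also derives $\Qisol_1$ and $\Qisol_2$ from the definition of a thick free factor, and obtains $\Qisol_3$ from strong reducedness by producing a vertex $v$ of the component whose link is not contained in $\Upsilon\circ\Upsilon^\perp$ (your spelled-out collapsibility argument is exactly the intended one). The problem is in your converse, which after the common first step (Lemma~\ref{lemma:freely-indecomposable}) diverges from the paper and contains a genuine gap.

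In the case where $L$ is contained in a conjugate $gG_vg^{-1}$ of an isolated vertex group of the type of $F$, you claim that every non-trivial $B\subseteq L$ satisfies $N_G(B)\subseteq N_G(\tilde B)\subseteq N_G(L)$, where $\tilde B$ is the parabolic support of $B$ \emph{for the given graph product structure over $\Gamma$}. The second inclusion is false in general. Since the only parabolic subgroups of the $\Gamma$-structure contained in $gG_vg^{-1}$ are $\{1\}$ and $gG_vg^{-1}$ itself, one has $\tilde B=gG_vg^{-1}$, hence $N_G(\tilde B)=g(G_v\times G_v^{\perp})g^{-1}\supseteq gG_vg^{-1}$. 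But $\Qisol_1$ only makes $L$ a free factor of $F$, hence a free factor of $gG_vg^{-1}$; it does not force $L=gG_vg^{-1}$. Since the paper makes no irreducibility assumption on the vertex groups (this is precisely its point of departure from Genevois, see Remark~\ref{rk:genevois}), the vertex group may split, say $G_v=L\ast L'$ with $L,L'$ non-trivial (e.g.\ $G_v\cong F_2$ and $L$ infinite cyclic). Such an $L$ is a freely indecomposable free factor of $F$, so it passes $\Qisol_1$ and $\Qisol_2$, and it is exactly the failure of $\Qisol_3$ that must rule it out --- this is the crux of the lemma. In that situation the normalizer of the proper free factor $L$ inside $gG_vg^{-1}=g(L\ast L')g^{-1}$ is $L$ itself, so $gG_vg^{-1}\nsubseteq N_G(L)$ and your inclusion $N_G(\tilde B)\subseteq N_G(L)$ breaks down: the $\Gamma$-parabolic support is too coarse to detect $L$. (Your appeal to ``$v$ is isolated in $F$'' does not help here; isolation plays no role in whether $G_v$ normalizes $L$.)

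The paper closes exactly this case by refining the graph product structure before invoking supports. Since $L$ is a free factor of $F$ contained in $G_v$, it is a free factor of $G_v$, say $G_v=L\ast L'$; one then exhibits $G$ as a graph product over the graph $\tilde\Gamma$ obtained from $\Gamma$ by replacing $v$ with two non-adjacent vertices $w,w'$, each joined to all neighbours of $v$, with vertex groups $G_w=L$ and $G_{w'}=L'$ (if $L'=\{1\}$ one simply keeps $\tilde\Gamma=\Gamma$). In this refined structure $L$ is itself a vertex group, so the parabolic support of any non-trivial $B\subseteq L$, now taken with respect to $\tilde\Gamma$, equals $L$, and the general fact $N_G(B)\subseteq N_G(\tilde B)$ yields $N_G(B)\subseteq N_G(L)$ for all such $B$, contradicting $\Qisol_3$. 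Your proof is repaired by inserting this change of graph product structure at the point where you pass to parabolic supports.
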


\begin{proof}
Without loss of generality, we will assume that $F=G_{\Lambda}$, for some induced subgraph $\Lambda\subseteq\Gamma$.

We first assume that $L$ is a thick free factor -- in particular $L$ is a free factor and is freely indecomposable, so Properties~$\Qisol_1$ and~$\Qisol_2$ hold. To prove $\Qisol_3$, without loss of generality, we will assume that $L=G_\Upsilon$, for some induced subgraph $\Upsilon\subseteq\Lambda\subseteq\Gamma$ which is a connected component of $\Lambda$ on at least two vertices. Since $\Gamma$ is strongly reduced, there exists $v\in V\Upsilon$ such that $\lk_{\Gamma}(v)\nsubseteq\Upsilon\circ\Upsilon^{\perp}$. Letting $B=G_v$ shows that Property~$\Qisol_3$ holds. 

\medskip

Conversely, we now assume that $(G,F,L)$ satisfies Property~$\Qisol$. Since $L$ is a freely indecomposable free factor of $G_\Lambda$, Lemma~\ref{lemma:freely-indecomposable} ensures that
\begin{itemize}
\item either $L$ is thick, and we are done,
\item or else there exists an isolated vertex $v\in V\Lambda$, such that $L$ is contained in a conjugate of $G_v$.
\end{itemize}
We assume that we are in the second situation, and without loss of generality that $L\subseteq G_v$, and we aim for a contradiction. Since $L$ is a free factor of $G$ contained in $G_v$, we deduce that $L$ is a free factor of $G_v$, i.e.\ $G_v$ splits as $G_v=L\ast L'$ for some subgroup $L'\subseteq G_v$ (possibly with $L'=\{1\}$). 

We now observe that $G$ splits as a graph product over a graph $\tilde\Gamma$, with one vertex group equal to $L$: indeed,
\begin{itemize}
\item if $L=G_v$ and $L'=\{1\}$, we simply let $\tilde\Gamma=\Gamma$;
\item otherwise, $\tilde\Gamma$ is obtained from $\Gamma$ by replacing $v$ by two non-adjacent vertices $w,w'$, and joining both $w$ and $w'$ by an edge to all vertices of $\Gamma$ that are adjacent to $v$; then $G$ is a graph product over $\tilde{\Gamma}$ with $G_w=L$ and $G_{w'}=L'$.
\end{itemize}
Now, if $B\subseteq L$ is a non-trivial subgroup, then $L$ is the parabolic support of $B$ for the graph product structure over $\tilde{\Gamma}$. Therefore $N_G(B)\subseteq N_G(L)$, which contradicts Property~$\Qisol_3$.
\end{proof}

\subsubsection{Special subproducts}

Let $P=\mathsf{F}_1\times\dots\times\mathsf{F}_n$ be a product parabolic subgroup of $G$, written in such a way that no $\mathsf{F}_j$ further splits as a direct product of parabolic subgroups. In particular the clique factor of $P$ is decomposed as the product of factors that are conjugate to vertex groups. Recall from \cref{sec:factors}, \cpageref{sec:factors} that a \emph{subproduct} is a subgroup of $P$ equal to the product of finitely many factors $\mathsf{F}_j$ (possibly zero or one). A subproduct $S$ of $P$ is \emph{special} if $N_G(S)\nsubseteq N_G(P)$.

The following definition is the group theoretic analogue of \cref{de:pspec}, \cpageref{de:pspec}.

\begin{Def}[Property $\Qspec$]\index{Property!$\Qspec$}\label{de:qspec}
Let $G$ be a group, and $S\subseteq P$ be subgroups of~$G$. We say that the triple $(G,P,S)$ satisfies \emph{Property $\Qspec$} if $S\unlhd P$, and $N_G(S)\nsubseteq N_G(P)$.
\end{Def}

Notice that if $G$ is a graph product and $P$ is a product parabolic subgroup of $G$, and if $S$ is a special subproduct of $P$, then $(G,P,S)$ satisfies Property~$\Qspec$. Conversely, we have the following statement (analogous to \cref{lemma:spec}, \cpageref{lemma:spec}). 

\begin{lemma}\label{lemma:spec-groups}
Let $G$ be a graph product over a finite simple graph $\Gamma$. Let $P\subseteq G$ be a product parabolic subgroup, and $S\subseteq P$ be a subgroup. 

If $(G,P,S)$ satisfies Property~$\Qspec$, then the parabolic support of $S$ is a special subproduct of~$P$.
\end{lemma}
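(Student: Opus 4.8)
The statement asserts that if $(G,P,S)$ satisfies Property~$\Qspec$, where $P$ is a product parabolic subgroup, then the parabolic support $\tilde S$ of $S$ is a special subproduct of $P$. The plan is to use the two defining conditions of $\Qspec$ in turn: first the normality $S\unlhd P$ to force $\tilde S$ to be a subproduct of $P$, and then the condition $N_G(S)\nsubseteq N_G(P)$ to upgrade this to \emph{special}. Without loss of generality I would conjugate so that $P=G_\Lambda$ for some induced subgraph $\Lambda\subseteq\Gamma$, and write $P=\mathsf F_1\times\dots\times\mathsf F_n$ in the fully decomposed form, so that the statement becomes a claim about the type of $\tilde S$.

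\textbf{Step 1: $\tilde S$ is a subproduct.} Since $S\unlhd P$, the parabolic support $\tilde S$ is normalized by $P$: indeed, as recorded just above the proof of Lemma~\ref{lemma:graph-2}, if $N$ normalizes $A$ then $\tilde N$ normalizes $\tilde A$, and here $P$ is already parabolic so $\tilde P=P$, giving that $P$ normalizes $\tilde S$. By \cite[Proposition~3.13]{AM}, the normalizer of the parabolic subgroup $\tilde S$ equals $\tilde S\times\tilde S^\perp$, so $P\subseteq\tilde S\times\tilde S^\perp$. Now $\tilde S\subseteq P$ by definition of the parabolic support (as $S\subseteq P$ and $P$ is parabolic), and Lemma~\ref{lemma:product-parabolic-subgroup}, \cpageref{lemma:product-parabolic-subgroup} applied to the product decomposition of $P$ then forces $\tilde S$ to be compatible with the factorization, i.e.\ $\tilde S=(\tilde S\cap\mathsf F_1)\times\dots\times(\tilde S\cap\mathsf F_n)$. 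Since each $\mathsf F_j$ does not further split as a direct product of parabolics and $\tilde S\cap\mathsf F_j$ is parabolic, each factor $\tilde S\cap\mathsf F_j$ is either trivial or all of $\mathsf F_j$; hence $\tilde S$ is a subproduct of $P$.

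\textbf{Step 2: $\tilde S$ is special.} It remains to check $N_G(\tilde S)\nsubseteq N_G(P)$. This is immediate from the inclusion $N_G(S)\subseteq N_G(\tilde S)$ (valid because $\tilde S$ is the parabolic support of $S$, as recalled before Lemma~\ref{lemma:graph-2}) together with the hypothesis $N_G(S)\nsubseteq N_G(P)$: if we had $N_G(\tilde S)\subseteq N_G(P)$, then $N_G(S)\subseteq N_G(\tilde S)\subseteq N_G(P)$, contradicting $\Qspec$. Therefore $\tilde S$ is a special subproduct of $P$, as desired.

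\textbf{Main obstacle.} The delicate point is Step~1, specifically confirming that $\tilde S$ respects the direct product structure so that it really is a \emph{subproduct} (a product of full factors $\mathsf F_j$) and not merely a parabolic subgroup squeezed between $S$ and $P$. The key leverage is that normality of $S$ in $P$ propagates to normality of the \emph{support} $\tilde S$ in $P$, which then places $\tilde S$ inside its own normalizer-product and lets Lemma~\ref{lemma:product-parabolic-subgroup} do the factoring; once the factorization is established, the indecomposability of each $\mathsf F_j$ closes the argument. The remainder is routine bookkeeping with normalizers of parabolic subgroups.
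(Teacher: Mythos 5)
Your overall route coincides with the paper's proof, which is a terser version of your two steps: from $S\unlhd P$ one gets $\tilde S\unlhd P$, hence $P\subseteq\tilde S\times\tilde S^{\perp}$ and $\tilde S$ is a subproduct; then $N_G(S)\subseteq N_G(\tilde S)$ transports the non-containment of normalizers from $S$ to $\tilde S$. Your Step~2 is correct and identical to the paper's. However, the final deduction in your Step~1 is wrong as stated. You claim that since each $\mathsf F_j$ does not split as a direct product of parabolic subgroups and $\tilde S\cap\mathsf F_j$ is parabolic, the intersection $\tilde S\cap\mathsf F_j$ must be trivial or all of $\mathsf F_j$. Direct indecomposability of $\mathsf F_j$ says nothing about its proper parabolic subgroups: for instance, if $\mathsf F_j=G_v\ast G_w$ is the graph product over two non-adjacent vertices, then $\mathsf F_j$ is directly indecomposable, yet $G_v$ is a proper non-trivial parabolic subgroup of $\mathsf F_j$. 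So ``parabolic inside an indecomposable factor'' does not imply ``trivial or full'', and as written this step fails.

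The repair uses an ingredient you already established but did not invoke at the crucial moment, namely normality. Since $P\subseteq\tilde S\times\tilde S^{\perp}$, every $\mathsf F_j\subseteq P$ normalizes $\tilde S$, and since $\mathsf F_j$ also normalizes itself, it normalizes $\tilde S\cap\mathsf F_j$. Thus $\tilde S\cap\mathsf F_j$ is a \emph{normal} parabolic subgroup of $\mathsf F_j$, so by the normalizer formula of \cite[Proposition~3.13]{AM} applied inside $\mathsf F_j$ one gets $\mathsf F_j=(\tilde S\cap\mathsf F_j)\times\left((\tilde S\cap\mathsf F_j)^{\perp}\cap\mathsf F_j\right)$. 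Only now does the direct indecomposability of $\mathsf F_j$ force $\tilde S\cap\mathsf F_j$ to be trivial or equal to $\mathsf F_j$, completing Step~1. With this correction, your argument matches the paper's proof (which leaves exactly this verification implicit in the sentence ``So $\tilde S$ is a subproduct of $P$'').
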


\begin{proof}
Let $\tilde{S}$ be the parabolic support of $S$. By assumption $S\unlhd P$, so $\tilde{S}\unlhd P$, and therefore $P\subseteq \tilde{S}\times \tilde{S}^{\perp}$. So $\tilde{S}$ is a subproduct of $P$. In addition, since $N_G(S)\nsubseteq N_G(P)$, we have $N_G(\tilde{S})\nsubseteq N_G(P)$, i.e.\ $\tilde{S}$ is special. In summary $S$ is contained in the special subproduct~$\tilde{S}$.
\end{proof}

\subsubsection{Clique-inclusive co-factors}

Recall from Section~\ref{sec:factors}, \cpageref{sec:factors} that if $P$ is a product parabolic subgroup of $G$, which splits as $P=C_0\times F_1\times\dots\times F_k$, where $C_0$ is the clique factor, and no $F_j$ further decomposes as a direct product of two parabolic subgroups, then a \emph{clique-inclusive co-factor} is a subgroup of $P$ of the form $C_0\times F_1\times\dots\times\hat{F}_j\times\dots\times F_k$, for some $j\in\{1,\dots,k\}$. By convention, if $P=C_0$, there is no clique-inclusive co-factor.
The following definition is analogous to \cref{de:padm}, \cpageref{de:padm}.

\begin{Def}[Property~$\Qadm$]\index{Property!$\Qadm$}\label{de:qadm}
 Let $G$ be a group, and $Q\subseteq P$ be subgroups. We say that the triple $(G,P,Q)$ satisfies \emph{Property~$\Qadm$}\index{Property!$\Qadm$} if the following three properties hold.
\begin{description}
\item[$\Qadm_1$] The subgroup $Q$ is normal in $P$.
    \item[$\Qadm_2$] There exists a non-trivial subgroup $B\subseteq P$, which is normalized both by $Q$ and by a subgroup $N\subseteq G$ such that 
    \begin{enumerate}
        \item[(a)] we have $N\nsubseteq N_G(P)$;
        \item[(b)] whenever $S\subseteq P$ is a subgroup such that $B\subseteq S$, and such that $(G,P,S)$ satisfies Property~$\Qspec$, we have $N\nsubseteq N_G(S)$.
    \end{enumerate}
    \item[$\Qadm_3$] The subgroup $Q$ is maximal among all subgroups $Q'\subseteq P$ such that $(G,P,Q')$ satisfies Properties~$\Qadm_1$ and $\Qadm_2$. 
\end{description}
\end{Def}

We now distinguish two cases depending on whether the type of $P$ is a clique or not.

\begin{lemma}\label{lemma:admissible-clique}
Let $G$ be a graph product over a finite simple graph $\Gamma$. Let $P\subseteq G$ be a product parabolic subgroup whose type is a clique.

Then there does not exist any subgroup $Q\subseteq P$ such that $(G,P,Q)$ satisfies Property~$\Qadm$.
\end{lemma}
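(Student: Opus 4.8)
The statement asserts that when the type of a product parabolic subgroup $P$ is a clique, no subgroup $Q\subseteq P$ can satisfy Property~$\Qadm$. The strategy is to show that Property~$\Qadm_2$ is already impossible to satisfy in this setting, so in particular no $Q$ can meet the full requirement of $\Qadm$. The key observation is that when the type of $P$ is a clique, $P$ is abelian-like in a very strong sense: every parabolic subgroup contained in $P$ is itself a standard clique subgroup (up to conjugation), and more importantly, every subproduct of $P$ is in fact a special subproduct candidate that ``absorbs'' any small subgroup. I would therefore aim to contradict condition~$\Qadm_2(b)$ by constructing, for any given non-trivial $B\subseteq P$ and any normalizing $N\nsubseteq N_G(P)$, a special subproduct $S$ containing $B$ such that $N\subseteq N_G(S)$.

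First I would reduce to parabolic supports. Assume towards a contradiction that $(G,P,Q)$ satisfies Property~$\Qadm$, and let $B\subseteq P$ and $N\subseteq G$ be the subgroups witnessing $\Qadm_2$. Up to a global conjugation, write $P=G_\Upsilon$ for a clique $\Upsilon\subseteq\Gamma$, so that $P=G_{w_1}\times\dots\times G_{w_k}$ with $V\Upsilon=\{w_1,\dots,w_k\}$. Let $\tilde B\subseteq P$ be the parabolic support of $B$; this is a subproduct of $P$, say $\tilde B=G_{\Theta}$ for some subclique $\Theta\subseteq\Upsilon$ (here I use that intersections of parabolics are parabolic and that parabolic subgroups of $P$ correspond to induced subcliques, as recorded in \cref{sec:first-facts}). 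Since $N$ normalizes $B$, it also normalizes $\tilde B$, i.e.\ $N\subseteq N_G(\tilde B)=\tilde B\times\tilde B^{\perp}$. The plan is then to set $S:=\tilde B$: by construction $B\subseteq S$, and $S$ is a subproduct of $P$.

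The crux is to verify that $S=\tilde B$ is genuinely a \emph{special} subproduct, i.e.\ that $(G,P,S)$ satisfies Property~$\Qspec$, while simultaneously $N\subseteq N_G(S)$ --- this directly contradicts $\Qadm_2(b)$. The inclusion $N\subseteq N_G(S)$ is immediate from $N\subseteq N_G(\tilde B)$. For speciality, I must show $N_G(S)\nsubseteq N_G(P)$; equivalently, $S\times S^{\perp}\nsupseteq P\times P^{\perp}$ fails in the right direction, namely $S^{\perp}\supsetneq P^{\perp}$ strictly. Here is where the clique hypothesis enters decisively: because $\Upsilon$ is a clique and $\Theta\subseteq\Upsilon$ is a proper (or equal) subclique, every vertex of $\Upsilon\setminus\Theta$ is adjacent to every vertex of $\Theta$, hence lies in $\Theta^{\perp}=S^{\perp}$ but not in $\Upsilon^{\perp}=P^{\perp}$. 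Thus as soon as $\Theta\subsetneq\Upsilon$, the subproduct $S$ is special. I would then have to separately rule out the case $\Theta=\Upsilon$ (i.e.\ $S=P$): in that situation $N$ normalizes $P$, contradicting condition~$\Qadm_2(a)$, so this case cannot occur. The main obstacle I anticipate is handling the degenerate boundary cases cleanly --- in particular making sure the argument is valid when $\tilde B$ is trivial, when it equals a single vertex group, and when $P=C_0$ has no clique-inclusive co-factor at all --- and confirming via \cref{prop:am} and \cref{lemma:adjacency-extension} that the orthogonal complements behave as claimed for subcliques of a clique. Once the dichotomy ($\Theta=\Upsilon$ versus $\Theta\subsetneq\Upsilon$) is resolved, both branches yield a contradiction, establishing that no $Q$ can satisfy $\Qadm$.
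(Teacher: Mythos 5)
There is a genuine gap in your speciality claim. You assert that as soon as $\Theta\subsetneq\Upsilon$, the subproduct $S=G_\Theta$ is special, and you justify this by noting that every vertex of $\Upsilon\setminus\Theta$ lies in $\Theta^{\perp}$ but not in $\Upsilon^{\perp}$. But this only shows $S^{\perp}\supsetneq P^{\perp}$, which is \emph{not} equivalent to speciality: a vertex $w\in V\Upsilon\setminus V\Theta$ lies in $\Upsilon$, hence in $P\times P^{\perp}=N_G(P)$, so it does not witness $N_G(S)\nsubseteq N_G(P)$. Speciality requires a vertex adjacent to all of $\Theta$ that lies \emph{outside} $\Upsilon\cup\Upsilon^{\perp}$, and whether such a vertex exists depends on the ambient graph $\Gamma$, not just on the strict inclusion $\Theta\subsetneq\Upsilon$. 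Concretely, if $\Gamma=\Upsilon$ is itself a clique, then $N_G(P)=G$ and no subproduct whatsoever is special; or take $\Gamma$ a path $a-b-c$ with $\Upsilon=\{a,b\}$ and $\Theta=\{a\}$: then $\Theta\circ\Theta^{\perp}=\{a,b\}\subseteq\Upsilon\circ\Upsilon^{\perp}$, so $G_a$ is not special. In all such situations your second branch claims a contradiction with $\Qadm_2$(b) via an $S$ for which $(G,P,S)$ does not actually satisfy $\Qspec$, and no contradiction is obtained.

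The repair is to organize the dichotomy around speciality of the parabolic support $\tilde B$ rather than around whether $\tilde B=P$ or $\tilde B\subsetneq P$ (this is what the paper does). If $\tilde B$ is non-special, then since $\tilde B$ is a subproduct one has $N_G(\tilde B)=\tilde B\times\tilde B^{\perp}\subseteq P\times P^{\perp}=N_G(P)$, hence $N\subseteq N_G(B)\subseteq N_G(\tilde B)\subseteq N_G(P)$ and Assertion~(a) fails; note this branch absorbs your case $\tilde B=P$, since $P$ itself is never special. If instead $\tilde B$ is special, then $(G,P,\tilde B)$ does satisfy $\Qspec$ (normality of $\tilde B$ in $P$ holds because the type of $P$ is a clique), and $N\subseteq N_G(\tilde B)$ violates Assertion~(b). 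Everything else in your write-up (reduction to the parabolic support, $N\subseteq N_G(\tilde B)$, $\tilde B$ being a subproduct) is correct and coincides with the paper's argument; only the case split and the false speciality claim need to be replaced.
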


\begin{proof}
It is enough to prove that whenever $B,N$ are subgroups of $P$, with $B$ non-trivial and normalized by $N$, one of the Assertions~(a) and~(b) from $\Qadm_2$ fails.

So let $B,N$ be as above, and let $\tilde{B}$ be the parabolic support of $B$. Notice that $N\subseteq N_G(B)\subseteq N_G(\tilde{B})$. Notice also that $\tilde{B}$ is a subproduct of $P$, because the type of $P$ is a clique.

If $\tilde{B}$ is non-special, then $N_G(\tilde{B})=N_G(P)$, and therefore $N\subseteq N_G(P)$, so Assertion~(a) fails.

If $\tilde{B}$ is special, then $(G,P,\tilde{B})$ satisfies Property~$\Qspec$, and the fact that $N\subseteq N_G(\tilde{B})$ shows that Assertion~(b) fails.
\end{proof}

The following lemma is analogous to \cref{lemma:admissible}, \cpageref{lemma:admissible}.

\begin{lemma}\label{lemma:admissible-groups}
Let $G$ be a graph product over a strongly reduced finite simple graph $\Gamma$. Let $P\subseteq G$ be a product parabolic subgroup whose type is not a clique. Let $Q\subseteq P$ be a subgroup. 

Then $Q$ is a clique-inclusive co-factor of $P$ if and only if $(G,P,Q)$ satisfies Property~$\Qadm$.
\end{lemma}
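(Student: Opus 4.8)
The statement I want to prove is the characterization of clique-inclusive co-factors via Property~$\Qadm$ (Lemma~\ref{lemma:admissible-groups}), under the assumption that $\Gamma$ is strongly reduced and the type of $P$ is not a clique. My proof will be a group-theoretic transcription of the groupoid argument in Lemma~\ref{lemma:admissible}, \cpageref{lemma:admissible}, and I will organize it into the same four steps, replacing action-like cocycle machinery by the parabolic-support calculus (every subgroup $A$ has a smallest parabolic subgroup $\tilde A$ containing it, with $N_G(A)\subseteq N_G(\tilde A)$, and $\tilde N$ normalizes $\tilde A$ whenever $N$ normalizes $A$) and by the results on special subproducts (Lemma~\ref{lemma:spec-groups}) and maximal products (Lemma~\ref{lemma:qprod}). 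Throughout, after a global conjugation I will assume $P=G_\Lambda$ for an induced subgraph $\Lambda$, and write $P=C_0\times F_1\times\dots\times F_k$ with $C_0$ the clique factor and each $F_j$ irreducible; since the type of $P$ is not a clique we have $k\ge 1$.

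\textbf{Step 1 (co-factor $\Rightarrow\Qadm_1,\Qadm_2$).} First I would assume $Q=C_0\times F_1\times\dots\times\hat F_j\times\dots\times F_k$ is a clique-inclusive co-factor and verify the first two properties. Property~$\Qadm_1$ is immediate since $Q$ is a subproduct, hence normal in $P$. For $\Qadm_2$, I use that $\Gamma$ is strongly reduced: the type $\Lambda_j$ of $F_j$ is not reduced to a point and is not collapsible, so I can find $v\in V\Lambda_j$ joined to some $w\notin V\Lambda_j$ with $w$ not joined to all of $\Lambda_j$. Setting $B=G_v$ and $N=G_v\times G_v^{\perp}$, the inclusions $G_w\subseteq G_v^{\perp}$, $G_w\nsubseteq F_j\times F_j^{\perp}$ and $G_w\nsubseteq P\times P^{\perp}$ give exactly what is needed: $B$ is normalized by $Q$ (factor inclusion) and by $N$; condition (a) holds because $N_G(P)=P\times P^{\perp}$ does not contain $N$; and condition (b) follows because any special subproduct $S$ containing $B$ must contain $F_j$, whence $N_G(S)\subseteq F_j\times F_j^{\perp}\not\ni$ an element of $N$. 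This is the group-theoretic shadow of Step~1 of Lemma~\ref{lemma:admissible}, and I expect it to be routine once the combinatorial vertex $v,w$ is produced.

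\textbf{Steps 2--4 (converse and maximality).} In Step~2 I would assume $(G,P,Q)$ satisfies $\Qadm_1,\Qadm_2$ and show $Q$ is contained in a clique-inclusive co-factor. Let $\tilde B$ be the parabolic support of $B$; then $N\subseteq N_G(\tilde B)$ and $\tilde N\subseteq \tilde B\times\tilde B^{\perp}$. Writing $B_j=\tilde B\cap\mathsf F_j$ in a finest product decomposition $P=\mathsf F_1\times\dots\times\mathsf F_n$, I claim there is some $j$ with $\{1\}\neq B_j\neq\mathsf F_j$: otherwise $\tilde B$ is a subproduct, and whether it is special or not contradicts either $\Qadm_2(a)$ (via $N'=P\times P^{\perp}$) or $\Qadm_2(b)$ (via Lemma~\ref{lemma:spec-groups} and $N'=\tilde B\times\tilde B^{\perp}$). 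For such $j$, the factor $\mathsf F_j$ is not conjugate to a vertex group; since $Q\subseteq\tilde B\times\tilde B^{\perp}$ and $Q\unlhd P$ I deduce $Q\subseteq\mathsf F_1\times\dots\times\hat{\mathsf F}_j\times\dots\times\mathsf F_n$, a clique-inclusive co-factor. Steps~3 and~4 then combine maximality ($\Qadm_3$) with Step~1 applied to the co-factor $Q'$ produced in Step~2: since $(G,P,Q')$ also satisfies $\Qadm_1,\Qadm_2$ and $Q\subseteq Q'$, maximality forces $Q=Q'$; and conversely an inclusion of two clique-inclusive co-factors is an equality, giving $\Qadm_3$. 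The main obstacle, as in the groupoid version, is the dichotomy in Step~2 separating special from nonspecial subproducts of $\tilde B$; the clean way to handle it is to invoke Lemma~\ref{lemma:spec-groups} so that the special case feeds directly into $\Qadm_2(b)$, and to use the explicit normalizer $N'=P\times P^{\perp}$ for the nonspecial case in $\Qadm_2(a)$.
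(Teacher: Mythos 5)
Your proof is correct and takes essentially the same approach as the paper's: the same four-step structure, the same witnesses $B=G_v$ and $N=G_v\times G_v^{\perp}$ extracted from strong reducedness in Step~1, the same dichotomy on $\tilde{B}\cap\mathsf{F}_j$ in Step~2, and the same maximality bootstrapping in Steps~3--4. The only compressed point is the end of your Step~2: deducing $Q\subseteq\mathsf{F}_1\times\dots\times\hat{\mathsf{F}}_j\times\dots\times\mathsf{F}_n$ from $Q\subseteq\tilde{B}\times\tilde{B}^{\perp}$ and $Q\unlhd P$ is exactly where the paper passes to the parabolic support $\tilde{Q}$, shows $\tilde{Q}\cap\mathsf{F}_j\subseteq N_{\mathsf{F}_j}(\tilde{B}_j)\subsetneq\mathsf{F}_j$, and then uses normality of $Q$ in $P$ together with irreducibility of $\mathsf{F}_j$ to force $\tilde{Q}\cap\mathsf{F}_j=\{1\}$ --- a genuine computation, but one that follows from precisely the two hypotheses you name.
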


\begin{proof}
\textbf{Step 1.} We first prove that if $Q$ is a clique-inclusive co-factor of $P$, then $(G,P,Q)$ satisfies Properties~$\Qadm_1$ and~$\Qadm_2$.
\smallskip

We clearly have $Q\unlhd P$, i.e.\ $\Qadm_1$ holds. 

To prove Property~$\Qadm_2$, write $P=C_0\times F_1\times\dots\times F_k$, where $C_0$ is the clique factor, and no $F_j$ further decomposes as a direct product of two parabolic subgroups. Without loss of generality, we will assume that $P=G_\Lambda$ for some induced subgraph $\Lambda\subseteq \Gamma$, and each $F_j$ is equal to $G_{\Lambda_j}$, where $\Lambda_j$ is a join factor of $\Lambda$ that is not reduced to a point. Notice that $k\ge 1$ because the type of $P$ is not a clique. There thus exists $j\in\{1,\dots,k\}$ such that  $Q=C_0\times F_1\times\dots\times\hat{F}_j\times\dots\times F_k$. 

Since $\Gamma$ is strongly reduced, the subgraph $\Lambda_j$ is not collapsible. Therefore, there exists a vertex $v\in V\Lambda_j$ which is joined to a vertex $w\notin V\Lambda_j$, such that $w$ is not joined to all vertices of $\Lambda_j$. Notice in particular that 
    \[G_w \subseteq G^\perp_v \quad \text{and}
     \quad G_w\nsubseteq F_j\times F^\perp_j \quad \text{and} 
    \quad G_w\nsubseteq P\times P^\perp.\]
Let $B=G_v$. Then $B\subseteq P$ is non-trivial and normalized by $Q$. Let $N=G_v\times G_v^{\perp}$, which normalizes $B$. Then $G_w\subseteq N$, so in particular $N\nsubseteq N_G(P)$, which verifies $\Qadm_2$(a).

For $\Qadm_2$(b), let $S\subseteq P$ be a subgroup such that $B\subseteq S$, and such that $(G,P,S)$ satisfies Property~$\Qspec$. Lemma~\ref{lemma:spec-groups} ensures that the parabolic support $\tilde{S}$ of $S$ is a special subproduct of $P$. Since $G_v\subseteq \tilde{S}$ and $\tilde{S}$ is a subproduct, we have $F_j\subseteq \tilde{S}$. Since $G_w\nsubseteq N_G(F_j)$, we deduce that $G_w\nsubseteq N_G(\tilde{S})$. Therefore $N\nsubseteq N_G(\tilde{S})$ and in particular $N\nsubseteq N_G(S)$, as desired.

\medskip

\noindent \textbf{Step 2.} We now assume that $(G,P,Q)$ satisfies Properties~$\Qadm_1$ and~$\Qadm_2$, and prove that $Q$ is contained in a clique-inclusive co-factor of $P$.
\smallskip

Let $B,N$ be as in $\Qadm_2$. Write $P=\mathsf{F}_1\times\dots\times \mathsf{F}_n$, where the $\mathsf{F}_j$ are chosen not to split further (in particular the clique factor of $P$ is decomposed as the product of factors that are conjugate to vertex groups). Let $\tilde{B}$ and $\tilde{N}$ be the respective parabolic supports of $B$ and $N$. Since $N$ normalizes $B$, we have $N\subseteq N_G(\tilde{B})$. For $j\in\{1,\dots,n\}$, let $\tilde{B}_j=\tilde{B}\cap \mathsf{F}_j$. 

We claim that there exists $j\in\{1,\dots,n\}$ such that $\tilde{B}_j\neq\{1\}$ and $\tilde{B}_j\neq \mathsf{F}_j$. Indeed, otherwise $\tilde{B}$ would be a subproduct of $P$. Since $N\subseteq N_G(\tilde{B})$ and $N\nsubseteq N_G(P)$ by $\Qadm_2(a)$, we would have $N_G(\tilde{B})\nsubseteq N_G(P)$, in other words $\tilde{B}$ would be a special subproduct. So $(G,P,\tilde{B})$ would satisfy Property~$\Qspec$, and the inclusion $N\subseteq N_G(\tilde{B})$ would give a contradiction to $\Qadm_2$(b). Hence our claim.

Let $j\in\{1,\dots,n\}$ be as above. The above claim shows that $\mathsf{F}_j$ contains $\tilde{B}_j$ as a proper and non-trivial parabolic subgroup, in particular $\mathsf{F}_j$ is not conjugate to a vertex group. Let $\tilde{Q}$ be the parabolic support of $Q$ and let us show that $\tilde{Q}\cap \mathsf{F}_j=\{1\}$. Since $Q$ normalizes $B$, we have $Q\subseteq N_G(\tilde{B})$, and therefore $\tilde{Q}\subseteq N_G(\tilde{B})$. In particular $\tilde{Q}\cap \mathsf{F}_j$ is contained in $N_{\mathsf{F}_j}(\tilde{B}_j)$. Since $\tilde{B}_j$ is a proper non-trivial parabolic subgroup of $\mathsf{F}_j$ and $\mathsf{F}_j$ does not split as a product, the normalizer $N_{\mathsf{F}_j}(\tilde{B}_j)$ is a proper parabolic subgroup of $\mathsf{F}_j$. Using Lemma~\ref{lemma:product-parabolic-subgroup}, \cpageref{lemma:product-parabolic-subgroup} we have \[\tilde{Q}\subseteq \mathsf{F}_1\times\dots\times \mathsf{F}_{j-1}\times N_{\mathsf{F}_j}(\tilde{B}_j)\times \mathsf{F}_{j+1}\times\dots\times \mathsf{F}_n.\] 
Let $\tilde{Q}_j:=\tilde{Q}\cap\mathsf{F}_j$, a proper parabolic subgroup of $\mathsf{F}_j$, contained in $N_{\mathsf{F}_j}(\tilde{B}_j)$. Since $Q\unlhd P$, we have $P\subseteq N_G(\tilde Q)$, and we deduce that 
\[P\subseteq\mathsf{F}_1\times\dots\times \mathsf{F}_{j-1}\times N_{\mathsf{F}_j}(\tilde Q_j)\times \mathsf{F}_{j+1}\times\dots\times \mathsf{F}_n.\]
Therefore $N_{\mathsf{F}_j}(\tilde Q_j)=\mathsf{F}_j$, which implies that $\tilde Q_j=\{1\}$. We have thus proved that
\[Q\subseteq\tilde{Q}\subseteq \mathsf{F}_1\times\dots\times \hat {\mathsf{F}}_{j}\times\dots\times \mathsf{F}_n.\] 
This concludes Step~2.

\medskip

\noindent \textbf{Step 3.} We now assume that $(G,P,Q)$ satisfies Property~$\Qadm$, and prove that $Q$ is a clique-inclusive co-factor of $P$. 
\smallskip

By Step~2, $Q$ is contained in a clique-inclusive co-factor $Q'$ of $P$. By Step~1, the triple $(G,P,Q')$ satisfies Properties~$\Qadm_1$ and~$\Qadm_2$. So Property~$\Qadm_3$ for $(G,P,Q)$ implies that $Q=Q'$.

\medskip

\noindent\textbf{Step 4.} We are left with showing that if $Q\subseteq P$ is a clique-inclusive co-factor, then $(G,P,Q)$ satisfies Property~$\Qadm_3$. 
\smallskip

So let $Q'\subseteq P$ be such that $Q\subseteq Q'$ and $(G,P,Q')$ satisfies Properties~$\Qadm_1$ and $\Qadm_2$. By Step~2, $Q'$ is contained in a clique-inclusive co-factor $Q''$ of $P$. In particular $Q$ and $Q''$ are two clique-inclusive co-factors of $P$ with $Q\subseteq Q''$, so in fact $Q=Q''$. In particular $Q=Q'$, and Property~$\Qadm_3$ holds. 
\end{proof}

\subsubsection{Factors}

We now provide the group theoretic analogue of \cref{de:pfact}, \cpageref{de:pfact}.

\begin{Def}[Property~$\Qfact$]\index{Property!$\Qfact$}\label{de:qfact}
Let $G$ be a group, and $F\subseteq P$ be subgroups. We say that the triple $(G,P,F)$ satisfies \emph{Property~$\Qfact$} if the following two assertions hold. 
\begin{description}
\item[$\Qfact_1$] $F$ is non-trivial, and can be written as an intersection $Q_1\cap\dots\cap Q_\ell$ of subgroups $Q_j$ of $P$ with $\ell \ge 1$, where for every $j\in\{1,\dots,\ell \}$, the triple $(G,P,Q_j)$ satisfies Property~$\Qadm$.
\item[$\Qfact_2$] $F$ is minimal among all subgroups of $P$ that satisfy $\Qfact_1$. 
\end{description}
\end{Def}

The following lemma records the fact that factors arise as minimal non-trivial intersections of co-factors.

\begin{lemma}\label{lemma:factor-groups}
Let $G$ be a graph product over a strongly reduced finite simple graph $\Gamma$. Let $P\subseteq G$ be a product parabolic subgroup with clique factor $C_0$. Let $F\subseteq P$ be a subgroup.
\begin{enumerate}
\item If the type of $P$ is a clique, then $(G,P,F)$ never satisfies Property~$\Qfact$.    
\item If the type of $P$ is not a clique, then
\begin{enumerate}
\item If $C_0=\{1\}$, then $F$ is a factor of $P$ if and only if $(G,P,F)$ satisfies Property~$\Qfact$.
    \item If $C_0\neq\{1\}$, then $F$ is the clique factor of $P$ if and only if $(G,P,F)$ satisfies Property~$\Qfact$. 
    \end{enumerate}
\end{enumerate}
\end{lemma}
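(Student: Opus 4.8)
The plan is to prove \cref{lemma:factor-groups} by closely mirroring the structure of the proof of \cref{lemma:factor}, \cpageref{lemma:factor}, replacing the groupoid-theoretic ingredients by their group-theoretic counterparts established earlier in this appendix (namely \cref{lemma:admissible-clique,lemma:admissible-groups} for Property~$\Qadm$, \cref{lemma:spec-groups} for Property~$\Qspec$). Throughout, the key structural fact is that the clique-inclusive co-factors of $P=C_0\times F_1\times\dots\times F_k$ are exactly the subgroups $C_0\times F_1\times\dots\times\hat F_j\times\dots\times F_k$, and that these are characterized by Property~$\Qadm$ when the type of $P$ is not a clique.

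First I would dispose of the two degenerate cases. If the type of $P$ is a clique, then by \cref{lemma:admissible-clique} no subgroup $Q\subseteq P$ satisfies Property~$\Qadm$, so Property~$\Qfact_1$ can never be satisfied (it requires at least one such $Q_j$), proving assertion~(1). For assertion~(2), I would write $P=C_0\times F_1\times\dots\times F_k$ with $k\ge 1$ (since the type is not a clique) and the $F_j$ irreducible as products, and by \cref{lemma:admissible-groups} identify the subgroups satisfying $\Qadm$ with the clique-inclusive co-factors $Q_j:=C_0\times F_1\times\dots\times\hat F_j\times\dots\times F_k$.

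For the forward implications I would argue directly. When $C_0=\{1\}$ and $F=F_i$ is a factor, I would observe that $F_i=\bigcap_{j\neq i}Q_j$ is a non-trivial intersection of co-factors, giving $\Qfact_1$; minimality ($\Qfact_2$) follows because any co-factor $Q$ either contains $F_i$ (so intersecting with it changes nothing) or is the complementary co-factor $F_2\times\dots\times F_k$ (so $F\cap Q=\{1\}$), hence no proper non-trivial subgroup arises as a further intersection. When $C_0\neq\{1\}$ and $F=C_0$, I would note that $C_0=\bigcap_{j=1}^{k}Q_j$ and that \emph{every} clique-inclusive co-factor contains $C_0$, so $C_0$ is the smallest such intersection, giving both $\Qfact_1$ and $\Qfact_2$. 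For the converse, assuming $(G,P,F)$ satisfies $\Qfact$, I would write $F=Q_1\cap\dots\cap Q_\ell$ with each $Q_j$ a clique-inclusive co-factor (via \cref{lemma:admissible-groups}); such an intersection is a subproduct of $P$ containing $C_0$, and I would then use minimality together with the same dichotomy (intersecting with a further co-factor either leaves $F$ unchanged, leaving it non-minimal, or kills a factor) to conclude that $F$ is either a single factor $F_i$ (when $C_0=\{1\}$) or exactly $C_0$ (when $C_0\neq\{1\}$).

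The main obstacle I anticipate is the bookkeeping in the converse direction, specifically ensuring that the intersection of co-factors is a \emph{parabolic} subproduct and that minimality genuinely forces $F$ down to a single factor (resp.\ to $C_0$) rather than to some intermediate subproduct. Here I would lean on \cref{lemma:product-parabolic-subgroup}, \cpageref{lemma:product-parabolic-subgroup}, to control intersections of parabolic subgroups inside a product, and argue that if $F$ were a proper subproduct strictly larger than a factor (resp.\ than $C_0$), then one could find a clique-inclusive co-factor $Q$ with $F\cap Q$ a proper non-trivial subgroup of $F$, contradicting $\Qfact_2$. This is the exact group-theoretic shadow of the contradiction reached at the end of the proof of \cref{lemma:factor} using $\Pfact_3$; the only care needed is that, unlike in the groupoid setting where one argues ``up to a positive measure subset,'' here the statements are about honest subgroups, which actually simplifies the argument and removes the need for the $\Qfact_3$-type minimality-by-restriction clause.
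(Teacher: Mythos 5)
Your proposal is correct and follows essentially the same route as the paper: assertion~(1) via \cref{lemma:admissible-clique}, and assertion~(2) via \cref{lemma:admissible-groups} combined with the observation that factors (resp.\ the clique factor) are exactly the minimal non-trivial intersections of clique-inclusive co-factors --- the paper's proof is just a terse citation of these two facts, and your write-up supplies the same elementary subproduct bookkeeping that they leave implicit (the appeal to \cref{lemma:product-parabolic-subgroup} is not even needed, since an intersection of co-factors is computed directly inside the direct product decomposition).
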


\begin{proof}
The first assertion is a consequence of the fact that there does not exist any subgroup $Q\subseteq P$ such that $(G,P,Q)$ satisfies Property~$\Qadm$ (Lemma~\ref{lemma:admissible-clique}).

The second assertion follows from Lemma~\ref{lemma:admissible-groups}, together with the fact that 
\begin{itemize}
    \item if $C_0=\{1\}$, then factors of $P$ are exactly the minimal non-trivial intersections of clique-inclusive co-factors;
    \item if $C_0\neq\{1\}$, then the clique factor of $P$ is the unique minimal non-trivial intersection of clique-inclusive co-factors. \qedhere
\end{itemize}
\end{proof}

We now aim at a characterization of product parabolic subgroups with trivial clique factor.

\begin{Def}[Property~$\Qtc$]\index{Property!$\Qtc$}
    Let $G$ be a group, and $P\subseteq G$ be a subgroup. We say that the pair $(G,P)$ satisfies \emph{Property~$\Qtc$} if there exist $\ell \geq 1$ and subgroups $Q_1,\dots,Q_\ell\subseteq P$ such that $(G,P,Q_j)$ satisfies Property~$\Qadm$ for every $j\in\{1,\dots,\ell\}$, and $Q_1\cap\dots\cap Q_\ell$ is trivial.
\end{Def}

The following lemma records the fact that the clique factor of $P$ is trivial if and only if the intersection of all clique-inclusive co-factors is trivial.

\begin{lemma}\label{lemma:trivial-clique}
    Let $G$ be a graph product over a strongly reduced finite simple graph $\Gamma$. Let $P$ be a product parabolic subgroup.

    Then the clique factor of $P$ is trivial if and only if $(G,P)$ satisfies Property~$\Qtc$. \qed
\end{lemma}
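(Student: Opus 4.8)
The plan is to prove Lemma~\ref{lemma:trivial-clique} by translating the combinatorial fact that the clique factor of $P$ is trivial exactly when the intersection of all clique-inclusive co-factors is trivial into the group-theoretic language of Property~$\Qtc$. Since this is stated as a lemma whose proof is omitted (ending with \qed), I expect the argument to be short and to rely entirely on the characterization of clique-inclusive co-factors already established in Lemma~\ref{lemma:admissible-groups}, \cpageref{lemma:admissible-groups}, together with Lemma~\ref{lemma:admissible-clique}, \cpageref{lemma:admissible-clique} to handle the degenerate case where the type of $P$ is a clique.

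First I would dispose of the case where the type $\Lambda$ of $P$ is a clique. In that case $P$ equals its own clique factor $C_0$, which is non-trivial (vertex groups are non-trivial), so the clique factor is \emph{not} trivial. On the other hand, by Lemma~\ref{lemma:admissible-clique} there is no subgroup $Q\subseteq P$ for which $(G,P,Q)$ satisfies Property~$\Qadm$, so Property~$\Qtc$ (which requires at least $\ell\ge 1$ such subgroups) cannot hold. Thus both sides of the equivalence fail, and the lemma holds vacuously in this case.

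Next I would treat the main case, where the type of $P$ is not a clique, so that Lemma~\ref{lemma:admissible-groups} applies and identifies the subgroups $Q\subseteq P$ satisfying Property~$\Qadm$ precisely as the clique-inclusive co-factors of $P$. Writing $P=C_0\times F_1\times\dots\times F_k$ with $k\ge 1$, the clique-inclusive co-factors are the subgroups $Q_j=C_0\times F_1\times\dots\times\hat F_j\times\dots\times F_k$, and a direct computation in a direct product gives $Q_1\cap\dots\cap Q_k=C_0$. More generally, any intersection of clique-inclusive co-factors contains $C_0$, and is equal to $C_0$ precisely when it involves all $k$ of the $Q_j$. Hence: if $C_0=\{1\}$, then taking $Q_1,\dots,Q_k$ witnesses Property~$\Qtc$ since their intersection is trivial; conversely, if $(G,P)$ satisfies Property~$\Qtc$, the witnessing subgroups are clique-inclusive co-factors (Lemma~\ref{lemma:admissible-groups}) whose intersection is trivial and contains $C_0$, forcing $C_0=\{1\}$.

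The only subtlety — which I would treat as the main (though still minor) obstacle — is to verify carefully that an intersection of clique-inclusive co-factors of $P$ always contains $C_0$ and can only be trivial when $C_0=\{1\}$, which is exactly the content of the parenthetical remark preceding the lemma statement and follows from the product decomposition $P=C_0\times F_1\times\dots\times F_k$ together with Lemma~\ref{lemma:product-parabolic-subgroup}, \cpageref{lemma:product-parabolic-subgroup} describing how parabolic subgroups of a product split along the factors. Once this is in place, the equivalence is immediate in both directions, completing the proof.
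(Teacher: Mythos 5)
Your proof is correct and is essentially the paper's own (intended) argument: the paper states this lemma without a written proof, the sentence preceding it indicating that it merely records the fact that the clique factor of $P$ is the intersection of all clique-inclusive co-factors, and your write-up---\cref{lemma:admissible-clique} for the clique case, \cref{lemma:admissible-groups} together with the computation $Q_1\cap\dots\cap Q_k=C_0$ otherwise---is exactly that argument. The only cosmetic remark is that your appeal to \cref{lemma:product-parabolic-subgroup} is superfluous, since each clique-inclusive co-factor contains $C_0$ by definition and the equality $\bigcap_{j}Q_j=C_0$ is an immediate direct-product computation.
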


\subsubsection{Proof of Theorem~\ref{theo:conjugating-automorphism}}

The following property is inspired by Lemma~\ref{lemma:graph-2}. It can be seen as a group theoretic variation of \cref{de:pvert}, \cpageref{de:pvert}. Let us mention that it is closer to the version presented in \cref{de:pamen}, \cpageref{de:pamen}.  

\begin{Def}[Property~$\Qvert$]\index{Property!$\Qvert$}\label{de:qvert}
Let $G$ be a group, and $V\subseteq G$ be a non-trivial subgroup. We say that the pair $(G,V)$ satisfies \emph{Property $\Qvert$} if there exists a chain of subgroups
    % ---
    \[G=F_0\supseteq L_1\supseteq P_1\supseteq F_1\supseteq\dots\supseteq L_n\supseteq P_n\supseteq F_n=V\]
    % ---
    with $n\ge 1$ such that
    \begin{enumerate}
        \item for every $j\in\{1,\dots,n\}$, the triple $(G,F_{j-1},L_j)$ satisfies Property~$\Qisol$ and $L_j$ is non-abelian;
        \item for every $j\in\{1,\dots,n\}$, the pair $(L_j,P_j)$ satisfies Property~$\Qprod$;
        \item for every $j\in\{1,\dots,n-1\}$, the pair $(G,P_j)$ satisfies Property~$\Qtc$, while $(G,P_n)$ does not;
        \item for every $j\in\{1,\dots,n\}$, the triple $(G,P_j,F_j)$ satisfies Property~$\Qfact$;
    \item for every non-trivial subgroup $W\subseteq V$, one has $N_G(W)\subseteq N_G(V)$. 
    \end{enumerate}
\end{Def}

\begin{lemma}\label{lemma:qvert}
    Let $G$ be a graph product over a strongly reduced finite simple graph, not reduced to one vertex. A subgroup $V\subseteq G$ is conjugate to an untransvectable vertex group if and only if $(G,V)$ satisfies Property~$\Qvert$.
\end{lemma}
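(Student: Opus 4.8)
The plan is to prove \cref{lemma:qvert} by showing that the group-theoretic Property~$\Qvert$ is precisely the translation of the chain characterization from \cref{lemma:graph-2}, \cpageref{lemma:graph-2}. The strategy mirrors the concluding step of the groupoid argument (the proof of \cref{lemma:characterization-vertex-groups}, \cpageref{lemma:characterization-vertex-groups}), but is significantly cleaner here because we work with honest subgroups rather than subgroupoids up to Borel partition. First I would establish the forward implication: assuming $V$ is conjugate to $G_v$ for an untransvectable vertex $v$, I invoke \cref{lemma:graph-2} to obtain a chain
\[G=F_0\supseteq L_1\supseteq P_1\supseteq F_1\supseteq\dots\supseteq L_n\supseteq P_n\supseteq F_n=V\]
satisfying the five conditions there, and then verify the five conditions of $\Qvert$ one by one via the recognition lemmas already proved. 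Condition~(1) follows from \cref{lemma:isol}, \cpageref{lemma:isol} (thick free factors satisfy $\Qisol$), together with the observation that a thick free factor whose type is not a clique is non-abelian. Condition~(2) is \cref{lemma:qprod}, \cpageref{lemma:qprod} applied inside $L_j$ (noting $L_j$ is a non-abelian graph product over a connected graph not reduced to a vertex, since it is a thick free factor with non-clique type). Condition~(3) is \cref{lemma:trivial-clique}, \cpageref{lemma:trivial-clique}, using that $P_j$ has trivial clique factor for $j<n$ while $P_n$ has non-trivial clique factor $F_n$. Condition~(4) is \cref{lemma:factor-groups}, \cpageref{lemma:factor-groups}. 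Condition~(5) holds because $V=G_v$ is the parabolic support of any of its non-trivial subgroups, whence $N_G(W)\subseteq N_G(\tilde W)\subseteq N_G(V)$.

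For the converse, I would start from a chain witnessing $\Qvert$ and reconstruct the hypotheses of \cref{lemma:graph-2}. The key point is that each recognition lemma is an \emph{equivalence}, so I can run them backwards: Condition~(1) plus \cref{lemma:isol} forces each $L_j$ to be a thick free factor of $F_{j-1}$ whose type is not a clique (here the non-abelian hypothesis on $L_j$, together with the type not being a clique, must be checked to be consistent — a thick free factor is a parabolic subgroup conjugate to some $G_{\Lambda_j}$ with $\Lambda_j$ connected on at least two vertices). Condition~(2) plus \cref{lemma:qprod} identifies $P_j$ as a maximal product parabolic subgroup of $L_j$. Condition~(3) plus \cref{lemma:trivial-clique} says the clique factor of $P_j$ is trivial for $j<n$ and non-trivial for $j=n$. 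Condition~(4) plus \cref{lemma:factor-groups} identifies $F_j$ as a factor of $P_j$ for $j<n$, and $F_n$ as the clique factor of $P_n$. This assembles exactly the chain of \cref{lemma:graph-2}, except that I still need to confirm condition~(e) of that lemma — namely that every non-trivial subgroup $W\subseteq F_n$ satisfies $N_G(W)\subseteq N_G(F_n)$ — which is precisely Condition~(5) of $\Qvert$. Applying \cref{lemma:graph-2} then yields that $V=F_n$ is conjugate to $G_v$ for some untransvectable vertex $v$, as desired.

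The main obstacle I anticipate is ensuring that the abstractly-defined subgroups appearing in the chain are automatically \emph{parabolic}, since \cref{lemma:graph-2} is stated for parabolic subgroups whereas $\Qvert$ makes no a priori parabolicity assumption. This is resolved inductively: $F_0=G$ is parabolic; if $F_{j-1}$ is parabolic, then \cref{lemma:isol} (applied with $F=F_{j-1}$) gives that $L_j$ is a thick free factor of $F_{j-1}$, hence parabolic in $G$; \cref{lemma:qprod} then produces $P_j$ as a product parabolic subgroup of $L_j$ (a parabolic subgroup of a parabolic subgroup is parabolic, by the discussion in \cref{sec:first-facts}); and \cref{lemma:factor-groups} yields $F_j$ as a (clique) factor of $P_j$, again parabolic. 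Thus parabolicity propagates down the chain and the recognition lemmas apply at each stage without extra hypotheses. A secondary subtlety is the non-abelianness and connectedness bookkeeping needed so that \cref{lemma:qprod} and \cref{lemma:product}, \cpageref{lemma:product} apply to $L_j$; this is guaranteed since $L_j$ is thick (its type is a connected graph on at least two vertices) and its type is not a clique.

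With \cref{lemma:qvert} in hand, the proof of \cref{theo:conjugating-automorphism} is immediate and I would record it next: given an isomorphism $f:G\to H$ and an untransvectable vertex $v\in V\Gamma_G$, the subgroup $G_v$ satisfies $(G,G_v)$ has Property~$\Qvert$ by \cref{lemma:qvert}. Since every condition appearing in $\Qvert$ (and in the auxiliary properties $\Qisol,\Qprod,\Qtc,\Qfact,\Qadm,\Qspec$) is phrased purely in terms of subgroups, normalizers, centralizers, products and free factors, it is preserved under the isomorphism $f$ (as noted in \cref{rk:iso}). Hence $(H,f(G_v))$ also satisfies Property~$\Qvert$, and \cref{lemma:qvert} applied to $H$ (over its strongly reduced graph $\Gamma_H$) yields that $f(G_v)$ is conjugate to $H_w$ for some untransvectable vertex $w\in V\Gamma_H$. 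This completes the argument.
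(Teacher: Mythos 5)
Your proposal follows essentially the same route as the paper's proof: both directions set up a dictionary between the chain of \cref{lemma:graph-2} and the conditions of Property~$\Qvert$, implemented by running the recognition lemmas (\cref{lemma:isol,lemma:qprod,lemma:trivial-clique,lemma:factor-groups}) forwards and backwards, and your explicit induction showing that parabolicity propagates down the chain is a correct elaboration of a point the paper leaves implicit.

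There is, however, one step of your converse direction that fails as written. You claim that ``Condition~(1) plus \cref{lemma:isol} forces each $L_j$ to be a thick free factor of $F_{j-1}$ \emph{whose type is not a clique}.'' \cref{lemma:isol} gives no control whatsoever on whether the type is a clique: a thick free factor is conjugate to $G_\Upsilon$ for $\Upsilon$ a connected component, on at least two vertices, of the type of $F_{j-1}$, and such a component may perfectly well be a clique, since the type of $F_{j-1}$ is merely an induced subgraph of $\Gamma$ and strong reduction of $\Gamma$ does not exclude this. Moreover such an $L_j$ genuinely satisfies $\Qisol$ (the subgroup $B$ in $\Qisol_3$ is produced in the proof of \cref{lemma:isol} from non-collapsibility of $\Upsilon$ inside $\Gamma$, irrespective of whether $\Upsilon$ is a clique), and it is non-abelian as soon as one of its vertex groups is, so Condition~(1) of $\Qvert$ cannot rule this out either. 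Your parenthetical remark that this ``must be checked to be consistent'' acknowledges the issue but supplies no argument, and Assertion~2.(a) of \cref{lemma:graph-2} does require the type of $L_j$ not to be a clique, so the fact must be derived from another source. The paper derives it from Condition~(4): since $(G,P_j,F_j)$ satisfies Property~$\Qfact$, Assertion~1 of \cref{lemma:factor-groups}, read contrapositively, shows that the type of $P_j$ is not a clique, and since $P_j$ is a parabolic subgroup of $L_j$, the type of $L_j$ is not a clique either. You invoke Condition~(4) and \cref{lemma:factor-groups} in the very next sentence, so all ingredients for the repair are already in your write-up, but the derivation must be rerouted through $\Qfact$ rather than $\Qisol$. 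A smaller instance of the same issue occurs in your forward direction: before applying Assertion~2 of \cref{lemma:factor-groups} to obtain Condition~(4), one must verify that the type of $P_j$ is not a clique; the paper deduces this from the fact that $P_j$ is a maximal product parabolic subgroup of $L_j$, whose type is connected and not a clique by Assertion~2.(a) of \cref{lemma:graph-2}.
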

   
\begin{proof}
We first assume that $V$ is conjugate to an untransvectable vertex group, and prove that $(G,V)$ satisfies Property~$\Qvert$. Let 
 \[G=F_0\supseteq L_1\supseteq P_1\supseteq F_1\supseteq\dots\supseteq L_n\supseteq P_n\supseteq F_n=V\]
 be a chain of parabolic subgroups as in Lemma~\ref{lemma:graph-2}, \cpageref{lemma:graph-2}. Then
    \begin{enumerate}
    \item For every $j\in\{1,\dots,n\}$, the pair $(G,F_{j-1},L_j)$ satisfies Property~$\Qisol$ (this uses Assertion~2.(a) of Lemma~\ref{lemma:graph-2}, together with Lemma~\ref{lemma:isol}), and $L_j$ is non-abelian because its type is not a clique. 
    \item For every $j\in\{1,\dots,n\}$, the pair $(L_j,P_j)$ satisfies Property~$\Qprod$: this uses Assertion 2.(b) of Lemma~\ref{lemma:graph-2}, together with Lemma~\ref{lemma:qprod} (to apply Lemma~\ref{lemma:qprod}, one needs to know that the type of $L_j$ is a connected subgraph of $\Gamma$ which is not reduced to one vertex -- this is ensured by Assertion 2.(a) of Lemma~\ref{lemma:graph-2}).  
    \item For every $j\in\{1,\dots,n-1\}$, the pair $(G,P_j)$ satisfies Property~$\Qtc$, while $(G,P_n)$ does not: this follows from Assertions~2.(c) and~2.(d) of Lemma~\ref{lemma:graph-2}, together with Lemma~\ref{lemma:trivial-clique}.
    \item For every $j\in\{1,\dots,n\}$, the triple $(G,P_j,F_j)$ satisfies Property~$\Qfact$. Indeed, we saw that the type of $L_j$ is a connected subgraph which is not a clique; since $P_j$ is a maximal product parabolic subgroup inside $L_j$, its type is not a clique either. Therefore, Assertions~2.(c) and~2.(d) of Lemma~\ref{lemma:graph-2}, together with Lemma~\ref{lemma:factor-groups}, ensure that $(G,P_j,F_j)$ satisfies Property~$\Qfact$.
    \item For every non-trivial subgroup $W\subseteq F_n$, one has $N_G(W)\subseteq N_G(F_n)$: this is exactly Assertion 2.(e) of Lemma~\ref{lemma:graph-2}.
    \end{enumerate}
So Property~$\Qvert$ is satisfied.  
\medskip

Conversely, let us assume that $(G,V)$ satisfies Property~$\Qvert$. Let \[G=F_0\supseteq L_1\supseteq P_1\supseteq F_1\supseteq\dots\supseteq L_n\supseteq P_n\supseteq F_n=V\] be a chain of subgroups given by Property~$\Qvert$. Successive applications of Lemmas~\ref{lemma:isol},~\ref{lemma:qprod},~\ref{lemma:trivial-clique} and~\ref{lemma:factor-groups} show that 
\begin{itemize}
    \item For every $j\in\{1,\dots,n\}$, $L_j$ is a thick free factor of $F_{j-1}$ (in particular its type is not reduced to one vertex), and by $\Qvert_1$ the subgroup $L_j$ is non-abelian.
    \item For every $j\in\{1,\dots,n\}$, $P_j$ is a maximal product parabolic subgroup of $L_j$.
    \item For every $j\in\{1,\dots,n\}$, the type of $P_j$ (and therefore of $L_j$) is not a clique. This follows from $\Qvert_4$ and Assertion~1 of \cref{lemma:factor-groups}. 
    \item For every $j\in\{1,\dots,n-1\}$ by $\Qvert_3$ and \cref{lemma:trivial-clique} the clique factor of $P_j$ is trivial, while the clique factor of $P_n$ is non-trivial.
    \item For every $j\in\{1,\dots,n\}$, the subgroup $F_j$ is a factor of $P_j$, by $\Qvert_4$ and Assertion~2 of \cref{lemma:factor-groups}. Moreover $F_n$ is the clique factor of $P_n$.
\end{itemize}
Together with Assertion~5 from Property~$\Qvert$, this means that all assertions from Lemma~\ref{lemma:graph-2} hold. Lemma~\ref{lemma:graph-2} therefore implies that $V$ is conjugate to an untransvectable vertex group. 
\end{proof}

We are now in position to complete the proof of the main theorem of this appendix.

\begin{proof}[Proof of Theorem~\ref{theo:conjugating-automorphism}]
Let $v\in V\Gamma_G$ be an untransvectable vertex. Then $(G,G_v)$ satisfies Property~$\Qvert$ (Lemma~\ref{lemma:qvert}), and therefore so does $(H,f(G_v))$ -- notice indeed that all properties of the present section are stable under group isomorphisms (Remark~\ref{rk:iso}). Applying Lemma~\ref{lemma:qvert} again shows that $f(G_v)$ is conjugate to an untransvectable vertex group.
\end{proof}

\subsubsection{An extension to products}

We finally mention that Theorem~\ref{theo:conjugating-automorphism} can be extended to include direct products, as follows.

\begin{Th}\label{theo:conjugating-automorphism-join}
     Let $G,H$ be graph products over finite simple graphs $\Gamma_G,\Gamma_H$ that decompose as joins of strongly reduced graphs, none of which is reduced to a vertex or an edge.
    
    Then for every isomorphism $f:G\to H$ and every untransvectable vertex $v\in V\Gamma_G$, there exists an untransvectable vertex $w\in V\Gamma_H$ such that $f(G_v)$ is conjugate to $H_w$.
\end{Th}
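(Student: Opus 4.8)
The plan is to reduce Theorem~\ref{theo:conjugating-automorphism-join} to the already-established strongly reduced case (Theorem~\ref{theo:conjugating-automorphism}) by adding a single group-theoretic layer that recognizes the thick free factors coming from the join decomposition and peels them off. Write $\Gamma_G=\Gamma_1\circ\dots\circ\Gamma_k$ and $\Gamma_H=\Lambda_1\circ\dots\circ\Lambda_m$ as joins of strongly reduced graphs, none reduced to a vertex or edge, and let $G=F_1\times\dots\times F_k$ and $H=E_1\times\dots\times E_m$ be the corresponding direct product decompositions into irreducible factors (each $F_i=G_{\Gamma_i}$ and each $E_j=H_{\Lambda_j}$). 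The first step is to characterize these factors by a purely group-theoretic property stable under isomorphism, in exactly the spirit of Properties~$\Qprod$ and $\Qadm$ above. Concretely, I would introduce a co-factor property analogous to $\Pcofact$ (Definition~\ref{de:pcofact}): a subgroup $Q\subseteq G$ should be recognized as a co-factor $F_1\times\dots\times\hat{F}_i\times\dots\times F_k$ when $Q$ is normal in $G$, $Q$ normalizes a non-cyclic subgroup, and $Q$ is maximal with these properties. The factors $F_i$ are then the minimal non-trivial intersections of co-factors, precisely as in Lemma~\ref{lemma:factor-groups}. This is where the assumption that no join factor is reduced to a vertex or an edge is used: it guarantees each $F_i$ is non-abelian (so contains a non-cyclic parabolic), which is what makes the co-factor normalize a non-cyclic subgroup and hence distinguishes genuine factors from abelian pieces.

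The second step is to show that $f$ sends the set of factors of $G$ bijectively to the set of factors of $H$, matching them up. Since the factor-recognition property is isomorphism-invariant, $f$ carries each $F_i$ to a factor $E_{\tau(i)}$ of $H$ for some bijection $\tau$ (in particular $k=m$), exactly as the reducibility and factor-type arguments work in the proof of Theorem~\ref{theo:strong-rigidity}, Step~1. The key point to verify is that an isomorphism between direct products of irreducible (non-abelian, freely-in-the-product-sense indecomposable) graph products must permute the factors; this follows from the recognition property together with the observation that each $F_i$ is directly indecomposable and ICC-like enough that $f(F_i)$ cannot spread across several $E_j$. Here I would lean on the parabolic-support machinery (the remark after Lemma~\ref{lemma:graph-2}) to control normalizers and supports.

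The third and final step is to restrict $f$ to a factor and apply the strongly reduced case. Given an untransvectable vertex $v\in V\Gamma_G$, let $i$ be the index with $v\in V\Gamma_i$, so $G_v$ is a vertex group of the irreducible factor $F_i=G_{\Gamma_i}$, and $v$ is still untransvectable as a vertex of the strongly reduced graph $\Gamma_i$ (untransvectability is inherited by join factors, as in Remark~\ref{rk:transvection-free-factors}). The restriction $f|_{F_i}\colon F_i\to E_{\tau(i)}$ is an isomorphism between graph products over strongly reduced graphs not reduced to a vertex, so Theorem~\ref{theo:conjugating-automorphism} applies directly and yields an untransvectable vertex $w\in V\Lambda_{\tau(i)}\subseteq V\Gamma_H$ with $f(G_v)$ conjugate (in $E_{\tau(i)}$, hence in $H$) to $H_w$. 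Since $w$ is untransvectable in $\Lambda_{\tau(i)}$ and this graph is a join factor of $\Gamma_H$, it is untransvectable in $\Gamma_H$, completing the argument.

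The main obstacle I expect is the second step: proving rigorously that $f$ must permute the irreducible factors rather than merely sending each $F_i$ into a product of several $E_j$'s or splitting one $F_i$ across factors. The subtlety is that direct product decompositions of groups are not unique in general, so one genuinely needs the co-factor recognition property to be tight enough to pin down the factors canonically. I would handle this by proving that the collection of factors (as minimal non-trivial intersections of co-factors) is an intrinsic invariant of the group, so that any isomorphism permutes it; the non-abelianness hypothesis (no edge or vertex join factors) and the freely/directly indecomposable nature of each $F_i$ are exactly what prevent pathological decompositions and make the recognition property robust. Everything else is a routine transcription of the strongly reduced arguments from the appendix into the product setting.
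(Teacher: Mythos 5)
Your overall skeleton is the same as the paper's: decompose $G$ and $H$ as direct products of irreducible graph products over strongly reduced graphs not reduced to a vertex, show that $f$ permutes these factors, and then apply Theorem~\ref{theo:conjugating-automorphism} factor by factor (your third step, including the fact that untransvectability passes between a join factor and the whole graph, is exactly how the paper concludes). The gap is in your second step. The co-factor recognition property you propose --- ``$Q$ is normal in $G$, $Q$ normalizes a non-cyclic subgroup, and $Q$ is maximal with these properties'' --- does not characterize co-factors: it is satisfied by $Q=G$ itself, since $G$ is normal in $G$ and normalizes the non-cyclic normal subgroup $F_1$, so the unique maximal subgroup with these properties is $G$, not $F_1\times\dots\times\hat{F}_i\times\dots\times F_k$. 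Restricting to proper subgroups does not repair this: if $N\subsetneq F_1$ is any proper normal subgroup (e.g.\ the kernel of the retraction onto a vertex group of $F_1$), then $N\times F_2\times\dots\times F_k$ is a proper normal subgroup of $G$ normalizing the non-cyclic subgroup $F_2$ and strictly containing the co-factor $F_2\times\dots\times F_k$, so co-factors are not maximal. The groupoid-theoretic Property~$\Pcofact$ (Definition~\ref{de:pcofact}) works precisely because the normalized subgroupoid is required to be \emph{amenable} of infinite type: normalization of such an object by the whole groupoid would force a normal amenable direction in $G$, which is excluded by the Adams-type argument in Lemma~\ref{lemma:cofactor-2}. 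Replacing ``amenable'' by ``non-cyclic'' destroys exactly that mechanism, and this is not a detail --- it is the entire content of the factor-matching step. Your fallback appeal to each $F_i$ being ``directly indecomposable and ICC-like enough'' is also not free: uniqueness of the factors is a Remak-type statement for centerless groups that requires first proving direct indecomposability of irreducible graph products as abstract groups.

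For comparison, the paper disposes of this step with a citation rather than a new recognition property: each irreducible factor has trivial center and, by \cite[Lemma~3.5]{Gen}, admits a generating set consisting of pairwise non-commuting elements with maximal centralizers, so \cite[Lemma~3.12]{Gen} (applied with $A_0=B_0=\{1\}$) gives $k=\ell$ and a bijection $\sigma$ with $f(G_j)=H_{\sigma(j)}$; Theorem~\ref{theo:conjugating-automorphism} is then applied to each factor. If you want your self-contained route to go through, the natural fix is to require the normalized subgroup $B$ to be abelian (the group-theoretic shadow of amenability) and then prove, via parabolic supports and the fact that normal parabolic subgroups of $G$ are unions of the irreducible join factors, that a normal subgroup of $G$ normalizing a non-trivial abelian subgroup lies in a proper co-factor. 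That is genuine work on the order of Lemmas~\ref{lemma:admissible-groups} and~\ref{lemma:factor-groups}, not a routine transcription.
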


\begin{proof}
Write $G=G_1\times\dots\times G_k$ and $H=H_1\times\dots\times H_\ell$, where the $G_j$ and $H_j$ are irreducible graph products over strongly reduced finite simple graphs, not reduced to a vertex. Notice that (by irreducibility) all groups $G_j$ and $H_j$ have trivial center, and by \cite[Lemma~3.5]{Gen} they admit a generating set consisting of pairwise non-commuting elements with maximal centralizers. Since $G$ and $H$ are isomorphic, we can therefore apply \cite[Lemma~3.12]{Gen} with $A_0=B_0=\{1\}$ and deduce that $k=\ell$, and that there exists a bijection $\sigma:\{1,\dots,k\}\to\{1,\dots,\ell \}$ such that for every $j\in\{1,\dots,k\}$, the image $f(G_j)$ is equal to $H_{\sigma(j)}$. The conclusion then follows from Theorem~\ref{theo:conjugating-automorphism}, applied to each subgroup~$G_j$.
\end{proof}

Again, together with \cite[Theorem~3.11]{GM}, we deduce the following consequence, which in particular establishes the version of Theorem~\ref{theo:classification-commensurability}, where “commensurable” and “strongly commensurable” are replaced by “isomorphic”.

\begin{Cor}\label{cor:isomorphism-join}
Let $G,H$ be graph products over finite simple graphs $\Gamma_G,\Gamma_H$ that decompose as joins of strongly reduced graphs, with $\Gamma_G$ transvection-free. 

If $G$ and $H$ are isomorphic, then there exist an isomorphism $\sigma:\Gamma_G\to \Gamma_H$ such that for every $v\in V\Gamma$, the vertex groups $G_v$ and $H_{\sigma(v)}$ are isomorphic. \qed
\end{Cor}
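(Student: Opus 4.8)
The final statement, \cref{cor:isomorphism-join}, is a direct corollary, so my plan is to exhibit how it follows immediately from the two results it cites: \Cref{theo:conjugating-automorphism-join} (the version of the conjugating-automorphism theorem allowing direct products) and \cite[Theorem~3.11]{GM}. The structure of the argument mirrors exactly the proof of \cref{cor:isomorphism} given earlier in the appendix, so the main work is simply to check that the hypotheses align.

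First I would fix an isomorphism $f:G\to H$, which exists by assumption. The key hypothesis to exploit is that $\Gamma_G$ is transvection-free: this guarantees that \emph{every} vertex $v\in V\Gamma_G$ is untransvectable (there is no pair of distinct vertices $v,w$ with $\lk(v)\subseteq\st(w)$). I should also verify that the graph-theoretic hypotheses of \Cref{theo:conjugating-automorphism-join} are met. That theorem requires $\Gamma_G,\Gamma_H$ to decompose as joins of strongly reduced graphs, none reduced to a vertex or an edge. A small point to address is that the statement of \cref{cor:isomorphism-join} as written only asks $\Gamma_G$ to be transvection-free and both graphs to be joins of strongly reduced graphs; I would note that being transvection-free forces the strongly reduced join factors not to be reduced to a vertex (by \cref{rk:transvection-free-factors}, a transvection-free graph has no single-vertex join factors), and I would point out that the edge-exclusion hypothesis is what rules out the square/complete-graph ambiguities discussed after \cref{theo:classificationversionintro}.

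With these hypotheses in place, the core step is a single application of \Cref{theo:conjugating-automorphism-join}: for every (untransvectable) vertex $v\in V\Gamma_G$, there exists an untransvectable vertex $w\in V\Gamma_H$ such that $f(G_v)$ is conjugate to $H_w$. In other words, $f$ sends every vertex group of $G$ to a conjugate of a vertex group of $H$. This is precisely the hypothesis needed to invoke \cite[Theorem~3.11]{GM}, which asserts that an isomorphism between graph products carrying vertex groups to conjugates of vertex groups must be induced by a graph isomorphism together with isomorphisms of the corresponding vertex groups. Applying it yields the desired graph isomorphism $\sigma:\Gamma_G\to\Gamma_H$ with $G_v\cong H_{\sigma(v)}$ for every $v$, completing the proof.

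I do not anticipate any genuine obstacle here, since both cited results do the heavy lifting; the only subtlety worth flagging is bookkeeping about which direction of the transvection-free hypothesis is used (it is used on $\Gamma_G$ to ensure all its vertices are untransvectable, so that \Cref{theo:conjugating-automorphism-join} applies to \emph{every} vertex group, not merely the untransvectable ones). The proof will therefore be essentially one line, reading: ``Let $f:G\to H$ be an isomorphism. Since $\Gamma_G$ is transvection-free, every vertex of $\Gamma_G$ is untransvectable, so \Cref{theo:conjugating-automorphism-join} shows that $f$ sends every vertex group of $G$ to a conjugate of a vertex group of $H$. The conclusion then follows from \cite[Theorem~3.11]{GM}.''
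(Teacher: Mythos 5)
Your proposal is correct and is essentially identical to the paper's own proof: the corollary carries a \qed precisely because it is the one-liner you end with, namely that transvection-freeness of $\Gamma_G$ makes every vertex of $\Gamma_G$ untransvectable, so \Cref{theo:conjugating-automorphism-join} shows $f$ carries each vertex group of $G$ to a conjugate of a vertex group of $H$, and \cite[Theorem~3.11]{GM} then concludes. One refinement to your hypothesis bookkeeping: where you appeal to the ``edge-exclusion'' clause of \Cref{theo:conjugating-automorphism-join}, the cleaner observation is that transvection-freeness of $\Gamma_G$ already rules out edge join factors as well as single-vertex ones (each vertex of an edge factor, like a single-vertex factor, is joined to every other vertex of $\Gamma_G$, so its star is all of $\Gamma_G$ and any other vertex forms a transvection pair with it), which is exactly the alignment needed to apply the theorem.
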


%===============================================================
% -------------------- Acylindrical hyperbolicity
%===============================================================

\subsection{Acylindrical hyperbolicity for automorphisms of graph products}\label{sec:acyl-hyp}

A group is \emph{acylindrically hyperbolic}\index{Acylindrically hyperbolic} \cite[Definition~1.3]{Osi} if it admits a non-elementary action on a Gromov-hyperbolic metric space $X$ (i.e.\ with unbounded orbits and no finite orbit in the Gromov boundary $\partial_\infty X$), which is acylindrical in the sense that for every $R>0$, there exist $L,N\ge 0$ such that given any two points $x,y\in X$ with $d(x,y)\ge L$, one has
\[\left\vert \left\{g\in G \mid d(x,gx)\le R \text{~and~} d(y,gy)\le R \right\} \right\vert\le N. \]

We show the following result, which is essentially due to Genevois \cite[Theorem~1.1]{Gen}. See also the remark below for more details.

\begin{Th}\label{theo:acyl-hyp}
    Let $\Gamma$ be a finite irreducible simple graph, not reduced to a vertex, and let $G$ be a finitely generated graph product over $\Gamma$. Assume that $G$ is not isomorphic to $\mathbb{Z}/2\mathbb{Z}\ast\mathbb{Z}/2\mathbb{Z}$.
    
    Then $\Aut(G)$ is acylindrically hyperbolic.
\end{Th}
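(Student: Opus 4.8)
The plan is to reduce the statement to a known acylindrical hyperbolicity result by producing a suitable action of $\Aut(G)$ on a hyperbolic space, and to use the rigidity provided by Theorem~\ref{theo:conjugating-automorphism} to control how automorphisms permute the vertex groups. First I would invoke Theorem~\ref{theo:conjugating-automorphism} (in the strongly reduced case, after passing to the strongly reduced representative $\overline{\Gamma}$ of $\Gamma$, which exists and is still irreducible and not reduced to a vertex): since $\Gamma$ is irreducible it is either strongly reduced or a join, but irreducibility rules out a nontrivial join, so $\overline\Gamma$ is strongly reduced and transvection-free. The theorem then guarantees that every automorphism of $G$ sends each untransvectable vertex group to a conjugate of a vertex group, so the collection $\mathfrak{C}_G$ of conjugates of (untransvectable) vertex groups is $\Aut(G)$-invariant up to the permutation action on vertices. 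This yields a homomorphism from $\Aut(G)$ to a finite extension of $\Out(G)\ltimes(\text{something})$, but more usefully it lets $\Aut(G)$ act on the extension graph $\Gamma_G^e$ (or the right-angled building $\bD_G$), because both are built functorially from the conjugacy classes of vertex groups and their commutation.

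The key steps, in order, are: (1) establish that $\Aut(G)$ acts on a combinatorial object (the extension graph $\Gamma_G^e$, or equivalently the building $\bD_G$) extending the $G$-action by inner automorphisms; this is exactly what Theorem~\ref{theo:conjugating-automorphism} makes possible, since an automorphism permutes the vertices of $\Gamma_G^e$ because it permutes conjugates of vertex groups while preserving commutation (adjacency), by \cref{lemma:adjacency-extension}. (2) Identify a normal subgroup of $\Aut(G)$ — namely $\Inn(G)\cong G$, since $G$ is ICC hence centerless by \cref{Lmm:ICC} applied in the irreducible non-clique case — on which one already has a rich acylindrically hyperbolic action: indeed by \cite[Corollary~2.13]{MO} the group $G$ itself is acylindrically hyperbolic, as $\Gamma$ is irreducible and not a clique (which holds unless $G$ is abelian or $\mathbb{Z}/2\mathbb{Z}\ast\mathbb{Z}/2\mathbb{Z}$; the excluded case and the clique case must be handled separately). (3) Promote the acylindrical hyperbolicity of the normal subgroup $\Inn(G)$ to that of $\Aut(G)$ using the standard criterion: if $N\trianglelefteq Q$ is acylindrically hyperbolic, $N$ is not virtually cyclic, and $Q$ does not normalize any finite subgroup via a nondegenerate hyperbolically embedded structure, then $Q$ is acylindrically hyperbolic. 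The precise tool is \cite[Theorem~8.14 / Lemma~7.2]{DGO} or the Minasyan–Osin criterion: an acylindrically hyperbolic normal subgroup with trivial centralizer passes acylindrical hyperbolicity to the ambient group.

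Concretely, I would check that $C_{\Aut(G)}(\Inn(G))$ is trivial (it is, since $\Inn(G)\cong G$ is centerless and the centralizer of $\Inn(G)$ in $\Aut(G)$ consists of automorphisms acting trivially by conjugation, i.e.\ commuting with all inner automorphisms, which forces them to be inner-trivial hence trivial on a centerless group), and then apply the criterion of Minasyan–Osin that an acylindrically hyperbolic group $N$ embedded as a normal subgroup with trivial centralizer in $Q$ forces $Q$ to be acylindrically hyperbolic. The main obstacle I anticipate is verifying the \emph{acylindricity} of the induced $\Aut(G)$-action on the relevant hyperbolic space, rather than merely its nonelementarity: the action of $G$ on its hyperbolic space (say a hyperbolic space coming from the contact graph of $\bD_G$, or the space witnessing acylindrical hyperbolicity in \cite{MO}) is acylindrical, but extending this to $\Aut(G)$ and checking the quantitative acylindricity bound survives the extension is the delicate point. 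This is precisely where the structural input of Theorem~\ref{theo:conjugating-automorphism} is essential, as it constrains the outer automorphisms enough to keep the extended action nonelementary and to prevent the creation of large almost-stabilizers. The exceptional cases — when $\Gamma$ is a single clique (so $G$ is a direct product of the vertex groups, and $\Aut(G)$ need not be acylindrically hyperbolic, but the irreducibility hypothesis forbids a clique on $\geq 2$ vertices unless it is a single edge, handled by excluding $\mathbb{Z}/2\mathbb{Z}\ast\mathbb{Z}/2\mathbb{Z}$ and abelian factors) — should be dispatched by direct inspection, matching the hypotheses of \cite[Theorem~1.1]{Gen}.
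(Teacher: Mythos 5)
The fatal gap is your promotion step (3)--(4). There is no theorem asserting that if $N\trianglelefteq Q$ is acylindrically hyperbolic, not virtually cyclic, and $C_Q(N)=\{1\}$, then $Q$ is acylindrically hyperbolic. Whether acylindrical hyperbolicity passes to finite extensions, or more generally from a normal subgroup with trivial centralizer to the ambient group, is a well-known open problem, and it is exactly what makes Theorem~\ref{theo:acyl-hyp} (and \cite[Theorem~1.1]{Gen}, \cite[Theorem~1.1]{GH-Hyp}) nontrivial: $\mathrm{Inn}(G)\cong G$ is always an acylindrically hyperbolic normal subgroup of $\Aut(G)$ with trivial centralizer in the cases at hand, yet none of these papers concludes by citing a general criterion, because none exists. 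The references you invoke do not contain one: \cite[Theorem~8.14]{DGO} is the statement (used in Lemma~\ref{Lmm:ICC}) that a group with a nondegenerate hyperbolically embedded subgroup and no nontrivial finite normal subgroup is ICC, and the Minasyan--Osin theorem is a criterion for acylindrical hyperbolicity of groups acting on \emph{trees}; neither passes acylindrical hyperbolicity from $\mathrm{Inn}(G)$ to $\Aut(G)$. The obstruction is the one you name and then set aside: WPD and acylindricity are properties relative to the acting group, so an element of $G$ which is WPD for the $G$-action can fail to be WPD once the action is extended to $\Aut(G)$, because outer automorphisms may almost-fix long segments of its quasi-axis. Controlling this is not a formality; it is the actual content of the proof, and your proposal contains no argument for it.

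For contrast, the paper closes this gap with quantitative input from Genevois' work: it takes $g\in G$ produced by \cite[Theorem~6.2]{Gen} (contained in no proper parabolic subgroup, and rigid for $G$-actions on $\mathbb{R}$-trees whose arc stabilizers lie in proper parabolics), uses \cite[Theorem~5.1]{Gen} to show that the fixator of $g$ in $\Aut(G)$ has finite image in $\Out(G)$, and uses Theorem~\ref{theo:conjugating-automorphism} to verify the hypotheses of \cite[Proposition~4.33]{Gen}, which produces a hyperbolic space on which $\Aut(G)$ \emph{itself} acts with $g$ as a WPD element; then \cite[Theorem~1.2]{Osi} together with the fact that $\Aut(G)\supseteq\mathrm{Inn}(G)\cong G$ is not virtually cyclic concludes. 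A secondary error in your reduction: irreducibility of $\Gamma$ does not make its strongly reduced representative transvection-free (a path on four vertices is connected, irreducible and strongly reduced, yet has transvections), so Theorem~\ref{theo:conjugating-automorphism} only controls conjugates of \emph{untransvectable} vertex groups, and your claimed $\Aut(G)$-action on the full extension graph $\Gamma_G^e$ (or on $\bD_G$) is unjustified in general; this is precisely why Genevois' Proposition~4.33 is formulated in terms of the $\prec$-maximal (that is, untransvectable) vertices only.
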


\begin{Rq}[Parallel with Genevois’ work]
    The above theorem was proved by Genevois \cite[Theorem~1.1]{Gen} under the extra assumption that all vertex groups are graphically irreducible (see Remark~\ref{rk:genevois} for the definition). See also the work of Genevois--Martin \cite[Theorem~E]{GM}, where this was proved without any assumption on the vertex groups, but with more restrictions on the defining graph $\Gamma$. 

    It is unknown whether or not every finitely generated graphically irreducible group admits a decomposition as a graph product over a finite simple graph where all vertex groups are graphically irreducible. Our approach enables us to optimize Genevois' result by bypassing this question. In fact our proof still heavily relies on his work, and we only provide a new argument for Step~1 of Genevois' proof, carried in \cite[Section~3]{Gen}. 
\end{Rq}

In particular all consequences obtained from the acylindrical hyperbolicity of $\Aut(G)$ hold in this generality. Let us mention for instance the result by Fournier-Facio and Wade \cite[Theorem~5.5]{FFW}, that now holds for any graph product $G$ as in Theorem~\ref{theo:acyl-hyp}, and says that the space of $\Aut(G)$-invariant homogeneous quasimorphisms on $G$ is infinite-dimensional. Likewise, the theorem of Coulon and Fournier-Facio \cite[Corollary~3.5 and Example~3.6]{CFF}, saying that $G$ has an infinite simple characteristic quotient, also holds in this generality.

\begin{proof}[Proof of \cref{theo:acyl-hyp}]
If $\Gamma$ is disconnected, then $G$ has infinitely many ends, and the theorem follows from \cite[Theorem~1.1]{GH-Hyp}. From now on we assume that $\Gamma$ is connected.

By collasping some subgraphs of $\Gamma$ if needed, we can write $G$ as a graph product over a strongly reduced finite simple graph $\bar\Gamma$, not reduced to one vertex, and still irreducible. Therefore, without loss of generality, we can (and shall) assume that $\Gamma$ is strongly reduced to start with.

By \cite[Theorem~6.2]{Gen}, there exists an element $g\in G$ such that 
\begin{itemize}
    \item $g$ is not contained in any proper parabolic subgroup of $G$, and
    \item if $g$ is elliptic in some isometric $G$-action on an $\mathbb{R}$-tree $T$ whose arc stabilizers are contained in proper parabolic subgroups, then $G$ has a global fixed point in $T$.
\end{itemize}
In particular $g$ is \emph{irreducible} in the sense of \cite[p.~14]{Gen}, i.e.\ it is not conjugate in any vertex group or product parabolic subgroup, and up to conjugating $g$ we can further assume that it is \emph{graphically cyclically reduced} in the sense of \cite[Definition~2.2]{Gen}. By \cite[Theorem~5.1]{Gen}, the fixator of $g$ in $\Aut(G)$ has finite image in $\Out(G)$. Now, Theorem~\ref{theo:conjugating-automorphism} precisely shows that all assumptions from \cite[Proposition~4.33]{Gen} are satisfied (our \emph{untransvectable} vertices correspond to \emph{$\prec$-maximal} vertices in Genevois' terminology). By applying \cite[Proposition~4.33]{Gen}, we deduce that $g$ is a WPD element for some action of $\Aut(G)$ on a hyperbolic space (in the sense of Bestvina--Fujiwara \cite[Section~3]{BF}). Having a WPD element is enough to ensures that $\Aut(G)$ is either virtually cyclic or acylindrically hyperbolic \cite[Theorem~1.2]{Osi}. Finally, $\Aut(G)$ is not virtually cyclic: since $G$ is not virtually cyclic and has trivial center (see \cite[Theorem~3.34]{Gre}), the group of inner automorphisms of $G$ is not virtually cyclic.
\end{proof}

\markboth{Index}{Index}
\printindex
\addcontentsline{toc}{section}{Index}

\newpage
\section*{Notations index}
\markboth{Notations}{Notations}
\addcontentsline{toc}{section}{Notations}

\begin{description}
\item[$\preccurlyeq$] See \cpageref{Nota:AsymptoBehaviour};
\item[{$[G:H]_\Omega$}] Compression constant, equal to ${m(X_H)}/{m(X_G)}$ (see \cpageref{Def:CompressionConstant});
\item[$A_\Gamma$] Right-angled Artin group defined over the graph $\Gamma$;
%\item[$\calC$] See \cpageref{Def:CalCIntro} or \cpageref{Def:CalC};
\item[$\mathfrak{C}_G$] See \cpageref{Nota:FrakC};
\item[$\partial_\infty T_v$] Boundary of the tree $T_v$;
\item[$\left(\partial_\infty T_v\right)^\mathrm{para}$] See \cpageref{Def:Tvpara};
\item[$\left(\partial_\infty T_v\right)^\mathrm{reg}$] See \cpageref{Def:TvReg};
\item[$\bD_G$] Right-angled building of $G$;
\item[$E\Gamma$] Set of edges of the graph $\Gamma$;
\item[$\mathfrak{F}$] The set of all standard clique cosets (see \cpageref{Def:FrakF});
\item[$G \ltimes X$] Groupoid associated to an action of $G$ on a probability space $X$ (see \cref{Ex:MeasuredGroupoidAssociatedToAnAction};
\item[$\calG$, $\calH$] Groupoids;
\item[$\Gamma$, $\Lambda$, $\Upsilon$] Finite simple graphs;
\item[$\Gamma \circ \Lambda$] The graph join of $\Gamma$ and $\Lambda$;
\item[$\Gamma^e_G$] Extension graph of $G$ (see \cref{de:extension-graph});
\item[$\mathfrak{I}_G$] See \cpageref{Nota:FrakI};
\item[$\kappa(f)$] Compression constant of $f$, equal to ${\nu(V)}/{\mu(U)}$ (see \cpageref{Def:CompressionConstantf});
\item[$\lk(v)$] The link of the vertex $v$ (see \cpageref{Def:Link})
\item[$\Lambda^\perp$] See \cpageref{notation:lambdaperp};
\item[$\mathcal{P}_{<\infty}(Z)$] Set of all nonempty finite subsets of $Z$;
\item[$\mathcal{P}_{\leq 2}(Z)$] Set of all nonempty subsets of $Z$ of cardinality at most $2$;
\item[$(\Omega,m,X_G,X_H)$] A measure equivalence coupling (see \cref{Def:QuantitativeME});
\item[$\mathbb{P}_G$] Set of parabolic subgroups of $G$;
%\item[$\mathcal{P}(G)$] See \cpageref{Nota:CalPG};
%\item[$P^a$] Maximal amenable parabolic subgroup of $P$ (see \cref{Subsec:MoreRecoStatements}, \cpageref{Subsec:MoreRecoStatements});
%\item[$P^{\mathrm{na}}$] Orthogonal of $P^{\mathrm{a}}$ in $P$ (see \cref{Subsec:MoreRecoStatements}, \cpageref{Subsec:MoreRecoStatements});
\item[$\mathrm{Prob}(K)$] Space of Borel probability measures on $K$;
\item[$\rho$] A cocycle (groupoid framework);
\item[$\st(v)$] The star of the vertex $v$ (see \cpageref{Def:Star})
\item[$T_v$] Bass-Serre tree associated to the splitting $G=G_{\st(v)}\ast_{G_\lk(v)}G_{\Gamma\setminus\{v\}}$ of a graph product (see \cref{NotationTv})
\item[$V\Gamma$] Set of vertices of the graph $\Gamma$;
\item[$\underline{\mathsf{w}}$] A word (see \cref{def:word});
\item[$X_G$] A fundamental domain for the action of a group $G$.
\end{description}

\newpage
\markboth{Bibliography}{Bibliography}
\footnotesize
\bibliographystyle{alpha}
\bibliography{quantitative}
\addcontentsline{toc}{section}{Bibliography}

\bigskip

\begin{flushleft}
Amandine Escalier\\ 
Universit\'e Paris-Saclay, CNRS,  Laboratoire de math\'ematiques d'Orsay, 91405, Orsay, France \\
\emph{e-mail:~}\texttt{amandine.escalier@universite-paris-saclay.fr}\\[4mm]
\end{flushleft}

\begin{flushleft}
Camille Horbez\\ 
Universit\'e Paris-Saclay, CNRS,  Laboratoire de math\'ematiques d'Orsay, 91405, Orsay, France \\
\emph{e-mail:~}\texttt{camille.horbez@universite-paris-saclay.fr}\\[4mm]
\end{flushleft}
\end{document}